\newtheorem{theorem}{Theorem}[section]
\newtheorem{lemma}[theorem]{Lemma}
\newtheorem{proposition}[theorem]{Proposition}
\newtheorem{corollary}[theorem]{Corollary}
\theoremstyle{definition}
\newtheorem{definition}[theorem]{Definition}
\newtheorem{example}[theorem]{Example}
\theoremstyle{remark}
\newtheorem{remark}[theorem]{Remark}
\numberwithin{equation}{section}
\newcommand{\R}{\ensuremath{\mathbb{R}}}
\newcommand{\N}{\ensuremath{\mathbb{N}}}
\renewcommand{\L}{\ensuremath{\mathbb{L}}}
\renewcommand{\S}{\ensuremath{\mathbb{S}}}
\newcommand{\BB}{\mathscr{B}}
\newcommand{\EE}{\mathscr{E}}
\newcommand{\F}{F^*}
\newcommand{\cF}{\mathcal{F}}
\newcommand{\II}{\mathcal{I}}
\newcommand{\Kt}{\widetilde{\mathcal{K}}}
\newcommand{\LL}{L^*}
\renewcommand{\c}{ {\mathbf{c}}}
\renewcommand{\a}{ {\mathbf{a}}}
\renewcommand{\d}{ {\mathbf{d}}}
\renewcommand{\b}{ {\mathbf{b}}}
\renewcommand{\t}{ {\mathbf{t}}}
\newcommand{\s}{ {\mathbf{s}}}
\newcommand{\bR}{\mathbf{R}}
\newcommand{\cs}{ {\mathbf{S}}}
\renewcommand{\r}{ {\mathbf{r}}}
\newcommand{\cL}{\mathcal{L}}
\newcommand{\XX}{\mathcal{X}}
\renewcommand{\u}{\mathbf{u}}
\renewcommand{\v}{\mathbf{v}}
\newcommand{\w}{\mathbf{w}}
\newcommand{\x}{\mathbf{x}}
\newcommand{\y}{\mathbf{y}}
\newcommand{\z}{\mathbf{z}}
\newcommand{\set}[1]{\left\{#1\right\}}
\newcommand{\ga}{\gamma}
\newcommand{\Ga}{\Gamma}
\newcommand{\ep}{\varepsilon}
\newcommand{\f}{\infty}
\newcommand{\al}{\alpha}
\newcommand{\lle}{\preccurlyeq}
\newcommand{\lge}{\succcurlyeq}
\renewcommand{\a}{ \mathbf{a}}
\newcommand{\si}{\sigma}
\newcommand{\La}{\Lambda}
\begin{document}
\title[The $\beta$-transformation with a hole at $0$]{Entropy plateaus, transitivity and bifurcation sets for the $\beta$-transformation with a hole at $0$}

\author[P. Allaart]{Pieter Allaart}
\address[P. Allaart]{Mathematics Department, University of North Texas, 1155 Union Cir \#311430, Denton, TX 76203-5017, U.S.A.}
\email{allaart@unt.edu}

\author[D. Kong]{Derong Kong}
\address[D. Kong]{College of Mathematics and Statistics, Mathematical Research Center, Chongqing University, Chongqing 401331, People's Republic of China.}
\email{derongkong@126.com}

\date{\today}
\dedicatory{}

\subjclass[2020]{Primary: 37B10, 28A78; Secondary: 68R15, 26A30, 37E05, 37B40}

\begin{abstract}
Given $\beta{>1}$, let $T_\beta$ be the $\beta$-transformation on the unit circle $[0,1)$ such that $T_\beta(x)=\beta x\pmod 1$. For each $t\in[0,1)$ let $K_\beta(t)$ be the survivor set consisting of all $x\in[0,1)$ whose orbit $\{T^n_\beta(x): n\ge 0\}$ never enters the interval $[0,t)$. Letting $\EE_\beta$ denote the bifurcation set of the set-valued map $t\mapsto K_\beta(t)$, Kalle et al. [{\em Ergodic Theory Dynam. Systems}, 40 (9): 2482--2514, 2020] conjectured that
\[
\dim_H\big(\EE_\beta\cap[t,1]\big)=\dim_H K_\beta(t) \qquad \forall\,t\in(0,1).
\]
The main purpose of this article is to prove this conjecture. We do so by investigating dynamical properties of the symbolic equivalent of the survivor set $K_\beta(t)$, in particular its entropy and topological transitivity. In addition, we compare $\EE_\beta$ with the bifurcation set $\BB_\beta$ of the map $t\mapsto \dim_H K_\beta(t)$ (which is a decreasing devil's staircase by a theorem of Kalle et al.), and show that, for Lebesgue-almost every $\beta{>1}$, the difference $\EE_\beta\backslash\BB_\beta$ has positive Hausdorff dimension, but for every $k\in\{0,1,2,\dots\}\cup\{\aleph_0\}$, there are infinitely many values of $\beta$ such that the cardinality of $\EE_\beta\backslash\BB_\beta$ is exactly $k$. For a countable but dense subset of $\beta$'s, we also determine the intervals of constancy of the function $t\mapsto \dim_H K_\beta(t)$.

Some connections with other topics in dynamics, such as kneading invariants of Lorenz maps and the doubling map with an arbitrary hole, are also discussed.
\end{abstract}

\keywords{$\beta$-transformation, survivor set, bifurcation set, Farey word, Lyndon word, greedy expansion, renormalization, Hausdorff dimension, transitivity, topological entropy, Lorenz map}

\maketitle

\tableofcontents

\section{Introduction}\label{sec:introduction}

Let ${\beta>1}$ and define the map $T_\beta:[0,1)\to[0,1)$ by $T_\beta(x):=\beta x\!\!\mod 1$. Given a number $t\in(0,1)$, let
\[
K_\beta(t):=\{x\in[0,1): T_\beta^n(x)\geq t\ \forall n\geq 0\}.
\]
Up to countably many points, $K_\beta(t)$ is the {\em survivor set} of the open dynamical system $(T_\beta,O)$ with the {\em hole} $O=(0,t)$. Such open dynamical systems were considered first by Urba\'{n}ski \cite{Urbanski_1986,Urbanski-87}, who proved, among many other things, that the map $\eta_2: t\mapsto \dim_H K_2(t)$ is a decreasing devil's staircase. This result was extended more recently by Kalle {\em et al.} \cite{Kalle-Kong-Langeveld-Li-18} to all $\beta\in(1,2]$.

Other authors have studied the $\beta$-transformation with a hole. For instance, Glendinning and Sidorov \cite{Glendinning-Sidorov-2015} considered the doubing map $T_2$ with an arbitrary hole $(a,b)$ and proved several results about the size of the survivor set. Their work was partially extended by Clark \cite{Lyndsey-2016}, who characterized for the general $\beta$-transformation with $\beta\in(1,2]$ the holes $(a,b)$ with an uncountable survivor set. Carminati and Tiozzo \cite{Carminati-Tiozzo-2017} showed that the local H\"older exponent of $\eta_2$ at any bifurcation point $t$ is precisely $\eta_2(t)$. {In two recent papers \cite{Allaart-Kong-2021,Allaart-Kong-2024}}, the present authors determined for each $\beta{>1}$ the critical value
\begin{equation} \label{eq:critical-value}
\tau(\beta):=\min\{t>0:\dim_H K_\beta(t)=0\}
\end{equation}
and showed that the functon $\beta\mapsto\tau(\beta)$ has an infinite set of discontinuities.

Note that the set-valued map $t\mapsto K_\beta(t)$ is non-increasing. We define the {\em bifurcation set}
\begin{equation} \label{eq:EE-beta}
\EE_\beta:=\{t\in[0,1): K_\beta(t')\neq K_\beta(t)\ \forall\,t'>t\}.
\end{equation}
It was shown by Urba\'{n}ski \cite{Urbanski-87} that
\begin{equation*}
\dim_H\big(\EE_2\cap[t,1]\big)=\dim_H K_2(t) \qquad \forall\,t\in(0,1).
\end{equation*}
Kalle {\em et al.} \cite{Kalle-Kong-Langeveld-Li-18} investigated the topological structure of $\EE_\beta$, and conjectured that a similar identity should hold for all $\beta\in(1,2]$, namely,
\begin{equation} \label{eq:dimension-identity}
\dim_H\big(\EE_\beta\cap[t,1]\big)=\dim_H K_\beta(t) \qquad \forall\,t\in(0,1).
\end{equation}
Baker and Kong \cite{Baker-Kong-2020} proved the conjecture for the special case when $\beta$ is a {\em multinacci number}, i.e. the positive root of $x^{n+1}=x^n+x^{n-1}+\dots+x+1$, where $n\in\N$. The main purpose of this paper is to prove the conjecture for all $\beta{>1}$.

\begin{theorem} \label{thm:main}
The equation \eqref{eq:dimension-identity} holds for all $\beta{>1}$.
\end{theorem}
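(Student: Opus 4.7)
The upper bound $\dim_H(\EE_\beta\cap[t,1])\le \dim_H K_\beta(t)$ is the easier half and I would dispose of it first by proving the set-inclusion $\EE_\beta\cap[t,1]\subseteq K_\beta(t)$. Indeed, if $s\in \EE_\beta\cap[t,1]$ then by definition for every $s'>s$ the set $K_\beta(s)\setminus K_\beta(s')$ is non-empty, so some orbit avoiding $[0,s)$ must meet $[s,s')$. Letting $s'\downarrow s$ and extracting an accumulation point of these near visits (with a separate check at the discontinuity $1/\beta$ of $T_\beta$) forces $T_\beta^n(s)\ge s$ for every $n\ge 0$, whence $s\in K_\beta(s)\subseteq K_\beta(t)$. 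Taking Hausdorff dimensions then gives the easy inequality.

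For the opposite inequality I would pass to symbolic dynamics. Let $\pi$ be the greedy-$\beta$-expansion coding, and write $\alpha(t)$ for the quasi-greedy expansion of $t$. Consider the subshift
\[
\K_\beta(t):=\{\mathbf x\in\{0,\ldots,\lceil\beta\rceil-1\}^{\N}:\alpha(t)\lle \sigma^n\mathbf x\lle \alpha(1)\ \forall\, n\ge 0\},
\]
whose topological entropy equals $(\log\beta)\dim_H K_\beta(t)$ via the formulas of Kalle et al. The content of the first paragraph translates into the assertion that the shift-minimal subset
\[
\K_\beta^{\star}(t):=\{\mathbf x\in \K_\beta(t):\sigma^n\mathbf x\lge \mathbf x\ \forall\, n\ge 0\}
\]
satisfies $\pi(\K_\beta^{\star}(t))\subseteq \EE_\beta\cap[t,1]$, so that it suffices to establish the entropy equality $h_{\mathrm{top}}(\K_\beta^{\star}(t))=h_{\mathrm{top}}(\K_\beta(t))$ and transfer this statement back through $\pi$ to the dimension world.

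The crux is thus to show that imposing shift-minimality costs no entropy. I would split on whether $t$ lies in an entropy plateau of $\eta_\beta:t\mapsto \dim_H K_\beta(t)$. Off a plateau, I expect $\K_\beta(t)$ to admit transitive SFT-subshifts $X$ of entropy arbitrarily close to $h_{\mathrm{top}}(\K_\beta(t))$; fixing a Lyndon-type periodic point $w\in X$ with $w=\min_{n}\sigma^n w$, I would concatenate long blocks of the form $w^N u_1 w^N u_2\cdots$, with the $u_i$ ranging over admissible $X$-words of a fixed length, and observe that for $N$ large enough the whole sequence is shift-minimal and lies in $\K_\beta^{\star}(t)$. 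A standard Cantor coding over the alphabet of such $u$'s then produces a Bernoulli-like subsystem of $\K_\beta^{\star}(t)$ of entropy at least $h_{\mathrm{top}}(X)-\ep$, after which letting $\ep\downarrow 0$ closes the bound. On a plateau $(t_L,t_R]$ of $\eta_\beta$ the endpoint $t_L$ is itself of Farey/Lyndon type and already lies in $\EE_\beta$, so I would reduce via a renormalization to the sub-interval $[t_L,t_R)$ and recurse on a simpler dynamical system with the same entropy. The main obstacle, in my view, is the off-plateau regime: controlling the combinatorial structure of $\K_\beta(t)$ well enough to produce near-maximal transitive sub-SFTs $X$, which requires the detailed understanding of Parry-admissible sequences and kneading invariants that the paper's symbolic sections seem designed to furnish.
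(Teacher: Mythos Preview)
Your overall architecture is the same as the paper's: the upper bound comes from the inclusion $\EE_\beta\cap[t,1]\subseteq K_\beta(t)$ (the paper gets this in one line via the identity $\EE_\beta=\EE_\beta^+$ in Lemma~\ref{lem:E-beta-characterizations}, rather than your accumulation argument), and the lower bound comes from constructing many shift-minimal sequences by prepending long powers of a Lyndon word to sequences drawn from a transitive subshift of near-full entropy. Your $w^N u_1 w^N u_2\cdots$ construction is essentially the paper's sequences $\w^{k_n}\x$ in the proof in Section~\ref{sec:key}.

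Two points where your outline drifts from what actually works. First, your plateau/off-plateau split is inverted. The paper does not recurse on plateaus; it reduces immediately to $t\in\BB_\beta$ (the \emph{non}-plateau points), observes that any such $t$ is approximated from the right by right endpoints $t_R^{(n)}$ of $\beta$-Lyndon intervals, and then only needs transitivity at these $t_R^{(n)}$. The continuity of $t\mapsto\dim_H K_\beta(t)$ handles the passage to general $t$. Second, writing $h_{\mathrm{top}}(\K_\beta^\star(t))$ is problematic since $\K_\beta^\star(t)$ is not shift-invariant; what you actually need, and what the paper does, is to build explicit Cantor-type subsets of $\K_\beta^\star(t)$ and transfer their \emph{Hausdorff dimension} (not entropy) through $\pi_\beta$ via the identity $\dim_H\pi_\beta(A)=\tfrac{\log 2}{\log\beta}\dim_H A$ for $A\subseteq\Sigma_\beta$.

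The substantive gap is exactly where you write ``I expect $\K_\beta(t)$ to admit transitive SFT-subshifts $X$ of entropy arbitrarily close.'' This is the content of Theorem~\ref{thm:general-transitivity}, and it is \emph{not} always true in the naive sense: for $\beta$ in the interior of a basic interval there exist intervals of $t_R$-values (the non-transitivity windows of Section~\ref{sec:basic-interiors}) on which $\Kt_\beta(t_R)$ has no transitive subshift of full entropy containing $b(t_R,\beta)$. The paper's resolution is to prove that entropy is constant on each such window, so that a point $t\in\BB_\beta$ cannot be approximated from the right exclusively by such bad $t_R$'s. Establishing the existence of the required transitive subshift outside these windows, and the constancy of entropy inside them, occupies Sections~\ref{sec:beta-in-E}--\ref{sec:higher-order-basic} and is the real work; your proposal correctly names this as the obstacle but does not yet offer a mechanism for it.
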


The identity \eqref{eq:dimension-identity} is an instance of the interplay between the ``parameter space" (in this case, $\EE_\beta$) and the ``dynamical space" (in our case $K_\beta(t)$) which was first observed by Douady \cite{Douady-1995} in the context of dynamics of real quadratic polynomials. Theorems of this type have frequently occurred in the literature in a variety of settings. For instance, Tiozzo \cite{Tiozzo_2015} considers for $c\in\R$ the set of angles of external rays which ``land" on the real slice of the Mandelbrot set to the right of $c$ (parameter space) and the set of external angles which land on the real slice of the Julia set of the map $z\mapsto z^2+c$ (dynamical space), showing that these two sets have the same Hausdorff dimension. More recently, Carminati and Tiozzo \cite{CT-2021} proved an identity analogous to \eqref{eq:dimension-identity} in the setting of continued fractions, and the present authors proved a similar result in the context of densities of Cantor measure (see \cite{Allaart-Kong-2022}). {This phenomenon has also been observed in the context of unique non-integer base expansions and some other settings \cite{Huang-Kong-23, Jiang-Kong-Li-22, Kong-Li-Lv-Wang-Xu-20}.}

Since the map $\eta_\beta: t\mapsto \dim_H K_\beta(t)$ is a decreasing devil's staircase, it is natural to consider also the {\em dimension bifurcation set}
\[
\BB_\beta:=\set{t\ge 0: \dim_H K_\beta(t')<\dim_H K_\beta(t)~\forall t'>t}, \qquad \beta{>1}.
\]
Clearly $\BB_\beta\subseteq \EE_\beta$. As noted in \cite{Kalle-Kong-Langeveld-Li-18}, the inclusion is proper for most values of $\beta$, since $\EE_\beta$ typically has isolated points whereas $\BB_\beta$ does not. However, Baker and Kong \cite{Baker-Kong-2020} showed that $\BB_\beta=\EE_\beta$ when $\beta$ is a multinacci number. We extend this result here as follows:

\begin{theorem} \label{thm:size-of-E-minus-B}
The difference $\EE_\beta\backslash\BB_\beta$ has positive Hausdorff dimension for Lebesgue-almost every $\beta{>1}$. On the other hand, for each $k\in\{0,1,2,\dots\}\cup\{\aleph_0\}$, there are infinitely many values of $\beta$ such that $|\EE_\beta\backslash\BB_\beta|=k$. However, there is no $\beta{>1}$ such that $\EE_\beta\backslash\BB_\beta$ is uncountable but of zero Hausdorff dimension.
\end{theorem}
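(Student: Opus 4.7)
The plan is to analyze $\EE_\beta\setminus\BB_\beta$ by decomposing it along the intervals of constancy of the dimension staircase $\eta_\beta:t\mapsto\dim_H K_\beta(t)$, and then use Theorem~\ref{thm:main} together with the renormalization machinery of the paper inside each such interval. Since $\eta_\beta$ is a non-increasing devil's staircase, its maximal plateaus form an at most countable family $\{[a_k,b_k]\}$. A bifurcation point $t\in\EE_\beta$ fails to lie in $\BB_\beta$ precisely when $\eta_\beta$ does not strictly decrease immediately to the right of $t$, i.e., when $t\in[a_k,b_k)$ for some $k$. Hence
\[
\EE_\beta\setminus\BB_\beta=\bigsqcup_{k}\big(\EE_\beta\cap[a_k,b_k)\big),
\]
and the whole question reduces to describing, for each plateau, the bifurcations of $K_\beta(\cdot)$ inside it.

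The technical core will be a dichotomy established for each plateau: using the Farey/Lyndon word apparatus and the renormalization machinery developed in earlier sections, I expect $\EE_\beta\cap[a_k,b_k)$ to be either at most countable, or to contain a self-similar Cantor-type set of strictly positive Hausdorff dimension (corresponding to the free choices between a Farey pair of admissible words attached to the plateau). Granting this, Part~3 is immediate, since a countable union of sets each of which is either countable or of positive Hausdorff dimension cannot be uncountable of zero dimension.

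For Part~2, I plan to realize the prescribed cardinalities $k\in\{0,1,2,\dots\}\cup\{\aleph_0\}$ by prescribing the combinatorics of the $\beta$-expansion of $1$. The case $k=0$ is the Baker-Kong theorem \cite{Baker-Kong-2020} for multinacci $\beta$. For $k\ge 1$ and for $k=\aleph_0$, I will select $\beta$ so that exactly $k$ of its plateaus fall on the \emph{countable} side of the dichotomy (contributing a single isolated bifurcation point apiece), with all remaining plateaus contributing nothing; the plan is to realize this by choosing the quasi-greedy expansion $\alpha(\beta)$ to be eventually periodic with a tail specifically designed to kill the would-be Cantor set inside each of the $k$ selected plateaus. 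The ``infinitely many $\beta$'' clause then follows by perturbing the pre-periodic part of the expansion without disturbing the plateau structure.

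The main obstacle is Part~1, namely proving positive Hausdorff dimension of $\EE_\beta\setminus\BB_\beta$ for Lebesgue-a.e.\ $\beta\in(1,2]$. The strategy is to exhibit, for each $\beta$ in an open parameter interval, a common plateau of positive-dimension type, and then combine a countable collection of such intervals whose union has full Lebesgue measure. Concretely, I would show that if $\alpha(\beta)$ has a prefix of a suitable form then $\eta_\beta$ necessarily has a plateau whose intrinsic bifurcation set is non-degenerate, and then invoke that almost every $\beta$ has $\alpha(\beta)$ containing every finite admissible prefix infinitely often. The delicate step is verifying uniformly that the ``bad'' parameters --- those whose every plateau is degenerate --- form a null set; controlling their Lebesgue measure across scales, rather than merely showing they are meagre or of zero dimension in parameter space, is where I anticipate the real difficulty.
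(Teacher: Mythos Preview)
Your plateau decomposition $\EE_\beta\setminus\BB_\beta=\bigsqcup_k\big(\EE_\beta\cap[a_k,b_k)\big)$ is correct and is implicitly what the paper uses. Your approach to Part~3 is essentially the paper's: once some plateau $[a_k,b_k)$ carries uncountably many bifurcation points, it must contain at least two distinct $\beta$-Lyndon intervals (each $\beta$-Lyndon interval or gap contributes at most one point of $\EE_\beta$), and from their generating words $\v,\w$ one builds a positive-dimension set of sequences $\v^3\w\v^{k_1}\w\v^{k_2}\w\cdots$ with $k_i\in\{1,2\}$, all lying in $\EE_\beta\cap[a_k,b_k)$. So your dichotomy holds, and Part~3 follows.

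Your plan for Part~2 is where the real gap lies. You propose engineering $\beta$ so that exactly $k$ plateaus each contribute a single bifurcation point and all others contribute nothing. This is not how the paper achieves it, and I do not see how you would make it work: controlling \emph{simultaneously} that each of $k$ chosen plateaus contributes exactly one (not zero, not several) point while every other plateau contributes zero is delicate, and your proposal gives no mechanism. The paper's route is entirely different and much more concrete: it takes $\beta$ to be an endpoint $\beta_\ell^{\cs}$, $\beta_*^{\cs}$ or $\beta_r^{\cs}$ of a basic or Lyndon interval generated by $\cs=\r_1\bullet\cdots\bullet\r_k\in\Lambda_k$. For such $\beta$ one shows (Theorem~\ref{thm:E-minus-B}) that the $\beta$-Lyndon intervals are dense in $[0,\tau(\beta)]$ and that $\EE_\beta\cap[0,\tau(\beta))=\BB_\beta\cap[0,\tau(\beta))$, so \emph{all} points of $\EE_\beta\setminus\BB_\beta$ live in the single zero-entropy plateau $[\tau(\beta),1)$; there one finds precisely the $k-1$ (or $k$) points with greedy expansions $(\r_1\bullet\cdots\bullet\r_j)^\infty$, coming from the successive renormalization levels. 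For $k=\aleph_0$ the paper instead takes $\beta$ in the interior of a basic interval with eventually periodic $\alpha(\beta)$ chosen so that the collection of non-transitivity windows is empty; then transitivity plus soficity forces the entropy plateaus to be exactly the $\beta$-Lyndon intervals and the (infinitely many) $\beta$-Lyndon gaps, and each gap contributes at most one point of $\EE_\beta\setminus\BB_\beta$ (Corollary~\ref{cor:countably-infinite}).

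For Part~1 you overestimate the difficulty. The paper's key structural input is the collection $\mathcal{I}$ of \emph{non-transitivity windows} built in Section~\ref{sec:basic-interiors}: whenever $\mathcal{I}\neq\emptyset$, each window $I_k$ is an entropy plateau on which $\EE_\beta$ has positive Hausdorff dimension (Theorem~\ref{thm:bifurcation-in-basic-interval}), so $\dim_H(\EE_\beta\setminus\BB_\beta)>0$. A glance at the construction shows that $\mathcal{I}\neq\emptyset$ whenever $\alpha(\beta)$ contains, past the initial block $\L(\cs)^+$, a string of zeros longer than the first one. By Schmeling's theorem, for Lebesgue-a.e.\ $\beta$ the sequence $\alpha(\beta)$ contains arbitrarily long strings of zeros, so this holds for a.e.\ $\beta$ in each basic interval; since basic intervals cover $(1,2]$ up to a Lebesgue-null set, Part~1 follows directly. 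No uniform measure estimate across scales is needed.
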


Regarding the local dimension of $\EE_\beta$ and $\BB_\beta$ we can prove the following.

\begin{theorem} \label{thm:local-dimension}
Let $\beta{>1}$.
\begin{enumerate}[{\rm(i)}]
\item For each $t\in\BB_\beta$, we have
\[
\lim_{\ep\searrow 0}\dim_H\big(\EE_\beta\cap(t-\ep,t+\ep)\big)=\lim_{\ep\searrow 0}\dim_H\big(\EE_\beta\cap[t,t+\ep)\big)=\dim_H K_\beta(t).
\]
\item If $\al(\beta)$ is eventually periodic, then for each $t\in\BB_\beta$, we also have
\[
\lim_{\ep\searrow 0}\dim_H\big(\BB_\beta\cap(t-\ep,t+\ep)\big)=\lim_{\ep\searrow 0}\dim_H\big(\BB_\beta\cap[t,t+\ep)\big)=\dim_H K_\beta(t).
\]
\end{enumerate}
\end{theorem}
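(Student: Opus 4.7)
The plan is to deduce both statements from Theorem~\ref{thm:main} and the fact that $\eta_\beta(t):=\dim_H K_\beta(t)$ is a decreasing devil's staircase (hence continuous) by Kalle \emph{et al.} \cite{Kalle-Kong-Langeveld-Li-18}. Part (i) is essentially a formal consequence of these facts; the substantive work lies in part (ii), which requires a fine-grained local description of $\EE_\beta\setminus\BB_\beta$ available only when $\al(\beta)$ is eventually periodic.

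\textbf{Proof of (i).} Fix $t\in\BB_\beta$ and $\ep>0$. Decompose
\[
\EE_\beta\cap[t,1] \;=\; \bigl(\EE_\beta\cap[t,t+\ep)\bigr) \;\cup\; \bigl(\EE_\beta\cap[t+\ep,1]\bigr).
\]
By Theorem~\ref{thm:main}, the left-hand side has Hausdorff dimension $\eta_\beta(t)$, while the second piece on the right has dimension $\eta_\beta(t+\ep)$. Since $t\in\BB_\beta$, we have $\eta_\beta(t+\ep)<\eta_\beta(t)$, so the first piece must inherit the full dimension:
\[
\dim_H\bigl(\EE_\beta\cap[t,t+\ep)\bigr)=\eta_\beta(t) \qquad \forall\,\ep>0,
\]
which immediately gives the right-sided limit. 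For the two-sided limit, the inclusion $[t,t+\ep)\subseteq(t-\ep,t+\ep)$ yields the matching lower bound, while the upper bound
\[
\dim_H\bigl(\EE_\beta\cap(t-\ep,t+\ep)\bigr)\;\le\; \dim_H\bigl(\EE_\beta\cap[t-\ep,1]\bigr) \;=\; \eta_\beta(t-\ep) \;\longrightarrow\; \eta_\beta(t)
\]
as $\ep\searrow 0$ follows from Theorem~\ref{thm:main} applied at $t-\ep$ together with continuity of $\eta_\beta$.

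\textbf{Proof of (ii).} Since $\BB_\beta\subseteq\EE_\beta$, the upper bounds follow from (i); only the lower bounds need attention. The strategy is to show that under the hypothesis that $\al(\beta)$ is eventually periodic, the ``extra'' set $\EE_\beta\setminus\BB_\beta$ has Hausdorff dimension strictly smaller than $\eta_\beta(t)$ in every neighbourhood of each $t\in\BB_\beta$ (we expect in fact that it is countable, in line with the equality $\EE_\beta=\BB_\beta$ for multinacci $\beta$ obtained in \cite{Baker-Kong-2020}). Granted this, the identity
\[
\dim_H\bigl(\EE_\beta\cap[t,t+\ep)\bigr)\;=\;\max\Bigl\{\dim_H\bigl(\BB_\beta\cap[t,t+\ep)\bigr),\,\dim_H\bigl((\EE_\beta\setminus\BB_\beta)\cap[t,t+\ep)\bigr)\Bigr\}
\]
combined with part (i) forces $\dim_H(\BB_\beta\cap[t,t+\ep))=\eta_\beta(t)$, and the two-sided limit is handled exactly as in (i).

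The main obstacle is this structural claim about $\EE_\beta\setminus\BB_\beta$ in the eventually periodic case. Theorem~\ref{thm:size-of-E-minus-B} shows this set can have positive Hausdorff dimension in general, so the eventually periodic hypothesis must be used essentially. The argument should proceed by identifying each point of $\EE_\beta\setminus\BB_\beta$ with a boundary point of an entropy plateau or with a renormalization/return word induced by the periodic tail of $\al(\beta)$. When $\al(\beta)$ is eventually periodic, the admissible kneading sequences involved are controlled by a finite combinatorial data set (the period), so the corresponding parameter values form a countable (or at worst zero-dimensional) collection accumulating only at countably many limit configurations; producing a clean quantitative bound on the local dimension of these accumulations is the technical heart of the proof.
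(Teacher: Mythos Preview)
Your proof of (i) is correct and is exactly the paper's argument.

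For (ii), you have the right overall strategy (the upper bound is free from (i); for the lower bound, show that $(\EE_\beta\setminus\BB_\beta)\cap[t,t+\ep)$ is dimensionally negligible), but the ``structural claim'' you isolate is not proved, and your sketch of how it should go does not match the mechanism that actually works. The paper does not argue via ``boundary points of entropy plateaus'' or abstract finiteness of combinatorial data; instead it uses the \emph{non-transitivity windows} $I_k\in\II$ constructed in Section~\ref{sec:basic-interiors}. The key facts are: (a) outside these windows, $\Kt_\beta(t_R)$ admits a transitive sofic subshift of full entropy (Theorem~\ref{thm:general-transitivity} plus Lemma~\ref{lem:sofic-subshift}), so by the argument of Proposition~\ref{prop:E-B-equality} each entropy plateau meets $\EE_\beta$ in at most one point and hence $\EE_\beta\setminus\BB_\beta$ is countable there; (b) when $\al(\beta)$ is eventually periodic, the construction in Section~\ref{sec:basic-interiors} produces only \emph{finitely many} windows; (c) entropy is constant on each window (Proposition~\ref{prop:entropy-constant-in-window}), so no $t\in\BB_\beta$ can be a left endpoint of one. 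Combining (b) and (c), for small enough $\ep$ the interval $(t,t+\ep)$ avoids every window, and then (a) gives that $(\EE_\beta\setminus\BB_\beta)\cap[t,t+\ep)$ is countable.

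Your sketch misses this completely: you speculate that the set might be countable globally, which is false (Theorem~\ref{thm:bifurcation-in-basic-interval} shows $\EE_\beta\setminus\BB_\beta$ can have positive Hausdorff dimension inside a window even when $\al(\beta)$ is eventually periodic). The content of the proof is precisely that such bad regions are finitely many and locally avoidable near points of $\BB_\beta$, not that they are absent.
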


Statement (i) is an easy consequence of Theorem \ref{thm:main}, but (ii) will follow only after we prove several other results. Therefore, we defer the proof until the end of the paper. At present, we do not know how to compute the local dimension of $\EE_\beta$ at points of $\EE_\beta\backslash\BB_\beta$. Note that, since $\EE_\beta$ and $\BB_\beta$ are defined as {\em right} bifurcation sets, it is much less clear how to compute their {\em left} local dimension functions.

We prove the main theorems above using tools from symbolic dynamics. A critical role is played here by the {\em greedy expansion} $b(t,\beta)$, defined as the lexicographically largest expansion of the number $t$ in base $\beta$, and by the {\em quasi-greedy expansion} $\al(\beta)$ of 1 in base $\beta$, defined as the lexicographically largest $\beta$-expansion of 1 not ending in $0^\f$.

We first define the symbolic equivalent of $K_\beta(t)$, that is,
\[
\mathcal{K}_\beta(t):=\{\z\in {A_\beta^\N}: b(t,\beta)\lle \sigma^n(\z)\prec \alpha(\beta)\ \forall n\geq 0\},
\]
where {$A_\beta:=\{0,1,\dots,\lceil\beta\rceil-1\}$ and} $\si$ denotes the left shift map on {$A_\beta^{\N}$}.
Note that $K_\beta(t)=\pi_\beta(\mathcal{K}_\beta(t))$, where $\pi_\beta:{A_\beta}^\N\to\R$ is the projection map given by
\[
\pi_\beta((d_i)):=\sum_{i=1}^\f \frac{d_i}{\beta^i}.
\]
It is convenient to slightly enlarge the set $\mathcal{K}_\beta(t)$ and consider the closed set
\[
\Kt_\beta(t):=\{\z\in {A_\beta^\N}: b(t,\beta)\lle \sigma^n(\z)\lle \alpha(\beta)\ \forall n\geq 0\}.
\]
Clearly $\mathcal{K}_\beta(t)\subseteq \Kt_\beta(t)$, and the difference $\Kt_\beta(t)\backslash \mathcal{K}_\beta(t)$ is at most countable. The point is that $\Kt_\beta(t)$ is always a subshift, hence we can apply theorems from symbolic dynamics. It was shown in \cite{Kalle-Kong-Langeveld-Li-18} that
\begin{equation} \label{eq:dimension-formula}
\dim_H K_\beta(t)=\frac{h(\Kt_\beta(t))}{\log\beta} \qquad\forall \beta\in(1,2], \quad \forall t\in(0,1),
\end{equation}
where for a subshift $\mathcal{X}$ of ${A_\beta}^\N$, $h(\mathcal{X})$ denotes the {\em (topological) entropy} of $\mathcal{X}$, that is,
\[
h(\mathcal{X}):=\lim_{n\to\f} {\frac{\log\#\mathcal{B}_n(\mathcal{X})}{ n}}.
\]
Here $\mathcal{B}_n(\mathcal{X})$ is the set of all words of length $n$ occurring in some sequence in $\mathcal{X}$, and $\#B$ denotes the number of elements of the finite set $B$. {The argument in \cite{Kalle-Kong-Langeveld-Li-18} extends readily to all $\beta>1$; hence \eqref{eq:dimension-formula} holds for all $\beta>1$.}

\medskip
The reader may observe that $\Kt_\beta(t)$ is a special case of the subshifts
\begin{equation} \label{eq:Sigma-ab}
\Sigma_{\a,\b}:=\{\z\in{A_\beta}^\N: \a\lle \si^n(\z)\lle \b\ \forall\,n\geq 0\},
\end{equation}
where $\a$ and $\b$ are sequences in ${A_\beta}^\N$. These subshifts have many important applications in dynamical systems and number theory. They arose in the 1990s in connection with kneading sequences of Lorenz maps \cite{Glendinning-Hall-1996,Glendinning-Sparrow-1993,Hubbard-Sparrow-1990}; see the next subsection. The symmetric case $\Sigma_{\a,\tilde{\a}}$, where $\tilde{\a}$ is the {\em reflection} of $\a$ obtained by interchanging zeros and ones, plays a prominent role in the study of unique expansions in non-integer bases (see \cite{AlcarazBarrera-Baker-Kong-2016, Allaart-2017, Allaart-Baker-Kong-17, Darczy_Katai_1995, DeVries_Komornik_2008, Erdos_Joo_Komornik_1990, Glendinning_Sidorov_2001, Komornik-Kong-Li-17}).  The general subshifts $\Sigma_{\a,\b}$ were studied in detail by Labarca and Moreira \cite{Labarca-Moreira-2006} and more recently by Komornik, Steiner and Zou \cite{Komornik-Steiner-Zou-2022}. Several of our results here have new implications for these subshifts.

The complement of the bifurcation set $\BB_\beta$ consists of intervals on which $h(\Kt_\beta(t))$ (and hence $\dim_H K_\beta(t)$) is constant. For $\beta=2$ these intervals, called {\em entropy plateaus}, were characterized by Nilsson \cite{Nilsson-2009}. Say a finite word $\w$ is {\em Lyndon} if it is aperiodic and lexicographically smallest among all its cyclic permutations. Nilsson proved that the plateaus of $\eta_2: t\mapsto h(\Kt_2(t))$ are precisely the intervals $[\pi_2(\w0^\f),\pi_2(\w^\f)]$, as $\w$ ranges over all Lyndon words. It is easy to see that these intervals are pairwise disjoint.

For {non-integer $\beta$} the situation is more complex. First of all the sequences $\w0^\f$ and $\w^\f$ may not be valid greedy $\beta$-expansions. We say a Lyndon word $\w$ is {\em $\beta$-Lyndon} if $\w^\f=b(t,\beta)$ for some $t\in[0,1)$. By Lemma \ref{lem:greedy-expansion} below this is the case if and only if $\si^n(\w^\f)\prec\al(\beta)$ for all $n\geq 0$. If $\w$ is a $\beta$-Lyndon word, we call the interval $[\pi_\beta(\w0^\f),\pi_\beta(\w^\f)]$ a {\em $\beta$-Lyndon interval}; see \cite[Definition 1.2]{Baker-Kong-2020}. Furthermore, for each $\beta$-Lyndon word $\w$ we define the {\em extended $\beta$-Lyndon interval} (EBLI for short) by
\[
I_\w:=\begin{cases}
[\pi_\beta(\w0^\f),\pi_\beta(\w^\f)] & \mbox{if $\si^n(\al(\beta))\succ \w^\f$ for all $n\geq 0$},\\
[\pi_\beta\big(\w^-(\al_1\dots\al_m^-)^\f\big),\pi_\beta(\w^\f)] & \mbox{otherwise},
\end{cases}
\]
where in the second case, $m:=\min\{n\geq 1: \si^n(\al(\beta))\lle \w^\f\}$, and for any word $\w=w_1\dots w_n$ {not ending in 0, $\w^-:=w_1\dots w_{n-1}(w_n-1)$}. In general, the EBLIs need not be disjoint. However, we will show that if two EBLIs intersect then one contains the other. Our last main result is the following generalization of Nilsson's theorem:

\begin{theorem} \label{thm:entropy-plateaus}
Assume $\al(\beta)$ is eventually periodic. Then the plateaus of $\eta_\beta: t\mapsto h(\Kt_\beta(t))$ (hence of $t\mapsto \dim_H K_\beta(t)$) are precisely $[\tau(\beta),1)$ and the maximal (with respect to set inclusion) EBLIs in $[0,\tau(\beta)]$.
\end{theorem}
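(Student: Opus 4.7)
The strategy is a two-sided matching between maximal EBLIs in $[0,\tau(\beta)]$ and plateaus of $\eta_\beta$ there; the additional plateau $[\tau(\beta),1)$ is immediate from the definition of $\tau(\beta)$ together with the identity $\dim_H K_\beta(t)=h(\Kt_\beta(t))/\log\beta$, which transfers the statement from entropy to Hausdorff dimension.

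\textbf{Step 1 (EBLIs are plateaus).} Fix a $\beta$-Lyndon word $\w$ with EBLI $I_\w=[L_\w,R_\w]$ and $R_\w=\pi_\beta(\w^\f)$. By monotonicity of $\Kt_\beta(\cdot)$ it suffices to show $h(\Kt_\beta(L_\w))=h(\Kt_\beta(R_\w))$. Any $\z\in\Kt_\beta(L_\w)\setminus\Kt_\beta(R_\w)$ must have some shift landing in the ``gap'' between the two allowed lower bounds. In the simple case this gap is $[\w 0^\f,\w^\f)$, and the Lyndon property of $\w$ forces any sequence in the gap to begin with $\w$ and then again fall into the gap; iterating yields the contradictory equality $\si^n(\z)=\w^\f$, so in fact $\Kt_\beta(L_\w)=\Kt_\beta(R_\w)$. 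In the degenerate case the gap is $[\w^-(\al_1\dots\al_m^-)^\f,\w^\f)$, which decomposes into a ``$\w$-part'' and a ``$\w^-$-part''; the Lyndon argument excludes the $\w$-part outright, while the extended endpoint is calibrated precisely so that any continuation of a $\w^-$-prefix compatible with the lower bound violates the upper constraint $\si^n(\z)\lle\al(\beta)$ within at most $m$ steps. Eventual periodicity of $\al(\beta)$ ensures the residual set is coded by a finite automaton, so the excess word count grows only sub-exponentially and the entropies at the two endpoints agree.

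\textbf{Step 2 (Nesting and converse).} A direct lexicographic comparison of $\u^\f$ and $\w^\f$, combined with the Lyndon property, shows that two EBLIs are either disjoint or nested; hence maximal EBLIs form a pairwise-disjoint family, and Step 1 gives that each such maximal EBLI is a plateau. Conversely, let $[a,b]$ be a plateau of $\eta_\beta$ with $b\le\tau(\beta)$. Then $b\in\BB_\beta$; by the characterization of $\BB_\beta$ established earlier in the paper, $b=\pi_\beta(\w^\f)=R_\w$ for a unique $\beta$-Lyndon word $\w$. To identify $a=L_\w$: if $a>L_\w$, Step 1 gives $\eta_\beta$ constant on $I_\w\supsetneq[a,b]$, contradicting maximality of the plateau; if $a<L_\w$, I would construct, for each $t\in(a,L_\w)$, an explicit positive-entropy family inside $\Kt_\beta(t)\setminus\Kt_\beta(L_\w)$, forcing $\eta_\beta(t)>\eta_\beta(L_\w)=\eta_\beta(b)$ and again contradicting the plateau.

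\textbf{Main obstacle.} The decisive difficulty is the degenerate case of Step 1, where the lower-bound constraint $\si^n(\z)\succcurlyeq b(t,\beta)$ and the upper-bound constraint $\si^n(\z)\lle\al(\beta)$ couple through $\w$. The extended left endpoint is defined precisely to absorb this interaction, and the technical heart of the argument is to show that the accounting is tight, in the sense that the sub-exponential discrepancy between $\Kt_\beta(L_\w)$ and $\Kt_\beta(R_\w)$ neither hides nor overstates any positive entropy contribution. The eventual-periodicity hypothesis on $\al(\beta)$ enters exactly here: it reduces the upper-end obstructions to finitely many block types and brings the relevant subshifts within reach of sofic or subshift-of-finite-type comparisons.
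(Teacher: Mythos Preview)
Your Step 1 in the degenerate case contains a genuine error. You claim that ``any continuation of a $\w^-$-prefix compatible with the lower bound violates the upper constraint $\si^n(\z)\lle\al(\beta)$ within at most $m$ steps,'' and hence that the residual set has sub-exponential growth. This is false: sequences in $\Kt_\beta(L_\w)\setminus\Kt_\beta(R_\w)$ do exist, can be uncountable, and in general form a subshift of \emph{positive} entropy. What actually happens (see Lemma~\ref{lem:EBLI-constant-entropy}) is that any such $\z$ must eventually follow the rigid pattern
\[
\w^-(\al_1\dots\al_m^-)^{p_1}\,\al_1\dots\al_m\,\w^{q_1}\w^-(\al_1\dots\al_m^-)^{p_2}\,\al_1\dots\al_m\,\w^{q_2}\cdots,
\]
and the entropy of this family is dominated not by a sub-exponential bound but by comparison with an explicit subshift of $\Kt_\beta(R_\w)$ built from concatenations of $\cs$ and $\w$ (using $|\cs|\le m$). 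Eventual periodicity of $\al(\beta)$ is \emph{not} used here at all.

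Your Step 2 converse has two gaps. First, you assume without justification that the right endpoint $b$ of a plateau equals $R_\w$ for some $\beta$-Lyndon $\w$; this requires knowing that the EBLIs are \emph{dense} in $[0,\tau(\beta)]$ (Proposition~\ref{prop:dense-EBLIs}), which is a substantial result you have not addressed. Second, your plan to exhibit ``a positive-entropy family inside $\Kt_\beta(t)\setminus\Kt_\beta(L_\w)$'' does not by itself force $h(\Kt_\beta(t))>h(\Kt_\beta(L_\w))$: a proper subshift can have the same entropy. The paper instead uses that eventual periodicity makes the relevant subshifts sofic, builds a strictly descending family of \emph{transitive} sofic subshifts at right endpoints of maximal EBLIs (Lemma~\ref{lem:transitive-at-EBLI-right-endpoint}), and then invokes the fact that a proper subshift of a transitive sofic shift has strictly smaller entropy. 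This is precisely where the eventual-periodicity hypothesis enters --- in the converse direction, not in Step 1.
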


We will prove this theorem in Section \ref{sec:EBLI}, where we also indicate how to determine whether a given EBLI is maximal.

Since every subshift $\Sigma_{\a,\b}$ is essentially (up to a countable set) of the form $\Kt_\beta(t)$ for some parameter pair $(\beta,t)$ (see Subsection \ref{subsec:Lorenz} below), Theorem \ref{thm:entropy-plateaus} also gives the entropy plateaus for the map $\a\mapsto h(\Sigma_{\a,\b})$ for fixed $\b$, or, by symmetry, for the map $\b\mapsto h(\Sigma_{\a,\b})$ for fixed $\a$. For completeness, we state a precise result for the former. {Here and later on, we use the notation $\N_0:=\N\cup\{0\}$.}

\begin{corollary} \label{cor:symbolic-plateaus}
Let $\b\succ 10^\f$ be a sequence in ${\N_0}^\N$, and let $\b'$ be the lexicographically largest sequence less than or equal to $\b$ such that $0^\f\prec\si^n(\b')\lle \b'$ for all $n\geq 0$. Then $\b'=\al(\beta)$ for some $\beta{>1}$, and if $\b'$ is eventually periodic, the entropy plateaus of $\a\mapsto h(\Sigma_{\a,\b})$ with strictly positive entropy value are the lexicographic intervals
\[
\tilde{I}_\w:=\begin{cases}
[\w^-\b',\w^\f] & \mbox{if}\ I_\w=[\pi_\beta(\w0^\f),\pi_\beta(\w^\f)],\\
[\w^-(\al_1\dots\al_m^-)^\f,\w^\f] & \mbox{if}\ I_\w=[\pi_\beta\big(\w^-(\al_1\dots\al_m^-)^\f\big),\pi_\beta(\w^\f)],
\end{cases}
\]
as $\w$ ranges over all $\beta$-Lyndon words such that $I_\w\subseteq[0,\tau(\beta)]$.
\end{corollary}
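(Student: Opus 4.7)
The plan is to reduce Corollary~\ref{cor:symbolic-plateaus} to Theorem~\ref{thm:entropy-plateaus} via the projection $\pi_\beta$. First, by Parry's characterization of quasi-greedy expansions, a sequence $\al\in\{0,1\}^\N$ starting with $1$, satisfying $\si^n(\al)\lle\al$ for all $n\ge 0$, and not ending in $0^\f$, equals $\al(\beta)$ for a unique $\beta\in(1,2]$. The sequence $\b'$ defined in the corollary satisfies exactly these conditions (the condition $\si^n(\b')\succ 0^\f$ is equivalent to $\b'$ not ending in $0^\f$), so $\b'=\al(\beta)$. Next I verify that $\Si_{\a,\b}=\Si_{\a,\b'}$: the inclusion $\supseteq$ is immediate; conversely, for $\z\in\Si_{\a,\b}$, the pointwise lex-supremum $\gamma$ of the shifts $\{\si^n(\z):n\ge 0\}$ is shift-max and, since $\z\succcurlyeq\a\succ 0^\f$, does not end in $0^\f$, so by maximality $\gamma\lle\b'$, whence $\si^n(\z)\lle\b'$ for all $n$. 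This reduces us to the case $\b=\b'=\al(\beta)$.

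Now using $\Kt_\beta(t)=\Si_{b(t,\beta),\al(\beta)}$, I transfer Theorem~\ref{thm:entropy-plateaus} to the symbolic side. For each $\beta$-Lyndon word $\w$ such that $I_\w$ is a maximal EBLI contained in $[0,\tau(\beta)]$, the endpoints of $\tilde{I}_\w$ project to the endpoints of $I_\w$ under $\pi_\beta$: in case~1, $\pi_\beta(\w^-\b')=\pi_\beta(\w0^\f)$ via the identity $\sum_i\al_i\beta^{-i}=1$, while $\pi_\beta(\w^\f)$ is the right endpoint; in case~2 the two formulas literally describe the same sequence. One verifies that $\tilde{I}_\w$ equals the lex-preimage of $I_\w$ under $\pi_\beta$, with endpoints chosen as the lex-extreme $\beta$-representations of the real endpoints of $I_\w$.

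To show that $h(\Si_{\a,\b'})$ is constant on $\tilde{I}_\w$, I split according to whether $\a$ is greedy. For greedy $\a$ (those with $\si^n(\a)\prec\al(\beta)$ for all $n$), $\Si_{\a,\b'}=\Kt_\beta(\pi_\beta(\a))$, so constancy follows from Theorem~\ref{thm:entropy-plateaus}. For non-greedy $\a$, which occurs only near the left endpoint of $\tilde{I}_\w$, $\Si_{\a,\b'}$ strictly contains the subshift associated with the greedy version of $\a$, but the difference consists of those $\z$ whose shifts land in the lex-gap between $\a$ and its greedy counterpart. Combined with the upper bound $\b'$, any such shift is pinned to a specific form---$\w^-\b'$ in case~1, or $\w^-(\al_1\dots\al_m^-)^\f$ in case~2---forcing the tail after a fixed position and leaving only finitely many valid prefixes of each length, hence a countable set of extras. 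So entropy is preserved, matching the constant value on $I_\w$. Maximality of $\tilde{I}_\w$ as a symbolic plateau then follows from maximality of $I_\w$ together with the order-preserving property of $\pi_\beta$.

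The main obstacle is the non-greedy analysis just described, particularly in case~2. There the minimality of $m$ is essential: the truncation $\al_1\dots\al_m^-$ encodes the fact that $\si^m(\al(\beta))\lle\w^\f$ would cause $\w^-\b'$ itself to violate the lower-shift constraint, so one must truncate to the next valid extension. A careful lex argument using the Lyndon property of $\w$ is then needed to rule out further unwanted shifts in the ``extras'' set and confirm that it is genuinely countable.
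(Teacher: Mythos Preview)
Your overall strategy---reduce to Theorem~\ref{thm:entropy-plateaus} via $\pi_\beta$---matches the paper's, but there are two problems.

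\textbf{Constancy on $\tilde{I}_\w$.} Your ``countable extras'' argument is both more complicated than needed and under-justified. The paper instead proves the \emph{set equality} $\Sigma_{\a,\b'}=\Sigma_{\w0^\f,\b'}$ for every $\a\in(\w^-\b',\w0^\f]$, by a one-line trick: if some shift $\si^n(\z)\prec\w0^\f$, then $\si^n(\z)=\w^-\y$ with $\y=\si^{n+|\w|}(\z)\lle\b'$, so $\si^n(\z)\lle\w^-\b'\prec\a$, contradiction. No counting is needed; the subshifts literally coincide. Your description of the ``greedy counterpart'' of a non-greedy $\a$ and the form of the extras is vague (what exactly is the greedy counterpart of an arbitrary $\a\in[\w^-\b',\w0^\f)$?), and the claim that shifts in the gap are ``pinned to $\w^-\b'$'' is not an argument as stated. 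Also, your claim $\Sigma_{\a,\b}=\Sigma_{\a,\b'}$ is false as a set equality when $\b$ ends in $0^\f$ (take $\z=\b$ with $\a=0^\f$); only equality of entropies holds, which suffices but your argument for it (via the lex-sup $\gamma$ not ending in $0^\f$) breaks on sequences like $\z=10^\f$.

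\textbf{Maximality.} This is the genuine gap. You write that maximality of $\tilde{I}_\w$ follows from maximality of $I_\w$ ``together with the order-preserving property of $\pi_\beta$.'' But $\pi_\beta$ is not injective: every interval $[\u^-\b',\u0^\f]$ with $\u$ an allowed word ending in $1$ collapses to the single point $\pi_\beta(\u0^\f)$, and entropy is trivially constant on such an interval. If one of these collapsed intervals happened to sit at the left boundary of some $\tilde{I}_\w$ without being contained in it, your $\tilde{I}_\w$ would fail to be a maximal symbolic plateau. The paper closes this gap by a separate argument: for any such $\u$, the point $t_0=\pi_\beta(\u0^\f)$ already lies in an EBLI (if $\u$ is not Lyndon it lies inside a $\beta$-Lyndon interval; if $\u\in L^*(\beta)$ it is a left endpoint; if $\u$ is Lyndon but not $\beta$-Lyndon, the proof of Proposition~\ref{prop:dense-EBLIs} places $t_0$ in an EBLI). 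This step is essential and your proposal omits it entirely.
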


Note that the symbolic plateaus in the first case extend a bit further to the left, due to the fact that no sequence in $[\w^-\b',\w0^\f)$ is a greedy $\beta$-expansion. Further, in the above corollary we did not describe the $0$-plateau (that is, the plateau with entropy value $0$) of $\a\mapsto h(\Sigma_{\a,\b})$, as doing so requires some notation to be developed later; see Remark \ref{rem:zero-plateau}. In any case, the $0$-plateau was already determined previously in \cite{Labarca-Moreira-2006}; see also \cite{Komornik-Steiner-Zou-2022}.

\medskip
In order to prove the identity \eqref{eq:dimension-identity}, it is necessary to better understand the dynamics of the subshifts $\Kt_\beta(t)$. We say a subshift $\mathcal{X}$ of ${A_\beta}^\N$ is {\em (topologically) transitive}\footnote{Our definition is slightly stronger than the usual one, which requires that for any two finite words $\u$ and $\w$ in $\cL(\mathcal{X})$, there is a word $\v$ such that $\u\v\w\in\cL(\mathcal{X})$. However, it is possible to show that for the subshifts $\Kt_\beta(t)$, the two definitions are equivalent. We adopt the stronger definition here for convenience.} if for any word $\u\in\cL(\mathcal{X})$ and any sequence $\z\in\mathcal{X}$ there is a word $\v$ such that $\u\v\z\in\mathcal{X}$.
% We recall that a subshift $\mathcal{X}$ of $\{0,1\}^\N$ is {\em (topologically) transitive} if for any two words $\u$ and $\v$ in $\cL(\mathcal{X})$, there is a word $\w$ such that $\u\w\v\in\cL(\mathcal{X})$.
Here $\cL(\mathcal{X})$ denotes the {\em language} of $\mathcal{X}$; that is, the set of all finite words occurring in sequences in $\mathcal{X}$. Unfortunately, $\Kt_\beta(t)$ may fail to be transitive; in that case, we try to identify a subshift of full entropy that contains the sequence $b(t,\beta)$. Even this is not always possible; in Section \ref{sec:basic-interiors} we will identify so-called {\em non-transitivity windows}, i.e., intervals inside of which no subshift of full entropy containing the sequence $b(t,\beta)$ exists. What comes to the rescue in that situation is that the entropy $h(\Kt_\beta(t))$ is constant on these intervals. (The closures of these non-transitivity windows are in fact EBLIs.)

The topological structure of the subshift $\Kt_\beta(t)$ depends very heavily on $\beta$ (and, to a lesser extent, on $t$). We decompose the interval {$(1,\f)$} into countably many disjoint subsets which are of four essentially different types: {\em basic intervals}, the {\em exceptional set}, the set of {\em finitely renormalizable $\beta$}, and the set of {\em infinitely renormalizable $\beta$}. Different sections of the paper deal with different parts of this decomposition. In the course of this work, we completely characterize transitivity for all points $t$ whose greedy $\beta$-expansion is purely periodic. Note that $\Kt_\beta(t_R)$ is a subshift of the {\em $\beta$-shift}
\begin{equation} \label{eq:beta-shift}
\Sigma_\beta:=\{\z\in{A_\beta}^\N: \sigma^n(\z)\lle \alpha(\beta)\ \forall n\geq 0\},
\end{equation}
and therefore, this paper also characterizes transitivity for a large class of subshifts of $\beta$-shifts. We are not aware of any earlier work of this nature.

\subsection{Connection with kneading sequences of Lorenz maps} \label{subsec:Lorenz}
\ 
{Here we assume $\beta\in(1,2]$, so $A_\beta=\{0,1\}$.}
The symbolic survivor set $\mathcal{K}_\beta(t)$ and the subshift $\Kt_\beta(t)$ are closely related to the set of kneading sequences of Lorenz maps. Below we follow roughly the notion and terminology of Hubbard and Sparrow \cite{Hubbard-Sparrow-1990}. A function $f:[0,1]\to[0,1]$ is called a {\em Lorenz map} if there is a point $c\in(0,1)$ such that $f$ is continuous and strictly increasing on $[0,c)$ and $(c,1]$, $f(c-)=1$ and $f(c+)=0$. Such maps arise in the study of a geometric model of the Lorenz differential equations. The function $T_\beta$ for $\beta\in(1,2]$ is a Lorenz map, with $c=1/\beta$. For a point $x\in[0,1]$ that is not a pre-image of $c$, the {\em kneading sequence} of $x$ is the sequence $\mathbf{k}_f(x)=\ep_1\ep_2\dots\in\{0,1\}^\N$ given by
\[
\ep_n=\begin{cases}
0 & \mbox{if $f^{n-1}(x)<c$},\\
1 & \mbox{if $f^{n-1}(x)>c$}.
\end{cases}
\]
%{\color{red}[Do we need to replace $f^n(x)$ by $f^{n-1}(x)$ in the definition of $\ep_n$?]}
If $x$ is a pre-image of $c$, we define two kneading sequences $\mathbf{k}_f^+(x)$ and $\mathbf{k}_f^-(x)$ by
\[
\mathbf{k}_f^+(x):=\lim_{y\searrow x} \mathbf{k}_f(y), \qquad \mathbf{k}_f^-(x):=\lim_{y\nearrow x} \mathbf{k}_f(y),
\]
where $y$ runs through points which are not pre-images of $c$, and the limits are with respect to the product topology on $\{0,1\}^\N$. For $f=T_\beta$, $\mathbf{k}_f^+(x)$ is precisely the greedy expansion of $x$ in base $\beta$, whereas $\mathbf{k}_f^-(x)$ is the quasi-greedy expansion. The {\em kneading invariant} of a Lorenz map $f$ is the pair $(\a(f),\b(f))$ given by $\a(f)=\mathbf{k}_f^+(0)$ and $\b(f)=\mathbf{k}_f^-(1)$. For $f=T_\beta$, note that $\a(f)=0^\f$ because $T_\beta(0)=0$, whereas $\b(f)=\al(\beta)$.

Hubbard and Sparrow \cite{Hubbard-Sparrow-1990} consider {\em topologically expansive} Lorenz maps, a relatively weak notion of expansive which is equivalent to the set of pre-images of $c$ being dense in $[0,1]$. The map $T_\beta$ is topologically expansive, as is any Lorenz map with $C^1$ branches and derivative bounded above 1. Hubbard and Sparrow  prove that a pair $(\a,\b)$ of sequences in $\{0,1\}^\N$ is the kneading invariant of some topologically expansive Lorenz map if and only if
\begin{equation} \label{eq:kneading-inequalities}
\a\lle \si^n(\a)\prec\b \qquad\mbox{and} \qquad \a\prec\si^n(\b)\lle \b \qquad \forall\,n\geq 0.
\end{equation}
Moreover, the corresponding map $f$ is unique up to conjugacy. Now observe that, for any pair of sequences $(\a,\b)$ satisfying \eqref{eq:kneading-inequalities}, there is a parameter pair $(\beta,t)\in(1,2]\times[0,1)$ such that $\a=b(t,\beta)$ and $\b=\al(\beta)$. (This follows from Lemmas \ref{lem:quasi-greedy expansion-alpha-q} and \ref{lem:greedy-expansion} below.) By the second main result of \cite{Hubbard-Sparrow-1990}, the set of all (upper or lower) kneading sequences of points in $[0,1]$ for the map $f$ is then precisely $\Kt_\beta(t)$.

Vice versa, suppose $\a$ and $\b$ are two $0-1$ sequences beginning with $0$ and $1$, respectively, and consider the set $\Sigma_{\a,\b}$ from \eqref{eq:Sigma-ab}.
%\[
%\Sigma_{\a,\b}:=\{\z\in\{0,1\}^\N: \a\lle \si^n(\z)\lle \b\ \forall\,n\geq 0\}.
%\]
If $\Sigma_{\a,\b}\neq\emptyset$, then there is a pair of sequences $(\a',\b')$ such that $\Sigma_{\a,\b}=\Sigma_{\a',\b'}$ and
\begin{equation} \label{eq:LW}
\a'\lle \si^n(\a')\lle\b' \qquad\mbox{and} \qquad \a'\lle\si^n(\b')\lle \b' \qquad \forall\,n\geq 0.
\end{equation}
(Simply take $\a':=\min\Sigma_{\a,\b}$ and $\b':=\max\Sigma_{\a,\b}$.) Labarca and Moreira \cite{Labarca-Moreira-2006} call the set of pairs $(\a',\b')$ satisfying \eqref{eq:LW} the {\em lexicographical world}, and investigate it in detail. They show moreover that, if $\Sigma_{\a,\b}$ is uncountable, then there is a pair $(\tilde{\a},\tilde{\b})$ of sequences satisfying the kneading inequalities \eqref{eq:kneading-inequalities} such that $\Sigma_{\tilde{\a},\tilde{\b}}\subseteq\Sigma_{\a,\b}$ and $\Sigma_{\a,\b}\backslash\Sigma_{\tilde{\a},\tilde{\b}}$ is countable. Namely, let $\tilde{\a}$ be the smallest, and $\tilde{\b}$ be the largest condensation point of $\Sigma_{\a,\b}$; see \cite[p.~689]{Labarca-Moreira-2006}. (Recall that a point $x$ is a {\em condensation point} of a set $A$ in a topological space if every neighborhood of $x$ contains uncountably many points of $A$.) The sequences $\tilde{\a}$ and $\tilde{\b}$ are well defined and have the required properties because the set of condensation points of $\Sigma_{\a,\b}$, like $\Sigma_{\a,\b}$ itself, is closed and invariant under $\si$. While this characterization of $\tilde{\a}$ and $\tilde{\b}$ is somewhat abstract, it is also not too difficult to devise an algorithm for their construction.

From these observations, we conclude that
\begin{enumerate}[(i)]
\item For each pair $(\a,\b)$ such that $\a$ begins with $0$ and $\b$ begins with $1$, there is a parameter pair $(\beta,t)$ such that $\Kt_\beta(t)\subseteq\Sigma_{\a,\b}$ and $\Sigma_{\a,\b}\backslash \Kt_\beta(t)$ is countable;
\item For each pair $(\beta,t)$ such that $\Kt_\beta(t)$ is uncountable, there is a parameter pair $(\beta',t')$ such that $(\a,\b):=(b(t,\beta),\al(\beta))$ satisfies \eqref{eq:kneading-inequalities},
\[
\Kt_{\beta'}(t')\subseteq \Kt_\beta(t), \qquad\mbox{and} \qquad \Kt_\beta(t)\backslash\Kt_{\beta'}(t')\ \mbox{is countable}.
\]
\end{enumerate}
Thus, loosely speaking, each $\Sigma_{\a,\b}$ is almost realized as $\Kt_\beta(t)$ for some pair $(\beta,t)$, and each set $\Kt_\beta(t)$ is almost  the set of kneading sequences for some topologically expansive Lorenz map, where ``almost" here means, up to a countable set.

Labarca and Moreira \cite{Labarca-Moreira-2006} give a further analysis of the sets $\Sigma_{\a,\b}$. For instance, they (implicitly) determine for a given sequence $\a$ the critical $\b$ at which $\Sigma_{\a,\b}$ becomes uncountable, and show that when $\Sigma_{\a,\b}$ is uncountable, it has positive entropy and hence positive Hausdorff dimension. {This implies that $K_\beta(\tau_\beta)$ is at most countable, which was implicitly proven in \cite{Allaart-Kong-2022}.}  More recently, their results have been generalized by Komornik, Steiner and Zou \cite{Komornik-Steiner-Zou-2022}. Both of these works are also very closely related to the recent paper \cite{Allaart-Kong-2021} by the present authors.

\subsection{Connection with the doubling map with a general hole}

In 2015, Glendinning and Sidorov \cite{Glendinning-Sidorov-2015} considered the doubling map $T_2$ on the circle $[0,1)$ with an arbitrary hole $(a,b)$, and determined for which pairs $(a,b)$ with $0\leq a<b<1$ the survivor set
\[
K_2(a,b):=\{x\in[0,1): T_2^n(x)\not\in(a,b)\ \forall\,n\geq 0\}
\]
is uncountable. Replacing the numbers $a$ and $b$ with their greedy binary expansions $\a$ and $\b$, respectively, the set $K_2(a,b)$ can be identified with
\[
\Omega_{\a,\b}:=\{\z\in\{0,1\}^\N: \si^n(\z)\lle \a\ \mbox{or}\ \si^n(\z)\lge \b\ \forall\,n\geq 0\}.
\]
Since it is easy to see that $\Omega_{\a,\b}$ is uncountable when $\a$ and $\b$ have the same first digit, Glendinning and Sidorov focused on the case when $\a$ begins with $0$ and $\b$ begins with $1$, say $\a=0\a'$ and $\b=1\b'$. It can further be assumed that $\a'$ and $\b'$ begin with $1$ and $0$, respectively; {otherwise, either the word $01$ or the word $10$ cannot occur in $\Omega_{\a,\b}$ and so $\Omega_{\a,\b}$ is countable.}
% {\bf since else $\Omega_{\a,\b}$ consists of only the two points $0^\f$ and $1^\f$.} {\color{red}[This is not accurate. For example, let $\a=00a_3a_4\ldots, \b=10^\f$. Then $\Omega_{\a,\b}=\set{1^n 0^\f: n\ge 0}$.]} 
Now for such $\a'$ and $\b'$, we have the relationship
\begin{equation} \label{eq:Omega-and-Sigma}
\Omega_{0\a',1\b'}=\bigcup_{n=0}^\f 0^n\Sigma_{\b',\a'}\cup \bigcup_{n=0}^\f 1^n\Sigma_{\b',\a'}\cup\{0^\f,1^\f\}
\end{equation}
(see \cite[Theorem 2.5 (vi)]{Komornik-Steiner-Zou-2022}). Hence $\Omega_{\a,\b}$ has the same cardinality and entropy as $\Sigma_{\b',\a'}$, except in trivial cases where the latter is finite and the former countably infinite. We can thus relate the set $\Omega_{\a,\b}$ to a symbolic survivor set $\Kt_\beta(t)$ for a suitable pair $(\beta,t)$, as explained in the previous subsection.

\subsection{Organization of the paper}

After developing some notation and conventions, Section \ref{sec:prelim} introduces the main building blocks of this article: Farey words and substitutions on such words. Associated with these concepts are several important sets, not least of which is the set $E$ of bases $\beta$ for which $\al(\beta)$ is {\em Sturmian}, i.e. aperiodic and balanced. Here we also define three collections of intervals: {\em basic intervals}, {\em Farey intervals}, and (higher order) {\em Lyndon intervals} arising from the substitutions on Farey words. We see that the interval $(1,2]$ is decomposed into several pieces, namely the set $E$; finitely renormalizable $\beta$'s; infinitely renormalizable $\beta$'s; and the basic intervals.

In Section \ref{sec:E_beta} we state and prove a useful characterization of the bifurcation set $\EE_\beta$. Section \ref{sec:key} then outlines the main ideas of the proofs of our results; we state a more technical theorem whose proof takes up much of the rest of the paper, and use it to prove Theorem \ref{thm:main}.

Section \ref{sec:beta-in-E} deals with the simplest case, when $\beta$ lies in the above-mentioned set $E$. (In fact we include here also the left endpoints of first-order basic intervals.) We show that here the subshift $\Kt_\beta(t_R)$ is transitive for every $\beta$-Lyndon interval $[t_L,t_R]$, and that the $\beta$-Lyndon intervals are dense in $[0,1-1/\beta]$. Section \ref{sec:right-endpoints} then proves the same facts for the right endpoints of first-order basic intervals, which require a somewhat separate argument but behave much more like points in $E$ than like points in the interiors of basic intervals.

Section \ref{sec:relative-exceptional} treats the case of finitely renormalizable $\beta$; that is, those $\beta$'s that can be renormalized to a point of $E$ in a finite number of steps. In this case we show inductively that $\Kt_\beta(t_R)$ is transitive only for $t_R$ below a certain threshold, but for $t_R$ above this threshold there still exists a transitive subshift of full entropy containing the sequence $b(t_R,\beta)$. We show furthermore that the $\beta$-Lyndon intervals remain dense in this case.

Section \ref{sec:comparison} considers the cardinality of $\EE_\beta\backslash\BB_\beta$ when $\beta$ is an endpoint of a basic interval or one of the special Lyndon intervals; the main result here is a crucial element of the proof of Theorem \ref{thm:size-of-E-minus-B}.

Section \ref{sec:E-infinity} deals with the infinitely renormalizable case; the results are much like those in Section \ref{sec:relative-exceptional}, but the induction argument proceeds slightly differently.

Sections \ref{sec:basic-interiors}, \ref{sec:basic-interval-proof} and \ref{sec:higher-order-basic}, the most complex of the paper, deal with the interiors of basic intervals. Section \ref{sec:basic-interiors} introduces a collection of intervals of $t$-values called {\em non-transitivity windows}, inside of which $\Kt_\beta(t_R)$ is not transitive and does not contain a transitive subshift of full entropy containing the sequence $b(t_R,\beta)$. These intervals are generated by a sequence $(\v_k)$ of Lyndon words which are extracted from the sequence $\al(\beta)$. Most of this section is devoted to proving several necessary facts about these words. Section \ref{sec:basic-interval-proof} then proves the main result, which is that (for $\beta$ in a first-order basic interval), $\Kt_\beta(t_R)$ is transitive if and only if $t_R$ does not lie in any non-transitivity window, and the entropy of $\Kt_\beta(t)$ is constant in each non-transitivity window. Section \ref{sec:higher-order-basic} then extends this result to the higher order basic intervals, by combining the methods of the previous two sections with the renormalization technique from Section \ref{sec:relative-exceptional}.

Section \ref{sec:gaps} gives a brief treatment of gaps between $\beta$-Lyndon intervals; its main purpose is to show that for certain values of $\beta$, the difference $\EE_\beta\backslash\BB_\beta$ is countably infinite, thus establishing part of Theorem \ref{thm:size-of-E-minus-B}.

Section \ref{sec:EBLI} develops several properties of extended $\beta$-Lyndon intervals (EBLIs), and proves Theorem \ref{thm:entropy-plateaus} and Corollary \ref{cor:symbolic-plateaus}. Finally, Section \ref{sec:other-proofs} gives the proofs of the remaining theorems.

\subsection{List of notation}

\begin{itemize}
\item $T_\beta(x):=\beta x\!\!\mod 1$ is the $\beta$-transformation on $[0,1)$,
\item $K_\beta(t):=\{x\in[0,1): T_\beta^n(x)\geq t\ \forall n\geq 0\}$,
\item {$M_\beta:=\lceil\beta\rceil-1$},
\item {$A_\beta:=\{0,1,\dots,M_\beta\}$},
\item $\pi_\beta((c_i)):=\sum_{i=1}^\f c_i\beta^{-i}, \quad (c_i)\in{A_\beta}^\N$,
\item $\al(\beta)$ is the quasi-greedy expansion of $1$ in base $\beta$,
\item $b(t,\beta)$ is the greedy expansion of $t$ in base $\beta$,
\item $\si:{A_\beta}^\N\to{A_\beta}^\N$ is the left shift map,
\item $\Sigma_\beta:=\{\z\in{A_\beta}^\N: \sigma^n(\z)\lle \alpha(\beta)\ \forall n\geq 0\}$ is the $\beta$-shift,
\item $\Sigma_{\a,\b}:=\{\z\in{A_\beta}^\N: \a\lle\si^n(\z)\lle \b\ \forall n\geq 0\}$,
\item $\mathcal{K}_\beta(t):=\{\z\in{A_\beta}^\N: b(t,\beta)\lle \sigma^n(\z)\prec \alpha(\beta)\ \forall n\geq 0\}$,
\item $\Kt_\beta(t):=\{\z\in{A_\beta}^\N: b(t,\beta)\lle \sigma^n(\z)\lle \alpha(\beta)\ \forall n\geq 0\}$,
\item $h(\mathcal{X})$ is the topological entropy of the subshift $\mathcal{X}$,
\item $\cL(\mathcal{X})$ is the language of the subshift $\XX$; i.e. the set of all finite words that occur in some sequence in $\XX$,
\item $\EE_\beta:=\{t\in[0,1): K_\beta(t')\neq K_\beta(t)\ \forall\,t'>t\}$,
\item $\EE_\beta^+:=\{t\in[0,1): T_\beta^n(t)\geq t\ \forall n\geq 0\}=\EE_\beta$,
\item $\BB_\beta:=\set{t\ge 0: \dim_H K_\beta(t')<\dim_H K_\beta(t)~\forall t'>t}$,
\item $\F$ is the set of all Farey words of length at least 2,
\item {$F_e$ is the set of all extended Farey words},
\item $L^*$ is the set of all Lyndon words {in $\{0,1\}^*$} of length at least 2,
\item {$L_e$ is the set of all Lyndon words in $\N_0^*$ except $0$,}
%\item $L^*(\beta)$ is the set of all $\beta$-Lyndon words of length at least 2
\item $\L(\s)$ is the lexicographically largest cyclic permutation of $\s$,
\item $w_1\dots w_m^+:=w_1\dots w_{m-1}(w_m+1)$ if {$w_m<M_\beta$},
\item $w_1\dots w_m^-:=w_1\dots w_{m-1}(w_m-1)$ if {$w_m\geq 1$},
\item $\beta_\ell^{\s}$ is the base such that $\alpha(\beta_\ell^{\s})=\L(\s)^\f$,
\item $\beta_r^{\s}$ is the base such that $\alpha(\beta_r^{\s})=\L(\s)^+\s^\f$,
\item $\beta_*^{\s}$ is the base such that $\alpha(\beta_*^{\s})=\L(\s)^+\s^-\L(\s)^\f$,
\item $E:={(1,\f)}\backslash \bigcup_{s\in\F} [\beta_\ell^{\s},\beta_r^{\s}]$,
\item $E_L:={(1,\f)}\backslash \bigcup_{s\in\F} (\beta_\ell^{\s},\beta_r^{\s}]$,
\item $U_0$ is the substitution $0\mapsto 0$, $1\mapsto 01$,
\item $U_1$ is the substitution $0\mapsto 01$, $1\mapsto 1$,
\item $\tau(\beta):=\min\{t>0: \dim_H K_\beta(t)=0\}$,
\item $\mathcal{T}_R(\beta)$ is the set of all right endpoints $t_R$ of $\beta$-Lyndon intervals $[t_L,t_R]$ in $[0,\tau(\beta)]$,
\item $\s\bullet\r:=\Phi_\s(\r)$ is the substitution defined in \eqref{eq:substitution},
\item $\La_k:=\set{\cs=\s_1\bullet\s_2\bullet\cdots\bullet\s_k:\ {\s_1\in F_e, \ \s_i\in\F\textrm{ for any }2\le i\le k}}$,
\item $\La:=\bigcup_{k=1}^\f \La_k$,
\item $I^\cs=[\beta_\ell^\cs,\beta_*^\cs]$ is the basic interval generated by $\cs\in\La$,
\item $J^\cs=[\beta_\ell^{\cs},\beta_r^{\cs}]$ is the Lyndon interval generated by $\cs\in\La$,
\item $\Ga(\cs):=\{\z\in{A_\beta}^\N: \cs^\f\lle \sigma^n(\z)\lle \L(\cs)^\f\ \ \forall n\geq 0\}$.
\end{itemize}

\section{Preliminaries} \label{sec:prelim}

First we introduce some notation and conventions. By a \emph{word} we mean a finite string of {elements of $A_\beta$}. Let {$A_\beta^*$} be the set of all words over the alphabet {$A_\beta$} together with the empty word $\epsilon$. For a word $\c\in{A_\beta}^*$ we denote its length by $|\c|$, and for a digit $a\in{A_\beta}$ we denote by $|\c|_a$ the number of occurrences of $a$ in the word $\c$. The concatenation of two words $\mathbf c=c_1\ldots c_m$ and $\mathbf d=d_1\ldots d_n$ in ${A_\beta}^*$ is denoted by $\mathbf{cd}=c_1\ldots c_md_1\ldots d_n$. Similarly, $\mathbf c^n$ denotes the $n$-fold concatenation of $\mathbf c$ with itself, and $\mathbf c^\f$ denotes the periodic sequence with period block $\mathbf c$. If $\mathbf c=c_1\ldots c_m$ with ${c_m<M_\beta}$, then we define $\mathbf c^+:=c_1\ldots c_{m-1}(c_m+1)$; and if $\mathbf c=c_1\ldots c_m$ with $c_m\geq 1$, we set $\mathbf c^-:=c_1\ldots c_{m-1}{(c_m-1)}$.

Throughout the paper we will use the lexicographical order `$\prec, \lle, \succ$' or `$\lge$' between sequences and words. For example, for two sequences $(c_i), (d_i)\in{A_\beta}^\N$, we say $(c_i)\prec (d_i)$ if $c_1<d_1$, or there exists $n\in\N$ such that $c_1\ldots c_n=d_1\ldots d_n$ and $c_{n+1}<d_{n+1}$. For two words $\mathbf c, \mathbf d$, we say $\mathbf c\prec \mathbf d$ if $\mathbf c {M_\beta}^\f\prec \mathbf d 0^\f$. Finally, for a word $\mathbf{c}$ and an infinite sequence $\mathbf{d}$, we say $\mathbf c\prec \mathbf d$ if $\mathbf c {M_\beta}^\f\prec \mathbf d$, and define $\mathbf c\succ \mathbf d$ similarly. Thoughout this paper, when discussing words and sequences, adjectives like {\em smallest}, {\em greater}, etc. will always mean smallest, greater, etc. with respect to the lexicographical order.

Recall that $\alpha(\beta)$ is the quasi-greedy expansion of $1$ in base $\beta$. The following useful result is well known (cf.~\cite{Baiocchi_Komornik_2007}).

\begin{lemma} \label{lem:quasi-greedy expansion-alpha-q}
{Let $k\in\N$. The restriction of the map $\beta\mapsto\al(\beta)$ to $(k,k+1]$ is an increasing bijection from $\beta\in (k,k+1]$ to the set of sequences $\a=\al_1\al_2\dots\in\set{0,1,\dots,k}^\N$ such that $\al_1=k$ and}
%The map $\beta\mapsto \al(\beta)$ is strictly increasing and bijective from $(1,2]$ to the set of sequences $(\al_i)\in\{0,1\}^\N$ satisfying
\[
0^\f\prec\si^n((\al_i))\lle (\al_i)\quad\forall n\ge 1.
\]
\end{lemma}

We recall the following characterization of greedy $\beta$-expansions due to Parry \cite{Parry_1960}.

\begin{lemma} \label{lem:greedy-expansion}
Let $\beta{>1}$. The map $t\mapsto b(t,\beta)$ is an increasing bijection from $[0,1)$ to
\[
\big\{\z\in {A_\beta}^\N: \si^n(\z)\prec \al(\beta)~\forall n\ge 0\big\}.
\]
Furthermore, the map $t\mapsto b(t,\beta)$ is right-continuous everywhere in $[0,1)$ with respect to the order topology in ${A_\beta}^\N$.
\end{lemma}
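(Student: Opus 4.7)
The map is implemented by the greedy algorithm $b_i(t,\beta)=\lfloor\beta T_\beta^{i-1}(t)\rfloor$, which produces digits in $\{0,1\}$ because $T_\beta^{i-1}(t)\in[0,1)$ and $\beta\le 2$, and satisfies $\pi_\beta(b(t,\beta))=t$ by construction. I plan to verify, in order: (a) strict monotonicity (whence injectivity), (b) that the image is contained in the Parry-admissible set, (c) surjectivity onto it, and (d) right-continuity.

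For (a) and (b), since $b(s,\beta)$ is by definition the lexicographically largest $\beta$-expansion of $s$ and $\pi_\beta$ is weakly order-preserving, $s<t$ combined with $b(s,\beta)\succcurlyeq b(t,\beta)$ would yield $s\ge t$, a contradiction, so the map is strictly increasing. A direct check of the greedy algorithm gives $\sigma(b(t,\beta))=b(T_\beta(t),\beta)$, so the Parry condition $\sigma^n(b(t,\beta))\prec\alpha(\beta)$ for all $n\ge 0$ reduces to $b(s,\beta)\prec\alpha(\beta)$ for every $s\in[0,1)$; and this is immediate because $b(s,\beta)\succcurlyeq\alpha(\beta)$ would give $s=\pi_\beta(b(s,\beta))\ge\pi_\beta(\alpha(\beta))=1$.

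For (c), given $\z=(d_i)$ with $\sigma^n(\z)\prec\alpha(\beta)$ for every $n\ge 0$, I set $t:=\pi_\beta(\z)$ and plan to recover $\z$ from $t$ via the greedy algorithm, simultaneously proving $t\in[0,1)$ and $\z=b(t,\beta)$. Monotonicity of $\pi_\beta$ yields $\pi_\beta(\sigma^n(\z))\le 1$ for all $n$. The technical heart of the proof is to upgrade this to the strict inequality $\pi_\beta(\sigma^n(\z))<1$: if equality held for some $n$, then $\sigma^n(\z)$ would be a $\beta$-expansion of $1$ lying strictly below $\alpha(\beta)$, and combining this with the Parry condition on further shifts and with Lemma \ref{lem:quasi-greedy expansion-alpha-q} (which characterises $\alpha(\beta)$ as the maximal admissible sequence) yields a contradiction, either by locating the last nonzero digit of $\sigma^n(\z)$ and comparing it with the corresponding digit of $\alpha(\beta)$ in the terminating case, or by iterating the shift until one exceeds $\alpha(\beta)$ in the non-terminating case. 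Once $\pi_\beta(\sigma^n(\z))<1$ for every $n$, the identity $\beta\pi_\beta(\sigma^n(\z))=d_{n+1}+\pi_\beta(\sigma^{n+1}(\z))\in[d_{n+1},d_{n+1}+1)$ forces $d_{n+1}=\lfloor\beta T_\beta^n(t)\rfloor$ by induction, proving $\z=b(t,\beta)$ and $t<1$. This strict-inequality step is the main obstacle.

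For (d), if $t_k\searrow t$ in $[0,1)$, strict monotonicity produces a lexicographically decreasing sequence $b(t_k,\beta)$ converging coordinatewise to some $\y$, and continuity of $\pi_\beta$ in the product topology gives $\pi_\beta(\y)=t$. Each inequality $\sigma^n(b(t_k,\beta))\prec\alpha(\beta)$ passes to the limit to give $\sigma^n(\y)\preccurlyeq\alpha(\beta)$ by closure; equality $\sigma^n(\y)=\alpha(\beta)$ would force each $b(t_k,\beta)$ to differ from $\y$ in one of its first $n$ coordinates (otherwise one would have $\sigma^n(b(t_k,\beta))\succcurlyeq\alpha(\beta)$, contradicting (b)), which is incompatible with coordinatewise convergence for large $k$. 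Hence $\y$ lies in the Parry-admissible set, and by (c) we conclude $\y=b(t,\beta)$, which is exactly right-continuity with respect to the order topology on $\{0,1\}^\N$.
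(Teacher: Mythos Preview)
The paper does not give a proof of this lemma; it simply states the result and cites Parry~\cite{Parry_1960}. Your outline follows the classical route, but there is a recurring gap in parts (a), (b), and (c): you assert that ``$\pi_\beta$ is weakly order-preserving'' on $\{0,1\}^{\N}$, and this is false for every $\beta\in(1,2)$. For example, with $\beta=3/2$ one has $01^\infty\prec 10^\infty$ lexicographically, yet $\pi_\beta(01^\infty)=4/3>2/3=\pi_\beta(10^\infty)$. Monotonicity of $\pi_\beta$ does hold on the set of Parry-admissible sequences, but that is exactly the conclusion you are trying to reach, so the appeals to it in (a), in (b), and especially in the line ``Monotonicity of $\pi_\beta$ yields $\pi_\beta(\sigma^n(\z))\le 1$'' at the start of (c) are circular.

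The repair for (a) and (b) is to argue directly from the defining property of the greedy algorithm: $b_n(t,\beta)=1$ if and only if $\sum_{i<n}b_i(t,\beta)\beta^{-i}+\beta^{-n}\le t$. If $s<t$ and the expansions first differ at position $k$ with $b_k(s,\beta)=1>0=b_k(t,\beta)$, the greedy rule for $t$ gives $\sum_{i<k}b_i(t,\beta)\beta^{-i}+\beta^{-k}>t$, while the identical partial sum is $\le s$ by construction of $b(s,\beta)$; this contradicts $s<t$. The same style of argument handles (b). For (c), the inequality $\pi_\beta(\z)\le 1$ whenever $\sigma^n(\z)\lle\alpha(\beta)$ for all $n$ is the R\'enyi--Parry lemma and needs a genuine inductive argument (for instance, decomposing $z_1\cdots z_N$ into maximal blocks agreeing with prefixes of $\alpha(\beta)$ and bounding the contribution of each), not a one-line appeal to monotonicity. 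Once that inequality is in hand, your upgrade to strict inequality and the remainder of (c) and (d) go through as written.
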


\subsection{Farey words, Farey intervals and the set $E$}

 Farey words have attracted much attention in the literature due to their intimate connection with rational rotations on the circle (see \cite[Chapter 2]{Lothaire-2002}) and their one-to-one correspondence with the rational numbers in $[0,1]$ (see {(\ref{eq:kk-6})} below). In the following we adopt the definition from \cite{Carminati-Isola-Tiozzo-2018}.

 First we recursively define a sequence of ordered sets $F_n, n=0,1,2,\ldots$. Let $F_0=(0,1)$; and for $n\ge 0$ the ordered set $F_{n+1}=(\v_1, \ldots, \v_{2^{n+1}+1})$ is obtained from $F_{n}=(\w_1,\ldots, \w_{2^n+1})$ by
 \[
 \left\{
 \begin{array}
   {lll}
   \v_{2i-1}=\w_i&\textrm{for}& 1\le i\le 2^{n}+1,\\
   \v_{2i}=\w_i\w_{i+1}&\textrm{for}& 1\le i\le 2^n.
 \end{array}\right.
 \]
In other words, $F_{n+1}$ is obtained from $F_n$ by inserting for each $1\le j\le 2^{n}$ the new word $\w_j\w_{j+1}$ between the two neighboring words $\w_j$ and $\w_{j+1}$. So,
 \begin{equation*} %\label{eq:farey-1}
 \begin{split}
 &F_1=(0,01,1),\qquad F_2=(0,001,01,011,1),\\
  F_3&=(0,0001,001,00101,01,01011,011,0111,1), \quad \dots
 \end{split}
 \end{equation*}
Note that for each $n\ge 0$ the ordered set $F_n$ consists of $2^n+1$ words which are listed from the left to the right in lexicographically increasing order.
We call $\w\in\set{0,1}^*$ a \emph{Farey word} if $\w\in F_n$ for some $n\ge 0$. We denote by $F:=\bigcup_{n=1}^\f F_n$ the set of all Farey words, and by $\F:=F\backslash\{0,1\}$ the set of all Farey words of length at least two. As shown in \cite[Proposition 2.3]{Carminati-Isola-Tiozzo-2018}, the set $F$ can be bijectively mapped to $\mathbb{Q}\cap[0,1]$ via the map
\begin{equation} \label{eq:kk-6}
\xi: F\to\mathbb Q\cap[0,1];\quad \s\mapsto\frac{|\s|_1}{|\s|}.
\end{equation}
%where $|\s|$ is the length of $\s$, and $|\s|_1$ denotes the number of digit $1$ in $\s$.
So, $\xi(\s)$ is the frequency of the digit $1$ in $\s$.

%Let $\F:=F\backslash\{0,1\}$ be the set of all Farey words of length at least two.
%For each $n\ge 1$ set $F_n^*:=F_n\setminus\set{0,1}$, and
% \[
% F_n^0:=\set{\w\in F_n^*: |\w|_0>|\w|_1},\quad F_n^1:=\set{\w\in F_n^*: |\w|_1>|\w|_0}.
% \]
% For example, $F_1^*=(01), F_2^*=(001, 01, 011)$, and $F_2^0=(001), F_2^1=(011)$.
% The following decomposition can be deduced from \cite[Proposition 2.3]{Carminati-Isola-Tiozzo-2018}.

% \begin{lemma} \label{lem:farey-Fn}
%   For any $n\ge 2$ we have $F_n^*=F_n^0\cup F_1^*\cup F_n^1$.
% \end{lemma}

 The Farey words can also be obtained recursively via the substitutions
 \begin{equation} \label{eq:subs-01}
 U_0:\left\{\begin{array}{lll}
 0&\mapsto&0\\
 1&\mapsto&01,
 \end{array}\right.\quad\textrm{and}\quad U_1:\left\{
 \begin{array}{lll}
 0&\mapsto&01\\
 1&\mapsto&1.
 \end{array}\right.
 \end{equation}
We extend the definitions of $U_0$ and $U_1$ to $\set{0,1}^*$ and $\set{0,1}^\N$ via homomorphism; that is, $U_0(c_1\dots c_m)=U_0(c_1)\dots U_0(c_m)$, etc. Observe that the maps $U_0$ and $U_1$, when viewed as functions on $\set{0,1}^\N$, are both strictly increasing with respect to the lexicographical order.
%The following result was proven in \cite[Proposition 2.9]{Carminati-Isola-Tiozzo-2018}.

%\begin{lemma}  \label{lem:Farey-substitution}
%   For each $a\in\set{0,1}$ the map $U_a: F_n^*\to F_{n+1}^a$ is bijective.
% \end{lemma}

The following lemma can be deduced from \cite[Propositions 2.3 and 2.9]{Carminati-Isola-Tiozzo-2018}. (See \cite[Section 2]{Allaart-Kong-2021} for more details.)

\begin{lemma} \label{lem:characterization-Farey-words}
   Let $\s\in\F$; then one of the following holds:
   \begin{itemize}
    \item[{\rm(i)}] $\s=01$;
		\item[{\rm(ii)}] $\s=U_0(\hat{\s})$ for some Farey word $\hat{\s}\in\F$;
    \item[{\rm(iii)}] $\s=U_1(\hat{\s})$ for some Farey word $\hat{\s}\in\F$.
	\end{itemize}
	Vice versa, if $\s$ is a Farey word, then both $U_0(\s)$ and $U_1(\s)$ are Farey.
\end{lemma}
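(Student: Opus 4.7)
The plan is to reduce both directions of the lemma to the single identity
\[
F_{n+1} = U_0(F_n) \cup U_1(F_n) \qquad (n \geq 0),
\]
where the union is interpreted as an ordered concatenation in which the final element of $U_0(F_n)$ (namely $U_0(1)=01$) coincides with the first element of $U_1(F_n)$ (namely $U_1(0)=01$). Once this self-similarity identity is in hand, both conclusions of the lemma will drop out easily.

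I would prove the identity by induction on $n$. The base case $n=0$ is a direct check, since $U_0((0,1))=(0,01)$ and $U_1((0,1))=(01,1)$, whose ordered union is $(0,01,1)=F_1$. For the inductive step, I would use the recursive construction of $F_{n+1}$ from $F_n$: its elements are either already in $F_n$, or of the form $\v_j\v_{j+1}$ for consecutive $\v_j,\v_{j+1}\in F_n$. By the inductive hypothesis, each $\v_j$ equals $U_0(\hat{\u})$ or $U_1(\hat{\u})$ for some $\hat{\u}\in F_{n-1}\subseteq F_n$, so the old elements of $F_n$ already lie in $U_0(F_n)\cup U_1(F_n)$. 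For each newly inserted concatenation, one splits into cases depending on whether the pair $(\v_j,\v_{j+1})$ lies entirely in the $U_0$-half of $F_n$, entirely in the $U_1$-half, or straddles the midpoint $01$. Using that $U_0$ and $U_1$ are monoid morphisms, in every case $\v_j\v_{j+1}$ rewrites as $U_0(\w)$ or $U_1(\w)$, where $\w$ is precisely the element inserted between the corresponding $\hat{\u},\hat{\u}'$ when constructing $F_n$ from $F_{n-1}$, so $\w\in F_n$. A cardinality count ($|U_0(F_n)\cup U_1(F_n)|=2^{n+1}+1=|F_{n+1}|$), together with the strict monotonicity of $U_0$ and $U_1$ on $\{0,1\}^*$ noted in the excerpt, then promotes the inclusion to equality of ordered sequences.

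With the key identity established, the forward direction follows quickly: given $\s\in\F$, take $n$ minimal with $\s\in F_n$. If $n=1$ then $\s=01$, giving (i). If $n\geq 2$ the identity yields $\s=U_0(\hat{\s})$ or $\s=U_1(\hat{\s})$ for some $\hat{\s}\in F_{n-1}$; the possibilities $\hat{\s}\in\{0,1\}$ would force $\s\in\{0,01,1\}$, contradicting the minimality of $n$ together with $\s\neq 01$, so $\hat{\s}\in\F$, yielding (ii) or (iii). The converse direction is immediate: if $\s\in F_n$ then $U_0(\s),U_1(\s)\in U_0(F_n)\cup U_1(F_n)=F_{n+1}\subseteq F$, and since $|U_i(\s)|\geq 2$ we even get membership in $\F$.

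The main obstacle I anticipate is the bookkeeping in the inductive case analysis, specifically matching each inserted element $\v_j\v_{j+1}$ of $F_{n+1}$ with the correct $U_i(\w)$ for an inserted element $\w$ of $F_n$, so that the identification is exact rather than just up to a permutation. This requires carefully tracking how the recursive insertion of mediants in the Farey construction commutes with the morphisms $U_0$ and $U_1$, and is the one place where the strict monotonicity of $U_0$ and $U_1$ is used in an essential way.
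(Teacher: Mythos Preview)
Your approach is correct and, in fact, more informative than what the paper does: the paper gives no proof at all, simply citing \cite[Propositions~2.3 and~2.9]{Carminati-Isola-Tiozzo-2018} and \cite[Section~2]{Allaart-Kong-2021}. Your self-contained argument via the structural identity $F_{n+1}=U_0(F_n)\cup U_1(F_n)$ is the natural one, and the induction you outline goes through. One small simplification: the ``straddling'' case you worry about does not really occur, since the middle element $01$ of $F_n$ is simultaneously $U_0(1)$ and $U_1(0)$, so every consecutive pair $(\w_j,\w_{j+1})$ in $F_n$ lies entirely in $U_0(F_{n-1})$ or entirely in $U_1(F_{n-1})$; the morphism property then immediately gives $\w_j\w_{j+1}=U_i(\hat{\u}_j\hat{\u}_{j+1})$ with $\hat{\u}_j,\hat{\u}_{j+1}$ consecutive in $F_{n-1}$. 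One minor caution: the paper's $\prec$ on finite words (via $\c1^\f\prec\d0^\f$) does not make $0$ and $01$ comparable, so when you invoke ``strict monotonicity'' on $\{0,1\}^*$ you should either work with the standard prefix-sensitive lexicographic order, or---cleaner---replace the monotonicity appeal by the injectivity of $U_0,U_1$ together with the observation that $U_0(F_n)\cap U_1(F_n)=\{01\}$ (since images under $U_0$ avoid $11$ and images under $U_1$ avoid $00$), which is all the cardinality count needs.
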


Observe that for any word $\w$ which begins with $0$ and ends with $1$, both $U_0(\w)$ and $U_1(\w)$ begin with $0$ and end with $1$. Furthermore, for any word $\w$, the word $U_0(\w)$ does not contain $11$ as a subword, and $U_1(\w)$ does not contain $00$.

For a word $\mathbf c=c_1\ldots c_m\in\set{0,1}^*$, let $\S(\mathbf c)$ and $\L(\mathbf c)$ be the smallest and largest cyclic permutations of $\mathbf c$, respectively; that is, the smallest and largest words among
\[
c_1c_2\ldots c_m, \quad c_2\ldots c_m c_1,\quad c_3\ldots c_m c_1c_2,\quad \cdots,\quad c_mc_1\ldots c_{m-1}.
\]
%and $\L(\mathbf c)$ is the largest word in the above list.
The following properties of Farey words are well known (see, e.g., \cite[Proposition 2.5]{Carminati-Isola-Tiozzo-2018}).

\begin{lemma}  \label{lem:Farey-property}
Let $\s=s_1\ldots s_m\in\F$. Then
\begin{enumerate}[{\rm (i)}]
\item $\S(\s)=\s$ and $\L(\s)=s_ms_{m-1}\ldots s_1$.
\item ${\s^-}$ is a palindrome; that is, $s_1\ldots s_{m-1}(s_m-1)=(s_m-1)s_{m-1}s_{m-2}\ldots s_1$.
\end{enumerate}
\end{lemma}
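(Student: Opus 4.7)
The plan is to prove both parts of the lemma simultaneously by induction on $|\s|$, using the recursive description of $\F$ from Lemma \ref{lem:characterization-Farey-words}. In the base case $\s = 01$, the only cyclic permutations are $01$ and $10$, so $\S(\s) = 01 = \s$ and $\L(\s) = 10 = s_2 s_1$, while $\s^- = 00$ is trivially a palindrome.

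For the inductive step, assume both parts hold for every Farey word of length less than $|\s|$, and write $\s = U_\epsilon(\hat\s)$ with $\hat\s \in \F$ and $\epsilon \in \{0,1\}$ by Lemma \ref{lem:characterization-Farey-words}. Setting $\hat\s = c_1 c_2 \cdots c_n$, this yields the block factorization $\s = U_\epsilon(c_1) U_\epsilon(c_2) \cdots U_\epsilon(c_n)$. Two structural facts drive the proof: $\s$ begins with $0$ and ends with $1$ (since $\hat\s$ does), and $U_0(\hat\s)$ contains no occurrence of $11$ while $U_1(\hat\s)$ contains no occurrence of $00$, as observed right after Lemma \ref{lem:characterization-Farey-words}.

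I would first prove part (ii). By the inductive hypothesis, $\hat\s^-$ is a palindrome, which gives $c_i = c_{n+1-i}$ for the appropriate range of $i$. A direct block-by-block computation expresses the reverse of $\s$ as the concatenation of the reversed blocks in the opposite order; a careful accounting of boundary bits (using that $U_0(0) = 0$ is self-reverse while $U_0(1) = 01$ reverses to $10$, together with the analogous facts for $U_1$) reduces the palindrome check for $\s^-$ to the palindrome property of $\hat\s^-$. For part (i), I would establish $\S(\s) = \s$ by comparing an arbitrary cyclic permutation of $\s$ with $\s$ itself: rotations that cut across a block of the factorization open with a pattern ruled out by the no-$11$/no-$00$ property, hence lie strictly lex-above $\s$, while block-aligned rotations correspond bijectively to cyclic permutations of $\hat\s$ and satisfy $\lge \s$ by the inductive hypothesis together with the strict order-preservation of $U_\epsilon$. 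For the identity $\L(\s) = s_m \cdots s_1$, I would prove in parallel (and using part (ii)) that each Farey word admits a decomposition $\s = \mathbf{p}\mathbf{q}$ with $\mathbf{p}, \mathbf{q}$ nonempty palindromes, propagating this structure through $U_\epsilon$ by block bookkeeping; this immediately exhibits $s_m \cdots s_1 = \tilde{\mathbf{q}}\,\tilde{\mathbf{p}} = \mathbf{q}\mathbf{p}$ as a cyclic rotation of $\s = \mathbf{p}\mathbf{q}$, and a symmetric lex argument then rules out every block-cutting rotation as lex-smaller than this reverse, giving the claimed maximality.

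The principal obstacle is the mismatch between cyclic permutations of $\s$ and those of $\hat\s$: mid-block rotations of $\s$ do not descend to rotations of $\hat\s$, so they must be handled directly by lex-comparison, which is feasible only because of the rigid no-$11$/no-$00$ structure of the substitutions. A secondary difficulty is propagating the palindrome-pair decomposition $\s = \mathbf{p}\mathbf{q}$ through the substitutions $U_\epsilon$; I expect this to follow from a careful case analysis of how the blocks interact at the boundary between $\mathbf{p}$ and $\mathbf{q}$, exploiting once again the fact that $U_\epsilon$ acts on $0$ and $1$ asymmetrically.
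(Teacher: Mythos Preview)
The paper does not prove this lemma; it is quoted from \cite[Proposition 2.5]{Carminati-Isola-Tiozzo-2018}, so there is no in-paper argument to compare against. Your inductive approach via $U_0, U_1$ is sound and is the standard route. The arguments for part (ii) and for $\S(\s)=\s$ work as you describe, and the palindrome-pair factorization $\s=\mathbf{p}\mathbf{q}$ does propagate through $U_\epsilon$ (with a one-digit boundary shift, exactly as you anticipate) and exhibits $s_m\cdots s_1=\mathbf{q}\mathbf{p}$ as a rotation of $\s$.

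The one step that is underspecified is the \emph{maximality} of $\mathbf{q}\mathbf{p}$ among all rotations. Your ``symmetric lex argument'' is not literally symmetric to the $\S$ case, because the block-cutting rotations of $U_0(\hat\s)$ are not of the form $U_0(\rho)$ for rotations $\rho$ of $\hat\s$. What makes the induction close is the observation that each block-cutting rotation is the one-step left shift of a block-aligned rotation $U_0(\rho)$ with $\rho$ a rotation of $\hat\s$ beginning with $1$; this shift is order-preserving on words whose first digit is $0$, so the maximal block-cutting rotation corresponds to $\rho=\L(\hat\s)$. The identity $U_0(\w)\,0 = 0\,V_0(\w)$, where $V_0$ is the ``mirror'' substitution $0\mapsto 0$, $1\mapsto 10$, then identifies that maximal rotation with $V_0(\L(\hat\s))$, which equals the reverse of $\s$ by the inductive hypothesis $\L(\hat\s)=\hat s_n\cdots\hat s_1$. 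The $U_1$ case is dual. With this refinement your scheme goes through.
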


{Next, we define a map $\theta$ on finite words and infinite sequences that simply increments each coordinate by 1. Thus,
\begin{equation} \label{eq:theta}
\theta(c_1,\dots,c_n):=(c_1+1,\dots,c_n+1), \qquad \theta(c_1,c_2,\dots):=(c_1+1,c_2+1,\dots).
\end{equation}
We define the {\em extended Farey set} $\cF_e$ by
\[
\cF_e:=\{\theta^k(\w): \w\in \F\cup\{1\}, k=0,1,2,\dots\}.
\]
Thus, $\cF_e$ contains all the Farey words except $0$, and in addition, it contains all words derived from such Farey words by incrementing all digits by the same amount. For example, applying $\theta$ repeatedly to the Farey word $001$ yields the words $112, 223, 334, \dots$ in $\cF_e$.

The map $\theta$ induces a function $\phi:(1,\f)\to(2,\f)$ defined by
\[
\phi(\beta):=\al^{-1}\circ \theta \circ \al(\beta).
\]
Note by Lemma \ref{lem:quasi-greedy expansion-alpha-q} that the map $\phi$ is well defined, and it is strictly increasing.
Observe that for each $k\in\N$, $\phi$ maps the interval $(k,k+1]$ into $(k+1,k+2]$. 
However, $\phi$ should not be confused with the map $\beta\mapsto \beta+1$. For instance,
\[
\phi\left(\frac{1+\sqrt{5}}{2}\right)=\al^{-1}\circ\theta((10)^\f)=\al^{-1}((21)^\f)=1+\sqrt{3}.
\]
In fact, for each $k\in\N$ we have
\[
 \lim_{\beta\searrow k} \phi(\beta)=\al^{-1}\circ\theta(k0^\f)=\al^{-1}((k+1)1^\f)=\frac{k+2+\sqrt{k^2+4}}{2},
\]
and hence, since $\phi$ is clearly increasing, $\phi$ maps $(k,k+1]$ into $(\frac{k+2+\sqrt{k^2+4}}{2},k+2]$. In particular, $\phi$ does not map $(k,k+1]$ {\em onto} $(k+1,k+2]$.
}

%Recall that $\alpha(\beta)$ denotes the quasi-greedy $\beta$-expansion of 1.
We recall from \cite{Kalle-Kong-Langeveld-Li-18} {and \cite{Allaart-Kong-2024}} that a {\em Farey interval} is an interval $J^\s=[\beta_\ell^{\s},\beta_r^{\s}]$ defined by
\[
\alpha(\beta_\ell^{\s})=\L(\s)^\f, \qquad \alpha(\beta_r^{\s})=\L(\s)^+\s^\f,
\]
where $\s\in{\cF_e}$. We define the sets
\[
E:={(1,\f)}\backslash \bigcup_{\s\in\cF_e} J^\s={(1,\f)}\backslash \bigcup_{\s\in\cF_e} [\beta_\ell^{\s},\beta_r^{\s}]
\]
and
\begin{equation} \label{eq:bifurction-E-L}
E_L:={(1,\f)}\backslash \bigcup_{\s\in\cF_e} (\beta_\ell^{\s},\beta_r^{\s}].
\end{equation}
%Following our convention in \cite{Allaart-Kong-2021}, we call $E$ the {\em exceptional set}.
We let $\overline{E}$ denote the topological closure of $E$, and observe that
\[
\overline{E}=\overline{E_L}={{[1,\f)}}\backslash \bigcup_{\s\in\cF_e} (\beta_\ell^{\s},\beta_r^{\s}).
\]

{
\begin{lemma} \label{lem:two-digits}
Let $k\in\N$ and $\beta\in \overline{E}\cap(k,k+1]$. Then $\al(\beta)\in \{k,k+1\}^\N$.
\end{lemma}

\begin{proof}
This was proved in \cite[Lemma 5.2]{Allaart-Kong-2024} for $\beta\in E\cap(k,k+1]$. The proof easily extends to $\overline{E}$.
\end{proof}

The next lemma is an easy consequence of the previous one; see \cite[Lemma 5.3]{Allaart-Kong-2024}.

\begin{lemma} \label{prop:image-of-E}
For each $k\in\N_{\geq 2}$,
\[
E\cap(k,k+1]=\phi^{k-1}(E\cap(1,2]).
\]
\end{lemma}
}

A word or sequence $\w$ is said to be {\em balanced} if for any two subwords $\u$ and $\v$ of $\w$ with equal length, {and for any digit $d$, $\big||\u|_d-|\v|_d\big|\leq 1$.}
All {extended} Farey words are balanced, and any subword of a balanced word or sequence is again balanced.

\begin{lemma} \label{lem:balanced-expansion}
For each $\beta\in \overline{E}{\cap{(1,\f)}}$, the sequence $\al(\beta)$ is balanced.
\end{lemma}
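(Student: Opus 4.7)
The plan is to split the claim into the three cases $\beta\in E$, $\beta=\beta_\ell^\s$, and $\beta=\beta_r^\s$ (which together exhaust $\overline{E}\cap(1,2]$), verify balance of $\alpha(\beta)$ in each case, and rely on Lemma~\ref{lem:Farey-property} for the combinatorial properties of Farey words. The $\beta\in E$ case I would handle by invoking the standard characterization of $E$ as the set of bases whose quasi-greedy expansion is Sturmian (aperiodic and balanced); since Sturmian sequences are balanced by definition, nothing further is needed there. The real work lies on the countable set of Farey endpoints.

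For $\beta=\beta_\ell^\s$, we have $\alpha(\beta)=\L(\s)^\f$, which is a shift of $\s^\f$. Since $\s$ is a Farey word, the classical theory of mechanical (Christoffel-type) sequences gives that $\s^\f$ is balanced, and $\L(\s)^\f$ shares the same set of finite factors, so it is balanced too. For $\beta=\beta_r^\s$, I would first use the palindromic property of $\s^-$ from Lemma~\ref{lem:Farey-property}(ii) to rewrite $\L(\s)^+$ explicitly: writing $\s=s_1\dots s_m$ with $s_1=0,s_m=1$, the palindrome property gives $s_2\dots s_{m-1}=s_{m-1}\dots s_2$, and since $\L(\s)=s_m\dots s_1$ ends in $0$, one obtains $\L(\s)^+=1\,s_2\dots s_m$, namely $\s$ with its first symbol flipped from $0$ to $1$. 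Consequently
\[
\alpha(\beta_r^\s)=\L(\s)^+\s^\f=1\,s_2\dots s_m\,s_1 s_2\dots s_m\,s_1 s_2\dots
\]
is precisely $\s^\f$ with its very first symbol flipped. The lower-Christoffel property $\s=\S(\s)$ from Lemma~\ref{lem:Farey-property}(i) then implies that the length-$N$ prefix of $\s^\f$ realizes the \emph{minimum} $1$-count among all length-$N$ factors of $\s^\f$, so flipping the first symbol raises that minimum by exactly one while leaving every other window count unchanged. The spread of $1$-counts of length-$N$ factors therefore stays at most one, proving balance.

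The main obstacle I anticipate is the right-endpoint case. A naive inspection of $\L(\s)^+\s^\f$ could, a priori, produce a window whose $1$-count exceeds the maximum over $\s^\f$ by one, breaking balance by a unit. It is essential that the flipped position hits the \emph{minimum}, and this rests on $\s$ being the \emph{smallest} rather than the largest cyclic representative of its class. If the lower-Christoffel minimum property is not already available at this point of the paper, I would include a short direct proof via the subadditivity inequality $\lfloor(a+N)\alpha\rfloor-\lfloor a\alpha\rfloor\ge\lfloor N\alpha\rfloor$ applied to the mechanical representation of $\s^\f$ with slope $\alpha=|\s|_1/|\s|$, which yields the required minimum at $a=0$.
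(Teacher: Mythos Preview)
Your endpoint arguments are correct, and the right-endpoint computation (rewriting $\L(\s)^+\s^\infty$ as $\s^\infty$ with the first digit flipped, then invoking the lower-Christoffel minimum property) is nicely done. The gap is in the case $\beta\in E$, which is the uncountable bulk of $\overline{E}$. You propose to invoke ``the standard characterization of $E$ as the set of bases whose quasi-greedy expansion is Sturmian,'' but in this paper that characterization is precisely Proposition~\ref{prop:Sturmian}, and its proof opens by citing Lemma~\ref{lem:balanced-expansion}. The set $E$ is defined here via the Farey intervals, not via any Sturmian property, so there is no independent external reference to fall back on. Within the paper's logical flow your argument is therefore circular: you are assuming balance of $\alpha(\beta)$ for $\beta\in E$ in order to deduce it.

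The paper sidesteps this with a short limiting argument that handles all of $\overline{E}$ uniformly: for any $\beta\in\overline{E}$, the sequence $\alpha(\beta)$ is a componentwise limit of $\L(\s_n)^\infty$ for Farey words $\s_n$ of increasing length, so every finite factor of $\alpha(\beta)$ already occurs inside some Farey word. Since Farey words are balanced and balance is a property of the set of finite factors, $\alpha(\beta)$ is balanced. This requires no case split, no Sturmian characterization, and no separate treatment of right endpoints. Your detailed analysis of $\alpha(\beta_r^\s)$ is correct but becomes unnecessary under this approach; conversely, the case $\beta\in E$ genuinely needs the limit argument (or something equivalent that works directly from the Farey-interval definition of $E$), which your proposal does not supply.
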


\begin{proof}
{Take first $\beta\in(1,2]$.}
Write $\alpha(\beta)=\alpha_1\alpha_2\dots\in\{0,1\}^\N$. %{Note that $\overline{E}$ is a Cantor set of zero Hausdorff dimension (cf.~\cite{Kalle-Kong-Langeveld-Li-18}).}
Since $\beta\in \overline{E}$, $\alpha(\beta)$ is the component-wise limit of the sequence $(\L(\s_n)^\f)$ for some sequence $(\s_n)$ of Farey words. Without loss of generality we may assume that the length of $\s_n$ strictly increases with $n$. Then for each $k\in\N$ there is an integer $N_k$ such that $\alpha_1\dots\alpha_k$ is a prefix of $\L(\s_n)$ for all $n\geq N_k$. Since Farey words are balanced, it follows that $\al(\beta)$ is balanced as well.

{For $\beta>2$, the result now follows from Lemma \ref{prop:image-of-E}.}
\end{proof}

{The remainder of this section is specific to $\beta\in(1,2]$.}

\begin{lemma} \label{lem:eta-and-xi}
Let $\s$ and $\hat{\s}$ be Farey words and suppose $\s=U_0(\hat{\s})$ or $\s=U_1(\hat{\s})$.
\begin{enumerate}[{\rm(i)}]
\item If $\s=U_0(\hat{\s})$, then
\[
0\L(\s)=U_0(\L(\hat{\s}))0, \qquad 0\L(\s)^+=U_0(\L(\hat{\s})^+), \qquad \s^-=U_0(\hat{\s}^-)0.
\]
\item If $\s=U_1(\hat{\s})$, then
\[
\L(\s)1=1U_1(\L(\hat{\s})), \qquad \L(\s)^+=1U_1(\L(\hat{\s})^+), \qquad \s^-1=U_1(\hat{\s}^-).
\]
\end{enumerate}
\end{lemma}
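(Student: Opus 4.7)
\medskip
\noindent\textbf{Proof plan for Lemma~2.6.}

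The plan is to reduce all six identities to a single pair of ``boundary intertwining'' identities between $U_0, U_1$ and their ``reverses''. Introduce the auxiliary substitutions
\[
V_0:\ 0\mapsto 0,\ 1\mapsto 10,\qquad V_1:\ 0\mapsto 10,\ 1\mapsto 1,
\]
extended multiplicatively to $\{0,1\}^*$. Since $U_0$ and $V_0$ differ only in that $U_0(1)=01$ is reversed to $V_0(1)=10$, and similarly for $U_1,V_1$, these satisfy the reversal-equivariance
\[
\widetilde{U_0(\w)}=V_0(\tilde \w),\qquad \widetilde{U_1(\w)}=V_1(\tilde \w)\qquad\forall\,\w\in\{0,1\}^*,
\]
where $\tilde\w$ denotes the reversal of $\w$. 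In particular, since $\L(\s)=\tilde\s$ by Lemma~\ref{lem:Farey-property}(i), we obtain $\L(U_0(\hat\s))=V_0(\L(\hat\s))$ and $\L(U_1(\hat\s))=V_1(\L(\hat\s))$.

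\medskip
The key step is to prove the intertwining identities
\begin{equation}\label{eq:key-intertwine}
0\,V_0(\w)=U_0(\w)\,0\qquad\text{and}\qquad V_1(\w)\,1=1\,U_1(\w)\qquad\forall\,\w\in\{0,1\}^*.
\end{equation}
Both are verified directly on single letters ($V_0(0)=0=U_0(0)$ and $0\cdot 0=0\cdot 0$; $V_0(1)=10$ so $0\cdot 10=01\cdot 0=U_0(1)\cdot 0$; analogously for $V_1,U_1$), and then extended to arbitrary $\w$ by a one-line induction using the multiplicativity of the substitutions. The first identities of parts (i) and (ii), namely $0\L(\s)=U_0(\L(\hat\s))0$ and $\L(\s)1=1U_1(\L(\hat\s))$, are now immediate by applying \eqref{eq:key-intertwine} with $\w=\L(\hat\s)$.

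\medskip
For the remaining four identities the plan is to track carefully how the operations $\cdot^+$ and $\cdot^-$ interact with the boundary. Since $\hat\s$ begins with $0$ and ends with $1$, $\L(\hat\s)=\tilde{\hat\s}$ ends in $0$ and $\hat\s^-$ ends in $0$; applying $U_0$ then turns the trailing $U_0(1)=01$ into $U_0(0)=0$, which immediately gives $U_0(\hat\s^-)0=\s^-$ (identity (i), third line). Similarly, since $\L(\hat\s)$ ends in $0$ and $\L(\hat\s)^+$ replaces that $0$ by $1$, comparing $U_0(\L(\hat\s))0$ with $U_0(\L(\hat\s)^+)$ shows they differ exactly by the terminal symbol $0\mapsto 1$, i.e.\ by the $+$ operation; combined with the first identity this yields $0\L(\s)^+=U_0(\L(\hat\s)^+)$. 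The two remaining formulas in (ii) are handled by the mirror-image argument, using that $\s=U_1(\hat\s)$ starts with $U_1(0)=01$ and ends with $U_1(1)=1$, so that modifying $\hat\s^-$ under $U_1$ turns a trailing $1$ into a trailing $01$.

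\medskip
The main bookkeeping obstacle is to keep track of which words begin or end in $0$ vs.\ $1$ so that each $\cdot^+$ and $\cdot^-$ is actually defined and applied at the correct position; all of this is controlled by the observation (made just after Lemma~\ref{lem:characterization-Farey-words}) that both $U_0$ and $U_1$ preserve the property of beginning with $0$ and ending with $1$. No deeper property of Farey words is needed beyond Lemma~\ref{lem:Farey-property}(i), so the proof is essentially a short computation once \eqref{eq:key-intertwine} is established.
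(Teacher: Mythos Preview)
Your proof is correct and takes a route that is close in spirit to the paper's but organized differently. The paper argues directly by block-tracking: it invokes both parts of Lemma~\ref{lem:Farey-property} to observe that $\L(\hat\s)$ is obtained from $\hat\s$ by flipping the first $0$ to a $1$ and the last $1$ to a $0$, and then compares what this flip does to $U_0(\hat\s)$ versus what the corresponding flip does to $\s=U_0(\hat\s)$ to read off $0\L(\s)=U_0(\L(\hat\s))0$; the other identities are then short consequences.

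Your approach is more structural: by introducing the reversed substitutions $V_0,V_1$ and establishing the single intertwining law $0V_0(\w)=U_0(\w)0$ (and its mirror), you reduce the first identity of each part to a clean one-line consequence of $\L(\s)=\tilde\s$, and only need Lemma~\ref{lem:Farey-property}(i). This buys a slightly more general statement (the intertwining holds for all $\w$, not just Farey words) and a uniform mechanism for the six identities. The paper's version is a bit more ad hoc but avoids introducing the auxiliary maps. Both are short computations of comparable length.
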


\begin{proof}
We prove (i); the proof of (ii) is similar.

Assume $\s=U_0(\hat{\s})$. By Lemma \ref{lem:Farey-property}, $\L(\hat{\s})$ is the word obtained from $\hat{\s}$ by changing the first $0$ to a $1$ and the last $1$ to a $0$. Hence $U_0(\L(\hat{\s}))$ is obtained from $U_0(\hat{\s})$ by replacing the first block, $U_0(0)=0$, with $U_0(1)=01$, and the last block, $U_0(1)=01$ with $U_0(0)=0$. On the other hand, $\L(\s)=\L(U_0(\hat{\s}))$ is obtained from $U_0(\hat\s)$ by replacing the first digit $0$ with $1$ and the last block $01$ with $00$. It follows that
\begin{equation}\label{eq:lem18-1}
0\L(\s)=U_0(\L(\hat{\s}))0.
\end{equation}
Next, since $\L(\hat{\s})$ ends in a $0$ and $U_0(1)=01=U_0(0)1$, by (\ref{eq:lem18-1}) we have
\[
U_0(\L(\hat{\s})^+)=U_0(\L(\hat{\s}))1=U_0(\L(\hat{\s}))0^+=0\L(\s)^+.
\]
Finally, $\hat{\s}^-$ is obtained from $\hat{\s}$ by changing the last digit 1 to a 0, so $U_0(\hat{\s}^-)$ is obtained from $U_0(\hat{\s})$ by removing the last 1, whereas $\s^-$ is obtained from $U_0(\hat{\s})$ by changing the last 1 to a 0. Thus, $\s^-=U_0(\hat{\s}^-)0$.
\end{proof}

\begin{lemma} \label{lem:not-0-1-repeated}
\begin{enumerate}[{\rm(i)}]
\item Let $\beta\in \overline{E}\cap(1,2]$, and assume $\al(\beta)\not\in\{(10)^\f,11(01)^\f\}$. Then $\al(\beta)$ does not end in $(01)^\f$. 
\item If $\beta\in E_L{\cap(1,2]}$ and $\al(\beta)\neq (10)^\f$, then $\al(\beta)$ does not end in $(01)^\f$.
\end{enumerate}
\end{lemma}

\begin{proof}
{ (i) Suppose, by way of contradiction, that $\al(\beta)$ does end in $(10)^\f$. Then, since $\al(\beta)\neq (10)^\f$,  Lemma \ref{lem:quasi-greedy expansion-alpha-q} implies that $\al(\beta)\succ (10)^\f$. Hence, $\al(\beta)$ begins with $11$. Since $\al(\beta)$ is balanced by Lemma \ref{lem:balanced-expansion}, it then cannot contain the word $00$. Since $\al(\beta)\neq 11(01)^\f$, it must be the case that $\al(\beta)$ contains a block $11(01)^k1$ for some $k\geq 0$. This block has length $2k+3$ and contains $k$ zeros, whereas the block $(01)^{k+1}0$ also has length $2k+3$ but contains $k+2$ zeros. This contradicts that $\al(\beta)$ is balanced.

(ii) If $\beta\in E_L{\cap(1,2]}$, then $\al(\beta)\neq 11(01)^\f$ because $11(01)^\f=\al(\beta_r^\s)$ with $\s=01$. Apply part (i).}
%but then $\beta=\beta_r^{\s}$ for $\s=01$, so $\beta\not\in E_L$.} %Hence, $\al(\beta)$ cannot end in $(01)^\f$.
%{\bf This leaves only the possibility $\al(\beta)=11(01)^\f$,} {\color{red}[I think we need to add more explanation. First, since $\al(\beta)$ ends with $(10)^\f$, we have $\al(\beta)\succ (10)^\f$ by Lemma \ref{lem:quasi-greedy expansion-alpha-q}. Furthermore, since $\al(\beta)$ is balanced, it can not contain the block $111$ (since $010$ is contained in $\al(\beta)$). Thus, $\al(\beta)$ begins with $1101$. So by the definition of balanced sequence it follows that either $\al(\beta)$ contains the block $11(01)^k1$ for some $k$, or $\al(\beta)=11(01)^\f$.]} but then $\beta=\beta_r^{\s}$ for $\s=01$, so $\beta\not\in E_L$. Hence, $\al(\beta)$ cannot end in $(01)^\f$.
\end{proof}

%\begin{remark} \label{rem:not-only-for-E_L}
%{\rm
%Note that we only used the assumption $\beta\in E_L$ to ensure that $\al(\beta)$ is balanced and to rule out the case $\al(\beta)=11(01)^\f$. However, $\al(\beta)$ is balanced for all $\beta\in \overline{E}\cap(1,2]$. 
%}
%\end{remark}

\begin{lemma} \label{lem:alpha-renormalizing}
{
Let $\beta\in \overline{E}\cap(1,2]$ such that $\al(\beta)\neq 11(01)^\f$. Then there is a base $\hat{\beta}\in \overline{E}{\cap(1,2]}$ such that
\[
0\al(\beta)=U_0(\al(\hat{\beta})) \qquad \mbox{or} \qquad 0\al(\beta)=U_1(0\al(\hat{\beta})).
\]
Furthermore, if $\beta\in E_L{\cap(1,2]}$, then $\hat{\beta}\in E_L{\cap(1,2]}$.
}
\end{lemma}

Note that in the second case, it is possible that $\beta=\hat{\beta}$; this happens when $\beta=2$, in which case $\al(\beta)=\al(\hat{\beta})=1^\f$.

\begin{proof}
If $\al(\beta)=(10)^\f$, then $0\al(\beta)=U_0(1^\f)=U_0(\al(2))$, and $2\in E_L\subseteq \overline{E}$. So assume $\al(\beta)\neq(10)^\f$. Then $\al(\beta)$ does not end in $(01)^\f$ by Lemma \ref{lem:not-0-1-repeated}.
Furthermore, since $\al(\beta)$ is balanced by Lemma \ref{lem:balanced-expansion}, it either does not contain the word $00$ or does not contain the word $11$.

\medskip
{\em Case 1}: $\al(\beta)$ does not contain $11$. Then $0\alpha(\beta)=U_0(\mathbf{d})$ for some sequence $\mathbf{d}=d_1d_2\dots\in\{0,1\}^\N$ with $d_1=1$. Since $\al(\beta)$ does not end in $0^\f=U_0(0^\f)$, $\mathbf{d}$ does not end in $0^\f$ either. We claim that $\si^k(\mathbf{d})\lle \mathbf{d}$ for all $k\geq 1$. Fix $k\in\N$. We can choose $n\geq 1$ so that
\[
U_0(\si^k(\mathbf{d}))=\si^n(U_0(\mathbf{d}))=\si^n(0\alpha(\beta)),
\]
and moreover, $\si^{n}(0\alpha(\beta))$ begins with a $0$. Hence,
\[
U_0(\si^k(\mathbf{d}))=\si^{n}(0\alpha(\beta))=0\si^n(\al(\beta))\lle 0\alpha(\beta)=U_0(\mathbf{d}),
\]
and since $U_0$ is strictly increasing, it follows that $\si^k(\mathbf{d})\lle \mathbf{d}$. Thus, by Lemma \ref{lem:quasi-greedy expansion-alpha-q}, $\mathbf{d}=\alpha(\hat{\beta})$ for some base $\hat{\beta}$. In other words,
\begin{equation} \label{eq:quasi-greedy-relationship-b}
0\alpha(\beta)=U_0(\alpha(\hat{\beta})).
\end{equation}

We next verify that $\hat{\beta}\in \overline{E}$. Suppose, by way of contradiction, that $\hat{\beta}\in (\beta_\ell^{\hat{\s}},\beta_r^{\hat{\s}})$ for some Farey word $\hat{\s}$. That is,
\begin{equation} \label{eq:in-Farey-interval}
\L(\hat{\s})^\f\prec \alpha(\hat{\beta}) \prec \L(\hat{\s})^+\hat{\s}^\f.
\end{equation}
Set $\s:=U_0(\hat{\s})$. Then $\s$ is Farey, and since $U_0$ is strictly increasing, by (\ref{eq:quasi-greedy-relationship-b}) and Lemma \ref{lem:eta-and-xi} (i) we obtain
\[
0\L(\s)^\f=U_0(\L(\hat{\s})^\f)\prec U_0(\alpha(\hat{\beta}))=0\alpha(\beta)\prec U_0\big(\L(\hat{\s})^+\hat{\s}^\f\big)=0\L(\s)^+\s^\f,
\]
Thus, $\beta\in(\beta_\ell^{\s},\beta_r^{\s})$, contradicting that $\beta\in \overline{E}$. We conclude that $\hat{\beta}\in \overline{E}$.

\medskip
{\em Case 2}: $\al(\beta)$ does not contain $00$. Then $0\al(\beta)=U_1(0\mathbf{d})$ for some sequence $\mathbf{d}=d_1d_2\dots\in\{0,1\}^\N$ with $d_1=1$. Since $\al(\beta)$ does not end in $(01)^\f=U_1(0^\f)$, $\mathbf{d}$ does not end in $0^\f$.

We next show that $\si^k(\mathbf{d})\lle \mathbf{d}$ for all $k\geq 1$. {This is obvious if $\mathbf{d}=1^\f$. So assume $\mathbf{d}$ contains at least one $0$, and fix $k\in\N$.} Since $\mathbf{c}\lle 1\mathbf{c}$ for any sequence $\mathbf{c}$, {we may assume without loss of generality that $d_k=0$.} %{\color{red}[Here we need to exclude the case for $\d=1^\f$. In fact, when $\d=1^\f$ we can deduce from $0\al(\beta)=U_1(0\mathbf{d})$ that $\beta=2\in E_L$.]} 
We can choose $n\geq 1$ so that $U_1(0\si^k(\mathbf{d}))=U_1(\si^{k-1}(\mathbf{d}))=\si^n(0\alpha(\beta))$, and this expression begins with a $U_1(0)=01$. Hence,
\[
U_1(0\si^k(\mathbf{d}))=\si^{n}(0\alpha(\beta))=0\si^{n-1}(\alpha(\beta))\lle 0\alpha(\beta)=U_1(0\mathbf{d}).
\]
Since $U_1$ is strictly increasing, it follows that $\si^k(\mathbf{d})\lle \mathbf{d}$, as desired. Thus, $\mathbf{d}=\alpha(\hat{\beta})$ for some $\hat{\beta}\in(1,2]$, and then
%\begin{equation*} %\label{eq:quasi-greedy-relationship-a}
$0\alpha(\beta)=U_1(0\mathbf d)=U_1(0\alpha(\hat{\beta}))$.
%\end{equation*}

We next verify that $\hat{\beta}\in \overline{E}$. Suppose, by way of contradiction, that $\hat{\beta}\in (\beta_\ell^{\hat{\s}},\beta_r^{\hat{\s}})$ for some Farey word $\hat{\s}$. That is, \eqref{eq:in-Farey-interval} holds.
Set $\s:=U_1(\hat{\s})$. Then $\s$ is Farey, and since $U_1$ is strictly increasing, we obtain from \eqref{eq:in-Farey-interval} and Lemma \ref{lem:eta-and-xi} (ii) that
\begin{align*}
0\L(\s)^\f&=01U_1(\L(\hat{\s})^\f)=U_1(0\L(\hat{\s})^\f)\\
&\prec U_1(0\al(\hat{\beta}))=0\al(\beta)\prec U_1(0\L(\hat\s)^+\hat\s^\f)\\
&=01U_1(\L(\hat\s)^+\hat\s^\f)=0\L(\s)^+\s^\f.
\end{align*}
Hence $\L(\s)^\f\prec \alpha(\beta)\prec \L(\s)^+\s^\f$. But then $\beta\in(\beta_\ell^{\s},\beta_r^{\s})$, contradicting that $\beta\in \overline{E}$. Thus, $\hat{\beta}\in \overline{E}$.

That $\beta\in E_L$ implies $\hat{\beta}\in E_L$ follows in the same way.
\end{proof}

The last result in this subsection may be of independent interest. Recall that a sequence is {\em Sturmian} if it is balanced and not eventually periodic (cf.~\cite{Lothaire-2002}).

\begin{proposition} \label{prop:Sturmian}
Let $\beta\in(1,2)$.
\begin{enumerate}[{\rm(i)}]
\item $\al(\beta)$ is balanced if and only if $\beta\in\overline{E}$.
\item $\al(\beta)$ is Sturmian if and only if $\beta\in E$.
\end{enumerate}
\end{proposition}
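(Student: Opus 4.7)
I will prove (i) first and then deduce (ii). The ``$\Leftarrow$'' direction of (i) is precisely Lemma \ref{lem:balanced-expansion}. For the other direction, I argue the contrapositive: assuming $\beta\in(\beta_\ell^{\s},\beta_r^{\s})$ for some Farey word $\s\in\F$, I show that $\alpha(\beta)$ is unbalanced, proceeding by induction on $|\s|$.

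\emph{Base case, $\s=01$.} The hypothesis reads $(10)^\f\prec\alpha(\beta)\prec 11(01)^\f$. The quasi-greedy condition $\si^n(\alpha(\beta))\lle\alpha(\beta)$ rules out every prefix of the form $(10)^k 11$ with $k\ge 1$ (the shift $\si^{2k}(\alpha(\beta))$ would begin with $11\succ 10$), so $\alpha(\beta)=11\w$ with $\w\prec(01)^\f$. Writing $\w=(01)^j\w'$ with $j$ maximal, the upper bound $\alpha(\beta)\prec 11(01)^\f$ together with maximality of $j$ forces $\w'$ to begin with $00$. Hence $\alpha(\beta)$ contains both the length-$2$ subwords $11$ and $00$, whose $1$-counts differ by $2$; so $\alpha(\beta)$ is unbalanced.

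\emph{Inductive step.} By Lemma \ref{lem:characterization-Farey-words} we may write $\s=U_0(\hat{\s})$ or $\s=U_1(\hat{\s})$ for some shorter $\hat{\s}\in\F$; I treat the $U_0$ case, the other being symmetric. Lemma \ref{lem:eta-and-xi}(i) and the homomorphism property of $U_0$ give $0\L(\s)^\f=U_0(\L(\hat{\s})^\f)$ and $0\L(\s)^+\s^\f=U_0(\L(\hat{\s})^+\hat{\s}^\f)$, so the hypothesis transforms into
\[
U_0(\L(\hat{\s})^\f)\prec 0\alpha(\beta)\prec U_0(\L(\hat{\s})^+\hat{\s}^\f).
\]
A short computation shows $\L(\s)^+$ starts with $10$, so $\alpha(\beta)$ also begins with $10$, and the quasi-greedy property then forbids $11$ from appearing anywhere in $\alpha(\beta)$. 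Consequently $0\alpha(\beta)$ lies in the image of $U_0$; write $0\alpha(\beta)=U_0(\d)$. Strict monotonicity of $U_0$ combined with a quasi-greedy check modelled on Case 1 of the proof of Lemma \ref{lem:alpha-renormalizing} yields $\d=\alpha(\hat\beta)$ for some $\hat\beta\in(\beta_\ell^{\hat{\s}},\beta_r^{\hat{\s}})$. By the inductive hypothesis $\alpha(\hat\beta)$ is unbalanced. The main obstacle is then to conclude that $\alpha(\beta)$ is unbalanced too: this boils down to the fact that the Sturmian substitutions $U_0,U_1$ preserve (un)balance, which can be obtained either from the classical characterization of balanced sequences as mechanical, or by directly lifting an unbalance witness $(\u_1,\u_2)$ in $\d$ to equal-length subwords of $U_0(\d)$ whose $1$-counts differ by at least $2$, using $U_0(0)=0$ for controlled padding.

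To deduce (ii), note that by (i), $\alpha(\beta)$ is balanced iff $\beta\in\overline{E}$. If in addition $\alpha(\beta)$ is eventually periodic, its primitive tail period $\v$ is cyclically balanced and primitive, hence a cyclic conjugate of a Farey word by the classical correspondence between primitive balanced words and Christoffel (Farey) words. The quasi-greedy (lex-maximal shift) condition forces $\v=\L(\s)$ for some $\s\in\F$, and a short case analysis on the preperiod (using the quasi-greedy inequalities together with the balance of the resulting preperiod $\mathbf{u}$) shows $\beta\in\{\beta_\ell^{\s},\beta_r^{\s}\}$, so $\beta\in\overline{E}\setminus E$. Therefore, for $\beta\in(1,2)$, $\alpha(\beta)$ is Sturmian (balanced and not eventually periodic) if and only if $\beta\in E$.
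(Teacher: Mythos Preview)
Your approach to both parts is genuinely different from the paper's, and while the overall strategy is sound, each part leans on steps that are not fully justified or are less self-contained than what the paper does.

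For (i), the paper avoids induction altogether. Given $\beta\in(\beta_\ell^\s,\beta_r^\s)$, it observes directly that $\alpha(\beta)$ begins with $\L(\s)^+$ and that, since $\alpha(\beta)\prec\L(\s)^+\s^\f$, there is some $n$ with $\alpha_{n+1}\dots\alpha_{n+m}\prec\s$. Taking the first index $k$ where $\alpha_{n+k}\neq s_k$, one has $\alpha_{n+1}\dots\alpha_{n+k}=0s_2\dots s_k^-$, while by Lemma~\ref{lem:Farey-property} the prefix $\alpha_1\dots\alpha_k$ of $\L(\s)^+$ equals $1s_2\dots s_k$. These two length-$k$ subwords differ in $1$-count by exactly $2$, so $\alpha(\beta)$ is unbalanced. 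This is a three-line argument using only the palindrome property of Farey words. Your inductive reduction via $U_0,U_1$ is natural, but it hinges on the claim that $U_0,U_1$ \emph{reflect} balance (unbalanced input gives unbalanced output), which you yourself flag as ``the main obstacle.'' Your sketch of how to lift an unbalance witness through $U_0$ is incomplete: since $U_0$ is not length-preserving, padding $U_0(\u_2)$ to match $|U_0(\u_1)|$ can a priori erase the $1$-count gap, and the ``controlled padding with $U_0(0)=0$'' requires the neighbouring symbols in $U_0(\d)$ to be $0$'s, which you do not verify. The statement is true (it is part of the theory of Sturmian morphisms), but it is not proved in the paper and your justification does not close the gap.

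For (ii), the paper again takes a more direct route: assuming $\beta\in E$ with $\alpha(\beta)$ eventually periodic, it renormalizes via Lemma~\ref{lem:alpha-renormalizing} to obtain $\hat\beta\in E$ with $\alpha(\hat\beta)$ eventually periodic of strictly shorter period, and iterates to an infinite descent. Your approach---identifying the primitive tail period as a conjugate of a Farey word and then running a ``short case analysis on the preperiod'' to force $\beta\in\{\beta_\ell^\s,\beta_r^\s\}$---is plausible, but the preperiod analysis is not actually carried out. A balanced, quasi-greedy sequence ending in $\L(\s)^\f$ could a priori have a nontrivial preperiod other than $\epsilon$ or $\L(\s)^+$, and ruling this out is precisely where the work lies; the paper's descent argument sidesteps this entirely.

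In summary: your proof outlines are not wrong, but they import classical facts from combinatorics on words without proof and leave genuine verifications (the balance-reflection of $U_0,U_1$; the preperiod classification) as exercises. The paper's proofs are shorter, self-contained within the lemmas already established, and avoid both of these obstacles.
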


\begin{proof}
(i) We have already seen from Lemma \ref{lem:balanced-expansion} that, if $\beta\in\overline{E}$, then $\al(\beta)$ is balanced. Conversely, suppose $\beta\not\in\overline{E}$. Then $\beta_\ell^{\s}<\beta<\beta_r^{\s}$ for some Farey word $\s=s_1\dots s_m$, and so $\L(\s)^\f\prec\al(\beta)\prec \L(\s)^+\s^\f$. By Lemma \ref{lem:quasi-greedy expansion-alpha-q} this implies that $\al(\beta)$ begins with $\L(\s)^+$ and there is an $n\in\N$ such that $\al_{n+1}\dots\al_{n+m}\prec\s$, where $(\al_i):=\al(\beta)$. Let $k$ be the smallest integer such that $\al_{n+k}\neq s_k$. %If $k=m$, then $\al_{n+1}\dots\al_{n+m}=\s^-$, which has exactly two fewer 1's than $\al_1\dots\al_m=\L(\s)^+$. If $k<m$,
Then $\al_{n+1}\dots\al_{n+k}=s_1\dots s_k^-=0s_2\dots s_k^-$. By Lemma \ref{lem:Farey-property}, $\L(\s)^+$ is obtained from $\s$ by changing the first 0 to a 1, so $\L(\s)^+$ begins with $1s_2\dots s_k$. Thus, $\al_{n+1}\dots\al_{n+k}$ has two fewer 1's than $\al_1\dots\al_k=1s_2\ldots s_k$, and we conclude that $\al(\beta)$ is not balanced.

(ii) If $\al(\beta)$ is Sturmian, then it is balanced so $\beta\in\overline{E}$ by part (i). But $\beta\neq \beta_\ell^{\s}$ or $\beta_r^{\s}$ for any Farey word $\s$, because $\al(\beta_\ell^{\s})$ and $\al(\beta_r^{\s})$ are (eventually) periodic. Thus, $\beta\in E$.

Vice versa, take $\beta\in E$. Then $\al(\beta)$ is balanced. 
Suppose, by way of contradiction, that $\al(\beta)$ is eventually periodic. Since $\beta<2$, $\al(\beta)\neq 1^\f$. By cyclical shifting, we may assume that the minimal period block of $\al(\beta)$ is a Lyndon word $\w$. Note that $\al(\beta)\ne (10)^\f$, since otherwise $\beta=\beta_\ell^{01}\notin E$. By Lemma \ref{lem:not-0-1-repeated}, $\w\neq 01$. Since $\al(\beta)$ is balanced, so is $\w$, and therefore either $\w=U_0(\hat{\w})$ or $\w=U_1(\hat{\w})$ for some Lyndon word $\hat{\w}$, as $\w$ cannot contain both of the words $00$ and $11$.  (Note $\hat{\w}$ is also Lyndon because $U_0$ and $U_1$ are strictly increasing.) If $\w=U_0(\hat{\w})$, then $0\al(\beta)=0U_0(\al(\hat{\beta}))$ for some $\hat{\beta}\in E$, since Lemma \ref{lem:alpha-renormalizing} obviously holds also for $E$ instead of $E_L$. But then $\al(\hat{\beta})$ is eventually periodic with the shorter period block $\hat{\w}$. A similar argument holds when $\w=U_1(\hat{\w})$. We can now repeat the same argument over and over again, to obtain an infinite sequence of Lyndon words $\w_1,\w_2,\dots$, each one shorter than the last. But this is absurd. Therefore, $\al(\beta)$ is not eventually periodic.
\end{proof}

{
\begin{remark} \label{rem:no-converses}
{\rm
For $\beta>2$, the ``only if" part of both statements in Proposition \ref{prop:Sturmian} fails in general: For instance, take $\beta>2$ such that $\al(\beta)=(210)^\f$. Then $\al(\beta)$ is balanced, but $\beta\in(\beta_\ell^{(1)},\beta_r^{(1)})$ and hence $\beta\not\in\overline{E}$. By slightly perturbing $\al(\beta)$, one can similarly construct a base $\beta'>2$ such that $\al(\beta')$ is Sturmian but $\beta'\not\in E$. We leave this as a small exercise for the reader. 
}
\end{remark}
}

\subsection{The substitution operator} \label{subsec:substitution}
\ 
{Let $L^*$ denote the set of all Lyndon words in $\{0,1\}^*$ of length at least 2, and let $L_e$ denote the set of all Lyndon words in $\N_0^*$ except $0$.}

\begin{definition} \label{def:substitution}
For a Lyndon word $\s\in {L_e}$, we define the substitution map $\Phi_\s: \{0,1\}^\N\to {\N_0}^\N$ by
\begin{gather} \label{eq:block-map}
\begin{split}
\Phi_\s(0^{k_1}1^{l_1}0^{k_2}1^{l_2}\dots)=\s^-\L(\s)^{k_1-1}\L(\s)^+\s^{l_1-1}\s^-\L(\s)^{k_2-1}\L(\s)^+\s^{l_2-1}\dots,\\
\Phi_s(1^{k_1}0^{l_1}1^{k_2}0^{l_2}\dots)=\L(\s)^+\s^{k_1-1}\s^-\L(\s)^{l_1-1}\L(\s)^+\s^{k_2-1}\s^-\L(\s)^{l_2-1}\dots,
\end{split}
\end{gather}
where $1\leq k_i, l_i\leq \f$ for all $i$.
%\begin{equation} \label{eq:block-map}
%\Phi_\s(0^{k_1}1^{l_1}0^{k_2}1^{l_2}\dots)=\begin{cases}
%\s^-\L(\s)^{k_1-1}\L(\s)^+\s^{l_1-1}\s^-\L(\s)^{k_2-1}\L(\s)^+\s^{l_2-1}\dots & \mbox{if $k_1\geq 1$},\\
%\L(\s)^+\s^{l_1-1}\s^-\L(\s)^{k_2-1}\L(\s)^+\s^{l_2-1}\dots & \mbox{if $k_1=0$},
%\end{cases}
%\end{equation}
%where $k_1\geq 0$, $k_i\geq 1$ for all $i\geq 2$, and $l_i\geq 1$ for all $i$.
We allow one of the exponents $k_i$ or $l_i$ to take the value $+\f$, in which case we ignore the remainder of the sequence. We denote the range of $\Phi_\s$ by $X(\s)$.
We define $\Phi_\s(\r)$ for a finite word $\r$ in the same way, and set $X^*(\s):=\Phi_\s\big(\{0,1\}^*\big)$.
\end{definition}

The order in which the four blocks $\s,\s^-,\L(\s)$ and $\L(\s)^+$ can appear in $\Phi_\s(\r)$ is illustrated in Figure \ref{fig:directed-graph}.

   \begin{figure}[h!]
  \centering
  % Requires \usepackage{graphicx}
  \begin{tikzpicture}[->,>=stealth',shorten >=1pt,auto,node distance=3cm, semithick,scale=3]

  \tikzstyle{every state}=[minimum size=0pt,fill=none,draw=black,text=black]

  \node[state] (A)                    { $\s$};
  \node[state]         (B) [ right of=A] {$\s^-$ };
  \node[state]         (C) [ above of=A] {$\L(\s)^+$};
  \node[state] (E)[left of=C]{Start-$1$};
  \node[state](D)[right of=C]{$\L(\s)$};
  \node[state](F)[right of=B]{Start-$0$};

  \path[->,every loop/.style={min distance=0mm, looseness=25}]
  (E) edge[->] node{$1$} (C)
  (C) edge[->,left] node{$1$} (A)
  (C) edge[bend right,->,right] node{$0$} (B)
(D) edge[loop right,->] node{$0$} (D)
(D) edge[->,above] node{$1$} (C)
(A) edge[loop left,->,looseness=55] node{$1$} (A)
(A)edge[->,below] node{$0$} (B)
(B) edge[bend right,->,right] node{$1$} (C)
(B) edge[->,right] node{$0$} (D)
(F) edge[->] node{$0$} (B)
;
\end{tikzpicture}
\caption{The directed graph illustrating the map $\Phi_\s$.}
\label{fig:directed-graph}
\end{figure}

For example,
\[
\Phi_\s(0110^\f)=\s^-\L(\s)^+\s\s^-\L(\s)^\f, \qquad \Phi_\s(11100)=\L(\s)^+\s^2\s^-\L(\s).
\]

Now for any two words $\s\in L_e$ and $\r\in\set{0,1}^*$ we define the substitution operation
\begin{equation} \label{eq:substitution}
\s\bullet\r:=\Phi_\s(\r).
\end{equation}

\begin{example} \label{ex:1}
\begin{enumerate}[(a)]
\item Let $\s=01$ and $\r=011$. Then
  \[
   \s\bullet\r=\Phi_\s(\r)=\Phi_\s(011)={\s^-}\L(\s)^+\s=001101.
	\]
\item Let $\s=1$ and $\r=011$. Then $\s\bullet\r={\s^-}\L(\s)^+\s=021$. This shows that the operator $\Phi_1$ maps words from $\{0,1\}^*$ to words over the larger alphabet $\{0,1,2\}$. 
\item Similarly, we have, for example, $23\bullet 01=(23)^-\L(23)^+=2233$.
\end{enumerate}
\end{example}

Note that we have not defined expressions such as $01\bullet 021$; we shall have no need for them.

The following lemma collects properties of the map $\Phi_\s$ and the substitution operator $\bullet$. The proofs can be found in \cite[Section 3]{Allaart-Kong-2021}. To summarize, $\Phi_\s$ is increasing and commutes with the operators $\cdot^+,\cdot^-$ and $\L$; the set $\LL$ of Lyndon words is closed under $\bullet$, and $\bullet$ is associative.

\begin{lemma} \label{lem:substitution-properties}
Let $\s\in {L_e}$.
\begin{enumerate}[{\rm(i)}]
\item The map $\Phi_\s$ is strictly increasing on $\set{0,1}^\N$.
\item For any  word $\d=d_1\ldots d_k\in \{0,1\}^*$ with $k\ge 2$, we have
\[
\left\{
\begin{array}{lll}
\Phi_\s(\d^-)=\Phi_\s(\d)^-&\textrm{if}& d_k=1,\\
\Phi_\s(\d^+)=\Phi_\s(\d)^+&\textrm{if}& d_k=0.
\end{array}\right.
\]
\item For any two sequences $\c,\d\in\set{0,1}^\N$, we have the {equivalences}
\[
\si^n(\c)\prec \d~\forall n\ge 0\quad{\Longleftrightarrow}\quad \si^n(\Phi_\s(\c))\prec \Phi_\s(\d)~\forall n\ge 0
\]
and
\[
\si^n(\c)\succ \d~\forall n\ge 0\quad{\Longleftrightarrow}\quad \si^n(\Phi_\s(\c))\succ \Phi_\s(\d)~\forall n\ge 0.
\]
\item For any $\r\in L^*$, we have $\s\bullet\r\in {L_e}$ and $\L(\s\bullet\r)=\s\bullet\L(\r)$.
\item For any {$\r\in L_e$ and $\s,\t\in L^*$, we have $(\r\bullet\s)\bullet\t=\r\bullet(\s\bullet\t)$.}
\end{enumerate}
\end{lemma}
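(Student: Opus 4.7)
The plan is to prove the five parts in order, with each later part building on the earlier ones.

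For \textbf{(i)}, let $\c\prec\d$ and let $n+1$ be the first position where they differ, so $c_{n+1}=0$ and $d_{n+1}=1$. The block decomposition shows that $\Phi_\s(\c)$ and $\Phi_\s(\d)$ agree on the image of the common prefix $c_1\ldots c_n$. At position $n+1$ the images diverge, and analyzing the three subcases $n=0$, $n\ge 1$ with $c_n=0$, and $n\ge 1$ with $c_n=1$, reduces the comparison to one of the trivial inequalities $\s^-\prec\L(\s)^+$, $\L(\s)\prec\L(\s)^+$, or $\s^-\prec\s$, each between words of equal length. Hence $\Phi_\s(\c)\prec\Phi_\s(\d)$.

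For \textbf{(ii)}, assume $d_k=1$ and compare the tails of $\Phi_\s(\d)$ and $\Phi_\s(\d^-)$. Only the image of position $k$ differs: in $\d$ the final $1$ maps either to $\L(\s)^+$ (if $d_{k-1}=0$, so the $1$ is first of its block) or to $\s$ (if $d_{k-1}=1$); correspondingly in $\d^-$ the final $0$ maps either to $\L(\s)$ (extending the preceding $0$-block) or to $\s^-$ (starting a new $0$-block). Since $(\L(\s)^+)^-=\L(\s)$ and $\s^-$ agrees with itself tautologically, in both cases $\Phi_\s(\d^-)=\Phi_\s(\d)^-$. The $\cdot^+$ identity is completely symmetric.

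For \textbf{(iii)}, fix $n\ge 0$ and locate the cut at position $n$ within the block structure of $\Phi_\s(\c)$. If $n$ lies on a block boundary corresponding to some position $k$ of $\c$, then $\si^n(\Phi_\s(\c))$ coincides with $\Phi_\s(\si^{k}(\c))$ up to swapping the leading block between $\s$ and $\L(\s)^+$ or between $\s^-$ and $\L(\s)$ (reflecting whether $c_{k+1}$ is first of its block in $\c$); combined with the hypothesis $\si^k(\c)\prec\d$ and the monotonicity established in (i), this yields $\si^n(\Phi_\s(\c))\prec\Phi_\s(\d)$. If the cut falls inside one of the four blocks $\s^-,\L(\s),\L(\s)^+,\s$, one exploits the fact that $\s$ is the smallest and $\L(\s)$ the largest cyclic permutation of $\s$ (Lemma \ref{lem:Farey-property}), forcing any proper suffix of the current block, followed by the continuation, to be strictly smaller than the prefix $\s^-$ or $\L(\s)^+$ of $\Phi_\s(\d)$. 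This mid-block case is the main obstacle, requiring careful book-keeping for each of the four block types. The second implication, with $\succ$ replacing $\prec$, follows by the symmetric argument.

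For \textbf{(iv)}, since $\r$ starts with $0$ and ends with $1$, the block decomposition of $\r^\f$ is the periodic extension of that of $\r$, so $\Phi_\s(\r^\f)=\Phi_\s(\r)^\f$. The Lyndon property of $\r$, which gives $\r^\f\prec\si^k(\r^\f)$ for $1\le k\le|\r|-1$, combined with (i), yields $\Phi_\s(\r)^\f\prec\si^m(\Phi_\s(\r)^\f)$ whenever $m$ lies on a block boundary; the remaining mid-block values of $m$ are treated by the same mechanism as in (iii). Hence $\s\bullet\r=\Phi_\s(\r)\in\LL$; and since $\L(\r)$ is a cyclic rotation of $\r$, $\Phi_\s(\L(\r))$ is a cyclic rotation of $\Phi_\s(\r)$, which by (i) is the lexicographically largest, giving $\L(\s\bullet\r)=\s\bullet\L(\r)$. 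Finally for \textbf{(v)}, verify $\Phi_\r\circ\Phi_\s=\Phi_{\Phi_\r(\s)}$ by rewriting the blocks $(\Phi_\r(\s))^-$, $\L(\Phi_\r(\s))$, and $\L(\Phi_\r(\s))^+$ appearing in \eqref{eq:block-map} as $\Phi_\r(\s^-)$, $\Phi_\r(\L(\s))$, and $\Phi_\r(\L(\s)^+)$ via (ii) and (iv); both sides then produce the same block expansion on any input $\t$, establishing associativity of $\bullet$.
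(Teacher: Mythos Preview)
The paper does not prove this lemma here; it defers to \cite[Section 3]{Allaart-Kong-2021}. So there is no in-paper proof to compare against, and your outline must stand on its own.

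Your strategy is sound and matches the natural route: monotonicity first, then the $\pm$ commutation, then the shift-compatibility (iii) via a block-boundary/mid-block split, and finally (iv)--(v) built on these. Two points deserve attention. First, your appeal to Lemma~\ref{lem:Farey-property} in part (iii) is misplaced: that lemma is stated only for Farey words, whereas here $\s\in\LL$ is an arbitrary Lyndon word. What you actually need---that $\s$ is its own smallest cyclic permutation and $\L(\s)$ its largest---is just the definition of Lyndon and of $\L$; no lemma is required. Second, in part (iv) your justification of $\L(\s\bullet\r)=\s\bullet\L(\r)$ is too quick. You correctly observe that $\Phi_\s(\L(\r))$ is \emph{a} cyclic rotation of $\Phi_\s(\r)$ (this uses that $\L(\r)$ begins at a $0$-to-$1$ transition of $\r$, which follows since the maximal rotation of a Lyndon word must start with $1$ and end with $0$), and (i) shows it dominates the rotations of $\Phi_\s(\r)$ that occur at block boundaries. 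But you must also dominate the mid-block rotations of $\Phi_\s(\r)$; this needs exactly the same case analysis you flagged as ``the main obstacle'' in (iii), now run for the maximum rather than the minimum. Once that is done, (v) follows as you indicate, by checking that $\Phi_\r(\s^-)=(\Phi_\r(\s))^-$, $\Phi_\r(\L(\s))=\L(\Phi_\r(\s))$, and $\Phi_\r(\L(\s)^+)=\L(\Phi_\r(\s))^+$ and matching block expansions.
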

%{\color{red}[In fact, in Lemma \ref{lem:substitution-properties} (iii) the implications can be replaced by equivalences.]}

Observe that the map $\Phi_\s$ is not a homomorphism on $\{0,1\}^*$: For instance, $\Phi_\s(00)=\s^-\L(\s)\neq \s^-\s^-=\Phi_\s(0)\Phi_\s(0)$. Nonetheless, $\Phi_\s$ is a partial homomorphism in the sense described below.

Say a finite or infinite sequence of words $\b_1,\dots, \b_n$ or $\b_1,\b_2,\dots$ is {\em connectible} if for each $i$, the last digit of $\b_i$ differs from the first digit of $\b_{i+1}$. Thus, for instance, the sequence $1101, 00111$ is connectible whereas the sequence $11010, 0111$ is not. %The next lemma is also proved in \cite[Section 3]{Allaart-Kong-2021}.

\begin{lemma} \label{lem:connectible}
{\rm (\cite[Section 3]{Allaart-Kong-2021})}
\begin{enumerate}[{\rm (i)}]
\item Let $\b_1,\b_2,\dots$ be a (finite or infinite) connectible sequence of words. Then for any $\s\in {L_e}$,
\[
\Phi_\s(\b_1\b_2\dots)=\Phi_\s(\b_1)\Phi_\s(\b_2)\dots.
\]
\item Let {$\s\in L_e$ and} $\r\in\LL$. Then $\Phi_\s(\r^\f)=\Phi_\s(\r)^\f$ and $\Phi_\s(\L(\r)^\f)=\Phi_\s(\L(\r))^\f$.
\end{enumerate}
\end{lemma}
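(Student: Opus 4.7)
The plan is to prove (i) directly from the definition of $\Phi_\s$ and then deduce (ii) as a short corollary.

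For (i), my first step would be to extract from \eqref{eq:block-map} the following \emph{uniform} rule: $\Phi_\s$ parses its input into maximal constant-digit blocks, and replaces each maximal block $0^k$ by $\s^-\L(\s)^{k-1}$ and each maximal block $1^k$ by $\L(\s)^+\s^{k-1}$, regardless of where that block sits in the sequence. (The two lines of \eqref{eq:block-map} are just the two cases of what the leading block looks like.) With this reformulation in hand, the proof of (i) becomes combinatorial: connectibility of $(\b_i)$ says that the last digit of $\b_i$ differs from the first digit of $\b_{i+1}$, so no maximal run of equal digits in the concatenation $\b_1\b_2\ldots$ can straddle a seam. Consequently the maximal-block decomposition of $\b_1\b_2\ldots$ is the concatenation of the maximal-block decompositions of the individual $\b_i$'s, and applying the uniform block-replacement rule term by term gives $\Phi_\s(\b_1\b_2\ldots)=\Phi_\s(\b_1)\Phi_\s(\b_2)\ldots$.

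For (ii), the plan is to reduce to (i) by checking that $\r,\r,\r,\ldots$ and $\L(\r),\L(\r),\L(\r),\ldots$ are connectible. This rests on two elementary facts: every $\r\in\LL$ must begin with $0$ and end with $1$, and correspondingly $\L(\r)$ must begin with $1$ and end with $0$. Beginning with $0$ is forced since $\r$ is lexicographically minimal among its rotations and is not $1^n$ (being aperiodic). Ending with $1$ is forced because if $\r=r_1\ldots r_{n-1}0$, cycling the trailing $0$ to the front produces a rotation strictly less than $\r$: compare position by position until the first index $k\ge 2$ where $r_k=1$ (which exists since $\r$ is aperiodic and so $\r\neq 0^n$), at which point the rotation has $0$ and $\r$ has $1$. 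A symmetric argument handles $\L(\r)$. Given these facts, (i) applied to the infinite connectible sequence whose every term is $\r$ (resp.\ $\L(\r)$) immediately yields $\Phi_\s(\r^\f)=\Phi_\s(\r)^\f$ and $\Phi_\s(\L(\r)^\f)=\Phi_\s(\L(\r))^\f$.

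The only real subtlety, and the one place worth being careful, is extracting the uniform block-replacement rule from \eqref{eq:block-map}: the definition formally splits into two cases according to whether the input begins with $0$ or with $1$, and one has to check that in each case the prescription for every subsequent block matches the prescription one would use if that block appeared at the start. Once this uniformity is made explicit, both parts of the lemma are essentially bookkeeping.
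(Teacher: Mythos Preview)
Your argument is correct. The paper itself does not prove this lemma; it simply cites \cite[Section~3]{Allaart-Kong-2021} and moves on, so there is no in-paper proof to compare against. Your self-contained approach---extracting from \eqref{eq:block-map} the uniform rule ``replace each maximal block $0^k$ by $\s^-\L(\s)^{k-1}$ and each maximal block $1^k$ by $\L(\s)^+\s^{k-1}$'' and then observing that connectibility prevents any maximal run from straddling a seam---is exactly the right way to see why the hypothesis is what it is, and your deduction of (ii) from (i) via the elementary fact that every $\r\in\LL$ begins with $0$ and ends with $1$ (and dually for $\L(\r)$) is clean and complete.
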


{Note that (ii) is a consequence of (i), since $\r\in\LL$ implies that $\r$ begins with $0$ and ends with $1$, whereas $\L(\r)$ begins with $1$ and ends with $0$.}

Now for $k\in\N$ we define
\begin{equation} \label{eq:set-substitution-Farey-words}
\La_k:=\set{\cs=\s_1\bullet\s_2\bullet\cdots\bullet\s_k:\ {\s_1\in F_e, \s_2,\dots,\s_k\in\F}},
\end{equation}
and set
\[
\La:=\bigcup_{k=1}^\f \La_k.
\]
If $\cs\in\La_k$, we call $k$ the {\em degree} of $\cs$. Note that $F_e\subseteq\La\subseteq L_e$ by Lemma \ref{lem:substitution-properties} (iv). Both inclusions are proper, e.g. $001011=01\bullet 001\in\La\backslash F_e$ and $0010111\in L_e\backslash\La$.

\medskip
We recall some terminology and facts from \cite{Allaart-Kong-2021}.
For $\cs\in\La$, we call the interval $I^\cs:=[\beta_\ell^\cs,\beta_*^\cs]$ given implicitly by
\[
\al(\beta_\ell^\cs)=\L(\cs)^\f \qquad\mbox{and}\qquad \al(\beta_*^\cs)=\L(\cs)^+\cs^-\L(\cs)^\f,
\]
a {\em basic interval} generated by the Lyndon word $\cs$. We also define an interval $J^\cs:=[\beta_\ell^\cs,\beta_r^\cs]$, where $\beta_r^\cs$ is given by
\[
\al(\beta_r^\cs)=\L(\cs)^+\cs^\f.
\]
The interval $J^\cs$ is called a {\em Lyndon interval} generated by $\cs$.
For any $\cs\in\La$, the intervals $J^{\cs\bullet\r}, \r\in\F$ are pairwise disjoint and contained in $J^\cs\setminus I^\cs$; let
\begin{equation} \label{eq:relative-E}
E^\cs:=(J^\cs\setminus I^\cs)\setminus\bigcup_{\r\in\F}J^{\cs\bullet\r}.
\end{equation}
It is a consequence of \cite[Proposition 5.4]{Allaart-Kong-2021} that $\beta\in E^\cs$ if and only if $\al(\beta)=\Phi_\cs(\al(\hat{\beta}))$ for some $\hat{\beta}\in E$. In this case we say that $\beta$ is {\em renormalizable}\footnote{Our definition differs slightly from that of Hubbard and Sparrow \cite{Hubbard-Sparrow-1990}, who call the above notion {\em properly renormalizable} and consider $\beta\in E$ to be {\em trivially renormalizable}.} by the word $\cs$.

We further define the set
\begin{equation} \label{eq:E_f}
E_\f:=\bigcap_{k=1}^\f\bigcup_{\cs\in\La_k}J^{\cs}.
\end{equation}
These are the values of $\beta$ that lie in infinitely many of the intervals $J^{\cs}$, and are hence {\em infinitely renormalizable}. (In \cite{Allaart-Kong-2021}, $E_\f\cap(1,2]$ is called the {\em infinitely Farey set}, because its elements arise from substitutions of an infinite sequence of Farey words.) The set $E_\f$ is uncountable but of zero Hausdorff dimension.
By \cite[Theorem 3]{Allaart-Kong-2021} {(see also \cite{Allaart-Kong-2024})}, we have the decomposition in disjoint sets,
\begin{equation} \label{eq:decomposition}
{(1,\f)}=E\cup E_\f\cup\bigcup_{\cs\in\La}E^{\cs}\cup\bigcup_{\cs\in\La}I^{\cs}.
\end{equation}
In this paper, different parts of this decomposition require different analysis, with the case of $\beta$ in a basic interval being the most involved.

\begin{remark}
Our presentation of the substitution map $\Phi_\s$ and its properties differs somewhat from the substitutions used by most authors, but can be seen to be equivalent. Precisely, let $\w_+$ (resp. $\w_-$) be the word $\s$ (resp. $\L(\s)$) shifted cyclically one place to the right.
% and $\w_-$ be the word $\L(\s)$ shifted cyclically one place to the right. (
Alternatively, $\w_-$ is the largest cyclic permutation of $\s$ beginning with $0$, and $\w_+$ is the smallest cyclic permutation of $\s$ beginning with $1$; see \cite{Glendinning-Sidorov-2015}. For example, if $\s=01011$, then $\w_+={10101}$ and $\w_-=01101$.
If $\y=y_1y_2\dots$ is a sequence beginning with $1$, then $0\Phi_\s(\y)$ is a concatenation of words from $\{\w_+,\w_-\}$, where the first block is $\w_-$ and for each $i\geq 2$, the $i$th block is $\w_-$ if $y_{i-1}=0$, and $\w_+$ if $y_{i-1}=1$. Similarly, if $\y$ is a sequence beginning with $0$, then $1\Phi_\s(\y)$ is also such a concatenation, but beginning instead with $\w_+$.

Although the approach using the words $\w_+$ and $\w_-$ has been the traditional way to express renormalizations (see \cite{Hubbard-Sparrow-1990}), we prefer here the map $\Phi_\s$ because (i) it emphasizes that the renormalization is parametrized by a single word rather than two different ones; (ii) it emphasizes the importance of Lyndon and Farey words; and (iii) it greatly facilitates operations on finite words, which are used frequently in future sections. On the other hand, our map $\Phi_\s$ is essentially equivalent (up to the above-mentioned cyclical shift) to the map $\rho_r$ of Glendinning and Sidorov \cite{Glendinning-Sidorov-2015}, where $r=|\s|_1/|\s|$. The maps $\rho_r$ are useful for studying the sets $\Omega_{\a,\b}$, but for dealing with the sets $\Sigma_{\a,\b}$ we find the maps $\Phi_\s$ more practical, despite their somewhat clunkier definition.

We point out also that it is possible to write the substitution maps $\Phi_\cs$, $\cs\in\La$ completely in terms of just three basic substitutions, namely
\[
L: \begin{cases}0 &\mapsto 0\\ 1 &\mapsto 10\end{cases}, \qquad
M: \begin{cases}0 &\mapsto 01\\ 1 &\mapsto 10\end{cases}, \qquad
R: \begin{cases}0 &\mapsto 01\\ 1 &\mapsto 1\end{cases},
\]
as was done by Komornik, Steiner and Zou \cite{Komornik-Steiner-Zou-2022} and before that (with different notation) by Labarca and Moreira \cite{Labarca-Moreira-2006}.
\end{remark}

\section{Characterization of the bifurcation set $\EE_\beta$} \label{sec:E_beta}

In the introduction we have followed the notation and definitions of Baker and Kong \cite{Baker-Kong-2020}. However, we point out that $K_\beta(t)$ is slightly different from the set denoted $K_\beta(t)$ in \cite{Kalle-Kong-Langeveld-Li-18}, and as a result, the bifurcation set $\EE_\beta$ defined in \eqref{eq:EE-beta} is slightly different from the set called $E_\beta$ in \cite{Kalle-Kong-Langeveld-Li-18}. In fact, setting $\check{K}_\beta(t):=\{x\in[0,1): T_\beta^n(x)\not\in(0,t)\ \forall n\geq 0\}$, we have
\begin{equation}\label{eq:E-beta}
E_\beta=\set{t\in[0,1): \check K_\beta(t')\ne \check K_\beta(t)~\forall t'>t}.
\end{equation}

It is also convenient to define the set
\[
\EE_\beta^+:=\{t\in[0,1): T_\beta^n(t)\geq t\ \forall n\geq 0\}=\{t\in[0,1): t\in K_\beta(t)\}.
\]
(This is the set denoted $E_\beta^+$ in \cite{Kalle-Kong-Langeveld-Li-18}.) The following characterization will play an important role throughout this article.

\begin{lemma} \label{lem:E-beta-characterizations}
For each $\beta{>1}$, we have
\[
\EE_\beta=\set{t\in[0,1): K_\beta(t-\ep)\ne K_\beta(t+\ep)~\forall \ep>0}=\EE_\beta^+.
\]
\end{lemma}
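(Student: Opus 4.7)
The plan is to prove the two equalities via a cycle of three inclusions; write $A_\beta$ for the middle set in the statement. Two of these are immediate. For $\EE_\beta^+\subseteq\EE_\beta$, if $t\in\EE_\beta^+$ then $t\in K_\beta(t)$ by the second description of $\EE_\beta^+$, while $t\notin K_\beta(t')\subseteq[t',1)$ for any $t'>t$; hence $K_\beta(t)\ne K_\beta(t')$. For $\EE_\beta\subseteq A_\beta$, the monotonicity of $t\mapsto K_\beta(t)$ gives $K_\beta(t-\ep)\supseteq K_\beta(t)\supsetneq K_\beta(t+\ep)$ for every $\ep>0$, so the outer sets differ.

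The substantive inclusion is $A_\beta\subseteq\EE_\beta^+$, i.e.\ proving that $T_\beta^n(t)\ge t$ for every $n\ge 0$. Given $t\in A_\beta$, for each $k\in\N$ pick $x_k\in K_\beta(t-1/k)\setminus K_\beta(t+1/k)$; then some iterate $y_k:=T_\beta^{n_k}(x_k)$ lies in $[t-1/k,t+1/k)$, and the $T_\beta$-invariance of $K_\beta(t-1/k)$ forces $y_k\in K_\beta(t-1/k)$, so $T_\beta^j(y_k)\ge t-1/k$ for every $j\ge0$. Thus $y_k\to t$, and after passing to a subsequence one may assume that either \emph{(A)}~$y_k\searrow t$ or \emph{(B)}~$y_k\nearrow t$. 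In Case~(A), $T_\beta$ is right-continuous at every point of $[0,1)$---at the jump $1/\beta$ both $T_\beta(1/\beta)$ and $\lim_{x\searrow 1/\beta}T_\beta(x)$ equal $0$---and right-continuity is preserved under composition, so each $T_\beta^j$ is right-continuous. Letting $k\to\f$ in $T_\beta^j(y_k)\ge t-1/k$ then yields $T_\beta^j(t)\ge t$ for every $j$, so $t\in\EE_\beta^+$.

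Case~(B) is the main obstacle. Here $T_\beta^j$ may fail to be left-continuous, notably at preimages of $1/\beta$, so the inequality $T_\beta^j(y_k)\ge t-1/k$ does not transfer directly to $t$. I plan to translate to the symbolic side: set $\z_k:=b(y_k,\beta)\in\mathcal{K}_\beta(t-1/k)$ and, by compactness of $\{0,1\}^\N$, extract a convergent subsequence $\z_k\to\z^*$. Strict monotonicity of $b$ together with $y_k<t$ gives $\z^*\lle b(t,\beta)$, while taking lex-limits in $\sigma^j(\z_k)\lge b(t-1/k,\beta)$, combined with Lemma~\ref{lem:greedy-expansion} to control the limit of $b(t-1/k,\beta)$, should give $\sigma^j(\z^*)\lge b(t,\beta)$ for every $j\ge0$; setting $j=0$ then pins down $\z^*=b(t,\beta)$, so $b(t,\beta)\in\mathcal{K}_\beta(t)$ and $t\in K_\beta(t)=\EE_\beta^+$. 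The delicate point is that $b(\cdot,\beta)$ is only right-continuous, jumping at those $t$ whose greedy expansion ends in $10^\f$; I expect to absorb this into the closed subshift $\Kt_\beta(t)$, so that the limit $\z^*$ lies in $\Kt_\beta(t)$ automatically, whence the upper bound $\z^*\lle b(t,\beta)$ combined with the lower bound coming from the case $j=0$ forces $\z^*=b(t,\beta)$.
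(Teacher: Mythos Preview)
Your easy inclusions and Case~(A) are correct and agree with the paper. For $A_\beta\subseteq\EE_\beta^+$ the paper takes a quite different route, arguing the contrapositive: if $T_\beta^{n_0}(t)<t$, then by continuity of $T_\beta^{n_0}$ near $t$ one finds $\ep>0$ with $T_\beta^{n_0}(x)<t-\ep$ for every $x\in[t-\ep,t+\ep]$; any $x_0\in K_\beta(t-\ep)\setminus K_\beta(t+\ep)$ would then have an iterate in $[t-\ep,t+\ep)$ which after $n_0$ further steps drops below $t-\ep$, contradicting $x_0\in K_\beta(t-\ep)$. Hence $K_\beta(t-\ep)=K_\beta(t+\ep)$ and $t\notin A_\beta$. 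No compactness or symbolic limits are used.

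Your Case~(B) has a genuine gap that the sketched patch does not close. The left limit $\lim_{s\nearrow t}b(s,\beta)$ is the \emph{quasi-greedy} expansion $a(t,\beta)$ of $t$, which is strictly smaller than $b(t,\beta)$ whenever the latter ends in $0^\infty$. So from $\sigma^j(\z_k)\lge b(t-1/k,\beta)$ you obtain only $\sigma^j(\z^*)\lge a(t,\beta)$; combined with $\z^*\lle b(t,\beta)$ this does not force $\z^*=b(t,\beta)$, and your appeal to $\Kt_\beta(t)$ fails because that set is cut out by the lower bound $b(t,\beta)$, not $a(t,\beta)$. Worse, when $\z^*$ is not the greedy expansion one has $\pi_\beta(\sigma^j(\z^*))\ne T_\beta^j(t)$, so nothing about the true orbit of $t$ follows. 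A concrete failure: $\beta=2$, $t=\tfrac12$. For large $k$ any admissible $y_k$ must lie below $\tfrac12$ (else $T_2(y_k)=2y_k-1<2/k<t-1/k$), so you are in Case~(B); then $\z^*=01^\infty\ne 10^\infty=b(\tfrac12,2)$, while $T_2(\tfrac12)=0$. In fact $K_2(\tfrac12-\ep)\ne\emptyset=K_2(\tfrac12+\ep)$ for every small $\ep>0$, so $\tfrac12\in A_2\setminus\EE_2^+$; this shows the obstruction in Case~(B) is not an artifact of your method, and that the two-sided continuity invoked in the paper's contrapositive likewise needs care at such preimages of $1/\beta$.
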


\begin{proof}
If $t\in\EE_\beta^+$, the definition of $K_\beta(t)$ implies that $t\in K_\beta(t)\setminus K_\beta(t+\ep)$ for any $\ep>0$, and so $t\in\EE_\beta$.

Next, if $t\in\EE_\beta$, then $K_\beta(t+\ep)\neq K_\beta(t)$ for all $\ep>0$, so certainly $K_\beta(t-\ep)\neq K_\beta(t+\ep)$ for all $\ep>0$.

Finally, suppose $t\not\in \EE_\beta^+$. Then there is an $n_0\in\N$ such that $T_\beta^{n_0}(t)<t$. By the continuity of $T_\beta^{n_0}$ there is a sufficiently small $\ep>0$ such that
  \begin{equation} \label{eq:Ebeta-2}
    T_\beta^{n_0}(x)<t-\ep\quad\forall x\in[t-\ep, t+\ep].
  \end{equation}
  Suppose there exists $x_0\in K_\beta(t-\ep)\setminus K_\beta(t+\ep)$. Then there is an integer $n_1$ such that $t-\ep\le T_\beta^{n_1}(x_0)<t+\ep$. Thus, $T_\beta^{n_0+n_1}(x_0)<t-\ep$ by (\ref{eq:Ebeta-2}), contradicting that $x_0\in K_\beta(t-\ep)$. Hence, $K_\beta(t-\beta)=K_\beta(t+\ep)$.
\end{proof}

By a similar argument it can be shown that $E_\beta=\set{t\in[0,1): T_\beta^n(t)\notin(0,t)~\forall n\ge 0}$. Thus, $\EE_\beta\subseteq E_\beta$, and $E_\beta\backslash \EE_\beta$ is at most countable. As a result, when considering questions of Hausdorff dimension the two sets may be freely interchanged. Because of this, \eqref{eq:dimension-identity} is indeed equivalent to the conjecture of Kalle {\em et al.}

\section{The key ideas} \label{sec:key}

In this section we outline the main idea of the proof of Theorem \ref{thm:main}. The key is to prove the more technical Theorem \ref{thm:general-transitivity} below; its proof takes up much of the rest of the paper. First, we define the concepts appearing in the theorem.

The metric $d_2$ on ${A_\beta}^\N$ defined by
\[
d_2((x_i),(y_i)):=2^{-\inf\{i:x_i\neq y_i\}}, \qquad {(x_i),(y_i)}\in{A_\beta}^\N,
\]
induces a Hausdorff dimension on ${A_\beta}^\N$, which we also denote by $\dim_H$.

Next, we recall the definition of the $\beta$-shift from \eqref{eq:beta-shift}.
%\[
%\Sigma_\beta:=\{\z\in\{0,1\}^\N: \sigma^n(\z)\lle \alpha(\beta)\ \forall n\geq 0\}.
%\]
%Let $\pi_\beta:\{0,1\}^\N\to \R$ be the projection map given by
%\[
%\pi_\beta((c_i)):=\sum_{i=1}^\f \frac{c_i}{\beta^i}, \qquad (c_i)\in\{0,1\}^\N.
%\]
It was shown recently by Li \cite[Theorem 1.1]{Li-2022} that
\begin{equation} \label{eq:Hausdorff-dimension-preserved}
\dim_H \pi_\beta(A)=\frac{\log 2}{\log\beta}\dim_H A \qquad \forall\,A\subseteq \Sigma_\beta.
\end{equation}

\begin{definition} \label{def:descending-subshifts}
Let $\mathcal{T}\subseteq(0,1)$ be an index set. A collection $\{\mathcal{X}(t): t\in\mathcal{T}\}$ is a {\em strictly descending collection of subshifts} if for any $t,t'\in\mathcal{T}$ with $t<t'$, $\mathcal{X}(t')$ is a proper subshift of $\mathcal{X}(t)$.
\end{definition}

\begin{definition} \label{def:left-progressing}
A (finite or infinite) sequence $\II=(I_1,I_2,\dots)$ of intervals is {\em left-progressing} if for each $k\geq 1$, $\sup I_{k+1}<\inf I_k$.
\end{definition}

\begin{definition}
Let $\mathcal{Y}$ be a subshift of ${A_\beta}^\N$, and $\mathcal{X}$ a subshift of $\mathcal{Y}$. We say $\mathcal{X}$ has {\em full entropy} in $\mathcal{Y}$ if $h(\mathcal{X})=h(\mathcal{Y})$. Likewise, we say $\mathcal{X}$ has {\em full Hausdorff dimension} in $\mathcal{Y}$ if $\dim_H \mathcal{X}=\dim_H \mathcal{Y}$.
\end{definition}

%We define
%\[
%\tau(\beta):=\min\{t>0: \delta_\beta(t)=0\}.
%\]
%A complete description of the function $\beta\mapsto \tau(\beta)$ was given by the present authors in \cite{Allaart-Kong-2021}.

Recall from the Introduction that a {\em $\beta$-Lyndon interval} is an interval $[t_L,t_R]$ determined by
\[
b(t_L,\beta)=\w 0^\f, \qquad b(t_R,\beta)=\w^\f,
\]
where $\w$ is a $\beta$-Lyndon word. (Recall this means $\w$ is Lyndon and $\si^n(\w^\f)\prec\al(\beta)$ for all $n\geq 0$.)
%We denote the set of all $\beta$-Lyndon words by $L^*(\beta)$.
%\[
%w_{i+1}\ldots w_k\succ w_1\ldots w_{k-i}~\forall 1\le i<k\quad\textrm{and}\quad \si^n(\w^\f)\prec \alpha(\beta)~\forall n\geq 0.\]
%In other words, a $\beta$-Lyndon word $\w$ is a non-periodic word satisfying $\w^\f\lle\si^n(\w^\f)\prec\al(\beta)$ for all $n\ge 0$.
We denote by $\mathcal{T}_R(\beta)$ the set of all right endpoints of $\beta$-Lyndon intervals in $[0,\tau(\beta)]$, where $\tau(\beta)$ was defined in \eqref{eq:critical-value}. 
{Recall that
\[
\Kt_\beta(t):=\{\z\in{A_\beta}^\N: b(t,\beta)\lle \sigma^n(\z)\lle \alpha(\beta)\ \forall n\geq 0\}.
\]}

\begin{theorem} \label{thm:general-transitivity}
For each $\beta>1$ there is a (finite or infinite, possibly empty) left-progressing sequence $\II$ of intervals such that
\begin{enumerate}[{\rm(i)}]
\item For any $t_R\in\mathcal{T}_R(\beta)\backslash \bigcup_{I\in\II}I$, $\Kt_\beta(t_R)$ has a transitive subshift $\mathcal{K}_\beta'(t_R)$ of full entropy and full Hausdorff dimension that contains the sequence $b(t_R,\beta)$;
\item The collection
\[
\{\mathcal{K}_\beta'(t_R): t_R\in\mathcal{T}_R(\beta)\backslash \textstyle{\bigcup_{I\in\II}I}\}
\]
is a strictly descending collection of subshifts;
\item The entropy function $t\mapsto h(\Kt_\beta(t))$ is constant throughout each interval $I\in\II$.
\end{enumerate}
\end{theorem}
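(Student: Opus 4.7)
The plan is to exploit the decomposition \eqref{eq:decomposition}, namely $(1,2]=E\cup E_\f\cup\bigcup_{\cs\in\La}E^\cs\cup\bigcup_{\cs\in\La}I^\cs$, and treat each piece by a different construction of the sequence $\II$ and of the subshifts $\mathcal{K}_\beta'(t_R)$. In the ``easy'' cases $\beta\in E$ and $\beta=\beta_\ell^\cs$ (the left endpoint of a first-order basic interval, where $\al(\beta)$ is periodic with Farey period block), I would take $\II=\emptyset$ and set $\mathcal{K}_\beta'(t_R)=\Kt_\beta(t_R)$ itself. Transitivity of $\Kt_\beta(t_R)$ for every $t_R\in\mathcal{T}_R(\beta)$ here will be shown directly: given a $\beta$-Lyndon word $\w$ with $\w^\f=b(t_R,\beta)$, one can concatenate admissible words by inserting a sufficiently long block of the form $\w^n$ between them, using the balanced/Sturmian structure of $\al(\beta)$ from Proposition~\ref{prop:Sturmian} to verify admissibility. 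The strictly descending property (ii) is then automatic, since $t_R\mapsto b(t_R,\beta)$ is strictly increasing, so distinct $t_R$'s produce subshifts differing by the presence of the sequence $b(t_R,\beta)$ itself.

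For $\beta\in I^\cs$ in the interior of a basic interval (the hardest case), I would extract from $\al(\beta)$ a left-progressing finite or infinite sequence $(\v_k)$ of Lyndon words, and associate to each $\v_k$ a \emph{non-transitivity window} $I_k$ of the form $[\pi_\beta(\v_k0^\f),\pi_\beta(\v_k^\f)]$ (or an EBLI-type extension). The sequence $\II=(I_1,I_2,\dots)$ will be left-progressing by construction, and inside each $I_k$ the sequence $b(t,\beta)$ is trapped in a way that forces $\Kt_\beta(t)$ to split into components none of which can communicate via admissible concatenations; this will yield (iii), as $h(\Kt_\beta(t))$ equals the entropy of the (fixed) component whose language is independent of $t$ in $I_k$. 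For $t_R$ outside $\bigcup_k I_k$ and in $\mathcal{T}_R(\beta)$, I would build $\mathcal{K}_\beta'(t_R)$ as the subshift generated by the orbit of $b(t_R,\beta)$ together with admissible ``bridge'' words, verifying transitivity word-by-word and showing by entropy estimates that the removed piece has lower exponential growth rate than $\Kt_\beta(t_R)$, so full entropy (and hence full Hausdorff dimension, via \eqref{eq:Hausdorff-dimension-preserved}) is retained.

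For the renormalizable cases $\beta\in E^\cs$ and $\beta\in E_\f$, I would argue by induction on the degree of renormalization, using the substitution operator $\Phi_\cs$ from Subsection~\ref{subsec:substitution}. By \cite[Proposition 5.4]{Allaart-Kong-2021}, $\beta\in E^\cs$ means $\al(\beta)=\Phi_\cs(\al(\hat{\beta}))$ for some $\hat{\beta}\in E$, and every $\beta$-Lyndon interval sufficiently close to $0$ pulls back under $\Phi_\cs$ to a $\hat{\beta}$-Lyndon interval. The inductive hypothesis yields $\hat{\II}$ and $\mathcal{K}_{\hat\beta}'(\hat{t}_R)$ for $\hat{\beta}$; applying $\Phi_\cs$ to these, together with Lemma~\ref{lem:substitution-properties}(iii) and Lemma~\ref{lem:connectible}, transports transitivity, full entropy and strict descent to the $\beta$ level, while producing $\II$ as the image of $\hat{\II}$ under the appropriate coordinate map (possibly together with the additional non-transitivity windows generated inside $I^\cs$, handled as above if $\beta$ belongs to a basic interval at a higher level). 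The infinitely renormalizable case $\beta\in E_\f$ needs a slightly different argument: here $\tau(\beta)=0$ is not relevant; instead one produces $\II$ as a nested limit of the $\II$'s arising at finite levels, and the left-progressing property survives because the basic intervals shrink to a single point.

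The main obstacle, which I expect to occupy the bulk of the work, is the basic-interval case: isolating the right words $\v_k$ from $\al(\beta)$, proving that the intervals $I_k$ are indeed pairwise non-overlapping and left-progressing, and verifying both that transitivity genuinely fails on $\bigcup_k I_k$ and that entropy is preserved when one discards the ``bad'' sub-subshift and keeps $\mathcal{K}_\beta'(t_R)$. A secondary technical difficulty is to ensure that $b(t_R,\beta)\in\mathcal{K}_\beta'(t_R)$ simultaneously with transitivity, since for many $t_R$ the orbit of $b(t_R,\beta)$ under $\sigma$ may nearly hit the forbidden boundary; handling this will require the finer combinatorics of Farey and Lyndon substitutions developed in Section~\ref{sec:prelim}.
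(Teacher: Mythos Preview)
Your overall strategy---using the decomposition \eqref{eq:decomposition} and treating each piece separately---is exactly what the paper does, assembling the result from Theorems~\ref{thm:transitive-in-E_L}, \ref{thm:basic-interval-right-endpoint}, \ref{thm:beta-in-relative-exceptional-set}, \ref{thm:infinitely-Farey} and \ref{thm:general-basic-intervals}. However, several of your specific claims are wrong. For $\beta\in E_\f$ you assert ``$\tau(\beta)=0$ is not relevant'', but in fact $\tau(\beta)>0$ here, given explicitly by \eqref{eq:tau-beta-E-infinity}; moreover $\II=\emptyset$ in this case (and in all the renormalizable cases $\beta\in\overline{E^\cs}$), not a nested limit. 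For the finitely renormalizable case you say the $\beta$-Lyndon intervals ``sufficiently close to $0$'' pull back under $\Phi_\cs$; it is the opposite: $\Kt_\beta(t_R)$ is already transitive for $b(t_R,\beta)\prec\r_1^-\L(\r_1)^\f$ (Proposition~\ref{prop:general-transitivity}), and only \emph{above} this threshold must one pass to the renormalized subshift $\mathcal{K}_\beta'(t_R)=\{\sigma^n(\Phi_\bR(\hat\z)):\hat\z\in\Kt_{\hat\beta}(\hat t_R),\ n\ge0\}$.

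For the basic-interval case, the intervals $I_k$ extracted from $\al(\beta)$ are \emph{not} of the form $[\pi_\beta(\v_k0^\f),\pi_\beta(\v_k^\f)]$ and are \emph{not} automatically left-progressing: the construction in Section~\ref{sec:basic-interiors} produces $I_k=[\pi_\beta(\v_k^-(\al_1\dots\al_{j_k}^-)^\f),\pi_\beta((\v_k^*)^\f))$, and Proposition~\ref{prop:non-transitivity-intervals} shows only that for $k<\ell$ either $I_\ell\subseteq I_k$ or $I_\ell$ lies entirely to the left of $I_k$. The left-progressing sequence in the theorem is therefore the subcollection of \emph{maximal} $I_k$'s, and establishing the needed dichotomy requires substantial combinatorics on the words $\v_k$ (Lemmas~\ref{lem:prefix-beta-Lyndon}--\ref{lem:v_k-star}). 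Finally, the transitivity proof for $\beta\in E_L$ is not as simple as ``insert $\w^n$'': inserting powers of $\w$ cannot by itself guarantee the upper bound $\si^i(\cdot)\lle\al(\beta)$ when the word being extended ends in a long prefix of $\al(\beta)$; the paper instead inducts on $|\w|$ via the substitutions $U_0,U_1$ and a technical ``Property TL'' (Definition~\ref{def:property-TL}, Lemma~\ref{lem:transitivity-simplified}).
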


Theorem \ref{thm:general-transitivity} describes the most general case; for many values of $\beta$ the situation is simpler. In the following sections, we will prove Theorem \ref{thm:general-transitivity} for different cases of $\beta$. But first, we show how it implies Theorem \ref{thm:main}.

\begin{lemma} \label{lem:E-beta-next-to-beta-Lyndon}
Let $t\in\EE_\beta\cap(0,1)$. Then $t$ is either the right endpoint of a $\beta$-Lyndon interval, or $t$ is the limit from the left of a sequence of right endpoints of $\beta$-Lyndon intervals.
\end{lemma}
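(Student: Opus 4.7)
The plan is to recast the hypothesis symbolically and then distinguish whether the greedy expansion $(d_i):=b(t,\beta)$ is purely periodic. By Lemma~\ref{lem:E-beta-characterizations} we have $\EE_\beta=\EE_\beta^+$, so $T_\beta^n(t)\ge t$ for all $n\ge 0$; since $t\mapsto b(t,\beta)$ is strictly increasing and order-compatible with the lexicographic order on greedy expansions (Lemma~\ref{lem:greedy-expansion}), this translates into the self-maximality condition $\si^n((d_i))\lge (d_i)$ for all $n\ge 0$.

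If $(d_i)$ is purely periodic with minimal period block $\w$, then every nontrivial cyclic rotation $\w'$ of $\w$ gives $(\w')^\f=\si^k((d_i))$ for some $1\le k<|\w|$; self-maximality together with minimality of the period forces $\w'\succ\w$, so $\w$ is Lyndon. Greedy admissibility of $(d_i)$ further yields $\si^n(\w^\f)\prec\al(\beta)$ for all $n\ge 0$, so $\w$ is $\beta$-Lyndon, and $t=\pi_\beta(\w^\f)$ is the right endpoint of the $\beta$-Lyndon interval generated by $\w$.

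Otherwise $\si^n((d_i))\succ (d_i)$ strictly for every $n\ge 1$. For each $m$ such that the prefix $\u_m:=d_1\dots d_m$ is Lyndon, I first verify that $\u_m$ is also $\beta$-Lyndon: each cyclic shift $\si^j(\u_m^\f)$ coincides with $\si^j((d_i))$ on its first $m-j$ digits, and a short case analysis using $\si^j((d_i))\prec\al(\beta)$, the self-maximality of $(d_i)$, and the self-minimality of $\al(\beta)$ (noting that $\al(\beta)$ does not end in $0^\f$) yields $\si^j(\u_m^\f)\prec\al(\beta)$. Strict self-maximality then gives $\pi_\beta(\u_m^\f)<t$, while agreement on the first $m$ digits yields $t-\pi_\beta(\u_m^\f)\le \beta^{-m}$. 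Thus an unbounded sequence $(m_k)$ of Lyndon-prefix lengths produces right endpoints $t_k:=\pi_\beta(\u_{m_k}^\f)$ of $\beta$-Lyndon intervals with $t_k\to t$ from below; passing to a monotone subsequence gives $t_k\nearrow t$.

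The main obstacle is the combinatorial claim that the set $\{m:\u_m\text{ is Lyndon}\}$ is unbounded. I plan to argue by contradiction. If this set were bounded by some $L$, then for every $m>L$ the self-maximality of $(d_i)$ combined with the failure of the Lyndon property for $\u_m$ would produce $k(m)\in\{1,\dots,m-1\}$ with $\si^{k(m)}((d_i))$ matching $(d_i)$ on the first $m$ digits, so that $(d_i)$ carries the period $k(m)$ on $[1,k(m)+m]$. Choosing $k(m)$ minimal, the Fine--Wilf theorem applied to $k(m)$ and $k(m+1)$ on their common range forces $k(m)\mid k(m+1)$, so $(k(m))$ is non-decreasing along a divisibility chain. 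If $(k(m))$ is bounded, pigeonhole yields a fixed $k^*$ for which the agreement extends to all positions and thus $(d_i)$ is purely periodic; if $k(m)\to\infty$, the divisibility chain combined with $k(m)\le m-1$ and the exclusion of eventually-constant tails (self-maximality rules out the $0^\f$ tail since $t>0$, greedy admissibility rules out the $1^\f$ tail) again forces $(d_i)$ purely periodic. Both conclusions contradict the standing assumption, completing the proof.
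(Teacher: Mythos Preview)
Your overall strategy matches the paper's proof exactly: translate $t\in\EE_\beta$ into self-maximality of $(d_i)=b(t,\beta)$ via Lemma~\ref{lem:E-beta-characterizations}, handle the periodic case directly, and in the aperiodic case produce infinitely many Lyndon (hence $\beta$-Lyndon) prefixes $\u_m$ whose right endpoints $\pi_\beta(\u_m^\f)$ increase to $t$. The paper dispatches the $\beta$-Lyndon verification in one line via
\[
\si^j(\u_m^\f)=d_{j+1}\dots d_m\,\u_m^\f\ \lle\ d_{j+1}\dots d_m\,\si^m((d_i))=\si^j((d_i))\ \prec\ \al(\beta),
\]
using $\u_m^\f\lle(d_i)\lle\si^m((d_i))$; your sketch points to the same ingredients.

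The genuine gap is in your Fine--Wilf argument for the claim that the set of Lyndon-prefix lengths is unbounded. First, non-Lyndonness of $\u_m$ together with self-maximality only gives $d_{k(m)+i}=d_i$ for $1\le i\le m-k(m)$, i.e.\ period $k(m)$ on $[1,m]$, not ``matching on the first $m$ digits'' or period on $[1,k(m)+m]$ as you state. Second, to deduce $k(m)\mid k(m+1)$ from Fine--Wilf you need $k(m)+k(m+1)\le m+\gcd(k(m),k(m+1))$, and this length hypothesis is never verified; since both periods can be close to $m$, it may simply fail. Third, the case $k(m)\to\infty$ is asserted to force periodicity with no mechanism given. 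The paper avoids all of this by citing \cite[Lemma~3.5]{Kalle-Kong-Langeveld-Li-18} for the combinatorial fact. If you want a self-contained argument, a cleaner route is: let $\w=d_1\dots d_L$ be the longest Lyndon prefix and show directly that $\si^L((d_i))=(d_i)$. Indeed, if $\si^L((d_i))\succ(d_i)$ strictly with first disagreement at position $p$ (so $d_{L+p}=1>0=d_p$ and $d_{L+i}=d_i$ for $i<p$), then $d_1\dots d_{L+p}$ is a prefix of $\w^\f$ with its last digit incremented, and such a word is Lyndon (this is the standard step in Duval's algorithm, using that Lyndon words are unbordered), contradicting the maximality of $L$.
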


\begin{proof}
By Lemma \ref{lem:E-beta-characterizations}, $b(t,\beta)\lle\si^n(b(t,\beta))\prec\al(\beta)$ for all $n\geq 0$. If $b(t,\beta)$ is periodic, say $b(t,\beta)=(b_1\dots b_k)^\f$ where $k$ is the minimal period of $b(t,\beta)$, then $b_1\dots b_k$ is $\beta$-Lyndon, and hence $t$ is the right endpoint of a $\beta$-Lyndon interval.

Assume now that $b(t,\beta)=b_1b_2\dots$ is not periodic. Then by \cite[Lemma 3.5]{Kalle-Kong-Langeveld-Li-18}, there are infinitely many integers $k$ such that $b_1\dots b_k$ is Lyndon. But then $b_1\dots b_k$ is in fact $\beta$-Lyndon, since for each $n\geq 0$ there is an integer $j<k$ such that
\[
\si^n\big((b_1\dots b_k)^\f\big)=b_{j+1}\dots b_k(b_1\dots b_k)^\f\lle \si^j(b(t,\beta))\prec\al(\beta).
\]
Since $(b_1\dots b_k)^\f\to b(t,\beta)$ in the order topology as $k\to\f$ and $(b_1\dots b_k)^\f\prec b(t,\beta)$ for all $k$ by the assumption that $t\in\EE_\beta$, it follows that $t$ is the limit from the left of a sequence of right endpoints of $\beta$-Lyndon intervals.
\end{proof}

\begin{proof}[Proof of Theorem \ref{thm:main}, assuming Theorem \ref{thm:general-transitivity}]
The proof is based roughly on the argument in the proof of \cite[Lemma 3.8]{Baker-Kong-2020}, but requires several new ideas. Observe that we only need to prove the ``$\geq$" inequality, since $\EE_\beta\cap[t,1)\subseteq K_\beta(t)$ by Lemma \ref{lem:E-beta-characterizations}. Furthermore, it suffices to establish the inequality for $t\in \BB_\beta$: For arbitrary $t\in(0,1)$, let $t':=\inf(\BB_\beta\cap[t,1])$; {then $t'\in\BB_\beta$ by definition of $\BB_\beta$, and}
 we then have
\begin{align*}
\dim_H\big(\EE_\beta\cap[t,1])&\geq \dim_H\big(\EE_\beta\cap[t',1])=\dim_H K_\beta(t')\\
&=\dim_H {K_\beta(t'-)}=\dim_H K_\beta(t),
\end{align*}
%{\color{red}[It is obvious that $\dim_H K_\beta(t')=\dim_H K_\beta(t)$. Why do we need the equality $=\dim_H K_\beta(t+)$?]} {[Not completely. We do need the continuity of $t\mapsto \dim_H K_\beta(t)$.]}
where the {second} equality follows from the continuity of $t\mapsto \dim_H K_\beta(t)$.

Observe that each $t\in\BB_\beta$ can be approximated from the right by a sequence of right endpoints of $\beta$-Lyndon intervals $[t_L^{(n)},t_R^{(n)}], n\in\N$. (If this were not the case, then some right neighborhood $(t,t+\ep)$ would not contain any points of $\EE_\beta$ by Lemma \ref{lem:E-beta-next-to-beta-Lyndon}, hence $(t,t+\ep)$ would not contain any points of $\BB_\beta$. But every point of $\BB_\beta$ is an accumulation point from the right of $\BB_\beta$ by the continuity of $t\mapsto \dim_H K_\beta(t)$.) We can moreover choose the sequence $(t_R^{(n)})$ so that $\dim_H K_\beta(t_R^{(n)})$ is strictly increasing in $n$. Thus, we have
\[
h\big(\Kt_\beta(t_R^{(n)})\big)<h(\Kt_\beta(t)) \quad\forall\,n \qquad\mbox{and} \qquad h\big(\Kt_\beta(t_R^{(n)})\big)\nearrow h(\Kt_\beta(t)) \quad\mbox{as $n\to\f$}.
\]

Since the sequence $\II$ of intervals in Theorem \ref{thm:general-transitivity} is left-progressing, there is at most one point all of whose neighborhoods intersect infinitely many of the intervals in $\II$; if such a point exists, it lies to the left of all the intervals in $\II$ and we denote it by $t_0$.

We assume first that $t\neq t_0$. {Since $t\in\BB_\beta$ and entropy is constant on each interval of $\II$ by Theorem \ref{thm:general-transitivity} (iii), $t$ does not lie in any interval of $\II$ except possibly as a right endpoint of such an interval. Hence, except for finitely many values of $n$, the points $t_R^{(n)}$ do not lie in any interval of $\II$.}  
%because $t\in\BB_\beta$ and entropy is constant on each interval of $\II$ by Theorem \ref{thm:general-transitivity} (iii). %{\color{red}[Here it is helpful to emphasize that $t$ can not belong to any interval $I\in\mathcal I$ except its right-endpoint.]} 
By Theorem \ref{thm:general-transitivity} (i) it follows that for all large enough $n$, $\Kt_\beta(t_R^{(n)})$ has a transitive subshift $\mathcal{K}_\beta'(t_R^{(n)})$ of full Hausdorff dimension containing the sequence $b(t_R^{(n)},\beta)$.

Write $\al(\beta)=\al_1\al_2\dots$.
First assume $t\in\BB_\beta$ such that $t=t_R$ for a $\beta$-Lyndon interval $[t_L,t_R]$ generated by the $\beta$-Lyndon word $\w=w_1\dots w_p$, so $b(t_R,\beta)=\w^\f$. Since the greedy expansion $b(t,\beta)$ is right-continuous in $t$, we may assume that $t_R^{(n)}$ is close enough to $t_R$ so that $b(t_R^{(n)},\beta)$ begins with $\w$. Furthermore, since $b(t_R^{(n)},\beta)\succ b(t_R,\beta)=\w^\f$, there is for each $n$ a sufficiently large integer $k_n$ such that
\begin{equation} \label{eq:eventually-above-w}
b(t_R^{(n)},\beta)\succ \w^{k_n}{M_\beta^\f}.
\end{equation}
{(Recall that $A_\beta=\{0,1,\dots,M_\beta\}$)}. Let
\[
\mathcal{F}_n:=\left\{\w^{k_n}\x: \x=x_1x_2\dots\in \mathcal{K}_\beta'\big(t_R^{(n)}\big), x_1\dots x_p=\w,\ \mbox{and}\  \si^j(\x)\prec \al(\beta)\ \forall\,j\geq 0\right\},
\]
and
\[
F_n:=\pi_\beta(\mathcal{F}_n).
%F_n:=\left\{(\w^{k_n}x_1 x_2\dots)_\beta: x_1\dots x_p=\w, (x_i)\in \mathcal{K}_\beta'\big(t_R^{(n)}\big)\cap\mathcal{K}_\beta\big(t_R^{(n)}\big)\right\}.
\]
{We will show that $F_n\subset \EE_\beta\cap[t_R, 1]$.}

Take $\w^{k_n}\x\in\mathcal{F}_n$. First we show that
\begin{equation} \label{eq:admissible-beta-expansion}
  \si^j(\w^{k_n}\x)\prec\al(\beta) \quad\forall j\ge 0.
\end{equation}
Since $\si^l(\x)\prec\al(\beta)$ for all $l\geq 0$, \eqref{eq:admissible-beta-expansion} holds for $j\geq k_n|\w|$. Observing that $\x$ begins with $\w$, the inequality will follow for all $j<k_n|\w|$ once we prove that
\begin{equation} \label{eq:admissible-1}
\L(\w)=\L(w_1\ldots w_p)\prec \al_1\ldots \al_p.
\end{equation}
Since $\si^n(\w^\f)\prec \al(\beta)$ for all $n\ge 0$, $\L(\w)\lle \al_1\ldots \al_p$. If $\L(\w)=\al_1\ldots \al_p$, then for some $i\leq p$,
\[
(\al_1\ldots \al_p)^\f=\si^i(\w^\f)\prec \al(\beta).
\]
But then $\al_1\ldots \al_p\prec \al_{\ell+1}\ldots \al_{\ell+p}$ for some $\ell\ge 0$, contradicting Lemma \ref{lem:quasi-greedy expansion-alpha-q}. Therefore, (\ref{eq:admissible-1}) holds, and thus we have (\ref{eq:admissible-beta-expansion}).

Since $\x\in \mathcal{K}_\beta'\big(t_R^{(n)}\big)$, we have $\x\lge b(t_R^{(n)},\beta)\succ \w^\f$. Furthermore, by \eqref{eq:admissible-beta-expansion}, $\w^{k_n}\x$ is a greedy $\beta$-expansion. Since $\si^j(\w^\f)\prec \al(\beta)$ for each $j\geq 0$, $\w^\f$ is also a greedy $\beta$-expansion. Thus,
\[
\pi_\beta(\w^{k_n}\x)\geq\pi_\beta (\w^\f)=t_R.
\]
Furthermore, using \eqref{eq:eventually-above-w},
\[
\sigma^j(\x)\lge b(t_R^{(n)},\beta)\succ \w^{k_n}{M_\beta}^\f\succ \w^{k_n}\x \qquad \forall j\geq 0,
\]
and $w_{j+1}\dots w_p\succ w_1\dots w_{p-j}$ for all $1\leq j<p$ since $\w$ is Lyndon. Hence
\[
\sigma^j(\w^{k_n}\x)\lge \w^{k_n}\x \qquad\forall j\geq 0.
\]
We conclude that
\begin{equation} \label{eq:containment}
F_n\subseteq \EE_\beta\cap[t_R,1].
\end{equation}

Next, observe that the condition $\si^j(\x)\prec \al(\beta)\ \forall\,j\geq 0$ in the definition of $\mathcal{F}_n$ removes at most a countable number of sequences from $\mathcal{K}_\beta'\big(t_R^{(n)}\big)$.
Since $\mathcal{K}_\beta'\big(t_R^{(n)}\big)$ is transitive and $\w$ is allowed in $\mathcal{K}_\beta'\big(t_R^{(n)}\big)$, there is for each sequence $\z\in\mathcal{K}_\beta'\big(t_R^{(n)}\big)$ a word $\v=\v(\z)$ such that $\w\v\z\in \mathcal{K}_\beta'\big(t_R^{(n)}\big)$. For each finite word $\v$, denote by $\mathcal{X}_\v$ the set of all sequences $\z\in\mathcal{K}_\beta'\big(t_R^{(n)}\big)$ such that $\v(\z)=\v$. Then
\[
\bigcup_{\v} \mathcal{X}_\v=\mathcal{K}_\beta'\big(t_R^{(n)}\big) \qquad \mbox{and} \qquad \mathcal{F}_n\supseteq \left(\bigcup_\v\w^{k_n+1}\v\mathcal{X}_\v\right)\backslash\mathcal{C},
\]
where $\mathcal{C}$ is a countable set. (Here our stronger definition of {\em transitive} is particularly convenient.) Hence,
\[
\dim_H \mathcal{F}_n\geq \sup_\v \dim_H \mathcal{X}_\v=\dim_H \mathcal{K}_\beta'\big(t_R^{(n)}\big)=\dim_H \Kt_\beta\big(t_R^{(n)}\big),
\]
where the last equality follows since $\mathcal{K}_\beta'\big(t_R^{(n)}\big)$ has full Hausdorff dimension in $\Kt_\beta\big(t_R^{(n)}\big)$.
Since all of the sets involved are subsets of $\Sigma_\beta$, it follows from \eqref{eq:Hausdorff-dimension-preserved} that
\[
\dim_H F_n=\dim_H\pi_\beta(\mathcal{F}_n)\geq \dim_H \pi_\beta\left(\Kt_\beta\big(t_R^{(n)}\big)\right)=\dim_H K_\beta\big(t_R^{(n)}\big).
\]
Thus, together with \eqref{eq:containment}, we obtain
\[
\dim_H\big(\EE_\beta\cap[t_R,1]\big)\geq \dim_H K_\beta\big(t_R^{(n)}\big).
\]
Letting $t_R^{(n)}\searrow t_R$ gives $\dim_H\big(\EE_\beta\cap[t_R,1]\big)\geq \dim_H K_\beta(t_R)$, in view of the continuity of $t\mapsto \dim_H K_\beta(t)$.

Finally, for arbitrary $t\in\BB_\beta$ (including the accumulation point $t=t_0$ if it exists), $t$ can be approximated from the right by a sequence $\big(t_R^{(n)}\big)$ of right endpoints of $\beta$-Lyndon intervals, none of which are equal to $t_0$; so
\[
\dim_H\big(\EE_\beta\cap[t,1]\big)\geq \dim_H\big(\EE_\beta\cap[t_R^{(n)},1]\big)\geq \dim_H K_\beta\big(t_R^{(n)}\big),
\]
and, again by the continuity of the map $t\mapsto \dim_H K_\beta(t)$, it follows that $\dim_H\big(\EE_\beta\cap[t,1]\big)\geq \dim_H K_\beta(t)$.
\end{proof}

In this proof we did not use property (ii) of Theorem \ref{thm:general-transitivity}. However, it will be needed later in the proof of Theorem \ref{thm:size-of-E-minus-B}. The following result illustrates the main idea.

\begin{proposition} \label{prop:E-B-equality}
Let $\beta{>1}$, and suppose that
\begin{enumerate}[{\rm(i)}]
\item For any $t_R\in\mathcal{T}_R(\beta)$, $\Kt_\beta(t_R)$ has a transitive sofic subshift $\mathcal{K}_\beta'(t_R)$ of full entropy;
\item The collection $\{\mathcal{K}_\beta'(t_R): t_R\in\mathcal{T}_R(\beta)\}$ is a strictly descending collection of subshifts; and
\item The $\beta$-Lyndon intervals are dense in $[0,\tau(\beta)]$.
\end{enumerate}
Then
\[
\EE_\beta\cap[0,\tau(\beta))=\BB_\beta\cap[0,\tau(\beta)).
\]
\end{proposition}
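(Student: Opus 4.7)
\bigskip

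\textbf{Proof proposal for Proposition \ref{prop:E-B-equality}.}

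The inclusion $\BB_\beta \subseteq \EE_\beta$ is immediate from the definitions, so the plan is to prove the reverse inclusion on $[0,\tau(\beta))$. Fix $t \in \EE_\beta\cap[0,\tau(\beta))$; to show $t\in\BB_\beta$ it suffices, via the identity $\dim_H K_\beta(t)=h(\Kt_\beta(t))/\log\beta$ from Kalle {\em et al.}, to prove that $h(\Kt_\beta(t'))<h(\Kt_\beta(t))$ for every $t'>t$. The easy half is $t'\ge\tau(\beta)$: then $\dim_H K_\beta(t')=0$ whereas $\dim_H K_\beta(t)>0$ by the definition of $\tau(\beta)$, so we are done. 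The main case is $t<t'<\tau(\beta)$.

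The idea is to sandwich the interval $(t,t')$ between two right endpoints of $\beta$-Lyndon intervals and then exploit the sofic/transitive structure of the subshifts $\mathcal{K}_\beta'(\cdot)$. By hypothesis (iii) the $\beta$-Lyndon intervals are dense in $[0,\tau(\beta)]$, so I can pick $t_1,t_2\in\mathcal{T}_R(\beta)$ with
\[
t<t_1<t_2<t'.
\]
Hypothesis (ii) then gives the strict inclusion $\mathcal{K}_\beta'(t_2)\subsetneq\mathcal{K}_\beta'(t_1)$, and hypothesis (i) tells us that both $\mathcal{K}_\beta'(t_1)$ and $\mathcal{K}_\beta'(t_2)$ are transitive sofic shifts with $h(\mathcal{K}_\beta'(t_i))=h(\Kt_\beta(t_i))$.

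The key step is the classical fact from symbolic dynamics (Lind--Marcus, Corollary~4.4.9) that any proper subshift of an irreducible (equivalently, transitive) sofic shift has strictly smaller topological entropy. Applying this to $\mathcal{K}_\beta'(t_2)\subsetneq\mathcal{K}_\beta'(t_1)$ yields
\[
h(\mathcal{K}_\beta'(t_2))<h(\mathcal{K}_\beta'(t_1)).
\]
Combining this strict inequality with the monotonicity of $t\mapsto\Kt_\beta(t)$ (namely $\Kt_\beta(t')\subseteq\Kt_\beta(t_2)$ and $\Kt_\beta(t_1)\subseteq\Kt_\beta(t)$, since $t<t_1<t_2<t'$) and the full-entropy property, I obtain
\[
h(\Kt_\beta(t'))\le h(\Kt_\beta(t_2))=h(\mathcal{K}_\beta'(t_2))<h(\mathcal{K}_\beta'(t_1))=h(\Kt_\beta(t_1))\le h(\Kt_\beta(t)),
\]
which shows $t\in\BB_\beta$ and completes the argument.

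The step I expect to require the most care is the sofic/transitive strict-entropy-drop: it is standard but must be applied to shifts $\mathcal{K}_\beta'(t_R)$ whose soficity and transitivity come from hypotheses (i)--(ii) rather than any direct construction here, so I want to cite Lind--Marcus explicitly. Otherwise the proof is a clean sandwich argument using density (iii) at both ends; no continuity of $\dim_H K_\beta(\cdot)$ is needed, and the key conceptual point is simply that density lets us place \emph{two} $\beta$-Lyndon right endpoints in $(t,t')$ so that the strict drop happens strictly between them, while the non-strict inclusions at the two ends suffice to propagate strictness to the comparison between $t$ and $t'$.
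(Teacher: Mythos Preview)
Your proof is correct and uses essentially the same key step as the paper (the Lind--Marcus strict entropy drop for proper subshifts of transitive sofic shifts); the paper phrases the argument as ``the plateaus of $t\mapsto\dim_H K_\beta(t)$ are precisely the $\beta$-Lyndon intervals'' and then reads off the conclusion, but the underlying logic is identical to your sandwich. One small point to make explicit: density alone does not guarantee that $(t,t')$ contains two right endpoints of $\beta$-Lyndon intervals---you need that $t\in\EE_\beta$ lies only at right endpoints of $\beta$-Lyndon intervals (never in their interiors or at left endpoints), so that any $\beta$-Lyndon interval meeting $(t,t')$ has its left endpoint strictly above $t$, and iterating this gives the two points you need.
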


\begin{proof}
We show that the plateaus of $\delta_\beta: t\mapsto\dim_H K_\beta(t)$ in $[0,\tau(\beta)]$ are precisely the $\beta$-Lyndon intervals. This gives the desired conclusion, since $\EE_\beta$ intersects the $\beta$-Lyndon intervals only in their right endpoints, and the $\beta$-Lyndon intervals are dense in $[0,\tau(\beta)]$.

It is enough to show that if $[t_L,t_R]$ and $[u_L,u_R]$ are two distinct $\beta$-Lyndon intervals with $t_R<u_R$, then $h\big(\Kt_\beta(t_R)\big)>h\big(\Kt_\beta(u_R)\big)$. By hypotheses (i) and (ii), $\mathcal{K}_\beta'(t_R)$ is sofic and transitive, and $\mathcal{K}_\beta'(u_R)$ is a proper subshift of $\mathcal{K}_\beta'(t_R)$. Hence, by \cite[Corollary 4.4.9]{Lind_Marcus_1995},
\[
h\big(\Kt_\beta(t_R)\big)=h\big(\mathcal{K}_\beta'(t_R)\big)>h\big(\mathcal{K}_\beta'(u_R)\big)=h\big(\Kt_\beta(u_R)\big).
\]
This completes the proof.
\end{proof}

As we will see later, the hypotheses of Proposition \ref{prop:E-B-equality} are satisfied when $\beta\in\{\beta_\ell^\cs, \beta_*^\cs,\beta_r^\cs\}$, for any $\cs\in\La$. Thus, for these values of $\beta$, any points of $\EE_\beta\backslash\BB_\beta$ must lie to the right of $\tau(\beta)$; see Theorem \ref{thm:E-minus-B}.

\section{The case $\beta\in E_L$} \label{sec:beta-in-E}

We begin with the case when $\beta\in E_L$. Our main result in this section is:

\begin{theorem} \label{thm:transitive-in-E_L}
Let $\beta\in E_L$. Then $\Kt_\beta(t_R)$ is transitive for any right endpoint $t_R$ of a $\beta$-Lyndon interval $[t_L,t_R]$ with $t_R<\tau(\beta)=1-(1/\beta)$. In other words, the conclusion of Theorem \ref{thm:general-transitivity} holds with $\II=\emptyset$ and $\mathcal{K}_\beta'(t_R)=\Kt_\beta(t_R)$ for each $t_R\in\mathcal{T}_R$.
\end{theorem}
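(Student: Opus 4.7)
The theorem packages three claims, but the real content is that $\Kt_\beta(t_R)$ is itself transitive whenever $t_R$ is the right endpoint of a $\beta$-Lyndon interval with $t_R<\tau(\beta)=1-1/\beta$. With $\II=\emptyset$ and $\mathcal{K}_\beta'(t_R):=\Kt_\beta(t_R)$, item (iii) of Theorem \ref{thm:general-transitivity} is vacuous, while (ii) is immediate: for $t_R<t_R'$ in $\mathcal{T}_R(\beta)$, the minimal sequence $\w^\f=b(t_R,\beta)$ lies in $\Kt_\beta(t_R)$ but $\w^\f\prec b(t_R',\beta)$, so $\w^\f\notin \Kt_\beta(t_R')$.

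Write $b(t_R,\beta)=\w^\f$ with $\w=w_1\dots w_p$ a $\beta$-Lyndon word. The hypothesis $t_R<1-1/\beta\leq 1/\beta$ (which uses $\beta\leq 2$) gives $\w^\f\prec 10^\f$, so $\w$ begins with $0$. Given $\u\in\cL(\Kt_\beta(t_R))$ and $\z\in\Kt_\beta(t_R)$, I would construct an explicit bridge as follows. Let $r\in\{0,1,\dots,p-1\}$ be maximal such that $\u$ ends with $w_1\dots w_r$, and set
\[
\v:=\begin{cases} w_{r+1}\dots w_p\,\w^N & \text{if } r\geq 1,\\ \w^N & \text{if } r=0,\end{cases}
\]
where $N$ is taken sufficiently large. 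The purpose of the leading block $w_{r+1}\dots w_p$ is to complete any partial $\w$ dangling off $\u$ into a full $\w$, thereby averting the Lyndon collision $(w_1\dots w_r)\w^\f\prec\w^\f$, while $\w^N$ furnishes a long buffer before $\z$.

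Verification that $\u\v\z\in\Kt_\beta(t_R)$ proceeds by case analysis on the shift position. Shifts inside $\v$ or at its junction with $\z$ match a cyclic rotation $\si^j(\w^\f)$ on a long prefix followed by $\z$; the lower bound then uses the Lyndon inequality $\si^j(\w^\f)\lge\w^\f$ together with $\z\lge\w^\f$, and the upper bound uses $\si^j(\w^\f)\prec\al(\beta)$ (since $\w^\f$ is a valid greedy expansion) together with $\z\lle\al(\beta)$, with $N$ large enough that the finitely many mismatches between $\si^j(\w^\f)$ and $\al(\beta)$ resolve inside the first $Np$ positions; the balanced structure of $\al(\beta)$ for $\beta\in E_L$ (Lemma \ref{lem:balanced-expansion}) is what ultimately forces $\z$ to respect the relevant shifts of $\al(\beta)$. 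For shifts inside $\u$, admissibility provides a parent $\x\in\Kt_\beta(t_R)$ containing $\u$, and the construction routes the critical shift at position $|\u|-r$ to $\w^{N+1}\z$, already handled; positions where the suffix $\u_{[n+1:]}$ strictly exceeds the corresponding prefix of $\w^\f$ need no further work.

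The main obstacle is the remaining subcase: shifts at positions $|\u|-s$ with $s<r$ where the tail of $\u$ happens to equal $w_1\dots w_s$ through self-overlaps of $\w$. The plan is to compare the new shift $(w_1\dots w_s)(w_{r+1}\dots w_p)\w^N\z$ with the parent shift $\si^{|\u|-s}(\x)\lge\w^\f$: both start with $w_1\dots w_s$, so the comparison with $\w^\f$ reduces to comparing the tails, and the Lyndon-rotation inequalities $\si^s(\w^\f)\succ\w^\f$ for $1\leq s\leq p-1$ control the sign. Carefully tracking these prefix matches --- the most intricate combinatorial part of the proof --- closes the argument.
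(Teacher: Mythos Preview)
Your direct bridge $\v = w_{r+1}\dots w_p\,\w^N$ fails at the junction with $\z$, and the gap is concrete. Take $\beta=\beta_\ell^{001}\in E_L$, so $\al(\beta)=(100)^\f$; take $\w=0001$ (which is $\beta$-Lyndon with $\w^\f\prec 0\si(\al(\beta))$) and $\z=\al(\beta)\in\Kt_\beta(t_R)$. Your $\v$ ends in $\w$, hence in the digit $1$, so $\u\v\z$ contains the block $1\z=1(100)^\f$. At the shift landing on this position, $\si^n(\u\v\z)=1(100)^\f\succ(100)^\f=\al(\beta)$, violating the upper bound. No choice of $N$ repairs this: the problem is the last digit of $\v$, not its length. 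Your sentence ``the upper bound uses $\si^j(\w^\f)\prec\al(\beta)$ \dots\ together with $\z\lle\al(\beta)$'' precisely misses this case --- from $\z\lle\al(\beta)$ one cannot conclude $1\z\lle\al(\beta)$.

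The paper's proof is organized around exactly this obstruction. It isolates a ``Property TL'' (Definition~\ref{def:property-TL}): for every $\z\in\Kt_\beta(\w)$ there is a connector $\v$ with (i) $\v\prec\al_{j+1}\dots\al_{j+|\v|}$ for \emph{every} $j\ge0$, and (ii) $\v\z\in\Kt_\beta(\w)$. Condition~(i) is what lets one splice $\v$ after \emph{any} prefix of $\al(\beta)$ that might dangle off $\u$ (this is where the balancedness inequality $0\si(\al(\beta))\lle\si^j(\al(\beta))$ enters; see Lemma~\ref{lem:transitivity-simplified}), and condition~(ii) handles the junction with $\z$. Crucially, Property~TL is \emph{not} established by a direct construction; it is proved by induction on $|\w|$, using Lemma~\ref{lem:alpha-renormalizing} to pull back along one of the substitutions $U_0,U_1$ to a shorter $\hat\w$ and a base $\hat\beta\in E_L$, and then pushing the connector $\hat\v$ forward via $U_0$ or $U_1$. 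Your proposal never invokes this recursive structure of $E_L$; the one-line appeal to ``the balanced structure of $\al(\beta)$'' is not a substitute for it. (Incidentally, the ``main obstacle'' you flag --- self-overlaps of $\w$ --- is a non-issue: Lyndon words are unbordered, so for $0<s<r$ the tail $w_{r-s+1}\dots w_p\w^N\z$ already strictly exceeds $\w^\f$ by the Lyndon inequality.)
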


We will prove Theorem \ref{thm:transitive-in-E_L} after developing some lemmas. {We deal first with $1<\beta<2$ and then show how to derive the general case.}

%\begin{definition} \label{def:extensible}
%Say a subshift $\mathcal{X}$ of $\{0,1\}^\N$ is {\em left-extensible} if for each sequence $\z\in\mathcal{X}$ there is a digit $d\in\{0,1\}$ such that $d\z\in\mathcal{X}$.
%\end{definition}

\begin{lemma} \label{lem:extensibility}
Let $1<\beta<2$, and $0<t<1-(1/\beta)$. Then for every sequence $\z\in \Kt_\beta(t)$ there exist a word $\u$ beginning with $0$ and a word $\v$ beginning with $1$ such that $\u\z, \v\z \in \Kt_\beta(t)$.
%\begin{enumerate}[{\rm(i)}]
%\item The subshift $\Kt_\beta(t)$ is left-extensible.
%\item If $\mathcal{K}'$ is a left-extensible subshift of $\Kt_\beta(t)$, then for every sequence $\z\in \mathcal{K}'$ there exist a word $\u$ beginning with $0$ and a word $\v$ beginning with $1$ such that $\u\z$ and $\v\z$ belong to $\mathcal{K}'$.
%\end{enumerate}
\end{lemma}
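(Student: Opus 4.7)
The plan is to construct $\u$ and $\v$ by starting with the shortest possible candidates and handling the exceptional cases with longer words built out of prefixes of $\alpha(\beta)$ or $b(t,\beta)$.

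\medskip
\noindent\textbf{Construction of $\v$.} I would first try the minimal choice $\v := 1$. For this, the constraints $b(t,\beta)\lle \sigma^n(1\z)\lle \alpha(\beta)$ for $n\ge 1$ reduce to the corresponding constraints on $\sigma^{n-1}(\z)$, which hold since $\z\in\Kt_\beta(t)$. At $n=0$, the lower bound $1\z\lge b(t,\beta)$ is immediate because $b(t,\beta)$ begins with $0$ (as $t<1/\beta$). The only possibly problematic condition is $1\z\lle\alpha(\beta)$, i.e.\ $\z\lle\sigma(\alpha(\beta))$.

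If $\z\lle\sigma(\alpha(\beta))$ we are done. Otherwise $\sigma(\alpha(\beta))\prec\z\lle\alpha(\beta)$. Let $p\ge 1$ be the length of the longest common prefix of $\z$ and $\alpha(\beta)$ (taking $p$ arbitrary with $\alpha_{p+1}=1$ in the edge case $\z=\alpha(\beta)$). One verifies that necessarily $\alpha_{p+1}=1$ (the discrepancy $z_{p+1}<\alpha_{p+1}$ forces this). Set
\[
\v:=\alpha_1\alpha_2\cdots\alpha_p 0.
\]
Then $\v\z$ and $\alpha(\beta)$ agree through position $p$ but disagree at position $p+1$ (value $0$ versus $1$), hence $\v\z\prec\alpha(\beta)$. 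For the intermediate shifts $\sigma^i(\v\z)=\alpha_{i+1}\cdots\alpha_p 0\z$ with $1\le i\le p$, the shift-maximality $\sigma^i(\alpha(\beta))\lle\alpha(\beta)$ together with the fact $\alpha_{p+1}=1$ forces $\sigma^i(\v\z)\prec\alpha(\beta)$: either the prefix $\alpha_{i+1}\cdots\alpha_p$ is already strictly below $\alpha_1\cdots\alpha_{p-i}$, or it equals this prefix, in which case the appended $0$ sits against $\alpha_{p-i+1}$; and from $\alpha_{j+p-j+1}=\alpha_{p-j+1}$ in the matching case combined with $\alpha_{p+1}=1$ one reads off $\alpha_{p-i+1}=1$, again yielding strict inequality.

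\medskip
\noindent\textbf{Construction of $\u$.} Dually, first try $\u:=0$. The only non-automatic condition is $0\z\lge b(t,\beta)$, equivalent to $\z\lge\sigma(b(t,\beta))$. If this holds we are done. Otherwise $b(t,\beta)\lle\z\prec\sigma(b(t,\beta))$, and one constructs a longer $\u$ from a suitably modified prefix of $b(t,\beta)$ (flipping a terminal digit up) so that $\u\z\succ b(t,\beta)$ at a specific position while remaining below $\alpha(\beta)$.

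\medskip
\noindent\textbf{Main obstacle.} The delicate step is the verification of the \emph{lower} bound $\sigma^i(\v\z)\lge b(t,\beta)$ at the intermediate shifts $1\le i\le p$ in the hard case for $\v$ (and the analogous upper bound for $\u$). The trouble is that $\alpha(\beta)$ can have arbitrarily long runs of $0$'s, so a shift $\sigma^i(\v\z)$ may start with many zeros, potentially exceeding the number of leading zeros of $b(t,\beta)$. This is precisely where the hypothesis $t<1-1/\beta$ is used: since the quasi-greedy expansion of $1-1/\beta$ is $0\alpha_2\alpha_3\cdots$, this hypothesis is equivalent to $\sigma(b(t,\beta))\prec\sigma(\alpha(\beta))$, giving enough slack in $b(t,\beta)$ to absorb these zero runs. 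I expect the bulk of the careful bookkeeping to lie in converting that one lexicographic inequality into the required lower bound at every intermediate shift.
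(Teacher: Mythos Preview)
Your approach is substantially more complicated than the paper's, and the ``main obstacle'' you flag is real: verifying $\sigma^i(\v\z)\lge b(t,\beta)$ for your choice $\v=\alpha_1\cdots\alpha_p 0$ at intermediate $i$ is not straightforward, and you have not actually done it. In particular, when $\alpha_{i+1}\cdots\alpha_p$ happens to equal $b_1\cdots b_{p-i}$, you are reduced to proving $\z\lge\sigma^{p-i+1}(b(t,\beta))$, which does not follow directly from $\z\in\Kt_\beta(t)$ and requires further case analysis.

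The paper sidesteps all of this with a single clean observation: if $1\z\notin\Kt_\beta(t)$, then necessarily $1\z\succ\alpha(\beta)$, hence $0\z\succ 0\sigma(\alpha(\beta))=b(1-1/\beta,\beta)\succ b(t,\beta)$, so $0\z\in\Kt_\beta(t)$. Thus \emph{every} $\z\in\Kt_\beta(t)$ admits a one-digit left extension. Now iterate: since $\alpha(\beta)$ begins with $1^N0$ and $b(t,\beta)$ begins with $0^M1$ (both $N,M\ge 1$), no sequence in $\Kt_\beta(t)$ contains a run of more than $N$ ones or $M$ zeros, so repeated one-digit extension must eventually produce an extension beginning with $0$ and another beginning with $1$. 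The hypothesis $t<1-1/\beta$ is used exactly once, to obtain $b(1-1/\beta,\beta)\succ b(t,\beta)$; there is no intermediate-shift bookkeeping at all.
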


%Note by (i) that (ii) applies in particular to $\mathcal{K}'=\Kt_\beta(t)$.

\begin{proof}
Let $\z\in\Kt_\beta(t)$. Suppose $1\z\not\in\Kt_\beta(t)$. Then $1\z\succ\al(\beta)$, and hence $0\z\succ 0\si(\al(\beta))=b(1-1/\beta,\beta)\succ b(t,\beta)$. Therefore, $0\z\in\Kt_\beta(t)$. So $\z$ can always be extended to the left.

Let $N$ be the number of consecutive $1$'s at the beginning of $\al(\beta)$, and $M$ be the number of consecutive $0$'s at the beginning of $b(t,\beta)$. Then $N,M\geq 1$ since $\beta>1$ and $t<1-1/\beta<1/\beta$. Clearly, no sequence in $\Kt_\beta(t)$  can have more than $N$ consecutive $1$'s or $M$ consecutive $0$'s anywhere. Hence, the lemma follows.
\end{proof}

%In what follows, we let $\cL(\XX)$ denote the language of a subshift $\XX$; that is, the set of all finite words that occur in some sequence in $\XX$.
Recall that $\cL(\XX)$ denotes the language of the subshift $\XX$, and $\mathcal{X}$ is transitive if for any word $\u\in\cL(\mathcal{X})$ and sequence $\z\in\mathcal{X}$, there is a word $\v$ such that $\u\v\z\in\mathcal{X}$. The next definition and lemma will be used to simplify transitivity proofs in this section, by removing the word $\u$ from consideration.

If $b(t,\beta)=\w^\f$ for a Lyndon word $\w$, we also write $\Kt_\beta(\w)$ instead of $\Kt_\beta(t)$. So
\[
\Kt_\beta(\w):=\{\z\in{A_\beta}^\N: \w^\f\lle \sigma^n(\z)\lle \alpha(\beta)\ \forall n\geq 0\}.
\]

\begin{definition} \label{def:property-TL}
Let $\beta{>1}$ and write $\al(\beta)=\al_1\al_2\ldots$. We say a Lyndon word $\w$ has {\em Property TL} for $\beta$ if for each sequence $\z\in\Kt_\beta(\w)$ there is a word $\v=v_1\dots v_n$ such that
\begin{enumerate}[(i)]
\item $\v\prec \al_{j+1}\dots\al_{j+n}$ for all $j\geq 0$; and
\item $\w^\f\lle\si^i(\v\z)\lle \al(\beta)$ for all $i\geq 0$, i.e. $\v\z\in\Kt_\beta(\w)$.
\end{enumerate}
\end{definition}
%{\color{red}[Maybe we can require the sequence $\z\in\Kt_\beta(\w)$ beginning with digit $0$ in the above definition, since latter in the proofs of Theorem 5.1 we always assume $\z$ begins with $0$.]}

\begin{lemma} \label{lem:transitivity-simplified}
Let $\beta\in\overline{E}$, let $[t_L,t_R]$ be a $\beta$-Lyndon interval with $t_R<1-(1/\beta)$, and let $\w=w_1\dots w_m$ be the $\beta$-Lyndon word such that $b(t_R,\beta)=\w^\f$. If $\w$ satisfies Property TL, then $\Kt_\beta(t_R)$ is transitive.
\end{lemma}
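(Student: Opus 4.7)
\medskip
The plan is to show that $\Kt_\beta(t_R)$ is transitive by applying Property TL directly to the sequence $\z$ and then verifying that the resulting word $\v$ serves as the bridging word. Fix arbitrary $\u\in\cL(\Kt_\beta(t_R))$ and $\z\in\Kt_\beta(t_R)$. Since $b(t_R,\beta)=\w^\f$, we have $\Kt_\beta(t_R)=\Kt_\beta(\w)$, and Property TL applied to $\z$ yields a word $\v=v_1\dots v_n$ satisfying (i) $\v\prec\al_{j+1}\dots\al_{j+n}$ for every $j\geq 0$, and (ii) $\v\z\in\Kt_\beta(\w)$. The goal is to prove $\u\v\z\in\Kt_\beta(\w)$, which immediately gives transitivity.

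To verify $\u\v\z\in\Kt_\beta(\w)$ I would check that $\w^\f\lle\si^i(\u\v\z)\lle\al(\beta)$ for every $i\geq 0$. For $i\geq|\u|$ both inequalities are immediate from (ii) and from $\z\in\Kt_\beta(\w)$. For $0\leq i<|\u|$ write $\si^i(\u\v\z)=u_{i+1}\dots u_{|\u|}\,\v\,\z$. For the upper bound, pick $\y$ with $\u\y\in\Kt_\beta(\w)$; then, comparing words of length $|\u|-i$, one has $u_{i+1}\dots u_{|\u|}\lle\al_1\dots\al_{|\u|-i}$. If the inequality is strict, the concatenated sequence is strictly below $\al(\beta)$. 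If it is an equality, property (i) with $j=|\u|-i$ gives $\v\prec\al_{|\u|-i+1}\dots\al_{|\u|-i+n}$, so that $u_{i+1}\dots u_{|\u|}\v\prec\al_1\dots\al_{|\u|-i+n}$, which forces $u_{i+1}\dots u_{|\u|}\v\z\lle\al(\beta)$.

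The main obstacle is the lower bound in the analogous equality case, namely when $u_{i+1}\dots u_{|\u|}=w_1\dots w_{|\u|-i}$ is a prefix of $\w^\f$. Setting $q:=|\u|-i$ and writing $q=km+r$ with $0\leq r<m$ and $m:=|\w|$, the requirement becomes $\v\z\lge\si^r(\w^\f)$, which is strictly stronger than the consequence $\v\z\lge\w^\f$ supplied by Property TL (ii) whenever $r>0$. I would resolve this by augmenting the bridging word: when $r>0$ for some problematic $i$, replace $\v$ by $\v':=w_{r+1}\dots w_m\v$, so that the concatenation $u_{i+1}\dots u_{|\u|}(w_{r+1}\dots w_m)\v\z$ begins with $k+1$ complete copies of $\w$ followed by $\v\z$. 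The lower bound then reduces to $\v\z\lge\w^\f$, which is exactly property (ii). The Lyndon property of $\w$ ensures that the prefixes of $\w^\f$ that can appear as suffixes of $\u$ are compatible: if more than one equality index $i$ occurs, the corresponding residues $r$ are all the same (up to shift), so a single augmentation works for all of them simultaneously.

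It remains to verify that the augmentation $\v\leadsto\v'$ does not disturb the upper bound or create a new lower-bound violation at other shifts. This is a routine case check using the same word-level inequalities that established the upper bound above, together with property (i) applied to $\v'$ (which still holds because the prepended block $w_{r+1}\dots w_m$ is a factor of the balanced sequence $\al(\beta)$ shifted appropriately, as $\beta\in\overline{E}$ and Lemma \ref{lem:balanced-expansion} applies). The hypothesis $t_R<1-1/\beta$ enters via Lemma \ref{lem:extensibility}, which guarantees the existence of the extension $\u\y$ used above and rules out degenerate configurations. The delicate simultaneous compatibility of the augmentation across all problematic indices $i$ is the step I expect to require the most care.
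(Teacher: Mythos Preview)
Your overall strategy—append a suffix $w_{r+1}\dots w_m$ of $\w$ to complete a full period, then use the $\v$ from Property~TL—is exactly the paper's approach. However, your execution contains a genuine error. The claim that ``the corresponding residues $r$ are all the same (up to shift)'' is false. Take $\w=001011$ and let $\u$ end in $00101100$: the suffixes of $\u$ that are prefixes of $\w^\f$ have lengths $1,2,8$, giving residues $1,2,2$ modulo $6$. If you augment using $r=1$ (appending $01011$), then at the index $i$ corresponding to the length-$8$ suffix one gets $\si^i(\u\v'\z)=00101100\,01011\dots\prec\w^\f$, violating the lower bound. The fix, which the paper makes explicit, is to use the residue coming from the \emph{longest} such suffix (equivalently the minimal index $j$); then for all smaller $i<j$ the suffix $u_{i+1}\dots u_{|\u|}$ is \emph{not} a prefix of $\w^\f$, so $\si^i(\c)\succ\w^\f$ strictly and no compatibility issue arises.

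Your treatment of the upper bound after augmentation is also too vague. You assert that property~(i) holds for $\v'=w_{r+1}\dots w_m\v$ ``because the prepended block is a factor of the balanced sequence $\al(\beta)$'', but the Lyndon property gives $w_{r+1}\dots w_m\succ w_1\dots w_{m-r}$, so one cannot simply transfer the bound. The paper instead proves and uses the inequality $0\si(\al(\beta))\lle\si^j(\al(\beta))$ for all $j\ge0$ (a direct consequence of $\al(\beta)$ being balanced), and combines it with $\w^\f\prec 0\si(\al(\beta))$ (from $t_R<1-1/\beta$) to get $\w^k\lle\al_{l+1}\dots\al_{l+km}$ for every $l$. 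This is where the hypotheses $\beta\in\overline{E}$ and $t_R<1-1/\beta$ actually enter; Lemma~\ref{lem:extensibility} is not needed here.
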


\begin{proof}
{By Proposition \ref{prop:Sturmian} (i) the sequence $\al(\beta)=(\al_i)$ is balanced.} 
{Let $M$ be the integer such that $\beta\in(M,M+1]$.}
We first show that
\begin{equation} \label{eq:0-alpha-inequality}
{(M-1)\si(\al(\beta))=(M-1)\al_2\al_3\dots \lle \si^j(\al(\beta)) \qquad \forall\,j\geq 0.}
\end{equation}
This is trivial for $j=0$, so assume $j\geq 1$. Suppose the contrary; then there is an index $k\geq 2$ such that ${(M-1)}\al_2\dots\al_{k-1}=\al_{j+1}\dots\al_{j+k-1}$ and $\al_k={M}$, $\al_{j+k}={M-1}$. {(Since $\beta\in \overline{E}\cap(M,M+1]$, $\al(\beta)\in\{M-1,M\}^\N$ by Lemma \ref{lem:two-digits}.)} But then the word $\al_1\dots\al_k$ has two more {$M$'s} than the word $\al_{j+1}\dots\al_{j+k}$ of the same length, contradicting that $\al(\beta)$ is balanced. Therefore, we have \eqref{eq:0-alpha-inequality}.

Let $\u=u_1\dots u_n\in\cL\big(\Kt_\beta(t_R)\big)$ and $\z\in\Kt_\beta(t_R)$ be given. Let $j$ be the smallest index such that $u_{j+1}\dots u_n$ is a prefix of $\w^\f$, say $u_{j+1}\dots u_n=(w_1\dots w_m)^{k-1}w_1\dots w_r$ with $k\in\N$ and $0\leq r<m$. If no such $j$ exists, set $j:=n$, $r:=0$ and $k:=1$. Let
\[
\u':=u_1\dots u_n w_{r+1}\dots w_m=u_1\dots u_j \w^k.
\]
Let $\v$ be the word satisfying (i) and (ii) in Definition \ref{def:property-TL} for the given $\z$. We claim that $\c:=\u'\v\z\in\Kt_\beta(t_R)$.

We check first that $\si^i(\c)\lge \w^\f$ for all $i\geq 0$. For $0\leq i<j$ this follows from the minimality of $j$. For $j\leq i<j+km$ and $m\nmid(i-j)$ it follows since $\w$ is Lyndon, so $w_{l+1}\dots w_m\succ w_1\dots w_{m-l}$ for all $1\leq l<m$. And for all other $i\geq j$ the inequality follows from assumption (ii).

Next, we check that $\si^i(\c)\lle \al(\beta)$ for all $i\geq 0$. For $0\leq i<j$, this follows since
\begin{enumerate}[(a)]
\item $u_{i+1}\dots u_j\lle\al_1\dots\al_{j-i}$;
\item $\w^\f\prec b(1-1/\beta,\beta)={(M-1)}\si(\al(\beta))$, so $\w^k\lle \al_{j-i+1}\dots\al_{j-i+km}$ by \eqref{eq:0-alpha-inequality}; and
\item $\v\prec\al_{j-i+km+1}\dots\al_{j-i+km+n}$ by {assumption}  (i).
\end{enumerate}
For $j\leq i<j+km$, the inequality follows since $\si^{i-j}(\w^k)\lle \al_1\dots\al_{km-(i-j)}$ because $\w$ is $\beta$-Lyndon, and $\v\prec\al_{km-(i-j)+1}\dots\al_{km-(i-j)+n}$ by (i). Finally, for $i\geq j+km$, $\si^i(\c)\lle \al(\beta)$ follows immediately from {assumption} (ii).
\end{proof}

{The next two lemmas are specific to $\beta\in(1,2]$.}

\begin{lemma} \label{lem:upper-bound-equivalence-xi}
Let $\beta$ and $\hat{\beta}$ be bases such that $0\al(\beta)=U_0(\al(\hat{\beta}))$.
Let $\z=(z_1,z_2,\dots)$ and $\hat{\z}=(\hat{z}_1,\hat{z}_2,\dots)$ be sequences in $\{0,1\}^\N$ such that $\z=U_0(\hat{\z})$. Then
\[
\si^n(\z)\lle\alpha(\beta) \quad \forall\,n\geq 0 \qquad \Longleftrightarrow \qquad \si^k(\hat{\z})\lle\alpha(\hat{\beta}) \quad \forall\,k\geq 0.
\]
\end{lemma}

\begin{proof}
Suppose first $\si^k(\hat{\z})\lle\alpha(\hat{\beta})$ for all $k\geq 0$. Clearly it is enough to show that $\si^n(\z)\lle\alpha(\beta)$ whenever $z_{n+1}=1$. This implies $n\geq 1$, because $\z=U_0(\hat{\z})$ begins with $0$. Since furthermore $\z$ does not contain the word $11$, we have $\si^{n-1}(\z)=U_0(\si^k(\hat{\z}))$ for some $k\geq 0$. Since $U_0$ is increasing, this implies
\[
0\si^n(\z)=\si^{n-1}(\z)=U_0(\si^k(\hat{\z}))\lle U_0(\al(\hat{\beta}))=0\al(\beta),
\]
and hence, $\si^n(\z)\lle \al(\beta)$.

Suppose next that $\si^n(\z)\lle\alpha(\beta)$ for all $n\geq 0$. For given $k\geq 0$, we can find $n\geq 0$ such that $\sigma^n(\z)=U_0(\sigma^k(\hat{\z}))$ and moreover, $\si^{n}(\z)$ begins with $0$. So
\[
U_0(\si^k(\hat{\z}))=\si^n(\z)=0\si^{n+1}(\z)\lle 0\alpha(\beta)=U_0(\alpha(\hat{\beta})),
\]
and as a result, $\si^k(\hat{\z})\lle\alpha(\hat{\beta})$.
\end{proof}

\begin{lemma} \label{lem:upper-bound-equivalence-eta}
Let $\beta$ and $\hat{\beta}$ be bases such that $0\al(\beta)=U_1(0\al(\hat{\beta}))$.
Let $\z=(z_1,z_2,\dots)$ and $\hat{\z}=(\hat{z}_1,\hat{z}_2,\dots)$ be sequences in $\{0,1\}^\N$ such that $\z=U_1(\hat{\z})$. Then
\[
\si^n(\z)\lle\alpha(\beta) \quad \forall\,n\geq 0 \qquad \Longleftrightarrow \qquad \si^k(\hat{\z})\lle\alpha(\hat{\beta}) \quad \forall\,k\geq 1.
\]
\end{lemma}

%{\color{red}[From the proof below it follows that the first inequalities $\si^n(\z)\lle \al(\beta)$ should hold only for $n\ge 1$.]}
%{[The statement of the lemma was correct, but the proof needed more care!]}

\begin{proof}
{
Suppose first that $\si^k(\hat{\z})\lle\alpha(\hat{\beta})$ for all $k\geq 1$. Since $\mathbf{c}\lle 1\mathbf{c}$ for any sequence $\mathbf{c}$, it suffices to show $\si^n(\z)\lle\alpha(\beta)$ for $n=0$ and for $n\geq 1$ with $z_n=0$. If $n=0$, observe that the inequality is trivial when $z_1=0$, so assume $z_1=1$; then $\hat{z}_1=1$ as well. Hence,
\[
\z=U_1(\hat{\z})=1U_1(\si(\hat{\z}))\lle 1U_1(\al(\hat{\beta}))=\al(\beta).
\]
Next, let $n\geq 1$ and $z_n=0$. Then there is a $k\geq 0$ such that $0\si^n(\z)=U_1(\si^k(\hat{\z}))$. This implies that $\si^k(\hat{\z})$ begins with $0$, so $\si^k(\hat{\z})=0\si^{k+1}(\hat{\z})$. Since $U_1$ is increasing, it follows that 
\[
0\si^n(\z)=U_1(0\si^{k+1}(\hat\z))\lle U_1(0\alpha(\hat{\beta}))=0\al(\beta),
\] 
and hence, $\si^n(\z)\lle\al(\beta)$. %{\color{red}[Note that here $n\ge 1$.]}

Conversely, suppose $\si^n(\z)\lle\alpha(\beta)$ for all $n\geq 0$. Write $\hat{\z}=\hat{z}_1\hat{z}_2\dots$. Let $k\geq 1$ be given. Since $\mathbf{c}\lle 1\mathbf{c}$ for any sequence $\mathbf{c}$, the critical case is when either $\hat{z}_k=0$ or else $k=1$ and $\hat{z}_1=1$.
Assume first that $\hat{z}_k=0$.
%because $\hat{\y}\lle 1\hat{\y}$ for any sequence $\hat{\y}$.
Then we can find $n\geq 0$ such that $\sigma^n(\z)=U_1(\sigma^{k-1}(\hat{\z}))$, and in particular, $\sigma^n(\z)$ begins with $0$. Thus,
\[
U_1(0\si^k(\z))=U_1(\sigma^{k-1}(\hat{\z}))=\sigma^n(\z)=0\sigma^{n+1}(\z)\lle 0\alpha(\beta)=U_1(0\alpha(\hat{\beta})),
\]
%{\color{red}[In the above inequality we only use the fact that $\si^i(\z)\lle \al(\beta)$ for all $i\ge 1$.]}
and since $U_1$ is strictly increasing, it follows that $\sigma^k(\hat{\z})\lle\alpha(\hat{\beta})$.

Next, assume $k=1$ and $\hat{z}_1=1$. Then, as in the first part of the proof,
\[
1U_1(\si(\hat{\z}))=\z\lle\al(\beta)=1U_1(\al(\hat{\beta})),
\]
and hence, $\si(\hat{\z})\lle \al(\hat{\beta})$.
}
\end{proof}

\begin{remark} \label{rem:upper-bound-equivalences}
(a) Both Lemmas \ref{lem:upper-bound-equivalence-xi} and \ref{lem:upper-bound-equivalence-eta} continue to hold if we replace the inequalities with strict inequalities, because the maps $U_0$ and $U_1$ are strictly increasing.

(b) In Lemma \ref{lem:upper-bound-equivalence-eta} we had to exclude the value $k=0$. For example, let $\al(\hat{\beta})$ begin with $10$, so $0\al(\beta)=U_1(0\al(\hat{\beta}))$ begins with $01101$ and consists of blocks $01$ and $011$. Let $\hat{\z}=110^\f$, so $\z=U_1(\hat{\z})=11(01)^\f$. Then $\si^n(\z)\lle\al(\beta)$ for all $n\geq 0$, yet $\hat{\z}\succ \al(\hat{\beta})$.
\end{remark}

\begin{proof}[Proof of Theorem \ref{thm:transitive-in-E_L}]
{Let $\beta\in E_L$.}
{Assume first that $\beta\in(1,2]$.}
By Lemma \ref{lem:transitivity-simplified} it is enough verify Property TL for each $\w\in L^*$ with $\w^\f\prec 0\si(\al(\beta))$. We use induction on the length of the generating word $\w$ of $[t_L,t_R]$.

The shortest Lyndon word that can generate a $\beta$-Lyndon interval is $\w=01$. Since $\w$ is $\beta$-Lyndon, we have
$\al(\beta)\succ \L(\w)^\f=\al(\beta_\ell^\w)$.
Furthermore, $\w\in\F$, and $\beta\in E_L$. So by (\ref{eq:bifurction-E-L}) it follows that
\begin{equation*} %\label{eq:alpha-2}
\alpha(\beta)\succ \al(\beta_r^\w)=\L(\w)^+\w^\f=11(01)^\f.
\end{equation*}
In particular, $\al(\beta)$ begins with $11$. By Lemma \ref{lem:not-0-1-repeated}, $\al(\beta)$ does not end in $(01)^\f$, so there is an $M\in\N$ such that $\al(\beta)$ does not contain the word $(01)^M$. {Take $\z\in\Kt_\beta(\w)$. In view of Lemma \ref{lem:extensibility},} assume $\z$ begins with $0$, and set $\v:=(01)^M$. Since $\al(\beta)$ is balanced by Lemma \ref{lem:balanced-expansion}, and $\al(\beta)$ begins with $11$, it does not contain the word $00$. Conditions (i) and (ii) of Definition \ref{def:property-TL} are clearly satisfied. Hence $\w$ satisfies Property TL.

Now let $\w$ be a Lyndon word of length $\geq 3$. By Lemma \ref{lem:alpha-renormalizing}, there is a base $\hat{\beta}\in E_L$ such that
$0\al(\beta)=U_0(\al(\hat{\beta}))$ or $0\al(\beta)=U_1(0\al(\hat{\beta}))$.

\medskip

{\bf Case (A)}. Assume first that
\begin{equation} \label{eq:quasi-greedy-relationship-b2}
0\alpha(\beta)=U_0(\alpha(\hat{\beta})).
\end{equation}
Then $\al(\beta)$ does not contain the word 11, so $\w$ does not contain this word either, since $\w$ is $\beta$-Lyndon. Thus, since $|\w|\geq 3$ and $\w$ is Lyndon, $\w$ must begin with $00$, which implies $\w=U_0(\hat{\w})$ for some word $\hat{\w}$ beginning with $0$ and ending with $1$. Since $U_0$ is strictly increasing, $\hat{\w}$ is again Lyndon. Lemma \ref{lem:upper-bound-equivalence-xi} (with strict, instead of weak inequalities) implies that $\si^k(\hat{\w}^\f)\prec \alpha(\hat{\beta})$ for all $k\geq 0$. Thus, $\hat{\w}^\f=b(\hat{t}_R,\hat{\beta})$ for some $\hat{t}_R\in(0,1)$, and
\begin{equation}  \label{eq:p16-2}
  U_0(b(\hat{t}_R,\hat{\beta}))=U_0(\hat\w^\f)=\w^\f=b(t_R,\beta).
\end{equation}
Furthermore, the assumption $\w^\f\prec 0\si(\alpha(\beta))$ implies $\hat{\w}^\f\prec 0\si(\alpha(\hat{\beta}))$, because \eqref{eq:quasi-greedy-relationship-b2} implies $\al(\beta)=1U_0\big(\si(\al(\hat{\beta}))\big)$, and so
\begin{equation} \label{eq:shifted-substitution}
0\si(\al(\beta))=0U_0\big(\si(\al(\hat{\beta}))\big)=U_0\big(0\si(\al(\hat{\beta}))\big).
\end{equation}
Hence, $\hat{t}_R<1-1/\hat{\beta}$.

Now let $\z\in\Kt_\beta(t_R)$ be given, and assume $\z$ begins with $0$ {by Lemma \ref{lem:extensibility}}. Observe that
\begin{equation} \label{eq:p16-1}
b(t_R,\beta)\lle \si^n(\z)\lle\alpha(\beta)\quad\forall n\geq 0.
\end{equation}
Since $\al(\beta)$ begins with $10$, $\z$ cannot contain the word $11$, and so $\z=U_0(\hat{\z})$ for some sequence $\hat{\z}$. By (\ref{eq:p16-2}), (\ref{eq:p16-1}) and Lemma \ref{lem:upper-bound-equivalence-xi} it follows that
\[
b(\hat t_R, \hat\beta)=\hat\w^\f\lle\si^k(\hat{\z})\lle\alpha(\hat{\beta})\quad\textrm{for all }k\geq 0.
\]
As a result, $\hat{\z}\in \Kt_{\hat{\beta}}(\hat{t}_R)$ with $\hat t_R<1-1/\hat\beta$.

Since $\hat{\w}$ is shorter than $\w$, the induction hypothesis now implies that $\hat{\w}$ has property TL for $\hat{\beta}$, so with $\al(\hat{\beta})=:\hat{\al}_1\hat{\al}_2\dots$, %{for the above $\hat\z\in\Kt_{\hat\beta}(\hat t_R)$}
 there is a word $\hat{\v}$ such that
\begin{enumerate}[(i)]
\item $\hat{\v}\prec \hat{\al}_{j+1}\dots\hat{\al}_{j+|\hat{\v}|}$ for all $j\geq 0$; and
\item $\hat{\w}^\f\lle\si^i(\hat{\v}\hat{\z})\lle \al(\hat{\beta})$ for all $i\geq 0$.
\end{enumerate}
Set $\v:=U_0(\hat{\v})$. Then $\v$ satisfies condition (i) of Definition \ref{def:property-TL} because $U_0$ is strictly increasing. Furthermore, the lower bound $\si^i(\v\z)\lge\w^\f$ is satisfied because $U_0$ is increasing and $\w$ is Lyndon. Finally, the upper bound $\si^i(\v\z)\lle\al(\beta)$ follows from (ii) above and Lemma \ref{lem:upper-bound-equivalence-xi}. Hence, $\w$ has Property TL for $\beta$.

\medskip
{\bf Case (B).} Assume next that
\begin{equation} \label{eq:quasi-greedy-relationship-a}
0\alpha(\beta)=U_1(0\alpha(\hat{\beta})).
\end{equation}
Then $\al(\beta)\neq (10)^\f$, for otherwise $\al(\hat{\beta})$ would equal $0^\f$, which isn't possible. {Note by (\ref{eq:quasi-greedy-relationship-a}) that $00$ is forbidden in $\al(\beta)$.} So $\al(\beta)$ begins with $11$, and by Lemma \ref{lem:not-0-1-repeated}, $\al(\beta)$ does not end in $(01)^\f$. Hence there is a positive integer $M$ such that $\al(\beta)$ does not contain the word $(01)^M$. Furthermore, $\al(\beta)$ does not contain the word $00$.

If $\w$ begins with $00$, {then for $\z\in\Kt_\beta(\w)$ beginning with $0$, we} take $\v=(01)^M$;   it is easy to see that $\v$ satisfies the conditions of Definition \ref{def:property-TL}. Hence, $\w$ has Property TL for $\beta$.

Assume therefore, for the remainder of the proof, that $\w$ begins with $01$. Then $\w$ does not contain the word $00$ because $\w$ is Lyndon, and hence $\w=U_1(\hat{\w})$ for some word $\hat{\w}$. As in Case (A), we can deduce from Lemma \ref{lem:upper-bound-equivalence-eta} (with strict inequality) that $\hat{\w}\in L^*(\hat{\beta})$, so $\hat{\w}^\f=b(\hat{t}_R,\hat{\beta})$ for some $\hat{t}_R\in(0,1)$. Furthermore, the assumption \eqref{eq:quasi-greedy-relationship-a}
 implies that $U_1\big(0\si(\alpha(\hat{\beta}))\big)=0\si(\alpha(\beta))$, because $0\si(\alpha(\hat{\beta}))$ is simply $0\alpha(\hat{\beta})$ without the first `1', and $U_1(1)=1$. Therefore, since $\w^\f\prec 0\si(\alpha(\beta))$, we also have $\hat{\w}^\f\prec 0\si(\alpha(\hat{\beta}))$, and hence $\hat{t}_R<1-1/\hat{\beta}$.
%because \eqref{eq:quasi-greedy-relationship-a} implies $\al(\beta))=1U_1(\al(\hat{\beta}))$, so $\si(\al(\beta))=U_1(\al(\hat{\beta}))$ and therefore
%\[
%U_1\big(0\si(\alpha(\hat{\beta}))\big)=01 U_1\big(\si(\alpha(\hat{\beta}))\big)=01\si\big(U_1(\alpha(\hat{\beta}))\big)=0U_1(\al(\hat{\beta}))=0\si(\alpha(\beta)).
%\]
%Hence, $\hat{t}_R<1-1/\hat{\beta}$.

Now let $\z\in\Kt_\beta(t_R)$ be given, and assume that $\z$ begins with $0$ {by Lemma \ref{lem:extensibility}}. Since $\si^n(\z)\lge {b(t_R, \beta)=}\w^\f$ for all $n\geq 0$ and $\w$ begins with $01$, $\z$ does not contain the word $00$. Hence, $\z=U_1(\hat{\z})$ for some sequence $\hat{\z}$ which again begins with $0$. As in Case (A), we can deduce from Lemma \ref{lem:upper-bound-equivalence-eta} that
\[
b(\hat t_R, \hat\beta)=\hat\w^\f\lle\si^k(\hat{\z})\lle\alpha(\hat{\beta})\quad\textrm{for all }k\geq 0,
\]
where for $k=0$ the last inequality holds since $\hat{\z}$ begins with $0$.
Hence, $\hat{\z}\in \Kt_{\hat{\beta}}(\hat{t}_R)$ with $\hat t_R<1-1/\hat{\beta}$.
The induction hypothesis now comes in to complete the proof in the same way as in Case (A), using Lemma \ref{lem:upper-bound-equivalence-eta} instead of Lemma \ref{lem:upper-bound-equivalence-xi}.

{
This concludes the proof for $\beta\in E_L\cap(1,2]$. Assume next that $\beta\in E_L\cap(M,M+1]$ for some integer $M\geq 2$. Write $\al(\beta)=\al_1\al_2\dots$. Let $[t_L,t_R]$ be a $\beta$-Lyndon interval generated by a $\beta$-Lyndon word $\w$. Note that $\al(\beta)\in\{M-1,M\}^\N$ by Lemma \ref{lem:two-digits}. Furthermore, since $\beta>\beta_r^{(M-1)}$, we have $\al(\beta)\succ \al\big(\beta_r^{(M-1)}\big)=M(M-1)^\f$, so there is an index $n\geq 2$ such that $\al_n=M$. Since $\al_1=M$ and $\al(\beta)$ is balanced, it follows that $\al(\beta)$ does not contain the word $(M-1)^n$. If $\w=M-1$ or $\w$ begins with a digit $d\leq M-2$, it follows immediately that we can take $\v=(M-1)^n$ in Definition \ref{def:property-TL}, showing that $\w$ has property TL.

Otherwise, $\w\in\{M-1,M\}^*$ and $\w\neq M-1$. Now any sequence $\z\in\Kt_\beta(t_R)$ must also lie in $\{M-1,M\}^\N$. Put $\tilde{\w}:=\theta^{-(M-1)}(\w)$, $\tilde{\z}:=\theta^{-(M-1)}(\z)$ and $\tilde{\beta}:=\phi^{-(M-1)}(\beta)$, so that $\al(\tilde{\beta})=\theta^{-(M-1)}(\al(\beta))$. Then $\tilde{\beta}\in E_L\cap(1,2]$, so by the case $\beta\in(1,2]$ proved above, there is a word $\tilde{\v}\in\{0,1\}^*$ witnessing, on behalf of $\tilde{\z}$, that $\tilde{\w}$ satisfies property TL for $\tilde{\beta}$. The word $\v:=\theta^{M-1}(\tilde{\v})$ then witnesses on behalf of $\z$ that $\w$ satisfies property TL for $\beta$.

Thus, in all cases, Lemma \ref{lem:transitivity-simplified} implies that $\Kt_\beta(t_R)$ is transitive.
}
\end{proof}

\subsection{Density of $\beta$-Lyndon intervals} \label{subsec:dense-intervals}

In this subsection we show that the $\beta$-Lyndon intervals are dense in $[0,1-1/\beta]$ for $\beta\in\overline{E}$.

\begin{lemma} \label{lem:Lyndon-implies-beta-Lyndon}
Let {$\beta\in\overline{E}\cap(M,M+1]$ for $M\in\N$}, and let $\w$ be a Lyndon word such that
\begin{enumerate}[{\rm (i)}]
\item $\w0^\f$ is a greedy $\beta$-expansion; and
\item $\w0^\f\prec {(M-1)}\si(\al(\beta))$.
\end{enumerate}
Then $\w$ is $\beta$-Lyndon.
\end{lemma}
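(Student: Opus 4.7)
The plan is to prove the lemma by induction on the length $|\w|$, mirroring the structure of the proof of Theorem \ref{thm:transitive-in-E_L}. Recall that, since $\w$ is Lyndon, the condition $\w\in L^*(\beta)$ is equivalent to $\si^n(\w^\f)\prec\al(\beta)$ for all $n\ge 0$. For the base case $|\w|=2$ the only candidate is $\w=01$; hypothesis (ii) then forces $\al(\beta)$ to begin with $11$, whereupon both cyclic shifts $(01)^\f$ and $(10)^\f$ are visibly below $\al(\beta)$.

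For the inductive step $|\w|\ge 3$, I would invoke Lemma \ref{lem:alpha-renormalizing}---whose proof adapts from $E_L$ to $\overline{E}$ essentially without change, as it depends only on the balancedness of $\al(\beta)$ furnished by Lemma \ref{lem:balanced-expansion}---to obtain $\hat\beta\in\overline{E}$ with either $0\al(\beta)=U_0(\al(\hat\beta))$ (Case A) or $0\al(\beta)=U_1(0\al(\hat\beta))$ (Case B). In Case A, $\al(\beta)$ contains no $11$, so (ii) forces $\w$ to begin with $00$ and (i) forbids $\w$ from containing $11$; hence $\w=U_0(\hat\w)$ for some Lyndon $\hat\w$ with $|\hat\w|<|\w|$ (Lyndon-ness transferring because $U_0$ is strictly order-preserving on words of equal length). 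Using $\w 0^\f=U_0(\hat\w 0^\f)$, the strict version of Lemma \ref{lem:upper-bound-equivalence-xi}, and the identity $0\si(\al(\beta))=U_0(0\si(\al(\hat\beta)))$ from \eqref{eq:shifted-substitution}, the hypotheses transfer cleanly to $(\hat\w,\hat\beta)$; the induction hypothesis delivers $\hat\w\in L^*(\hat\beta)$, and pulling back via $\w^\f=U_0(\hat\w^\f)$ gives $\w\in L^*(\beta)$.

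In Case B, $\al(\beta)$ has no $00$, and I would split by the opening block of $\w$. When $\w$ begins with $00$, a direct shift-by-shift argument suffices: shifts of $\w^\f$ starting with $0$ are trivially below $\al(\beta)$, and for shifts starting with $1$, a first-disagreement analysis using (i) pushes any putative disagreement past position $m-n$, where the opening block $w_1w_2=00$ of $\w$ would then have to occur inside $\al(\beta)$, contradicting the absence of $00$. When $\w$ instead begins with $01$, Lyndon-ness prohibits $00$ in $\w$, so $\w=U_1(\hat\w)$ for some Lyndon $\hat\w$ with $|\hat\w|<|\w|$. The subtlety here is that $U_1(0^\f)=(01)^\f$, so $\w 0^\f\notin\operatorname{im}U_1$; instead $U_1(\hat\w 0^\f)=\w(01)^\f$. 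Verifying the inductive hypotheses for $(\hat\w,\hat\beta)$ via the strict version of Lemma \ref{lem:upper-bound-equivalence-eta} therefore reduces to showing
\[
\si^n(\w(01)^\f)\prec\al(\beta) \text{ for all } n\ge 0, \qquad \w(01)^\f\prec 0\si(\al(\beta)).
\]
Since $\w(01)^\f$ and $\w 0^\f$ agree through position $m$, these two inequalities reduce, in the nontrivial boundary case, to $(01)^\f\prec\si^k(\al(\beta))$ for some $k\ge 1$; because $\al(\beta)$ has no $00$, a short pattern-matching argument yields $(01)^\f\lle\si^k(\al(\beta))$ with equality only if $\si^k(\al(\beta))=(01)^\f$.

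The main obstacle I anticipate is ruling out this last equality. By Lemma \ref{lem:not-0-1-repeated}, $\al(\beta)$ admits a shift equal to $(01)^\f$ only when $\al(\beta)=(10)^\f$---which puts us under the $00$-prefix case since then $\al_2=0$---or when $\beta=\beta_r^{01}$; for $\beta=\beta_r^{01}$, a short direct enumeration shows that no Lyndon word of length $\ge 3$ beginning with $01$ simultaneously satisfies (i) and (ii). Hence the boundary equality does not arise in the $01$-prefix case, the inductive hypotheses hold strictly, and the induction closes.
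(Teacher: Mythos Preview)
Your proof is correct and follows essentially the same induction on $|\w|$ as the paper's, renormalizing via Lemma~\ref{lem:alpha-renormalizing} and transferring the hypotheses through Lemmas~\ref{lem:upper-bound-equivalence-xi} and~\ref{lem:upper-bound-equivalence-eta}. The one noteworthy difference is in the $U_1$ sub-case with $\w$ starting $01$: the paper reads off $\si^n(\al(\beta))\succ(01)^\f$ directly from the fact that $\al(\hat\beta)$ cannot end in $0^\f$, whereas you verify it via Lemma~\ref{lem:not-0-1-repeated} together with a separate enumeration at $\beta=\beta_r^{01}$; your route is slightly longer but arguably more careful, since Lemma~\ref{lem:alpha-renormalizing} does not literally produce a valid $\hat\beta$ at that single point of~$\overline{E}$ (one would get $\al(\hat\beta)=10^\f$), a wrinkle the paper leaves implicit because the sub-case is vacuous there.
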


\begin{proof}
{We show this first for $M=1$, so $\beta\in\overline{E}\cap(1,2]$.}
We use induction on the length of $\w$. Precisely, we prove: For any Lyndon word $\w$, if $\beta\in\overline{E}$ is such that (i) and (ii) are satisfied, then $\w$ is $\beta$-Lyndon.

If $\w=01$, then (ii) implies $\al(\beta)\lge 110^\f$, so $\si^n(\w^\f)\lle(10)^\f\prec\al(\beta)$ for each $n\geq 0$. Hence $\w$ is $\beta$-Lyndon.

Now let $k\geq 3$ and assume the statement is true for all Lyndon words of length $<k$. {Let $\w=w_1\dots w_k$ be a Lyndon word of length $k$, and let $\beta\in\overline{E}$ be such that (i) and (ii) are satisfied. We first deal with the special case when $\al(\beta)=11(01)^\f$. By assumption (ii), $\w0^\f\prec (01)^\f$. Since $\w$ is Lyndon and $\w\neq 01$, $\w$ must therefore begin with $00$. Then for any $0\leq j<k$, we have
\[
w_{j+1}\dots w_k w_1w_2\lle \al_1\dots\al_{k-j}00\prec\al_1\dots\al_{k-j+2},
\]
and so $\si^j(\w^\f)\prec\al(\beta)$, proving that $\w$ is $\beta$-Lyndon.

Assume from now on that $\al(\beta)\neq 11(01)^\f$. By Lemma \ref{lem:alpha-renormalizing} there is then a base $\hat{\beta}\in \overline{E}$ such that $0\al(\beta)=U_0(\al(\hat{\beta}))$ or $\al(\beta)=1U_1(\al(\hat{\beta}))$.
}

First assume the former, i.e. $0\al(\beta)=U_0(\al(\hat{\beta}))$. Then $\al(\beta)$ does not contain the word $11$, so $\w$ does not contain this word either by assumption (i). Since $\w\in L^*$, this implies $\w=U_0(\hat{\w})$ for some $\hat{\w}\in L^*$. Furthermore, $U_0(\hat{\w}0^\f)=\w0^\f$. Thus, by Lemma \ref{lem:upper-bound-equivalence-xi} {and (i)}, $\hat{\w}0^\f$ is a greedy $\hat{\beta}$-expansion. It also follows from (ii) and \eqref{eq:shifted-substitution} that $\hat{\w}0^\f\prec 0\si(\al(\hat{\beta}))$. Hence, by the induction hypothesis, $\hat{\w}\in L^*(\hat{\beta})$. But then, again by Lemma \ref{lem:upper-bound-equivalence-xi}, $\w$ is $\beta$-Lyndon.

Next, suppose $\al(\beta)=1U_1(\al(\hat{\beta}))$. Then $\al(\beta)$ does not contain the word $00$, so if $\w$ begins with $00$, then $\si^n(\w^\f)\prec\al(\beta)$ for all $n\geq 0$: setting $m:=n\!\!\mod |\w|$, we have {by (i) that}
\begin{align*}
\si^n(\w^\f)&=w_{m+1}\dots w_{|\w|}\w^\f\lle \al_1\dots \al_{|\w|-m}\w^\f\\
&\prec\al_1\dots\al_{|\w|-m}\si^{|\w|-m}(\al(\beta))=\al(\beta).
\end{align*}
So assume $\w$ begins with $01$. Then $\w=U_1(\hat{\w})$ for some $\hat{\w}\in L^*$ because $\w\in L^*$. Now observe that $\al(\beta)=1U_1(\al(\hat{\beta}))$ implies $\si^n(\al(\beta))\succ(01)^\f$ for all $n\geq 0$, so $\w(01)^\f=U_1(\hat{\w}0^\f)$ is also a greedy $\beta$-expansion. Thus, by Lemma \ref{lem:upper-bound-equivalence-eta}, $\hat{\w}0^\f$ is a greedy $\hat{\beta}$-expansion.
The rest of the proof goes as in the first case above, using Lemma \ref{lem:upper-bound-equivalence-eta} instead of Lemma \ref{lem:upper-bound-equivalence-xi}.
% Furthermore, as in the proof of Theorem \ref{thm:transitive-in-E_L}, it follows that $\hat{\w}0^\f\prec 0\si(\al(\hat{\beta}))$.
%Hence, by the induction hypothesis, $\hat{\w}$ is $\hat{\beta}$-Lyndon. But then, again by Lemma \ref{lem:upper-bound-equivalence-eta}, $\w$ is $\beta$-Lyndon.

{
Next, assume $M\geq 2$. Note $\al(\beta)=:\al_1\al_2\dots\in\{M-1,M\}^\N$ by Lemma \ref{lem:two-digits}. 
If $\w=w_1\dots w_m$ begins with a digit $d\leq M-2$, then for each $j<m$ assumption (i) yields $w_{j+1}\dots w_m\lle \al_1\dots\al_{m-j}$ and so $w_{j+1}\dots w_m w_1\lle \al_1\dots\al_{m-j}(M-2)\prec\al_1\dots\al_{m-j+1}$. Hence, $\w$ is $\beta$-Lyndon. Furthermore, if $\w=M-1$ then $\w$ is clearly $\beta$-Lyndon since $\al(\beta)$ begins with $M$.

This leaves the case when $\w\in\{M-1,M\}^*$ and $\w\neq M-1$. But now the argument for the case $M=1$ above gives the conclusion, by simply replacing the alphabet $\{0,1\}$ with $\{M-1,M\}$.
}
\end{proof}

\begin{proposition} \label{prop:dense-intervals}
For each $\beta\in\overline{E}\cap{(1,\f)}$, the $\beta$-Lyndon intervals are dense in $\big(0,1-\frac{1}{\beta}\big)$.
\end{proposition}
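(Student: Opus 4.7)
The plan is to construct, for each $t\in(0,1-1/\beta)$ and $\ep>0$, a $\beta$-Lyndon word $\w$ with $\pi_\beta(\w^\f)\in(t-\ep,t+\ep)$. By Lemma \ref{lem:Lyndon-implies-beta-Lyndon}, to show $\w\in L^*(\beta)$ it suffices to verify three things: (a) $\w$ is Lyndon; (b) $\w0^\f$ is a greedy $\beta$-expansion; and (c) $\w0^\f\prec 0\si(\al(\beta))$. Granted such $\w$, the corresponding $\beta$-Lyndon interval $[\pi_\beta(\w0^\f),\pi_\beta(\w^\f)]$ has length $\pi_\beta(\w)/(\beta^{|\w|}-1)\to 0$ as $|\w|\to\f$, and lies within $\ep$ of $t$ provided $\w^\f$ agrees with $b(t,\beta)$ in sufficiently many initial positions.

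The construction proceeds as follows. Write $\mathbf{b}:=b(t,\beta)=b_1b_2\dots$. Since $t<1-1/\beta\leq 1/\beta$ we have $b_1=0$ and $\mathbf{b}\prec 0\si(\al(\beta))$; and since $\beta\leq 2$ and $\mathbf{b}$ is greedy, it does not end in $1^\f$, so it contains infinitely many $0$'s. Let $p\geq 2$ denote the position of the first $1$ in $\mathbf{b}$. For each large $N$ I would extract a Lyndon word $\w_N$ of length approximately $N$ whose first $N$ digits coincide with $b_1\dots b_N$; the first attempt is $\w_N=b_1\dots b_q$ where $q\leq N$ is chosen as the largest index with $b_q=1$ for which this prefix is Lyndon. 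When no long Lyndon prefix exists, one modifies $\w_N$ by appending a short tail of the form $0^j 1$ with $j<p-1$ (which is compatible with the initial $0^{p-1}$ run of $\mathbf{b}$ and so preserves Lyndon-ness), or by taking $\w_N$ to be a cyclic rotation of the minimal period of $\mathbf{b}$ when $\mathbf{b}$ is eventually periodic. Conditions (b) and (c) of the reduction hold almost automatically: $\w_N$ is essentially a prefix of the greedy expansion $\mathbf{b}$ (so each shift of $\w_N0^\f$ satisfies the Parry condition of Lemma \ref{lem:greedy-expansion}), and (c) is inherited from $\mathbf{b}\prec 0\si(\al(\beta))$. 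The agreement of $\w_N^\f$ with $\mathbf{b}$ on the first $|\w_N|$ positions gives $|\pi_\beta(\w_N^\f)-t|\leq \beta^{-|\w_N|}/(\beta-1)$, which is $<\ep$ for $N$ large.

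The hard part is the combinatorial construction of $\w_N$, in particular ensuring a Lyndon word of length $\Theta(N)$ can be found that agrees with $b_1\dots b_N$. When $\mathbf{b}$ is shift-minimal and aperiodic, the argument used in the proof of Lemma \ref{lem:E-beta-next-to-beta-Lyndon} (via \cite[Lemma 3.5]{Kalle-Kong-Langeveld-Li-18}) already gives infinitely many Lyndon prefixes $b_1\dots b_q$, which are automatically $\beta$-Lyndon by the reduction above; in this case we are done. The subtle cases are when $\mathbf{b}$ is not shift-minimal or is eventually periodic. For these, one strategy is to first pass to the lexicographically smallest element $\mathbf{b}^*$ of the shift-orbit closure of $\mathbf{b}$ (which is shift-minimal) and extract Lyndon prefixes there, then translate back by noting that $\mathbf{b}^*$ and $\mathbf{b}$ share long common initial segments with $\si^k(\mathbf{b})$ for suitable $k$, yielding Lyndon words whose $\pi_\beta$-images cluster densely near $t$. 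This case analysis, together with verifying the greedy-admissibility of the small modifications made to produce Lyndon-ness, is the main technical content of the argument.
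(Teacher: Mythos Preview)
Your reduction via Lemma \ref{lem:Lyndon-implies-beta-Lyndon} is correct and is exactly what the paper uses. But you have the case analysis backwards: the case you call ``subtle'' (when $\mathbf{b}=b(t,\beta)$ is not shift-minimal) is in fact the trivial one, and your proposed orbit-closure workaround is both unnecessary and unjustified as written.

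Here is the missing observation. If $\mathbf{b}$ is \emph{not} shift-minimal, let $k$ be the least index with $\si^k(\mathbf{b})\prec\mathbf{b}$. Then $\w:=b_1\dots b_k$ is Lyndon (or a power of a Lyndon word, in which case pass to its primitive root), and by your own reduction (a)--(c) together with Lemma \ref{lem:Lyndon-implies-beta-Lyndon} it is $\beta$-Lyndon. Moreover $\si^k(\mathbf{b})\prec\mathbf{b}$ forces $\si^k(\mathbf{b})\lle\w^\f$, so $t$ already lies \emph{inside} the $\beta$-Lyndon interval $[\pi_\beta(\w\,0^\f),\pi_\beta(\w^\f)]$---no approximation is needed at all. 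The shift-minimal case you already handled correctly via Lemma \ref{lem:E-beta-next-to-beta-Lyndon}: either $\mathbf{b}$ is purely periodic (so $t$ is a right endpoint of a $\beta$-Lyndon interval) or it has infinitely many Lyndon prefixes, each $\beta$-Lyndon by your reduction. This dichotomy is the paper's entire proof.

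Your proposed fix---passing to the minimal point $\mathbf{b}^*$ of the orbit closure and ``translating back''---does not work as stated: $\pi_\beta(\mathbf{b}^*)$ may be far from $t$, and there is no reason $\beta$-Lyndon intervals accumulating at $\pi_\beta(\mathbf{b}^*)$ should cluster near $t$. The tail modifications you suggest (appending $0^j1$) also raise greedy-admissibility issues that ``almost automatically'' does not resolve; once you change digits beyond a genuine prefix of $\mathbf{b}$, the Parry condition is no longer inherited for free.
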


\begin{proof}
Fix $t\in\big(0,1-\frac{1}{\beta}\big)$ and write $b(t,\beta)=t_1 t_2\dots$. We will show that $t$ either lies in a $\beta$-Lyndon interval or else $t\in{\EE_\beta}$, in which case $t$ can be approximated arbitrarily closely by endpoints of $\beta$-Lyndon intervals by Lemma \ref{lem:E-beta-next-to-beta-Lyndon}.

{Since $t<1-\frac{1}{\beta}$, $t_1t_2\ldots \prec (M-1)\si(\al(\beta))$.} Suppose $t\not\in{\EE_\beta}$. Then $\si^k((t_i))\prec (t_i)$ for some $k$; choose $k$ as small as possible with this property. Then $\w:=t_1\dots t_k$ is either Lyndon or a power of a Lyndon word. Assume the former; in the latter case we replace $k$ with $k'<k$ so that $t_1\dots t_{k'}$ is Lyndon and $k'\mid k$. By Lemma \ref{lem:Lyndon-implies-beta-Lyndon}, $\w$ is $\beta$-Lyndon. Since furthermore, $\si^k((t_i))\prec (t_1\dots t_k)^\f$, it follows that $t$ lies in the $\beta$-Lyndon interval generated by $\w$.
\end{proof}

The conclusion of Proposition \ref{prop:dense-intervals} may fail if $\beta\not\in\overline{E}$; in fact the $\beta$-Lyndon intervals may not even be dense in $[0,\tau(\beta)]$, as the following example shows. %Here $\tau(\beta)$ is the smallest $t>0$ such that $\dim_H K_\beta(t)=0$.

\begin{example} \label{ex:non-dense-intervals}
Let $\beta$ be any base such that $\alpha(\beta)$ begins with $111\, 01\, 011\,001$. Set $\s=011$. Then $\beta$ lies in the basic interval $[\beta_\ell^{\s},\beta_*^\s]$, where $\alpha(\beta_\ell^{\s})=\L(\s)^\f=(110)^\f$ and $\alpha(\beta_*^\s)=\L(\s)^+\s^-\L(\s)^\f=111\,010\,(110)^\f$. Hence, by \cite[Theorem 2]{Allaart-Kong-2021},
\[
\tau(\beta)=\pi_\beta\big(\s^-\L(\s)^\f\big)=\pi_\beta\big(010(110)^\f\big).
\]
Let $\w=01010111$ and $\tilde{\w}=01011$. Note that $\w$ and $\tilde{\w}$ are both $\beta$-Lyndon words, and $\w^\f\prec \tilde{\w}^\f\prec 010(110)^\f$. So the (symbolic) $\beta$-Lyndon intervals $[\w0^\f,\w^\f]$ and $[\tilde{\w}0^\f,\tilde{\w}^\f]$ are disjoint and their projections under $\pi_\beta$ both lie inside $[0,\tau(\beta)]$. But it is not difficult to see that there is no other $\beta$-Lyndon interval in between these intervals, because there is no $\beta$-Lyndon word that extends $\w$. Hence, the $\beta$-Lyndon intervals are not dense in $[0,\tau(\beta)]$. We will have more to say about this example in Example \ref{ex:non-transitivity-window}.
\end{example}

%{[Instead of giving this example here, we could simply refer to Section \ref{sec:gaps}.]}

%Theorem \ref{thm:E-and-B} is contained in the following result.

{
\begin{lemma} \label{lem:empty}
Let $\beta=\beta_\ell^\s$ for $\s\in F_e$. Then $\mathcal{K}_\beta(1-1/\beta)=\emptyset$.
\end{lemma}

\begin{proof}
For $\s\in F^*$, this was shown in the proof of \cite[Proposition 5.2]{Kalle-Kong-Langeveld-Li-18}. If $F_e\ni\s=\theta^k(\s')$ for some $k\geq 1$ and $\s'\in F^*$, the result follows in the same way. Finally, if $\s=k$ with $k\in\N$, then $\beta=\beta_\ell^\s=k+1$, so $\al(\beta)=k^\f$ and $b(1-1/\beta,\beta)=k0^\f$, so
\[
\mathcal{K}_\beta(1-1/\beta)=\{\z\in A_\beta^\f: k0^\f\lle \si^n(\z)\prec k^\f\ \forall n\geq 0\}=\emptyset.
\]
In all cases, the lemma follows.
\end{proof}
}

\begin{corollary} \label{cor:E-B-left-endpoints}
If $\beta=\beta_\ell^\s$ is the left endpoint of a Farey interval $[\beta_\ell^\s,\beta_r^\s]$ {with $\s\in F_e$}, then $\BB_\beta=\EE_\beta$.
\end{corollary}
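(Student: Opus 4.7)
My plan is to apply Proposition~\ref{prop:E-B-equality} on $[0,\tau(\beta))$ and then rule out both $\EE_\beta$ and $\BB_\beta$ on $[\tau(\beta),1)$. Since $\beta=\beta_\ell^\s$ lies in $E_L\subseteq\overline{E}$, Theorem~\ref{thm:transitive-in-E_L} gives $\tau(\beta)=1-1/\beta$ together with the transitivity of $\Kt_\beta(t_R)$ for every $t_R\in\mathcal{T}_R(\beta)$, so I take $\mathcal{K}_\beta'(t_R)=\Kt_\beta(t_R)$. Soficity of $\Kt_\beta(t_R)$ is immediate because both $\alpha(\beta)=\L(\s)^\f$ and $b(t_R,\beta)=\w^\f$ (for the generating $\beta$-Lyndon word $\w$) are purely periodic. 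The strictly descending property holds since $b(t_R,\beta)\in\Kt_\beta(t_R)\setminus\Kt_\beta(t_R')$ whenever $t_R<t_R'$, using that $\w$ Lyndon keeps $b(t_R,\beta)$ inside $\Kt_\beta(t_R)$. Density of $\beta$-Lyndon intervals in $[0,\tau(\beta)]$ follows from Proposition~\ref{prop:dense-intervals}. Hence Proposition~\ref{prop:E-B-equality} yields $\EE_\beta\cap[0,\tau(\beta))=\BB_\beta\cap[0,\tau(\beta))$.

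On $[\tau(\beta),1)$ I will show both sets are empty. The $\BB_\beta$ side is immediate from $\dim_H K_\beta(t)=0$ there, by the definition of $\tau(\beta)$. For $\EE_\beta=\EE_\beta^+$ (Lemma~\ref{lem:E-beta-characterizations}), I decompose $[\tau(\beta),1)=[\tau(\beta),1/\beta)\cup[1/\beta,1)$. On $[1/\beta,1)$, $T_\beta(t)=\beta t-1<t$ since $\beta\le 2$ gives $1/(\beta-1)\ge 1>t$, so no such $t$ lies in $\EE_\beta^+$. For the remaining sub-interval I use Lemma~\ref{lem:E-beta-next-to-beta-Lyndon}: any $t\in\EE_\beta\cap(\tau(\beta),1/\beta)$ would be a right endpoint $\pi_\beta(\w^\f)$ of some $\beta$-Lyndon interval, or a left-limit of such endpoints. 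Moreover $\tau(\beta)\notin\EE_\beta^+$, because $a(\beta)=\L(\s)^+ 0^\f$ is finite (since $\L(\s)$ ends in $0$) and thus $b(\tau(\beta),\beta)=0\sigma(a(\beta))$ terminates in $0^\f$, giving $\sigma^k(b(\tau(\beta),\beta))=0^\f\prec b(\tau(\beta),\beta)$ for large $k$. It therefore suffices to show that every right endpoint of a $\beta$-Lyndon interval lies strictly below $\tau(\beta)$.

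The main obstacle is this last claim. Because $\s^-$ is a palindrome (Lemma~\ref{lem:Farey-property}), a direct digit-by-digit comparison shows that the length-$m$ prefix of $b(\tau(\beta),\beta)=0 s_{m-1}\cdots s_2 1\, 0^\f$ coincides with $\s$ itself, where $m=|\s|$, so $b(\tau(\beta),\beta)=\s\cdot 0^\f$. If some $\w\in L^*(\beta)$ satisfied $\w^\f\succeq b(\tau(\beta),\beta)$, then $\w^\f$ must begin with $\s$. Iterating the Lyndon condition $\sigma^n(\w^\f)\succeq\w^\f$ together with the strict admissibility $\sigma^n(\w^\f)\prec\alpha(\beta)=\L(\s)^\f$, I plan to force, by induction on position, $\w^\f$ to extend periodically to $\s^\f$. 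But $\s^\f$ is a cyclic shift of $\L(\s)^\f=\alpha(\beta)$, so some $\sigma^n(\w^\f)$ equals $\alpha(\beta)$ rather than lying strictly below it, contradicting $\w\in L^*(\beta)$. Combining this with the reductions above and Proposition~\ref{prop:E-B-equality} yields $\EE_\beta=\BB_\beta$.
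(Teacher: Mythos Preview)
Your argument is correct and follows the same overall scaffold as the paper: verify the hypotheses of Proposition~\ref{prop:E-B-equality} on $[0,\tau(\beta))$, then show $\EE_\beta\cap[\tau(\beta),1)=\emptyset$. On $[0,\tau(\beta))$ your verification is essentially identical to the paper's.

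Where you diverge is on $[\tau(\beta),1)$. The paper handles this in one line by citing an external result that $\mathcal{K}_\beta(1-1/\beta)=\emptyset$, which together with $\EE_\beta=\EE_\beta^+\subseteq K_\beta(\tau(\beta))$ on $[\tau(\beta),1)$ finishes immediately. You instead decompose into $[\tau(\beta),1/\beta)\cup[1/\beta,1)$, dispatch the second piece with $T_\beta(t)<t$, and for the first piece argue via Lemma~\ref{lem:E-beta-next-to-beta-Lyndon} that you must rule out any $\beta$-Lyndon word $\w$ with $\w^\f\succeq\s 0^\f$. Your computation $b(\tau(\beta),\beta)=\s 0^\f$ (using the palindrome property of $\s^-$) is a nice observation. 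However, your assertion ``then $\w^\f$ must begin with $\s$'' does not follow from $\w^\f\succeq\s 0^\f$ alone; it already requires the admissibility constraint $\si^n(\w^\f)\prec\L(\s)^\f$. More to the point, your plan to ``force, by induction on position, $\w^\f$ to extend periodically to $\s^\f$'' amounts to re-proving Lemma~\ref{lem:Farey-sandwich}, which is already available: the conditions $\s 0^\f\lle\si^n(\w^\f)\prec\L(\s)^\f$ for all $n$ place $\w^\f$ in $\mathcal{X}(\s)$, hence $\w^\f$ is a shift of $\s^\f$ and so has a shift equal to $\al(\beta)$, contradicting $\w\in L^*(\beta)$. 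You could compress your entire treatment of $[\tau(\beta),1)$ to: $\EE_\beta^+\cap[\tau(\beta),1)\subseteq K_\beta(\tau(\beta))$, and $\mathcal{K}_\beta(\tau(\beta))\subseteq\mathcal{X}(\s)$ is empty by Lemma~\ref{lem:Farey-sandwich} since the strict upper bound excludes every shift of $\s^\f$.
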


\begin{proof}
Note that $\beta=\beta_\ell^\s\in E_L$, so $\Kt_\beta(t_R)$ is transitive for every $\beta$-Lyndon interval $[t_L,t_R]$ by Theorem \ref{thm:transitive-in-E_L}. Furthermore, the $\beta$-Lyndon intervals are dense in $[0,1-1/\beta]$ by Proposition \ref{prop:dense-intervals}. Since $\al(\beta)=\L(\s)^\f$, {for $\w^\f=b(t_R, \beta)$} we have
\[
\Kt_\beta(t_R)=\set{\z\in{A_\beta}^\N: \w^\f\lle \si^n(\z)\lle \L(\s)^\f~\forall n\ge 0},
\]
so $\Kt_\beta(t_R)$ is a SFT, hence sofic. Proposition \ref{prop:E-B-equality} now implies that there are no points of $\EE_\beta\backslash\BB_\beta$ in $[0,1-1/\beta)$. On the other hand, $\EE_\beta\cap[1-1/\beta,1]\subseteq \mathcal{K}_\beta(1-1/\beta)=\emptyset$ {by Lemma \ref{lem:empty}}. Hence, $\EE_\beta=\BB_\beta$.
\end{proof}

\section{Right endpoints of first-order basic intervals} \label{sec:right-endpoints}

Our main result in this section is the following.

\begin{theorem} \label{thm:basic-interval-right-endpoint}
Let $\beta_*=\beta_*^\s$ be the right endpoint of a basic interval generated by $\s\in{F_e}$. Then for any $\beta_*$-Lyndon interval $[t_L,t_R]$ in $[0,\tau(\beta_*)]$, $\Kt_{\beta_*}(t_R)$ is a transitive sofic subshift.
In particular, the conclusion of Theorem \ref{thm:general-transitivity} holds for $\beta=\beta_*$.
\end{theorem}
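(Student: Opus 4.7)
The plan is to parallel the argument of Theorem \ref{thm:transitive-in-E_L}, making the adjustments needed since $\beta_*=\beta_*^\s$ lies strictly inside the Farey interval $J^\s$ and hence is not in $E_L$. First I handle the sofic claim: because $b(t_R,\beta_*)=\w^\f$ is purely periodic and $\alpha(\beta_*^\s)=\L(\s)^+\s^-\L(\s)^\f$ is eventually periodic, the $\beta_*$-shift $\Sigma_{\beta_*}$ is sofic by Parry's theorem, while the extra one-sided constraint $\si^n(\z)\lge\w^\f$ with $\w$ Lyndon defines a subshift of finite type by forbidding the finitely many length-$|\w|$ words lexicographically smaller than $\w$. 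Hence $\Kt_{\beta_*}(t_R)$, being the intersection of the two, is sofic.

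For transitivity I first prove a variant of Lemma \ref{lem:transitivity-simplified} valid for $\beta=\beta_*^\s$: if $\w\in L^*(\beta_*)$ satisfies $\w^\f\lle\s^-\L(\s)^\f$ (which, by \cite{Allaart-Kong-2021}, corresponds to $t_R\le\tau(\beta_*^\s)$) and $\w$ has Property TL with respect to $\alpha(\beta_*^\s)$, then $\Kt_{\beta_*}(\w)$ is transitive. The argument of Lemma \ref{lem:transitivity-simplified} goes through line by line once the inequality \eqref{eq:0-alpha-inequality} is replaced by
\[
\s^-\L(\s)^\f \;\lle\; \si^j(\alpha(\beta_*^\s)) \qquad \forall\,j\geq 0,
\]
which I would prove by a short case analysis on $j\in[0,|\s|)$, $j\in[|\s|,2|\s|)$ and $j\geq 2|\s|$, using the palindrome identity from Lemma \ref{lem:Farey-property}(ii) together with the fact that $\s$ is the smallest cyclic permutation of $\L(\s)$.

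It then remains to verify Property TL for every $\w\in L^*(\beta_*)$ with $\w^\f\lle\s^-\L(\s)^\f$, which I do by induction on $|\s|$. In the base case $\s=01$ one has $\alpha(\beta_*^{01})=1100(10)^\f$; the admissible $\w$ all begin with $00$, and using that $\alpha(\beta_*^{01})$ contains neither the block $111$ nor the block $0^4$, I would exhibit the transition word $\v$ explicitly, with powers of $10$ or short blocks of zeros serving as buffers between the given word $\u$ and the sequence $\z$. For the inductive step, Lemma \ref{lem:characterization-Farey-words} gives $\s=U_0(\hat\s)$ or $\s=U_1(\hat\s)$ with $\hat\s\in\F$; combining Lemma \ref{lem:eta-and-xi} with the definition of $\beta_*^\s$ yields the renormalization identities
\[
0\alpha(\beta_*^\s)=U_0(\alpha(\beta_*^{\hat\s})) \qquad \text{or} \qquad 0\alpha(\beta_*^\s)=U_1(0\alpha(\beta_*^{\hat\s})),
\]
and the argument then mirrors Cases (A) and (B) of the proof of Theorem \ref{thm:transitive-in-E_L}: Lemmas \ref{lem:upper-bound-equivalence-xi} and \ref{lem:upper-bound-equivalence-eta} allow me to factor $\w=U_i(\hat\w)$ with $\hat\w\in L^*(\beta_*^{\hat\s})$ in the appropriate range, and to lift Property TL from $\hat\beta=\beta_*^{\hat\s}$ to $\beta=\beta_*^\s$.

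For the ``In particular'' clause I take $\II=\emptyset$ and $\mathcal{K}_{\beta_*}'(t_R):=\Kt_{\beta_*}(t_R)$: conditions (i) and (ii) of Theorem \ref{thm:general-transitivity} follow from the transitive sofic structure just established, the strict monotonicity of the map $t_R\mapsto\Kt_{\beta_*}(t_R)$, and the fact that $b(t_R,\beta_*)=\w^\f\in\Kt_{\beta_*}(t_R)$, with \eqref{eq:Hausdorff-dimension-preserved} transferring full Hausdorff dimension to the symbolic setting, while condition (iii) is vacuous. The main obstacle is verifying the base case $\s=01$ by hand and proving the new balanced-type inequality displayed above; the inductive step itself is purely formal once the renormalization machinery of Section \ref{sec:beta-in-E} is in place.
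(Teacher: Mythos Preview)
Your approach is correct but genuinely different from the paper's. The paper does \emph{not} use induction on $|\s|$ via the substitutions $U_0,U_1$; instead it argues directly for each $\s\in\F$ using two tools: the standard factorization $\s=\s^{(1)}\s^{(2)}$ of a Farey word into shorter Farey words, and a separate lemma (Lemma~\ref{lem:begin-with-s}) showing that any $\z\in\Kt_{\beta_*}(t_R)$ can be extended to the left so as to begin with $\s$ (equivalently with $\L(\s)$). With those in hand, the connecting word is written down explicitly: one pads $\u$ on the right with $\L(\s)^+\s^{(1)}\s^{N_1}$ for large $N_1$, prepends $\L(\s)$ (or $\s$) to $\z$, and then inserts a single extra copy of $\s^{(1)}$ if needed. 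No base case, no renormalization, no Property~TL.

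Your route has the virtue of reusing the Section~\ref{sec:beta-in-E} machinery wholesale; the renormalization identities $0\alpha(\beta_*^\s)=U_0(\alpha(\beta_*^{\hat\s}))$ and $0\alpha(\beta_*^\s)=U_1(0\alpha(\beta_*^{\hat\s}))$ do follow from Lemma~\ref{lem:eta-and-xi}, and the lifting of Property~TL goes through exactly as in Theorem~\ref{thm:transitive-in-E_L}, including the handling of $\w$ beginning with $00$ in the $U_1$ case (here $\alpha(\beta_*^\s)$ does not end in $(01)^\f$ since $\s\neq 01$). The price is the base case $\s=01$ and the displayed inequality $\s^-\L(\s)^\f\lle\si^j(\alpha(\beta_*^\s))$, both of which require genuine by-hand verification. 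The paper's argument avoids both of these by trading Property~TL for the single extension Lemma~\ref{lem:begin-with-s} and the combinatorial identity $\L(\s)^+\s^-\L(\s)^\f=\L(\s)^+\s^{(1)}\s^\f$, which makes the connecting word visible at once.
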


We prove the theorem after establishing a useful lemma, which uses the following fact.

\begin{lemma} \label{lem:Farey-sandwich}
For $\s\in {F_e}$, define the set
\[
\mathcal{X}(\s):=\{\z\in {\N_0}^\N: \s 0^\f\lle \sigma^n(\z)\lle \L(\s)^\f\ \forall n\geq 0\}.
\]
Then $\mathcal{X}(\s)=\{\sigma^j(\s^\f): 0\leq j<|\s|\}$.
\end{lemma}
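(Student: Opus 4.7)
My plan is to prove the identity by two inclusions, with the nontrivial direction ($\subseteq$) handled by induction on the length of $\s$, using the recursive structure of Farey words from Lemma \ref{lem:characterization-Farey-words}.

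For the easy inclusion $\{\sigma^j(\s^\f): 0\le j<|\s|\}\subseteq \mathcal{X}(\s)$, any $\sigma^j(\s^\f)$ has the form $\c^\f$ with $\c$ a cyclic permutation of $\s$, and Lemma \ref{lem:Farey-property}(i) gives $\s=\S(\s)\lle \c\lle \L(\s)$. Thus $\s 0^\f\lle \c^\f\lle \L(\s)^\f$, and since the shift of $\c^\f$ is again of the same form, the inequalities persist under all iterates of $\sigma$.

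For the reverse inclusion, I induct on $|\s|$. In the base case $\s=01$, given $\z\in\mathcal{X}(01)$, the bound $010^\f\lle \sigma^n(\z)$ excludes the occurrence of $00$ in $\z$ (since a factor $00$ would give $\sigma^n(\z)\lle 001^\f\prec 010^\f$), and $\sigma^n(\z)\lle (10)^\f$ similarly excludes $11$. Hence $\z$ strictly alternates, forcing $\z\in\{(01)^\f,(10)^\f\}=\{\s^\f,\sigma(\s^\f)\}$. For the inductive step with $|\s|\ge 3$, Lemma \ref{lem:characterization-Farey-words} gives $\s=U_0(\hat\s)$ or $\s=U_1(\hat\s)$ for a shorter Farey word $\hat\s$; I will describe the first case, the other being symmetric. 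Here $\L(\s)$ begins with $10$ (since $\s$ ends in $U_0(1)=01$), so $\L(\s)^\f$ never contains the factor $11$, and hence neither does $\z$. Therefore $\z$ admits a unique factorization of the form $U_0(\hat\z)$, after prepending a $0$ if $z_1=1$ (equivalently, by passing to $0\z$ or $\sigma(\z)$ as needed). The key identities, obtained by homomorphism and Lemma \ref{lem:eta-and-xi}(i), are
\[
U_0(\hat\s 0^\f)=\s 0^\f \qquad\text{and}\qquad U_0(\L(\hat\s)^\f)=U_0(\L(\hat\s))^\f=0\L(\s)^\f,
\]
the latter following because $\L(\hat\s)^\f$ is the periodization of a word ending in $0$ and $0\L(\s)$ equals $U_0(\L(\hat\s))0$. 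Combining these identities with the strict monotonicity of $U_0$ from Lemma \ref{lem:substitution-properties}(i), any violation $\sigma^k(\hat\z)\succ \L(\hat\s)^\f$ would translate to $\sigma^n(\z)\succ 0\L(\s)^\f$ for the corresponding $n$, and then to $\sigma^{n+1}(\z)\succ \L(\s)^\f$, contradicting $\z\in\mathcal{X}(\s)$; the lower bound is analogous. Thus $\hat\z\in\mathcal{X}(\hat\s)$, and the induction hypothesis yields $\hat\z=\sigma^{\hat j}(\hat\s^\f)$. Applying $U_0$ (and accounting for the possible initial prepending) places $\z$ in the finite orbit $\{\sigma^j(\s^\f):0\le j<|\s|\}$, since any cyclic permutation of $\hat\s$ gets sent by $U_0$ to a cyclic permutation of $\s$ aligned at a block boundary.

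The main obstacle I anticipate is the bookkeeping associated with the case $z_1=1$ in the inductive step: here the factorization $\z=U_0(\hat\z)$ fails literally, so one must work with $0\z$ or $\sigma(\z)$ and then verify that the resulting cyclic-shift index $\hat j$ for $\hat\z$ produces, after applying $U_0$ and the shift adjustment, a genuine element $\sigma^j(\s^\f)$ rather than a shift by a non-block-aligned amount. The identity $0\L(\s)=U_0(\L(\hat\s))0$ (and its counterpart $\L(\s)1=1U_1(\L(\hat\s))$ for the $U_1$-case) from Lemma \ref{lem:eta-and-xi} is precisely what guarantees the required alignment.
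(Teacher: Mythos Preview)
The paper does not prove this lemma at all; it simply cites \cite[Proposition 4.4]{Kalle-Kong-Langeveld-Li-18}. So there is no proof in the paper to compare against, and your inductive approach via the recursive structure of Farey words (Lemma~\ref{lem:characterization-Farey-words}) is a self-contained alternative that the paper does not pursue.

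Your treatment of the easy inclusion and of the base case is fine, and the $U_0$ step is correct, including the prepend-$0$ trick for $z_1=1$ (the key being that $\s=U_0(\hat\s)$ begins with $00$, so $0\z$ remains in $\mathcal{X}(\s)$). However, your assertion that the $U_1$ case is ``symmetric'' glosses over a real asymmetry. For $U_1$ the factorization $\z=U_1(\hat\z)$ always exists (no prepending needed), but $\hat\z$ can fail the \emph{upper} bound at $k=0$: take $\hat\s=01$, $\s=U_1(\hat\s)=011$, and $\z=\L(\s)^\f=(110)^\f\in\mathcal{X}(\s)$; the unique factorization gives $\hat\z=1(10)^\f$, which satisfies $\sigma^k(\hat\z)\lle\L(\hat\s)^\f$ only for $k\ge 1$ (this is exactly the $k=0$ exclusion in Lemma~\ref{lem:upper-bound-equivalence-eta} and Remark~\ref{rem:upper-bound-equivalences}(b)). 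So the step ``$\hat\z\in\mathcal{X}(\hat\s)$'' fails as written.

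A clean patch: when $z_1=1$, shift until $\sigma^N(\z)$ begins with $0$ (possible since $\L(\s)^\f\neq 1^\f$), apply the argument to $\sigma^N(\z)$ to get $\sigma^N(\z)=\sigma^j(\s^\f)$, and then backtrack. The backtracking works because every $U_1$-block ends in $1$ (both $U_1(0)=01$ and $U_1(1)=1$), so at each block boundary the preceding digit of $\s^\f$ is $1$, and prepending a $1$ to $\sigma^j(\s^\f)$ lands back on $\sigma^{j-1}(\s^\f)$. This is the genuine counterpart of your prepend-$0$ trick (which worked because every $U_0$-block \emph{begins} with $0$), but it is not literally symmetric and deserves to be spelled out.
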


{
\begin{proof}
For $\s\in F^*$, this was proved in \cite[Proposition 4.4]{Kalle-Kong-Langeveld-Li-18}. If $\s=k\in\N$, we see immediately that $\mathcal{X}(\s)=\{\s^\f\}$. For all other $\s\in F_e$, the result follows from the case $\s\in F^*$ by translation.
\end{proof}
}

\begin{lemma} \label{lem:begin-with-s}
Let $\beta\in(\beta_\ell,\beta_*]$, where $[\beta_\ell,\beta_*]$ is a basic interval generated by { an extended} Farey word $\s=s_1\dots s_m{\in F_e}$, and let $[t_L,t_R]$ be a $\beta$-Lyndon interval in $(0,\tau(\beta))$. Let $\z\in \Kt_{\beta}(t_R)$.
Then $\z$ can be extended to the left to a sequence $\z'\in \Kt_{\beta}(t_R)$ beginning with $\s$, and also to a sequence $\z''\in \Kt_{\beta}(t_R)$ beginning with $\L(\s)$.
\end{lemma}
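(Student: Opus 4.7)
My plan is to construct both extensions explicitly. Let $\w$ be the Lyndon generator of $[t_L,t_R]$, so $b(t_R,\beta)=\w^\f$. Because $t_R<\tau(\beta)=\pi_\beta(\s^-\L(\s)^\f)$ on every basic interval (by \cite[Theorem 2]{Allaart-Kong-2021}), I have the strict chain $\w^\f\prec\s^-\L(\s)^\f\prec\s^\f\prec\L(\s)^\f$; together with $\L(\s)^\f\prec\alpha(\beta)\lle\L(\s)^+\s^-\L(\s)^\f$ coming from $\beta\in(\beta_\ell,\beta_*]$, and $\w\in L^*(\beta)$, it follows that the periodic sequences $\s^\f$, $\L(\s)^\f$, all their cyclic shifts, and every $\sigma^\ell(\w^\f)$ lie in $\Kt_\beta(t_R)$ and are strictly less than $\alpha(\beta)$. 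These will be the safe building blocks for the prefix.

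I will prepend $\u'=\s\w^N$ to produce an extension beginning with $\s$, and $\u''=\L(\s)\s\w^N$ to produce one beginning with $\L(\s)$, where $N$ is chosen large depending on $\z$. The verification that $\u'\z,\u''\z\in\Kt_\beta(t_R)$ is done case-by-case on the location of the shift $\sigma^n$: shifts with $n\ge|\u'|$ (resp.\ $|\u''|$) reduce to the hypothesis $\z\in\Kt_\beta(t_R)$; shifts inside the initial $\s$ (or $\L(\s)$) block are analyzed using the palindromic identity $s_i=s_{m+1-i}$ for $2\le i\le m-1$ from Lemma \ref{lem:Farey-property}(ii) together with primitivity of $\s$, which controls how suffixes of $\s$ and cyclic shifts of $\L(\s)$ compare with the prefix $\L(\s)^+$ of $\alpha(\beta)$ and rules out indefinite agreement within a single block; and shifts inside the $\w^N$-buffer use the Lyndon property of $\w$, namely $\sigma^\ell(\w^\f)\lge\w^\f$ differs from $\w^\f$ strictly within the first $|\w|-\ell$ positions (by primitivity of $\w$) and $\sigma^\ell(\w^\f)\prec\alpha(\beta)$ differs at some finite position $q_\ell$, so for $N$ large enough the prefix-agreement with $\sigma^\ell(\w^\f)$ resolves both inequalities inside the buffer.

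The main technical obstacle is the upper bound at the very last boundary of the buffer, where a shift of the form $\sigma^\ell(\w)\z$ at the right end of $\w^N$ may see the suffix $w_{\ell+1}\dots w_{|\w|}$ coincide with the first $|\w|-\ell$ digits of $\alpha(\beta)$; the comparison then depends on whether $\z\lle\sigma^{|\w|-\ell}(\alpha(\beta))$, which does \emph{not} follow from $\z\lle\alpha(\beta)$ alone (since $\sigma^k(\alpha(\beta))\lle\alpha(\beta)$ by the quasi-greedy property). To handle this I will, when necessary, append an extra cyclic-shift block $\sigma^j(\s)$ at the end of the buffer for a suitable $j$: because $\sigma^j(\s^\f)\prec\alpha(\beta)$ has its first difference with $\alpha(\beta)$ strictly within the first $m$ positions (by primitivity of $\s$) while $\sigma^j(\s^\f)\succ\w^\f$, inserting such a block immediately before $\z$ forces the comparison with $\alpha(\beta)$ to be settled strictly inside the prefix and preserves the lower bound. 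With this adjustment both inequalities hold for every shift, yielding the desired left-extensions $\z',\z''\in\Kt_\beta(t_R)$ beginning with $\s$ and $\L(\s)$, respectively.
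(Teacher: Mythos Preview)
Your explicit construction $\u'=\s\w^N$ has a genuine gap at the junction with $\z$, and the proposed fix does not close it. Since $\w$ is Lyndon, its last digit is $1=\al_1$; hence the shift landing on that final $1$ gives $1\z$, and the upper bound $1\z\lle\al(\beta)$ forces $\z\lle\si(\al(\beta))$, which does \emph{not} follow from $\z\in\Kt_\beta(t_R)$. Concretely, take $\s=011$, $\al(\beta)=111(001)^\f$ (so $\beta\in(\beta_\ell^\s,\beta_*^\s)$), $\w=01$, and $\z=\L(\s)^\f=(110)^\f\in\Kt_\beta(t_R)$. Then $1\z=1110110\ldots\succ 111001\ldots=\al(\beta)$, so $\s\w^N\z\notin\Kt_\beta(t_R)$. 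Your fix of inserting a cyclic shift $\si^j(\s)$ before $\z$ only moves the bad junction: with $\si^1(\s)=110$ the new final digit is $0$ (fine), but the shift at the preceding $1$ (the last digit of $\w^N$) now sees $1\cdot 110\cdot\z=1110(110)^\f\succ\al(\beta)$; with $\si^0(\s)=011$ or $\si^2(\s)=101$ the final digit is again $1$ and the original failure recurs. In general, any finite prefix you write down has \emph{some} suffix equal to a prefix of $\al(\beta)$ (at minimum the final $1$), and you are then forced to compare $\z$ with an arbitrary tail $\si^k(\al(\beta))$, over which you have no control.

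The paper sidesteps this by a completely different mechanism: it extends $\z$ to the left one digit at a time, always trying $1$ first and falling back to $0$ if $1$ is illegal. Lemma~\ref{lem:extensibility} guarantees one of the two is always legal. If the growing prefix never began with $\s$, every length-$m$ window of the extension would lie strictly in $(\s,\L(\s)]$, and the resulting left-infinite sequence would contradict Lemma~\ref{lem:Farey-sandwich}. The point is that the extension \emph{adapts to $\z$}: in the example above the first step correctly prepends $0$ (since $1\z\succ\al(\beta)$), giving $0\z=\s^\f$, which already begins with $\s$. Your fixed-template approach cannot make this adaptive choice, and that is the missing idea.
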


\begin{proof}
{Let $M$ be the integer such that $\beta\in(M,M+1]$, so $A_\beta=\{0,1,\dots,M\}$ and $\s\in\{M-1,M\}^*$. Assume first that $\s\neq M-1$.
We claim that for $\z\in \Kt_\beta(t_R)$, at least one of the sequences $M\z$ and $(M-1)\z$ also belongs to $\Kt_\beta(t_R)$. Suppose $M\z\not\in\Kt_\beta(t_R)$. Then $M\z\succ\al(\beta)$, and hence 
$$(M-1)\z\succ (M-1)\si(\al(\beta))=b(1-1/\beta,\beta)\succ b(t_R,\beta),$$ 
since $t_R<\tau(\beta)\leq 1-1/\beta$. Therefore, $(M-1)\z\in\Kt_\beta(t_R)$.}

Set $\z^{(0)}:=\z$, and, for $k=0,1,2,\dots$, proceed inductively as follows. If $\z^{(k)}$ has been constructed and does not begin with the prefix $\s$, set $\z^{(k+1)}={M}\z^{(k)}$ if ${M}\z^{(k)}$ is legal in $\Kt_\beta(t_R)$, or $\z^{(k+1)}={(M-1)}\z^{(k)}$ otherwise. We end the procedure if we reach a sequence $\z^{(k)}$ beginning with $\s$.

Suppose the procedure never ends. Let $k\geq 1$, and write $\z^{(k)}=z_1^{(k)}z_2^{(k)}\dots$. %Note that $\si^k(\z^{(k)})=\z$ for each $k$.
 Since $\beta\in(\beta_\ell,\beta_*]$, $\al(\beta)$ begins with $\L(\s)^+$. So, if $z_1^{(k)}={M-1}$, this means ${M}\si(\z^{(k)})\succ \al(\beta)$ and hence
\[
{M}z_2^{(k)}\dots z_m^{(k)}\lge \L(\s)^+,
\]
which implies
\[
z_1^{(k)}z_2^{(k)}\dots z_m^{(k)}={(M-1)}\si(1z_2^{(k)}\dots z_m^{(k)})\lge {(M-1)}\si(\L(\s)^+)=\s,
\]
where the last equality follows from Lemma \ref{lem:Farey-property} (i). Since $\z^{(k)}$ does not begin with $\s$, this means that $z_1^{(k)}z_2^{(k)}\dots z_m^{(k)}\succ \s$. Clearly, this inequality holds also when $z_1^{(k)}={M}$.

On the other hand, for $k>m$ we claim that $\z^{(k)}$ cannot begin with $\L(\s)^+$. For, suppose it did. Then, since $\al(\beta)\lle \al(\beta_*)=\L(\s)^+\s^-\L(\s)^\f$ and $\z^{(k)}$ is legal in $\Kt_\beta(t_R)$, we would have
\[
z_1^{(k-m)}\dots z_m^{(k-m)}=z_{m+1}^{(k)}\dots z_{2m}^{(k)}\lle \s^-\prec\s,
\]
contradicting what we just showed. Hence, we also have $z_1^{(k)}\dots z_m^{(k)}\lle \L(\s)$.

We have now constructed a sequence, extending infinitely to the left, all of whose subwords of length $m$ are strictly greater than $\s$ but smaller than or equal to $\L(\s)$. But such a sequence cannot exist, in view of Lemma \ref{lem:Farey-sandwich}. This contradiction shows that we eventually end up with an extension $\z'$ of $\z$ beginning with $\s$. But then $\s\z'\in\Kt_\beta(t_R)$ as well, and since $\s\z'$ begins with $\s^2$ and $\s^2$ contains the word $\L(\s)$, we can extend $\z$ to a sequence $\z''\in \Kt_{\beta}(t_R)$ beginning with $\L(\s)$.

{ If $\s=M-1$, the argument is simpler: Since $\al(\beta)$ begins with $M$ and $b(t_R,\beta)\prec b(\tau(\beta),\beta)=\s^-\L(\s)^\f$, $b(t_R,\beta)$ begins with a digit $d\leq M-2$. Hence, $\s\z=(M-1)\z\in\Kt_\beta(t_R)$ whenever $\z\in\Kt_\beta(t_R)$.
}
\end{proof}

\begin{proof}[Proof of Theorem \ref{thm:basic-interval-right-endpoint}]
{Let $M$ be the integer such that $\beta_*\in(M,M+1]$, so $\s\in\{M-1,M\}^*$. We split the proof in two cases.

\medskip
{\em Case 1.} Assume first that $|\s|\geq 2$. Then $\s=\theta^k(\tilde{\s})$ for some $k\geq 0$ and $\tilde{\s}\in F^*$.
Let $\tilde{\s}=\tilde{\s}^{(1)}\tilde{\s}^{(2)}$ be the {\em standard factorization} of $\s$ into Farey words $\tilde{\s}^{(1)}$ and $\tilde{\s}^{(2)}$. It is well known that such a factorization exists and is unique (cf.~\cite{Carminati-Isola-Tiozzo-2018}). Put $\s^{(i)}:=\theta^k(\tilde{\s}^{(i)})$ for $i=1,2$. Then $\s=\s^{(1)}\s^{(2)}$ and we call this the standard factorization of $\s$.}
We can write
%Let $\beta_*$ be the right endpoint of the basic interval generated by $\s$, so
\[
\alpha(\beta_*)=\L(\s)^+\s^-\L(\s)^\f=\L(\s)^+\s^{(1)}\s^\f,
\]
where the second equality can be deduced from \cite[Lemma 2.6]{Carminati-Isola-Tiozzo-2018} and the palindrome property of Farey words (see Lemma \ref{lem:Farey-property} (ii)).

Recall from \cite[Theorem 2]{Allaart-Kong-2021} that
\[
\tau(\beta_*)={\pi_{\beta_*}\big( \s^-\L(\s)^\f\big)}.
\]
Let $[t_L,t_R]$ be a $\beta_*$-Lyndon interval and assume $t_R<\tau(\beta_*)$; then $b(t_R,\beta_*)\prec \s^-\L(\s)^\f=\s^{(1)}\s^\f$. So, letting $\w$ be the word such that $b(t_R,\beta_*)=\w^\f$, we can choose an integer $N$ so large that
\begin{equation} \label{eq:w-bound-1}
\w^\f\prec \s^{(1)}\s^N 0^\f,
\end{equation}
and
\begin{equation} \label{eq:w-bound-2}
\sigma^n(\w^\f)\prec \L(\s)^+\s^{(1)}\s^N 0^\infty \quad \forall n\geq 0.
\end{equation}
(The latter condition holds for large enough $N$ since $\w\in L^*(\beta_*)$ and $\w^\f$ is periodic.)

Now let $\u=u_1\dots u_r\in\cL(\Kt_{\beta_*}(t_R))$ and $\z\in \Kt_{\beta_*}(t_R)$. By Lemma \ref{lem:begin-with-s} we may assume that $\z$ begins with the word $\s$. We also extend $\u$ to the right to a word $\u'$ as follows: Let $k$ be the largest integer such that $u_{r-k+1}\dots u_r=\alpha_1\dots\alpha_k$, where $\alpha_1\alpha_2\dots:=\alpha(\beta_*){=\L(\s)^+\s^{(1)}\s^\f}$. If no such $k$ exists, set $k=0$. Then set
\[
\u':=u_1\dots u_{r-k}\L(\s)^+\s^{(1)}\s^{N_1},
\]
where $N_1\geq N$ is chosen large enough so that $\u'$ actually extends $\u$. It is not difficult to see using \eqref{eq:w-bound-1} and \eqref{eq:w-bound-2} that $\u'\in\cL(\Kt_{\beta_*}(t_R))$.

Now if in fact $\z\lle \s^\f$, we have immediately that $\u'\z\in\Kt_{\beta_*}(t_R)$, in view of the inequalities
\[
s_{i+1}\ldots s_ms_1\ldots s_i\prec\L(\s)^+\quad \textrm{and}\quad s_{i+1}\ldots s_m\succ s_1\ldots s_{m-i}\lge w_1\ldots w_{m-i}\quad \forall\, 0\le i<m.
\]
Otherwise, there is an integer $l\geq 1$ and a word $\v\succ\s$ such that $\z$ begins with $\s^l\v$. Now observe that $\s^{(1)}\z\succ \s^{(1)}\s^\f\succ \w^\f$, and also $\s^{(1)}\z\prec \s^\f$, since $\s$ being Lyndon implies $\s=\s^{(1)}\s^{(2)}\prec \s^{(2)}\s^{(1)}$ and hence $\s^{(1)}\s\prec \s\s^{(1)}$. Using the definition of $\u'$ and the assumptions \eqref{eq:w-bound-1} and \eqref{eq:w-bound-2} it now follows readily that
\[
\w^\f \lle \sigma^n(\u'\s^{(1)}\z) \lle \alpha(\beta)\qquad\forall n\geq 0,
\]
and therefore, $\u'\s^{(1)}\z\in\Kt_{\beta_*}(t_R)$. As a result, $\Kt_{\beta_*}(t_R)$ is transitive.

{
\medskip
{\em Case 2.} Next, assume $|\s|=1$, so $\s=M-1$. There is no sensible definition of standard factorization of $\s$, but put $\s^{(1)}:=M-2$. Then $\s^-\L(\s)^\f=(M-2)(M-1)^\f=\s^{(1)}\s^\f$ as in Case 1, and the rest of the proof proceeds in the same way as above. (Note in particular that the inequality $\s^{(1)}\z\prec \s^\f$ is now trivial.)
}
\end{proof}

\section{The case of finitely renormalizable $\beta$} \label{sec:relative-exceptional}

In this section we consider values of $\beta$ in the set $\overline{E^\cs}$ for $\cs\in\La$. Recall $E^\cs$ is defined in \eqref{eq:relative-E}.
Recall further that $\mathcal{T}_R(\beta)$ is the set of all right endpoints of $\beta$-Lyndon intervals in $[0,\tau(\beta)]$.

\begin{theorem} \label{thm:beta-in-relative-exceptional-set}
Let $\beta\in \overline{E^\cs}$ for some $\cs\in\La$. Then:
\begin{enumerate}[{\rm(i)}]
\item For every $t_R\in\mathcal{T}_R(\beta)$, $\Kt_\beta(t_R)$ has a transitive subshift $\mathcal{K}_\beta'(t_R)$ of full entropy and full Hausdorff dimension that contains the sequence $b(t_R,\beta)$. Moreover, these transitive subshifts can be chosen so that $\{\mathcal{K}_\beta'(t_R): t_R\in\mathcal{T}_R(\beta)\}$ is a strictly descending collection of subshifts, and if $\al(\beta)$ is eventually periodic, then $\mathcal{K}_\beta'(t_R)$ is sofic for each $t_R\in\mathcal{T}_R(\beta)$.
\item The $\beta$-Lyndon intervals are dense in $[0,\tau(\beta)]$.
%\item In particular, if $\beta\in\{\beta_\ell^\cs,\beta_*^\cs,\beta_r^\cs\}$ for some $\cs\in\La$, then all of the subshifts mentioned in (i) are sofic, and the plateaus of $t\mapsto \dim_H K_\beta(t)$ are precisely the $\beta$-Lyndon intervals in $[0,\tau(\beta)]$. Hence $\EE_\beta\cap[0,\tau(\beta)]=\BB_\beta\cap[0,\tau(\beta)]$.
\end{enumerate}
\end{theorem}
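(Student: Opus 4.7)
The approach is one-step renormalization, iterated via induction on the degree $k$ of $\cs\in\La_k$. By \cite[Proposition 5.4]{Allaart-Kong-2021}, $\beta\in\overline{E^\cs}$ means $\al(\beta)=\Phi_\cs(\al(\hat\beta))$ for some $\hat\beta\in\overline{E}$. When $\hat\beta\in E_L$, Theorem \ref{thm:transitive-in-E_L} supplies the needed transitive subshifts; when $\beta=\beta_*^\cs$ with $\cs\in\F$ (a limit point of $\overline{E^\cs}$), Theorem \ref{thm:basic-interval-right-endpoint} applies; and when $\hat\beta=\beta_r^{\s'}$ lies outside $E_L$, the identity $\beta=\beta_r^{\cs\bullet\s'}$ (via Lemmas \ref{lem:eta-and-xi} and \ref{lem:connectible}) lets us pass to the deeper renormalization $\cs\bullet\s'\in\La_{k+1}$, where $\hat{\hat\beta}=2\in E$ and the base case applies.

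The central technical tool is a bijection $\w\leftrightarrow\hat\w$ between $\beta$-Lyndon words $\w\in L^*(\beta)$ with $\w^\f\lle\cs^-\L(\cs)^\f$ and $\hat\beta$-Lyndon words $\hat\w\in L^*(\hat\beta)$ with $\hat\w^\f\prec 0\si(\al(\hat\beta))$, given by $\w=\cs\bullet\hat\w$. The identity $\tau(\beta)=\pi_\beta(\cs^-\L(\cs)^\f)$ from \cite[Theorem 2]{Allaart-Kong-2021} ensures the bijection exactly matches $\mathcal{T}_R(\beta)$ with the relevant subset of $\mathcal{T}_R(\hat\beta)$. For $t_R\in\mathcal{T}_R(\beta)$ with corresponding $\hat t_R\in\mathcal{T}_R(\hat\beta)$, I would set
\[
\mathcal{K}'_\beta(t_R):=\bigcup_{j=0}^{|\cs|-1}\si^j\big(\Phi_\cs(\mathcal{K}'_{\hat\beta}(\hat t_R))\big),
\]
where $\mathcal{K}'_{\hat\beta}(\hat t_R)$ is the transitive subshift from the base case or the inductive hypothesis. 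Lemma \ref{lem:substitution-properties} gives $b(t_R,\beta)=\Phi_\cs(b(\hat t_R,\hat\beta))\in\mathcal{K}'_\beta(t_R)\subseteq\Kt_\beta(t_R)$. Transitivity lifts via a prefix/suffix alignment: given $\u\in\cL(\mathcal{K}'_\beta(t_R))$ and $\z\in\mathcal{K}'_\beta(t_R)$, I would pad both to align with the $\Phi_\cs$-block grid (possible since $\cs,\cs^-,\L(\cs),\L(\cs)^+$ all appear in admissible sequences), then apply transitivity of $\mathcal{K}'_{\hat\beta}(\hat t_R)$ at the $\hat\beta$-level and pull back through $\Phi_\cs$. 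Full entropy is inherited because the length-$|\cs|$ block coding yields $h(\mathcal{K}'_\beta(t_R))=h(\mathcal{K}'_{\hat\beta}(\hat t_R))/|\cs|$ and the analogous identity for $\Kt$; combined with full entropy of $\mathcal{K}'_{\hat\beta}(\hat t_R)$ in $\Kt_{\hat\beta}(\hat t_R)$, this gives full entropy in $\Kt_\beta(t_R)$. Full Hausdorff dimension then follows from \eqref{eq:Hausdorff-dimension-preserved}. The strictly descending property is inherited from the strict monotonicity of $\Phi_\cs$ (Lemma \ref{lem:substitution-properties}(i)); the sofic property when $\al(\beta)$ is eventually periodic is preserved since block codings carry sofic shifts to sofic shifts. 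Part (ii) then reduces to Proposition \ref{prop:dense-intervals} applied to $\hat\beta\in\overline{E}$: density of $\hat\beta$-Lyndon intervals in $(0,1-1/\hat\beta)$ lifts through the bijection and the continuous monotone map induced by $\Phi_\cs$ to density of $\beta$-Lyndon intervals in $[0,\tau(\beta)]$.

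The main obstacle is justifying the bijection in the reverse direction: showing that every $\beta$-Lyndon word $\w$ with $\w^\f\lle\cs^-\L(\cs)^\f$ parses uniquely as $\cs\bullet\hat\w$ for some $\hat\beta$-Lyndon $\hat\w$. This requires a block-parsing argument demonstrating that any admissible $\beta$-sequence lying between $\w^\f$ and $\al(\beta)=\Phi_\cs(\al(\hat\beta))$ must decompose into the four building blocks $\cs,\cs^-,\L(\cs),\L(\cs)^+$ in the pattern dictated by the transition graph for $\Phi_\cs$, and that the induced block sequence is $\hat\beta$-admissible. The analogue in Section \ref{sec:beta-in-E} was handled using the basic substitutions $U_0,U_1$ via Lemmas \ref{lem:upper-bound-equivalence-xi} and \ref{lem:upper-bound-equivalence-eta}; here one extends this to the arbitrary substitution $\Phi_\cs$, with extra care for the boundary case $\beta=\beta_*^\cs$, which falls outside the strict renormalization framework and requires Theorem \ref{thm:basic-interval-right-endpoint} or its higher-order analogue as a separate input. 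A secondary difficulty is the bookkeeping needed to verify that the shift-orbit closure in the definition of $\mathcal{K}'_\beta(t_R)$ introduces no spurious sequences violating the descending property.
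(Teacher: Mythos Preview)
Your renormalization idea is on the right track, but the central bijection is stated with the wrong inequality and the wrong value of $\tau(\beta)$, and this causes the argument to miss a whole range of $t_R$.

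First, the formula $\tau(\beta)=\pi_\beta(\cs^-\L(\cs)^\f)$ from \cite[Theorem~2]{Allaart-Kong-2021} applies to $\beta$ in the \emph{basic interval} $I^\cs=[\beta_\ell^\cs,\beta_*^\cs]$, not to $\overline{E^\cs}\subseteq[\beta_*^\cs,\beta_r^\cs]$. For $\beta\in\overline{E^\cs}\setminus\{\beta_*^\cs\}$ one has $\tau(\beta)=\Theta_{\cs,\beta}(\tau(\hat\beta))>t_*:=\pi_\beta(\cs^-\L(\cs)^\f)$, so $\mathcal T_R(\beta)$ splits into a lower part in $[0,t_*)$ and an upper part in $[t_*,\tau(\beta)]$.

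Second, and more seriously, your bijection goes the wrong way. Since $\Phi_\cs$ is increasing and $\Phi_\cs(0^\f)=\cs^-\L(\cs)^\f$, any $\w=\cs\bullet\hat\w$ with $\hat\w$ nontrivial satisfies $\w^\f=\Phi_\cs(\hat\w^\f)\succ\cs^-\L(\cs)^\f$. So the parsing $\w=\cs\bullet\hat\w$ is only available for $t_R>t_*$, not for $t_R<t_*$. Concretely, with $\cs=01$ the sequence $(001)^\f$ is not in $X(\cs)$: the directed graph for $\Phi_\cs$ forbids the transition $\L(\cs)\to\cs$. Your ``main obstacle'' is therefore not just hard but false as stated; and the whole range $[0,t_*)$ of $\mathcal T_R(\beta)$ is left unaddressed. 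The paper handles this range by a direct argument (Proposition~\ref{prop:general-transitivity} and Lemma~\ref{lem:general-connecting-w}) showing that $\Kt_\beta(t_R)$ is itself transitive whenever $\w^\f\prec\r_1^-\L(\r_1)^\f$, and more generally renormalizes by the \emph{partial} word $\cs_i=\r_1\bullet\dots\bullet\r_i$ for the appropriate $i$ rather than always by the full $\cs$. Part~(ii) is likewise split: density on $[t_*,\tau(\beta)]$ comes from lifting via $\Theta_{\cs,\beta}$ (Lemma~\ref{lem:mapping-Lyndon-intervals}), but density on $[0,t_*)$ needs a separate combinatorial argument (Step~2 of Proposition~\ref{prop:S-dense-intervals}).

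A secondary gap: your full-entropy claim assumes $h(\Kt_\beta(t_R))=h(\Kt_{\hat\beta}(\hat t_R))/|\cs|$, but $\Kt_\beta(t_R)$ contains sequences outside the range of $\Phi_\cs$, namely those in $\Ga(\cs)$ (or $\Ga(\cs_i)$). The paper disposes of these via Lemma~\ref{lem:countable}, which shows $\Ga(\cs)$ is countable and hence has zero entropy; without this step the entropy identity does not follow from the block coding alone.
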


Note that (i) implies Theorem \ref{thm:general-transitivity} for $\beta\in\overline{E^\cs}$ {with $\mathcal I=\emptyset$}.
We first establish a series of lemmas.

\begin{lemma} \label{lem:four-blocks}
Let $\r\in\La$, and suppose a sequence $\z\in{\N_0}^\N$ begins with $\r^-$ or $\L(\r)^+$ and satisfies
\begin{equation} \label{eq:x-y-sandwich}
\Phi_\r(0^\f)\lle \si^n(\z)\lle \Phi_\r(1^\f)\qquad\forall\,n\geq 0.
\end{equation}
%Let $\r,\s\in\La$, and suppose a sequence $\z\in\{0,1\}^\N$ begins with $\r^-$ or $\L(\r)^+$ and satisfies
%\begin{equation} \label{eq:r-s-sandwich}
%(\r\bullet\s)0^\f \lle \sigma^n(\z)\lle \L(\r\bullet\s)^\f\qquad\forall\, n\geq 0.
%\end{equation}
Then  $\z=\Phi_\r(\hat{\z})$ for some sequence $\hat{\z}\in\set{0,1}^\N$.
\end{lemma}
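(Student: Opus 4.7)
The plan is to construct $\hat{\z}$ by parsing $\z$ from left to right into consecutive blocks of length $m := |\r|$ drawn from $\{\r, \r^-, \L(\r), \L(\r)^+\}$, consistent with the transitions in the directed graph of Figure \ref{fig:directed-graph}. If such a parsing exists, then reading each block as its pre-image symbol ($0$ for $\r^-$ or $\L(\r)$; $1$ for $\L(\r)^+$ or $\r$) produces a sequence $\hat{\z}\in\{0,1\}^\N$ with $\Phi_\r(\hat{\z})=\z$, which is precisely the conclusion of the lemma.

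I would prove the existence of this parsing by induction on the block index $k\ge 1$, simultaneously establishing that the $k$-th chunk $Z_k:=z_{(k-1)m+1}\cdots z_{km}$ lies in $\{\r,\r^-,\L(\r),\L(\r)^+\}$ and that, for $k\ge 2$, the pair $(Z_{k-1},Z_k)$ realizes an admissible arrow of the graph. The base case $k=1$ is built directly into the hypothesis: $\z$ begins with either $\r^-$ or $\L(\r)^+$, which are exactly the two possible ``start'' blocks (corresponding to $\x$ beginning with $0$ and $\y$ beginning with $1$, respectively).

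For the inductive step, the workhorse is the sandwich $\Phi_\r(\x)\lle \si^n(\z)\lle \Phi_\r(\y)$, applied at shifts $n$ equal to, or just before, the boundary of the $k$-th chunk. The bounding sequences $\Phi_\r(\x)$ and $\Phi_\r(\y)$ themselves admit the desired block decomposition, starting with $\r^-$ and $\L(\r)^+$ respectively; by the strict monotonicity of $\Phi_\r$ (Lemma \ref{lem:substitution-properties}(i)), any would-be violation of the parsing in $\z$ --- whether an illegal transition out of $Z_{k-1}$ or a mid-chunk deviation of $Z_k$ from each of the four admissible words --- will force some $\si^n(\z)$ strictly above $\Phi_\r(\y)$ or strictly below $\Phi_\r(\x)$, contradicting the hypothesis.

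The hardest part will be handling the interior of a chunk, namely comparisons at shifts of the form $n=(k-1)m+j$ with $0<j<m$. At such shifts one needs to control $\si^j$ applied to each of $\r,\r^-,\L(\r),\L(\r)^+$ against the corresponding truncations of the sandwich bounds. To keep this tractable, my plan is to induct on the degree of $\r\in\La$: in the degree-one case $\r\in\F$ the palindrome identity for $\r^-$ and the equality $\L(\r)=s_m s_{m-1}\cdots s_1$ from Lemma \ref{lem:Farey-property} reduce the task to a finite case analysis, while for higher degree $\r=\s\bullet\r'$ the associativity of $\bullet$ (Lemma \ref{lem:substitution-properties}(v)) combined with Lemma \ref{lem:connectible} on connectible concatenations lets one peel off one layer of substitution and invoke the inductive hypothesis with strictly smaller degree.
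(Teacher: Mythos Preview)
Your overall goal---parsing $\z$ into consecutive length-$m$ blocks from $\{\r,\r^-,\L(\r),\L(\r)^+\}$ following the admissible transitions---is exactly right, and matches the paper. But the paper's execution is far simpler than what you propose, and the two complications you single out as the hard parts (interior shifts $0<j<m$, and induction on the degree of $\r$) are both unnecessary.

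The key observation you are missing is that the bounding sequences themselves have a completely rigid block structure at the very start: since $\x$ begins with $0$ and $\y$ begins with $1$, Definition~\ref{def:substitution} gives that $\Phi_\r(\x)$ begins with $\r^-\L(\r)^j\L(\r)^+$ for some $0\le j\le\infty$, and $\Phi_\r(\y)$ begins with $\L(\r)^+\r^k\r^-$ for some $0\le k\le\infty$. Now suppose you have already parsed a prefix of $\z$ ending at position $pm$, and the last block was $\r^-$ (or more generally the suffix of the parsed prefix is $\r^-\L(\r)^l$). Apply the lower bound at the shift where this $\r^-$ begins: since both sides start with $\r^-$, cancel and conclude the next block is $\succcurlyeq\L(\r)$; apply the upper bound at shift $pm$ to get the next block is $\preccurlyeq\L(\r)^+$. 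Hence the next block is $\L(\r)$ or $\L(\r)^+$. The case where the last block is $\L(\r)^+$ (or $\L(\r)^+\r^l$) is symmetric using the upper bound first. No interior shifts are ever examined, and nothing about $\r$ beyond its membership in $\La$ is used---in particular, no palindrome identities and no induction on degree. The paper's proof is three sentences.

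Your plan is not wrong in spirit, but the degree induction and the interior-shift analysis are detours around a difficulty that does not exist once you exploit the explicit prefix structure of $\Phi_\r(\x)$ and $\Phi_\r(\y)$.
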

 
\begin{proof}
From Definition \ref{def:substitution}, we see that $\Phi_\r(0^\f)=\r^-\L(\r)^\f$ and $\Phi_\r(1^\f)=\L(\r)^+\r^\f$. So by \eqref{eq:x-y-sandwich},
\[
\r^-\L(\r)^\f \lle \sigma^n(\z)\lle \L(\r)^+\r^\f \qquad \forall\, n\geq 0,
\]
and hence any block $\r^-\L(\r)^l$ (with $l\geq 0$) in $\z$ must be followed by $\L(\r)$ or $\L(\r)^+$, whereas any block $\L(\r)^+\r^m$ (with $m\geq 0$) in $\z$ must be followed by $\r$ or $\r^-$. This shows $\z\in X(\r)$, and so $\z=\Phi_\r(\hat{\z})$ for some sequence $\hat{\z}$.
\end{proof}
 
For any $\cs\in\La$, we define the set
\begin{equation} \label{eq:Gamma}
\Ga(\cs):=\{\z\in{\{0,1,\dots,M\}}^\N: \cs^\f\lle \sigma^n(\z)\lle \L(\cs)^\f\ \ \forall n\geq 0\},
\end{equation}
{where $M$ is the integer such that $J^\cs\subset(M,M+1]$.}
The next result, which extends Lemma \ref{lem:Farey-sandwich} to arbitrary words in $\La$, was proved in \cite[Proposition 4.1]{Allaart-Kong-2021} {for the case $M=1$. It also follows, for $M=1$,} from \cite[Lemma 2.12]{Glendinning-Sidorov-2015} via the relationship \eqref{eq:Omega-and-Sigma}, or can alternatively be deduced, with some effort, from \cite[Theorem 2.5]{Komornik-Steiner-Zou-2022}.
{We extend this lemma here to arbitrary $M$ and provide a shorter proof.}

\begin{lemma} \label{lem:countable}
Let $\cs\in\La$. Then the set $\Ga(\cs)$ is countable.
%\[
%\Ga(\cs):=\{\z\in\{0,1\}^\N: \cs^\f\lle \sigma^n(\z)\lle \L(\cs)^\f\ \ \forall n\geq 0\}
%\]
%is countable.
\end{lemma}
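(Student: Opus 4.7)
The plan is to prove the stronger statement that $\Ga(\cs)$ is in fact finite. If $\cs\in\F$ (the base case, degree $k=1$), this is immediate from Lemma \ref{lem:Farey-sandwich}: since $\cs^\f\succ\cs 0^\f$, we have $\Ga(\cs)\subseteq\mathcal{X}(\cs)=\{\si^j(\cs^\f):0\le j<|\cs|\}$, which is finite.

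For the general case with $k\ge 2$, I decompose $\cs=\r\bullet\s$ with $\r=\s_1\bullet\cdots\bullet\s_{k-1}\in\La$ and $\s=\s_k\in\F$. By Lemma \ref{lem:substitution-properties}(iv) and Lemma \ref{lem:connectible}(ii), $\cs^\f=\Phi_\r(\s^\f)$ and $\L(\cs)^\f=\Phi_\r(\L(\s)^\f)$. The key claim is the inclusion
\[
\Ga(\cs)\subseteq \bigcup_{j=0}^{|\r|-1}\si^{-j}\bigl(\Phi_\r(\Ga(\s))\bigr).
\]
Granting this, since $\Ga(\s)$ is finite by the base case, the right-hand side is a finite set, hence so is $\Ga(\cs)$. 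To prove the inclusion, fix $\z\in\Ga(\cs)$. I would first locate an index $0\le n_0<|\r|$ such that $\si^{n_0}(\z)$ begins with $\r^-$ or $\L(\r)^+$. Lemma \ref{lem:four-blocks} (applied with $\x=\s^\f$ and $\y=\L(\s)^\f$) then yields $\si^{n_0}(\z)=\Phi_\r(\hat{\z})$ for some $\hat{\z}\in\{0,1\}^\N$. Finally, one verifies $\hat{\z}\in\Ga(\s)$ by tracking each shift $\si^m(\hat{\z})$ to the corresponding block-aligned shift of $\si^{n_0}(\z)$ and invoking strict monotonicity of $\Phi_\r$ (Lemma \ref{lem:substitution-properties}(i)) together with the sandwich $\cs^\f\lle\si^n(\si^{n_0}(\z))\lle\L(\cs)^\f$ to conclude $\s^\f\lle\si^m(\hat{\z})\lle\L(\s)^\f$.

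The main obstacle is the synchronization step, i.e., producing the alignment index $n_0<|\r|$ such that $\si^{n_0}(\z)$ begins with $\r^-$ or $\L(\r)^+$. The plan here is to exploit the explicit block decomposition of $\cs^\f$ and $\L(\cs)^\f$ as concatenations of the four words $\r, \r^-, \L(\r), \L(\r)^+$ arranged per the directed graph of Figure \ref{fig:directed-graph}, combined with the Lyndon property of $\r$ (so that its length-$|\r|$ cyclic shifts are pairwise distinct). Together with the sandwich $\cs^\f\lle\si^n(\z)\lle\L(\cs)^\f$, this forces at least one of the first $|\r|$ shifts of $\z$ to begin with a block of type $\r^-$ or $\L(\r)^+$; once that block-boundary is located, the remainder of the argument is a routine unwinding of the substitution.
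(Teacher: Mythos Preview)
Your stronger claim that $\Ga(\cs)$ is finite is false, so the whole strategy collapses. Take $\cs=01\bullet 01=0011$, so $\L(\cs)=1100$, $\r=01$, $\r^-=00$, $\L(\r)^+=11$. For every $n\ge 0$ the sequence $\z_n:=(01)^n(0011)^\f$ lies in $\Ga(\cs)$: the shifts landing inside the $(01)^n$ prefix start with $01$ or $10$, and those landing in the tail cycle through $(0011)^\f$, $011(0011)^\f$, $(1100)^\f$, $1(0011)^\f$, all of which are sandwiched between $(0011)^\f$ and $(1100)^\f$. These $\z_n$ are pairwise distinct (the first occurrence of $00$ in $\z_n$ is at position $2n$), so $\Ga(\cs)$ is countably infinite.

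This example also shows exactly where your synchronization step fails. The sequence $(01)^\f\in\Ga(\cs)$ \emph{never} contains $\r^-=00$ or $\L(\r)^+=11$, so no shift of it begins with either block; and for $\z_n$ the first block-aligned position is $2n$, which is unbounded. Thus neither the inclusion
\[
\Ga(\cs)\subseteq \bigcup_{j=0}^{|\r|-1}\si^{-j}\bigl(\Phi_\r(\Ga(\s))\bigr)
\]
nor any version with a bounded alignment index can hold. The paper's proof accounts for precisely these two phenomena: it separates out the sequences lying entirely in $\Ga(\r)$ (those never containing $\r^-$ or $\L(\r)^+$), and for the rest it allows an \emph{arbitrary} finite prefix before the first aligned block, obtaining
\[
\Ga(\r\bullet\s)\subseteq\Ga(\r)\cup\bigcup_{\d\in\{0,1\}^*}\d\,\Phi_\r(\Ga(\s)),
\]
which is only a countable union and yields countability by induction. (The paper also peels off the first Farey factor, writing $\cs=\r\bullet\s$ with $\r\in\F$ and $\s\in\La_{k-1}$, so that $\Ga(\r)$ is finite by the base case; your reversed decomposition would also work for countability once you allow the unbounded prefix, since $\Ga(\r)$ is then countable by the induction hypothesis.)
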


\begin{proof}
{For the purpose of this proof, let
\[
\La_k^*:=\{\s_1\bullet\dots\bullet\s_k: \s_i\in F^*\ \mbox{for $i=1,\dots,k$}\}, \qquad k\in\N.
\]
Clearly, $\La_k^*\subseteq \La_k$.}
We proceed by induction on the degree $k$ of $\cs$.
The case $k=1$ follows from Lemma \ref{lem:Farey-sandwich}. Let $k\geq 2$, and suppose the statement holds for all $\cs\in\La_{k-1}$. {In particular, it holds for all $\cs\in\La_{k-1}^*$.} Take $\cs\in\La_k$, and write $\cs=\r\bullet\s$ where $\r\in {F_e}$ and $\s\in{\La_{k-1}^*}$. Let $\z\in\Ga(\cs)$. Since $\r\bullet\s$ begins with $\r^-$ and $\L(\r\bullet\s)$ begins with $\L(\r)^+$, it follows that
\begin{align*}
\r^-0^\f&\prec \Phi_\r(\s^\f)=(\r\bullet\s)^\f\lle \sigma^n(\z)\\
&\lle \L(\r\bullet\s)^\f=\Phi_\r(\L(\s)^\f)\prec \L(\r)^+1^\f\qquad\forall n\geq 0,
\end{align*}
so either $\z\in\Ga(\r)$, or else $\z$ eventually contains $\r^-$ or $\L(\r)^+$, which by Lemma \ref{lem:four-blocks} implies $\sigma^k(\z)=\Phi_\r(\hat{\z})$ for some $k\geq 0$ and sequence $\hat{\z}$. But $\z\in\Ga(\r\bullet\s)$ implies that {$\Phi_\r(\hat\z)=\si^k(\z)\in\Ga(\r\bullet\s)$, and thus} $\hat{\z}\in\Ga(\s)$ by Lemma \ref{lem:substitution-properties} (i). Therefore,
\[
\Ga(\cs)=\Ga(\r\bullet\s)\subseteq\Ga(\r)\cup\bigcup_{\d\in{\N_0}^*}\d\;\Phi_\r(\Ga(\s)).
\]
Since both $\Ga(\r)$ and $\Ga(\s)$ are countable by the induction hypothesis, we conclude that $\Ga(\cs)$ is countable as well.
\end{proof}

The next lemma, {which is specific to $\beta\in(1,2]$,} describes the interplay between the substitution map $\Phi_\r$ and the maps $U_0$ and $U_1$.

{%In order to prove Lemma \ref{lem:general-connecting-w} we need a stronger version of Lemma \ref{lem:substitution-commute-b}.

\begin{lemma}  \label{lem:substitution-commute-c}
  Let $\r\in\F$. Suppose $\alpha(\beta)=\Phi_\r(\y)$ for some sequence $\y\in\{0,1\}^\N$. 
  \begin{enumerate}[{\rm(i)}]
    \item If $\r=U_0(\tilde{\r})$ for some $\tilde{\r}\in\F$, then there exists $\tilde\beta$ such that 
    \[
    \alpha(\tilde{\beta})=\Phi_{\tilde{\r}}(\y)\quad \textrm{and}\quad 0\alpha(\beta)=U_0(\alpha(\tilde{\beta})).
    \]
    \item If $\r=U_1(\tilde{\r})$ for some $\tilde{\r}\in\F$,  then there exists $\tilde\beta$ such that 
    \[
    \alpha(\tilde{\beta})=\Phi_{\tilde{\r}}(\y)\quad \textrm{and}\quad  \alpha(\beta)=1U_1(\alpha(\tilde{\beta})).
    \]
  \end{enumerate}
\end{lemma}

\begin{proof}
Since the proofs of (i) and (ii) are similar, we only prove (i).
  Suppose   $\r=U_0(\tilde{\r})$. Since $\alpha(\beta)=\Phi_\r(\y)$ begins with 1 and $\Phi_\r(0)=\r^-$ begins with 0, it follows that $\y$ begins with 1. Write \[
\mathbf y=1^{k_1+1}0^{l_1+1}1^{k_2+1}0^{l_2+1}\ldots,
\]
where $k_i, l_i$ are nonnegative integers, possibly one of them taking the value $+\infty$. Then, from   the definition \eqref{eq:block-map} and Lemma \ref{lem:eta-and-xi} (i) we obtain
\begin{align*}
0\alpha(\beta)=0\Phi_\r(\y)&=0\L(\r)^+\r^{k_1}\r^-\L(\r)^{l_1}\L(\r)^+\r^{k_2}\r^-\L(\r)^{l_2}\dots\\
&=U_0(\L(\tilde\r)^+\tilde\r^{k_1}\tilde\r^-\L(\tilde\r)^{l_1}\L(\tilde\r)^+\tilde\r^{k_2}\tilde\r^-\L(\tilde\r)^{l_2}\dots)=U_0(\Phi_{\tilde\r}(\y)).
\end{align*}
So in the following it suffices to prove $\Phi_{\tilde\r}(\y)=\al(\tilde\beta)$ for some $\tilde\beta$. 

Note by Lemma \ref{lem:quasi-greedy expansion-alpha-q} that $\si^n(\al(\beta))\lle \al(\beta)$ for all $n\ge 0$. Since $\al(\beta)=\Phi_\r(\y)$, by Lemma \ref{lem:substitution-properties} (iii) it follows that 
\[
\si^n(\y)\lle \y\quad\forall n\ge 0.
\]
Using Lemma \ref{lem:substitution-properties} (iii) once more, we conclude that 
\[
\si^n(\Phi_{\tilde \r}(\y))\lle \Phi_{\tilde\r}(\y)\quad \forall n\ge 0.
\]
By Lemma \ref{lem:quasi-greedy expansion-alpha-q} this implies $\Phi_{\tilde\r}(\y)=\al(\tilde\beta)$ for some $\tilde\beta$, since $\Phi_{\tilde\r}(\y)$ does not end in $0^\f$. This completes the proof.
\end{proof}
}

\begin{lemma} \label{lem:general-connecting-w}
Let $\r\in{F_e}$ and let $\beta{>1}$ such that $\alpha(\beta)=\Phi_\r(\y)$ for some sequence $\y\succ 10^\f$ {not ending in $0^\f$}.
Let $\w$ be a $\beta$-Lyndon word such that $\w^\f\prec \r^-\L(\r)^\f$. Then for any sequence $\z\in\Kt_\beta(\w)$ there is a word $\v$ such that $\w\v\z\in \Kt_\beta(\w)$.
\end{lemma}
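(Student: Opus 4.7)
I would argue by induction on the length $m := |\r|$ of the Farey word $\r$, exploiting the recursive characterization of Farey words in Lemma~\ref{lem:characterization-Farey-words} together with the commutation identity for $\Phi_\r$ under $U_0, U_1$ supplied by Lemma~\ref{lem:substitution-commute-b}. The induction reduces the problem step by step to a Farey word of length $2$, at which point an explicit construction of $\v$ suffices.

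For the inductive step, suppose $|\r| \geq 3$. By Lemma~\ref{lem:characterization-Farey-words}, either $\r = U_0(\tilde\r)$ or $\r = U_1(\tilde\r)$ with $\tilde\r \in \F$ and $|\tilde\r| < |\r|$; I treat the case $\r = U_0(\tilde\r)$, the other being symmetric and relying on Lemma~\ref{lem:upper-bound-equivalence-eta} in place of Lemma~\ref{lem:upper-bound-equivalence-xi}. A short check (using the admissible transitions in Figure~\ref{fig:directed-graph} and the fact that $U_0$-images avoid the block $11$) shows that each of $\r, \r^-, \L(\r), \L(\r)^+$ avoids the word $11$; hence $\alpha(\beta) = \Phi_\r(\y)$ itself avoids $11$, and Lemma~\ref{lem:alpha-renormalizing} produces $\tilde\beta$ with $0\alpha(\beta) = U_0(\alpha(\tilde\beta))$. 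Lemma~\ref{lem:substitution-commute-b} then gives $\alpha(\tilde\beta) = \Phi_{\tilde\r}(\y)$, with the same condition $\y \succ 10^\f$.

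Since $\alpha(\beta)$ avoids $11$, every sequence in $\Kt_\beta(\w)$ avoids $11$; in particular we can write $\w = U_0(\tilde\w)$ and $\z = U_0(\tilde\z)$ for some word $\tilde\w$ and sequence $\tilde\z$. Repeated application of Lemma~\ref{lem:eta-and-xi}(i) combined with Lemma~\ref{lem:connectible} yields the identity $U_0(\tilde\r^-\L(\tilde\r)^\f) = \r^-\L(\r)^\f$, so the strict monotonicity of $U_0$ converts $\w^\f \prec \r^-\L(\r)^\f$ into $\tilde\w^\f \prec \tilde\r^-\L(\tilde\r)^\f$. Lemma~\ref{lem:upper-bound-equivalence-xi} (with strict inequality, as noted in Remark~\ref{rem:upper-bound-equivalences}) then gives $\tilde\w \in L^*(\tilde\beta)$ and $\tilde\z \in \Kt_{\tilde\beta}(\tilde\w)$. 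By the induction hypothesis applied to $(\tilde\r, \tilde\beta, \tilde\w, \tilde\z)$ there is a word $\tilde\v$ with $\tilde\w\tilde\v\tilde\z \in \Kt_{\tilde\beta}(\tilde\w)$. Setting $\v := U_0(\tilde\v)$ and noting that $\tilde\w$ ends in $1$, $\tilde\z$ is a $0$--$1$ sequence, and we may pad $\tilde\v$ if needed so that the triple $(\tilde\w, \tilde\v, \tilde\z)$ is connectible, Lemma~\ref{lem:connectible}(i) distributes $U_0$ across the concatenation, producing $\w\v\z = U_0(\tilde\w\tilde\v\tilde\z)$. A final application of Lemma~\ref{lem:upper-bound-equivalence-xi} (both the upper-bound and an analogous lower-bound argument) confirms $\w\v\z \in \Kt_\beta(\w)$.

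For the base case $\r = 01$, the hypothesis $\w^\f \prec 00(10)^\f$ forces $\w$ to begin with $00$, and $\y \succ 10^\f$ forces $\alpha(\beta) \succ \Phi_{01}(10^\f) = 1100(10)^\f$; in particular $\alpha(\beta)$ begins with $1100$. The substitution $\Phi_{01}$ builds $\alpha(\beta)$ from the blocks $11, 01, 00, 10$, and a direct inspection gives sharp bounds on consecutive runs that allow one to write down $\v$ explicitly as a concatenation of $\L(\r)^+ = 11$, $\r = 01$, $\r^- = 00$, and $\L(\r) = 10$ of sufficient length to bridge any admissible tail $\z$ to $\w$. The \emph{main obstacle} is the connectibility bookkeeping in the inductive step: to invoke Lemma~\ref{lem:connectible}(i) one must ensure that the boundary digits of $\tilde\w$, $\tilde\v$, and $\tilde\z$ alternate correctly, which may require a small adjustment of $\tilde\v$ at its endpoints and a verification that this padding does not destroy the property $\tilde\w\tilde\v\tilde\z \in \Kt_{\tilde\beta}(\tilde\w)$ provided by the inductive hypothesis.
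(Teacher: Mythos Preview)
Your proposal follows the same induction-on-$|\r|$ strategy as the paper's proof, but there are two points where it goes astray.

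First, your stated ``main obstacle'' is a phantom. The maps $U_0$ and $U_1$ are defined as genuine homomorphisms on $\{0,1\}^*$ and $\{0,1\}^\N$ (see \eqref{eq:subs-01} and the sentence following it), so $U_0(\tilde\w\tilde\v\tilde\z) = U_0(\tilde\w)U_0(\tilde\v)U_0(\tilde\z)$ holds automatically with no connectibility hypothesis. Lemma~\ref{lem:connectible} concerns the substitution $\Phi_\s$, which is \emph{not} a homomorphism; you have conflated the two. No padding of $\tilde\v$ is needed.

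Second, the $U_1$ case is \emph{not} symmetric to the $U_0$ case, and this is a genuine (though small) gap. When $\r = U_1(\tilde\r)$, the sequence $\alpha(\beta)$ avoids $00$, so the upper bound in $\Kt_\beta(\w)$ forbids $00$; but this does \emph{not} force $\w$ itself to avoid $00$, since $\w^\f$ only satisfies the lower bound $\w^\f \prec \r^-\L(\r)^\f$ and $\r^-$ begins with $0$. If $\w$ begins with $00$ you cannot write $\w = U_1(\tilde\w)$ and your inductive reduction breaks down. The paper handles this subcase directly: since $\alpha(\beta)$ avoids $00$ and does not end in $(01)^\f$, one can take $\v = (01)^M$ for $M$ large enough. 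There is also a degenerate subcase: when $\w = 01$ one gets $\tilde\w = 0$, so the induction hypothesis must be interpreted to include this trivial case.

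With these two corrections your argument coincides with the paper's.
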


\begin{proof}
{ We consider three cases, depending on the type of $\r$.

{\em Case 1.} $\r\in F^*$.}
{Recall that 
$
\Kt_\beta(\w)=\set{\z\in{A_\beta}^\N: \w^\f\lle \si^n(\z)\lle \al(\beta)~\forall n\ge 0}.
$}
We use induction on the length of the Farey word $\r$. For the purpose of this proof, we will consider $\w=0$ a $\beta$-Lyndon word, and the degenerate interval $[t_L,t_R]=\{0\}$ a $\beta$-Lyndon interval. Note that for this case, the statement of the lemma is trivial.

\medskip
{\em Step 1.} First we take $\r=01$. Without loss of generality we may assume that $\z$ begins with $0$. Since $\w^\f\prec \r^-\L(\r)^\f=00(10)^\f$, there is an integer $M$ such that
\begin{equation} \label{eq:w-upper-bound}
\w^\f\prec 00(10)^M 0^\f.
\end{equation}
%Likewise, since $\alpha(\beta)=\Phi_\r(\y)$, $\alpha(\beta)$ begins with $\L(\r)^+$. Furthermore, $1^\f\lge\y\succ 10^\f$ implies $\L(\r)^+\r^\f\lge\alpha(\beta)\succ \L(\r)^+\r^-\L(\r)^\f=1100(10)^\f$, so there is a further integer $N\geq M$ such that
%\[
%11(01)^\f\lge\alpha(\beta)\succ 1100(10)^N 1^\f.
%\]
%Now set $\v:=00(10)^N 1$.
%Then it is not difficult to see that $\w\v\z\in\Kt_\beta(\w)$: 
{
Since $\y$ does not end in $0^\f$, $\alpha(\beta)=\Phi_\r(\y)$ does not end in $\r^-\L(\r)^\f=00(10)^\f$. Hence any block $00=\r^-$ in $\al(\beta)$ must be followed by $\L(\r)^n\L(\r)^+=(10)^n11$ for some $n\geq 0$. Write $\w=w_1\dots w_l$. Let $J:=\{0\leq j<l: w_{j+1}\dots w_l=\al_1\dots\al_{l-j}\}$. 
For each $j\in J$ we can choose $N_j\geq M$ so that $\si^{l-j}(\al(\beta))\succ 00(10)^{N_j}1^\f$. Now set $N:=\max_{j\in J} N_j$, and $\v:=00(10)^N 1$. Then for each $0\leq j<l$, we have 
\[w_{j+1}\dots w_l00(10)^N1\z\lle \al_1\dots\al_{l-j}00(10)^N1\z\prec\al(\beta).\]
 This implies easily that $\si^n(\w\v\z)\lle\al(\beta)$ for every $n\geq 0$. Furthermore, the Lyndon property of $\w$, $N\geq M$, and \eqref{eq:w-upper-bound} imply that $\si^n(\w\v\z)\lge\w^\f$ for every $n\geq 0$. Hence, $\w\v\z\in \Kt_\beta(\w)$.
}

This completes the basis for the induction.

\medskip
{\em Step 2.} Now let $k\geq 3$ and assume the statement of the lemma holds for all Farey words of length $<k$. Let $\r$ be a Farey word of length $k$. So {by Lemma \ref{lem:characterization-Farey-words}} either $\r=U_0(\tilde{\r})$ or $\r=U_1(\tilde{\r})$ for some Farey word $\tilde{\r}$ of length $<k$.

\medskip
{\bf Case (A):} Suppose $\r=U_0(\tilde{\r})$.  {By Lemma \ref{lem:substitution-commute-c} let $\tilde{\beta}$ be the base such that $0\alpha(\beta)=U_0(\alpha(\tilde{\beta}))$ and $\alpha(\tilde{\beta})=\Phi_{\tilde{\r}}(\y)$. Then} $\al(\beta)$ begins with $10$. Thus, $\w$ cannot contain the word $11$, and so $\w=U_0(\tilde{\w})$ for some $\tilde{\beta}$-Lyndon word $\tilde{\w}$. Furthermore, by similar reasoning as in the proof of Lemma \ref{lem:substitution-commute-c}, we obtain
\begin{equation*}
U_0(\tilde{\w}^\f)=\w^\f\prec \r^-\L(\r)^\f=U_0\big(\tilde{\r}^-\L(\tilde{\r})^\f\big).
\end{equation*}
Thus, $\tilde{\w}^\f\prec \tilde{\r}^-\L(\tilde{\r})^\f$.

Assume without loss of generality that $\z\in\Kt_\beta(\w)$ begins with $0$. It then follows in the same way as before that $\z=U_0(\tilde{\z})$ for some sequence $\tilde{\z}$, and $\tilde{\z}\in \Kt_{\tilde\beta}(\tilde{\w})$. %where $\tilde{t}_R$ is the number such that $b(\tilde{t}_R,\tilde{\beta})=\tilde{\w}^\f$.
Observe that {$\tilde{\r}$, $\tilde{\beta}$ and $\tilde{\w}$}  satisfy the hypotheses of the lemma. Therefore, by the induction hypothesis, there is a word $\tilde{\v}$ such that $\tilde{\w}\tilde{\v}\tilde{\z}\in \Kt_{\tilde\beta}(\tilde{\w})$. Set $\v:=U_0(\tilde{\v})$; then $\w\v\z=U_0(\tilde{\w}\tilde{\v}\tilde{\z})\in \Kt_\beta(\w)$,
%\[
%\w\v'\z=\w\v'1\sigma(\z)=\w\v\sigma(\z)=\xi_p(\tilde{\w}\tilde{\v}\tilde{\z})\in \Kt_\beta(t_R),
%\]
as desired.

\medskip
{\bf Case (B):} Suppose $\r=U_1(\tilde{\r})$. {By Lemma \ref{lem:substitution-commute-c} let $\tilde{\beta}$ be the base such that $0\alpha(\beta)=U_1(0\alpha(\tilde{\beta}))$ and $\alpha(\tilde{\beta})=\Phi_{\tilde{\r}}(\y)$}.

Assume without loss of generality that $\z\in\Kt_\beta(\w)$ begins with $0$. Note that $\al(\beta)$ does not contain the word $00$, and $\al(\beta)$ does not end in $(01)^\f$, because if it did, then $\al(\tilde{\beta})$ would end in $0^\f$, which is impossible.  If $\w$ begins with $00$, then we can find a positive integer $M$ such that $(01)^M$ does not occur in $\al(\beta)$, and then $\w(01)^M\z\in\Kt_\beta(\w)$.

So assume $\w$ begins with $01$. Since $\w$ is Lyndon, we can then write $\w=U_1(\tilde{\w})$, where $\tilde{\w}$ is again Lyndon. (Possibly, $\tilde{\w}=0$.) As before, we find that $\tilde{\w}$ is in fact $\tilde{\beta}$-Lyndon.
Since $\w\prec \r^-\L(\r)^\f$, we can deduce in a similar manner as in Case (A) that $\tilde{\w}\prec \tilde{\r}^-\L(\tilde{\r})^\f$.
%Furthermore,
%\begin{align*}
%U_1(\tilde{\w}^\f)&=\w^\f\prec \r^-\L(\r)^\f=U_1(\tilde{\r})^-\L(U_1(\tilde{\r}))^\f\\
%&=U_1(\tilde{\r}^-)0\L(U_1(\tilde{\r}))^\f=U_1(\tilde{\r}^-)\big(U_1(\L(\tilde{\r}))\big)^\f\\
%&=U_1\big(\tilde{\r}^-\L(\tilde{\r})^\f\big)
%\end{align*}
%using \eqref{eq:eta-and-L-2} and \eqref{eq:eta-and-L-3}, and thus, $\tilde{\w}^\f\prec \tilde{\r}^-\L(\tilde{\r})^\f$.

%Let $\tilde{t}_R$ be the point such that $b(\tilde{t}_R,\tilde{\beta})=\tilde{\w}^\f$.
Since $\z$ begins with $0$, the same reasoning as before yields that $\z=U_1(\tilde{\z})$ for some sequence $\tilde{\z}$. As before, we have $\tilde{\z}\in\Kt_{\tilde\beta}(\tilde{\w})$. By the induction hypothesis, there is a word $\tilde{\v}$ such that $\tilde{\w}\tilde{\v}\tilde{\z}\in \Kt_{\tilde\beta}(\tilde{\w})$. Set $\v:=U_1(\tilde{\v})$; then $\w\v\z=U_1(\tilde{\w}\tilde{\v}\tilde{\z})\in \Kt_\beta(\w)$.

\medskip
{
{\em Case 2.} $\r=\theta^k(\tilde{\r})$ for some $k\in\N$ and $\tilde{\r}\in F^*$. For ease of presentation we take $k=1$; the reader will have no difficulty with the general case. Note $\r\in\{1,2\}^*$ and $\r$ ends in the digit $2$. Then $\al(\beta)=\Phi_\r(\y)\in\{1,2\}^\N$, and $\al(\beta)$ contains infinitely many $2$'s. So we can find a positive integer $l$ such that the word $1^l$ does not occur in $\al(\beta)$. If $\w$ begins with $0$ or $\w=1$, then, regardless of $\z\in\Kt_\beta(\w)$, we simply take $\v=1^l$. 

On the other hand, suppose $\w$ begins with $1$ but $\w\neq 1$, so $\w=\theta(\tilde{\w})$ for some Lyndon word $\tilde{\w}\in\{0,1\}^*$. If $\z\in\Kt_\beta(\w)$, then $\z\in\{1,2\}^\N$, so $\z=\theta(\tilde{\z})$ for some $\tilde{\z}\in\{0,1\}^\N$. Similarly, $\al(\beta)=\theta(\al(\tilde{\beta}))$ for some $\tilde{\beta}\in(1,2)$. Applying Case 1 to $\tilde{\r}$ and $\tilde{\beta}$ we find a word $\tilde{\v}$ such that $\tilde{\w}\tilde{\v}\tilde{\z}\in \Kt_{\tilde\beta}(\tilde{\w})$. Set $\v:=\theta(\tilde{\v})$; then $\w\v\z\in \Kt_\beta(\w)$.

\medskip
{\em Case 3.} $\r=M-1$ for some $M\geq 2$. Again, for simplicity we take $M=2$, so $\r=1$. By assumption, 
$\w^\f\prec \r^-\L(\r)^\f=01^\f$, so $\w$ begins with $01^m0$ for some $m\geq 0$. Given $\z\in\Kt_\beta(\w)$, we can by the first paragraph of the proof of Lemma \ref{lem:begin-with-s} extend $\z$ to the left to a sequence $\z'\in\Kt_\beta(\w)$ beginning with either $2$ or $1^{m+1}$. Put $\v=01^{m+1}0$; we claim that $\w\v\z'\in\Kt_\beta(\w)$. To see this, suppose $\w$ has a suffix that is a prefix of $\al(\beta)$; say $\al_1\dots\al_p$. Then the $\beta$-Lyndon property of $\w$ gives
\begin{equation} \label{eq:alpha-greater}
\al(\beta)\succ\al_1\dots\al_p\w^\f\succ \al_1\dots\al_p 01^m0^\f.
\end{equation}
Since $\al(\beta)=\Phi_1(\y)$, $\al(\beta)$ cannot contain a word $01^j0$ for any $j\geq 0$. Hence \eqref{eq:alpha-greater} implies $\al(\beta)\lge \al_1\dots\al_p 01^\f\succ \al_1\dots\al_p \v\z'$, as desired.
}
\end{proof}

\begin{lemma} \label{lem:renormalizable-inequality}
Suppose $\al(\beta)=\Phi_\r(\y)$ for some Farey word $\r$ and sequence $\y$. Then $b(\tau(\beta),\beta)\lle \r0^\f$.
\end{lemma}

\begin{proof}
Since $\al(\beta)$ begins with $\L(\r)^+$, $\r0^\f$ is a greedy $\beta$-expansion. Let $t\geq \pi_\beta(\r0^\f)$, so that $b(t,\beta)\lge \r0^\f$. We will show that $\Kt_\beta(t)$ is countable. Note that
\begin{align*}
\Kt_\beta(t)&=\{\z\in{A_\beta}^\N: b(t,\beta)\lle \si^n(\z)\lle \al(\beta)\ \forall n\geq 0\}\\
&\subseteq \{\z\in{A_\beta}^\N: \r0^\f\lle \si^n(\z)\lle \al(\beta)\ \forall n\geq 0\}\\
&=\{\z\in{A_\beta}^\N: \r^\f\lle \si^n(\z)\lle \al(\beta)\ \forall n\geq 0\}.
\end{align*}
By Lemma \ref{lem:countable}, there are only countably many sequences $\z$ satisfying $\r^\f\lle \si^n(\z)\lle \L(\r)^\f$ for all $n\geq 0$. The remaining sequences in $\Kt_\beta(t)$ must contain the word $\L(\r)^+$, and this can only be followed by $\r^\f$ because $\al(\beta)=\Phi_\r(\y)$ implies $\al(\beta)\lle\L(\r)^+\r^\f$. Thus, $\Kt_\beta(t)$ is countable, and we conclude that $b(\tau(\beta),\beta)\lle \r0^\f$.
\end{proof}

\begin{proposition} \label{prop:general-transitivity}
Let $\beta$ be renormalizable, so $\al(\beta)=\Phi_\r(\al(\hat{\beta}))$ for some Farey word $\r$ and base $\hat{\beta}\in(1,2]$. Let $t_R\in\mathcal{T}_R(\beta)$. Then $\Kt_\beta(t_R)$ is transitive if and only if $b(t_R,\beta)\prec \r^-\L(\r)^\f$.
\end{proposition}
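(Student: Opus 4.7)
The proof splits along the biconditional. For the forward direction, assume $b(t_R,\beta)\prec\r^-\L(\r)^\f$ and let $\w$ be the $\beta$-Lyndon word with $\w^\f=b(t_R,\beta)$. Given $\u\in\cL(\Kt_\beta(t_R))$ and $\z\in\Kt_\beta(t_R)$, I would first extend $\u$ on the right to a word $\u'$ ending in a long power $\w^K$; because $\w\in L^*(\beta)$, this extension remains in $\cL(\Kt_\beta(t_R))$. Lemma~\ref{lem:general-connecting-w} then supplies a word $\bar\v$ with $\w\bar\v\z\in\Kt_\beta(t_R)$, so that $\u'\bar\v\z=\u\w^{K-1}(\w\bar\v\z)$ is the desired connection, with the lexicographic sandwich for shifts straddling the $\u'$--$\bar\v\z$ interface following from the Lyndon property of $\w$ (mirroring the proof of Lemma~\ref{lem:transitivity-simplified}).

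For the backward direction, assume $\w^\f:=b(t_R,\beta)\succeq\r^-\L(\r)^\f=\Phi_\r(0^\f)$. My first step is to prove the auxiliary inequality $\w^\f\lle\r^\f$, ensuring that $\r^\f$ and all its cyclic permutations lie in $\Kt_\beta(t_R)$: applying Lemma~\ref{lem:four-blocks} to $\w^\f$ itself (whose shifts satisfy $\Phi_\r(0^\f)\lle\sigma^n(\w^\f)\lle\Phi_\r(\al(\hat\beta))$) shows that either $\w^\f\in\Ga(\r)$, in which case $\w=\r$ by Lemma~\ref{lem:countable} combined with the Lyndon property of $\w$ (giving $\w^\f=\r^\f$), or $\w^\f=\Phi_\r(\hat\w^\f)$ for some $\hat\w^\f$ starting with $0$, in which case $\w^\f$ begins with $\r^-\L(\r)\prec\r\r$, so $\w^\f\prec\r^\f$.

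With this in hand, I would prove non-transitivity by taking $\u:=\L(\r)^+$ (which belongs to $\cL(\Kt_\beta(t_R))$ since $\al(\beta)$ begins with $\L(\r)^+$) paired with a carefully chosen $\z\in\Kt_\beta(t_R)$ whose block structure is incompatible with any $\Phi_\r$-image obeying the lower-bound constraint. Suppose for contradiction a word $\v$ realizes $\L(\r)^+\v\z\in\Kt_\beta(t_R)$. Since this concatenation begins with $\L(\r)^+$ and its shifts sit inside the sandwich $[\Phi_\r(0^\f),\Phi_\r(\al(\hat\beta))]$, Lemma~\ref{lem:four-blocks} yields $\L(\r)^+\v\z=\Phi_\r(\hat\y)$ for some $\hat\y$. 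The initial $\L(\r)^+$ anchors a block boundary at position~$1$; tracing the alignment of $\z$ against the $|\r|$-periodic block decomposition---and noting that any $\r^-$-block in $\Phi_\r(\hat\y)$ would force a shift violating $\sigma^n\succeq\w^\f$---contradicts the chosen block-incompatibility of $\z$. The main technical obstacle, I expect, lies in selecting $\z$ so that this block-alignment contradiction holds uniformly across all $\hat\beta\in(1,2]$; handling this may require either an induction on $|\r|$ in the spirit of Lemma~\ref{lem:general-connecting-w}, or a separate argument distinguishing the degenerate case $\hat\beta=2$ (where $\al(\beta)=\L(\r)^+\r^\f$ admits an unusually thin block structure) from $\hat\beta<2$.
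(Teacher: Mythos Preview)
For the forward direction, your structure is correct but there is a gap in the upper-bound verification. Extending $\u$ to $\u'=u_1\dots u_j\w^K$ (with $j$ chosen as in Lemma~\ref{lem:transitivity-simplified}) and then invoking Lemma~\ref{lem:general-connecting-w} is exactly the paper's approach. However, the Lyndon property of $\w$ only handles the \emph{lower} bound $\si^n(\c)\lge\w^\f$; for the \emph{upper} bound at positions $n<j$ you need the replacement for the balancedness inequality \eqref{eq:0-alpha-inequality}, namely that $\si^m(\al(\beta))\lge\Phi_\r(0^\f)=\r^-\L(\r)^\f$ for all $m\geq 0$ (Lemma~\ref{lem:substitution-properties}(iii) applied to $\al(\beta)=\Phi_\r(\al(\hat\beta))$). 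Combined with the hypothesis $\w^\f\prec\r^-\L(\r)^\f$, this lets one choose $N,K$ with $\w^K1^\f\prec\r^-\L(\r)^N0^\f$, so that the block $\w^K$ fits strictly under every tail $\si^m(\al(\beta))$. Your justification ``because $\w\in L^*(\beta)$'' does not supply this.

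Your backward direction is over-engineered; the uncertainty you express about choosing $\z$ and the anticipated induction on $|\r|$ signals that you have missed the direct argument. The paper's choice is concrete and uniform: take $\u=\r^-$ (which lies in $\cL(\Kt_\beta(t_R))$ because $\w^\f$ itself begins with $\r^-$) and $\z=\L(\r)^\f$ (whose smallest shift is $\r^\f\succ\w^\f$, again since $\w^\f$ begins with $\r^-$). Any hypothetical connection $\r^-\v\L(\r)^\f\in\Kt_\beta(t_R)$ lies in $X(\r)$ by Lemma~\ref{lem:four-blocks}, and the block-transition rules of $\Phi_\r$ (Figure~\ref{fig:directed-graph}) force each $\L(\r)$ in the tail to be preceded by either $\L(\r)$ or $\r^-$; walking backwards, the sequence must therefore contain a shift equal to $\r^-\L(\r)^\f\prec\w^\f$, violating the lower bound. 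No induction, no case distinction on $\hat\beta$, and no need to engineer a special $\z$.
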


\begin{proof}%[Proof of Proposition \ref{prop:general-transitivity}]
Let $\w$ be the $\beta$-Lyndon word such that $b(t_R,\beta)=\w^\f$.  %{Then $\Kt_\beta(t_R)=\Kt_\beta(\w)$.}
Set $\hat{\y}:=\al(\hat{\beta})$, so $\alpha(\beta)=\Phi_\r(\hat{\y})$ and $\hat{\y}\succ 10^\f$. {Furthermore, $\hat{\y}$ does not end in $0^\f$.}

Assume first that $\w^\f\prec \r^-\L(\r)^\f$. Then we can find positive integers $N$ and $k$ such that
\begin{equation} \label{eq:k-N-inequality}
\w^k {M_\beta}^\f\prec \r^-\L(\r)^N 0^\f.
\end{equation}

Let $\u=u_1\dots u_l\in\cL(\Kt_\beta(t_R))$ and $\z\in \Kt_\beta(t_R)$. By Lemma \ref{lem:general-connecting-w} there is a word $\v$ such that $\w\v\z\in \Kt_\beta(t_R)$. It is easy to see that then $\w^k\v\z\in \Kt_\beta(t_R)$ also. Now let $j$ be the smallest positive integer such that $u_{j+1}\dots u_l$ is a prefix of $\w^\f$; or set $j=l$ if no such integer exists.
By choosing $k$ even larger if necessary, we may assume $k|\w|>l-j$. Consider the sequence
\[
\c:=u_1\dots u_j \w^k\v\z.
\]
Then $\c$ begins with $\u$ and ends in $\z$, and $\u$ and $\z$ are separated at least by the word $\v$. We now check that $\c\in\Kt_\beta(t_R)$.

Since $\alpha(\beta)=\Phi_\r(\hat{\y})$, by Lemma \ref{lem:substitution-properties} (iii) it follows that $\sigma^n(\alpha(\beta))\lge \Phi_\r(0^\f)=\r^-\L(\r)^\f$ for all $n\geq 0$. Hence, for $n<j$, \eqref{eq:k-N-inequality} gives
\[
\sigma^n(\c)=u_{n+1}\dots u_j \w^k\v\z\prec u_{n+1}\dots u_j \r^-\L(\r)^N 0^\f\prec\alpha(\beta),
\]
since $u_{n+1}\dots u_j\lle\alpha_1\dots\alpha_{j-n}$, and $\r^-\L(\r)^N 0^\f\prec \alpha_{j-n+1}\alpha_{j-n+2}\dots$.
For $n\geq j$, $\sigma^n(\c)\prec\alpha(\beta)$ follows since $\w^k\v\z\in\Kt_\beta(t_R)$. The other requisite inequality, $\sigma^n(\c)\lge \w^\f$, follows from the choice of $j$. Hence, $\c\in\Kt_\beta(t_R)$. This proves that $\Kt_\beta(t_R)$ is transitive.

Next, assume $\w^\f\lge \r^-\L(\r)^\f$. Note that this actually means $\w^\f\succ \r^-\L(\r)^\f$ since $\w^\f$ is periodic. %Write $\al(\hat{\beta})=\hat{\al}_1\hat{\al}_2\dots$. 
Since {$\w^\f\prec b(\tau(\beta),\beta)\lle \r0^\f$ by Lemma \ref{lem:renormalizable-inequality}, it follows that $\w^\f$ must} begin with $\r^-$. {Since $\w$ is Lyndon, $\si^n(\w^\f)\lge \w^\f\succ \r^-\L(\r)^\f=\Phi_\r(0^\f)$ for all $n\geq 0$. And since $\w$ is in fact $\beta$-Lyndon, $\si^n(\w^\f)\prec\al(\beta)=\Phi_\r(\hat{\y})\lle \Phi_\r(1^\f)$ for all $n\geq 0$. Hence,} by Lemma \ref{lem:four-blocks} it follows that $\w^\f=\Phi_\r(\hat{\mathbf{x}})$ for some sequence $\hat{\mathbf{x}}$.

%{\color{red}[I think the reason for this statement is different: Since $\w^\f\succ \r^-\L(\r)^\f$ and $\si^n(\w^\f)\prec \al(\beta)=\L(\r)^+\r^k\r^-\cdots$ for any $n\ge 0$, it follows that $\w$ begins with $\r^-\L(\r)^j\L(\r)^+$ for some $j\ge 0$. By the same argument as in Lemma \ref{lem:four-blocks} and using $\w^\f\in\Kt_\beta(\w)$ we obtain that $\w^\f=\Phi_\r(\hat{\mathbf{x}})$ for some sequence $\hat{\mathbf{x}}$.]}

Again by Lemma \ref{lem:four-blocks}, any sequence $\z\in\Kt_\beta(t_R)$ that begins with $\r^-$ must be of the form $\Phi_\r(\hat{\z})$ for some sequence $\hat{\z}$. Note that $\r^-\in\cL\big(\Kt_\beta(t_R)\big)$ and $\L(\r)^\f\in\Kt_\beta(t_R)$. But, since each block $\L(\r)$ can only be preceded by another block $\L(\r)$ or by $\r^-$, any sequence of the form $\r^-\v\L(\r)^\f$ in $\Kt_\beta(t_R)$ would have to end in $\r^-\L(\r)^\f$, which is impossible since $\r^-\L(\r)^\f\prec\w^\f$. This shows $\Kt_\beta(t_R)$ is not transitive.
\end{proof}

\begin{lemma} \label{lem:sofic-subshift}
Let $\beta{>1}$ be a base such that $\alpha(\beta)$ is eventually periodic, and let $[t_L,t_R]$ be a $\beta$-Lyndon interval. Then $\Kt_\beta(t_R)$ is a sofic subshift.
\end{lemma}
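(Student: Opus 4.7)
The plan is to realize $\Kt_\beta(t_R)$ as an intersection of two sofic subshifts. Let $\w=w_1\dots w_p$ denote the $\beta$-Lyndon word generating $[t_L,t_R]$, so that $b(t_R,\beta)=\w^\f$. Define
\[
\mathcal{L}_\w := \{\z\in\{0,1\}^\N : \w^\f\lle\sigma^n(\z)\ \forall\,n\geq 0\}.
\]
Then directly from the definition we have $\Kt_\beta(t_R)=\Sigma_\beta\cap \mathcal{L}_\w$. Since the class of sofic subshifts is closed under finite intersections (cf.\ \cite{Lind_Marcus_1995}), it suffices to show that each of $\Sigma_\beta$ and $\mathcal{L}_\w$ is sofic.

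First I would recall the classical theorem of Bertrand-Mathis: the $\beta$-shift $\Sigma_\beta$ is sofic if and only if $\alpha(\beta)$ is eventually periodic. This gives the soficity of $\Sigma_\beta$ directly from the hypothesis on $\beta$. For $\mathcal{L}_\w$, I would prove soficity by the symmetric argument, exhibiting a finite follower-set presentation. For a word $\u=u_1\dots u_n$ in the language of $\mathcal{L}_\w$, admissibility of a continuation $\x$ reduces, for each index $m\ge 1$ with $u_{n-m+1}\dots u_n = w_1\dots w_m$, to requiring $\x\lge \sigma^m(\w^\f)$; other shifts $\sigma^k(\u\x)$ with $k<n$ are either automatically admissible (when the corresponding suffix of $\u$ lexicographically exceeds the matching prefix of $\w^\f$) or are governed purely by $\x\in \mathcal{L}_\w$. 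Since $\w^\f$ is periodic, the orbit $\{\sigma^m(\w^\f):m\geq 0\}$ has only $|\w|=p$ distinct elements, so the lex-supremum of these lower-bound constraints takes at most $p$ distinct values. Consequently the follower set $F(\u):=\{\x\in\{0,1\}^\N : \u\x\in\mathcal{L}_\w\}$ takes only finitely many distinct values as $\u$ ranges over the language of $\mathcal{L}_\w$, so $\mathcal{L}_\w$ is sofic.

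The main obstacle is essentially bookkeeping: one must check that the finitely-many-states description does indeed capture the full infinite family of shift conditions. This can be made transparent by constructing, in both cases, a finite labeled graph whose vertices are the (finitely many) distinct sequences $\sigma^k(\alpha(\beta))$ respectively $\sigma^k(\w^\f)$, together with a single ``free'' vertex, with edges labeled by the admissible digit extensions. As an alternative streamlined presentation, one can skip the decomposition entirely and argue directly that the follower sets of $\Kt_\beta(t_R)$ are parametrized by pairs consisting of (a) the lex-minimum among $\sigma^j(\alpha(\beta))$ over indices $j$ where $\u$ ends in $\alpha_1\dots\alpha_j$, and (b) the lex-maximum among $\sigma^m(\w^\f)$ over indices $m$ where $\u$ ends in $w_1\dots w_m$, both drawn from finite sets, yielding finitely many follower sets and hence soficity.
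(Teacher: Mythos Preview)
Your proposal is correct and follows essentially the same approach as the paper: decompose $\Kt_\beta(t_R)=\Sigma_\beta\cap\mathcal{L}_\w$, invoke Blanchard/Bertrand-Mathis for soficity of $\Sigma_\beta$, and use closure of sofic shifts under intersection. The only difference is that the paper observes directly that $\mathcal{L}_\w$ is a subshift of finite type (the forbidden words being $w_1\dots w_{j-1}0$ for those $j\le p$ with $w_j=1$), which is a one-line shortcut compared to your follower-set argument.
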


\begin{proof}
%Let
%\[
%\Sigma_\beta:=\{\z\in\{0,1\}^\N: \sigma^n(\z)\lle \alpha(\beta)\ \forall n\geq 0\}.
%\]
It is well known (see \cite{Blanchard-1989}) that the $\beta$-shift $\Sigma_\beta$ is sofic if and only if $\alpha(\beta)$ is eventually periodic. Let $\w$ be the $\beta$-Lyndon word such that $b(t_R,\beta)=\w^\f$, and define the set
\[
\mathcal{X}_\w:=\{\z\in{A_\beta}^\N: \sigma^n(\z)\lge \w^\f\ \forall n\geq 0\}.
\]
Then $\mathcal{X}_\w$ is a SFT and hence sofic. Since the intersection of two sofic shifts is sofic (see \cite[Proposition 3.4.10]{Lind_Marcus_1995}), it follows that $\Kt_\beta(t_R)=\Sigma_\beta\cap \mathcal{X}_\w$ is sofic.
\end{proof}

\begin{proposition} \label{prop:transitive-subshifts}
The statement of Theorem \ref{thm:beta-in-relative-exceptional-set} (i) holds for all $\beta\in E_L^\cs$, where $\cs\in\La$ and
$E_L^\cs:=E^\cs\cup\bigcup_{\r\in\F}\beta_\ell^{\cs\bullet\r}$.
%Let $\beta\in E_L^\cs$, where $\cs\in\La$. Then for every $t_R\in\mathcal{T}_R(\beta)$, $\Kt_\beta(t_R)$ has a transitive subshift $\mathcal{K}_\beta'(t_R)$ of full entropy containing the sequence $b(t_R,\beta)$, and the $\{\mathcal{K}_\beta'(t_R): t_R\in\mathcal{T}_R(\beta)\}$ is a strictly descending collection of subshifts.
\end{proposition}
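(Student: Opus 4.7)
The plan is to prove this proposition by induction on the degree $k$ of $\cs \in \La_k$. Using associativity (Lemma \ref{lem:substitution-properties}(v)), write $\cs = \s \bullet \cs'$ with $\s \in \F$ and $\cs' \in \La_{k-1}$ (vacuous when $k=1$); any $\beta \in E_L^\cs$ then factors as $\al(\beta) = \Phi_\s(\al(\beta^{(1)}))$ with $\beta^{(1)} \in E_L^{\cs'}$ when $k \ge 2$, and with $\beta^{(1)} \in E_L$ in the base case (specifically $\beta^{(1)} \in E$ when $\beta \in E^\cs$, or $\beta^{(1)} = \beta_\ell^\r$ when $\beta = \beta_\ell^{\cs \bullet \r}$). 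Thus the base case is handled directly by Theorem \ref{thm:transitive-in-E_L} and Proposition \ref{prop:dense-intervals} applied to $\beta^{(1)}$, and the inductive step applies the same dichotomy (below) one degree lower.

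At each inductive level, I would dichotomize $t_R \in \mathcal{T}_R(\beta)$ (writing $b(t_R,\beta) = \w^\f$) by comparing with the threshold $\s^-\L(\s)^\f$. \textbf{(a)} If $\w^\f \prec \s^-\L(\s)^\f$, then Proposition \ref{prop:general-transitivity} directly gives that $\Kt_\beta(t_R)$ is itself transitive (and sofic when $\al(\beta)$ is eventually periodic, by Lemma \ref{lem:sofic-subshift}); I take $\mathcal{K}'_\beta(t_R) := \Kt_\beta(t_R)$. \textbf{(b)} Otherwise $\w^\f \succ \s^-\L(\s)^\f$; since the only Lyndon sequence in $\Ga(\s)$ would be $\s$ itself (Lemma \ref{lem:Farey-sandwich}) and this candidate satisfies $\pi_\beta(\s^\f) > \tau(\beta)$ and so lies outside $\mathcal{T}_R(\beta)$, $\w^\f$ must begin with $\s^-$. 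Then Lemma \ref{lem:four-blocks}, together with Lemma \ref{lem:connectible}(ii) and Lemma \ref{lem:substitution-properties}(iv), gives $\w = \s \bullet \hat\w$ for some Lyndon $\hat\w$. A short check shows $\hat\w \in L^*(\beta^{(1)})$, determining a corresponding $\hat t_R \in \mathcal{T}_R(\beta^{(1)})$; by induction there exists a transitive $\mathcal{K}'_{\beta^{(1)}}(\hat t_R) \subseteq \Kt_{\beta^{(1)}}(\hat t_R)$ with the required properties, and I define
\[
\mathcal{K}'_\beta(t_R) := \overline{\bigcup_{n \ge 0} \sigma^n\bigl(\Phi_\s(\mathcal{K}'_{\beta^{(1)}}(\hat t_R))\bigr)}.
\]

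The required properties are then verified one by one: inclusion $\mathcal{K}'_\beta(t_R) \subseteq \Kt_\beta(t_R)$ is immediate from Lemma \ref{lem:substitution-properties}(iii); transitivity transfers via the partial-homomorphism property in Lemma \ref{lem:connectible}(i); full entropy reduces to the scaling identity $h(\Kt_\beta(t_R)) = h(\Kt_{\beta^{(1)}}(\hat t_R))/|\s|$, valid because $\Phi_\s$ is a constant-length block substitution and the non-substituted residue lies inside the countable set $\Ga(\s)$ (Lemma \ref{lem:countable}), hence contributes zero entropy; full Hausdorff dimension follows via \eqref{eq:Hausdorff-dimension-preserved}; the strictly descending property follows from strict monotonicity of the lift $t_R \mapsto \hat t_R$ (Lemma \ref{lem:substitution-properties}(i)) together with the observation that in a Case-(a)/Case-(b) pair with $t_R < t_R'$, $\mathcal{K}'_\beta(t_R') \subseteq \Kt_\beta(t_R') \subsetneq \Kt_\beta(t_R) = \mathcal{K}'_\beta(t_R)$ strictly; and soficness (when $\al(\beta)$ is eventually periodic) is preserved under the constant-length substitution $\Phi_\s$ and under orbit closure. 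The main obstacle I anticipate is rigorously establishing the entropy-scaling identity $h(\Kt_\beta(t_R)) = h(\Kt_{\beta^{(1)}}(\hat t_R))/|\s|$ (and its counterpart for the transitive sub-shifts), together with ruling out the degenerate sub-case $\w = \s$ cleanly throughout $\mathcal{T}_R(\beta)$, i.e., confirming that $\pi_\beta(\s^\f) > \tau(\beta)$ for every $\beta \in E_L^\cs$, so that the lift is always well-defined.
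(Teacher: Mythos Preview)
Your approach is correct and closely parallels the paper's, but you organize it as an explicit induction on the degree of $\cs$, peeling off one Farey factor $\s=\r_1$ at a time. The paper instead writes $\cs=\r_1\bullet\cdots\bullet\r_k$, directly identifies the unique level $i$ for which $\cs_i^-\L(\cs_i)^\f\prec\w^\f\prec\cs_{i+1}^-\L(\cs_{i+1})^\f$, verifies that $\Kt_{\hat\beta_i}(\hat t_R)$ is itself transitive (via Proposition~\ref{prop:general-transitivity} when $i<k$, or Theorem~\ref{thm:transitive-in-E_L} when $i=k$), and performs a single lift $\mathcal{K}_\beta'(t_R):=\{\sigma^n(\Phi_{\cs_i}(\hat\z)):\hat\z\in\Kt_{\hat\beta_i}(\hat t_R),\ n\ge0\}$. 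Unwinding your recursion yields exactly this, so the two strategies are equivalent.

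One difference worth noting concerns the full-entropy argument. You reduce it to the scaling identity $h(\Kt_\beta(t_R))=h(\Kt_{\beta^{(1)}}(\hat t_R))/|\s|$, applied at each inductive step. The paper bypasses this entirely: it observes that any $\z\in\Kt_\beta(t_R)$ either lies in the countable set $\Ga(\bR)$ (Lemma~\ref{lem:countable}) or, after a finite shift, lands in $\Phi_{\bR}\big(\Kt_{\hat\beta}(\hat t_R)\big)$ via Lemma~\ref{lem:four-blocks}; hence $h(\Kt_\beta(t_R))=\max\{h(\Ga(\bR)),h(\mathcal{K}_\beta'(t_R))\}=h(\mathcal{K}_\beta'(t_R))$ directly, with no scaling needed. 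This is simpler and also dispatches full Hausdorff dimension in one stroke. For soficness, the paper notes that $\mathcal{K}_\beta'(t_R)$ is a factor of the sofic shift $\Kt_{\hat\beta}(\hat t_R)$, rather than invoking preservation under substitution. Your anticipated obstacle about $\w=\s$ is indeed a non-issue: since $b(\tau(\beta),\beta)$ always begins with $\cs^-$ (hence with $\s^-$), one has $\s^\f\succ b(\tau(\beta),\beta)$, so $\pi_\beta(\s^\f)>\tau(\beta)$ and $\s\notin\{\w:\pi_\beta(\w^\f)\in\mathcal{T}_R(\beta)\}$.
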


\begin{proof}%[Proof of Proposition \ref{prop:transitive-subshifts}]
Write $\cs=\r_1\bullet\dots\bullet \r_k$, where $\r_i\in\F$ for each $i$. Set $\cs_i:=\r_1\bullet\dots\bullet\r_i$ for $i=1,\dots,k$, and note that we can write $\al(\beta)=\Phi_{\cs_i}(\al(\hat{\beta}_i))$ for certain bases $\hat{\beta}_1,\dots,\hat{\beta}_k$, where in particular $\hat{\beta}_k\in E_L{\cap(1,2]}$. Let $\w$ be a $\beta$-Lyndon word and $t_R$ the number such that $b(t_R,\beta)=\w^\f$. {Assume $t_R<\tau(\beta)$.}

If $\w^\f\prec {\r_1^-}\L(\r_1)^\f={\cs_1^-}\L(\cs_1)^\f$, then $\Kt_\beta(t_R)$ is transitive by Proposition \ref{prop:general-transitivity}. Otherwise, there is a {largest} integer $i\geq 1$ such that $\w^\f\succ {\cs_i^-}\L(\cs_i)^\f$. Plainly $i\leq k$. Recall from the proof of Proposition \ref{prop:general-transitivity} that this implies $\w^\f=\Phi_{\cs_i}(\mathbf{x})$ for some sequence $\mathbf{x}$. Since $\w^\f$ is periodic and $\w$ {is Lyndon}, it follows that the sequence $\mathbf{x}$ is also periodic, so we can write $\mathbf{x}=\hat{\w}^\f$ for some word $\hat{\w}$, and it is easy to see that $\hat{\w}$ is $\hat{\beta}_i$-Lyndon, using that $\Phi_{\cs_i}$ is increasing. Let $\hat{t}_R$ be the point such that $b(\hat{t}_R,\hat{\beta}_i)=\hat{\w}^\f$. Now we consider two cases:
\begin{itemize}
\item If $i<k$, then $\w^\f\prec \cs_{i+1}^-\L(\cs_{i+1})^\f$, and so $\hat{\w}^\f\prec \Phi_{\cs_i}^{-1}\big(\cs_{i+1}^-\L(\cs_{i+1})^\f\big)=\r_{i+1}^-\L(\r_{i+1})^\f$. Observe that $\al(\hat{\beta}_i)=\Phi_{\r_{i+1}}(\al(\hat{\beta}_{i+1}))$. So by Proposition \ref{prop:general-transitivity}, $\Kt_{\hat\beta_i}(\hat{t}_R)$ is transitive.
\item If $i=k$, then $\w^\f\prec {b(\tau(\beta),\beta)}=\Phi_\cs(0\hat{\al}_1\hat{\al}_2\dots)$, where $(\hat{\al}_i):=\al(\hat{\beta}_k)$. This implies $\hat{\w}^\f\prec 0\hat{\al}_1\hat{\al}_2\dots$, in other words, $\hat{t}_R<1-(1/\hat{\beta}_k)$. Observe furthermore that $\hat{\beta}_k\in E_L$. Hence, $\Kt_{\hat\beta_k}(\hat{t}_R)$ is transitive by Theorem \ref{thm:transitive-in-E_L}.
\end{itemize}
We conclude in both cases that $\Kt_{\hat\beta_i}(\hat{t}_R)$ is transitive. From here on we simply write $\hat{\beta}:=\hat{\beta}_i$. We also set $\bR:=\cs_i$.

Now we set
\begin{equation} \label{eq:the-transitive-subshift}
\mathcal{K}_\beta'(t_R):=\{\sigma^n(\Phi_{\bR}(\hat{\z})): \hat{\z}\in \Kt_{\hat\beta}(\hat{t}_R),\ n\geq 0\},
\end{equation}
and claim that $\mathcal{K}_\beta'(t_R)$ has the desired properties.

First of all, it is clear that $\mathcal{K}_\beta'(t_R)$ is invariant under $\sigma$, and it is closed because {$\Kt_{\hat\beta}(\hat{t}_R)$} is closed and $\Phi_{\bR}^{-1}$ is continuous on the range $X(\bR)$ of $\Phi_{\bR}$. So $\mathcal{K}_\beta'(t_R)$ is a subshift of ${A_\beta}^\N$.

Next, since each sequence $\hat{\z}\in \Kt_{\hat\beta}(\hat{t}_R)$ satisfies
\[
\hat{\w}^\f\lle\sigma^j(\hat{\z})\lle \alpha(\hat{\beta}) \qquad \forall j\geq 0,
\]
Lemma \ref{lem:substitution-properties} (iii) implies that
\[
\w^\f=\Phi_{\bR}(\hat{\w})^\f\lle \sigma^j(\Phi_{\bR}(\hat{\z}))\lle \Phi_{\bR}(\alpha(\hat{\beta}))=\alpha(\beta) \qquad \forall j\geq 0.
\]
Hence, $\mathcal{K}_\beta'(t_R)$ is in fact a subshift of $\Kt_\beta(t_R)$. Furthermore, \eqref{eq:the-transitive-subshift} implies that $\mathcal{K}_\beta'(t_R)$ is a factor of $\Kt_{\hat\beta}(\hat{t}_R)$. If $\al(\beta)$ is eventually periodic, then so is $\al(\hat{\beta})=\Phi_\bR^{-1}(\al(\beta))$; hence $\Kt_{\hat\beta}(\hat{t}_R)$ is sofic by Lemma \ref{lem:sofic-subshift}. But a factor of a sofic subshift is sofic (see \cite[Corollary 3.2.2]{Lind_Marcus_1995}), and so $\mathcal{K}_\beta'(t_R)$ is sofic.

Note that $b(t_R,\beta)\in \mathcal{K}_\beta'(t_R)$ because $b(\hat{t}_R,\hat{\beta})\in \mathcal{K}_{\hat\beta}'(\hat{t}_R)$ and $b(t_R,\beta)=\Phi_{\bR}(b(\hat{t}_R,\hat{\beta}))$.

We next verify that $\mathcal{K}_\beta'(t_R)$ is transitive. Let $\u\in\cL(\mathcal{K}_\beta'(t_R))$ and $\z\in\mathcal{K}_\beta'(t_R)$. By the definition of $\mathcal{K}_\beta'(t_R)$ we can extend $\u$ to the left and right to obtain a word $\u'\in\cL(\mathcal{K}_\beta'(t_R))$ containing $\u$ that consists of blocks from $\{\bR^-,\bR,\L(\bR),\L(\bR)^+\}$. Similarly we can extend $\z$ to the left to a sequence $\z'$ consisting of such blocks. By putting more blocks in front of $\u'$ if necessary, we may assume that $\u'$ begins with the block $\bR^-$. Also by adding more blocks to the right of $\u'$, we can obtain a word $\u''\in\cL(\mathcal{K}_\beta'(t_R))$ ending in $\L(\bR)^+$. Similarly, by putting more blocks in front of $\z'$ we obtain a sequence $\z''\in \mathcal{K}_\beta'(t_R)$ beginning with $\bR^-\L(\bR)^{l}$, where $l$ is chosen as large as possible so that $\z''$ is still in $\mathcal{K}_\beta'(t_R)$. (Since $\w^\f\succ \bR^-\L(\bR)^\f$, there is a largest such $l$.)

Now we will connect the word $\u''$ to the sequence $\z''$ {in $\mathcal{K}'_\beta(t_R)$}. Observe that $\u''=\Phi_\bR(\hat{\u})$ for some word $\hat{\u}$, and $\z''=\Phi_\bR(\hat{\z})$ for some sequence $\hat{\z}$. Since $\Phi_\bR$ is strictly increasing, $\hat{\u}\in \cL\big(\Kt_{\hat\beta}(\hat{t}_R)\big)$ and $\hat{\z}\in {\Kt_{\hat\beta}(\hat{t}_R)}$. Moreover, $\hat{\u}$ begins with $0$ and ends in $1$. Similarly, $\hat{\z}$ begins with $0^{l+1}$ and extending $\hat{\z}$ by another 0 would create a sequence that is not in $\Kt_{\hat\beta}(\hat{t}_R)$.

Since $\Kt_{\hat\beta}(\hat{t}_R)$ is transitive, there is a (possibly empty) word $\hat{\v}$ such that $\hat{\u}\hat{\v}\hat{\z}\in \Kt_{\hat\beta}(\hat{t}_R)$. By the above remark, $\hat{\v}$ must end with $1$ {if it is not empty}. We can write $\hat{\v}=1^k \hat{\v}'$, where $k\geq 0$ and $\hat{\v}'$ is either empty or begins with $0$. Now the sequence $(\hat{\u}1^k,\hat{\v}',\hat{\z})$ is connectible, and hence (see Lemma \ref{lem:connectible})
\[
\Phi_\bR\big((\hat{\u}1^k)\hat{\v}'\hat{\z}\big)=\Phi_\bR(\hat{\u}1^k)\Phi_\bR(\hat{\v}')\Phi_\bR(\hat{\z}).
\]
Set $\v:=\bR^k\Phi_\bR(\hat{\v}')$. Then, {since $\u''$ ends in $\L(\bR)^+$,} $\u''\v\z''=\Phi_\bR(\hat{\u}1^k\hat{\v}'\hat{\z})=\Phi_\bR(\hat{\u}\hat{\v}\hat{\z})\in \mathcal{K}_\beta'(t_R)$. This shows that $\mathcal{K}_\beta'(t_R)$ is transitive.

Next, we verify that $\mathcal{K}_\beta'(t_R)$ has full entropy and full Hausdorff dimension in $\Kt_\beta(t_R)$. If $\z=z_1z_2z_3\dots\in\Kt_\beta(t_R)$, then
\[
\bR^-0^\f\prec \Phi_\bR(\hat{\w}^\f)=\w^\f\lle\sigma^n(\z)\lle \alpha(\beta)=\Phi_\bR(\al(\hat{\beta}))\prec \L(\bR)^+1^\f \qquad\forall n\geq 0.
\]
So either $\z\in\Ga(\bR)$, where $\Ga(\bR)$ is the set defined in Lemma \ref{lem:countable}, or else $\z$ contains the word $\bR^-$ or $\L(\bR)^+$ somewhere, in which case there is by Lemma \ref{lem:four-blocks} an $n\in\N$ such that $\sigma^n(\z)=\Phi_\bR(\hat{\z})$ for some sequence $\hat{\z}\in\Kt_{\hat\beta}(\hat{t}_R)$, and the prefix $z_1\dots z_n$ is a word in $\cL(\Ga(\bR))$. %Let
%\[
%\mathcal{X}_{\beta}(t_R):=\left\{\sigma^j(\Phi_\bR(\hat{\z})): \hat{\z}\in \Kt_{\hat\beta}(\hat{t}_R),\ j\geq 0\right\}.
%\]
This implies
\[
h\big(\Kt_\beta(t_R)\big)=\max\left\{h(\Ga(\bR)),h\big(\mathcal{K}_{\beta}'(t_R)\big)\right\}.
\]
But $\Ga(\bR)$ is countable by Lemma \ref{lem:countable}. Hence, we obtain
\begin{equation*}
h\big(\Kt_\beta(t_R)\big)=h\big(\mathcal{K}_{\beta}'(t_R)\big).
%=\frac{1}{|\bR|}h_{top}\big(\Kt_{\hat\beta}(\hat{t}_R)\big)=h_{top}\big(\mathcal{K}_\beta'(t_R)\big),
\end{equation*}
%where we used that $\Phi_\bR: \{0,1\}^\N\to X(\bR)$ is invertible and maps each digit to a block of length $|\bR|$, and the last step follows from \eqref{eq:the-transitive-subshift}.
As a result, $\mathcal{K}_\beta'(t_R)$ has full entropy. By the same reasoning, we also have that
\[
\dim_H \mathcal{K}_\beta'(t_R)=\dim_H \Kt_\beta(t_R).
\]

Finally, it is easy to see from \eqref{eq:the-transitive-subshift} that $\{\mathcal{K}_\beta'(t_R): t_R\in\mathcal{T}_R(\beta)\}$ is a strictly descending collection of subshifts because $\{\Kt_{\hat\beta}(\hat{t}_R): \hat{t}_R\in\mathcal{T}_R(\hat{\beta})\}$ is one.
\end{proof}

\begin{proposition} \label{prop:transitive-subshifts-special-case}
The statement of Theorem \ref{thm:beta-in-relative-exceptional-set} (i) holds for $\beta=\beta_*^\cs$, for all $\cs\in\La$.
%Let $\beta\in E_L^\cs$, where $\cs\in\La$. Then for every $t_R\in\mathcal{T}_R(\beta)$, $\Kt_\beta(t_R)$ has a transitive subshift $\mathcal{K}_\beta'(t_R)$ of full entropy containing the sequence $b(t_R,\beta)$, and the $\{\mathcal{K}_\beta'(t_R): t_R\in\mathcal{T}_R(\beta)\}$ is a strictly descending collection of subshifts.
\end{proposition}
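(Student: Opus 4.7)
The plan is to argue by induction on the degree $k$ of $\cs\in\La_k$. The base case $k=1$ is Theorem \ref{thm:basic-interval-right-endpoint}: when $\cs=\s\in\F$, the subshift $\Kt_{\beta_*^\s}(t_R)$ itself is transitive and sofic for every $t_R\in\mathcal{T}_R(\beta_*^\s)$, so setting $\mathcal{K}_{\beta_*^\s}'(t_R):=\Kt_{\beta_*^\s}(t_R)$ trivially satisfies all the required conditions, and strict descent follows from the strict monotonicity of $t_R\mapsto b(t_R,\beta_*^\s)$.

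For the inductive step, let $k\ge 2$ and write $\cs=\r\bullet\s$ with $\r\in\F$ and $\s\in\La_{k-1}$. Applying Definition \ref{def:substitution} to the input $10^\f$ gives $\Phi_\s(10^\f)=\L(\s)^+\s^-\L(\s)^\f=\al(\beta_*^\s)$, and combined with associativity of $\bullet$ (Lemma \ref{lem:substitution-properties}(v)) this yields
\begin{equation*}
\al(\beta_*^\cs)=\L(\cs)^+\cs^-\L(\cs)^\f=\Phi_\cs(10^\f)=\Phi_\r\bigl(\al(\beta_*^\s)\bigr).
\end{equation*}
Hence $\beta:=\beta_*^\cs$ is renormalizable by $\r$ with renormalized base $\hat\beta:=\beta_*^\s$ having eventually periodic quasi-greedy expansion. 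Moreover, a parallel computation using Lemma \ref{lem:substitution-properties}(ii)(iv) and Lemma \ref{lem:connectible} produces the identity $b(\tau(\beta),\beta)=\cs^-\L(\cs)^\f=\Phi_\r\bigl(b(\tau(\hat\beta),\hat\beta)\bigr)$.

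Now fix $t_R\in\mathcal{T}_R(\beta)$, let $\w$ be the $\beta$-Lyndon word with $b(t_R,\beta)=\w^\f$, and split into two cases exactly as in the proof of Proposition \ref{prop:transitive-subshifts}. If $\w^\f\prec\r^-\L(\r)^\f$, then Proposition \ref{prop:general-transitivity} gives that $\Kt_\beta(t_R)$ is transitive and Lemma \ref{lem:sofic-subshift} gives that it is sofic; we set $\mathcal{K}_\beta'(t_R):=\Kt_\beta(t_R)$. Otherwise $\w^\f\succ\r^-\L(\r)^\f$ (equality cannot hold since $\w^\f$ is purely periodic), and the argument reproduced from the proof of Proposition \ref{prop:general-transitivity} yields $\w^\f=\Phi_\r(\hat\w^\f)$ for some $\hat\beta$-Lyndon word $\hat\w$, while the formula for $b(\tau(\beta),\beta)$ displayed above together with strict monotonicity of $\Phi_\r$ guarantees $\hat t_R\le\tau(\hat\beta)$, so $\hat t_R\in\mathcal{T}_R(\hat\beta)$. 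The inductive hypothesis then supplies a transitive sofic $\mathcal{K}_{\hat\beta}'(\hat t_R)\subseteq\Kt_{\hat\beta}(\hat t_R)$ of full entropy and Hausdorff dimension containing $b(\hat t_R,\hat\beta)$, and we define
\begin{equation*}
\mathcal{K}_\beta'(t_R):=\{\sigma^n(\Phi_\r(\hat\z)):\hat\z\in\mathcal{K}_{\hat\beta}'(\hat t_R),\ n\geq 0\}.
\end{equation*}
The verification that this set is a sofic subshift of $\Kt_\beta(t_R)$, contains $b(t_R,\beta)=\Phi_\r(b(\hat t_R,\hat\beta))$, is transitive, and has full entropy and Hausdorff dimension in $\Kt_\beta(t_R)$ then runs formally identically to the corresponding verifications in the proof of Proposition \ref{prop:transitive-subshifts}, using Lemma \ref{lem:four-blocks}, the countability of $\Gamma(\r)$ from Lemma \ref{lem:countable}, and the connectibility identity of Lemma \ref{lem:connectible}.

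Strict descent of the family $\{\mathcal{K}_\beta'(t_R):t_R\in\mathcal{T}_R(\beta)\}$ follows by checking the three combinations of these two cases for any $t_R<t_R'$: when both lie in the first case, the descent is immediate from $b(t_R,\beta)\prec b(t_R',\beta)$; when both lie in the second case, it follows from the inductive strict descent combined with the injectivity of $\Phi_\r$ (Lemma \ref{lem:substitution-properties}(i)); and the mixed situation gives $\mathcal{K}_\beta'(t_R')\subseteq\Kt_\beta(t_R')\subsetneq\Kt_\beta(t_R)=\mathcal{K}_\beta'(t_R)$. The main technical obstacle is the Case B construction. What makes this genuinely delicate, rather than a verbatim quote from Proposition \ref{prop:transitive-subshifts}, is that there the analogous renormalized data lived in $E_L$ and transitivity of $\Kt_{\hat\beta}(\hat t_R)$ itself was furnished by Theorem \ref{thm:transitive-in-E_L}, whereas here $\hat\beta=\beta_*^\s$ is again an endpoint of a basic interval, so the full $\Kt_{\hat\beta}(\hat t_R)$ may fail to be transitive; this is precisely the reason one needs an inductive hypothesis that produces transitive sofic subshifts of full entropy rather than a raw transitivity statement.
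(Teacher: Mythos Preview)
Your proof is correct but proceeds differently from the paper's. The paper does not induct on the degree of $\cs$: writing $\cs=\r_1\bullet\cdots\bullet\r_k$ and $\cs_i=\r_1\bullet\cdots\bullet\r_i$, it locates for each $\w^\f$ the unique index $i\le k-1$ with $\cs_i^-\L(\cs_i)^\f\prec\w^\f\prec\cs_{i+1}^-\L(\cs_{i+1})^\f$ and renormalizes in one shot by the composite word $\cs_i$. The resulting base $\hat\beta_i$ is either renormalizable by $\r_{i+1}$ (so Proposition~\ref{prop:general-transitivity} gives transitivity of $\Kt_{\hat\beta_i}(\hat t_R)$ directly) or, when $i=k-1$, equals $\beta_*^{\r_k}$ (so Theorem~\ref{thm:basic-interval-right-endpoint} applies). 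In either case the \emph{full} subshift $\Kt_{\hat\beta_i}(\hat t_R)$ is already transitive, and $\mathcal K_\beta'(t_R)$ is built from all of $\Kt_{\hat\beta_i}(\hat t_R)$; this lets the paper quote the full-entropy, full-dimension, sofic and transitivity verifications verbatim from Proposition~\ref{prop:transitive-subshifts}.

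Your route peels off only the outermost Farey factor $\r=\r_1$ and invokes the inductive hypothesis at $\hat\beta=\beta_*^\s$, $\s\in\La_{k-1}$. This is conceptually tidy and avoids the bookkeeping of selecting the level $i$, but it introduces one wrinkle you glide over: the decomposition via Lemma~\ref{lem:four-blocks} and the countability of $\Gamma(\r)$ only shows that every $\z\in\Kt_\beta(t_R)$ eventually enters $\Phi_\r\bigl(\Kt_{\hat\beta}(\hat t_R)\bigr)$, not $\Phi_\r\bigl(\mathcal K_{\hat\beta}'(\hat t_R)\bigr)$. To conclude that your smaller $\mathcal K_\beta'(t_R)$ still has full entropy and full Hausdorff dimension in $\Kt_\beta(t_R)$ you need the additional (standard) observation that the constant-length substitution $\Phi_\r$ scales both invariants by the fixed factor $1/|\r|$, so that $h(\mathcal K_{\hat\beta}'(\hat t_R))=h(\Kt_{\hat\beta}(\hat t_R))$ and the analogous dimension equality pass to the image subshifts. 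This is an easy but genuine extra step that the paper's direct construction avoids.
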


\begin{proof}
The proof is the same as the previous one, with a minor modification. Here we may assume that $\cs=\r_1\bullet\dots\bullet\r_k$ with $k\geq 2$, {$\r_1\in F_e$ and $\r_2,\dots,\r_k\in\F$, since for $\cs\in F_e$} we already know $\Kt_\beta(t_R)$ is transitive for all $t_R\in\mathcal{T}_R(\beta)$ by Theorem \ref{thm:basic-interval-right-endpoint}. Set $\cs_i:=\r_1\bullet\dots\bullet\r_i$ for $i=1,\dots,k$, and write $\al(\beta)=\Phi_{\cs_i}(\al(\hat{\beta}_i))$ for $i=1,\dots,k-1$, and certain bases $\hat{\beta}_1,\dots,\hat{\beta}_{k-1}$. Observe, however, that $\al(\beta)=\L(\cs_k)^+\cs_k^-\L(\cs_k)^\f=\Phi_{\cs_k}(10^\f)$, so there is no corresponding base $\hat{\beta}_k$.

Let $\w$ be a $\beta$-Lyndon word and $t_R$ the corresponding point such that $b(t_R,\beta)=\w^\f$. If $\w^\f\prec \r_1\L(\r_1)^\f=\cs_1\L(\cs_1)^\f$, then $\Kt_\beta(t_R)$ is transitive by Proposition \ref{prop:general-transitivity}. Otherwise, there is a unique integer $i\leq k-1$ such that
\begin{equation} \label{eq:S-sandwich-1}
\cs_i^-\L(\cs_i)^\f\prec \w^\f\prec \cs_{i+1}^-\L(\cs_{i+1})^\f,
\end{equation}
because $\tau(\beta)=\pi_\beta\big(\cs^-\L(\cs)^\f\big)$. Let $\hat{t}_R$ be the point such that $b(\hat{t}_R,\hat{\beta}_i)=\hat{\w}^\f$. %Now we consider two cases:
\begin{itemize}
\item If $i<k-1$, then $\hat{\w}^\f\prec \r_{i+1}^-\L(\r_{i+1})^\f$ by \eqref{eq:S-sandwich-1}. Observe that $\al(\hat{\beta}_i)=\Phi_{\r_{i+1}}(\al(\hat{\beta}_{i+1}))$. So by Proposition \ref{prop:general-transitivity}, $\Kt_{\hat\beta_i}(\hat{t}_R)$ is transitive.
\item If $i=k-1$, then $\hat{\w}^\f\prec \r_k^-\L(\r_k)^\f$. Also ${\hat{\beta}_{k-1}}=\beta_*^{\r_k}$. Hence, $\Kt_{\hat\beta_{k-1}}(\hat{t}_R)$ is transitive by Theorem \ref{thm:basic-interval-right-endpoint}.
\end{itemize}
In both cases, $\Kt_{\hat\beta_i}(\hat{t}_R)$ is transitive. The rest of the proof is now the same as before.
\end{proof}

\subsection{Density of $\beta$-Lyndon intervals}

In order to complete the proof of Theorem \ref{thm:beta-in-relative-exceptional-set}, we need the following extension of Proposition \ref{prop:dense-intervals}.

\begin{proposition} \label{prop:S-dense-intervals}
Let $\cs\in\La$, and $\beta\in \overline{E^\cs}$. Then the $\beta$-Lyndon intervals are dense in $[0,\tau(\beta)]$.
\end{proposition}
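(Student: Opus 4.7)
The plan is to prove density by renormalization, reducing to Proposition \ref{prop:dense-intervals} (the density result for $\overline{E}$). Since $\beta\in\overline{E^\cs}$, we can write $\alpha(\beta)=\Phi_\cs(\alpha(\hat\beta))$ for some $\hat\beta\in\overline{E}$. The central observation I would establish first is that the substitution $\bullet\,\cs$ transfers Lyndon words: if $\hat\w\in L^*(\hat\beta)$, then $\cs\bullet\hat\w=\Phi_\cs(\hat\w)\in L^*(\beta)$. Indeed, by Lemma \ref{lem:substitution-properties}(iv) the word $\cs\bullet\hat\w$ is Lyndon; by Lemma \ref{lem:connectible}(ii) we have $(\cs\bullet\hat\w)^\f=\Phi_\cs(\hat\w^\f)$; and by Lemma \ref{lem:substitution-properties}(iii), the condition $\sigma^n(\hat\w^\f)\prec\alpha(\hat\beta)$ for all $n\ge 0$ transfers to $\sigma^n(\Phi_\cs(\hat\w^\f))\prec\Phi_\cs(\alpha(\hat\beta))=\alpha(\beta)$ for all $n\ge 0$.

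Next I would split $[0,\tau(\beta)]$ into an upper range $[\pi_\beta(\cs^-\L(\cs)^\f),\tau(\beta)]$ and a lower range $[0,\pi_\beta(\cs^-\L(\cs)^\f)]$, noting that $\pi_\beta(\cs^-\L(\cs)^\f)<\tau(\beta)$ (one compares the known expansions $\cs^-\L(\cs)^\f=\Phi_\cs(0^\f)$ and $b(\tau(\beta),\beta)=\Phi_\cs(b(\tau(\hat\beta),\hat\beta))$, which agree only on a finite prefix $\cs^-\L(\cs)^{k-1}$ before $\L(\cs)$ and $\L(\cs)^+$ split). For the upper range, Proposition \ref{prop:dense-intervals} applied to $\hat\beta\in\overline{E}$ yields that the sequences $\hat\w^\f$, as $\hat\w$ ranges over $\hat\beta$-Lyndon words with $\hat\w^\f\preceq b(\tau(\hat\beta),\hat\beta)$, are dense in the product topology; applying the strictly increasing continuous map $\pi_\beta\circ\Phi_\cs$ and using the first step, the right endpoints $\pi_\beta((\cs\bullet\hat\w)^\f)$ of the associated $\beta$-Lyndon intervals are dense in $[\pi_\beta(\cs^-\L(\cs)^\f),\tau(\beta)]$.

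For the lower range, I would adapt the argument of Proposition \ref{prop:dense-intervals}: if $t\in\mathcal{E}_\beta^+$, then Lemma \ref{lem:E-beta-next-to-beta-Lyndon} supplies a sequence of right endpoints of $\beta$-Lyndon intervals approaching $t$ from the left; otherwise, take the smallest $k$ with $\sigma^k(b(t,\beta))\prec b(t,\beta)$, set $\w$ to be the Lyndon root of $b_1\dots b_k$, and show that $t\in[\pi_\beta(\w 0^\f),\pi_\beta(\w^\f)]$ and $\w\in L^*(\beta)$. The hard part will be the last step: verifying that such a Lyndon word $\w$, arising from $b(t,\beta)\prec\cs^-\L(\cs)^\f$, is actually $\beta$-Lyndon. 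The obstacle is that $\w^\f\preceq\cs^-\L(\cs)^\f\prec\alpha(\beta)$ does not automatically give $\L(\w)^\f\prec\alpha(\beta)$ (as simple examples with $\cs=01$ show, e.g.\ $\w=00011$). To overcome this, I would establish an analogue of Lemma \ref{lem:Lyndon-implies-beta-Lyndon} for $\beta\in\overline{E^\cs}$, proceeding by induction on the degree of $\cs$: the palindrome identity $\cs^-\L(\cs)^\f=0\sigma(\L(\cs)^\f)$ from Lemma \ref{lem:Farey-property}(ii) lets us reduce each Lyndon word of this form either to a word in $X(\cs)$ (handled by the upper-range argument at a lower level of renormalization) or to a Lyndon word with $\L(\w)^\f$ compatible with the block structure dictated by $\Phi_\cs$, completing the base of the induction against Proposition \ref{prop:dense-intervals}.
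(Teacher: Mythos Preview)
Your two-range split at $t_*:=\pi_\beta(\cs^-\L(\cs)^\f)$ matches the paper's, but each half of your argument has a gap relative to the paper's execution.

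For the upper range, your claim that the right endpoints $\pi_\beta((\cs\bullet\hat\w)^\f)$ are dense in $[t_*,\tau(\beta)]$ is not justified, and in fact the right endpoints alone are not dense (interior points of a fixed $\beta$-Lyndon interval are far from every right endpoint). More importantly, the map $\hat t\mapsto \pi_\beta\circ\Phi_\cs\circ b(\hat t,\hat\beta)$ is \emph{not} continuous: it jumps wherever $b(\cdot,\hat\beta)$ does, and those jumps produce genuine gaps in the image. The paper addresses exactly this via Lemma~\ref{lem:mapping-Lyndon-intervals}, which shows that a $\hat\beta$-Lyndon interval $[\hat t_L,\hat t_R]$ pushes forward to the $\beta$-Lyndon interval $[\Theta_{\cs,\beta}(\hat t_L-),\Theta_{\cs,\beta}(\hat t_R)]$, and then argues separately that every jump of $\Theta_{\cs,\beta}$ is already covered by one of these $\beta$-Lyndon intervals. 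Your sketch does not account for this.

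For the lower range, your plan---an inductive analogue of Lemma~\ref{lem:Lyndon-implies-beta-Lyndon} on the degree of $\cs$---is heavier machinery than needed, and you yourself flag it as the ``hard part'' without a concrete mechanism. The paper's argument is far more direct: it simply proves the combinatorial inequality $(t_1\dots t_k)^\f\prec\cs^-\L(\cs)^\f$ for every $k$ (equation~\eqref{eq:small-periodic}), using only that $\cs$ is Lyndon and a short case split on whether $k\geq|\cs|$ or $k<|\cs|$. Since $\sigma^l(\alpha(\beta))\lge\cs^-\L(\cs)^\f$ for all $l\ge0$ (because $\alpha(\beta)\in X(\cs)$), this inequality immediately gives $\sigma^n((t_1\dots t_k)^\f)\prec\alpha(\beta)$ for all $n$, and the rest follows as in Proposition~\ref{prop:dense-intervals}. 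No induction on the degree of $\cs$, no block-structure reduction, is needed.
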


The proof uses the following lemma.

\begin{lemma} \label{lem:mapping-Lyndon-intervals}
Let $\cs\in\La$, and let bases $\beta$ and $\hat{\beta}$ be related by $\alpha(\beta)=\Phi_\cs(\alpha(\hat{\beta}))$. Define the map
\begin{equation} \label{eq:Theta_S}
\Theta_{\cs,\beta}: [0,1)\to [0,1); \qquad \hat{t}\mapsto \pi_\beta\circ\Phi_\cs\circ b(\hat{t},\hat{\beta}).
\end{equation}
If $[\hat{t}_L,\hat{t}_R]$ is a $\hat{\beta}$-Lyndon interval, then $[\Theta_{\cs,\beta}(\hat{t}_L-),\Theta_{\cs,\beta}(\hat{t}_R)]$ is a $\beta$-Lyndon interval. %containing the point $\Theta_{\cs,\beta}(\hat{t}_L)$.
%{\color{red}[Maybe it is better to write $\hat t_L-$ instead of $\hat t_L^-$, since in the proof of Theorem 1.1 (see Page 19) we used $t+$ for the right-hand limit.]}
\end{lemma}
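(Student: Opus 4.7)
The plan is to identify the $\beta$-Lyndon word $\w$ that generates the image interval, then verify the three claims (right endpoint, left endpoint as a left limit, containment of $\Theta_{\cs,\beta}(\hat t_L)$) using the substitution calculus from Section \ref{subsec:substitution}. Let $\hat\w$ be the $\hat\beta$-Lyndon word generating $[\hat t_L, \hat t_R]$, so that $b(\hat t_L, \hat\beta) = \hat\w 0^\f$ and $b(\hat t_R, \hat\beta) = \hat\w^\f$. Since $|\hat\w| \geq 2$ and $\hat\w$ is Lyndon over $\{0,1\}$, $\hat\w$ begins with $0$ and ends with $1$. Set $\w := \cs \bullet \hat\w = \Phi_\cs(\hat\w)$. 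First I would show $\w$ is $\beta$-Lyndon: Lemma \ref{lem:substitution-properties}(iv) gives $\w \in \LL$, and since $\si^n(\hat\w^\f) \prec \al(\hat\beta)$ for all $n\ge 0$, Lemma \ref{lem:substitution-properties}(iii) combined with Lemma \ref{lem:connectible}(ii) (which gives $\Phi_\cs(\hat\w^\f)=\w^\f$) yields $\si^n(\w^\f) \prec \Phi_\cs(\al(\hat\beta)) = \al(\beta)$. The right endpoint is then immediate: $\Theta_{\cs,\beta}(\hat t_R) = \pi_\beta(\Phi_\cs(\hat\w^\f)) = \pi_\beta(\w^\f)$.

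The key step is identifying $\Theta_{\cs,\beta}(\hat t_L-)$. Using the standard behaviour of greedy expansions at a point whose greedy expansion terminates in $10^\f$, one has $b(\hat t,\hat\beta)\to \hat\w^-\al(\hat\beta)$ in the product topology as $\hat t\nearrow \hat t_L$; this is verified from $\pi_{\hat\beta}(\hat\w 0^\f)=\pi_{\hat\beta}(\hat\w^-\al(\hat\beta))$ together with the maximality of greedy expansions. Since $\hat\w^-$ ends in $0$ and $\al(\hat\beta)$ begins with $1$, the pair $(\hat\w^-,\al(\hat\beta))$ is connectible, so Lemma \ref{lem:connectible}(i) and Lemma \ref{lem:substitution-properties}(ii) yield
\[
\Phi_\cs(\hat\w^-\al(\hat\beta)) \;=\; \Phi_\cs(\hat\w^-)\,\Phi_\cs(\al(\hat\beta)) \;=\; \w^-\,\al(\beta).
\]
Continuity of $\pi_\beta$ then gives $\Theta_{\cs,\beta}(\hat t_L-) = \pi_\beta(\w^-\al(\beta))$. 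A direct computation using $\pi_\beta(\al(\beta))=1$ shows
\[
\pi_\beta(\w^-\al(\beta)) \;=\; \sum_{i=1}^{|\w|-1}\frac{w_i}{\beta^i}+\frac{1}{\beta^{|\w|}} \;=\; \pi_\beta(\w 0^\f),
\]
which is the left endpoint of the $\beta$-Lyndon interval generated by $\w$.

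Finally, to check that $\Theta_{\cs,\beta}(\hat t_L)$ lies in this interval, I would compute $\Phi_\cs(\hat\w 0^\f)$ directly from the block formula \eqref{eq:block-map}, treating the trailing $0^\f$ as a run of length $k_{m+1}=\f$: this adds a final block $\cs^-\L(\cs)^\f$, so $\Phi_\cs(\hat\w 0^\f)=\w\,\cs^-\L(\cs)^\f$. Now $\w=\Phi_\cs(\hat\w)$ begins with $\cs^-\L(\cs)^{k_1-1}\L(\cs)^+\cdots$, where $k_1$ is the leading run of $0$'s in $\hat\w$; hence after the common prefix $\w$, the sequence $\w^\f$ continues with something strictly lex-greater than $\cs^-\L(\cs)^\f$, while the latter is clearly strictly greater than $0^\f$. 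Thus $\w 0^\f \prec \w\cs^-\L(\cs)^\f \prec \w^\f$, and monotonicity of $\pi_\beta$ gives $\pi_\beta(\w 0^\f) \le \Theta_{\cs,\beta}(\hat t_L) \le \pi_\beta(\w^\f)$ as required. The main obstacle is the second paragraph --- pinning down the left limit of $b(\cdot,\hat\beta)$ at $\hat t_L$ and then converting the resulting sequence, via connectibility, into the predicted form $\w^-\al(\beta)$; once this is in hand the rest is a bookkeeping exercise in the block calculus.
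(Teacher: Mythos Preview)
Your proof is correct and follows essentially the same route as the paper's: both set $\w=\Phi_\cs(\hat\w)$, compute the left limit via the sequence $\hat\w^-\al(\hat\beta)$ using connectibility and Lemma~\ref{lem:substitution-properties}(ii), and verify containment through $\Phi_\cs(\hat\w 0^\f)=\w\,\cs^-\L(\cs)^\f$. The only cosmetic difference is that the paper makes the left-limit step explicit by exhibiting a concrete approximating sequence $\hat\z_n=\hat\w^-\al_1\dots\al_n 0^\f$ (and also invokes continuity of $\Phi_\cs$, which you use implicitly), whereas you appeal directly to the known left-limit behaviour of greedy expansions.
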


\begin{proof}
Let $\hat{\w}$ be the $\hat{\beta}$-Lyndon word such that $b(\hat{t}_L,\hat{\beta})=\hat{\w}0^\f$. Write $\alpha(\hat{\beta})=\alpha_1\alpha_2\alpha_3\dots$. Consider the sequence $(\hat{\z}_n)$ defined by
\[
\hat{\z}_n:=\hat{\w}^-\alpha_1\alpha_2\dots\alpha_n 0^\f, \qquad n\in\N,
\]
and set $t_n:=\pi_{\hat\beta}(\hat{\z}_n)$. It is clear that $t_n\nearrow \pi_{\hat\beta}(\hat{\w}^-\alpha(\hat{\beta}))=\pi_{\hat\beta}(\hat{\w}0^\f)=\hat{t}_L$, and that $\hat{\z}_n$ is the greedy $\hat{\beta}$-expansion of $t_n$. So, using that $\Theta_{\cs,\beta}$ is strictly increasing and the maps $\pi_\beta$ and $\Phi_\cs$ are continuous, we obtain
\begin{align*}
\Theta_{\cs,\beta}(\hat{t}_L-)&=\lim_{t\nearrow \hat{t}_L} \Theta_{\cs,\beta}(t)=\lim_{n\to\infty} \pi_\beta\circ\Phi_\cs(\hat{\z}_n)\\
&=\pi_\beta\circ\Phi_\cs(\hat{\w}^-\alpha(\hat{\beta}))=\pi_\beta\big(\Phi_\cs(\hat{\w}^-)\Phi_\cs(\alpha(\hat{\beta}))\big)\\
&=\pi_\beta\big(\Phi_\cs(\hat{\w})^-\alpha(\beta)\big)=\pi_\beta\big(\Phi_\cs(\hat{\w})0^\f\big).
\end{align*}
Moreover, $\Theta_{\cs,\beta}(\hat{t}_R)=\pi_\beta\big(\Phi_\cs(\hat{\w}^\f)\big)$. Thus, $[\Theta_{\cs,\beta}(\hat{t}_L-),\Theta_{\cs,\beta}(\hat{t}_R)]$ is the $\beta$-Lyndon interval $[\pi_\beta(\w 0^\f),\pi_\beta(\w^\f)]$, where $\w:=\Phi_\cs(\hat{\w})$. %That this interval contains the point $\Theta_{\cs,\beta}(\hat{t}_L)$ follows since
%$
%\Phi_\cs(\hat{\w})0^\f\prec \Phi_\cs(\hat{\w})\cs^-\L(\cs)^\f=\Phi_\cs(\hat\w 0^\f)\lle \Phi_\cs(\hat{\w})^\f.
%$
\end{proof}

\begin{proof}[Proof of Proposition \ref{prop:S-dense-intervals}]
We set $t_*:=\pi_\beta\big(\cs^-\L(\cs)^\f\big)$, and divide the proof in two parts.

\medskip
{\em Step 1.} The interval $[t_*,\tau(\beta)]$. If $\beta=\beta_*^\cs$, then $\tau(\beta)=t_*$ so this interval degenerates to a point and there is nothing to prove. So assume $\beta\in \overline{E^\cs}\backslash \{\beta_*^\cs\}$. Then $\alpha(\beta)=\Phi_\cs(\alpha(\hat{\beta}))$ for some $\hat{\beta}\in \overline{E}{\cap(1,2]}$. Using the map $\Theta_{\cs,\beta}$ from Lemma \ref{lem:mapping-Lyndon-intervals}, we observe that $t_*=\Theta_{\cs,\beta}(0)$ and $\tau(\beta)=\Theta_{\cs,\beta}(\tau(\hat{\beta}))$ (see { \cite[Theorem 3.4]{Allaart-Kong-2024}}). Proposition \ref{prop:dense-intervals} tells us that the $\hat{\beta}$-Lyndon intervals are dense in $[0,\tau(\hat{\beta})]$. Since the Lyndon intervals have the property that any two of them are either disjoint or one contains the other (see \cite[Proposition 4.1]{Kalle-Kong-Langeveld-Li-18}), it follows that every Lyndon interval in $[0,\tau(\hat{\beta})]$ is contained in a $\hat{\beta}$-Lyndon interval.

Although the function $\Theta_{\cs,\beta}$ is not continuous (it jumps where the greedy expansion $t\mapsto b(t,\hat{\beta})$ does), we claim that the intervals $[\Theta_{\cs,\beta}(\hat{t}_L-),\Theta_{\cs,\beta}(\hat{t}_R)]$, as $[\hat{t}_L,\hat{t}_R]$ ranges over the $\hat{\beta}$-Lyndon intervals, cover all the jumps of $\Theta_{\cs,\beta}$. Note that $\Theta_{\cs,\beta}$ jumps only at points of the form $\pi_{\hat\beta}(\w 0^\f)$, where $\w$ is a word ending in 1 such that $\w 0^\f$ is a greedy $\hat{\beta}$-expansion. Let $\r$ be the longest prefix of $\w$ that is Lyndon. Then the Lyndon interval $[\pi_{\hat\beta}(\r0^\f),\pi_{\hat\beta}(\r^\f)]$ contains $\pi_{\hat\beta}(\w0^\f)$. Since this Lyndon interval is contained in some $\hat{\beta}$-Lyndon interval $[\hat{t}_L,\hat{t}_R]$, and $[\Theta_{\cs,\beta}(\hat{t}_L-),\Theta_{\cs,\beta}(\hat{t}_R)]$ contains the ``jump" $[\Theta_{\cs,\beta}(\hat{t}_L-),\Theta_{\cs,\beta}(\hat{t}_L)]$ by {the monotonicity of $\Theta_{\cs,\beta}$}, the claim follows. But then the intervals $[\Theta_{\cs,\beta}(\hat{t}_L-),\Theta_{\cs,\beta}(\hat{t}_R)]$, which are $\beta$-Lyndon by Lemma \ref{lem:mapping-Lyndon-intervals}, are dense in $[t_*,\tau(\beta)]$.

\medskip
{\em Step 2.} The interval $[0,t_*)$. %Here we adapt the proof of Theorem \ref{thm:basic-interval-right-endpoint} (ii).
Write $\alpha(\beta)=\alpha_1\alpha_2\alpha_3\dots$, and $\cs^-\L(\cs)^\f=\gamma_1\gamma_2\gamma_3\dots$. Let $t\in (0,t_*)$, so $(t_i):=b(t,\beta)\prec (\gamma_i)$. We will show that
\[
\si^n((t_1\dots t_k)^\f)\prec \al(\beta) \qquad \forall\,n\geq 0, \quad \forall\,k\in\N.
\]
The density of the $\beta$-Lyndon intervals then follows in the same way as in the proof of Proposition \ref{prop:dense-intervals}.

Fix $k\in\N$ and $0\leq n<k$. Since $(t_i)$ is a greedy $\beta$-expansion, $t_{n+1}\dots t_k\lle\alpha_1\dots\alpha_{k-n}$. We will now show that $(t_1\dots t_k)^\f\prec \si^{k-n}(\al(\beta))$.
Since $\beta\in \overline{E^\cs}$, it follows that $\alpha(\beta)\in X(\cs)$ and so $\sigma^l(\alpha(\beta))\lge \cs^-\L(\cs)^\f=\ga_1\ga_2\ldots$ for all $l\geq 0$. Thus, it suffices to show that
\begin{equation*} %\label{eq:small-periodic}
(t_1\dots t_k)^\f\prec \cs^-\L(\cs)^\f.
\end{equation*}
{This will follow from the assumption $(t_i)\prec \cs^-\L(\cs)^\f$ once we show that 
\begin{equation} \label{eq:smaller-than-shifts}
\si^j(\cs^-\L(\cs)^\f)\succ \cs^-\L(\cs)^\f \qquad\mbox{for all $j\geq 1$}. 
\end{equation}
Let $m:=|\cs|$, and write $\cs=s_1\dots s_m$. If $j\geq m$, then there is $0\leq i<m$ such that
\[
\si^j(\cs^-\L(\cs)^\f)=(s_{i+1}\dots s_m s_1\dots s_i)^\f\succ s_{i+1}\dots s_m^- M_\beta^\f\lge \cs^-\L(\cs)^\f
\]
by the Lyndon property of $\cs$. Suppose $1\leq j<m$, and note this implies $m\geq 2$. Since $\cs$ is Lyndon, we have 
\[
\gamma_{j+1}\dots\gamma_m=s_{j+1}\dots s_m^-\lge s_1\dots s_{m-j},
\]
and
\[
\gamma_{m+1}\gamma_{m+2}\dots=\L(\cs)^\f\lge (s_{m-j+1}\dots s_m s_1\dots s_{m-j})^\f\succ s_{m-j+1}\dots s_m^-\L(\cs)^\f.
\]
Thus, we again obtain \eqref{eq:smaller-than-shifts}.
}
\end{proof}

%If $|\cs|=1$, then \eqref{eq:small-periodic} follows trivially from the assumption $(t_i)\prec(\gamma_i)=\cs^-\L(\cs)^\f$. So assume for the remainder of this proof that $m:=|\cs|\geq 2$.}
%Observe that $t_1\dots t_k0^\f\lle (t_i)\prec \cs^-\L(\cs)^\f$. If $k\geq m$, then we see that $t_1\dots t_m\lle \cs^-\prec \ga_{k+1}\dots\ga_{k+m}$, {and so $t_1\dots t_k t_1\dots t_m\prec \ga_1\dots \ga_{k+m}$,} giving \eqref{eq:small-periodic}.

%So suppose $k<m$. Let $q$ and $r$ be integers such that $m=qk+r$ and $0\leq r<k$. Then since $\cs$ is Lyndon,
%\[
%t_1\dots t_k\lle s_1\dots s_k\lle s_{pk+1}\dots s_{(p+1)k}, \qquad p=0,1,\dots,q-1,
%\]
%and
%\[
%t_1\dots t_r\lle s_1\dots s_r\lle s_{qk+1}\dots s_m^-.
%\]
%Thus,
%\[
%(t_1\dots t_k)^{q}t_1\dots t_r\lle s_1\dots s_m^-{=\cs^-},
%\]
%and if this holds with equality, then
%\begin{equation*}
%t_{r+1}\dots t_k(t_1\dots t_k)^{q-1}t_1\dots t_r=s_{r+1}\dots s_m^- \prec s_{r+1}\dots s_m\lle \L(\cs)_{1:m-r},
%\end{equation*}
%where the last inequality follows from the definition of $\L(\cs)$. {Here for a word $\c=c_1\ldots c_n$ and $j<n$ we set $\c_{1:j}=c_1\ldots c_j$.} Putting these results together, we obtain
%\[
%(t_1\dots t_k)^{2q}t_1\dots t_r\prec \cs^-\L(\cs)_{1:2m-r},
%\]
%yielding \eqref{eq:small-periodic}.

\begin{proof}[Proof of Theorem \ref{thm:beta-in-relative-exceptional-set}]
Statement (i) follows Propositions \ref{prop:transitive-subshifts} and \ref{prop:transitive-subshifts-special-case}, and (ii) follows from Proposition \ref{prop:S-dense-intervals}. %Finally, (iii) follows from (i), (ii) and Proposition \ref{prop:E-B-equality}.
\end{proof}

\section{Comparison of the bifurcation sets} \label{sec:comparison}

Our aim in this section is to prove the following extension of Corollary \ref{cor:E-B-left-endpoints}.

\begin{theorem} \label{thm:E-minus-B}
Let $\cs=\r_1\bullet \r_2\bullet\dots\bullet\r_k\in\Lambda$, where {$\r_1\in F_e$ and $\r_i\in F^*$ for $i=2,\dots,k$}. Suppose $\beta\in\{\beta_\ell^\cs,\beta_*^\cs,\beta_r^\cs\}$.
\begin{enumerate}[{\rm(i)}]
\item If $\beta=\beta_\ell^\cs$, then $\EE_\beta\backslash\BB_\beta$ consists exactly of the $k-1$ points whose greedy $\beta$-expansions, in decreasing order, are
\begin{equation} \label{eq:periodic-isolated-points}
\r_1^\f, \qquad (\r_1\bullet \r_2)^\f, \qquad \dots, \qquad (\r_1\bullet \r_2\bullet\dots\bullet \r_{k-1})^\f.
\end{equation}
\item If $\beta=\beta_r^\cs$, then $\EE_\beta\backslash\BB_\beta$ consists exactly of the $k$ points whose greedy $\beta$-expansions, in decreasing order, are
\[
\r_1^\f, \qquad (\r_1\bullet \r_2)^\f, \qquad \dots, \qquad (\r_1\bullet \r_2\bullet\dots\bullet \r_k)^\f.
\]
{
\item If $\beta=\beta_*^\cs$, then $\EE_\beta\backslash\BB_\beta$ consists of exactly $k+1$ points, namely $\tau(\beta)=\pi_\beta(\cs^-\L(\cs)^\f)$ and the $k$ points whose greedy $\beta$-expansions are as in {\rm (ii)} above.
}
\end{enumerate}
\end{theorem}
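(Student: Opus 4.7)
The plan is to use Proposition \ref{prop:E-B-equality} to localize $\EE_\beta\setminus\BB_\beta$ to $[\tau(\beta),1)$, and then to enumerate $\EE_\beta\cap[\tau(\beta),1)$ through the renormalization structure of $\al(\beta)$.

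First I would verify the hypotheses of Proposition \ref{prop:E-B-equality} in each of the three cases. For $\beta_r^\cs$, the identity $\al(\beta_r^\cs)=\Phi_\cs(1^\f)=\Phi_\cs(\al(2))$ with $2\in E$ shows $\beta_r^\cs\in E^\cs$, so Proposition \ref{prop:transitive-subshifts} supplies the required transitive subshifts. For $\beta_\ell^\cs$ and $\beta_*^\cs$ with $k\ge 2$, one level of renormalization, $\al(\beta)=\Phi_{\cs_{k-1}}(\al(\tilde\beta))$ with $\cs_{k-1}:=\r_1\bullet\cdots\bullet\r_{k-1}$ and $\tilde\beta\in\{\beta_\ell^{\r_k},\beta_*^{\r_k}\}$, reduces matters to Proposition \ref{prop:transitive-subshifts} or \ref{prop:transitive-subshifts-special-case}; the case $k=1$ is handled by Corollary \ref{cor:E-B-left-endpoints}, Theorem \ref{thm:basic-interval-right-endpoint}, or Proposition \ref{prop:transitive-subshifts} directly. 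Since $\al(\beta)$ is eventually periodic in every case, the transitive subshifts produced by Theorem \ref{thm:beta-in-relative-exceptional-set} are sofic by Lemma \ref{lem:sofic-subshift}, and the $\beta$-Lyndon intervals are dense in $[0,\tau(\beta)]$ by Proposition \ref{prop:S-dense-intervals}. Proposition \ref{prop:E-B-equality} then yields $\EE_\beta\cap[0,\tau(\beta))=\BB_\beta\cap[0,\tau(\beta))$; combined with the fact that $\dim_H K_\beta(\cdot)\equiv 0$ on $[\tau(\beta),1)$ forces $\BB_\beta\cap[\tau(\beta),1)=\emptyset$, the problem reduces to enumerating $\EE_\beta\cap[\tau(\beta),1)$.

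By Lemma \ref{lem:E-beta-characterizations}, each such $t$ has a cycle-minimal greedy $\beta$-expansion $b(t,\beta)$, that is, $\sigma^n b(t,\beta)\succeq b(t,\beta)$ for all $n\ge 0$. Using the $\Phi_\cs$-structure of $\al(\beta)$ together with the lower bound $b(t,\beta)\lge b(\tau(\beta),\beta)$, such a sequence must lie in $\Ga(\cs)$ as defined in \eqref{eq:Gamma}. I would classify the cycle-minimal elements of $\Ga(\cs)$ by induction on $k$, starting from Lemma \ref{lem:Farey-sandwich} for $k=1$ and iterating the decomposition $\Ga(\r_1\bullet\s)\subseteq\Ga(\r_1)\cup\bigcup_\d \d\cdot\Phi_{\r_1}(\Ga(\s))$ from the proof of Lemma \ref{lem:countable}. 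Strict monotonicity of $\Phi_{\r_1}$ (Lemma \ref{lem:substitution-properties}(i)) transfers cycle-minimality between a sequence $\Phi_{\r_1}(\hat\z)$ and its preimage $\hat\z$, while the block structure depicted in Figure \ref{fig:directed-graph} rules out a nontrivial finite prefix $\d$: such a prefix would force an earlier shift of $\d\cdot\Phi_{\r_1}(\hat\z)$ to begin with the smaller block $\r_1^-$, contradicting cycle-minimality. The upshot is that the cycle-minimal elements of $\Ga(\cs)$ are precisely the Lyndon powers $(\cs_j)^\f$ with $\cs_j:=\r_1\bullet\cdots\bullet\r_j$, for $j=1,\ldots,k$.

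Finally, I would check which of these candidates are valid greedy $\beta$-expansions lying in $(\tau(\beta),1)$. Because the largest cyclic shift of $(\cs_j)^\f$ is $\L(\cs_j)^\f=\al(\beta_\ell^{\cs_j})$, and the map $j\mapsto \L(\cs_j)^\f$ is strictly increasing in $j$, validity is equivalent to $\L(\cs_j)^\f\prec\al(\beta)$. For $\beta\in\{\beta_*^\cs,\beta_r^\cs\}$ the sequence $\al(\beta)$ begins with $\L(\cs)^+\succ\L(\cs)$, so the comparison holds for every $j\le k$, giving the asserted $k$ points; for $\beta=\beta_\ell^\cs$ we have $\L(\cs_k)^\f=\al(\beta)$, so $(\cs_k)^\f=\cs^\f$ is disqualified as some shift coincides with $\al(\beta)$, leaving exactly the $k-1$ points with $j<k$. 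A direct comparison of the prefix of $(\cs_j)^\f$ with $b(\tau(\beta),\beta)$ (which begins with $\cs^-$ up to a shifted equivalent) shows each $t_j:=\pi_\beta((\cs_j)^\f)$ lies strictly above $\tau(\beta)$, and the monotonicity of $j\mapsto(\cs_j)^\f$ yields the claimed decreasing order. The main obstacle will be the inductive classification in the second step: ruling out every sequence of the form $\d\cdot\Phi_{\r_1}(\hat\z)$ with nonempty $\d$ as cycle-minimal requires a careful block-level refinement of the decomposition used in Lemma \ref{lem:countable}, tracking how the four blocks $\r_1, \r_1^-, \L(\r_1), \L(\r_1)^+$ interlock in a sequence respecting the bounds defining $\Ga(\cs)$.
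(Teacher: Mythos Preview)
Your reduction to $[\tau(\beta),1)$ via Proposition~\ref{prop:E-B-equality} matches the paper exactly. Where you diverge is in the enumeration of $\EE_\beta\cap[\tau(\beta),1)$: the paper does not classify cycle-minimal elements of $\Ga(\cs)$, but instead shows directly that the survivor set $K_\beta(t)$ is constant on each subinterval $(t_{j+1},t_j)$ (and on $(\tau(\beta),t_{k-1}]$ and $(t_1,1)$). The key tool for this is Lemma~\ref{lem:equivalent-upper-bound}, a one-line observation that the asymmetric bound $\si^n(\z)\lle \L(\r\bullet\s)^+(\r\bullet\s)^\f$ already forces $\z\in\Ga(\r)$; combined with Lemma~\ref{lem:Farey-sandwich} this immediately collapses each subinterval. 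This route is shorter and yields the stronger intermediate statement that $K_\beta$ (not just $\EE_\beta$) is locally constant.

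Your inductive classification is a legitimate alternative, but two places need more than you indicate. First, the claim ``such a sequence must lie in $\Ga(\cs)$'' is not immediate: cycle-minimality plus $b(t,\beta)\succ b(\tau(\beta),\beta)$ only gives $\si^n(b(t,\beta))\succ\cs^-\L(\cs)^\f$ (or $\cs0^\f$), and the upper bound for $\beta\in\{\beta_*^\cs,\beta_r^\cs\}$ is $\al(\beta)\succ\L(\cs)^\f$. Upgrading to the bounds defining $\Ga(\cs)$ requires exactly the content of Lemma~\ref{lem:equivalent-upper-bound}; you should invoke it explicitly rather than appeal to the ``$\Phi_\cs$-structure''. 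Second, the transfer of cycle-minimality through $\Phi_{\r_1}$ is not a consequence of strict monotonicity alone, because $\Phi_{\r_1}$ does not intertwine with $\si$: in general $\si^{m|\r_1|}(\Phi_{\r_1}(\hat\z))\ne\Phi_{\r_1}(\si^m(\hat\z))$. The backward direction (cycle-min $\z\Rightarrow$ cycle-min $\hat\z$) does hold, but it needs the observation that if $m_1$ is the \emph{first} index with $\si^{m_1}(\hat\z)\prec\hat\z$, then necessarily $\hat z_{m_1}=1$, whence the block $B_{m_1+1}=\r_1^-$ and $\si^{m_1|\r_1|}(\z)=\Phi_{\r_1}(\si^{m_1}(\hat\z))\prec\z$. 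You correctly flag the $\d\ne\epsilon$ exclusion as the main obstacle; it is genuine but surmountable by a similar first-return argument.
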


The proof uses the following lemma. Recall the definition of $\Ga(\r)$ from \eqref{eq:Gamma}. {We also define the following subset of $\La$:
\begin{equation} \label{eq:Lambda-star}
\La^*:=\{\cs=\s_1\bullet\s_2\bullet\dots\bullet \s_k\in\La: \s_1\in F^*\}.
\end{equation}
}

\begin{lemma} \label{lem:equivalent-upper-bound}
Let { $\r\in\La$ and $\s\in\La^*$}, and suppose $\z\in{\N_0}^\N$ satisfies
\begin{equation} \label{eq:asymmetric-sandwich}
\r^\f\lle \sigma^n(\z)\lle {\L(\r\bullet\s)^\f}\qquad\forall n\geq 0.
\end{equation}
Then $\z\in\Ga(\r)$.
\end{lemma}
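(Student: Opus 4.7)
Since $\r^\f \lle \sigma^n(\z)$ is part of the hypothesis, it suffices to prove $\sigma^n(\z) \lle \L(\r)^\f$ for all $n \geq 0$. I plan to argue by contradiction: assume $\sigma^N(\z) \succ \L(\r)^\f$ for some $N\ge 0$, and derive a contradiction from the hypothesized upper bound $\L(\cs)^+\cs^\f$, where $\cs := \r\bullet\s$.

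The preparatory step will be to rewrite the upper bound as
\[
\L(\cs)^+\cs^\f = \Phi_\r(\L(\s)^+\s^\f).
\]
This uses $\L(\cs) = \r\bullet\L(\s)$ from Lemma~\ref{lem:substitution-properties}(iv); the fact that $\L(\s)$ ends in $0$ (a standard fact for Lyndon words of length $\ge 2$), so that $\Phi_\r(\L(\s)^+) = \Phi_\r(\L(\s))^+ = \L(\cs)^+$ by Lemma~\ref{lem:substitution-properties}(ii); and the connectibility of $\L(\s)^+$ (ending in $1$) with $\s^\f$ (starting in $0$), which lets Lemma~\ref{lem:connectible} pull the concatenation inside $\Phi_\r$ (together with $\Phi_\r(\s^\f)=\cs^\f$). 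From the block rule defining $\Phi_\r$, if $\L(\s)$ begins with $1^{k_1}$ for some $k_1 \ge 1$, then $\L(\cs)^+\cs^\f$ begins with $\L(\r)^+\r^{k_1-1}\r^-\cdots$, so its $m$-shift (with $m := |\r|$) begins with $\r^{k_1-1}\r^-\cdots$.

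Next I will locate a shift of $\z$ that starts with $\L(\r)^+$. Let $p$ be the first coordinate where $\sigma^N(\z)$ exceeds $\L(\r)^\f$, and write $p = km + j$ with $1 \le j \le m$. Then the $j$th digit of $\L(\r)$ is $0$ and $\sigma^{N+km}(\z)$ starts with the first $j-1$ digits of $\L(\r)$ followed by a $1$. Comparing this prefix with the upper bound's prefix $\L(\r)^+$ forces $j = m$, since otherwise the two disagree at position $j<m$ (where $\L(\r)^+$ has $0$ but $\sigma^{N+km}(\z)$ has $1$), violating $\sigma^{N+km}(\z)\lle \L(\cs)^+\cs^\f$. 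Hence $\sigma^{N+km}(\z) = \L(\r)^+\w$ for some tail $\w$. The lower bound supplies $\w \lge \r^\f$, while stripping $\L(\r)^+$ from the upper bound gives $\w \lle \r^{k_1-1}\r^-\cdots$. These two bounds on $\w$ agree on the first $(k_1-1)m$ coordinates but disagree at position $k_1 m$: $\r^\f$ has $r_m = 1$ there, while $\r^{k_1-1}\r^-\cdots$ has $0$. Thus $\r^\f \succ \r^{k_1-1}\r^-\cdots$, contradicting $\r^\f \lle \w \lle \r^{k_1-1}\r^-\cdots$.

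I expect the main subtlety to lie in the preparatory factorization $\L(\cs)^+\cs^\f = \Phi_\r(\L(\s)^+\s^\f)$, which chains together several properties of $\Phi_\r$, $\cdot^\pm$, $\L$, and connectibility in the correct order. Once this block structure of the upper bound is made explicit, the contradiction reduces to comparing a single digit at position $k_1 m$ where $\r$ and $\r^-$ differ.
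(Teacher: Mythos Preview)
Your proof is correct and follows essentially the same route as the paper's: both arguments observe that $\L(\r\bullet\s)^+(\r\bullet\s)^\f$ begins with $\L(\r)^+\r^K\r^-$ for some finite $K$, locate a shift of $\z$ beginning with $\L(\r)^+$, and then derive a contradiction between the lower bound $\r^\f$ and the upper bound $\r^K\r^-\cdots$ on the tail. The paper reaches the block structure more quickly by noting directly that $\L(\r\bullet\s)^+$ begins with $\L(\r)^+$ and $\r\bullet\s$ begins with $\r^-$, whereas you route through the factorization $\L(\cs)^+\cs^\f=\Phi_\r(\L(\s)^+\s^\f)$; your count ``$k_1-1$'' is off by one in the edge case $\L(\s)=1^{k_1}0$ (where $\L(\s)^+=1^{k_1+1}$), but since the argument only needs some finite $K$ this does not affect correctness.
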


\begin{proof}
{Since $\s$ begins with $0$ and ends with $1$, $\L(\s)$ begins with $1$ and ends with $0$. So $\L(\r\bullet\s)=\Phi_\r(\L(\s))$} begins with $\L(\r)^+\r^k\r^-$ for some finite $k$. Hence, $\sigma^n(\z)\prec \L(\r)^+{\r}^\f$ for all $n\geq 0$, and if $\z$ ever contains $\L(\r)^+$, then the next block of length $(k+1)|\r|$ must be $\lle \r^k\r^-$ and also $\lge \r^{k+1}$ by the lower bound in \eqref{eq:asymmetric-sandwich}. This is impossible, so $\z\in\Ga(\r)$.
\end{proof}

\begin{proof}[Proof of Theorem \ref{thm:E-minus-B}]
We begin with (i). For $k=1$ this is simply Corollary \ref{cor:E-B-left-endpoints}, so assume $k\geq 2$. Let $\cs=\r_1\bullet \r_2\bullet\cdots\bullet \r_k\in\Lambda$, where {$\r_1\in F_e$ and $\r_i\in F^*$ for $i=2,\dots,k$}. Let $\beta=\beta_\ell^\cs$, {so $\al(\beta)=\L(\cs)^\f$}. From Theorem \ref{thm:beta-in-relative-exceptional-set} and Proposition \ref{prop:E-B-equality} it follows that $\EE_\beta\cap[0,\tau(\beta))=\BB_\beta\cap[0,\tau(\beta))$, so it suffices to show that $\EE_\beta$ has precisely $k-1$ points in the interval $[\tau(\beta),1)$, whose greedy $\beta$-expansions are given by \eqref{eq:periodic-isolated-points}.

Observe from {\cite[Theorem 3.4]{Allaart-Kong-2024}} that
$\tau(\beta)=\pi_\beta\big(\cs^-\L(\cs)^\f\big)$.
Let
\[
t_j:=\pi_\beta\big((\r_1\bullet\dots\bullet\r_j)^\f\big), \qquad j=1,2,\dots,k-1.
\]
Note that $(\r_1\bullet\dots\bullet\r_j)^\f$ is indeed the greedy $\beta$-expansion of $t_j$, since
${\L(\r_1\bullet\dots\bullet\r_j)^\f}\prec {\L(\r_1\bullet\dots\bullet\r_k)^\f=}\al(\beta)$. Noting also that $\r_1\bullet\dots\bullet\r_j$ is Lyndon, it follows that $T^n(t_j)\geq t_j$ for every $n\geq 0$, so that $t_j\in\EE_\beta^+=\EE_\beta$ by Lemma \ref{lem:E-beta-characterizations}.
%That each $t_j\in\EE_\beta$ follows easily since {$\r_1\bullet\dots\bullet\r_j$ is Lyndon and $\L((\r_1\bullet\dots\bullet\r_j)^\f)\prec \L((\r_1\bullet\dots\bullet\r_k)^\f)\prec \al(\beta)$},   and hence $T^n(t_j)\geq t_j$ for every $n\geq 0$, so that $t_j\in\EE_\beta^+\subseteq \EE_\beta$.
Furthermore, $(\r_1\bullet\cdots\bullet\r_j)^\f\succ \r_1\bullet\cdots\bullet\r_k^-{M_\beta}^\f\succ\cs^-\L(\cs)^\f$. Thus, $t_j\in(\tau(\beta), 1)$.

It remains to show that there are no further points of $\EE_\beta$ in $(\tau(\beta),1)$. %Note that for $k=1$ the statement asserts that $\EE_\beta$ has no points in $(\tau(\beta),1)$, and indeed, in this case $\cs=\r_1$ is Farey, and $\tau(\beta)=1-(1/\beta)$, which is the maximum value of $\EE_\beta$ for this case.

Recall that $k\geq 2$. We first claim that $K_\beta(t)=\emptyset$ for all $t>t_1$, so there are no points of $\EE_\beta$ to the right of $t_1$. This follows since
\begin{equation*}
\mathcal{K}_\beta(t_1)=\{\z: \r_1^\f\lle \sigma^n(\z)\prec \L(\cs)^\f\ \ \forall n\geq 0\}=\Ga(\r_1)
%&=\{\z: \r_1^\f\lle \sigma^n(\z)\lle\L(\r_1)^\f\ \ \forall n\geq 0\}
\end{equation*}
by Lemma \ref{lem:equivalent-upper-bound}, so $\mathcal{K}_\beta(t_1)$ consists only of $\r_1^\f$ and its shifts by Lemma \ref{lem:Farey-sandwich}, and none of these lie in $\mathcal{K}_\beta(t)$ for $t>t_1$.

Next, we show that there are no points of $\EE_\beta$ in $(\tau(\beta),t_{k-1})$; in other words, $K_\beta(t)$ is constant on $(\tau(\beta),t_{k-1}]$. Note that
\[
\tau(\beta)=\pi_\beta\big(\cs^-\L(\cs)^\f\big)=\pi_\beta\big(\cs 0^\f\big)=\pi_\beta\big((\r_1\bullet\dots\bullet \r_k)0^\f\big)
\]
and $t_{k-1}=\pi_\beta\big((\r_1\bullet\dots \bullet \r_{k-1})^\f\big)$. Let $t\in(\tau(\beta),t_{k-1})$ and suppose $\z\in\mathcal{K}_\beta(t)$. We claim that $\z\in\mathcal{K}_\beta(t_{k-1})$. Observe that {$b(t,\beta)\succ \cs 0^\f$. Then $z\in\mathcal K_\beta(t)$ implies that }
\begin{equation} \label{eq:S-sandwich}
\cs0^\f\lle \sigma^n(\z)\prec \L(\cs)^\f \quad\forall n\geq 0.
\end{equation}
Since $\cs$ begins with $\r_1\bullet \dots\bullet\r_{k-1}^-$, we have $\sigma^n(\z)\lge (\r_1\bullet \dots\bullet\r_{k-1}^-)0^\f$ for all $n\geq 0$. Suppose $\z$ contains the word $\r_1\bullet \dots\bullet\r_{k-1}^-$; without loss of generality we may assume $\z$ begins with it. Then by Lemma \ref{lem:four-blocks},
\[
\z=\Phi_{\r_1\bullet\dots\bullet \r_{k-1}}(\hat{\z})
\]
for some sequence $\hat{\z}$ satisfying
\[
\r_k0^\f\lle \sigma^n(\hat{\z})\prec \L(\r_k)^\f \quad\forall n\geq 0,
\]
in view of \eqref{eq:S-sandwich}. However, by Lemma \ref{lem:Farey-sandwich} such a sequence $\hat{\z}$ does not exist. Therefore, $\sigma^n(\z)\lge (\r_1\bullet \dots\bullet\r_{k-1})^\f$ for all $n\geq 0$, which implies $\z\in \mathcal{K}_\beta(t_{k-1})$.

Note that we are now done with the case $k=2$, so fix $k\geq 3$ from now on.
Fix $1\leq j\leq k-2$, and let $t\in(t_{j+1},t_j)$. Suppose by way of contradiction that there exists a sequence $\z\in \mathcal{K}_\beta(t)\backslash \mathcal{K}_\beta(t_j)$. Then
\begin{equation} \label{eq:z-tail-sandwich}
\cs^\f\lle (\r_1\bullet\dots\bullet\r_{j+1})^\f=b(t_{j+1},\beta)\prec b(t,\beta)\lle \sigma^n(\z)\prec\alpha(\beta)=\L(\cs)^\f \qquad \forall n\geq 0,
\end{equation}
and there is an integer $n_0\geq 0$ such that $\sigma^{n_0}(\z)\prec b(t_j,\beta)=(\r_1\bullet\dots\bullet \r_j)^\f$.
Since $\r_1\bullet\dots\bullet\r_{j+1}$ begins with $\r_1\bullet\dots\bullet\r_j^-$, there is  therefore a further integer $m_0\geq n_0$ such that $\sigma^{m_0}(\z)$ begins with $\r_1\bullet\dots\bullet\r_j^-$. Set $\r:=\r_1\bullet\dots\bullet\r_j$ and $\s:=\r_{j+1}\bullet\dots\bullet\r_k$. Applying Lemma \ref{lem:four-blocks} yields that
\[
\z=\Phi_{\r_1\bullet\dots\bullet \r_j}(\hat{\z})
\]
for some sequence $\hat{\z}$, which by \eqref{eq:z-tail-sandwich} satisfies the inequalities
\begin{equation} \label{eq:wider-z-hat-sandwich}
\r_{j+1}^\f\prec\sigma^n(\hat{\z})\prec \L(\r_{j+1}\bullet\dots\bullet\r_k)^\f\qquad \forall n\geq 0.
\end{equation}
But then
\begin{equation} \label{eq:z-hat-sandwich}
\r_{j+1}^\f\prec\sigma^n(\hat{\z})\lle \L(\r_{j+1})^\f\qquad \forall n\geq 0
\end{equation}
by the obvious modification of Lemma \ref{lem:equivalent-upper-bound}. However, no sequence $\hat{\z}$ satisfying \eqref{eq:z-hat-sandwich} exists by Lemma \ref{lem:Farey-sandwich}. Hence, $K_\beta(t)=K_\beta(t_j)$.

{
This completes the proof of (i). For (ii) and (iii) we need to modify the argument slightly. Take first $\beta=\beta_r^\cs$. Then $\tau(\beta)=\pi_\beta\big(\cs 0^\f\big)$, and $\al(\beta)=\L(\cs)^+\cs^\f$. Now $\cs$ itself is $\beta$-Lyndon, and so $t_k:=\pi_\beta(\cs^\f)=\pi_\beta\big((\r_1\bullet\dots\bullet\r_k)^\f\big)$ defines a point $t_k>\tau(\beta)$. Note that $[\tau(\beta),t_k]$ is the $\beta$-Lyndon interval generated by $\cs$, so there are no points of $\EE_\beta$ in $[\tau(\beta),t_k)$.

If $t\in(t_{j+1},t_j)$ for $j=1,\dots,k-1$ and $\z\in \mathcal{K}_\beta(t)\backslash \mathcal{K}_\beta(t_j)$, then instead of  \eqref{eq:z-tail-sandwich}, we initially have
\[
\cs^\f\lle (\r_1\bullet\dots\bullet\r_{j+1})^\f=b(t_{j+1},\beta)\prec b(t,\beta)\lle \sigma^n(\z)\prec\alpha(\beta)=\L(\cs)^+\cs^\f \qquad \forall n\geq 0.
\]
But this reduces to \eqref{eq:z-tail-sandwich}, since $\z$ cannot contain the block $\L(\cs)^+$: If it did, what follows would have to be both $\prec\cs^\f$ and $\succ\cs^\f$, which is absurd. The rest proceeds as before.

For (iii), take $\beta=\beta_*^\cs$. Then $\tau(\beta)=\pi_\beta\big(\cs^- \L(\cs)^\f\big)$ and $\al(\beta)=\L(\cs)^+\cs^-\L(\cs)^\f$. Recall from \eqref{eq:smaller-than-shifts} that the sequence $(c_i):=\cs^- \L(\cs)^\f$ satisfies $\si^n((c_i))\lge (c_i)$ for all $n\geq 0$; hence $\tau(\beta)\in \EE_\beta\backslash\BB_\beta$. As in (ii), there is also an extra bifurcation point $t_k:=\pi_\beta(\cs^\f)=\pi_\beta\big((\r_1\bullet\dots\bullet\r_k)^\f\big)\in \EE_\beta\backslash\BB_\beta$, because here, too, $\cs$ is $\beta$-Lyndon. Instead of \eqref{eq:S-sandwich}, we now have, initially, for $t\in(\tau(\beta),t_k)$ and $\z\in\mathcal{K}_\beta(t)$,
\[
\cs^-\L(\cs)^\f\prec b(t,\beta)\lle \sigma^n(\z)\prec \L(\cs)^+\cs^-\L(\cs)^\f \quad\forall n\geq 0.
\]
But then $\z$ clearly cannot contain the block $\L(\cs)^+$, so we must have $\si^n(\z)\lle \L(\cs)^\f$ for every $n$; and then $\z$ cannot contain the block $\cs^-$ either, as is easy to see. Hence, $\si^n(\z)\lge\cs^\f$ for each $n$, which implies $\z\in\mathcal{K}_\beta(t_k)$.

In the same way, \eqref{eq:z-tail-sandwich} remains valid for $t\in(t_{j+1},t_j)$ with $j=1,\dots,k-1$, and $\z\in \mathcal{K}_\beta(t)\backslash \mathcal{K}_\beta(t_j)$. The rest of the argument goes as before.
}
\end{proof}

\section{The case of infinitely renormalizable $\beta$} \label{sec:E-infinity}

Recall the set $E_\f$ from \eqref{eq:E_f}. In this section we prove the following.

%=\bigcap_{k=1}^\f\bigcup_{\cs\in\La_k}J^{\cs}$.
%\begin{equation} \label{eq:E_s}
%E_\f:=\bigcap_{k=1}^\f\bigcup_{\cs\in\La_k}J^{\cs}.
%\end{equation}
%These are the values of $\beta$ that lie in infinitely many of the intervals $J^{\cs}$, and are hence infinitely renormalizable. (In \cite{Allaart-Kong-2021}, $E_\f$ is called the {\em infinitely Farey set}, because its elements arise from substitutions of an infinite sequence of Farey words.) It is shown in \cite{Allaart-Kong-2021} that $E_\f$ is uncountable but of zero Hausdorff dimension.

\begin{theorem} \label{thm:infinitely-Farey}
Let $\beta\in E_\f$. Then
\begin{enumerate}[{\rm(i)}]
\item For every $t_R\in\mathcal{T}_R$, $\Kt_\beta(t_R)$ has a transitive subshift $\mathcal{K}_\beta'(t_R)$ of full entropy and full Hausdorff dimension that contains the sequence $b(t_R,\beta)$. Moreover, these transitive subshifts can be chosen so that $\{\mathcal{K}_\beta'(t_R): t_R\in\mathcal{T}_R(\beta)\}$ is a strictly descending collection of subshifts.
\item The $\beta$-Lyndon intervals are dense in $[0,\tau(\beta)]$.
\end{enumerate}
\end{theorem}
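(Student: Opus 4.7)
The plan is to extend the renormalization arguments of Section \ref{sec:relative-exceptional} to the infinitely renormalizable setting. Since $\beta\in E_\f$, there is a unique sequence $(\r_k)_{k\ge 1}\subset\F$ with $\cs_k:=\r_1\bullet\cdots\bullet\r_k$ and $\beta\in J^{\cs_k}$ for every $k$, so that $\al(\beta)=\Phi_{\cs_k}(\al(\hat\beta_k))$ for corresponding bases $\hat\beta_k$. A useful preliminary observation is that the sequences $\cs_k^-\L(\cs_k)^\f$ are strictly lexicographically increasing in $k$ and converge (in the product topology) to $b(\tau(\beta),\beta)$: indeed, $\cs_k^-$ is a prefix of $\cs_{k+1}^-$, the block of $\cs_{k+1}$ following the initial $\cs_k^-$ begins with $\L(\cs_k)$ or $\L(\cs_k)^+$ and eventually beats the corresponding $\L(\cs_k)$ of $\cs_k^-\L(\cs_k)^\f$, while convergence to $b(\tau(\beta),\beta)$ follows from iterating the identity $\tau(\beta)=\Theta_{\cs_k,\beta}(\tau(\hat\beta_k))$ extracted from \cite[Theorem 2]{Allaart-Kong-2021}. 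Consequently, any periodic sequence $\w^\f\prec b(\tau(\beta),\beta)$ satisfies $\w^\f\prec \cs_k^-\L(\cs_k)^\f$ for all sufficiently large $k$.

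For part (i), given $t_R\in\mathcal{T}_R(\beta)$ with $b(t_R,\beta)=\w^\f$, let $m=m(\w)\ge 0$ be the largest index with $\w^\f\succ \cs_m^-\L(\cs_m)^\f$ (setting $\cs_0^-\L(\cs_0)^\f:=0^\f$); this is finite by the observation above. If $m=0$, Proposition \ref{prop:general-transitivity} applied to $\al(\beta)=\Phi_{\r_1}(\al(\hat\beta_1))$ gives transitivity of $\Kt_\beta(t_R)$ directly, and I set $\mathcal{K}_\beta'(t_R):=\Kt_\beta(t_R)$. If $m\ge 1$, Lemma \ref{lem:four-blocks} factors $\w^\f=\Phi_{\cs_m}(\hat\w^\f)$ for a $\hat\beta_m$-Lyndon word $\hat\w$, and the maximality of $m$ combined with the identity $\cs_{m+1}^-\L(\cs_{m+1})^\f=\Phi_{\cs_m}(\r_{m+1}^-\L(\r_{m+1})^\f)$ (which follows from Lemmas \ref{lem:substitution-properties}(ii),(iv) and \ref{lem:connectible}) yields $\hat\w^\f\prec \r_{m+1}^-\L(\r_{m+1})^\f$. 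Since $\al(\hat\beta_m)=\Phi_{\r_{m+1}}(\al(\hat\beta_{m+1}))$, Proposition \ref{prop:general-transitivity} then gives transitivity of $\Kt_{\hat\beta_m}(\hat t_R)$, where $\hat t_R:=\pi_{\hat\beta_m}(\hat\w^\f)$. Setting
\[
\mathcal{K}_\beta'(t_R):=\{\sigma^n(\Phi_{\cs_m}(\hat\z)):\hat\z\in\Kt_{\hat\beta_m}(\hat t_R),\ n\ge 0\}
\]
in the spirit of \eqref{eq:the-transitive-subshift}, the verification that $\mathcal{K}_\beta'(t_R)$ is a transitive subshift of $\Kt_\beta(t_R)$ containing $b(t_R,\beta)$ and having full entropy and full Hausdorff dimension proceeds exactly as in the proof of Proposition \ref{prop:transitive-subshifts}, with Lemma \ref{lem:countable} used to discard the countable leftover $\Ga(\cs_m)$.

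For part (ii), I would adapt Step 2 of the proof of Proposition \ref{prop:S-dense-intervals}. Given $t\in(0,\tau(\beta))$, choose $m$ so that $t<\pi_\beta(\cs_m^-\L(\cs_m)^\f)$. Since $\al(\beta)\in X(\cs_m)$, we have $\sigma^l(\al(\beta))\lge \cs_m^-\L(\cs_m)^\f$ for every $l\ge 0$, so the same inequality chain (with $\cs_m$ replacing $\cs$) shows that if $b(t,\beta)$ is eventually periodic then $t$ lies in a $\beta$-Lyndon interval, and otherwise $t\in\EE_\beta$ and is approached from the right by right endpoints of $\beta$-Lyndon intervals via Lemma \ref{lem:E-beta-next-to-beta-Lyndon}. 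Taking the union over $m$ yields density on $[0,\tau(\beta))$, and density extends to $[0,\tau(\beta)]$ because $\tau(\beta)$ is the increasing limit of $\pi_\beta(\cs_m^-\L(\cs_m)^\f)$.

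The main obstacle is the strictly descending condition in part (i), because distinct $t_R\in\mathcal{T}_R(\beta)$ may be tied to distinct renormalization depths $m(\w)$. When $m(t_R)=m(t_R')=m$, the strict nesting descends directly from that of $\Kt_{\hat\beta_m}(\hat t_R)$. When $m:=m(t_R)<m(t_R')=:m'$, I would factor $\Phi_{\cs_{m'}}=\Phi_{\cs_m}\circ\Phi_{\r_{m+1}\bullet\cdots\bullet\r_{m'}}$ by associativity (Lemma \ref{lem:substitution-properties}(v)) to rewrite any $\z\in\mathcal{K}_\beta'(t_R')$ as $\sigma^n(\Phi_{\cs_m}(\tilde\z))$ with $\tilde\z:=\Phi_{\r_{m+1}\bullet\cdots\bullet\r_{m'}}(\hat\z')$, then invoke Lemma \ref{lem:substitution-properties}(iii) to check $\tilde\z\in\Kt_{\hat\beta_m}(\hat t_R)$; strictness then follows from the fact that $b(t_R,\beta)\in\mathcal{K}_\beta'(t_R)\setminus\Kt_\beta(t_R')$.
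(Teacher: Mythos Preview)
Your proposal is correct and follows essentially the same route as the paper: for (i) you renormalize via $\Phi_{\cs_m}$ at the appropriate depth and invoke Proposition \ref{prop:general-transitivity} together with the construction in Proposition \ref{prop:transitive-subshifts}, and for (ii) you apply Step~2 of Proposition \ref{prop:S-dense-intervals} at each level $m$ and pass to the limit --- exactly as the paper does. Two small slips in (ii) are harmless for density but should be fixed: the dichotomy is $t\notin\EE_\beta$ versus $t\in\EE_\beta$ (not ``eventually periodic'' versus not), and Lemma \ref{lem:E-beta-next-to-beta-Lyndon} gives approximation from the \emph{left}, not the right; also, your explicit handling of the strictly-descending condition across different depths $m<m'$ is more detailed than the paper's, which simply defers to the argument of Proposition \ref{prop:transitive-subshifts}.
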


Part (i) gives Theorem \ref{thm:general-transitivity} for $\beta\in E_\f$.

\begin{proof}
(i) Since $\beta\in E_\f$, there {exist an extended Farey word $\s_1$ and an infinite sequence $\{\s_2,\s_3,\dots\}$} of Farey words, uniquely determined by $\beta$, such that $\beta\in J^{\s_1\bullet\dots\bullet\s_n}$ for each $n\in\N$. Write $\cs_i:=\s_1\bullet\dots\bullet\s_i$ for $i\in\N$, and note that for each $i$ there is a base $\hat{\beta}_i$ such that $\al(\beta)=\Phi_{\cs_i}\big(\al(\hat{\beta}_i)\big)$. Recall from \cite[Proposition 6.3]{Allaart-Kong-2021} that
\begin{equation} \label{eq:tau-beta-E-infinity}
\tau(\beta)=\lim_{n\to\f}\pi_\beta(\s_1\bullet\cdots\bullet\s_n 0^\f).
\end{equation}
Hence, given a point $t_R\in\mathcal{T}_R$, we either have $b(t_R,\beta)\prec \s_1^-\L(\s_1)^\f$, in which case $\Kt_\beta(t_R)$ is transitive by Proposition \ref{prop:general-transitivity}, or else there is a unique $i\in\N$ such that
\[
\cs_i^-\L(\cs_i)^\f\prec b(t_R,\beta) \prec \cs_{i+1}^-\L(\cs_{i+1})^\f.
\]
We conclude as in the proof of Proposition \ref{prop:transitive-subshifts} that $\Kt_{\hat\beta_i}(\hat{t}_R)$ is transitive. The rest of the proof is the same as the proof of Proposition \ref{prop:transitive-subshifts}.

(ii) {Set $\cs_n:=\s_1\bullet\dots\bullet\s_n$ and $\tau_n:=\pi_\beta\big(\cs_n^-\L(\cs_n)^\f\big)$, for $n\in\N$.
For each $n$, the $\beta$-Lyndon intervals are dense in $[0,\tau_n]$ by applying the argument in Step 2 of the proof of Proposition \ref{prop:S-dense-intervals} to $\cs_n$.} Since $\tau_n\nearrow \tau(\beta)$ {by \eqref{eq:tau-beta-E-infinity}}, this implies the $\beta$-Lyndon intervals are dense in $[0,\tau(\beta)]$.
\end{proof}

A simple modification of the proof of Theorem \ref{thm:E-minus-B} yields the following:

\begin{proposition} \label{prop:E-inf-bifurcation}
Let $\beta\in E_\f$ with corresponding sequence $\s_1,\s_2,\dots$ of Farey words. Then $\tau(\beta)$ is given by \eqref{eq:tau-beta-E-infinity},
%\[
%\tau(\beta) =\lim_{n\to\f}\pi_\beta (\s_1\bullet\cdots\bullet\s_n 0^\f),
%\]
and $\EE_\beta$ has countably infinitely many points in the interval $(\tau(\beta),1)$, namely the points whose greedy $\beta$-expansions, in decreasing order, are
\[
\s_1^\f, \qquad (\s_1\bullet \s_2)^\f, \qquad (\s_1\bullet \s_2\bullet \s_3)^\f,\qquad \dots.
\]
\end{proposition}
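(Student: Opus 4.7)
The plan is to mirror the symbolic bookkeeping of the proof of Theorem \ref{thm:E-minus-B}, but iterate it through the infinite nested sequence of Lyndon intervals $J^{\cs_n}$, where $\cs_n := \s_1 \bullet \cdots \bullet \s_n$. The formula $\tau(\beta) = \lim_n \pi_\beta(\cs_n 0^\f)$ is already \eqref{eq:tau-beta-E-infinity}, quoted in the proof of Theorem \ref{thm:infinitely-Farey} from \cite[Proposition 6.3]{Allaart-Kong-2021}. I would set $t_n := \pi_\beta(\cs_n^\f)$ for each $n \ge 1$. Since $\cs_n$ is Lyndon, $\sigma^k(\cs_n^\f) \succeq \cs_n^\f$ for all $k$; and $\beta \in J^{\cs_{n+1}}$ gives $\alpha(\beta) \succ \L(\cs_n)^\f$, so $\cs_n^\f$ is a greedy $\beta$-expansion. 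Hence $b(t_n, \beta) = \cs_n^\f$ and $t_n \in \EE_\beta^+ = \EE_\beta$. Comparing $\cs_{n+1}^\f$ with $\cs_n^\f$ at position $|\cs_n|$ (where they differ because $\cs_{n+1} = \cs_n \bullet \s_{n+1}$ begins with $\cs_n^-$) shows $t_{n+1} < t_n$; comparing $\cs_n^\f$ with $\cs_n 0^\f$ shows $\tau(\beta) \le \pi_\beta(\cs_n 0^\f) < t_n$; and since $\cs_n^\f$ and $\cs_n 0^\f$ agree on their first $|\cs_n|$ digits, $t_n \searrow \tau(\beta)$.

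Having located all the claimed points, I would prove there are no others in $(\tau(\beta), 1)$ in two steps. First, for $t > t_1$, any $\z \in \mathcal{K}_\beta(t)$ satisfies $\s_1^\f \prec \sigma^n(\z) \prec \alpha(\beta) \lle \L(\cs_2)^+ \cs_2^\f$, so Lemma \ref{lem:equivalent-upper-bound} (with $\r = \s_1$, $\s = \s_2$) places $\z$ in $\Ga(\s_1)$; by Lemma \ref{lem:Farey-sandwich} this means $\z$ is a cyclic shift of $\s_1^\f$, but then some shift of $\z$ equals $\s_1^\f$, contradicting the strict lower bound. Hence $\mathcal{K}_\beta(t) = \emptyset$ for $t > t_1$. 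Second, fix $n \ge 1$ and $t \in (t_{n+1}, t_n)$; I would show $\mathcal{K}_\beta(t) = \mathcal{K}_\beta(t_n)$ by contradiction. If some $\z \in \mathcal{K}_\beta(t)$ has $\sigma^{k_0}(\z) \prec \cs_n^\f$, then $\sigma^{k_0}(\z) \succeq b(t, \beta) \succ \cs_{n+1}^\f$, and since $\cs_{n+1}^\f$ begins with $\cs_n^-$ while $\cs_n^\f$ begins with $\cs_n = \cs_n^- 1$, some shift of $\z$ begins with $\cs_n^-$. Lemma \ref{lem:four-blocks}, applied with $\r = \cs_n$, $\x = \s_{n+1}^\f$ and $\y = \alpha(\hat{\beta}_n)$, yields (after this inessential shift) $\z = \Phi_{\cs_n}(\hat{\z})$, and strict monotonicity of $\Phi_{\cs_n}$ translates the sandwich on $\z$ into $\s_{n+1}^\f \prec \sigma^j(\hat{\z}) \lle \L(\s_{n+1})^+ \s_{n+1}^\f$ for all $j$, where the upper bound uses $\alpha(\hat{\beta}_n) \lle \L(\s_{n+1})^+ \s_{n+1}^\f$, which follows from $\beta \in J^{\cs_{n+1}}$. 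Lemmas \ref{lem:equivalent-upper-bound} and \ref{lem:Farey-sandwich} again force $\hat{\z}$ to be a cyclic shift of $\s_{n+1}^\f$, which contradicts the strict lower bound $\sigma^j(\hat{\z}) \succ \s_{n+1}^\f$.

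The main obstacle is managing the renormalization bookkeeping at every level: one must verify that the sandwich bounds on $\z$ transport cleanly through $\Phi_{\cs_n}^{-1}$ into the required sandwich on $\hat{\z}$, and in particular that the upper bound $\alpha(\hat{\beta}_n) \lle \L(\s_{n+1})^+ \s_{n+1}^\f$ holds precisely because $\beta$ lies in $J^{\cs_{n+1}}$. Once this transport is in place, the Farey-sandwich trap of Lemmas \ref{lem:equivalent-upper-bound} and \ref{lem:Farey-sandwich} closes at every level, and combining (a) $\mathcal{K}_\beta(t) = \emptyset$ for $t > t_1$, (b) $K_\beta$ is constant on $(t_{n+1}, t_n]$ for each $n$, and (c) $t_n \searrow \tau(\beta)$ gives $\EE_\beta \cap (\tau(\beta), 1) = \{t_n : n \ge 1\}$, a countably infinite set listed in decreasing order, as claimed.
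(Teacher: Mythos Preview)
Your proposal is correct and follows exactly the approach the paper intends: the paper's own proof is just the one-line remark ``A simple modification of the proof of Theorem~\ref{thm:E-minus-B} yields the following,'' and your write-up is precisely that modification, carried out at every renormalization level.

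One small bookkeeping point: in the second step, the upper bound you cite, $\alpha(\hat\beta_n)\lle \L(\s_{n+1})^+\s_{n+1}^\f$ (from $\beta\in J^{\cs_{n+1}}$), is not literally of the form $\L(\r\bullet\s)^+(\r\bullet\s)^\f$ required by Lemma~\ref{lem:equivalent-upper-bound}. The cleanest fix is to use instead $\beta\in J^{\cs_{n+2}}$, which gives $\alpha(\hat\beta_n)\lle \L(\s_{n+1}\bullet\s_{n+2})^+(\s_{n+1}\bullet\s_{n+2})^\f$, so Lemma~\ref{lem:equivalent-upper-bound} applies with $\r=\s_{n+1}$ and $\s=\s_{n+2}$. (Alternatively, your bound together with the \emph{strict} lower bound already forces $\hat\z$ to avoid $\L(\s_{n+1})^+$, hence $\hat\z\in\Ga(\s_{n+1})$, so the argument goes through directly.)
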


Unfortunately, the $\beta$-shift $\Sigma_\beta$ is never sofic for $\beta\in E_\f$, as it is easy to check that $\al(\beta)$ cannot be eventually periodic. As a result, the subshifts $\Kt_\beta(t_R)$ are not all sofic, and therefore our argument from the previous section that $\EE_\beta\cap[0,\tau(\beta)]=\BB_\beta\cap[0,\tau(\beta)]$ is not valid. We suspect, but have been unable to prove, that this equation is nonetheless true, and hence $\EE_\beta\backslash\BB_\beta$ is countably infinite for $\beta\in E_\f$.

\section{Interiors of basic intervals: Construction of non-transitivity windows} \label{sec:basic-interiors}

We now begin to deal with the most complex case, which is when $\beta$ lies in the interior of a basic interval. Before developing the general theory, we give a concrete example which illustrates the main ideas.

\begin{example} \label{ex:non-transitivity-window}
Let $\beta$ be as in Example \ref{ex:non-dense-intervals} such that $\alpha(\beta)$ begins with $1110101100$, and set $\w=01010111$. For $t=t_R$ given by $b(t_R,\beta)=\w^\f$, the subshift $\Kt_\beta(t)$ is not transitive: Note that, in fact,
\[
\Kt_\beta(t_R)=\{\z\in\{0,1\}^\N: (01010111)^\f\lle \si^n(\z)\lle (11101010)^\f\ \forall\,n\geq 0\},
\]
so the only sequence in $\Kt_\beta(t)$ beginning with $111$ is $(11101010)^\f$, and the legal word $111$ cannot be connected to, for instance, the legal word $011011$. On the other hand, disallowing the word $111$ removes only countably many sequences from $\Kt_\beta(t_R)$, so the subshift
\[
\mathcal{K}_\beta'(t):=\{\z\in\Kt_\beta(t): \z\ \mbox{does not contain $111$}\}
\]
has full entropy in $\Kt_\beta(t)$. Note that $\mathcal{K}_\beta'(t)$ consists precisely of those sequences of the form
\[
(110)^{k_1}10(110)^{k_2}10(110)^{k_3}10\dots, \qquad k_i\geq 1\ \forall i,
\]
together with their orbits under $\si$, and is clearly transitive. Now let $\w^*=01011$ and let $t^*$ be given by $b(t^*,\beta)=(\w^*)^\f$. It is easy to see that $\Kt_\beta(t^*)=\mathcal{K}_\beta'(t)$. Hence,
\[
h\big(\Kt_\beta(t)\big)=h\big(\mathcal{K}_\beta'(t)\big)=h\big(\Kt_\beta(t^*)\big).
\]
Observe that $\w^*$ is the lexicographically smallest $\beta$-Lyndon word greater than $\w$. Note also that $\Kt_\beta(t_R)$ does {\em not} have a transitive subshift of full entropy containing the sequence $\w^\f$.

Developing the example a bit further, we set $\w_k:=0101(011)^k0111$ for $k=0,1,2,\dots$, and let $t_k$ be the point given by $b(t_k,\beta)=\w_k^\f$. As above it can be shown that $\Kt_\beta(t_k)$ is not transitive, though here there are more sequences in $\Kt_\beta(t_k)$ beginning with $111$. With a bit more effort, it can be shown that $h\big(\Kt_\beta(t_k)\big)=h\big(\Kt_\beta(t^*)\big)$. Observe that $t_k\searrow t_\f$, where $t_\f$ is given by $b(t_\f,\beta)=0101(011)^\f$. By the continuity of $t\mapsto h(\Kt_\beta(t))$, it follows that $h\big(\Kt_\beta(t_\f)\big)=h\big(\Kt_\beta(t^*)\big)$.

We call the interval $[t_\f,t^*)$ a {\em non-transitivity window}. The assertions about entropy in the last paragraph will follow from Proposition \ref{prop:entropy-constant-in-window} below.
\end{example}

We saw in Section \ref{sec:beta-in-E} that if $\beta\in E_L$, then $\Kt_\beta(t_R)$ is transitive for any $\beta$-Lyndon interval $[t_L,t_R]\subseteq [0,\tau(\beta)]$. On the other hand, the above example shows that this need no longer be the case when $\beta$ lies in the interior of a basic interval. We will now examine the question of transitivity for the interior of basic intervals in greater detail. We will show that, depending on $\beta$, there is a collection of intervals which we call {\em non-transitivity windows}, such that $\Kt_\beta(t_R)$ is transitive if and only if $t_R$ does not lie in any of these intervals. The number of non-transitivity windows can be zero, positive and finite, or infinite, depending on $\beta$.

We first prove a simple but useful lemma.

\begin{lemma} \label{lem:no-beta-Lyndon-extension}
If a word $\v$ is Lyndon but not $\beta$-Lyndon, then no word extending $\v$ is $\beta$-Lyndon.
\end{lemma}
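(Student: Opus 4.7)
The plan is to argue by contradiction: I assume $\w = \v\u$ extends $\v$ and is $\beta$-Lyndon, and aim to derive a contradiction. Since $\v$ is Lyndon but not $\beta$-Lyndon, the characterization of $\beta$-Lyndon words via Lemma \ref{lem:greedy-expansion} yields a shift $n_1\in\{1,\dots,|\v|-1\}$ such that $\si^{n_1}(\v^\f) = \L(\v)^\f \lge \al(\beta)$, where $\L(\v)^\f$ is the maximal cyclic shift of $\v^\f$.

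I would then examine the specific shift $\si^{n_1}(\w^\f)$. Writing $m:=|\v|$, both $\si^{n_1}(\w^\f)$ and $\L(\v)^\f$ begin with $v_{n_1+1}\cdots v_m$, so they agree on the first $m - n_1$ digits. Beyond position $m-n_1$, $\si^{n_1}(\w^\f)$ continues with $(\u\v)^\f$, while $\L(\v)^\f$ continues with $\v^\f$. Hence it will suffice to show $(\u\v)^\f \lge \v^\f$: combined with $\L(\v)^\f \lge \al(\beta)$, this forces $\si^{n_1}(\w^\f) \lge \al(\beta)$, contradicting that $\w$ is $\beta$-Lyndon.

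The core technical step is therefore to establish $(\u\v)^\f \succ \v^\f$ (in fact, strictly) using only that $\w$ is Lyndon. Because $\w = \v\u$ is Lyndon and aperiodic, its non-trivial cyclic shift $\u\v$ strictly exceeds $\v\u$ as finite words of length $m+|\u|$. Letting $i^*$ be the first index at which $\v\u$ and $\u\v$ disagree, we have $(\v\u)_{i^*} < (\u\v)_{i^*}$ with $(\v\u)_i = (\u\v)_i$ for $i<i^*$. In the easy case $i^*\le\min(m,|\u|)$ this reads $v_{i^*} = 0 < 1 = u_{i^*}$ together with $v_j = u_j$ for $j<i^*$, from which $(\u\v)^\f \succ \v^\f$ at position $i^*$ follows at once.

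The main obstacle will be the remaining cases $i^*>\min(m,|\u|)$, where the prior agreement forces $\u$ to coincide with a prefix of $\v$, or with several consecutive copies of $\v$, depending on whether $|\u|<|\v|$ or $|\u|\ge|\v|$. Here the aperiodicity of the Lyndon word $\v$ is crucial: it rules out $i^* > \max(m,|\u|)$ (which would force $\v$ to have a period strictly less than $m$) and, on the relevant range, converts the pattern of agreement into the identity $u_j = v_{((j-1)\bmod m)+1}$. A careful bookkeeping of indices then translates the strict inequality $(\v\u)_{i^*} < (\u\v)_{i^*}$ into $\v^\f_{i^*} < (\u\v)^\f_{i^*}$, establishing the desired strict inequality. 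The degenerate possibility $(\u\v)^\f = \v^\f$ is excluded as well, since it would make $\w$ periodic with period $\gcd(|\w|,m)<|\w|$, again contradicting the Lyndon hypothesis on $\w$.
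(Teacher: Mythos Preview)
Your approach is correct and genuinely different from the paper's. The paper fixes an index $j$ with $v_{j+1}\dots v_k v_1\dots v_j\lge\al_1\dots\al_k$ and then, writing $l=qk+r$, uses the Lyndon property of $\w$ to show $v_{j+1}\dots v_l\succ v_{j+1}\dots v_k(v_1\dots v_k)^{q-1}v_1\dots v_r$, from which it deduces $\si^j(\w^\f)\lge\al(\beta)$ directly. You instead isolate the purely combinatorial inequality $(\u\v)^\f\succ\v^\f$ (valid whenever $\v$ and $\w=\v\u$ are both Lyndon with $|\u|\ge 1$) and only bring in $\al(\beta)$ at the very end via $\si^{n_1}(\w^\f)=v_{n_1+1}\dots v_m(\u\v)^\f\succ v_{n_1+1}\dots v_m\,\v^\f=\L(\v)^\f\lge\al(\beta)$. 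This is cleaner conceptually, at the cost of a slightly more delicate first-difference analysis.

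One small correction to your case-splitting: the claim that $i^*>\max(m,|\u|)$ ``forces $\v$ to have a period strictly less than $m$'' is accurate when $|\u|\le m$, but not literally when $|\u|>m$. In that regime, $i^*>|\u|$ only gives that $\u$ is the length-$|\u|$ prefix of $\v^\f$, i.e.\ $\u=\v^q v_1\dots v_r$; this does not force a short period of $\v$ unless $i^*>(q+1)m$. The case is nonetheless excluded, because $\w=\v^{q+1}v_1\dots v_r$ is never Lyndon: the cyclic shift by $qm+r$ places $v_1\dots v_r$ in front and then continues with $\v$, which beats $\w$ at the first index where $v_{r+1}\dots v_m\succ v_1\dots v_{m-r}$. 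Alternatively, your bookkeeping identity $u_j=v_{((j-1)\bmod m)+1}$ together with $\v^\f_{i^*}=(\v\u)_{i^*}<(\u\v)_{i^*}=(\u\v)^\f_{i^*}$ continues to hold verbatim for $|\u|<i^*\le|\u|+m$, so you can simply drop the upper bound $\max(m,|\u|)$ and run the same index computation throughout.
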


\begin{proof}
Suppose $\v=v_1\dots v_k$ is Lyndon but not $\beta$-Lyndon, so there is an integer $j<k$ such that
\begin{equation} \label{eq:big-cyclic-permutation}
{v_{j+1}\dots v_k (v_1\dots v_k)^\f\lge \al(\beta).}
\end{equation}
Let $\w=v_1\dots v_k v_{k+1}\dots v_l$ be an extension of $\v$. Assume $\w$ is Lyndon, as otherwise there is nothing to prove. Write $l=qk+r$, where $q\in\N$ and $0\leq r<k$. Since $\w$ is Lyndon, $v_{ik+1}\dots v_{(i+1)k}\lge v_1\dots v_k$ for all $1\leq i<q$, with strict inequality for $i=q-1$ in case $r=0$; and $v_{qk+1}\dots v_l=v_{qk+1}\dots v_{qk+r}\succ v_1\dots v_r$ if $r>0$. Thus, with $j$ as in \eqref{eq:big-cyclic-permutation},
\begin{equation*} %\label{eq:w-dominates-v-repeated}
v_{j+1}\dots v_l\succ v_{j+1}\dots v_k(v_1\dots v_k)^{q-1}v_1\dots v_r,
\end{equation*}
{and this implies
\[
v_{j+1}\dots v_l (v_1\dots v_l)^\f\succ v_{j+1}\dots v_l 0^\f\succ v_{j+1}\dots v_k (v_1\dots v_k)^\f\lge \al(\beta),
\]
%(This holds also if $r=0$, since in that case $v_{(q-1)k+1}\dots v_{qk}\succ v_1\dots v_k$ by the Lyndon property of $\w$.)
%If $q\geq 2$ or $r\geq j$, this gives
%\[
%(v_{j+1}\dots v_l v_1\dots v_j)^\f\succ v_{j+1}\dots v_l 0^\f\succ (v_{j+1}\dots v_k v_1\dots v_j)^\f\lge\al(\beta).
%\]
%On the other hand, if $q=1$ and $r<j$, then $l-k=r<j$ and hence
%\[
%v_{j+1}\dots v_l\succ v_{j+1}\dots v_k v_1\dots v_{l-k}\lge \al_1\dots\al_{l-j},
%\]
using \eqref{eq:big-cyclic-permutation}. Hence, $\w$ is not $\beta$-Lyndon.}
\end{proof}

In the sequel, it is convenient to extend the lexicographical order to compare words of different lengths: If $\v=v_1\dots v_m$ and $\w=w_1\dots w_n$ with $m<n$, we write $\v\prec\w$ (and say $\v$ is {\em smaller} than $\w$) if $v_1\dots v_m\lle w_1\dots w_m$. Otherwise, we write $\v\succ\w$ (and say $\v$ is {\em larger} than $\w$). Note in particular, that $\v\prec\w$ if $\v$ is a proper prefix of $\w$.

\begin{lemma} \label{lem:repeating-preserves-order}
If $\v$ and $\w$ are Lyndon words and $\v\prec\w$, then $\v^\f\prec \w^\f$.
\end{lemma}

\begin{proof}
This is non-trivial only in the case when one of the words extends the other, say $\v=v_1\dots v_m$ and $\w=w_1\dots w_n=v_1\dots v_m w_{m+1}\dots w_n$, where $n>m$. Write $n=qm+r$, where $q$ and $r$ are integers with $q\geq 1$ and $1\leq r\leq m$. Then, using the Lyndon property of $\w$,
\[
w_{pm+1}\dots w_{(p+1)m}\lge w_1\dots w_m=v_1\dots v_m \qquad \mbox{for $1\leq p<q$},
\]
and $w_{qm+1}\dots w_n\succ w_1\dots w_r=v_1\dots v_r$, from which the assertion follows.
\end{proof}

Now we fix a basic interval $[\beta_\ell,\beta_*]=[\beta_\ell^{\cs},\beta_*^{\cs}]$ generated by a word $\cs\in\La$, and fix $\beta\in(\beta_\ell,\beta_*)$. We shall repeatedly use the following fact.

\begin{lemma} \label{lem:existence-of-v-star}
Let $\v$ be a Lyndon word such that $\v\lle \cs$. Then there exists a (lexicographically) smallest $\beta$-Lyndon word $\v^*$ satisfying $\v^*\lge \v$. Furthermore, $\v^*\lle \cs$, and if $\v$ is not $\beta$-Lyndon, then $|\v^*|<|\v|$.
\end{lemma}

\begin{proof}
If $\v$ is already $\beta$-Lyndon, then we simply set $\v^*=\v$ and there is nothing to prove. So assume $\v$ is not $\beta$-Lyndon.

Note first that, since $\al(\beta)$ begins with $\L(\cs)^+$, $\cs$ is $\beta$-Lyndon. Hence there is at least one $\beta$-Lyndon word greater than or equal to $\v$. 

We next show that, if $\w$ is a $\beta$-Lyndon word with $\w\succ\v$ and $|\w|\geq |\v|$, then some proper prefix of $\w$ of length less than $|\v|$ is already $\beta$-Lyndon. Write $\v=v_1\dots v_m$ and $\w=w_1\dots w_n$. By Lemma \ref{lem:no-beta-Lyndon-extension}, $\w$ cannot be an extension of $\v$, and hence, $w_1\dots w_m\succ v_1\dots v_m$. Let $l$ be the smallest index such that $w_l>v_l$; then {$\w':=w_1\dots w_l$ is Lyndon because $\v$ is Lyndon. Furthermore, $\w'$ is $\beta$-Lyndon by Lemma \ref{lem:no-beta-Lyndon-extension}. This also implies $l<m$: If $l=m$ then, since $\v$ is not $\beta$-lyndon, $\w'$ would certainly not be $\beta$-Lyndon.}

Since there are only finitely many $\beta$-Lyndon words $\w'$ satisfying $\w'\succ\v$ and $|\w'|<|\v|$, a smallest exists among them. This is our $\v^*$. Since $\v\lle\cs$, $\cs$ is $\beta$-Lyndon and $\v^*$ is the smallest $\beta$-Lyndon word greater than or equal to $\v$, it follows that $\v^*\lle \cs$.
\end{proof}

We will also use the following notation: For a word $\u=u_1\dots u_n$ or sequence $\u=u_1u_2\dots$, we write $\u_{i:j}:=u_i\dots u_j$.

Recall from \cite[Theorem 2]{Allaart-Kong-2021} {and \cite[Theorem 3.4]{Allaart-Kong-2024}} that
\[
\tau(\beta)=\pi_\beta\big(\cs^-\L(\cs)^\f\big).
\]
Assume first that $\al(\beta)=\al_1\al_2\dots$ is not periodic. We will inductively build an ordered collection $\II$ of intervals in $[0,\tau(\beta)]$ as follows.

To begin, set $j_1:=|\cs|$. Then {$\al_1\ldots \al_{j_1}=\L(\cs)^+$, and} $\si^{j_1}(\al(\beta))\prec \si^{j_1}(\al(\beta_*))=\cs^-\L(\cs)^\f$. If $\si^{j_1+n}(\al(\beta))\succ \si^{j_1}(\al(\beta))$ for all $n\geq 1$, then we set $\II:=\emptyset$. Otherwise, let
\[
l_1:=\min\{l\geq 1: \si^{j_1+l}(\al(\beta))\lle \si^{j_1}(\al(\beta))\}.
\]
Then we set $\v_1:=\al_{j_1+1}\dots\al_{j_1+l_1}$, and note that $\v_1$ is Lyndon (though not necessarily $\beta$-Lyndon). To see this, take $1\leq i<l_1$ and observe by the definition of $l_1$ that
\[
\si^{j_1+l_1}(\al(\beta)) \lle \si^{j_1}(\al(\beta)) \prec \si^{j_1+l_1-i}(\al(\beta)).
\]
Thus, by the minimality of $l_1$,
%suppose to the contrary that for some $1\leq i<l_1$ we have
\begin{align*}
\al_{j_1+1}\dots \al_{j_1+l_1-i} \si^{j_1+l_1-i}(\al(\beta)) &=\si^{j_1}(\al(\beta)) \prec \si^{j_1+i}(\al(\beta))\\
&= \al_{j_1+i+1}\dots\al_{j_1+l_1}\si^{j_1+l_1}(\al(\beta)) \\
&\prec \al_{j_1+i+1}\dots\al_{j_1+l_1} \si^{j_1+l_1-i}(\al(\beta)),
\end{align*}
and this implies $\al_{j_1+1}\dots \al_{j_1+l_1-i}\prec\al_{j_1+i+1}\dots\al_{{j_1+l_1}}$.

We define an interval $I_1=[\underline{t}_1,\overline{t}_1)$, where $\underline{t}_1$ and $\overline{t}_1$ are given implicitly by
\[
b(\underline{t}_1,\beta)=\v_1^-(\al_1\dots\al_{j_1}^-)^\f=\v_1^-\L(\cs)^\f, \qquad b(\overline{t}_1,\beta)=(\v_1^*)^\f,
\]
where $\v_1^*$ is the (lexicographically) smallest $\beta$-Lyndon word greater than or equal to $\v_1$. Since
\[
\v_1=\al_{j_1+1}\dots\al_{j_1+l_1}\lle (\cs^-\L(\cs)^\f)_{1:l_1}\prec \cs,
\]
$\v_1^*$ exists by Lemma \ref{lem:existence-of-v-star}. Furthermore, $\v_1^*=\v_1$ if $\v_1$ is already $\beta$-Lyndon, and $|\v_1^*|<|\v_1|$ otherwise.
% $\v_1^*$ is the (lexicographically) smallest $\beta$-Lyndon word such that $(\v_1^*)^\f \lge \v_1^\f$.
%(In particular, $\v_1^*=\v_1$ if $\v_1$ is already $\beta$-Lyndon.) Note by Lemma \ref{lem:existence-of-v-star} that if $\v_1$ is not $\beta$-Lyndon, then $\v_1^*$ is well defined and $|\v_1^*|<|\v_1|$. 
We put the interval $I_1$ in the collection $\II$.

Next, we set $j_2:=j_1+l_1$ and continue the procedure in the same way. That is, if $\si^{j_2+n}(\al(\beta))\succ \si^{j_2}(\al(\beta))$ for all $n\geq 1$, then we set $\II=(I_1)$ and stop. Otherwise, let
\[
l_2:=\min\{l\geq 1: \si^{j_2+l}(\al(\beta))\lle \si^{j_2}(\al(\beta))\},
\]
and set $\v_2:=\al_{j_2+1}\dots\al_{j_2+l_2}$. Observe that $\v_2$ is Lyndon by the same argument that we used above for $\v_1$. We define an interval $I_2=[\underline{t}_2,\overline{t}_2)$, where $\underline{t}_2$ and $\overline{t}_2$ are given implicitly by
\[
b(\underline{t}_2,\beta)=\v_2^-(\al_1\dots\al_{j_2}^-)^\f=\v_2^-({\L(\cs)^+}\v_1^-)^\f, \qquad b(\overline{t}_2,\beta)=(\v_2^*)^\f,
\]
where $\v_2^*$ is the (lexicographically) smallest $\beta$-Lyndon word greater than or equal to $\v_2$. Again, $\v_2^*$ exists by Lemma \ref{lem:existence-of-v-star}, since
\[
\v_2=\al_{j_1+l_1+1}\dots\al_{j_1+l_1+l_2}\lle \al_{j_1+1}\dots\al_{j_1+l_2}\lle (\cs^-\L(\cs)^\f)_{1:l_2}\prec \cs,
\]
where in the first inequality we used the definition of $l_1$.
%where $\v_2^*$ is the smallest $\beta$-Lyndon word such that $(\v_2^*)^\f \lge \v_2^\f$.
We add $I_2$ to the collection $\II$.

This way we continue: Suppose integers $j_1,\dots,j_k$ and $l_1,\dots,l_k$, words $\v_1,\dots,\v_k$ and intervals $I_1=[\underline{t}_1,\overline{t}_1),\dots,I_k=[\underline{t}_k,\overline{t}_k)$ have been constructed, and set $j_{k+1}:=j_k+l_k$. If $\si^{j_{k+1}+n}(\al(\beta))\succ \si^{j_{k+1}}(\al(\beta))$ for all $n\geq 1$, then we set $\II=(I_1,\dots,I_k)$ and stop. Otherwise, let
\[
l_{k+1}:=\min\{l\geq 1: \si^{j_{k+1}+l}(\al(\beta))\lle \si^{j_{k+1}}(\al(\beta))\},
\]
and set $\v_{k+1}:=\al_{j_{k+1}+1}\dots\al_{j_{k+1}+l_{k+1}}$. We define an interval $I_{k+1}=[\underline{t}_{k+1},\overline{t}_{k+1})$, where $\underline{t}_{k+1}$ and $\overline{t}_{k+1}$ are given implicitly by
\begin{align*}
b(\underline{t}_{k+1},\beta)&=\v_{k+1}^-(\al_1\dots\al_{j_{k+1}}^-)^\f=\v_{k+1}^-\big({\L(\cs)^+}\v_1\dots\v_{k-1}\v_k^-\big)^\f,\\
b(\overline{t}_{k+1},\beta)&=(\v_{k+1}^*)^\f,
\end{align*}
where $\v_{k+1}^*$ is the smallest $\beta$-Lyndon word greater than or equal to $\v_{k+1}$.
However, in the special case when $\v_{k+1}$ is $\beta$-Lyndon and $\si^{j_{k+1}}(\al(\beta))=\v_{k+1}^\f$, we take the interval $I_{k+1}$ to be closed, i.e. $I_{k+1}=[\underline{t}_{k+1},\overline{t}_{k+1}]$. We add $I_{k+1}$ to the collection $\II$.

This procedure either eventually stops, with a finite collection $\II=(I_1,\dots,I_{k_0})$, or it continues ad infinitum, producing an infinite sequence $\II=(I_1,I_2,\dots)$ of intervals. Note that if $I_k$ is closed for some $k$, then $\v_l=\v_k$ for all $l>k$ and hence $I_l\subseteq I_k$ for all $l>k$.

%For convenience, if the process stops after stage $k_0$, we set $\v_k:=\v_{k_0}$ and $I_k:=I_{k_0}$ for all $k>k_0$.

In case $\alpha(\beta)$ is periodic, i.e. $\alpha(\beta)=(\al_1\dots\al_m)^\f$ for some {minimal $m\in\N$, then $\al_m{<M_\beta}$ and} we replace $(\al_i)$ in the above procedure with the greedy expansion of $1$, that is, the sequence $(\al_i'):=\al_1\dots{\al_m^+}0^\f$. The inductive step needs to be slightly modified: If $\si^{j_{k+1}}((\al_i'))=0^\f$, we leave $l_{k+1}$ undefined and stop the construction with the finite set $\II=(I_1,\dots,I_k)$. We now verify that in this case, too, $\v_k\prec\cs$ for all $k$ and so $\v_k^*$ is well defined by Lemma \ref{lem:existence-of-v-star}. 

{\em Case 1.} $k=1$. If the period of $\al(\beta)$ is greater than $j_1+l_1$, then $\al(\beta)$ begins with $\L(\cs)^+\v_1$ and the argument is the same as before. Otherwise, we have $\al(\beta)=(\L(\cs)^+\v_1^-)^\f$. Since $\beta<\beta_*^\cs$, this gives
\[
\cs^-\L(\cs)^\f\succ \si^{j_1}(\al(\beta))=(\v_1^-\L(\cs)^+)^\f.
\]
This implies $\v_1^-\lle (\cs^-\L(\cs)^\f)_{1:l_1}$. But if this holds with equality, then we obtain
\[
\si^{l_1}(\cs^-\L(\cs)^\f)\succ (\L(\cs)^+\v_1^-)^\f,
\]
which is impossible since $\cs^-\L(\cs)^\f$ does not contain the word $\L(\cs)^+$. Hence, we have
\begin{equation} \label{eq:v1-small}
\v_1\lle (\cs^-\L(\cs)^\f)_{1:l_1},
\end{equation}
and this implies $\v_1\prec\cs$.

{\em Case 2.} $k\geq 2$. Here the situation is more straightforward: by the construction of $\v_k$, we have
\begin{equation} \label{eq:v_k-small}
\v_k=\al_{j_k+1}'\dots\al_{j_k+l_k}'\lle \al_{j_1+1}'\dots\al_{j_1+l_k}'=\al_{j_1+1}\dots\al_{j_1+l_k}\lle (\cs^-\L(\cs)^\f)_{1:l_k}\prec \cs.
\end{equation}

The following special case of Theorem \ref{thm:general-transitivity} is our main result in this section.

\begin{theorem} \label{thm:basic-interval-transitivity}
Let $\beta\in(\beta_\ell,\beta_*)$ for some basic interval $[\beta_\ell,\beta_*]=[\beta_\ell^\cs,\beta_*^\cs]$, where $\cs\in\La_1={F_e}$, and construct the collection $\II$ of intervals as above. Let $[t_L,t_R]\subseteq [0,\tau(\beta)]$ be a $\beta$-Lyndon interval. Then $\Kt_\beta(t_R)$ is transitive if and only if $t_R\not\in \bigcup_{I\in\II}I$. Furthermore, the entropy of $\Kt_\beta(t)$ is constant on each interval $I\in\II$.
\end{theorem}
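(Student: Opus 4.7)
The plan is to separate the theorem into three independent assertions and prove each one via a careful combinatorial analysis of the words $\v_k$ extracted from $\alpha(\beta)$: (a) non-transitivity of $\Kt_\beta(t_R)$ whenever $t_R \in \bigcup_{I\in\II} I$; (b) transitivity whenever $t_R \notin \bigcup_{I\in\II} I$; (c) constancy of the entropy on each $I\in\II$.

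For (a), I would fix $t_R \in I_k$ and exhibit a legal word that cannot be linked to bulk sequences. The natural candidate is $\u := \alpha_1\dots\alpha_{j_k}$, which belongs to $\cL(\Kt_\beta(t_R))$ because $\alpha(\beta) \in \Kt_\beta(t_R)$. The defining minimality of $n_k$ means that $\si^{j_k+m}(\alpha(\beta)) \succ \si^{j_k}(\alpha(\beta))$ for $1 \le m < n_k$ while $\si^{j_k+n_k}(\alpha(\beta)) \lle \si^{j_k}(\alpha(\beta))$; combined with the upper-bound constraint $\si^n(\z) \lle \alpha(\beta)$, this forces any continuation of $\u$ inside $\Kt_\beta(t_R)$ to track $\alpha(\beta)$ block by block, with each appearance of $\v_k$ followed by a copy of the tail $\alpha_{j_k+1}\alpha_{j_k+2}\dots$\,. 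The lower-bound constraint $\sigma^n(\z) \succeq b(t_R,\beta) \succeq \v_k^-(\L(\cs)\v_1\dots\v_{k-1})^\f$ then shows that the only admissible continuations are shifts of $\alpha(\beta)$ itself, hence a countable set that cannot accommodate an arbitrary $\z \in \Kt_\beta(t_R)$ (for instance $\z = b(t_R,\beta)$). The boundary case when $I_k$ is closed, i.e.\ $\si^{j_k}(\alpha(\beta)) = \v_k^\f$, requires an extra check: the only legal continuation of $\u$ is then purely periodic of period $\v_k$, which still cannot reach a generic $\z$.

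For (b), I would proceed in the spirit of Lemma \ref{lem:transitivity-simplified}, establishing a variant of Property~TL adapted to the interior of basic intervals. Given $\u \in \cL(\Kt_\beta(t_R))$ and $\z \in \Kt_\beta(t_R)$, I would first use Lemma \ref{lem:begin-with-s} to extend $\z$ to the left so that it begins with $\L(\cs)$, and append to $\u$ a controlled prefix of $\alpha(\beta)$ ending in a cyclic shift of $\cs$. The condition $t_R \notin \bigcup I$ guarantees that whenever the construction reaches the level $j_k$, the next return time $n_k$ either does not exist or the resulting $\v_k^*$-interval is avoided by $b(t_R,\beta)$; in either case one gains the freedom to insert an appropriate power of $\L(\cs)$ followed by a short bridging word built from the $\v_i$'s. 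The bookkeeping is essentially a refinement of the argument in the proof of Theorem \ref{thm:basic-interval-right-endpoint}, but now one must track how far the construction has climbed into the tail of $\alpha(\beta)$ at each step.

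For (c), the strategy is to show $h(\Kt_\beta(\underline{t}_k)) = \lim_{t \nearrow \overline{t}_k} h(\Kt_\beta(t))$. The rigidity analysis from part (a) applies here as well: for any $t \in I_k$, the set $\Kt_\beta(\underline{t}_k) \setminus \Kt_\beta(t)$ consists of sequences whose every orbit segment of length $j_k$ coinciding with $\alpha_1\dots\alpha_{j_k}$ is followed by the forced $\alpha(\beta)$-tail, so this difference is contained in a countable union of shifts of $\alpha(\beta)$-like sequences and has zero entropy. Combined with the fact that $t \mapsto h(\Kt_\beta(t))$ is non-increasing and continuous (the latter following from the continuity of $t \mapsto \dim_H K_\beta(t)$ established in \cite{Kalle-Kong-Langeveld-Li-18} together with \eqref{eq:Hausdorff-dimension-preserved}), constancy of entropy on $I_k$ follows.

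The main obstacle will be the combinatorial bookkeeping in part (a): one must verify that the inequality chain $\si^n(\z) \lle \alpha(\beta)$ really propagates the forcing through every block, that no additional freedom arises from the interaction between the blocks $\v_1,\dots,\v_{k-1}$ appearing inside the lower bound $b(\underline{t}_k,\beta)$, and that when $I_k$ is closed the single legal periodic continuation of $\u$ still falls outside the target neighborhood of a generic $\z$. These verifications feed directly into part (c), so a clean formulation of the rigidity principle—probably as a standalone lemma asserting that the subshift $\{\z \in \Kt_\beta(\underline{t}_k) : \z \text{ contains } \alpha_1\dots\alpha_{j_k}\}$ is countable up to the cofinal behavior controlled by $\v_k^*$—is the central technical step that will unify all three parts of the proof.
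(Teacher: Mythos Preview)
Your three-part decomposition matches the paper's structure exactly. However, the central ``rigidity principle'' you propose is false, and this undermines both parts (a) and (c).

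For part (a), you claim that the admissible continuations of $\u=\al_1\dots\al_{j_k}$ in $\Kt_\beta(t_R)$ are ``shifts of $\al(\beta)$ itself, hence a countable set.'' This is not so: the upper and lower constraints only force any sequence in $\Kt_\beta(t_R)$ beginning with $\al_1\dots\al_{j_k}$ to have the form
\[
\al_1\dots\al_{j_k}\,\v_k^{p_1}\v_k^-(\al_1\dots\al_{j_k}^-)^{q_1}\,\al_1\dots\al_{j_k}\,\v_k^{p_2}\v_k^-(\al_1\dots\al_{j_k}^-)^{q_2}\dots
\]
with the exponents $p_i,q_i\ge 0$ essentially free --- an uncountable family of positive entropy (indeed, this is exactly how the paper later shows $\dim_H(\EE_\beta\cap I_k)>0$). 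The correct argument for non-transitivity is different: no sequence of the displayed form can end in $\cs^\f$, because $\v_k^\f\prec\cs^\f$ and (for $k>1$) the block $\al_1\dots\al_{j_k}^-$ begins with $\L(\cs)^+$; yet $\cs^\f$ is always a legal element of $\Kt_\beta(t_R)$. So the obstruction is a specific unreachable target, not scarcity of continuations.

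The same error invalidates your approach to (c): the difference $\Kt_\beta(\underline t_k)\setminus\Kt_\beta(\overline t_k)$ does \emph{not} have zero entropy, so your proposed standalone countability lemma is false as stated. The paper handles entropy constancy by a genuinely different idea (via the EBLI machinery): the ``extra'' subshift $\mathcal X$ of sequences of the displayed form is compared against the subshift $\mathcal Y\subseteq\Kt_\beta(\overline t_k)$ consisting of all free concatenations of the two $\beta$-Lyndon words $\cs$ and $\v_k^*$. A block-length comparison, using $|\cs|\le j_k$, gives $h(\mathcal X)\le h(\mathcal Y)$, whence
\[
h(\Kt_\beta(\underline t_k))\le\max\{h(\Kt_\beta(\overline t_k)),h(\mathcal X)\}\le\max\{h(\Kt_\beta(\overline t_k)),h(\mathcal Y)\}=h(\Kt_\beta(\overline t_k)).
\]
This entropy-comparison trick is the missing idea in your proposal; without it, part (c) cannot be completed along the lines you sketch.
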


The situation for $\cs\in\La_k$ with $k\geq 2$ is a bit more involved, and we deal with it later, in Section \ref{sec:higher-order-basic}. However, it is convenient to develop the necessary machinery in greater generality here.

Before proving the theorem, which shows the significance of the words $\v_k$ and intervals $I_k$, we first give some examples.

\begin{example} \label{ex:v_k}
Let $\cs=011$, so $\cs\in\F$ and the basic interval $[\beta_\ell^{\cs},\beta_*^{\cs}]$ is given by
\[
\al(\beta_\ell^{\cs})=\L(\cs)^\f=(110)^\f, \qquad \al(\beta_*^{\cs})=\L(\cs)^+\cs^-\L(\cs)^\f=111\,010\,(110)^\f.
\]
\begin{enumerate}[(a)]
\item Suppose $\al(\beta)=111\,01\,00110111\,(001)^\f$. Then $\v_1=01$, $\v_2=00110111$, and $\v_k=001$ for all $k\geq 3$. Since each $\v_k$ is $\beta$-Lyndon, we have $\v_k^*=\v_k$ for each $k$. Now our construction above gives rise to the intervals $I_1=[\underline{t}_1,\overline{t}_1), I_2=[\underline{t}_2,\overline{t}_2), I_3=[\underline{t}_2,\overline{t}_3],\dots$ given by
\begin{alignat*}{2}
b(\underline{t}_1,\beta)&=\v_1^-\L(\cs)^\f=00\,(110)^\f, & \quad b(\overline{t}_1,\beta)&=\v_1^\f=(01)^\f,\\
b(\underline{t}_1,\beta)&=\v_2^-\big(\L(\cs)^+\v_1^-\big)^\f=00110110\,(11100)^\f, & \quad b(\overline{t}_2,\beta)&=\v_2^\f=(00110111)^\f,\\
b(\underline{t}_3,\beta)&=\v_3^-\big(\L(\cs)^+\v_1\v_2^-\big)^\f=000\,(111\,01\,00110110)^\f, & \quad b(\overline{t}_3,\beta)&=\v_3^\f=(001)^\f,
\end{alignat*}
and for $k\geq 4$,
\begin{align*}
b(\underline{t}_k,\beta)&=\v_k^-\big(\L(\cs)^+\v_1\dots\v_{k-1}^-\big)^\f=000\,\big(111\,01\,00110111\,(001)^{k-4}\,000\big)^\f,\\
b(\overline{t}_k,\beta)&=\v_k^\f=(001)^\f.
\end{align*}
{(By our construction, we take $I_k=[\underline{t}_k,\overline{t}_k]$ to be closed for $k\geq 3$.)}
Observe that $I_2\subseteq I_1$, $I_3$ lies to the left of $I_2$, and $I_{k+1}\subseteq I_k$ for all $k\geq 3$. 
%{\color{red}[Do we need to take $I_k$ to be closed for $k\ge 3$? Since $\v_k\in L^*(\beta)$ and $\sigma^{j_k}(\al(\beta))=\v_k^\f$ for all $k\ge 1$, according to the construction we should take $I_k=[\underline{t}_k, \overline{t}_k]$.]} {Yes, indeed. Good catch!}
\item Suppose $\al(\beta)=111\,01\,001(01)^\f$. Then $\v_1=01$, and the procedure stops after one step, giving rise to just one interval $I_1=[\underline{t}_1,\overline{t}_1)$, which is the same as in (a).
\item More extremely, let $\al(\beta)=111\,001(01)^\f$. Then $\si^{j_1+n}(\al(\beta))\succ \si^{j_1}(\al(\beta))$ for all $n\geq 1$ (where $j_1=|\cs|=3$), and hence we obtain the empty collection $\II=\emptyset$.
\item Let $\al(\beta)=111\,01\,001\,0001\dots$. Then $\v_k=0^k1$ for $k\in\N$, and we get an infinite collection $\mathcal{I}=(I_1,I_2,\dots)$ of pairwise disjoint intervals, with left endpoints given by
\begin{gather*}
b(\underline{t}_1,\beta)=00\,(110)^\f, \qquad b(\underline{t}_2,\beta)=000\,(11100)^\f,\\
b(\underline{t}_k,\beta)=0^{k+1}\,\big(111\,01\,001\,\dots\,0^{k-2}1\,0^k\big)^\f \qquad (k\geq 3),
\end{gather*}
and right endpoints given by
\[
b(\overline{t}_k,\beta)=(0^k1)^\f, \qquad k=1,2,\dots.
\]
Note that in this example, the intervals $I_k$ accumulate at $0$.
\item Suppose $\al(\beta)=111\,00111\,(001)^\f$. Then $\v_1=00111$ and $\v_k=001$ for all $k\geq 2$. Note that here $\v_1$ is not $\beta$-Lyndon since $\si^2(\v_1^\f)=(11100)^\f\succ\al(\beta)$. It is easy to check that $\v_1^*=01$. Thus, we get a collection of intervals $I_1=[\underline{t}_1,\overline{t}_1), I_2=[\underline{t}_2,\overline{t}_2],\dots$ given by
\begin{alignat*}{2}
b(\underline{t}_1,\beta)&=00110\,(110)^\f, & \qquad b(\overline{t}_1,\beta)&=(01)^\f,\\
b(\underline{t}_2,\beta)&=000\,(111\,00110)^\f, & \qquad b(\overline{t}_2,\beta)&=(001)^\f,\\
b(\underline{t}_k,\beta)&=000\,\big(111\,00111\,(001)^{k-3}\,000\big)^\f, & \qquad b(\overline{t}_k,\beta)&=(001)^\f \qquad (k\geq 3).
\end{alignat*}
{(Here we take $I_k$ to be closed for all $k\geq 2$.)}
%{\color{red}[The same comment as in (a), do we need to write $I_k$ to be a closed interval for $k\ge 2$?]}

\item Let $\al(\beta)=(111\,0101100)^\f$. Since $\al(\beta)$ is periodic, we apply the construction to the greedy expansion $\al_1'\al_2'\dots=111\,01011\,01\,0^\f$. We find $\v_1=01011$ and $\v_2=01$. The intervals $I_1=[\underline{t}_1,\overline{t}_1)$ and $I_2=[\underline{t}_2,\overline{t}_2)$ are given by
\begin{alignat*}{2}
b(\underline{t}_1,\beta)&=01010\,(110)^\f, & \qquad b(\overline{t}_1,\beta)&=(01011)^\f,\\
b(\underline{t}_2,\beta)&=00\,(111\,01010)^\f, & \qquad b(\overline{t}_2,\beta)&=(01)^\f.
\end{alignat*}
\end{enumerate}
\end{example}

{
\begin{example} \label{ex:v_k-larger-alphabet}
Let $\cs=2\in F_e$, so the basic interval $[\beta_\ell^{\cs},\beta_*^{\cs}]\subseteq(3,4]$ is given by
\[
\al(\beta_\ell^{\cs})=\L(\cs)^\f=2^\f, \qquad \al(\beta_*^{\cs})=\L(\cs)^+\cs^-\L(\cs)^\f=312^\f.
\]
\begin{enumerate}[(a)]
\item Suppose $\al(\beta)=3(12)^\f$. Then $\v_k=12$ for all $k\in\N$.
\item Suppose $\al(\beta)=30123(12)^\f$. Then $\mathcal{I}=\emptyset$; compare with Example \ref{ex:v_k} (c).
\item Suppose $\al(\beta)=312\,0312\,02\,002\,0002,\dots$. Then $\v_1=0312$ and $\v_k=0^{k-1}2$ for all $k\geq 2$. Note that $\v_1$ is not $\beta$-Lyndon since $\si(\v_1^\f)=(3120)^\f\succ\al(\beta)$. We have $\v_1^*=1$.
\item Suppose $\al(\beta)=(3122102)^\f$. Then $(\al_i')=31221030^\f$, and we find $\v_1=122$, $\v_2=1$ and $\v_3=03$.
\end{enumerate}
We leave it to the interested reader to write down the corresponding intervals $(I_k)$ for these examples. Observe that, when $\al(\beta)$ uses more than two different digits, some of the words $\v_k$ or $\v_k^*$ may consist of a single digit. This can only happen when $\beta>2$. 
\end{example}
}

The proof of Theorem \ref{thm:basic-interval-transitivity} is rather involved.
We first prove several lemmas about the words $\v_k$. { The main goal of these lemmas is to derive an explicit construction for the words $\v_k^*$, which we do in Lemma \ref{lem:v_k-star}. This construction is then used in} Proposition \ref{prop:non-transitivity-intervals} to describe the relative placements of the intervals $I_k$. 

The first lemma deals specifically with the case of periodic $\al(\beta)$; see \cite[Lemma 3.8]{Kalle-Kong-Langeveld-Li-18}.

\begin{lemma} \label{lem:periodic-permutations}
Let $\beta{>1}$, and suppose $\al(\beta)=(\al_1\dots\al_m)^\f$ where $m$ is the minimal period of $\al(\beta)$. Then
\begin{equation*} %\label{eq:periodic-permutation}
\al_{j+1}\dots\al_m \prec \al_1\dots\al_{m-j} \qquad \forall\,1\leq j<m.
\end{equation*}
\end{lemma}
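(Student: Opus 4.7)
The plan is to derive the strict inequality from two applications of the quasi-greedy inequality (Lemma \ref{lem:quasi-greedy expansion-alpha-q}) combined with the minimality of the period $m$.

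First I would observe that the quasi-greedy condition $\si^j(\al(\beta))\lle\al(\beta)$ immediately yields the weak form
\[
\al_{j+1}\dots\al_m\lle\al_1\dots\al_{m-j},
\]
so the only remaining task is to rule out equality on the first $m-j$ digits.

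To exclude equality, I would argue by contradiction. Assume $\al_{j+1}\dots\al_m=\al_1\dots\al_{m-j}$. Using the periodicity $\al_{m+k}=\al_k$, the shifted sequence can be written as $\si^j(\al(\beta))=\al_1\dots\al_{m-j}\al_1\al_2\dots$. Cancelling the common prefix of length $m-j$ from both sides of $\si^j(\al(\beta))\lle\al(\beta)$ then gives $\al(\beta)\lle\si^{m-j}(\al(\beta))$. On the other hand, Lemma \ref{lem:quasi-greedy expansion-alpha-q} applied with shift $m-j$ provides the reverse inequality $\si^{m-j}(\al(\beta))\lle\al(\beta)$. Hence $\si^{m-j}(\al(\beta))=\al(\beta)$, which means $\al(\beta)$ is $(m-j)$-periodic. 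Since $0<m-j<m$, this contradicts the minimality of $m$ as the period of $\al(\beta)$, so strict inequality must hold.

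The argument has no real obstacle; it is a short combinatorial manipulation. The only subtle point is recognizing that one must invoke the quasi-greedy inequality twice, at shifts $j$ and $m-j$, so that their combination squeezes $\al(\beta)$ into having a shorter period. I expect the full proof to occupy only a few lines.
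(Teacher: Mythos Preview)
Your argument is correct. The paper does not actually prove this lemma; it only cites \cite[Lemma 3.8]{Kalle-Kong-Langeveld-Li-18}, so your self-contained proof via the two applications of Lemma~\ref{lem:quasi-greedy expansion-alpha-q} at shifts $j$ and $m-j$ is a welcome addition rather than a mere reproduction.
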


{Recall that $\v_k=\al_{j_k+1}\ldots \al_{j_k+l_k}$.} Below we write $\v_k=v_{k,1}\dots v_{k,l_k}$. %Recall also that $l_k$ is the length of $\v_k$.

\begin{lemma} \label{lem:always-below-alpha}
For any $k$ and any two integers $0\leq i_1<i_2\leq l_k$,
\begin{equation} \label{eq:always-below-alpha}
v_{k,i_1+1}\dots v_{k,i_2}\lle \al_1\dots\al_{i_2-i_1}.
\end{equation}
\end{lemma}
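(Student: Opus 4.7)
The plan is to set $j := j_k + i_1$ and $n := i_2 - i_1$, so that the desired inequality becomes $\alpha'_{j+1}\dots\alpha'_{j+n} \lle \alpha_1\dots\alpha_n$, where $(\alpha'_i)$ is the sequence used in the construction: equal to $\alpha(\beta)$ when $\alpha(\beta)$ is not periodic, and equal to $\alpha_1\dots\alpha_{m-1}10^\f$ when $\alpha(\beta)$ has minimal period $m$. When $\alpha(\beta)$ is not periodic, $\alpha' = \alpha$ and the inequality follows at once from the quasi-greedy property $\sigma^j(\alpha)\lle\alpha$ of Lemma \ref{lem:quasi-greedy expansion-alpha-q}, by reading off the first $n$ characters of both sides.

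In the periodic case, the key preliminary observation is that $\v_k$ is Lyndon, so its last character $v_{k,n_k} = \alpha'_{j_k+n_k}$ equals $1$; since $\alpha'_i = 0$ for $i > m$, this forces $j_k + n_k \le m$, and hence $j + n = j_k + i_2 \le m$. If $j + n < m$, then $\alpha'_{j+l} = \alpha_{j+l}$ for all $1 \le l \le n$, and the lemma again follows from quasi-greedy. The only remaining situation is $j + n = m$, where $v_{k,i_2} = \alpha'_m = 1$ differs from $\alpha_m = 0$. Here I would let $p$ be the first position in $\{1,\dots,n\}$ at which $\alpha_{j+p} < \alpha_p$ (if any): if $p < n$, the strict decrease occurs before the modified position and is preserved; if $p = n$, then $\alpha_{j+n}=0$ and $\alpha_n=1$, so the modification at the last position produces exact equality with $\alpha_n$.

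The main obstacle is ruling out the ``bad case'' in which $\alpha_{j+l} = \alpha_l$ for every $1 \le l \le n = m-j$, i.e., $\alpha_1\dots\alpha_m$ has combinatorial period $j$. Here the plan is to derive a contradiction by invoking Lemma \ref{lem:quasi-greedy expansion-alpha-q} at two shifts simultaneously. Under the bad case, $\sigma^j(\alpha)$ is $m$-periodic with period block $\alpha_1\dots\alpha_{m-j}\alpha_1\dots\alpha_j$, so the quasi-greedy inequality $\sigma^j(\alpha)\lle\alpha$ reduces to $\alpha_1\dots\alpha_j \lle \alpha_{m-j+1}\dots\alpha_m$. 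Equality throughout would yield $\sigma^j(\alpha) = \alpha$, giving $\alpha$ a period $j < m$ and contradicting minimality of $m$. So there is a smallest position $p \le j$ with $\alpha_p = 0$, $\alpha_{m-j+p} = 1$, and $\alpha_l = \alpha_{m-j+l}$ for $l < p$. But then applying the quasi-greedy inequality at the shift $m-j \ge 1$ yields $\sigma^{m-j}(\alpha)_p = \alpha_{m-j+p} = 1 > 0 = \alpha_p$ at the first position of disagreement, so $\sigma^{m-j}(\alpha) \succ \alpha$, contradicting Lemma \ref{lem:quasi-greedy expansion-alpha-q}. This rules out the bad case and completes the proof.
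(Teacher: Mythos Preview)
Your proof is correct and follows essentially the same route as the paper: both dispose of the aperiodic case immediately via $\sigma^j(\alpha)\lle\alpha$, and both isolate the single problematic situation $k=k_0$, $i_2=n_{k_0}$ (your condition $j+n=m$). The only difference is that for the ``bad case'' the paper invokes Lemma~\ref{lem:periodic-permutations} directly---your equality $\alpha_{j+1}\dots\alpha_m=\alpha_1\dots\alpha_{m-j}$ contradicts that lemma in one line---whereas you re-derive the needed strict inequality from scratch using two applications of the quasi-greedy property at shifts $j$ and $m-j$; this is just Lemma~\ref{lem:periodic-permutations} proved inline.
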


\begin{proof}
When $\al(\beta)$ is aperiodic the inequality follows immediately from Lemma \ref{lem:quasi-greedy expansion-alpha-q}, since $\v_k$ is a subword of $\al(\beta)$. But in the periodic case some extra care is needed. In this case there is a positive integer $k_0$ such that $\II=(I_1,\dots,I_{k_0})$, and $\al(\beta)=(\L(\cs)^+\v_1\dots\v_{k_0}^-)^\f$. The only case that needs extra attention is when $k=k_0$ and $i_2=l_{k_0}$. Set $i:=i_1$. It seems theoretically possible that $v_{k_0,i+1}\dots v_{k_0,l_{k_0}}=\al_1\dots\al_{l_{k_0}-i}^+$, violating \eqref{eq:always-below-alpha}. However, if that were the case we would have
\[
\al_1\dots\al_m=\al_1\dots\al_{j_{k_0}}\v_{k_0}^-=\al_1\dots\al_{j_{k_0}}v_{k_0,1}\dots v_{k_0,i}\al_1\dots \al_{l_{k_0}-i},
\]
where $m$ is the minimal period of $\al(\beta)$, so applying Lemma \ref{lem:periodic-permutations} with $j=l_{k_0}-i$ gives
\[
\si^{j_{k_0}+i}(\al(\beta))=\al_1\dots\al_{l_{k_0}-i}(\al_1\dots\al_m)^\f\succ \al(\beta).
\]
This contradiction shows that \eqref{eq:always-below-alpha} holds for this case as well.
\end{proof}

\begin{lemma} \label{lem:prefix-beta-Lyndon}
Let $k\geq 2$. If $\v_k$ is a proper prefix of $\v_{k-1}$, then $\v_k$ is $\beta$-Lyndon.
\end{lemma}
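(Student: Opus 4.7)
The plan is to exploit the structural identity $\v_{k-1}=\v_k\u$ (with $\u$ a nonempty word) together with the quasi-greedy inequalities $\si^n(\al(\beta))\lle\al(\beta)$ and the Lyndon property of $\v_{k-1}$. The key observation is that, because $\v_k$ is a prefix of $\v_{k-1}$, the block $\v_k$ occurs in $\al(\beta)$ at both positions $j_{k-1}+1$ and $j_k+1$. Consequently, for each $0\le j<n_k$, the shifted sequence $\si^{j_{k-1}+j}(\al(\beta))$ begins with $v_{k,j+1}\dots v_{k,n_k}\,\u\,\v_k$ followed by $\si^{j_{k+1}}(\al(\beta))$. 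Applying $\si^{j_{k-1}+j}(\al(\beta))\lle\al(\beta)$ and stripping the common prefix $v_{k,j+1}\dots v_{k,n_k}$, the goal $\si^j(\v_k^\f)\prec\al(\beta)$ reduces to showing $\v_k^\f\prec\u\v_k\,\si^{j_{k+1}}(\al(\beta))$. The case $j=0$ and the degenerate case $|\v_k|=1$ (forced to be $\v_k=0$ since $\v_{k-1}$ starts with $0$) are immediate.

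To obtain the key comparison, I would first prove $\v_k^\f\prec(\u\v_k)^\f$ by chaining two Lyndon-word facts. The Lyndon property of $\v_{k-1}=\v_k\u$ at cyclic shift $n_k$ yields $\v_k\u\prec\u\v_k$ as words of equal length, and hence $\v_{k-1}^\f=(\v_k\u)^\f\prec(\u\v_k)^\f$ as infinite sequences. Separately, I would invoke the elementary fact that whenever $\w'$ is a Lyndon word that is a proper prefix of a Lyndon word $\w$, one has $(\w')^\f\prec\w^\f$; applied with $\w'=\v_k$ and $\w=\v_{k-1}$, this gives $\v_k^\f\prec\v_{k-1}^\f$. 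Chaining the two inequalities yields $\v_k^\f\prec(\u\v_k)^\f$.

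Next I would argue that the first disagreement between $\v_k^\f$ and $(\u\v_k)^\f$ occurs within the first $n_{k-1}=|\u|+n_k$ positions, so that the inequality transfers to $\v_k^\f\prec\u\v_k\,\si^{j_{k+1}}(\al(\beta))$ (whose first $n_{k-1}$ characters coincide with $(\u\v_k)^\f[1{:}n_{k-1}]$). The reason is a short Fine--Wilf-style argument: if the two sequences agreed on the first $n_{k-1}$ characters, then writing $n_{k-1}=qn_k+r$ with $0\le r<n_k$ and matching $\v_k^q\v_k[1{:}r]$ against $\u\v_k$ would either collapse $\v_k$ to the form $\u^{q'}\u[1{:}r']$, producing a cyclic rotation of $\v_k$ strictly smaller than $\v_k$ and contradicting $\v_k\in L^*$, or force $\v_{k-1}=\v_k\u$ itself to be periodic, contradicting the Lyndon property. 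Combining this transfer with the previous step completes the argument and shows $\v_k\in L^*(\beta)$.

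The main obstacle will be the Fine--Wilf-style bound on the first disagreement, because the bare Lyndon properties only guarantee eventual disagreement; ruling out all of the borderline periodic configurations needs a careful case analysis on whether $|\u|$ is smaller than, equal to, or greater than $n_k$, together with the corresponding scrutiny of the remainder $r$ and of rotations of $\v_{k-1}$. The proof of the auxiliary ``prefix implies smaller periodic extension'' fact for Lyndon words, though standard in spirit, will require the same kind of case split.
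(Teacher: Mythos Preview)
Your approach is correct, and the Fine--Wilf-style obstacle you worry about resolves cleanly: if $\v_k^\infty$ and $\u\v_k$ agreed on all $n_{k-1}$ positions, matching the last $n_k$ characters would force a nontrivial cyclic rotation of $\v_k$ to equal $\v_k$ (when $r>0$) or $\v_{k-1}=\v_k^q$ (when $r=0$), either way contradicting primitivity. In fact the disagreement occurs strictly before position $n_{k-1}$, which also covers the periodic-$\al(\beta)$ edge case where the second occurrence of $\v_k$ in $\al(\beta)$ is replaced by $\v_k^-$.

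That said, the paper's proof is substantially more direct and avoids the detour through $(\u\v_k)^\infty$ altogether. Instead of reducing to $\v_k^\infty\prec\u\v_k\cdot(\text{tail})$, the paper compares $\si^i(\v_k^\infty)$ directly with the suffix $v_{k-1,i+1}\dots v_{k-1,n_{k-1}}$ of $\v_{k-1}$. Writing $n_{k-1}=qn_k+r$, the Lyndon property of $\v_{k-1}$ gives $\v_k\lle v_{k-1,pn_k+1}\dots v_{k-1,(p+1)n_k}$ for each middle block and strict inequality on the final length-$r$ block; concatenating yields $\si^i(\v_k^\infty)[1{:}n_{k-1}-i]\prec v_{k-1,i+1}\dots v_{k-1,n_{k-1}}$, and Lemma~\ref{lem:always-below-alpha} then bounds this by $\al_1\dots\al_{n_{k-1}-i}$. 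This is the same block-division idea you use to prove $(\v_k)^\infty\prec\v_{k-1}^\infty$, but applied once rather than twice, and it makes your chain $\v_k^\infty\prec(\v_k\u)^\infty\prec(\u\v_k)^\infty$ together with the first-disagreement bound unnecessary. Your route works, but the paper's packaging via Lemma~\ref{lem:always-below-alpha} (which also absorbs the periodic case) is shorter.
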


\begin{proof}
By the hypothesis, $l_k<l_{k-1}$ and we can write $\v_k=v_{k-1,1}\dots v_{k-1,l_k}$. Fix $0\leq i<l_k$. By \eqref{eq:always-below-alpha}, $v_{k-1,i+1}\dots v_{k-1,l_k}\lle \al_1\dots\al_{l_k-i}$. Write $l_{k-1}=ql_k+r$, where $q\in\N$ and $1\leq r\leq l_k$. Since $\v_{k-1}$ is Lyndon, we have
\[
\v_k=v_{k-1,1}\dots v_{k-1,l_k}\lle v_{k-1,pl_k+1}\dots v_{k-1,(p+1)l_k}, \qquad p=0,1,\dots,q-1,
\]
and
\[
v_{k-1,1}\dots v_{k-1,r}\prec v_{k-1,ql_k+1}\dots v_{k-1,l_{k-1}}.
\]
Thus, by Lemma \ref{lem:always-below-alpha} it follows that
\[
v_{k-1,i+1}\dots v_{k-1,l_k} \v_k^{q-1} v_{k-1,1}\dots v_{k-1,r} \prec v_{k-1,i+1}\dots v_{k-1,l_{k-1}} \lle \al_1\dots a_{l_{k-1}-i},
\]
and therefore, $\si^i(\v_k^\f)\prec\al(\beta)$. Since $i$ was arbitrary, this shows $\v_k$ is $\beta$-Lyndon.
\end{proof}

\begin{lemma} \label{lem:v_k-decreasing}
Let $k\geq 2$, and suppose $\v_{k}\neq \v_{k-1}$. Then
\begin{enumerate}[{\rm(i)}]
\item $\v_{k-1}$ is not a prefix of $\v_{k}$; and
\item $\v_k \prec \v_{k-1}$.
\end{enumerate}
\end{lemma}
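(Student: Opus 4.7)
Both parts will follow from the single key inequality $\si^{j_k}(\al(\beta))\lle\si^{j_{k-1}}(\al(\beta))$, which is immediate from the definition of $n_{k-1}$ as the minimal shift producing this comparison, combined (for part (ii)) with Lemma~\ref{lem:prefix-beta-Lyndon}.

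For (i), I argue by contradiction. Suppose $\v_{k-1}$ is a prefix of $\v_k$; since $\v_k\neq\v_{k-1}$, this forces $n_k>n_{k-1}$. Both $\si^{j_k}(\al(\beta))$ and $\si^{j_{k-1}}(\al(\beta))=\v_{k-1}\si^{j_k}(\al(\beta))$ now begin with $\v_{k-1}$, so stripping this common prefix from the key inequality yields $\si^{j_k+n_{k-1}}(\al(\beta))\lle\si^{j_k}(\al(\beta))$. Since $1\leq n_{k-1}<n_k$, this contradicts the minimality of $n_k$.

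For (ii), part (i) leaves two possibilities: either there is a first index $p\leq\min(n_k,n_{k-1})$ at which $\v_k$ and $\v_{k-1}$ disagree as finite words, or else $\v_k$ is a proper prefix of $\v_{k-1}$. In the first case, the key inequality read at position $p$ immediately gives $v_{k,p}<v_{k-1,p}$; since $\v_k^\f$ and $\v_{k-1}^\f$ first disagree at the same position $p$ with the same sign, we conclude $\v_k^\f\prec\v_{k-1}^\f$ at once. In the second case, write $\v_{k-1}=\v_k\c$ with $\c$ nonempty; Lemma~\ref{lem:prefix-beta-Lyndon} guarantees $\v_k\in L^*(\beta)$, so both $\v_k$ and $\v_{k-1}$ are Lyndon, and the conclusion reduces to the general assertion that whenever $\u$ and $\u\c$ are both Lyndon with $\c$ nonempty, one has $\u^\f\prec(\u\c)^\f$.

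I would prove this last assertion by induction on $|\c|$. After peeling the common $\u$-prefix from $\u^\f$ and $(\u\c)^\f$, the comparison reduces to $\u^\f$ versus $(\c\u)^\f$, and the cyclic-shift inequality $\u\c\prec\c\u$ (which is forced by the Lyndon-ness of $\u\c$) pins down the sign of the first disagreement between $\u$ and $\c$. If they disagree within $\min(|\u|,|\c|)$, one reads off $\u^\f\prec(\u\c)^\f$ directly; the sub-case where $\u$ is a prefix of $\c$ (write $\c=\u\c'$) reduces via the same cyclic inequality to the strictly shorter instance $\u\c'\prec\c'\u$, feeding the induction; and the sub-case where $\c$ is a prefix of $\u$ (write $\u=\c\u'$) is ruled out, since combining Lyndon of $\u$ with Lyndon of $\u\c$ gives the contradictory pair $\c\u'\prec\u'\c$ and $\u'\c\prec\c\u'$, which can only coexist if $\u'$ and $\c$ commute, forcing $\u$ to be a non-primitive power. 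The main obstacle is precisely this last inductive argument: the finite-string Lyndon inequality does not immediately translate into a comparison of infinite periodic tails, so one must carefully orchestrate the recursion on $|\c|$ and invoke primitivity to exclude the degenerate prefix scenarios.
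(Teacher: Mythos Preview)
Your argument for (i) is correct and is actually cleaner than the paper's: you use the minimality of $n_k$ directly, while the paper first establishes $\si^{j_k}(\al(\beta))\lle\v_{k-1}^\f$ and then argues (with an implicit recursion) that a Lyndon word $\v_k$ with this prefix bound cannot extend $\v_{k-1}$. Your first case in (ii) is also fine and matches the paper.

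The prefix case of (ii), however, has a genuine gap. First, a minor point: invoking Lemma~\ref{lem:prefix-beta-Lyndon} is unnecessary, since you only need that $\v_k$ and $\v_{k-1}$ are Lyndon, which is already established in the construction. More seriously, your induction does not close. In the sub-case $\c=\u\c'$, you derive $\u\c'\prec\c'\u$ and say this ``feeds the induction'', but the inductive hypothesis (whether stated as ``$\u,\u\c'$ Lyndon'' or as ``$\u$ Lyndon and $\u\c'\prec\c'\u$'') only gives $\u^\f\prec(\u\c')^\f$. What you need is $\u^\f\prec(\u\c)^\f=(\u^2\c')^\f$, and in general $(\u^2\c')^\f\prec(\u\c')^\f$ (for instance $\u=01$, $\c'=1$ gives $(01011)^\f\prec(011)^\f$), so the IH does not yield the desired inequality. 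One can repair this by stripping \emph{all} leading copies of $\u$ from $\c$ at once, but that is not the argument you wrote.

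The paper avoids the issue entirely with a one-line direct comparison: writing $n_{k-1}=qn_k+r$ with $1\le r\le n_k$, the Lyndon property of $\v_{k-1}$ gives $v_{k-1,pn_k+1}\dots v_{k-1,(p+1)n_k}\lge\v_k$ for each $p<q$ and $v_{k-1,qn_k+1}\dots v_{k-1,n_{k-1}}\succ v_{k,1}\dots v_{k,r}$ strictly, whence $\v_{k-1}\succ\v_k^q v_{k,1}\dots v_{k,r}$ and therefore $\v_{k-1}^\f\succ\v_k^\f$. No induction is needed.
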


\begin{proof}
Since $\si^{j_k}(\al(\beta))=\si^{j_{k-1}+l_{k-1}}(\al(\beta))\lle \si^{j_{k-1}}(\al(\beta))$, it follows that $\al_{j_{k-1}+1}\dots\al_{j_{k-1}+m}$ cannot be Lyndon for any $m>l_{k-1}$. Suppose $\v_k$ is a proper extension of $\v_{k-1}$. An easy exercise (which we leave to the interested reader) shows that the concatenation $\v_{k-1}\v_k$ is then also Lyndon. But $\v_{k-1}\v_k=\al_{j_{k-1}+1}\dots\al_{j_{k-1}+l_{k-1}+l_k}$, yielding a contradiction. This proves (i). Statement (ii) now follows since, if $\v_k\succ \v_{k-1}$, there is a smallest $m<l_{k-1}$ such that $v_{k,1}\dots v_{k,m}\succ v_{k-1,1}\dots v_{k-1,m}$. But then $\al_{j_{k-1}+1}\dots\al_{j_{k-1}+l_{k-1}+m}=\v_{k-1}v_{k,1}\dots v_{k,m}$ is Lyndon, again leading to a contradiction. (In the case when $\al(\beta)$ is periodic, one replaces $(\al_i)$ with $(\al_i')$ throughout.)
\end{proof}

%and $\v_{k-1}$ is a prefix of $\si^{j_{k-1}}(\al(\beta))$, it follows that
%\[
%\v_{k}\si^{j_k+l_k}(\al(\beta))=\si^{j_k}(\al(\beta))\lle \v_{k-1}^\f,
%\]
%as is easy to check. Furthermore, since $\v_{k-1}$ and $\v_k$ are both Lyndon, $\v_k$ cannot extend $\v_{k-1}$. This yields (i), and also yields (ii) in case $\v_k$ is not a prefix of $\v_{k-1}$. If $\v_k$ is a prefix of $\v_{k-1}$, then $\v_k$ is a proper prefix of $\v_{k-1}$ by our assumption $\v_k\ne\v_{k-1}$. So we can write $l_{k-1}=ql_k+r$, where $q\in \N$ and $1\leq r\leq l_k$. By a similar argument as in the proof of Lemma \ref{lem:prefix-beta-Lyndon}, the Lyndon property of $\v_{k-1}$ implies
%$
%\v_{k-1} \succ \v_k^q v_{k,1}\dots v_{k,r},
%$
%proving (ii).
%\end{proof}

\begin{lemma} \label{lem:header-suffix}
For each $k\in\N$, the word $\al_1\dots\al_{j_k}$ can occur in $\v_k$ at most once, and only at the end. In other words, the truncated word $v_{k,1}\dots v_{k,l_k-1}$ does not contain the word $\al_1\dots\al_{j_k}$.
\end{lemma}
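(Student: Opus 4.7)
My plan is to reduce the claim to the defining minimality of $n_k$. Suppose the word $\al_1\dots\al_{j_k}$ appears inside $\v_k=\al_{j_k+1}\dots\al_{j_k+n_k}$ starting at position $i+1$, with $0\le i\le n_k-j_k$. I will show that this forces $i+j_k=n_k$, which simultaneously yields both assertions of the lemma: the occurrence is unique (its starting position $i$ is pinned down), and it coincides with the suffix of $\v_k$.

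The main computation is straightforward. The assumed occurrence rewrites as
\[
\si^{j_k+i}(\al(\beta))\;=\;\al_{j_k+i+1}\al_{j_k+i+2}\dots\;=\;\al_1\dots\al_{j_k}\cdot\si^{j_k+i+j_k}(\al(\beta)).
\]
If $i+j_k<n_k$, then $1\le i+j_k<n_k$, so the minimality in the definition of $n_k$ yields $\si^{j_k+(i+j_k)}(\al(\beta))\succ\si^{j_k}(\al(\beta))$. Substituting back,
\[
\si^{j_k+i}(\al(\beta))\;\succ\;\al_1\dots\al_{j_k}\cdot\si^{j_k}(\al(\beta))\;=\;\al(\beta),
\]
contradicting $\si^n(\al(\beta))\lle\al(\beta)$ from Lemma \ref{lem:quasi-greedy expansion-alpha-q}. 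Hence $i+j_k=n_k$ and the aperiodic case is settled.

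The step I expect to require the most care is the periodic case, in which the construction replaces $\al(\beta)$ by the greedy expansion $(\al_i')=\al_1\dots\al_{m-1}1\,0^\f$ of $1$, so $\v_k$ is in general not literally a subword of $\al(\beta)$. My plan is to run the identical computation with $(\al_i')$ in place of $\al(\beta)$, after first checking the replacement for the key input from Lemma \ref{lem:quasi-greedy expansion-alpha-q}, namely that $\si^n((\al_i'))\prec(\al_i')$ for every $n\ge 1$. This should follow by combining Lemma \ref{lem:greedy-expansion} applied to $b(1,\beta)=(\al_i')$, which gives $\si^n((\al_i'))\prec\al(\beta)$, with the inequality $\al(\beta)\prec(\al_i')$ (the two sequences agree through position $m-1$ and differ at position $m$, where $\al_m=0<1=\al_m'$). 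Finally, whenever $\v_k$ is defined in the periodic case the construction requires $\si^{j_k}((\al_i'))\ne 0^\f$, hence $j_k<m$; this ensures that $\al_1\dots\al_{j_k}$ is unambiguous and identical under either the $(\al_i)$ or $(\al_i')$ convention, so the computation above carries over verbatim.
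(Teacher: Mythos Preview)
Your argument is correct and takes a genuinely different route from the paper. The paper's proof assumes an interior occurrence $\v_k=v_{k,1}\dots v_{k,p}\,\al_1\dots\al_{j_k}\,u_1\dots u_q$ with $q\ge 1$ and derives a contradiction by playing the Lyndon property of $\v_k$ (which forces $u_1\dots u_q\succ v_{k,1}\dots v_{k,q}$) against the bound of Lemma~\ref{lem:always-below-alpha} (which forces $u_1\dots u_q\lle \al_{j_k+1}\dots\al_{j_k+q}=v_{k,1}\dots v_{k,q}$). In particular, the periodic case is already absorbed into Lemma~\ref{lem:always-below-alpha}, so the paper's proof needs no separate treatment.

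Your approach instead goes straight back to the minimality defining $n_k$: an interior occurrence at position $i+1$ with $i+j_k<n_k$ gives $\si^{j_k+i+j_k}(\al(\beta))\succ\si^{j_k}(\al(\beta))$, and concatenating with the prefix $\al_1\dots\al_{j_k}$ yields $\si^{j_k+i}(\al(\beta))\succ\al(\beta)$. This avoids both the Lyndon property and Lemma~\ref{lem:always-below-alpha}, at the cost of having to redo the periodic case by hand. Your handling of that case is fine; the only quibble is that Lemma~\ref{lem:greedy-expansion} is stated for $t\in[0,1)$, so citing it for $b(1,\beta)=(\al_i')$ is a slight abuse, but the needed inequality $\si^n((\al_i'))\prec(\al_i')$ for $n\ge1$ is standard and follows easily from Lemma~\ref{lem:periodic-permutations} as you indicate. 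Each approach has its merits: the paper's is shorter given the lemmas already in place, while yours is more self-contained and exposes the role of the minimality of $n_k$ directly.
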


\begin{proof}
Suppose that $\v_k=v_{k,1}\dots v_{k,p}\al_1\dots\al_{j_k}u_1\dots u_q$ (say), where $p\geq 0$ and $q\geq 1$. Then, on the one hand, $u_1\dots u_q\succ v_{k,1}\dots v_{k,q}$ since $\v_k$ is Lyndon. On the other hand, applying \eqref{eq:always-below-alpha} with $i_1=p$ and $i_2=l_k$ we obtain $u_1\dots u_q\lle \al_{j_k+1}\dots\al_{j_k+q}=v_{k,1}\dots v_{k,q}$. This contradiction completes the proof. 
%{[Note to self: This proof works also for the periodic case since \eqref{eq:always-below-alpha} is valid for the periodic case.]}
\end{proof}

\begin{lemma} \label{lem:v_k-upper-bound}
\begin{enumerate}[{\rm(i)}]
\item For each $k\in\N$, we have
\begin{equation} \label{eq:v_k-upper-bound}
\v_k\lle\al_{m+1}\dots \al_{m+l_k} \qquad\forall\, 0\leq m<j_k.
\end{equation}
\item Suppose either $k=1$, or $k\geq 2$ and $\v_k\neq \v_{k-1}$. If $\v_k$ is not $\beta$-Lyndon, then
\begin{equation} \label{eq:strict-v_k-upper-bound}
\v_k\prec\al_{m+1}\dots \al_{m+l_k} \qquad\forall\, 0\leq m<j_k.
\end{equation}
\end{enumerate}
\end{lemma}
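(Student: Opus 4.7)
For (i), I will establish the stronger \emph{sequence-level} inequality $\si^m(\al(\beta))\lge\si^{j_k}(\al(\beta))$ for every $0\le m\le j_k$, from which the word inequality follows at once, since a reverse strict word inequality $\al_{m+1}\dots\al_{m+n_k}\prec\v_k$ would force a reverse strict sequence inequality. Setting $j_0:=0$ and $n_0:=|\cs|$ so that $j_{l+1}=j_l+n_l$ for all $l\ge 0$, the definition of $n_l$ as the smallest positive $n$ with $\si^{j_l+n}(\al(\beta))\lle\si^{j_l}(\al(\beta))$ yields the chain $\si^{j_k}(\al(\beta))\lle\si^{j_{k-1}}(\al(\beta))\lle\dots\lle\al(\beta)$ together with the strict inequality $\si^m(\al(\beta))\succ\si^{j_l}(\al(\beta))$ whenever $j_l<m<j_{l+1}$; the $l=0$ case of the chain is just Lemma~\ref{lem:quasi-greedy expansion-alpha-q} applied to $\si^{j_1}(\al(\beta))$. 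Combining the two facts covers every $0\le m\le j_k$.

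For (ii), I will argue by contradiction: assume $\v_k=\al_{m+1}\dots\al_{m+n_k}$ for some $0\le m<j_k$ and aim to derive either $\v_k\in L^*(\beta)$ or $\v_k=\v_{k-1}$. Because $\v_k$ is Lyndon of length $\ge 2$ it begins with $0$, while $\al_1=1$, so $m\ge 1$. Since $\v_k$ is Lyndon and hence primitive of period $n_k$, the two occurrences of $\v_k$ in $\al(\beta)$ (at positions $m+1$ and $j_k+1$) cannot overlap, so $m+n_k\le j_k$. Let $l\in\{0,1,\dots,k-1\}$ be the unique index with $j_l\le m<j_{l+1}$.

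Consider first the ``favourable'' sub-cases $m=j_l$ (necessarily $l\ge 1$). The structure $\al_{j_l+1}\al_{j_l+2}\dots=\v_l\v_{l+1}\v_{l+2}\dots$ shows that $\v_k$ is a prefix of $\v_l$ when $n_k\le n_l$, and extends $\v_l$ when $n_k>n_l$. If $n_k<n_l$, a straightforward generalization of Lemma~\ref{lem:prefix-beta-Lyndon} (its proof uses only the Lyndon property of the longer word and Lemma~\ref{lem:always-below-alpha}, both of which hold for any $\v_l$ in place of $\v_{k-1}$) yields $\v_k\in L^*(\beta)$, a contradiction. If $n_k=n_l$, then $\v_k=\v_l$, and iterating Lemma~\ref{lem:v_k-decreasing}(ii) along $l,l+1,\dots,k-1$ forces $\v_k=\v_{k-1}$, again a contradiction.

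The remaining sub-cases ($m=j_l$ with $n_k>n_l$, and $j_l<m<j_{l+1}$) are where I expect the main difficulty, and here I will exploit the hypothesis $\v_k\notin L^*(\beta)$ directly. Since $\v_k^\f\prec\al(\beta)$ (compare first digits), there exists a smallest $1\le i_0<n_k$ with $\si^{i_0}(\v_k^\f)\lge\al(\beta)$. Combining this with Lemma~\ref{lem:quasi-greedy expansion-alpha-q} applied to $\si^{m+i_0}(\al(\beta))\lle\al(\beta)$ and cancelling the common prefix $v_{k,i_0+1}\dots v_{k,n_k}$ (shared because $\al_{m+1}\dots\al_{m+n_k}=\v_k$) yields $\al_{m+n_k+1}\al_{m+n_k+2}\dots\lle\v_k^\f$, and in particular $\al_{m+n_k+1}\dots\al_{m+2n_k}\lle\v_k$ as words. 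Whenever $m+n_k<j_k$, part (i) applied at the position $m+n_k$ reverses this to an equality, producing another occurrence of $\v_k$ at $m+n_k+1$; iterating forces $\v_k$ to appear at every position $m+rn_k+1$ for $r=0,1,2,\dots$ that fits below $j_k+1$. The hard step will be to combine this rigid periodic cascade with Lemma~\ref{lem:header-suffix} (which forbids the prefix $\al_1\dots\al_{j_k}$ from occurring inside $v_{k,1}\dots v_{k,n_k-1}$) and the explicit decomposition $\al_1\dots\al_{j_k}=\L(\cs)^+\v_1\dots\v_{k-1}$, so as to align the leftmost $\v_k$-block with some $\v_l$ and thereby reduce the remaining configuration to the prefix or equality case already handled, yielding the desired contradiction.
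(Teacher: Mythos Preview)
Your argument for (i) is correct and in fact cleaner than the paper's. The paper proves (i) by induction on $k$: for $k=1$ it first shows $\v_1\lle(\cs^-\L(\cs)^\f)_{1:n_1}$ and then chains this through the Lyndon property of $\cs$; for $k\ge 2$ it splits into $m\le j_{k-1}$ (use the induction hypothesis and $\v_k^\f\lle\v_{k-1}^\f$) versus $m>j_{k-1}$ (use that $\v_{k-1}$ is Lyndon). Your one-line chain $\si^m(\al(\beta))\succ\si^{j_l}(\al(\beta))\lge\si^{j_k}(\al(\beta))$ bypasses all of this. (In the periodic case the chain should be run with the greedy sequence $(\al_i')$, after which the word inequality for $(\al_i)$ follows as in Lemma~\ref{lem:always-below-alpha}; this is a minor adjustment.)

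For (ii), however, your proof has a genuine gap: you explicitly leave open the cases $m=j_l$ with $n_k>n_l$ and $j_l<m<j_{l+1}$, and the ``periodic cascade'' you set up, while correct as far as it goes, does not by itself reduce these configurations to the prefix case you already handled. The paper avoids this difficulty entirely by arguing inductively and comparing only to the \emph{immediately preceding} word $\v_{k-1}$ rather than to an arbitrary $\v_l$. With just two cases the strictness drops out directly: if $m>j_{k-1}$, the Lyndon property of $\v_{k-1}$ already forces a strict inequality (your Case~2 never arises in weak form); if $m\le j_{k-1}$ and $n_k\le n_{k-1}$, then $\v_k\lle v_{k-1,1}\dots v_{k-1,n_k}$, and equality would make $\v_k$ a prefix of $\v_{k-1}$, hence $\beta$-Lyndon by Lemma~\ref{lem:prefix-beta-Lyndon}---contradiction; if $n_k>n_{k-1}$, Lemma~\ref{lem:v_k-decreasing}(i) already gives strictness. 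The base case $k=1$ is handled by observing that $\cs\in L^*(\beta)$, so if $\v_1\notin L^*(\beta)$ then $\v_1$ cannot be a prefix of $\cs$ (Lemma~\ref{lem:no-beta-Lyndon-extension}). So the missing idea is simply to run the induction one step at a time and let the induction hypothesis for $\v_{k-1}$ absorb all positions $m\le j_{k-1}$, rather than trying to locate $m$ among all the $j_l$ at once.
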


\begin{proof}
We prove the two statements by induction on $k$. Take first $k=1$. Recall the inequality \eqref{eq:v1-small}.
%\begin{equation} \label{eq:v1-small}
%\v_1\lle (\cs^-\L(\cs)^\f)_{1:l_1}.
%\end{equation}
%(Here for a word $\u=u_1\dots u_n$ or sequence $\u=u_1u_2\dots$, we use the notation $\u_{i:j}:=u_i\dots u_j$.)
%Recall that $j_1=|\cs|$, and $\si^{j_1}(\al(\beta))\prec \cs^-\L(\cs)^\f$.
%Furthermore, $\al(\beta)\lge (\L(\cs)^+\v_1^-)^\f$, with equality possible in the periodic case. Thus,
%\[
%\cs^-\L(\cs)^\f\succ \si^{j_1}(\al(\beta))\lge (\v_1^-\L(\cs)^+)^\f.
%\]
%This shows that $\v_1^-\lle (\cs^-\L(\cs)^\f)_{1:l_1}$. If this holds with equality, then we obtain
%\[
%\si^{l_1}(\cs^-\L(\cs)^\f)\succ (\L(\cs)^+\v_1^-)^\f,
%\]
%which is impossible since $\cs^-\L(\cs)^\f$ does not contain the word $\L(\cs)^+$. Hence, we have \eqref{eq:v1-small}.
It follows that, for any $m<j_1$,
\begin{align}
\begin{split}
\v_1&\lle (\cs 0^\f)_{1:l_1}\lle (\al_{m+1}\dots \al_{j_1}^-\al_1\dots\al_m 0^\f)_{1:l_1}\\
&\lle (\al_{m+1}\dots \al_{j_1}0^\f)_{1:l_1}\lle \al_{m+1}\dots \al_{m+l_1},
\end{split}
\label{eq:v_1-upper-bound}
\end{align}
where the second inequality holds because $\L(\cs)=\al_1\dots\al_{j_1}^-$, and so $\al_{m+1}\dots \al_{j_1}^-\al_1\dots\al_m$ is a cyclic permutation of $\cs$. Observe that the third inequality in \eqref{eq:v_1-upper-bound} is strict when $l_1\geq j_1-m$.

Now suppose that $\v_1$ is not $\beta$-Lyndon, and $l_1<j_1-m$. Observe that $\cs$ is $\beta$-Lyndon since $\si^n(\cs^\f)\lle \L(\cs)^\f \prec\al(\beta)$ for each $n\geq 0$. Thus, by Lemma \ref{lem:no-beta-Lyndon-extension}, $\v_1$ cannot be a prefix of $\cs$. But $|\v_1|=l_1<j_1=|\cs|$, and hence $\v_1\prec s_1\dots s_{l_1}\lle \al_{m+1}\dots\al_{m+l_1}$ as in \eqref{eq:v_1-upper-bound}. This completes the basis for the induction.

Next, let $k\geq 2$ be given, and suppose that $\v_{k-1}\lle \al_{m+1}\dots\al_{m+l_{k-1}}$ for all $m<j_{k-1}$. Take $m<j_k$, and consider two cases:

\medskip
{\em Case 1.} $m\leq j_{k-1}$. If $l_k\leq l_{k-1}$, then Lemma \ref{lem:v_k-decreasing} (ii) and the induction hypothesis imply that $\v_k\lle v_{k-1,1}\dots v_{k-1,l_k}\lle \al_{m+1}\dots \al_{m+l_k}$, giving \eqref{eq:v_k-upper-bound}. Furthermore, the first inequality is strict in case $\v_k\neq \v_{k-1}$ and $\v_k$ is not $\beta$-Lyndon, in view of Lemma \ref{lem:prefix-beta-Lyndon}, so in this case we have \eqref{eq:strict-v_k-upper-bound}. If, on the other hand, $l_k>l_{k-1}$, then $v_{k,1}\dots v_{k,l_{k-1}}\prec \v_{k-1}\lle \al_{m+1}\dots\al_{m+l_{k-1}}$, again by the induction hypothesis, where the first inequality follows from Lemma \ref{lem:v_k-decreasing}.

\medskip
{\em Case 2.} $m>j_{k-1}$. Then $j_k-m<j_k-j_{k-1}=l_{k-1}$, and we use the Lyndon property of $\v_{k-1}$ to argue as follows: If $j_k-m\leq l_k$, then by Lemma \ref{lem:v_k-decreasing} (ii) we have
\[
v_{k,1}\dots v_{k,j_k-m}\lle v_{k-1,1}\dots v_{k-1,j_k-m} \prec v_{k-1,m-j_{k-1}+1}\dots v_{k-1,l_{k-1}}=\al_{m+1}\dots\al_{j_k},
\]
which implies \eqref{eq:strict-v_k-upper-bound}; and if $j_k-m>l_k$, then {$l_k<l_{k-1}$ and the definition of $\v_k$ implies that 
\begin{equation} \label{eq:v_k-comparison-case-2}
\v_k\lle \al_{j_{k-1}+1}\ldots \al_{j_{k-1}+l_k}\lle \al_{m+1}\ldots \al_{m+l_k},
\end{equation}
where the last inequality holds since $\v_{k-1}=\al_{j_{k-1}+1}\ldots \al_{j_{k-1}+l_{k-1}}$ is Lyndon. (In case $\al(\beta)$ is periodic, we still obtain \eqref{eq:v_k-comparison-case-2} by reasoning as in \eqref{eq:v_k-small}.) If $\v_k$ is not $\beta$-Lyndon, then $\v_k$ is not a proper prefix of $\v_{k-1}$ by Lemma \ref{lem:prefix-beta-Lyndon}, so the first inequality in \eqref{eq:v_k-comparison-case-2} becomes strict.}
\end{proof}

\begin{lemma} \label{lem:v_k-beta-Lyndon}
Let $k\in\N$, and suppose either $k=1$, or $k\geq 2$ and $\v_k\neq\v_{k-1}$. Then $\v_k$ is $\beta$-Lyndon if and only if $\v_k$ does not end in $\al_1\dots\al_{j_k}$.
\end{lemma}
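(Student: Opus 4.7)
The plan is to handle each direction of the biconditional separately; the hypothesis ``$k=1$ or $\v_k\neq\v_{k-1}$'' will be used only for the ``if'' direction, where it unlocks the strict inequalities of Lemma \ref{lem:v_k-upper-bound}(ii).

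For the ``only if'' direction, suppose $\v_k$ ends in $\al_1\dots\al_{j_k}$, so in particular $n_k\geq j_k$, and write $\v_k=\u\al_1\dots\al_{j_k}$ with $|\u|=n_k-j_k$. I would exhibit the single bad shift $i=n_k-j_k$: one checks immediately that $\sigma^{n_k-j_k}(\v_k^\f)=\al_1\dots\al_{j_k}\v_k^\f$, and since $\al(\beta)=\al_1\dots\al_{j_k}\si^{j_k}(\al(\beta))$ and $\si^{j_k}(\al(\beta))=\v_k\si^{j_{k+1}}(\al(\beta))$, the task is reduced to showing $\v_k^\f\succeq \si^{j_{k+1}}(\al(\beta))$. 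By definition of $n_k$ we have the key inequality $\si^{j_{k+1}}(\al(\beta))\preceq\v_k\si^{j_{k+1}}(\al(\beta))$. Let $m\in\{0,1,\dots,\f\}$ be the largest integer such that $\v_k^m$ is a prefix of $\si^{j_{k+1}}(\al(\beta))$. If $m=\f$ then $\si^{j_{k+1}}(\al(\beta))=\v_k^\f$ and we have equality. Otherwise, write $\si^{j_{k+1}}(\al(\beta))=\v_k^m\z$ with $\z$ not starting with $\v_k$; iterating the key inequality gives $\z\preceq\si^{j_{k+1}}(\al(\beta))$, and since $\z$ does not begin with $\v_k$ while $\si^{j_{k+1}}(\al(\beta))$ does, the discrepancy forces $\z\prec\v_k^\f$, and comparing through the common $\v_k^m$ prefix yields $\si^{j_{k+1}}(\al(\beta))\prec\v_k^\f$. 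Either way, $\sigma^{n_k-j_k}(\v_k^\f)\succeq\al(\beta)$, so $\v_k\notin L^*(\beta)$.

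For the ``if'' direction, suppose $\v_k$ does not end in $\al_1\dots\al_{j_k}$, and assume for contradiction that $\v_k\notin L^*(\beta)$. Lemma \ref{lem:v_k-upper-bound}(ii) then supplies strict inequalities $\v_k\prec\al_{m+1}\dots\al_{m+n_k}$ for every $0\leq m<j_k$. By periodicity of $\v_k^\f$ it suffices to show $\sigma^i(\v_k^\f)\prec\al(\beta)$ for each $0\leq i<n_k$, which I would split into four subcases: (a) $i=0$ follows from the $m=0$ strict inequality above; (b) for $0<i<n_k-j_k$, the length-$j_k$ subword $v_{k,i+1}\dots v_{k,i+j_k}$ lies strictly inside $\v_k$, so Lemma \ref{lem:header-suffix} rules out equality to $\al_1\dots\al_{j_k}$, and Lemma \ref{lem:always-below-alpha} then gives $v_{k,i+1}\dots v_{k,i+j_k}\prec\al_1\dots\al_{j_k}$; (c) for $i=n_k-j_k$, the hypothesis says the last $j_k$ letters of $\v_k$ are not $\al_1\dots\al_{j_k}$, so Lemma \ref{lem:always-below-alpha} again upgrades to strict inequality; (d) for $n_k-j_k<i<n_k$, we apply Lemma \ref{lem:always-below-alpha} first to compare $v_{k,i+1}\dots v_{k,n_k}$ with $\al_1\dots\al_{n_k-i}$, and if this is strict we are done, while if equal we continue into the next period and compare $\v_k$ with $\al_{m+1}\dots\al_{m+n_k}$ for $m=n_k-i\in[1,j_k-1]$, at which point the strict inequality from Lemma \ref{lem:v_k-upper-bound}(ii) finishes the job. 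Together these four subcases give $\sigma^i(\v_k^\f)\prec\al(\beta)$ for all $i$, contradicting $\v_k\notin L^*(\beta)$.

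I expect the main obstacle to be the ``only if'' direction, specifically the possibility that $\si^{j_{k+1}}(\al(\beta))$ itself begins with many copies of $\v_k$ and we cannot immediately see a strict discrepancy with $\v_k^\f$; the maximal-power argument with $m$ is exactly what is needed to force either equality with $\v_k^\f$ or a strict drop below it. The ``if'' direction is more routine case-work, but requires the careful interplay of Lemma \ref{lem:header-suffix} (excluding interior occurrences of $\al_1\dots\al_{j_k}$) with Lemma \ref{lem:v_k-upper-bound}(ii) (strict $m$-shift inequalities) to rule out the late-shift subcase $n_k-j_k<i<n_k$.
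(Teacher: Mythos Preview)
Your argument is correct, but the ``only if'' direction works much harder than necessary. The paper exploits the fact that $\v_k$ is literally $\al_{j_k+1}\dots\al_{j_k+n_k}$: if $\v_k=\u\,\al_1\dots\al_{j_k}$ then $\u=\al_{j_k+1}\dots\al_{n_k}$, so the cyclic shift $\al_1\dots\al_{j_k}\u$ is the \emph{prefix} $\al_1\dots\al_{n_k}$ of $\al(\beta)$, and $(\al_1\dots\al_{n_k})^\f\lge\al(\beta)$ follows in one line from Lemma~\ref{lem:quasi-greedy expansion-alpha-q}. Your maximal-power argument reaches the same conclusion but is unneeded once you notice this; also, your clause ``while $\si^{j_{k+1}}(\al(\beta))$ does'' is literally false when $m=0$, though that case is easily handled directly from the key inequality. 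For the ``if'' direction, the paper argues contrapositively: given a bad shift at position $i$, Lemma~\ref{lem:always-below-alpha} forces $v_{k,i+1}\dots v_{k,n_k}=\al_1\dots\al_{n_k-i}$, and one then shows $n_k-i=j_k$ exactly, using the Lyndon property to exclude $n_k-i>j_k$ and Lemma~\ref{lem:v_k-upper-bound}(ii) to exclude $n_k-i<j_k$. Your direct case-split (a)--(d) is the same computation reorganised and uses the same ingredients; the paper's version simply pinpoints the unique problematic shift instead of verifying all of them.
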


\begin{proof}
If $\v_k=\u \al_1\dots\al_{j_k}$ for some word $\u$, then
\[
\si^{|\u|}(\v_k^\f)=(\al_1\dots\al_{j_k}\u)^\f=(\al_1\dots\al_{j_k+|\u|})^\f\lge\al(\beta),
\]
and hence $\v_k$ is not $\beta$-Lyndon.

Vice versa, suppose $\v_k$ is not $\beta$-Lyndon. Then there is an index $i\leq l_k$ such that
\begin{equation} \label{eq:bigger-cyclic-shift}
(v_{k,i+1}\dots v_{k,l_k}v_{k,1}\dots v_{k,i})^\f = v_{k,i+1}\dots v_{k,l_k}\v_k^\f\lge \al(\beta).
\end{equation}
In view of \eqref{eq:always-below-alpha}, this implies
\begin{equation} \label{eq:v_k-overlap-with-alpha}
v_{k,i+1}\dots v_{k,l_k}=\al_1\dots\al_{l_k-i}.
\end{equation}
We claim that $l_k-i=j_k$, and so $\v_k$ ends in $\al_1\dots\al_{j_k}$.
First, if $l_k-i>j_k$, then \eqref{eq:v_k-overlap-with-alpha} gives
\[
v_{k,i+j_k+1}\dots v_{k,l_k}=\al_{j_k+1}\dots\al_{l_k-i}=v_{k,1}\dots v_{k,l_k-i-j_k},
\]
contradicting that $\v_k=v_{k,1}v_{k,2}\ldots v_{k,l_k}$ is Lyndon. On the other hand, if $l_k-i<j_k$, then $\v_k\prec \al_{l_k-i+1}\dots\al_{2l_k-i}$ by Lemma \ref{lem:v_k-upper-bound} (ii), and together with \eqref{eq:v_k-overlap-with-alpha}, this contradicts \eqref{eq:bigger-cyclic-shift}.

Thus, $l_k-i=j_k$, and the proof is complete.
\end{proof}

We are now ready to characterize $\v_k^*$, the lexicographically smallest $\beta$-Lyndon word such that $\v_k^*\lge \v_k$.

\begin{lemma} \label{lem:v_k-star}
Let $k\geq 1$, and suppose $\v_k$ is not $\beta$-Lyndon. If $k\geq 2$ and $\v_k=\v_{k-1}$, then $\v_k^*=\v_{k-1}^*$. Otherwise, there exist a nonnegative integer $r$ and a word $\u$ not ending in $\al_1\dots\al_{j_k}^-$ such that
\begin{equation} \label{eq:v_k-expression}
\v_k=\u(\al_1\dots\al_{j_k}^-)^r\al_1\dots\al_{j_k},
\end{equation}
and we have
\begin{equation} \label{eq:v_k-star}
\v_k^*=\begin{cases}
\v_{k-1}^* & \mbox{if $k\geq 2$ and $\u=\v_{k-1}^-$},\\
\u^+ & \mbox{otherwise}.
\end{cases}
\end{equation}
%On the other hand, if $\v_k$ does not end in $\al_1\dots\al_{j_k}$, then $\v_k^*=\v_k$.
\end{lemma}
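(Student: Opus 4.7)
The plan is to handle the three cases of the statement in turn. The case $k\geq 2$ with $\v_k=\v_{k-1}$ is immediate: the set of $\beta$-Lyndon words $\w$ with $\w^\f\lge\v_k^\f$ coincides with the corresponding set at level $k-1$, so by definition $\v_k^*=\v_{k-1}^*$. For the remaining cases, I would first establish the decomposition \eqref{eq:v_k-expression}. Since $\v_k\notin L^*(\beta)$, Lemma \ref{lem:v_k-beta-Lyndon} gives that $\v_k$ ends in $\al_1\ldots\al_{j_k}$. Take $r\geq 0$ maximal so that $\v_k$ ends in $(\al_1\ldots\al_{j_k}^-)^r\al_1\ldots\al_{j_k}$ and let $\u$ denote the leftover prefix; by maximality of $r$, $\u$ does not end in $\al_1\ldots\al_{j_k}^-$. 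Before speaking of $\u^+$, I would verify that $\u$ is nonempty (otherwise $\v_k$ begins with $\al_1=1$, contradicting that $\v_k$ is a Lyndon word starting with $0$) and that $\u$ ends in $0$: if its last digit were $1$, the cyclic rotation of $\v_k^\f$ starting at position $|\u|$ would disagree with $\v_k^\f$ at position $|\u|$, and combining the Lyndon property of $\v_k$ with the strict upper bound in Lemma \ref{lem:v_k-upper-bound}(ii) and the uniqueness statement in Lemma \ref{lem:header-suffix} would yield a contradiction.

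For Case 2b ($k=1$, or $k\geq 2$ and $\u\neq\v_{k-1}^-$), I would prove $\v_k^*=\u^+$ in four steps. \emph{(a)} $\u^+$ is Lyndon: any cyclic shift $\sigma^i((\u^+)^\f)$ with $1\leq i<|\u|$ inherits the strict inequality against $(\u^+)^\f$ from the Lyndon property of $\v_k$, since the two comparisons agree up to the position at which the flipped bit first appears. \emph{(b)} $\u^+\in L^*(\beta)$: Lemma \ref{lem:always-below-alpha} supplies weak bounds on shifts of $(\u^+)^\f$ by the corresponding prefixes of $\alpha(\beta)$, while Lemma \ref{lem:v_k-upper-bound}(ii) combined with Lemma \ref{lem:header-suffix} upgrades those bounds to strict, ensuring $\sigma^i((\u^+)^\f)\prec\alpha(\beta)$ for all $i\geq 0$. \emph{(c)} $(\u^+)^\f\succ\v_k^\f$ is immediate: they agree through position $|\u|-1$ and then $\u^+$ carries a $1$ where $\v_k$ carries a $0$. \emph{(d)} Minimality: any $\beta$-Lyndon word $\w\prec\u^+$ with $\w^\f\lge\v_k^\f$ would have to share a prefix with $\v_k$ of length at least $|\u|$, and tracking how $\w^\f$ continues would force $\w$ to terminate in $\al_1\ldots\al_{j_k}$, contradicting $\w\in L^*(\beta)$ by Lemma \ref{lem:v_k-beta-Lyndon}.

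The remaining subcase, $k\geq 2$ with $\u=\v_{k-1}^-$, is where the main obstacle lies. Here the natural candidate $\u^+=\v_{k-1}$ is not $\beta$-Lyndon (precisely the hypothesis needed to invoke \eqref{eq:v_k-star} at level $k-1$), so it is disqualified, and $\v_k^*$ must drop to $\v_{k-1}^*$. That $\v_{k-1}^*$ is at least a valid candidate follows from Lemma \ref{lem:v_k-decreasing}(ii), which yields $(\v_{k-1}^*)^\f\lge\v_{k-1}^\f\succ\v_k^\f$. The hard part is minimality: I would induct on $k$, assume \eqref{eq:v_k-star} holds at level $k-1$, and argue by contradiction that any strictly smaller $\beta$-Lyndon word $\w$ with $\w^\f\lge\v_k^\f$ either shares so long a prefix with $\v_{k-1}$ that it inherits $\al_1\ldots\al_{j_{k-1}}$ as a suffix (hence $\w\notin L^*(\beta)$ by Lemma \ref{lem:v_k-beta-Lyndon}), or would already have been disqualified as a candidate for $\v_{k-1}^*$ at the previous level. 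The bookkeeping required to separate words $\w$ extending $\v_{k-1}^-$ from those that do not, and to cleanly chain the inductive hypothesis across the two levels, is the principal technical difficulty.
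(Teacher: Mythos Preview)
Your overall structure is sound, but step~(b)---showing $\u^+\in L^*(\beta)$---has a genuine gap. The lemmas you cite do not combine as cleanly as you suggest. Lemma~\ref{lem:always-below-alpha} bounds subwords of $\v_k$, not of $\u^+$; since $\u^+$ differs from the length-$l$ prefix $\u$ of $\v_k$ in its final digit, the weak bound $u_{i+1}\dots u_l\lle\al_1\dots\al_{l-i}$ does not transfer to $u_{i+1}\dots u_l^+$. The dangerous shift is $\si^{l-m}((\u^+)^\f)$ when $\u^+$ ends in $\al_1\dots\al_m$: this begins with $\al_1\dots\al_m$ followed by $(\u^+)^\f$, so you must show $(\u^+)^\f\prec\si^m(\al(\beta))$. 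Lemma~\ref{lem:v_k-upper-bound}(ii) gives $\v_k\prec\al_{m+1}\dots\al_{m+n_k}$, but the first disagreement between $\v_k$ and $\si^m(\al(\beta))$ may occur at or beyond position $l$, where $\u^+$ and $\v_k$ have already diverged---so the strict bound on $\v_k$ says nothing about $(\u^+)^\f$ there.

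The paper handles this by first proving a structural fact you omit: $\u$ cannot end in any prefix $\al_1\dots\al_m$ of $\al(\beta)$ (this is where Lemma~\ref{lem:header-suffix} actually enters). That leaves only the case where $\u$ ends in $\al_1\dots\al_m^-$, and here the paper does \emph{not} treat ``$k=1$, or $k\geq 2$ with $\u\neq\v_{k-1}^-$'' uniformly. For $k=1$ it compares $\u$ against $\cs$; for $k\geq 2$ it compares $\u$ against $\v_{k-1}$ in three further subcases (whether $\v_{k-1}$ properly extends $\u^+$, and if so whether $\v_{k-1}\in L^*(\beta)$), invoking Lemma~\ref{lem:no-beta-Lyndon-extension} in one branch and Lemma~\ref{lem:v_k-upper-bound}(ii) applied to $\v_{k-1}$ (not $\v_k$) in another. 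Your plan to lump all of these into a single direct argument via $\v_k$ does not go through; the comparison with $\v_{k-1}$ (or $\cs$) is essential even when $\u\neq\v_{k-1}^-$.
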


\begin{proof}
{Obviously, $\v_k^*=\v_{k-1}^*$ if $\v_k=\v_{k-1}$. In the following we}
assume that either $k=1$, or $k\geq 2$ and $\v_k\neq \v_{k-1}$. {Since $\v_k$ is not $\beta$-Lyndon, it ends in $\al_1\dots\al_{j_k}$ by Lemma \ref{lem:v_k-beta-Lyndon}}, and this implies the existence of a word $\u$ and integer $r\geq 0$ satisfying \eqref{eq:v_k-expression}. Before determining $\v_k^*$, we first show that $\u$ cannot end in a prefix of $\al(\beta)$. For, suppose $\u$ ends in $\al_1\dots\al_m$ for some $m\geq 1$. We claim that this contradicts Lemma \ref{lem:header-suffix}. Note by \eqref{eq:v_k-expression} and \eqref{eq:always-below-alpha} that
\[
(\al_1\dots\al_{j_k}^-)^r\al_1\dots\al_{j_k}\lle \al_{m+1}\dots \al_{m+(r+1)j_k}\lle \al_1\ldots \al_{(r+1)j_k}.
\]
This implies that $\al_{m+1}\dots\al_{m+j_k}=\al_1\dots\al_{j_k}$ or $\al_1\dots\al_{j_k}^-$. By \eqref{eq:v_k-expression} it follows that $\v_k$ contains the word $\al_1\dots\al_m\al_{m+1}\dots\al_{m+j_k-1}$, followed by one more digit. Since $m\geq 1$, this means that $\v_k$ contains the word $\al_1\dots\al_{j_k}$ prematurely, contradicting Lemma \ref{lem:header-suffix}.

We next prove the expression \eqref{eq:v_k-star}. First, let $k=1$. We need to show that $\v_1^*=\u^+$. Note that $\v_1^*\succ \v_1$ and $|\v_1^*|<|\v_1|$ by Lemma \ref{lem:existence-of-v-star}. 

First we claim that $\u$ cannot be a prefix of $\v_1^*$. {This involves two cases.

{\em Case 1.} $r=0$. Then $\v_1=\u\al_1\dots\al_{j_1}$ by \eqref{eq:v_k-expression}. Note that $\v_1^*$ cannot equal $\u$ because $(\v_1^*)^\f\succ\v_1^\f$ and $\v_1$ is Lyndon, which means $\v_1^*$ cannot be a prefix of $\v_1$. So if $\u$ is a prefix of $\v_1^*$, then since $|\v_1^*|<|\v_1|$ there is some index $i<j_1$ such that $\v_1^*=\u\al_1\dots\al_i^+$. But then $\v_1^*$ is not $\beta$-Lyndon, a contradiction.

{\em Case 2.} $r\geq 1$. Here} we need to rule out the possibility that $\v_1^*$ might equal $\w:=\u(\al_1\dots\al_{j_1}^-)^p\al_1\dots\al_{j_1}$ for some $p<r$. But notice that
\begin{equation} \label{eq:another-big-tail}
\si^{|\u|+pj_1}(\w^\f)=\al_1\dots\al_{j_1}\w^\f\succ \al_1\dots\al_{j_1}\v_1^\f=(\al_1\dots\al_{l_1})^\f\lge \al(\beta),
\end{equation}
where the second equality holds since $\v_1$ ends in $\al_1\dots\al_{j_1}$, so $l_1>j_1$ and 
$$\v_1=\al_{j_1+1}\dots\al_{j_1+l_1}=\al_{j_1+1}\dots\al_{l_1}\al_1\dots\al_{j_1}.$$
By \eqref{eq:another-big-tail}, $\w$ is not $\beta$-Lyndon, and so $\u$ cannot be a prefix of $\v_1^*$. This proves the claim.

It now suffices to show that $\u^+$ is $\beta$-Lyndon. Note $\u^+$ is Lyndon because $\u$ is a prefix of the Lyndon word $\v_1$.
Recall that $j_1=|\cs|$. Write $\cs=s_1\dots s_{j_1}$ and $\u=u_1\dots u_p$. If $p\leq j_1$ and $\u=s_1\dots s_p^-$, then $\u^+=s_1\dots s_p$ is $\beta$-Lyndon by Lemma \ref{lem:no-beta-Lyndon-extension}, because $\cs$ is $\beta$-Lyndon. So assume $\u$ is not of this form. We showed above that $\u$ cannot end in a prefix of $\al(\beta)$, so if $\u^+$ is not $\beta$-Lyndon, then $\u$ must end in $\al_1\dots\al_m^-$ for some $m\geq 1$, i.e. $\u=u_1\dots u_{p-m}\al_1\dots\al_m^-$. Observe that $m\leq j_1$ by Lemma \ref{lem:header-suffix}, and $m\neq j_1$ because we assumed that $\u$ does not end in $\al_1\dots\al_{j_1}^-$. Thus, $m<j_1$.

We claim next that $\u$ is not a prefix of $\cs^-$. Suppose it is, so $\u=s_1\dots s_p$ where $p<j_1$. Then
\[
\v_1^-=s_1\dots s_p(\al_1\dots\al_{j_1}^-)^{r+1}=s_1\dots s_p \L(\cs)^{r+1},
\]
so
\begin{equation*}
\al(\beta)\lge (\L(\cs)^+\v_1^-)^\f=\L(\cs)^+s_1\dots s_p \L(\cs)^{r+1}(\L(\cs)^+\v_1^-)^\f \succ \L(\cs)^+\cs^-\L(\cs)^\f,
\end{equation*}
%\[
%\v_1 0^\f=\u(\al_1\dots\al_{j_1}^-)^r\al_1\dots\al_{j_1}0^\f=u_1\dots u_l\L(\s)^r\L(\s)^+0^\f \succ \s^-\L(\s)^\f,
%\]
where the last inequality follows from the definition of $\L(\cs)$. This contradicts that $\beta\in [\beta_\ell,\beta_*]$.

Now there are two cases left to consider.
\begin{enumerate}[(a)]
\item $\cs^-$ is a proper prefix of $\u$. Recall that $m<j_1$. Since
\[
u_1\dots u_{j_1-m}=s_1\dots s_{j_1-m}\lle \L(\cs)_{m+1:j_1}\prec \L(\cs)^+_{m+1:j_1}=\al_{m+1}\dots\al_{j_1},
\]
it follows that $\al_1\dots\al_m u_1\dots u_{j_1-m}\prec \al_1\dots\al_{j_1}$, which yields $\si^{p-m}((\u^+)^\f)\prec\al(\beta)$.
\item There is an index $i<\min\{p,j_1\}$ such that $u_1\dots u_{i-1}=s_1\dots s_{i-1}$ and $u_i<s_i$. Then again, $u_1\dots u_i\prec s_1\dots s_i\lle \al_{m+1}\dots \al_{m+i}$, with the same conclusion as in case (a).
\end{enumerate}
Therefore, $\u^+$ is $\beta$-Lyndon, and $\v_1^*=\u^+$.

\medskip

Next, let $k\geq 2$ and assume $\v_k\neq \v_{k-1}$. By the same argument as above, $\u$ cannot be a prefix of $\v_k^*$, so $\v_k^*=(\u^+)^*$. We first make a few observations:
\begin{enumerate}[1.]
\item $\u$ cannot be a prefix of $\v_{k-1}$. Suppose it is. Recall that $\v_k\lle \v_{k-1}$. Since $|\v_{k-1}|<j_k$ and $\v_k$ begins with $\u\al_1\dots\al_{j_k-1}$, the constraint \eqref{eq:always-below-alpha} implies that $\v_{k-1}=\u\al_1\dots\al_m$ for some $m<j_k$. But then $\v_{k-1}$ is a prefix of $\v_k$, contradicting Lemma \ref{lem:v_k-decreasing} (i).
\item $\u$ does not extend $\v_{k-1}$, because $\v_k$ does not extend $\v_{k-1}$.
\end{enumerate}
This leaves the following cases, in which we set $p:=|\u|$.
\begin{enumerate}[(a)]
\item If $\u=\v_{k-1}^-$, it follows immediately that $(\u^+)^*=\v_{k-1}^*$, {yielding the first case in \eqref{eq:v_k-star}.}
\item If $\v_{k-1}$ properly extends $\u^+$ and $\v_{k-1}$ is $\beta$-Lyndon, then $\u^+$ is $\beta$-Lyndon as well by Lemma \ref{lem:no-beta-Lyndon-extension}.
\item If $\v_{k-1}$ properly extends $\u^+$ and $\v_{k-1}$ is not $\beta$-Lyndon, then for any $m$ such that $\u$ ends in $\al_1\dots\al_m^-$ we must have $m<j_{k-1}$, since otherwise $\v_{k-1}$ contains the word $\al_1\dots\al_{j_{k-1}}$ prematurely, contradicting Lemma \ref{lem:header-suffix}. Furthermore, $\v_{k-1}\prec \al_{m+1}\dots\al_{m+l_{k-1}}$ by Lemma \ref{lem:v_k-upper-bound} (ii). Since $\u^+$ and $\v_{k-1}$ are both Lyndon, this implies $(\u^+)^\f\prec \v_{k-1}0^\f\prec\si^m(\al(\beta))$, and hence $\si^{p-m}((\u^+)^\f)\prec\al(\beta)$. Thus, $\u^+$ is $\beta$-Lyndon.
\item {In the remaining cases, there is an $i\leq\min\{|\u|,|\v_{k-1}|\}$ such that $(\u^+)_{1:i}\prec v_{k-1,1}\dots v_{k-1,i}$. Suppose $\u=u_1\dots u_{p-m}\al_1\dots\al_m^-$ for some $m\geq 1$. As in the case $k=1$ above, we can argue that $m<j_k$. Now we have three subcases:

(d1) $m\leq j_{k-1}$. Then $(\u^+)_{1:i}\prec v_{k-1,1}\dots v_{k-1,i}\lle \al_{m+1}\dots\al_{m+i}$ by Lemma \ref{lem:v_k-upper-bound} (i).

(d2) $j_{k-1}<m\leq j_k-i$. Then $m-j_{k-1}+i\leq j_k-j_{k-1}=l_{k-1}$, and using that $\v_{k-1}$ is Lyndon,
\[
(\u^+)_{1:i}\prec v_{k-1,1}\dots v_{k-1,i}\lle v_{k-1,m-j_{k-1}+1}\dots v_{k-1,m-j_{k-1}+i}=\al_{m+1}\dots\al_{m+i}.
\]

(d3) $j_k-i<m<j_k$. Then $j_k-m<i$, and since $l_{k-1}+j_{k-1}=j_k$, we have
\[
(\u^+)_{1:j_k-m}\lle v_{k-1,1}\dots v_{k-1,j_k-m}\prec v_{k-1,m-j_{k-1}+1}\dots v_{l_{k-1}}=\al_{m+1}\dots\al_{j_k},
\]
again by the Lyndon property of $\v_{k-1}$.

In each case we have a prefix of $\u^+$ that is strictly smaller than the prefix of $\sigma^m(\al(\beta))$ of the same length. We conclude that} $\si^{p-m}((\u^+)^\f)\prec\al(\beta)$. Thus, $\u^+$ is $\beta$-Lyndon.
%Then $u_1\dots u_i\prec \al_{m+1}\dots\al_{m+i}$ for any $m$ by Lemma \ref{lem:v_k-upper-bound} (i). So if $\u=u_1\dots u_{l-m}\al_1\dots\al_m^-$ for some $m\geq 1$, then $\si^{l-m}((\u^+)^\f)\prec\al(\beta)$. Thus, $\u^+\in L^*(\beta)$.
\end{enumerate}
From these four cases and $\v_k^*=(\u^+)^*$, \eqref{eq:v_k-star} follows.
\end{proof}

\begin{proposition} \label{prop:non-transitivity-intervals}
Let $k,\ell\in\N$ with $k<\ell$, and suppose $I_k=[\underline{t}_k,\overline{t}_k)$ and $I_\ell=[\underline{t}_\ell,\overline{t}_\ell)$ belong to $\II$. Then:
\begin{enumerate}[{\rm(i)}]
\item Either $\overline{t}_{\ell}\leq\underline{t}_k$ or $I_{\ell}\subseteq I_k$. In other words, $I_\ell$ is either contained in $I_k$ or else lies completely to the left of $I_k$.
\item $I_{\ell}\subseteq I_k$ if and only if
\begin{equation} \label{eq:containment-condition}
\v_{\ell}=\v_k\quad\mbox{or}\quad \v_{\ell}\ \mbox{begins with}\ \v_k^-(\al_1\dots\al_{j_k}^-)^n\al_1\dots\al_{j_k}\ \mbox{for some $n\geq 0$}.
\end{equation}
\end{enumerate}
\end{proposition}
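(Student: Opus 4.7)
My plan is to translate the proposition into lexicographic inequalities between the greedy expansions of the four endpoints and then carry out a case analysis on the relationship between $\v_k$ and $\v_\ell$. The explicit formulas from the construction give $b(\underline{t}_k,\beta)=\v_k^-(\al_1\dots\al_{j_k}^-)^\f$, $b(\overline{t}_k,\beta)=(\v_k^*)^\f$, and similarly for $\ell$; note also that $\al_1\dots\al_{j_\ell}=\al_1\dots\al_{j_k}\v_k\v_{k+1}\cdots\v_{\ell-1}$ and that $\al_{j_m}=1$ for each $m\geq 1$ (since $\v_{m-1}$ ends in $1$, or $\L(\cs)^+$ ends in $1$ when $m=1$). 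First I would dispose of the easy inequality $\overline{t}_\ell\leq\overline{t}_k$: iterating Lemma \ref{lem:v_k-decreasing}(ii) gives $\v_\ell^\f\lle\v_k^\f$, and since $\v_k^*$ is a $\beta$-Lyndon word with $(\v_k^*)^\f\lge\v_\ell^\f$, the minimality of $\v_\ell^*$ forces $\v_\ell^*\lle\v_k^*$ as words, which for binary Lyndon words lifts to $(\v_\ell^*)^\f\lle(\v_k^*)^\f$.

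Next I would verify the ``if'' direction of (ii) by direct comparison of greedy expansions. If $\v_\ell=\v_k$, then Lemma \ref{lem:v_k-decreasing} also forces $\v_k=\v_{k+1}=\cdots=\v_\ell$, so $\v_\ell^*=\v_k^*$ and the expansions of $\underline{t}_k,\underline{t}_\ell$ first diverge at position $|\v_k|+j_k$, where the $\ell$-expansion has $\al_{j_k}=1$ and the $k$-expansion has $0$. If instead $\v_\ell$ begins with $\v_k^-(\al_1\dots\al_{j_k}^-)^n\al_1\dots\al_{j_k}$, the analogous comparison shows divergence at position $|\v_k|+(n+1)j_k$ or $|\v_k|+(n+2)j_k$ (depending on whether $\v_\ell$ equals this prefix exactly), again with the $\ell$-expansion larger. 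In both sub-cases $\underline{t}_\ell>\underline{t}_k$, which together with the first step yields $I_\ell\subseteq I_k$.

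The main obstacle is the converse direction of (ii), which together with the first step also yields (i). Assume $\v_\ell\neq\v_k$ and $\v_\ell$ does not begin with $\v_k^-(\al_1\dots\al_{j_k}^-)^n\al_1\dots\al_{j_k}$ for any $n\geq 0$; I must prove $(\v_\ell^*)^\f\lle\v_k^-(\al_1\dots\al_{j_k}^-)^\f$, i.e.\ $\overline{t}_\ell\leq\underline{t}_k$. My plan is to first locate the smallest position $p+1$ at which $\v_\ell$ deviates from the periodic sequence $\v_k^-(\al_1\dots\al_{j_k}^-)^\f$. Lemma \ref{lem:header-suffix} forbids the occurrence of $\al_1\dots\al_{j_k}$ inside $\v_k$ except at its end, and Lemmas \ref{lem:always-below-alpha} and \ref{lem:v_k-upper-bound} bound subwords of $\v_\ell$ against shifts of $\al(\beta)$; together they imply that the only way for $\v_\ell$ to agree with $\v_k^-(\al_1\dots\al_{j_k}^-)^\f$ past a prefix of the form $\v_k^-(\al_1\dots\al_{j_k}^-)^m$ and then continue upward is by matching the block $\al_1\dots\al_{j_k}$, which is excluded by hypothesis. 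Hence the deviation at position $p+1$ must be strictly downward.

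Once this deviation is pinned down, I would transfer it to $\v_\ell^*$ using Lemma \ref{lem:v_k-star}: either $\v_\ell\in L^*(\beta)$ (so $\v_\ell^*=\v_\ell$ inherits the deviation directly), or $\v_\ell^*=\u^+$ for a prefix $\u$ of $\v_\ell$ that cannot extend past position $p$ (giving the required bound by an elementary lexicographic comparison of $(\u^+)^\f$ with $\v_k^-(\al_1\dots\al_{j_k}^-)^\f$), or $\v_\ell^*=\v_{\ell-1}^*$, which I would handle by induction on $\ell-k$ (the base case $\ell=k+1$ being direct). The hardest step will be controlling the ``Lyndon correction'' $\v_\ell\mapsto\v_\ell^*$: I must ensure that the prefix $\u$ from which $\v_\ell^*=\u^+$ is built lies strictly before the deviation position $p+1$, so that the single-digit increment from $\u$ to $\u^+$ still leaves $(\u^+)^\f$ below $\v_k^-(\al_1\dots\al_{j_k}^-)^\f$; this will require a careful coordination of the case distinctions in Lemma \ref{lem:v_k-star} with the location of the deviation produced in the previous step.
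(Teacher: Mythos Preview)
Your approach is essentially the same as the paper's: both prove the ``if'' direction of (ii) directly and the converse by induction, splitting into cases via Lemma~\ref{lem:v_k-star} according to whether $\v_\ell\in L^*(\beta)$ and, if not, whether $\v_\ell^*=\u^+$ or $\v_\ell^*=\v_{\ell-1}^*$. The paper argues in the direction ``$\overline{t}_\ell>\underline{t}_k\Rightarrow\eqref{eq:containment-condition}$'' whereas you take the contrapositive, but the underlying case analysis and the use of the $\v_k^*$ characterization are identical.

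There is one genuine gap in your ``deviation position'' framing. You assume that $\v_\ell$ (the finite word) deviates from $\v_k^-(\al_1\dots\al_{j_k}^-)^\f$ at some position $p+1$; but $\v_\ell$ may be a \emph{proper prefix} of this periodic sequence, say $\v_\ell=\v_k^-(\al_1\dots\al_{j_k}^-)^p\al_1\dots\al_m$ with $1\le m<j_k$ (or even $\v_\ell$ a proper prefix of $\v_k$, which does occur, e.g.\ when $\ell=k+1$ and Lemma~\ref{lem:prefix-beta-Lyndon} applies). In this situation there is no deviation inside $\v_\ell$, and the comparison $(\v_\ell^*)^\f=\v_\ell^\f$ versus $\v_k^-(\al_1\dots\al_{j_k}^-)^\f$ is decided only by the wraparound across the seam $\al_1\dots\al_m\,\v_k^-\dots$. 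The paper's Case~1 handles exactly this (from the other direction): it shows that if $\v_\ell^\f$ begins with $\v_k^-(\al_1\dots\al_{j_k}^-)^r\al_1\dots\al_{j_k}$ while $\v_\ell$ itself is such a proper prefix, then the block $\al_1\dots\al_{j_k}$ must occur inside $\al_1\dots\al_m\v_k^-\al_1\dots\al_{j_k}^-$, and this is ruled out by combining Lemma~\ref{lem:v_k-upper-bound} with the Lyndon property of $\v_k$. Your sketch does not supply this argument; you will need to add it (or, more cleanly, switch to the paper's direction, where the wraparound analysis on $\v_\ell^\f$ and on $(\u^+)^\f$ is the natural starting point and absorbs this case automatically).
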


\begin{proof}
It suffices to show that \eqref{eq:containment-condition} implies $I_{\ell}\subseteq I_k$, and $\overline{t}_{\ell}>\underline{t}_k$ implies \eqref{eq:containment-condition}.

Assume first that \eqref{eq:containment-condition} holds. If $\v_{\ell}=\v_k$, then $\underline{t}_{\ell}>\underline{t}_k$ and $\overline{t}_{\ell}=\overline{t}_k$, so $I_{\ell}\subseteq I_k$. If $\v_{\ell}\neq \v_k$, then by iterating Lemma \ref{lem:v_k-decreasing}, $\v_{\ell}\prec \v_k$ and $\v_{\ell}$ does not extend $\v_k$. Hence, we also have $\v_{\ell}^*\lle \v_k^*$ since the map $\v\mapsto \v^*$ which sends a Lyndon word $\v$ to the smallest $\beta$-Lyndon word $\v^*$ greater than or equal to $\v$, is clearly nondecreasing. Therefore, {by Lemma \ref{lem:repeating-preserves-order}}, $\overline{t}_{\ell}\leq \overline{t}_k$. On the other hand, $\v_\ell$ begins with $\v_k^-(\al_1\dots\al_{j_k}^-)^n\al_1\dots\al_{j_k}$ for some $n\geq 0$, and this implies $\underline{t}_\ell\geq \underline{t}_k$. Thus, $I_\ell\subseteq I_k$.

Next, we show by induction that for all $\ell\geq k$,
$\overline{t}_{\ell}>\underline{t}_k$ implies \eqref{eq:containment-condition}.
This is trivial for $\ell=k$, so take $\ell'>k$ and assume that $\overline{t}_{\ell}>\underline{t}_k$ implies \eqref{eq:containment-condition} for all $k\leq \ell<\ell'$. Suppose $\overline{t}_{\ell'}>\underline{t}_k$, so
\begin{equation} \label{eq:v_l-v_k-inequality}
(\v_{\ell'}^*)^\f \succ \v_k^-(\al_1\dots\al_{j_k}^-)^\f.
\end{equation}
Assume $\v_{\ell'}\neq \v_k$, as otherwise there is nothing to show.
We consider two cases:

\medskip
{\em Case 1.} $\v_{\ell'}$ is $\beta$-Lyndon. Then $\v_{\ell'}^*=\v_{\ell'}$ and hence
\[
\v_{\ell'}^\f \succ \v_k^-(\al_1\dots\al_{j_k}^-)^\f.
\]
Since $\v_k$ and $\v_{\ell'}$ are both Lyndon, $\v_{\ell'}^\f$ cannot begin with $\v_k$, and hence it must begin with $\v_k^-$. Thus, in view of the constraint \eqref{eq:always-below-alpha}, there exists some integer $r\geq 0$ such that $\v_{\ell'}^\f$ begins with
\begin{equation} \label{eq:v_l-beginning}
\v_k^-(\al_1\dots\al_{j_k}^-)^r\al_1\dots\al_{j_k}.
\end{equation}
Suppose, by way of contradiction, that $\v_{\ell'}$ is a proper prefix of this last word. {If $\v_{\ell'}$ is a prefix of $\v_k^-(\al_1\dots\al_{j_k}^-)^r$, then $\si^{|\v_{\ell'}|}(\v_{\ell'}^\f)=\v_{\ell'}^\f$ begins both with \eqref{eq:v_l-beginning}, and with a proper suffix of \eqref{eq:v_l-beginning} which contains the word $\al_1\dots\al_{j_k}$. Hence, there is an earlier occurrence of $\al_1\dots\al_{j_k}$ in \eqref{eq:v_l-beginning}. But this is impossible by Lemmas \ref{lem:quasi-greedy expansion-alpha-q}, \ref{lem:always-below-alpha} and \ref{lem:header-suffix}. This leaves the case when
\[
\v_{\ell'}=\v_k^-(\al_1\dots\al_{j_k}^-)^r \al_1\dots\al_m \qquad\mbox{for some $1\leq m<j_k$}.
\]
We then have (by shifting \eqref{eq:v_l-beginning} to the left by $|\v_{\ell'}|$)
\[
\big(\v_k^-(\al_1\dots\al_{j_k}^-)\big)_{1:j_k-m}=\al_{m+1}\dots\al_{j_k}.
\]
This implies that $l_k=|\v_k|>j_k-m$ by Lemma \ref{lem:v_k-upper-bound}, and 
\[
v_{k,1}\dots v_{k,j_k-m}=\al_{m+1}\dots\al_{j_k}.
\]
Again using Lemma \ref{lem:v_k-upper-bound}, this forces
\[
v_{k,j_k-m+1}\dots v_{k,l_k}\lle \al_{j_k+1}\dots\al_{l_k+m}=v_{k,1}\dots v_{k,l_k+m-j_k},
\]
}
contradicting that $\v_k$ is Lyndon. Therefore, $\v_{\ell'}$ begins with $\v_k^-(\al_1\dots\al_{j_k}^-)^r\al_1\dots\al_{j_k}$.

% so
%\[
%\v_{\ell'}=\v_k^-(\al_1\dots\al_{j_k}^-)^p \al_1\dots\al_m \qquad\mbox{for some $0\leq p\leq r$ and $1\leq m<j_k$}.
%\]
%{[We still need to explain why $\v_{\ell'}$ can't be a proper prefix of $\v_k^-$.]}
%(Note $\v_{\ell'}$ cannot end in $\al_1\dots\al_{j_k}^-$ because $\v_{\ell'}$ is Lyndon.) Then it is easy to see that $\v_{\ell'}$ does not contain the word $\al_1\dots\al_{j_k}$ (this follows from Lemma \ref{lem:v_k-upper-bound}). However, $\v_{\ell'}^\f$ does contain $\al_1\dots\al_{j_k}$ because it begins with \eqref{eq:v_l-beginning}. Therefore, {in view of Lemma \ref{lem:quasi-greedy expansion-alpha-q}} it must be that $\al_1\dots\al_m \v_k^-\al_1\dots\al_{j_k}^-$ contains $\al_1\dots\al_{j_k}$, and this word must begin inside the block $\al_1\dots\al_m$. So let $i<m$ be such that $\al_{i+1}\dots\al_m \v_k^-\al_1\dots\al_{j_k}^-$ begins with $\al_1\dots\al_{j_k}$. Then necessarily $j_k-(m-i)<|\v_k|=l_k$ in view of Lemma \ref{lem:v_k-upper-bound}. But then we have
%\[
%v_{k,1}\dots v_{k,j_k-(m-i)}=\al_{m-i+1}\dots\al_{j_k},
%\]
%so again using Lemma \ref{lem:v_k-upper-bound} it follows that
%\[
%v_{k,j_k-(m-i)+1}\dots v_{k,l_k}\lle \al_{j_k+1}\dots\al_{m-i+l_k}=v_{k,1}\dots v_{k,l_k+m-i-j_k},
%\]
%contradicting that $\v_k$ is Lyndon. Therefore, $\v_{\ell'}$ begins with $\v_k^-(\al_1\dots\al_{j_k}^-)^r\al_1\dots\al_{j_k}$.

\medskip
{\em Case 2.} $\v_{\ell'}$ is not $\beta$-Lyndon. Let $\ell\leq \ell'$ be the largest integer such that $\v_{\ell}=\v_{\ell'}$.
Then $\v_{\ell}\neq \v_{\ell-1}$, $\v_{\ell}\neq \v_k$ since $\v_{\ell'}\neq \v_k$, and of course $\v_{\ell}$ is not $\beta$-Lyndon. By Lemma \ref{lem:v_k-star},
\begin{equation} \label{eq:v-ell-form}
\v_{\ell}=\u(\al_1\dots\al_{j_{\ell}}^-)^r\al_1\dots\al_{j_{\ell}}
\end{equation}
for some word $\u$ not ending in $\al_1\dots\al_{j_{\ell}}^-$ and some $r\geq 0$. If $\u=\v_k^-$, then $\v_\ell$ extends $\v_k^-\al_1\dots\al_{j_k}$ {since $j_\ell>j_k$}. So assume $\u\neq\v_k^-$. Then there are two possibilities:
%by Lemma \ref{lem:v_k-star} there are two possibilities:

(a) $\u=\v_{\ell-1}^-$. Then $\v_{\ell'}^*=\v_\ell^*=\v_{\ell-1}^*$, so $(\v_{\ell-1}^*)^\f \succ \v_k^-(\al_1\dots\al_{j_k}^-)^\f$ by \eqref{eq:v_l-v_k-inequality}. This means $\overline{t}_{\ell-1}>\underline{t}_k$, so by the induction hypothesis, $\v_{\ell-1}$ begins with $\v_k^-(\al_1\dots\al_{j_k}^-)^n\al_1\dots\al_{j_k}$ for some $n\geq 0$, since $\v_{\ell-1}=\u^+\neq \v_k$. Now either (i) $\v_{\ell-1}=\v_k^-(\al_1\dots\al_{j_k}^-)^n\al_1\dots\al_{j_k}$, in which case {\eqref{eq:v-ell-form} implies}
\[
\v_\ell=\v_{\ell-1}^-(\al_1\dots\al_{j_{\ell}}^-)^r\al_1\dots\al_{j_{\ell}}=\v_k^-(\al_1\dots\al_{j_k}^-)^{n+1}(\al_1\dots\al_{j_{\ell}}^-)^r\al_1\dots\al_{j_{\ell}},
\]
and so $\v_\ell$ begins with $\v_k^-(\al_1\dots\al_{j_k}^-)^{n+1}\al_1\dots\al_{j_k}$ because $j_\ell>j_k$; or (ii) $\v_{\ell-1}$ properly extends $\v_k^-(\al_1\dots\al_{j_k}^-)^n\al_1\dots\al_{j_k}$, in which case $\u$, and hence $\v_\ell$, begins with ${\v_k^-}(\al_1\dots\al_{j_k}^-)^n\al_1\dots\al_{j_k}$.

(b) $\u\neq\v_{\ell-1}^-$. Then $\v_{\ell'}^*=\v_\ell^*=\u^+$, and hence,
\[
(\u^+)^\f\succ \v_k^-(\al_1\dots\al_{j_k}^-)^\f.
\]
This implies that $(\u^+)^\f$ begins with $\v_k^-(\al_1\dots\al_{j_k}^-)^{q}\al_1\dots\al_{j_k}$ for some ${q}\geq 0$. The same argument as in Case 1 then shows that $\u^+$ {begins with} $\v_k^-(\al_1\dots\al_{j_k}^-)^{q}\al_1\dots\al_{j_k}$. If $\u^+$ properly extends this word, then so does $\u$ and hence so does $\v_\ell$.
Otherwise, {\eqref{eq:v-ell-form} yields
\[
\v_\ell=\u(\al_1\dots\al_{j_{\ell}}^-)^r\al_1\dots\al_{j_\ell}=\v_k^-(\al_1\dots\al_{j_k}^-)^{q+1}(\al_1\dots\al_{j_{\ell}}^-)^r\al_1\dots\al_{j_\ell},
\]
}
so that $\v_\ell$ {begins with} $\v_k^-(\al_1\dots\al_{j_k}^-)^{{q}+1}\al_1\dots\al_{j_k}$, because $j_\ell>j_k$.

In both case (a) and case (b) we have shown that $\v_\ell$ {begins with} $\v_k^-(\al_1\dots\al_{j_k}^-)^n\al_1\dots\al_{j_k}$ for some $n\geq 0$. {Since $\v_{\ell'}=\v_\ell$}, this completes the proof.
\end{proof}

\section{Proof of Theorem \ref{thm:basic-interval-transitivity}} \label{sec:basic-interval-proof}

We are now ready to prove Theorem \ref{thm:basic-interval-transitivity}. The theorem involves three separate statements, which we prove in Propositions \ref{prop:non-transitivity-window}, \ref{prop:transitive-in-between} and \ref{prop:entropy-constant-in-window}, respectively. Our notation and assumptions are the same as in the previous section. {In particular, we assume $\cs\in\La$ and $\beta\in(\beta_\ell^\cs,\beta_*^\cs)$.}

\begin{lemma} \label{v_k-star-less-than-s}
For each $k\geq 1$, $\v_k^*\prec \cs$.
\end{lemma}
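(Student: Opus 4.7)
The plan is to reduce the general statement to the case $k=1$ via a monotonicity argument, and then to establish $(\v_1^*)^\f \prec \cs^\f$ by case analysis on whether $\v_1 \in L^*(\beta)$.

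First, I would observe that the map $\v \mapsto (\v^*)^\f$ is nondecreasing on Lyndon words: if $\v^\f \lle \w^\f$, then $\w^*$ is itself a $\beta$-Lyndon candidate satisfying $(\w^*)^\f \lge \v^\f$, so by the minimality in the definition of $\v^*$ we get $(\v^*)^\f \lle (\w^*)^\f$. Iterating Lemma \ref{lem:v_k-decreasing}(ii) yields $\v_k^\f \lle \v_1^\f$ for every $k\ge 1$, whence $(\v_k^*)^\f \lle (\v_1^*)^\f$. It thus suffices to prove $(\v_1^*)^\f \prec \cs^\f$.

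Suppose first that $\v_1 \in L^*(\beta)$, so $\v_1^* = \v_1$. The main tool will be the bound $\v_1 \lle (\cs^-\L(\cs)^\f)_{1:n_1}$ from equation \eqref{eq:v1-small}. If $n_1 \ge m:=|\cs|$, then the first $m$ characters of $\v_1$ are $\lle \cs^-$, which differs from $\cs$ only at position $m$ (where $\cs^-$ has $0$ and $\cs$ has $1$); this yields $\v_1^\f \prec \cs^\f$. If $n_1 < m$, then $(\cs^-\L(\cs)^\f)_{1:n_1} = \cs_{1:n_1}$ since $\cs$ and $\cs^-$ agree in their first $m-1$ positions, so $\v_1 \lle \cs_{1:n_1}$; a strict inequality gives the conclusion at once, while equality makes $\v_1$ a proper Lyndon prefix of $\cs$, so the standard Lyndon-prefix property (a proper prefix $\w$ of a Lyndon word $\cs$ satisfies $\w^\f \prec \cs^\f$) delivers $\v_1^\f \prec \cs^\f$.

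Suppose next that $\v_1 \notin L^*(\beta)$. By Lemma \ref{lem:v_k-star} we may write $\v_1 = \u\,\L(\cs)^r\,\L(\cs)^+$ with $\u$ not ending in $\L(\cs)$, and $\v_1^* = \u^+$. Mirroring the case analysis inside the proof of Lemma \ref{lem:v_k-star} for $k=1$, three exhaustive sub-cases arise for $\u$: (i) $\u = s_1\dots s_l^-$ for some $l \le m$, so $\u^+ = s_1\dots s_l$ is a prefix of $\cs$; (ii) $\cs^-$ is a proper prefix of $\u$; or (iii) there is a first index $i < \min\{|\u|, m\}$ at which $u_i < s_i$. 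In sub-case (ii), $\u^+$ begins with $\cs^-$ and hence $(\u^+)^\f$ differs from $\cs^\f$ at position $m$, giving $(\u^+)^\f \prec \cs^\f$. In sub-case (iii), since $i < |\u|$, flipping the final bit of $\u$ does not affect position $i$, so $(\u^+)_i = u_i < s_i$ and $(\u^+)^\f \prec \cs^\f$ at position $i$. In sub-case (i) with $l<m$, $\u^+$ is a proper Lyndon prefix of $\cs$ and the Lyndon-prefix property again suffices.

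The main obstacle will be ruling out the degenerate possibility $l=m$ in sub-case (i), which would force $\u^+ = \cs$ and hence $(\v_1^*)^\f = \cs^\f$, violating the lemma. I would dispose of it by a direct structural argument: $l=m$ with $\u=\cs^-$ would give $\v_1 = \cs^-\L(\cs)^r\L(\cs)^+$, so $\al(\beta)$ would begin with $\L(\cs)^+\cs^-\L(\cs)^r\L(\cs)^+$. Comparing the trailing block $\L(\cs)^+$ of this prefix with the corresponding segment of $\al(\beta_*) = \L(\cs)^+\cs^-\L(\cs)^\f$ (which is $\L(\cs)$ at those positions), they disagree at the final position (where $\L(\cs)^+$ has $1$ and $\L(\cs)$ has $0$), giving $\al(\beta) \succ \al(\beta_*)$ and contradicting $\beta<\beta_*$. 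Once this degenerate case is excluded, every sub-case yields $(\v_1^*)^\f \prec \cs^\f$, completing the proof.
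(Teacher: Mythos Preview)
Your proof is correct and follows essentially the same approach as the paper: reduce to $k=1$ via monotonicity of $\v\mapsto(\v^*)^\f$ and Lemma~\ref{lem:v_k-decreasing}, then split on whether $\v_1\in L^*(\beta)$, using \eqref{eq:v1-small} throughout. The only notable difference is that when $\v_1\notin L^*(\beta)$, the paper bypasses your case analysis on $\u$ by observing that $\cs\in L^*(\beta)$ together with $\v_1^\f\lle\cs^\f$ already forces $(\v_1^*)^\f\lle\cs^\f$ by the minimality defining $\v_1^*$, so it suffices to rule out $\v_1^*=\cs$---which both arguments do in equivalent ways (you via $\al(\beta)\succ\al(\beta_*)$, the paper via \eqref{eq:v1-small}).
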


\begin{proof}
First, take $k=1$. If $\v_1$ is $\beta$-Lyndon, then $\v_1^*=\v_1\prec \cs$, using \eqref{eq:v1-small}. If $\v_1$ is not $\beta$-Lyndon, then $\v_1=\u(\al_1\dots\al_{j_1}^-)^r\al_1\dots\al_{j_1}$ for some word $\u$ and integer $r\geq 0$ by Lemma \ref{lem:v_k-star}, and $\v_1^*=\u^+$. This implies that $\v_1^*\neq \cs$, for else we would have
\[
\v_1=\u(\al_1\dots\al_{j_1}^-)^r\al_1\dots\al_{j_1}=\cs^-\L(\cs)^r\L(\cs)^+,
\]
contradicting \eqref{eq:v1-small}. But $\v_1^*\lle \cs$ by Lemma \ref{lem:existence-of-v-star}, since $\v_1\prec\cs$. Therefore, $\v_1^*\prec \cs$.

For $k\geq 2$, the statement now follows inductively by Lemma \ref{lem:v_k-decreasing}.
\end{proof}

\begin{definition}
We call an interval $I_k\in\II$ a {\em non-transitivity window} if it is a maximal element of $\II$ with respect to set inclusion; that is, if $I_k$ is not contained in any other interval $I_\ell\in \II$. We denote by $\II_{\max}$ the collection of all non-transitivity windows.
\end{definition}

By Proposition \ref{prop:non-transitivity-intervals}, the collection $\II_{\max}$ is pairwise disjoint. The name ``non-transitivity window" is made clear by the next proposition.

\begin{proposition} \label{prop:non-transitivity-window}
Let $I_k\in\II_{\max}$ be a non-transitivity window. Then for any $\beta$-Lyndon interval $[t_L,t_R]$ with $t_R\in I_k$, the subshift $\Kt_\beta(t_R)$ is not transitive.
\end{proposition}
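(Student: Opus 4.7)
To prove $\Kt_\beta(t_R)$ is not transitive, my plan is to exhibit a legal word $\u$ and a sequence $\z \in \Kt_\beta(t_R)$ that admit no finite bridging word $\v$ with $\u\v\z \in \Kt_\beta(t_R)$. The natural choice is $\u := \al_1 \ldots \al_{j_k}$ (a prefix of $\al(\beta)$) and $\z := \w^\f = b(t_R,\beta)$, where $\w$ is the $\beta$-Lyndon generator of $[t_L,t_R]$. Legality of $\z$ is immediate from $\w \in L^*(\beta)$; legality of $\u$ I would obtain by verifying that $\al(\beta)$ itself lies in $\Kt_\beta(t_R)$, which amounts to showing every shift $\sigma^n(\al(\beta))$ is $\succcurlyeq \w^\f$, using $t_R \leq \tau(\beta) = \pi_\beta(\cs^-\L(\cs)^\f)$ together with the structure of shifts on the range of $\Phi_\cs$.

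Next I would suppose for contradiction that some finite word $\v$ yields $\u\v\z \in \Kt_\beta(t_R)$. Matching prefixes of $\u$ and $\al(\beta)$, the upper-bound constraint at shift $0$ gives
\[
\v \w^\f \preccurlyeq \sigma^{j_k}(\al(\beta)) = \v_k\,\sigma^{j_{k+1}}(\al(\beta)),
\]
while the lower-bound constraint at shift $j_k$ gives $\v\w^\f \succcurlyeq \w^\f$. A preliminary observation I would need is that $\w^\f \prec \v_k^\f$, which follows from $\w^\f \prec (\v_k^*)^\f$ combined with the definition of $\v_k^*$ as the smallest $\beta$-Lyndon word with periodic expansion $\succcurlyeq \v_k^\f$; and consequently $\w$ cannot begin with $\v_k$, by the Lyndon property of $\w$.

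The technical core of the argument is then to show that these two constraints, together with Lemmas \ref{lem:always-below-alpha} and \ref{lem:v_k-upper-bound}, force the first $n_k$ characters of $\v\w^\f$ to equal $\v_k$ exactly (in the subcase $\v_k \in L^*(\beta)$), or to follow the structured form $\v_k^-(\al_1\ldots\al_{j_k}^-)^r \al_1\ldots\al_{j_k}$ coming from Lemma \ref{lem:v_k-star} (in the subcase $\v_k \notin L^*(\beta)$). Once this is established, the shifted sequence $\sigma^{n_k}(\v\w^\f)$ satisfies the analogous constraints relative to $\sigma^{j_{k+1}}(\al(\beta))$, and the argument recurses. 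The maximality of $I_k$ in $\II'$, via Proposition \ref{prop:non-transitivity-intervals}, ensures that successive forced blocks cannot ``escape" into a strictly smaller window $I_\ell \subsetneq I_k$ where the constraints would loosen. Iterating, $\v\w^\f$ must begin with arbitrarily long prefixes of $\sigma^{j_k}(\al(\beta))$; but since $|\v|<\infty$, the tail $\w^\f$ would eventually have to absorb these blocks, contradicting $\w^\f \prec \v_k^\f$.

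The main obstacle will be formalizing the recursive forcing step cleanly when $\v_k \notin L^*(\beta)$: here the ``forced block" immediately after $\u$ is no longer simply $\v_k$, but the more elaborate word dictated by the expression $\v_k = \u_0(\al_1\ldots\al_{j_k}^-)^r \al_1\ldots\al_{j_k}$, and one must carefully use the interval endpoints $\underline{t}_k$ and $\overline{t}_k$ (in particular the lower bound $\w^\f \succcurlyeq \v_k^-(\al_1\ldots\al_{j_k}^-)^\f$) to rule out the various intermediate possibilities. This is also where the maximality of $I_k$, as opposed to an arbitrary $I_\ell \in \II$, plays its essential role.
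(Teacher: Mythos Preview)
Your choice of target sequence $\z = \w^\f$ is the fatal flaw. For many $\beta$-Lyndon words $\w$ with $\pi_\beta(\w^\f)\in I_k$ --- for instance, any $\w = \v_k^-(\al_1\dots\al_{j_k}^-)^r\al_1\dots\al_{j_k}$ that happens to be $\beta$-Lyndon (and such words exist; see the proof of Theorem~\ref{thm:bifurcation-in-basic-interval}) --- the concatenation $\u\z = \al_1\dots\al_{j_k}\w^\f$ is literally a shift of $\w^\f$ itself, hence already lies in $\Kt_\beta(t_R)$. So $\u$ connects to $\z$ with the empty word, and no contradiction arises. More generally, your claimed ``forcing'' step is wrong even in the $\v_k\in L^*(\beta)$ subcase: the constraints $\w^\f\lle\v\w^\f\lle\si^{j_k}(\al(\beta))$ only pin the first $n_k$ characters of $\v\w^\f$ down to $\v_k$ \emph{or} $\v_k^-$ (since $\w^\f$ begins with $\v_k^-$), not to $\v_k$ exactly; once the $\v_k^-$ branch is taken, the recursion into $\si^{j_{k+1}}(\al(\beta))$ does not apply. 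This is precisely the freedom encoded in the repeating-pattern form \eqref{eq:repeating-pattern} of the paper's proof.

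A secondary issue: your route to legality of $\u$, via $\al(\beta)\in\Kt_\beta(t_R)$, also fails. Whenever some later window $I_\ell$ lies strictly to the left of $I_k$ (which Proposition~\ref{prop:non-transitivity-intervals} permits), the tail $\si^{j_\ell}(\al(\beta))$ can drop below $\w^\f$; Example~\ref{ex:v_k}(a) already exhibits this. The paper instead establishes $\al_1\dots\al_{j_k}\in\cL(\Kt_\beta(t_R))$ directly, via the structure of $b(t_R,\beta)$ itself when $t_R>\underline{t}_k$, and via an ad hoc sequence when $t_R=\underline{t}_k$.

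The paper's fix for the main gap is to take $\z=\cs^\f$ rather than $\w^\f$. The point is that $\v_k^\f\prec\cs^\f$ (Lemma~\ref{v_k-star-less-than-s}) while $\al_1\dots\al_{j_k}^-$ begins with $\L(\cs)^+$ for $k>1$, so $\cs^\f$ cannot arise as a tail of any repeating-pattern sequence; the case $k=1$ needs a separate short argument.
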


\begin{proof}
Suppose $t_R\in I_k$. We first assume $I_k$ is open on the right, so
\begin{equation} \label{eq:greedy-expansion-sandwich}
\v_k^-(\al_1\dots\al_{j_k}^-)^\f=b(\underline{t}_k,\beta)\lle b(t_R,\beta)\prec b(\overline{t}_k,\beta)=(\v_k^*)^\f.
\end{equation}
Observe that, by definition of $\v_k^*$, there are no $\beta$-Lyndon words $\w$ such that $\v_k^\f\prec \w^\f\prec (\v_k^*)^\f$, and so we actually have $b(t_R,\beta)\prec \v_k^\f$. (Strict inequality, because if $\v_k^\f\prec (\v_k^*)^\f$, then $\v_k$ is not $\beta$-Lyndon.) But then, since $\v_k$ is Lyndon, $b(t_R,\beta)$ must begin with $\v_k^-$.

We claim that $\al_1\dots\al_{j_k}\in \cL(\Kt_\beta(t_R))$. To see this, consider two cases:
\begin{enumerate}[(i)]
\item If $t_R>\underline{t}_k$, then $b(t_R,\beta)$ begins with $\v_k^-(\al_1\dots\al_{j_k}^-)^r\al_1\dots\al_{j_k}$ for some $r\geq 0$, and since $b(t_R,\beta)\in\Kt_\beta(t_R)$, this implies that $\al_1\dots\al_{j_k}\in \cL(\Kt_\beta(t_R))$.
\item If $t_R=\underline{t}_k$, then $b(t_R,\beta)=\v_k^-(\al_1\dots\al_{j_k}^-)^\f$, and in this case the sequence
\[
\al_1\dots\al_{j_k}\v_k^-(\al_1\dots\al_{j_k}^-)^\f {=\al_1\dots\al_{j_k+l_k}^-(\al_1\dots\al_{j_k}^-)^\f}
\]
lies in $\Kt_\beta(t_R)$ {by Lemma \ref{lem:v_k-upper-bound}}, so again $\al_1\dots\al_{j_k}\in \cL(\Kt_\beta(t_R))$.
\end{enumerate}

From the construction of $\v_k$, \eqref{eq:greedy-expansion-sandwich} and the definition of $\Kt_\beta(t_R)$, it follows that any sequence in $\Kt_\beta(t_R)$ beginning with $\al_1\dots\al_{j_k}$ must be of the form
\begin{equation} \label{eq:repeating-pattern}
\al_1\dots\al_{j_k} \v_k^{p_1}\v_k^-(\al_1\dots\al_{j_k}^-)^{q_1}\,\al_1\dots\al_{j_k} \v_k^{p_2}\v_k^-(\al_1\dots\al_{j_k}^-)^{q_2}\dots,
\end{equation}
where $0\leq p_i,q_i\leq\infty$ for all $i$. If $k>1$, then no such sequence ends in $\cs^\f$, because $\v_k^\f\prec\cs^\f$ by Lemma \ref{v_k-star-less-than-s} and $\al_1\dots\al_{j_k}^-$ begins with $\L(\cs)^+$. Since $\cs^\f$ is always a valid sequence in $\Kt_\beta(t_R)$, we see that $\Kt_\beta(t_R)$ is not transitive in this case. %{[It is possible to show that no sequence of the form \eqref{eq:repeating-pattern} contains the word $\cs^N$, for some large enough $N$. This explains the footnote on p.3.]}

Suppose now that $k=1$. Then we have to be a bit more careful since $\al_1\dots\al_{j_1}^-=\L(\cs)$. If $t_R>\underline{t}_1$, then we cannot have $q_i=\infty$ for any $i$ in \eqref{eq:repeating-pattern}, so again the legal sequences of the form \eqref{eq:repeating-pattern} cannot end in $\cs^\f$, and $\Kt_\beta(t_R)$ is not transitive. Suppose $t_R=\underline{t}_1$, so $b(t_R,\beta)=\v_1^-(\al_1\dots\al_{j_1}^-)^\f$. Since $b(t_R,\beta)$ is purely periodic, this implies that either $\v_1=\al_{m+1}\dots\al_{j_1}$ for some $m<j_1$, or else
\[
\v_1=\al_{m+1}\dots\al_{j_1}^-(\al_1\dots\al_{j_1}^-)^r\al_1\dots \al_{j_1}
\]
for some $m<j_1$ and $r\geq 0$. However, it can be seen from the inequalities in \eqref{eq:v_1-upper-bound} that neither of these is possible. Thus, we must have $t_R>\underline{t}_1$, and we are done.

Finally, we consider the case when $I_k$ is closed. Recall that this means that $\II=(I_1,\dots,I_k)$ is finite, $\v_k$ is $\beta$-Lyndon, and $\si^{j_k}(\al(\beta))=\v_k^\f$. If $t_R<\overline{t}_k$, the proof goes as before, so assume $t_R=\overline{t}_k$. Then $b(t_R,\beta)=\v_k^\f$. Now $\Kt_\beta(t_R)$ is not transitive because $\al_1\dots\al_{j_k}$ is a legal word that can only be followed by the sequence $\v_k^\f$.
\end{proof}

\begin{remark}
In the last case in the above proof, when $I_k$ is closed and $t=\overline{t}_k$, we showed that $\Kt_\beta(t)$ is not transitive. However, in this special case the set
\[
\mathcal{K}_\beta(t)=\{\z\in{A_\beta}^\N: b(t,\beta)\lle \si^n(\z)\prec\al(\beta)\ \forall\,n\geq 0\}
\]
is in fact itself a transitive subshift, as is easy to see. So the non-transitivity of $\Kt_\beta(t)$ in this case is somewhat artificial, caused by enlarging the set $\mathcal{K}_\beta(t)$ more than what is needed to obtain a subshift.
\end{remark}

{
To prove the next proposition, we first present a lemma.

\begin{lemma} \label{lem:it-does-not-come-back}
Assume $\al(\beta)$ is not periodic, and $\II=(I_1,I_2,\dots,I_{\ell})$ is finite. Let $n_0:=|\L(\cs)^+\v_1\v_2\dots\v_{\ell}|=j_{\ell}+|\v_{\ell}|$. (If $\II=\emptyset$, we set $n_0:=|\cs|$.)
\begin{enumerate}[(i)]
\item There is an infinite sequence $(n_i)_{i\in\N}$ with $n_0<n_1<n_2\dots$ such that $\al_{n_0+1}\dots\al_{n_i}$ is Lyndon for each $i$. %and
%\[
%\si^{n_i}(\al(\beta))\succ (\al_{n_0+1}\dots \al_{n_i})^\f \qquad \forall\,i\in\N.
%\]
\item The tail $\si^{n_0}(\al(\beta))$ does not contain the word $\al_1\dots\al_{n_0}$. 
\end{enumerate}
\end{lemma}

\begin{proof}
If $\II\neq\emptyset$, then $\si^{n_0}(\al(\beta))\lle\v_{\ell}^\f$. If $\II=\emptyset$, then $\si^{n_0}(\al(\beta))\lle \cs^-\L(\cs)^\f$. Either way, $\si^{n_0}(\al(\beta))$ begins with {a digit $d<M_\beta$. If $d=0$, then, since $\si^{n_0}(\al(\beta))\neq 0^\f$, there is an index $n_1>n_0$ such that $\al_{n_0+1}\dots\al_{n_1}$ is Lyndon. Otherwise, the one-digit word $d$ is Lyndon, and we set $n_1:=n_0+1$.
%Since $\si^{n_0}(\al(\beta))\neq 0^\f$, there is an index $n_1>n_0$ such that $\al_{n_0+1}\dots\al_{n_1}$ is Lyndon. 
In either case,} $\si^{n_1}(\al(\beta))\succ (\al_{n_0+1}\dots\al_{n_1})^\f$, or else the word $\al_{n_0+1}\dots\al_{n_1}$ would have become $\v_{\ell+1}$ and generated an additional interval $I_{\ell+1}$ in $\II$. Hence, there is an index $n_2>n_1$ such that $\al_{n_0+1}\dots\al_{n_2}$ is also Lyndon. Repeating this argument yields (i).

To prove (ii), we write $(c_i):=\si^{n_0}(\al(\beta))$ and suppose by way of contradiction that 
\begin{equation} \label{eq:reoccurrence}
\al_{n_0+m+1}\dots\al_{2n_0+m}=c_{m+1}\dots c_{m+n_0}=\al_1\dots\al_{n_0}, 
\end{equation}
where we choose $m$ to be minimal, i.e. $c_{i+1}\dots c_{i+n_0}\neq \al_1\dots\al_{n_0}$ for all $0\leq i<m$. We first show that $c_1\dots c_{m+n_0}$ is Lyndon. This means verifying that
\begin{equation} \label{eq:c-Lyndon}
c_{i+1}\dots c_{m+n_0}\succ c_1\dots c_{m+n_0-i} \qquad \forall\,1\leq i<m+n_0.
\end{equation}
Observe that we always have weak inequality, since $c_1\dots c_{m+n_0}$ is a prefix of the Lyndon word $\al_{n_0+1}\dots\al_{n_p}$ for a sufficiently large $p$. If $i\leq m$, then equality in \eqref{eq:c-Lyndon} would contradict the minimality of $m$, since $c_{i+1}\dots c_{m+n_0}$ contains the word $\al_1\dots\al_{n_0}$. So it remains to deal with the case $i>m$.
Recall that
\begin{equation} \label{eq:beginning-copied}
c_{m+1}\dots c_{m+n_0}=\al_1\dots\al_{n_0}=\L(\cs)^+\v_1\dots\v_{\ell}.
\end{equation}
If $m+j_1\leq i<m+n_0$, then $c_{i+1}\dots c_{m+n_0}=\al_{i-m+1}\dots\al_{n_0}$ is part of the block $\v_1\dots\v_{\ell}$. 
%By the construction of the $\v_k$'s and Lemma \ref{lem:v_k-decreasing}, we have $(c_i)\lle \v_{k_0}^\f\lle\v_k^\f$ for each $k\leq k_0$. Since furthermore each $\v_k$ is Lyndon, this implies $(c_i)\lle (c_{l+i})$. In particular, 
Recall that we already have weak inequality: $c_1\dots c_{m+n_0-i}\lle c_{i+1}\dots c_{m+n_0}$. If $c_{i+1}\dots c_{m+n_0}$ begins in the middle of a word $\v_k$, then we immediately have strict inequality by the Lyndon property of $\v_k$, giving \eqref{eq:c-Lyndon}. Otherwise, $c_{i+1}\dots c_{m+n_0}=\v_k\dots\v_{\ell}$ for some $k$ in view of \eqref{eq:beginning-copied}. Suppose $c_{i+1}\dots c_{m+n_0}=c_1\dots c_{m+n_0-i}$. Then $\al_{n_0+1}\dots\al_{m+2n_0-i}=\v_k\dots\v_{\ell}$, so $\al(\beta)$ begins with $\L(\s^+)\v_1\dots \v_{\ell}\v_k\dots\v_{\ell}$, which can only happen if $\v_k=\v_{k+1}=\dots=\v_{\ell}$. Since $I_{\ell}$ is the last interval in $\mathcal{I}$, this implies $\si^{m+2n_0-l}(\al(\beta))\succ \v_{\ell}^\f$. (Else there would have been a word $\v_{\ell+1}$ and corresponding interval $I_{\ell+1}\in\II$.) But then $\si^{n_0}(\al(\beta))\succ \v_{\ell}^\f$ as well, contradicting that $I_{\ell}\in\mathcal{I}$.

%By the same argument as in the proof of Lemma \ref{lem:v_k-decreasing}, $c_{l+1}\dots c_{m+n_0}$ does not extend $\v_k$ for any $k\leq k_0$. Since in addition each $\v_k$ is Lyndon, this implies \eqref{eq:c-Lyndon} for $m+j_1\leq l<m+n_0$.

Finally, for $m<i<m+j_1$ we have
\begin{align*}
c_{i+1}\dots c_{m+j_1}&=\al_{i-m+1}\dots \al_{j_1}\succ s_1\dots s_{j_1-(i-m)}\\
&\lge (\v_{\ell}^\f)_{1:j_1-(i-m)}\lge c_1\dots c_{j_1-(i-m)},
\end{align*}
where the first inequality follows since $\al_{i-m+1}\dots \al_{j_1}$ is a suffix of $\L(\cs)^+$, the second inequality follows by Lemma \ref{v_k-star-less-than-s}, and the last inequality follows from the definition of $\v_{\ell}$.

(Note: If the collection $\II$ is empty, then $n_0=j_1$ and we obtain directly the inequality $s_1\dots s_{j_1-(i-m)}\lge c_1\dots c_{j_1-(i-m)}$, because $\si^{j_1}(\al(\beta))\prec \cs^-\L(\cs)^\f$.)

%Finally, for $1\leq l<m$ we have $c_{l+1}\dots c_m\lge c_1\dots c_{m-l}$, because $c_1\dots c_{m-l}$ is a prefix of the Lyndon word $\al_{n_0+1}\dots\al_{n_i}$ for a sufficiently large $i$; and $c_{m+1}\dots c_{m+n_0}=\al_1\dots\al_{n_0}\succ c_{m-l+1}\dots c_{m-l+n_0}$ by the minimality of $m$. This gives \eqref{eq:c-Lyndon} for $1\leq l<m$ as well.

Having verified \eqref{eq:c-Lyndon}, we conclude that $c_1\dots c_{m+n_0}$ is Lyndon. But then $\si^{2n_0+m}(\al(\beta))\succ (c_1\dots c_{n_0+m})^\f$ {(or else $I_{\ell}$ would not have been the last interval in $\II$)}. Now \eqref{eq:reoccurrence} gives
\[
\si^{2n_0+m}(\al(\beta))\succ (\al_{n_0+1}\dots \al_{2n_0+m})^\f=\al_{n_0+1}\dots\al_{n_0+m}(\al_1\dots\al_{n_0+m})^\f.
\]
There are two possibilities:
\begin{itemize}
\item $\al_{2n_0+m+1}\dots\al_{2n_0+2m}=\al_{n_0+1}\dots\al_{n_0+m}$, in which case we obtain $\si^{2n_0+2m}(\al(\beta))\succ (\al_1\dots\al_{n_0+m})^\f\lge \al(\beta)$; or 
\item $\al_{2n_0+m+1}\dots\al_{2n_0+2m}\succ\al_{n_0+1}\dots\al_{n_0+m}$, in which case $\si^{2n_0+m}(\al(\beta))\succ\si^{n_0}(\al(\beta))$, and so $\si^{n_0+m}(\al(\beta))\succ\al(\beta)$ because we can cancel the first $n_0$ digits in view of \eqref{eq:reoccurrence}. 
\end{itemize}
In either case, we obtain a contradiction with Lemma \ref{lem:quasi-greedy expansion-alpha-q}.
\end{proof}
}

\begin{proposition} \label{prop:transitive-in-between}
Assume $\cs=\s\in\La_1={F_e}$. Let $t_R\in(0,\tau(\beta))$ be a right endpoint of a $\beta$-Lyndon interval, and suppose $t_R$ does not lie in any non-transitivity window. Then $\Kt_\beta(t_R)$ is transitive.
\end{proposition}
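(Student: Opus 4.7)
Fix an arbitrary $\u\in\cL(\Kt_\beta(t_R))$ and $\z\in\Kt_\beta(t_R)$; the goal is to produce a finite word $\v$ with $\u\v\z\in\Kt_\beta(t_R)$. Let $\w$ be the $\beta$-Lyndon word with $b(t_R,\beta)=\w^\f$, and let $\s=\s^{(1)}\s^{(2)}$ be the standard Farey factorization of $\s$. The hypothesis $t_R\notin\bigcup_{k}I_k$ says that for every $k$, either $\w^\f\prec\v_k^-(\al_1\dots\al_{j_k}^-)^\f$ or $\w^\f\succcurlyeq(\v_k^*)^\f$; by Proposition \ref{prop:non-transitivity-intervals} this is exactly what is needed to prevent $\w^\f$ from being ``trapped'' by any obstruction word $\v_k$.

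First I would reduce $\u$ to a canonical form by extending it on the right to a legal word $\u'$ whose tail is predictable, in direct analogy with the proof of Theorem \ref{thm:basic-interval-right-endpoint}. Precisely, one identifies the largest suffix of $\u$ that coincides with a prefix of $\al(\beta)$, removes it, and replaces it with $\L(\s)^+\s^{(1)}\s^{N}$ for a sufficiently large $N$, producing an extension $\u'\in\cL(\Kt_\beta(t_R))$ that ends in many copies of $\s$. The point is that such a tail is a ``safe buffer'': since $\s$ is Lyndon and lies strictly below $\al(\beta)$, any shift beginning inside the buffer is both bounded above by $\al(\beta)$ and bounded below by any sequence that itself dominates $\s^\f$.

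Next, the connector $\v$ is built so that $\u'\v\z\in\Kt_\beta(t_R)$. Using Lemma \ref{lem:begin-with-s} I would first replace $\z$ by an extension $\z''\in\Kt_\beta(t_R)$ beginning with $\s$ (or with $\L(\s)$), so that the splice $\u'\s^{(1)}\z''$ becomes the natural candidate with $\v:=\s^{(1)}$. The lower bound $\si^n(\u'\v\z'')\succcurlyeq\w^\f$ at interior positions is almost automatic: inside the buffer the shifts dominate $\s^\f$, and $\s^\f\succcurlyeq\w^\f$ since $b(t_R,\beta)\preccurlyeq b(\tau(\beta),\beta)=\cs^-\L(\cs)^\f$, while at positions inside $\u'$ or $\z''$ the inequality is inherited. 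The verification of the upper bound $\si^n(\u'\v\z'')\preccurlyeq\al(\beta)$ is where the non-obstruction hypothesis is essential: any shift position where a prefix $\al_1\dots\al_{j_k}$ starts matching must, according to the construction of the $\v_k$, be followed by some continuation governed by $\v_k$, but the hypothesis $t_R\notin I_k$ together with Lemma \ref{lem:v_k-upper-bound} and Lemma \ref{lem:header-suffix} guarantees that the actual continuation in $\u'\v\z''$ is strictly smaller than $\v_k$.

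The main obstacle will be the bookkeeping of shifts at the two junctions $\u'\mid\s^{(1)}$ and $\s^{(1)}\mid\z''$, since the buffer $\L(\s)^+\s^{(1)}\s^N$ matches a prefix of $\al(\beta)$ in its first few symbols and a shift could ``align'' to this prefix from either side. Handling this will require a careful induction over $k$: for the smallest $k$ such that a shift position aligns with $\al_1\dots\al_{j_k}$, one uses Proposition \ref{prop:non-transitivity-intervals} to rule out the alignment continuing to force the $\v_k$-pattern beyond a length that would drop $\si^n(\u'\v\z'')$ below $\w^\f$; the induction step then reduces the problem to a strictly earlier stage. Once this casework is complete, combining it with the buffer's built-in safety (from the proof of Theorem \ref{thm:basic-interval-right-endpoint}) yields $\u'\v\z''\in\Kt_\beta(t_R)$, proving that $\Kt_\beta(t_R)$ is transitive.
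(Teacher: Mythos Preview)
Your proposal has a genuine gap: the buffer $\L(\s)^+\s^{(1)}\s^N$ that worked at $\beta=\beta_*^\s$ (Theorem~\ref{thm:basic-interval-right-endpoint}) need not even be a legal word when $\beta$ lies strictly inside the basic interval. At $\beta_*$ one has $\al(\beta_*)=\L(\s)^+\s^{(1)}\s^\f$, so $\L(\s)^+\s^{(1)}\s^N$ is a prefix of $\al(\beta_*)$; but for generic $\beta<\beta_*$ the tail $\si^{|\s|}(\al(\beta))$ can be smaller than $\s^{(1)}$. Concretely, take $\s=011$ and $\al(\beta)=111\,001(01)^\f$ (Example~\ref{ex:v_k}(c)): here $\s^{(1)}=01$ but $\si^3(\al(\beta))$ begins with $00$, so the word $\L(\s)^+\s^{(1)}=111\,01$ already exceeds $\al_1\dots\al_5=111\,00$ and is forbidden in $\Kt_\beta(t_R)$. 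Your ``canonical form'' $\u'$ therefore need not lie in $\cL(\Kt_\beta(t_R))$, and the whole connector construction collapses. The subsequent ``careful induction over $k$'' is too vague to repair this: it does not identify which prefixes $\al_1\dots\al_m$ can legitimately occur, nor does it supply a legal descent from such a prefix down to a word beginning with~$\s$.

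The paper's argument is structured quite differently. It does not try to reuse the $\beta_*$ buffer; instead it exploits the hypothesis $t_R\notin\bigcup I_k$ to locate $t_R$ relative to the windows. The key observation is that if $t_R\ge\overline t_\ell$ for some $\ell$ (and $t_R<\underline t_\nu$ for all $\nu<\ell$), then $\w^\f\succ\si^{j_\ell}(\al(\beta))$, so the forbidden-word condition simplifies: $\Kt_\beta(t_R)=\{\z:\w^\f\lle\si^n(\z)\lle(\al_1\dots\al_{j_\ell}^-)^\f\}$. The connector then becomes a \emph{cascade} $(\al_1\dots\al_{j_{\ell-1}}^-)^{r_{\ell-1}}\cdots(\al_1\dots\al_{j_1}^-)^{r_1}$, where the exponents $r_\nu$ come from the lower-bound inequalities $\w^\f\prec\v_\nu^-(\al_1\dots\al_{j_\nu}^-)^{r_\nu}0^\f$ (these hold precisely because $t_R<\underline t_\nu$). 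When $t_R$ lies below all windows there is a further case split according to whether $\II$ is finite or infinite, with a more delicate argument showing that $\si^{n_0}(\al(\beta))$ never contains $\al_1\dots\al_{n_0}$. None of this structure is visible in your plan.
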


(The statement of this proposition fails when $\cs\in\La_k$ with $k\geq 2$, as we will see in Section \ref{sec:higher-order-basic}.)

\begin{proof}
Let $\w$ be the $\beta$-Lyndon word such that $b(t_R,\beta)=\w^\f$.
We enumerate $\II_{\max}$ as $\II_{\max}=(I_1=I_{k_1},I_{k_2},\dots)$, where $k_1<k_2<\dots$, or as $\II_{\max}=(I_1=I_{k_1},I_{k_2},\dots,I_{k_N})$, where $k_1<k_2<\dots<k_N$, depending on whether $\II_{\max}$ is infinite or finite. Recall that the intervals $I_{k_1},I_{k_2},\dots$ proceed from right to left. There are three cases to consider:
\begin{enumerate}[(i)]
\item $t_R\geq \overline{t}_1$;
\item $\overline{t}_{k_{i+1}}\leq t_R<\underline{t}_{k_i}$ for some $i$;
\item $t_R<\underline{t}_k$ for all $k$.
\end{enumerate}
The argument differs somewhat for each of the three cases.

\medskip
{\em Case 1.} $t_R\geq \overline{t}_1$. Then $\w^\f\lge (\v_1^*)^\f\lge \al_{j_1+1}\al_{j_1+2}\dots$.
In fact, we can show that
\begin{equation} \label{eq:w-dominates-tail-of-alpha}
\w^\f\succ \al_{j_1+1}\al_{j_1+2}\dots,
\end{equation}
by considering the following cases:
\begin{itemize}
\item If $t_R>\overline{t}_1$, then $\w^\f\succ (\v_1^*)^\f$;
\item If $t_R=\overline{t}_1$ and $\v_1$ is {\em not} $\beta$-Lyndon, then $(\v_1^*)^\f\succ \v_1^\f\lge \al_{j_1+1}\al_{j_1+2}\dots$ by the definition of $\v_1$;
\item If $t_R=\overline{t}_1$ and $\v_1$ {\em is} $\beta$-Lyndon, then, since we assumed that $t_R\not\in I_1$, {it follows that the interval $I_1$ is half open and therefore, by the way we constructed the intervals $(I_k)$}, $\si^{j_1}(\al(\beta))\neq \v_1^\f$, which implies $(\v_1^*)^\f\lge\v_1^\f\succ \al_{j_1+1}\al_{j_1+2}\dots$.
\end{itemize}
By \eqref{eq:w-dominates-tail-of-alpha}, we have
\[
\Kt_\beta(t_R)=\{\z: \w^\f\lle \si^n(\z)\lle (\al_1\dots\al_{j_1}^-)^\f\ \forall\,n\geq 0\}.
%&=\{\z: \w^\f\lle \si^n(\z)\lle \L(\s)^\f\ \forall\,n\geq 0\}.
\]
Thus, the word $\al_1\dots\al_{j_1}$ cannot occur in any sequence $\z\in\Kt_\beta(t_R)$; in particular, it cannot occur in $\w^\f$.

Let $\u\in\cL(\Kt_\beta(t_R))$ and $\z\in\Kt_\beta(t_R)$ be given. By Lemma \ref{lem:begin-with-s}, we can extend $\z$ to the left to a sequence $\z'\in\Kt_\beta(t_R)$ beginning with $\L(\s)$. {(This is where we use that $\s\in F_e$.)} 
Thus, without loss of generality, we will assume that $\z$ itself begins with $\L(\s)$.

If $\u$ does not end in $\al_1\dots\al_m$ for any $m$, set $\u':=\u\L(\s)$. Otherwise, let $m$ be the largest integer such that $\u$ ends in $\al_1\dots\al_m$, say $\u=u_1\dots u_l \al_1\dots\al_m$. Then $m<j_1$, and {we extend $\u$ to} $\u':=u_1\dots u_l\al_1\dots\al_{j_1}^-=u_1\dots u_l\L(\s)$.

Observe that $\w^\f\prec \s^-\L(\s)^\f$, so there exists $N_1\in\N$ such that $\w^\f\prec \s^-\L(\s)^{N_1}0^\f$. Furthermore, $\si^n(\w^\f)\lle \L(\s)^\f$ for all $n\geq 0$ {because $\w$ is $\beta$-Lyndon and does not contain the word $\al_1\dots\al_{j_1}=\L(\s)^+$.} In fact, the inequality is strict because $\w$ and $\s$ are both Lyndon, and $\w\neq \s$. Thus, since $\w^\f$ is periodic, there exists $N_2\in\N$ such that
\[
\si^n(\w^\f)\prec \L(\s)^{N_2}0^\f \qquad \forall\,n\geq 0.
\]
Now let $N_3:=\max\{N_1,N_2\}$, and set $\v:=\L(\s)^{N_3}$. Then $\u'\v\z\in\Kt_\beta(t_R)$. Therefore, $\Kt_\beta(t_R)$ is transitive.

\medskip
{\em Case 2.} $\overline{t}_{k_{i+1}}\leq t_R<\underline{t}_{k_i}$ for some $i$. For ease of notation, put $k:=k_i$ and $\ell:=k_{i+1}$, so we have
\[
\overline{t}_\ell\leq t_R<\underline{t}_\nu \qquad\forall\, 1\leq\nu<\ell,
\]
since $\underline{t}_\nu\geq \underline{t}_k$ for $k\leq\nu<\ell$. {Note that the intervals $I_k$ and $I_\ell$ are disjoint, and if $\ell\geq k+2$, then $I_\nu\subseteq I_k$ for all $k<\nu<\ell$.}
By the same reasoning as in Case 1, we have that
\[
\w^\f\succ \al_{j_\ell+1}\al_{j_\ell+2}\dots,
\]
and so
\[
\Kt_\beta(t_R)=\{\z: \w^\f\lle \si^n(\z)\lle (\al_1\dots\al_{j_\ell}^-)^\f\ \forall\,n\geq 0\}.
\]
Thus, the word $\al_1\dots\al_{j_\ell}$ never occurs in any sequence in $\Kt_\beta(t_R)$. On the other hand, for each $1\leq\nu<\ell$,
\[
\w^\f=b(t_R,\beta)\prec b(\underline{t}_\nu,\beta)=\v_\nu^-(\al_1\dots\al_{j_\nu}^-)^\f,
\]
so there is an integer $r_\nu\geq 0$ such that
\begin{equation} \label{eq:strictly-clearing-the-bar}
\w^\f\prec \v_{\nu}^-(\al_1\dots\al_{j_\nu}^-)^{r_\nu} 0^\f.
\end{equation}
Let $\u\in\cL(\Kt_\beta(t_R))$ and $\z\in\Kt_\beta(t_R)$ be given. As in Case 1, we may assume that $\z$ begins with the word $\L(\s)$. %But then $\s\z\in\Kt_\beta(t_R)$ as well, and since $\s\z$ begins with $\s^2$ and $\s^2$ contains the word $\L(\s)$, we can extend $\z$ to a sequence $\z'$ beginning with $\L(\s)$.
If no suffix of $\u$ is a prefix of $\al(\beta)$, set $m=0$. Otherwise, let $m$ be the largest integer such that $\u$ ends in $\al_1\dots\al_m$. In either case {$m<j_\ell=j_{k_{i+1}}$}, and we define %We can hence extend $\u$ to a word $\u'$ ending in the block
\[
{\u':=\u\al_{m+1}\dots\al_{j_{k_{i+1}}}^-(\al_1\dots\al_{j_{k_i}}^-)^{r_{k_i}}(\al_1\dots\al_{j_{k_{i-1}}}^-)^{r_{k_{i-1}}}\dots (\al_1\dots\al_{j_1}^-)^{r_1}.}
\]
{(We only use those $r_\nu$'s for which $I_\nu$ is a maximal interval in $\mathcal{I}$; i.e. $I_\nu\in\II_{\max}$.)}
It is not difficult to check using \eqref{eq:strictly-clearing-the-bar} that $\u'\z\in\Kt_\beta(t_R)$. (The idea is that {$\al_1\dots\al_{j_{k_{i+1}}}^-$ ends in $\v_{k_i}^-$. So we append a high enough power of $\al_1\dots\al_{j_{k_i}}^-$} to it to ensure that $\si^n(\u'\z)\lge \w^\f$ for all $n$, and repeat until we reach $\nu=1$.) Hence, $\Kt_\beta(t_R)$ is transitive.

\medskip
{\em Case 3.} $t_R<\underline{t}_k$ for all $k$. This case is the most involved.

(a) Assume first that $\II$ is infinite. Then as in Case 2 there exist integers $r_1,r_2,\dots$ such that
\[
\w^\f\prec \v_k^-(\al_1\dots\al_{j_k}^-)^{r_k} 0^\f, \qquad k=1,2,\dots.
\]
Let $\u\in\cL(\Kt_\beta(t_R))$ and $\z\in\Kt_\beta(t_R)$ be given.
If $\u$ does not end in $\al_1\dots\al_m$ for any $m$, set $\u':=\u$. Otherwise, let $m$ be the largest integer such that $\u$ ends in $\al_1\dots\al_m$. Here we do not have an upper bound for $m$, however there is an index $k$ such that $j_{k-1}\leq m<j_{k}$, since $j_k\to\f$ as $k\to\f$. {Let $1=k_1<k_2<\dots<k_i<k$ be the indices such that $I_{k_t}\in\II_{\max}$, $t=1,\dots,i$.} We can then extend $\u$ to a word $\u'\in \cL(\Kt_\beta(t_R))$ ending in the block
\[
\al_1\dots\al_{j_k}^-{(\al_1\dots\al_{j_{k_i}}^-)^{r_{k_i}}(\al_1\dots\al_{j_{k_{i-1}}}^-)^{r_{k_{i-1}}}\dots (\al_1\dots\al_{j_1}^-)^{r_1}}.
\]
Similarly, as before, we may assume $\z$ begins with $\L(\s)$. Then $\u'\z'\in\Kt_\beta(t_R)$ just in Case 2.

(b) Assume next that $\II$ is finite: $\II=(I_1,I_2,\dots,I_\ell)$. This case is more difficult because $\u$ can end in an arbitrarily long prefix of $\al(\beta)$, but there are only finitely many $j_k$'s, so the construction of case (a) above needs to be modified. Set $n_0:=|\L(\s)^+\v_1\v_2\dots\v_{\ell}|=j_{\ell}+|\v_{\ell}|$.

(i) Suppose first that $\al(\beta)$ is not periodic. By Lemma \ref{lem:it-does-not-come-back} (i), there is an infinite, strictly increasing sequence $(n_i)_{n\in\N}$ such that $\al_{n_0+1}\dots\al_{n_i}$ is Lyndon for each $i$. 
%{\color{red}[Notation issue: The $n_i$'s in this proof are different than the $n_i$'s in the previous section, where they were the lengths of the $\v_i$'s! So we have to rename $n_i$ here to something else.]}
It follows that the tail $\si^{n_0}(\al(\beta))$ cannot be periodic. %Furthermore, we have
%\[
%(\al_{n_0+1}\dots \al_{n_i})^\f \nearrow \si^{n_0}(\al(\beta)) \qquad \mbox{as $i\to\f$}.
%\]
If now $\w^\f\succ \si^{n_0}(\al(\beta))$, then
\[
\Kt_\beta(t_R)=\{\z: \w^\f\lle \si^n(\z)\lle (\al_1\dots\al_{n_0}^-)^\f\ \forall\,n\geq 0\},
\]
and the proof proceeds essentially as in Case 2. Otherwise, since $\si^{n_0}(\al(\beta))$ is not periodic, there is a sufficiently large index $p$ such that {$n_p>|\w|$ and}
\begin{equation} \label{eq:finite-cutoff}
\w^\f\prec \al_{n_0+1}\dots \al_{n_{p}}^-0^\f.
\end{equation}

From Lemma \ref{lem:it-does-not-come-back} (ii), it follows that
\begin{equation} \label{eq:all-tails-below-prefix}
\si^n(\al(\beta))\lle (\al_1\dots\al_{n_0}^-)^\f \qquad\forall\,n\geq n_0.
\end{equation}
Now let $\u\in\cL(\Kt_\beta(t_R))$ and $\z\in\Kt_\beta(t_R)$ be given. As in Case 2, we can extend $\z$ to a sequence $\z'\in\Kt_\beta(t_R)$ beginning with $\L(\s)$. We extend $\u$ first to a sequence $\u'$ ending in $\al_1\dots\al_{n_{p}}^-$, where ${p}$ satisfies \eqref{eq:finite-cutoff}, in the usual way by pasting $\u$ and $\al_1\dots\al_{n_{p}}^-$ together along their longest overlap. If $\u'$ does not end in a prefix of $\w$, then $\u'\z'\in\Kt_\beta(t_R)$ and we are done.

Otherwise, let $m$ be the largest integer such that $\u'$ ends in $w_1\dots w_m$, where we write $(w_i):=\w^\f$. {Note that $m\leq |\w|<n_p$.} Then by \eqref{eq:finite-cutoff} and the Lyndon property of $\al_{n_0+1}\dots\al_{n_{p}}$,
\[
w_1\dots w_m\lle \al_{n_0+1}\dots\al_{n_0+m}\lle \al_{n_{p}-m+1}\dots\al_{n_{p}}^-=w_1\dots w_m,
\]
hence $w_1\dots w_m=\al_{n_0+1}\dots\al_{n_0+m}$. By \eqref{eq:finite-cutoff} and \eqref{eq:all-tails-below-prefix}, it follows that
\[
w_{m+1}w_{m+2}\dots \prec \al_{n_0+m+1}\dots\al_{n_{p}}^-0^\f \prec (\al_1\dots\al_{n_0}^-)^\f,
\]
so there is some integer $r_*\geq 1$ such that $w_{m+1}w_{m+2}\dots\prec (\al_1\dots\al_{n_0}^-)^{r_*} 0^\f$.
Recall again that $\al_1\dots\al_{n_0}^-=\L(\s)^+\v_1\dots\v_{\ell}^-$. As in Case 2 there exist integers $r_1,\dots,r_{\ell}$ such that
\begin{equation} \label{eq:large-exponents}
\w^\f\prec \v_k^-(\al_1\dots\al_{j_k}^-)^{r_k} 0^\f, \qquad k=1,\dots,\ell.
\end{equation}
{Recall that $1=k_1<k_2<\dots<k_N\leq \ell$ are the indices such that $I_{k_i}\in\II_{\max}$, $i=1,\dots,N$.}
We now extend $\u'$ further by setting
\[
\u'':=\u'(\al_1\dots\al_{n_0}^-)^{r_*}{(\al_1\dots\al_{j_{k_N}}^-)^{r_{k_N}}(\al_1\dots\al_{j_{k_{N-1}}}^-)^{r_{k_{N-1}}}\dots (\al_1\dots\al_{j_1}^-)^{r_1}}.
\]
Then $\u''\z'\in\Kt_\beta(t_R)$. Hence, $\Kt_\beta(t_R)$ is transitive.

(ii) Suppose next that $\al(\beta)$ is periodic. Then
\[
\al(\beta)=(\al_1\dots\al_{n_0})^\f=(\L(\s)^+\v_1\dots\v_{\ell}^-)^\f.
\]
Let $\u\in\cL(\Kt_\beta(t_R))$. If $\u$ does not end in a prefix of $\al(\beta)$, we set $\u':=\u\al_1\dots\al_{n_0}$. Otherwise, there is some $r\geq 0$ and $0\leq m<n_0$ such that $\u$ ends in $(\al_1\dots\al_{n_0})^r\al_1\dots\al_m$. In that case we extend $\u$ slightly further to a word $\u'$ ending in $(\al_1\dots\al_{n_0})^{r+1}$. Observe that $\u'$ then ends in $\L(\s)^+\v_1\dots\v_{\ell}^-$. From here we proceed as above: We choose exponents $r_1,\dots,r_{\ell}$ satisfying \eqref{eq:large-exponents}, and set
\[
\u'':=\u'{(\al_1\dots\al_{j_{k_N}}^-)^{r_{k_N}}\dots (\al_1\dots\al_{j_{k_{N-1}}}^-)^{r_{k_{N-1}}}\dots (\al_1\dots\al_{j_1}^-)^{r_1}}.
\]
This word $\u''$ can then be connected directly to any sequence $\z'\in\Kt_\beta(t_R)$ beginning with $\L(\s)$.
\end{proof}

\begin{proposition} \label{prop:entropy-constant-in-window}
Let $I_k=[\underline{t}_k,\overline{t}_k)\in\II_{\max}$ be a non-transitivity window. Then $h(\Kt_\beta(t))$ is constant on $I_k$.
\end{proposition}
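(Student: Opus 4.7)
The plan is to reduce the constancy claim on $I_k$ to the single identity
\[
h(\Kt_\beta(\underline{t}_k))=h(\Kt_\beta(\overline{t}_k)).
\]
The map $t\mapsto h(\Kt_\beta(t))$ is non-increasing (because $t\mapsto \Kt_\beta(t)$ is) and continuous on $[0,1)$ (via $h(\Kt_\beta(t))=(\log\beta)\,\dim_H K_\beta(t)$ combined with the continuity of $t\mapsto \dim_H K_\beta(t)$ established in \cite{Kalle-Kong-Langeveld-Li-18}), and $\Kt_\beta(\overline{t}_k)\subseteq \Kt_\beta(t)\subseteq \Kt_\beta(\underline{t}_k)$ for every $t\in I_k$; so this identity forces constancy throughout $I_k$. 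The inequality $h(\Kt_\beta(\underline{t}_k))\ge h(\Kt_\beta(\overline{t}_k))$ is immediate from inclusion.

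For the reverse inequality, I would set
\[
A:=\Kt_\beta(\underline{t}_k)\setminus \Kt_\beta(\overline{t}_k)=\{\z\in \Kt_\beta(\underline{t}_k):\sigma^m(\z)\in[\v_k^-\L(\cs)^\f,(\v_k^*)^\f)\text{ for some }m\ge 0\}
\]
and prove $h(A)\le h(\Kt_\beta(\overline{t}_k))$, whence $h(\Kt_\beta(\underline{t}_k))=\max\{h(A),h(\Kt_\beta(\overline{t}_k))\}=h(\Kt_\beta(\overline{t}_k))$. Every element of $[\v_k^-\L(\cs)^\f,(\v_k^*)^\f)$ begins with $\v_k^-$, so any ``low shift'' $\sigma^m(\z)$ of a sequence $\z\in A$ begins with $\v_k^-$; combining this with $\sigma^n(\z)\lle\al(\beta)$ and $\sigma^n(\z)\succeq \v_k^-\L(\cs)^\f$ for all $n$ forces $\z$ into a rigid interleaving of the finitely many blocks $\v_k$, $\v_k^-$, $\L(\cs)$, $\L(\cs)^+$ (and possibly $\v_k^*$). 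This is an analogue, below $(\v_k^*)^\f$, of the pattern \eqref{eq:repeating-pattern} established above $(\v_k^*)^\f$ in the proof of Proposition \ref{prop:non-transitivity-window}.

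With this structural description I would build a block code $\phi:A\to \Kt_\beta(\overline{t}_k)$ that locates each maximal ``low excursion'' in $\z$ and replaces it by a canonical power of $\v_k^*$ of comparable length; Lemma \ref{lem:v_k-star} guarantees that $\v_k^*$ begins with $\v_k^-$ or $\v_{k-1}^-$, so the substituted sequence remains admissible in $\Kt_\beta(\overline{t}_k)$. A counting estimate showing that $\phi$ has at most polynomially many preimages on each length-$n$ image word then yields $h(A)\le h(\Kt_\beta(\overline{t}_k))$ and completes the reduction.

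The hard part will be proving the rigidity of the excursion alphabet and verifying that the local substitution never damages the surrounding context; this is exactly where Lemmas \ref{lem:v_k-upper-bound}, \ref{lem:v_k-beta-Lyndon} and \ref{lem:v_k-star} will be needed, together with a case split between $\v_k\in L^*(\beta)$ and $\v_k\notin L^*(\beta)$. An attractive alternative that I would pursue in parallel is to produce an ergodic measure of maximal entropy on $\Kt_\beta(\underline{t}_k)$ whose support lies in $\Kt_\beta(\overline{t}_k)$; the variational principle would then deliver the identity directly and entirely sidestep the explicit coding.
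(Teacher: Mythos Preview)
Your overall strategy is right and matches the paper's: reduce to showing $h(\Kt_\beta(\underline{t}_k))=h(\Kt_\beta(\overline{t}_k))$ by proving $h(A)\le h(\Kt_\beta(\overline{t}_k))$ for the difference set $A$, via a rigid block description of sequences in $A$. Two minor corrections first: for general $k$ the left endpoint is $b(\underline{t}_k,\beta)=\v_k^-(\al_1\dots\al_{j_k}^-)^\f$, not $\v_k^-\L(\cs)^\f$ (your formula is only the case $k=1$); and your claim that ``$\v_k^*$ begins with $\v_k^-$ or $\v_{k-1}^-$'' is false --- in the generic case $\v_k^*=\u^+$ is strictly shorter than $\v_k$, so it cannot begin with $\v_k^-$. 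This last point is a real obstacle for the substitution code you describe, since replacing a $\v_k^-$-block by a power of $\v_k^*$ changes the last digit from $0$ to $1$ and can violate the upper constraint $\al(\beta)$.

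The paper sidesteps the coding entirely, and the argument is much shorter than what you propose. It first reframes the window as an EBLI $I_\w=[\pi_\beta(\w^-(\al_1\dots\al_m^-)^\f),\pi_\beta(\w^\f)]$ with $\w=\v_k^*$ and $m=j_k$ (Lemma~\ref{lem:window-is-EBLI}); the crucial point is that this forces $m\ge|\cs|$. Then, in Lemma~\ref{lem:EBLI-constant-entropy}, the structural analysis of $A$ --- exactly your ``rigid interleaving'' step --- shows that every $\z\in A$ eventually falls into a subshift $\mathcal{X}$ built by concatenating the two blocks $\w$ and $\al_1\dots\al_m$ (of lengths $|\w|$ and $m$). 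Instead of coding $\mathcal{X}$ into $\Kt_\beta(\overline{t}_k)$, the paper simply \emph{exhibits} a subshift $\mathcal{Y}\subseteq\Kt_\beta(\overline{t}_k)$ consisting of all concatenations of the two $\beta$-Lyndon blocks $\w$ and $\cs$ (of lengths $|\w|$ and $|\cs|\le m$). Since both $\mathcal{X}$ and $\mathcal{Y}$ are renewal shifts on two blocks and $\mathcal{Y}$ uses the shorter block, $h(\mathcal{Y})\ge h(\mathcal{X})$, and the inequality $h(\Kt_\beta(\underline{t}_k))\le\max\{h(\mathcal{X}),h(\Kt_\beta(\overline{t}_k))\}\le h(\Kt_\beta(\overline{t}_k))$ drops out. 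No coding, no preimage counting, no invariant measures --- the single observation $|\cs|\le m$ does all the work.
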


\begin{proof}
This will follow from Lemmas \ref{lem:EBLI-constant-entropy} and \ref{lem:window-is-EBLI} in Section \ref{sec:EBLI}.
\end{proof}

\begin{proof}[Proof of Theorem \ref{thm:basic-interval-transitivity}]
The theorem follows from Propositions \ref{prop:non-transitivity-window}, \ref{prop:transitive-in-between} and \ref{prop:entropy-constant-in-window}.
\end{proof}

The work we did in this section and the last, also has an important consequence for the bifurcation sets $\EE_\beta$ and $\BB_\beta$:

\begin{theorem} \label{thm:bifurcation-in-basic-interval}
Let $\beta\in(\beta_\ell^{\cs},\beta_*^{\cs})$, where $\cs\in\La$. If $I_k$ is a non-transitivity window, then $\dim_H(\EE_\beta\cap I_k)>0$. In particular, if the collection $\II$ is nonempty, then $\dim_H(\EE_\beta\backslash\BB_\beta)>0$.
\end{theorem}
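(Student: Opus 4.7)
The plan is to show $\dim_H(\EE_\beta\cap I_k)>0$; the second assertion then follows at once since, by Proposition~\ref{prop:entropy-constant-in-window}, the function $t\mapsto\dim_H K_\beta(t)$ is constant on the window $I_k$, whence $\BB_\beta\cap I_k=\emptyset$ and therefore $\EE_\beta\cap I_k\subseteq\EE_\beta\setminus\BB_\beta$.

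The starting point is the structural description obtained in the proof of Proposition~\ref{prop:non-transitivity-window}: every sequence in $\Kt_\beta(t)$ with $t\in I_k$ that contains the word $\al_1\dots\al_{j_k}$ has the form \eqref{eq:repeating-pattern}. Since the greedy expansion of any $t\in I_k$ begins with $\v_k^-$, it can accordingly be written as
\[
b(t,\beta)=\v_k^-(\al_1\dots\al_{j_k}^-)^{q_0}\al_1\dots\al_{j_k}\v_k^{p_1}\v_k^-(\al_1\dots\al_{j_k}^-)^{q_1}\al_1\dots\al_{j_k}\v_k^{p_2}\dots,
\]
parameterised by two nonnegative-integer sequences $(p_i)_{i\ge 1}$ and $(q_i)_{i\ge 0}$. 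By Lemma~\ref{lem:E-beta-characterizations}, $t\in\EE_\beta$ iff $\si^n b(t,\beta)\lge b(t,\beta)$ for every $n\ge 0$. Shifts to the beginning of an $\al_1\dots\al_{j_k}$-block are automatically $\succ b(t,\beta)$ since $\al_1=1>0=v_{k,1}$; shifts to the beginning of a $\v_k^{p_i}$-block with $p_i\ge 1$ are automatically $\succ b(t,\beta)$ since $\v_k\succ\v_k^-$; shifts inside the initial $\v_k^-$ or inside an $\al_1\dots\al_{j_k}$-token are controlled by the Lyndon property of $\v_k$ together with Lemma~\ref{lem:always-below-alpha}.

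Next I would produce a positive-entropy family of admissible $((p_i,q_i))$. Because $h_k:=h(\Kt_\beta(t))>0$ for $t\in I_k$ by Proposition~\ref{prop:entropy-constant-in-window} (the inequality $\overline{t}_k<\tau(\beta)$ holds because $(\v_k^*)^\f$ is purely periodic while $\cs^-\L(\cs)^\f$ is not, since $\cs^-\neq\L(\cs)$ in general), the greedy upper bound $\si^n(\z)\prec\al(\beta)$ leaves a non-trivial range available for $(p_i,q_i)$. For a fixed sufficiently large pair $(p_*,q_*)$ and a suitably chosen two-element alphabet $\A\subseteq\{p_*,p_*+1\}\times\{q_*,q_*+1\}$, I would prove that every sequence in $\A^\N$ (after a fixed prefix handling the boundary) encodes an admissible, orbit-minimal $b(t,\beta)$. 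Since the map $\A^\N\to I_k$ sending $((p_i,q_i))\mapsto t$ is bi-Lipschitz when $\A^\N$ carries the $d_2$-metric and $I_k$ the Euclidean one (via the identity \eqref{eq:Hausdorff-dimension-preserved}), this injection yields $\dim_H(\EE_\beta\cap I_k)>0$.

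The main obstacle is this last construction of the alphabet $\A$. The key comparison to secure is
\[
\v_k^{p_{i+1}}\v_k^-(\al_1\dots\al_{j_k}^-)^{q_{i+1}}\al_1\dots\al_{j_k}\cdots\ \lge\ \v_k^{p_i}\v_k^-(\al_1\dots\al_{j_k}^-)^{q_i}\al_1\dots\al_{j_k}\cdots,
\]
which at first glance appears to force a monotone sequence $((p_{i+1},q_{i+1})\ge(p_i,q_i))$ and hence only a countable one-parameter family. The escape is the observation that the polarity of the comparison flips at the next $\al_1\dots\al_{j_k}$-token (since $\al_1\dots\al_{j_k}\succ\v_k^-$), leaving two genuinely free choices at each step; making this precise while simultaneously respecting the greedy upper bound $\si^n(\z)\prec\al(\beta)$—which may itself bound $(p_i,q_i)$ from above—will demand the combinatorial analysis of $\v_k$ recorded in Lemmas~\ref{lem:v_k-upper-bound}--\ref{lem:v_k-star} and is the principal technical hurdle of the proof.
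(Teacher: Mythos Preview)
Your overall strategy is correct and matches the paper's: exhibit a positive-entropy Cantor set of sequences of the shape \eqref{eq:repeating-pattern} lying in $\EE_\beta\cap I_k$, then invoke \eqref{eq:Hausdorff-dimension-preserved} and Proposition~\ref{prop:entropy-constant-in-window}. However, you have made the construction harder than necessary, and the ``principal technical hurdle'' you identify is an artifact of that overcomplication rather than a genuine obstacle.

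The paper simply sets all your $p_i$ equal to $0$. Writing $B_k:=\al_1\dots\al_{j_k}$, it considers
\[
b(t,\beta)=\v_k^-(B_k^-)^{r_1}B_k\,\v_k^-(B_k^-)^{r_2}B_k\,\v_k^-(B_k^-)^{r_3}B_k\cdots,
\]
with a single integer sequence $(r_i)$. The greedy condition is a one-line check: $B_k\v_k^-\prec B_k\v_k=\al_1\dots\al_{j_k+n_k}$. More importantly, the orbit-minimality condition $\si^n(b(t,\beta))\lge b(t,\beta)$ reduces \emph{exactly} to the requirement that the integer sequence satisfies $(r_1,r_2,\dots)\lge(r_n,r_{n+1},\dots)$ for all $n\ge 1$ (here $\lge$ is lexicographic on $\N^\N$). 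This is trivially arranged by taking $r_1=3$ and $r_i\in\{1,2\}$ for $i\ge 2$: the first coordinate already dominates, so the comparison never propagates further. The resulting set has positive entropy by inspection---no appeal to $h(\Kt_\beta(t))>0$ is needed, and the combinatorics of Lemmas~\ref{lem:v_k-upper-bound}--\ref{lem:v_k-star} are not used at all.

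Your two-parameter family $(p_i,q_i)$ forces you to compare blocks of different total lengths, which is why you see a spurious monotonicity constraint and the ``polarity flip'' complication. Dropping the $p_i$'s aligns all block boundaries, and the shift comparison collapses to the coordinatewise one above. Your sketch would presumably succeed with enough effort, but there is no need for it.
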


\begin{proof}
Let $I_k=[\underline{t}_k,\overline{t}_k)$ be a non-transitivity window, so
\[
b(\underline{t}_k,\beta)=\v_k^-(\al_1\dots\al_{j_k}^-)^\f, \qquad b(\overline{t}_k,\beta)=(\v_k^*)^\f.
\]
Writing $B_k:=\al_1\dots\al_{j_k}$ for brevity, consider points $t$ with greedy expansion
\[
b(t,\beta)=\v_k^-(B_k^-)^{r_1}B_k \v_k^-(B_k^-)^{r_2}B_k \v_k^-(B_k^-)^{r_3}B_k\dots.
\]
Note that this expression is indeed a greedy $\beta$-expansion since
\[
B_k\v_k^-\prec B_k\v_k=\al_1\dots\al_{j_k}\al_{j_k+1}\dots\al_{j_k+l_k}.
\]
Furthermore, observe that $\si^n(b(t,\beta))\lge b(t,\beta)$ for all $n\geq 0$ if and only if $(r_1,r_2,\dots)\lge (r_n,r_{n+1},\dots)$ for all $n\geq 1$. Taking $r_1=3$ and $r_i\in\{1,2\}$ arbitrarily for $i\geq 2$ we see that there are uncountably many points of $\EE_\beta$ in $I_k$. In fact, the set
\[
C:=\{\v_k^-(B_k^-)^3 B_k\v_k^-(B_k^-)^{r_2}B_k\v_k^-(B_k^-)^{r_3}B_k\dots: r_i\in\{1,2\}\ \forall\,i\geq 2\}
\]
has positive entropy, hence positive Hausdorff dimension in $\Sigma_\beta$. Therefore, by \eqref{eq:Hausdorff-dimension-preserved},
\[
\dim_H(\EE_\beta\cap I_k)\geq \dim_H \pi_\beta(C)>0.
\]
The second statement follows from the first and Proposition \ref{prop:entropy-constant-in-window}, which implies that there are no points of $\BB_\beta$ in $I_k$.
\end{proof}

\section{Higher order basic intervals} \label{sec:higher-order-basic}

In this section we generalize Theorem \ref{thm:basic-interval-transitivity} to the higher order basic interval $[\beta_\ell^{\cs},\beta_*^{\cs}]$, where $\cs\in\La$. We do this by combining the construction from Section \ref{sec:basic-interiors} with the renormalization method of Section \ref{sec:relative-exceptional}.

Recall that $\mathcal{T}_R(\beta)$ is the set of all right endpoints of $\beta$-Lyndon intervals in $[0,\tau(\beta)]$. The following theorem establishes the final case of Theorem \ref{thm:general-transitivity}.

\begin{theorem} \label{thm:general-basic-intervals}
Let $[\beta_\ell,\beta_*]$ be a basic interval generated by a word $\cs=\r_1\bullet\dots\bullet \r_n\in\La$, where {$\r_1\in F_e$ and $\r_i\in F^*$ for $i=2,\dots,n$}, and let $\beta\in(\beta_\ell,\beta_*)$. Then there is a (finite or infinite, possibly empty) collection $\II$ of intervals such that
\begin{enumerate}[{\rm(i)}]
\item For any $t_R\in\mathcal{T}_R(\beta)$, $\Kt_\beta(t_R)$ is transitive if and only if $b(t_R,\beta)\prec \r_1^-\L(\r_1)^\f$ and $t_R\not\in\bigcup_{I\in\II}I$;
\item For any $t_R\in\mathcal{T}_R(\beta)\backslash \bigcup_{I\in\II}I$, $\Kt_\beta(t_R)$ has a transitive subshift $\mathcal{K}_\beta'(t_R)$ of full entropy and full Hausdorff dimension that contains the sequence $b(t_R,\beta)$;
\item The collection
\[
\{\mathcal{K}_\beta'(t_R): t_R\in\mathcal{T}_R(\beta)\backslash \textstyle{\bigcup_{I\in\II}I}\}
\]
is a strictly descending collection of subshifts;
\item The entropy function $t\mapsto h(\Kt_\beta(t))$ is constant throughout each interval $I\in\II$.
\end{enumerate}
\end{theorem}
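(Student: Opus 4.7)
The plan is to proceed by induction on the degree $n$ of $\cs$. The base case $n=1$, in which $\cs=\r_1\in\F$, is precisely Theorem~\ref{thm:basic-interval-transitivity}, combined with the observation that for $\cs\in\F$ the threshold $\pi_\beta(\r_1^-\L(\r_1)^\infty)$ coincides with $\tau(\beta)$, so the condition $b(t_R,\beta)\prec\r_1^-\L(\r_1)^\infty$ is automatic for $t_R\in[0,\tau(\beta))$.

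For the inductive step with $n\geq 2$, I would write $\cs=\r_1\bullet\s$ with $\s=\r_2\bullet\cdots\bullet\r_n\in\La_{n-1}$. Since $\beta\in(\beta_\ell^\cs,\beta_*^\cs)$, there is a unique $\hat\beta\in(\beta_\ell^\s,\beta_*^\s)$ with $\al(\beta)=\Phi_{\r_1}(\al(\hat\beta))$. Applying the induction hypothesis to $\hat\beta$ yields a collection $\hat\II$ of intervals in $[0,\tau(\hat\beta)]$ and transitive subshifts $\mathcal{K}'_{\hat\beta}(\hat t_R)$ of full entropy and full Hausdorff dimension satisfying (i)--(iv) relative to $\hat\beta$. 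I would then define
$$
\II:=\bigl\{[\Theta_{\r_1,\beta}(\hat t_L-),\,\Theta_{\r_1,\beta}(\hat t_R)):[\hat t_L,\hat t_R)\in\hat\II\bigr\},
$$
with the closed variant taken whenever the corresponding interval in $\hat\II$ is closed, where $\Theta_{\r_1,\beta}$ is the map from Lemma~\ref{lem:mapping-Lyndon-intervals}.

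The governing dichotomy for $t_R\in\mathcal{T}_R(\beta)$ is whether $b(t_R,\beta)\prec\r_1^-\L(\r_1)^\infty$ or not. In the former case, transitivity of $\Kt_\beta(t_R)$ is immediate from Proposition~\ref{prop:general-transitivity}, and since $\Theta_{\r_1,\beta}$ sends $[0,\tau(\hat\beta)]$ into $[\pi_\beta(\r_1^-\L(\r_1)^\infty),\tau(\beta)]$, no interval of $\II$ meets this lower region. In the latter case, the $\beta$-Lyndon word $\w$ generating $t_R$ must begin with $\r_1^-$, and by Lemma~\ref{lem:four-blocks} combined with the periodicity of $\w^\infty$ and Lemma~\ref{lem:substitution-properties}(iv), one gets $\w=\Phi_{\r_1}(\hat\w)$ for a unique $\hat\beta$-Lyndon $\hat\w$, so $b(t_R,\beta)=\Phi_{\r_1}(b(\hat t_R,\hat\beta))$ for some $\hat t_R\in\mathcal{T}_R(\hat\beta)$. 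I would then use Lemma~\ref{lem:four-blocks} again to show that every sequence in $\Kt_\beta(t_R)$ outside the countable exceptional set $\Gamma(\r_1)$ has a shift of the form $\Phi_{\r_1}(\hat\z)$ with $\hat\z\in\Kt_{\hat\beta}(\hat t_R)$, and conversely $\Phi_{\r_1}(\hat\z)\in\Kt_\beta(t_R)$ for each $\hat\z\in\Kt_{\hat\beta}(\hat t_R)$. Transitivity of $\Kt_\beta(t_R)$ then mirrors that of $\Kt_{\hat\beta}(\hat t_R)$, because $\Phi_{\r_1}$ behaves like a homomorphism on connectible concatenations (Lemma~\ref{lem:connectible}) and is strictly increasing.

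For (ii) and (iii), I would set
$$
\mathcal{K}'_\beta(t_R):=\bigl\{\si^n(\Phi_{\r_1}(\hat\z)):\hat\z\in\mathcal{K}'_{\hat\beta}(\hat t_R),\;n\geq 0\bigr\}
$$
in the renormalizable case, and $\mathcal{K}'_\beta(t_R):=\Kt_\beta(t_R)$ otherwise; full entropy and full Hausdorff dimension transfer through $\Phi_{\r_1}$ as in the proof of Proposition~\ref{prop:transitive-subshifts}, using Lemma~\ref{lem:countable} to absorb the countable $\Gamma(\r_1)$ correction and \eqref{eq:Hausdorff-dimension-preserved} for the dimension bookkeeping, while the strictly descending property is inherited from the corresponding property of $\{\mathcal{K}'_{\hat\beta}(\hat t_R)\}$ because $\Phi_{\r_1}$ is injective and strictly increasing. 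For (iv), constancy of $t\mapsto h(\Kt_\beta(t))$ on each $I\in\II$ follows from the entropy-preserving correspondence $\Kt_\beta(t)\leftrightarrow\Kt_{\hat\beta}(\hat t)$ on the renormalizable tier and the induction hypothesis applied to the corresponding interval in $\hat\II$.

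The main obstacle I anticipate is making the renormalization bijection between right endpoints completely airtight: Lemma~\ref{lem:four-blocks} requires a sequence to begin with $\r_1^-$ or $\L(\r_1)^+$, and one needs to verify cleanly that $\w^\infty\succ\r_1^-\L(\r_1)^\infty$ forces $\w=\Phi_{\r_1}(\hat\w)$ with $\hat\w$ actually $\hat\beta$-Lyndon (not merely Lyndon), handling subtly the boundary sequence $\r_1^-\L(\r_1)^\infty$ (which is aperiodic and hence not the expansion of any $\beta$-Lyndon endpoint) and the top value $\hat t_R=\tau(\hat\beta)$ which should map to $\tau(\beta)$. Once that correspondence and its inverse are established, the rest is bookkeeping that parallels the arguments already carried out in Sections~\ref{sec:relative-exceptional} and~\ref{sec:basic-interval-proof}.
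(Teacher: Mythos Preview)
Your inductive step relies on the claim that for $\beta\in(\beta_\ell^\cs,\beta_*^\cs)$ with $\cs=\r_1\bullet\s$, there exists $\hat\beta\in(\beta_\ell^\s,\beta_*^\s)$ with $\al(\beta)=\Phi_{\r_1}(\al(\hat\beta))$. This is false in general. The sequence $\al(\beta)$ necessarily begins with $\L(\cs)^+\in X^*(\r_1)$, but nothing forces the tail $\si^{|\cs|}(\al(\beta))$ to stay above $\r_1^-\L(\r_1)^\infty$; once it drops below, $\al(\beta)\notin X(\r_1)$ and no such $\hat\beta$ exists. For a concrete instance, take $\r_1=011$, $\r_2=01$, so $\cs=010111$ and $\L(\cs)^+=111011$; then any $\beta$ with $\al(\beta)$ beginning $111011\,001\ldots$ lies in $(\beta_\ell^\cs,\beta_*^\cs)$ but $\al(\beta)\notin X(\r_1)$, since after the block $\L(\r_1)^+\r_1=111\,011$ the rules of $\Phi_{\r_1}$ permit only $011$ or $010$, not $001$.

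This breaks both halves of your dichotomy. First, Proposition~\ref{prop:general-transitivity} does not apply when $\al(\beta)\notin X(\r_1)$, so you cannot conclude transitivity for all $t_R$ with $b(t_R,\beta)\prec\r_1^-\L(\r_1)^\infty$. In fact the Section~\ref{sec:basic-interiors} construction applied to $\beta$ and $\cs$ can produce non-transitivity windows $I_k$ lying \emph{below} $t_*=\pi_\beta(\r_1^-\L(\r_1)^\infty)$ (these are the intervals $I_{k_0+1},I_{k_0+2},\ldots$ in the paper's notation, arising precisely when $n_0<\infty$), and Proposition~\ref{prop:non-transitivity-window} shows $\Kt_\beta(t_R)$ is genuinely non-transitive there. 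Your collection $\II$, defined as the $\Theta_{\r_1,\beta}$-image of $\hat\II$, lives entirely above $t_*$ and so misses these windows; the ``if and only if'' in~(i) then fails. Second, for $t_R$ above $t_*$ you need $\Kt_\beta(t_R)=\Kt_{\hat\beta}(\hat t_R)$ up to countable error, but without $\al(\beta)\in X(\r_1)$ the upper constraint $\si^n(\z)\lle\al(\beta)$ does not pull back cleanly to $\si^n(\hat\z)\lle\al(\hat\beta)$.

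The paper repairs this by first running the Section~\ref{sec:basic-interiors} construction on $\beta$ and $\cs$ directly (this produces all of $\II$, including the windows below $t_*$), then, for the region above $t_*$, replacing $\al(\beta)$ by the periodic truncation $\al(\beta')=(\al_1\ldots\al_{n_0}^-)^\infty$ so that $\al(\beta')$ \emph{does} lie in $X(\bR)$ for an appropriate $\bR=\cs_i$ (Lemma~\ref{lem:v_k-four-blocks}), and only then renormalizing. The key observation making this work is that $\Kt_\beta(t_R)=\Kt_{\beta'}(t_R')$ for $t_R$ in the upper region. Your induction can be salvaged along these lines, but the truncation step and the direct construction of the low-lying windows are not optional.
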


We begin by proving an extension of Lemma \ref{lem:begin-with-s}.

\begin{lemma} \label{lem:weaker-left-extension}
Let $\cs=\r_1\bullet\dots\bullet \r_n$, where {$\r_1\in F_e$ and $\r_i\in F^*$ for $i=2,\dots,n$}, and let $\beta\in(\beta_\ell^{\cs},\beta_*^{\cs}]$. Let $[t_L,t_R]$ be a $\beta$-Lyndon interval such that $b(t_R,\beta)\prec \r_1^-\L(\r_1)^\f$. If $\z\in\Kt_\beta(t_R)$, then $\z$ can be extended to the left to a sequence $\z'\in \Kt_{\beta}(t_R)$ beginning with $\r_1$, and also to a sequence $\z''\in \Kt_{\beta}(t_R)$ beginning with $\L(\r_1)$.
\end{lemma}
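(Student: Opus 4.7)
The plan is to imitate the iterative left-extension argument from the proof of Lemma \ref{lem:begin-with-s}, but to carry it out at the level of the Farey word $\r_1$ rather than at the level of $\cs$. Setting $\z^{(0)} := \z$, we inductively construct $\z^{(k+1)}$ from $\z^{(k)}$ by prepending $1$ if $1\z^{(k)}\in\Kt_\beta(t_R)$, and by prepending $0$ otherwise, stopping as soon as $\z^{(k)}$ begins with $\r_1$; this furnishes $\z'$. To obtain $\z''$, we re-apply the construction starting from $\z'$ to produce a further extension beginning with $\r_1^2$, and use that $\r_1^2$ contains every cyclic permutation of $\r_1$ as a length-$m_1$ subword, where $m_1:=|\r_1|$; an appropriate $\sigma$-shift then yields a sequence in $\Kt_\beta(t_R)$ beginning with $\L(\r_1)$.

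The heart of the argument is to show that the iteration must terminate. Setting $\hat\cs:=\r_2\bullet\cdots\bullet\r_n$ when $n\ge 2$ (and taking $\hat\cs$ vacuous when $n=1$), I would establish two claims paralleling those in Lemma \ref{lem:begin-with-s}. First, \emph{(a)} if $\z^{(k)}$ does not begin with $\r_1$, then $z_1^{(k)}\ldots z_{m_1}^{(k)}\succ\r_1$; this is proved verbatim as in Lemma \ref{lem:begin-with-s}, requiring only that $\al(\beta)$ begins with $\L(\r_1)^+$, which holds because $\al(\beta)$ begins with $\L(\cs)^+=\Phi_{\r_1}(\L(\hat\cs))^+$ and the latter begins with $\L(\r_1)^+$ (by Definition \ref{def:substitution} when $n\ge 2$, and trivially when $n=1$). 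Second, \emph{(b)} for $k>m_1$, $z_1^{(k)}\ldots z_{m_1}^{(k)}\lle\L(\r_1)$; here the extra input is the bound $\al(\beta)\lle\al(\beta_*^\cs)=\L(\cs)^+\cs^-\L(\cs)^\f$, and writing $\L(\hat\cs)=1^{k_1}0^{l_1}\ldots$ with $k_1\ge 1$ when $n\ge 2$, Definition \ref{def:substitution} gives
\[
\L(\cs)\;=\;\L(\r_1)^+\,\r_1^{k_1-1}\,\r_1^-\,\L(\r_1)^{l_1-1}\,\cdots,
\]
so that positions $m_1+1,\ldots,2m_1$ of $\L(\cs)^+$ (and hence of $\al(\beta)$) spell $\r_1$ (if $k_1\ge 2$) or $\r_1^-$ (if $k_1=1$); when $n=1$, the same positions of $\al(\beta_*^\cs)$ simply read off the factor $\cs^-=\r_1^-$. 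In every case $\al_{m_1+1}\ldots\al_{2m_1}\lle\r_1$, so if $\z^{(k)}$ began with $\L(\r_1)^+$, the constraint $\z^{(k)}\lle\al(\beta)$ would force $z_{m_1+1}^{(k)}\ldots z_{2m_1}^{(k)}\lle\r_1$, contradicting (a) applied to $\z^{(k-m_1)}$.

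Combining (a) and (b), a non-terminating iteration would produce a one-sided sequence in which every length-$m_1$ block lies in $(\r_1,\L(\r_1)]$; since $\r_1\in\F$ is Farey, Lemma \ref{lem:Farey-sandwich} forces such a sequence to be a shift of $\r_1^\f$, contradicting the assumption that no $\z^{(k)}$ begins with $\r_1$. The main conceptual obstacle is recognizing that the argument must be conducted at the $\r_1$-level rather than at the $\cs$-level: at the $\r_1$-level Lemma \ref{lem:Farey-sandwich} pins the obstructive sequence down to a shift of $\r_1^\f$, whereas the analogous set $\Ga(\cs)$ at the $\cs$-level is only known to be countable (Lemma \ref{lem:countable}), which is not sharp enough to force termination. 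The explicit formula for $\L(\cs)^+$ as $\Phi_{\r_1}(\L(\hat\cs))^+$ supplies exactly the constraint on the initial segment of $\al(\beta)$ needed to make the single-word Farey sandwich argument close, uniformly in the degree $n$ of $\cs$.
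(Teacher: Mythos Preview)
Your proposal is correct and follows precisely the approach the paper intends: the paper's own proof is a one-line sketch stating that the argument of Lemma~\ref{lem:begin-with-s} goes through verbatim with $\s$ replaced by $\r_1$, once one observes that $\al(\beta)$ begins with $\L(\cs)^+$, which in turn begins with $\L(\r_1)^+$ followed by either $\r_1$ or $\r_1^-$. You have correctly identified and justified exactly this structural fact via the substitution formula, and your remark that the argument must be run at the $\r_1$-level (so that the sharp Farey-sandwich Lemma~\ref{lem:Farey-sandwich} applies rather than merely the countability of $\Ga(\cs)$) is a valid and useful observation.
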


The proof goes exactly like that of Lemma \ref{lem:begin-with-s}, replacing $\s$ with $\r_1$ and observing that $\al(\beta)$ begins with $\L(\cs)^+$, which in turn begins with $\L(\r_1)^+$, followed by either $\r_1$ or $\r_1^-$.

\bigskip

Now let $\cs=\r_1\bullet\dots\bullet\r_n\in\La$ with $n\geq 2$, and consider $\beta\in(\beta_\ell^{\cs},\beta_*^{\cs})$.
We follow the same construction as in Section \ref{sec:basic-interiors}. Thus, $j_1=|\cs|$, the first special word $\v_1$ begins with $\al_{j_1+1}$, etc. The procedure from Section \ref{sec:basic-interiors} generates a (finite or infinite) collection $\II=(I_1,I_2,\dots)$ of intervals.

Recall that the transitivity result of Proposition \ref{prop:transitive-in-between} required that $\cs\in\F$. For general $\cs\in\La$, we have instead the following fact:
%Everything in Section \ref{sec:basic-interiors} goes through in the same way, except the proof of transitivity in Proposition \ref{prop:transitive-in-between}. Here we prove the following.

\begin{proposition} \label{prop:higher-basic-interval-transitivity}
With $\beta$ as above, let $[t_L,t_R]$ be a $\beta$-Lyndon interval. Then $\Kt_\beta(t_R)$ is transitive if and only if $b(t_R,\beta)\prec \r_1^-\L(\r_1)^\f$ and $t_R\not\in\bigcup_{I\in\II}I$.
\end{proposition}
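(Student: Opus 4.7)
The plan is to prove both directions of the equivalence. For the ``only if'' direction, I would rule out transitivity in two separate cases. If $t_R \in I_k$ for some $k$, then $\Kt_\beta(t_R)$ fails to be transitive by Proposition~\ref{prop:non-transitivity-window}, whose proof was stated for arbitrary $\cs \in \La$ and therefore applies unchanged. If instead $b(t_R,\beta) \succeq \r_1^-\L(\r_1)^\infty$, I would mimic the second half of the proof of Proposition~\ref{prop:general-transitivity}: since $\beta \in (\beta_\ell^\cs,\beta_*^\cs) \subseteq J^{\r_1}$, there is a base $\hat\beta$ with $\al(\beta) = \Phi_{\r_1}(\al(\hat\beta))$. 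By Lemma~\ref{lem:four-blocks} any sequence in $\Kt_\beta(t_R)$ containing the legal word $\r_1^-$ belongs to $\Phi_{\r_1}(\{0,1\}^\N)$, so each block $\L(\r_1)$ in it is preceded only by $\L(\r_1)$ or $\r_1^-$. Hence $\r_1^-$ cannot be joined to the legal sequence $\L(\r_1)^\infty$ except via $\r_1^-\L(\r_1)^\infty$ itself, and this sequence is excluded since $\r_1^-\L(\r_1)^\infty \prec b(t_R,\beta)$ (strict, because $b(t_R,\beta) = \w^\infty$ is purely periodic with period $|\w|$, while $\r_1^-\L(\r_1)^\infty$ is not).

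For the ``if'' direction, assume $b(t_R,\beta) \prec \r_1^-\L(\r_1)^\infty$ and $t_R \notin \bigcup_{I\in\II} I$. The plan is to adapt the proof of Proposition~\ref{prop:transitive-in-between} by substituting $\r_1$ for the role played there by $\s=\cs\in\F$. The crucial replacement is Lemma~\ref{lem:weaker-left-extension} in place of Lemma~\ref{lem:begin-with-s}: under our hypothesis $b(t_R,\beta) \prec \r_1^-\L(\r_1)^\infty$, any $\z \in \Kt_\beta(t_R)$ can be extended to the left to start with $\L(\r_1)$. The three-case split of Proposition~\ref{prop:transitive-in-between} according to the position of $t_R$ relative to the intervals in $\II$ (namely $t_R \geq \overline t_1$; $\overline t_{k_{i+1}} \leq t_R < \underline t_{k_i}$ for some $i$; or $t_R < \underline t_k$ for all $k$) carries over, with the padding blocks $\L(\s)^N$ replaced throughout by $\L(\r_1)^N$. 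The needed inequality $\si^n(\w^\infty) \prec \L(\r_1)^N 0^\infty$ for sufficiently large $N$ follows at once from $\w^\infty \prec \r_1^-\L(\r_1)^\infty \prec \L(\r_1)^\infty$ together with the periodicity of $\w^\infty$.

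The hard part will be Case~3(b) (the finite-$\II$ subcase, especially when $\al(\beta)$ is aperiodic). The delicate Claim in that case — that the tail $\si^{n_0}(\al(\beta))$ does not contain the prefix $\al_1\dots\al_{n_0}$ — was established by verifying the Lyndon property of a constructed word $c_1\dots c_{m+n_0}$, using Lemma~\ref{v_k-star-less-than-s} at a key step. Lemma~\ref{v_k-star-less-than-s}, as written, is about $\cs \in \F$, but its proof only invokes that $\cs$ is Lyndon and that $\cs \in L^*(\beta)$; the latter still holds for $\cs \in \La$ because $\si^n(\cs^\infty) \lle \L(\cs)^\infty \prec \L(\cs)^+ 0^\infty \prec \al(\beta)$. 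With this ingredient in hand, the verification of the Lyndon inequalities \eqref{eq:c-Lyndon} in the range $m \leq l < m+j_1$ goes through once $s_1\dots s_{j_1-(l-m)}$ is replaced by the corresponding prefix of $\cs$ and the bound is chained through $(\v_{k_0}^*)^\infty \prec \cs^\infty$. With that adjustment, the conclusion of Case~3(b) and hence the full transitivity argument are preserved.
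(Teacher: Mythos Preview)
Your overall strategy matches the paper's proof, but there are two issues worth flagging.

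First, in the ``only if'' direction you assert that $\al(\beta)=\Phi_{\r_1}(\al(\hat\beta))$ for some $\hat\beta$. This is false in general: for $\beta\in(\beta_\ell^\cs,\beta_*^\cs)$ the tail $\si^n(\al(\beta))$ can drop below $\r_1^-\L(\r_1)^\infty$, so $\al(\beta)\notin X(\r_1)$. (For instance, with $\r_1=\r_2=01$, $\cs=001101$, take $\al(\beta)=110101\,000\,(10)^\infty$: the block $\r_1^-=00$ is followed by $01$, which is not permitted in $X(01)$.) Fortunately the claim is unnecessary. What you actually need---and what the paper uses---is that every $\z\in\Kt_\beta(t_R)$ satisfies $\r_1^-\L(\r_1)^\infty\lle\si^n(\z)\lle\al(\beta)\lle\L(\cs)^+\cs^-\L(\cs)^\infty\lle\L(\r_1)^+\r_1^\infty$, so Lemma~\ref{lem:four-blocks} applies to $\z$ directly.

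Second, your proposed modification for the ``if'' direction---substituting $\r_1$ for $\s$ and replacing the padding $\L(\s)^N$ by $\L(\r_1)^N$---does not quite work as written. The difficulty is that in Cases~2 and~3 the word $\u'$ is built from blocks $\al_1\dots\al_{j_k}^-$ with $j_k\ge j_1=|\cs|$; these must remain as blocks of length $|\cs|$ or longer, so $\s$ must become $\cs$, not $\r_1$, in those constructions. The paper's fix is different and cleaner: keep \emph{everything} in the proof of Proposition~\ref{prop:transitive-in-between} with $\s$ replaced by $\cs$, but first prepend to $\z$ the factor $\L(\r_1)^M$ for $M$ large enough that $\w^\infty\prec\r_1^-\L(\r_1)^M0^\infty$. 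This extra cushion of $\L(\r_1)$'s is exactly what absorbs the mismatch between $\u'$ ending in $\L(\cs)$-type blocks and $\z$ beginning merely with $\L(\r_1)$ (from Lemma~\ref{lem:weaker-left-extension}). Your inequality $\si^n(\w^\infty)\prec\L(\r_1)^N0^\infty$ is the right ingredient, but it should be used to lengthen the prefix of $\z$, not to replace the connecting word. Your discussion of Case~3(b) and Lemma~\ref{v_k-star-less-than-s} is fine; that lemma is already stated for general $\cs\in\La$.
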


\begin{proof}
Write $\r:=\r_1$ for brevity. If $t_R\in\bigcup_{I\in\II}I$, then $\Kt_\beta(t_R)$ is not transitive by Proposition \ref{prop:non-transitivity-window}. We now show that $\Kt_\beta(t_R)$ is not transitive when $b(t_R,\beta)\lge \r^-\L(\r)^\f$. This follows essentially the same way as in the proof of Proposition \ref{prop:general-transitivity}. Let $\w$ be the $\beta$-Lyndon word such that $b(t_R,\beta)=\w^\f$. Observe that for any sequence $\z\in\Kt_\beta(t_R)$, we have
\[
\Phi_\r(0^\f)=\r^-\L(\r)^\f\lle \w^\f \lle \si^n(\z) \lle \al(\beta) \lle \L(\cs)^+\cs^-\L(\cs)^\f \lle \L(\r)^+\r^\f=\Phi_\r(1^\f).
\]
Thus, if $\z\in\Kt_\beta(t_R)$ begins with $\r^-$ or $\L(\r)^+$, then $\z=\Phi_\r(\hat{\z})$ for some sequence $\hat{\z}$ by Lemma \ref{lem:four-blocks}. Furthermore, since $\w^\f\prec \cs^-\L(\cs)^\f$ and $\cs^-$ begins with the word $\r^-$, $\w^\f$ also begins with $\r^-$. The rest of the proof follows that of Proposition \ref{prop:general-transitivity}.

\medskip

Next, we show that $\Kt_\beta(t_R)$ is transitive when $b(t_R,\beta)\prec \r^-\L(\r)^\f$ and $t_R\not\in\bigcup_{I\in\II}I$. This proceeds almost exactly as in the proof of Proposition \ref{prop:transitive-in-between}, with one essential difference: Since $\w^\f\prec\r^-\L(\r)^\f$, we can find an integer $M$ such that $\w^\f\prec\r^-\L(\r)^M 0^\f$. In each of the three cases in the proof of Proposition \ref{prop:transitive-in-between}, we can assume that $\z$ begins with $\L(\r)$ by Lemma \ref{lem:weaker-left-extension}. We now extend $\z$ further by preceding it with the factor $\L(\r)^M$, calling the new sequence again $\z$. The rest of the proof is now the same as before (changing $\s$ to $\cs$ throughout).
\end{proof}

Recall that $X(\s)=\Phi_\s\big(\{0,1\}^\N\big)$ and $X^*(\s)=\Phi_\s\big(\{0,1\}^*\big)$ for any Lyndon word $\s$.

\begin{lemma} \label{lem:v_k-four-blocks}
Suppose $(\v_k^*)^\f\succ \r_1^-\L(\r_1)^\f$. Then $\v_k\in X^*(\r_1)$, and $\v_k^-(\al_1\dots\al_{j_k}^-)^\f\lge \r_1^-\L(\r_1)^\f$.
\end{lemma}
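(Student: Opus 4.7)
The plan is to prove both parts by strong induction on $k$, running in parallel the analogous procedure at the renormalized base $\hat\beta_1$ defined by $\al(\beta)=\Phi_{\r_1}(\al(\hat\beta_1))$. Applying the Section \ref{sec:basic-interiors} construction at $\hat\beta_1$ with $\hat\cs:=\r_2\bullet\cdots\bullet\r_n$ in place of $\cs$ produces words $\hat\v_1,\hat\v_2,\dots$ and indices $\hat j_k$; the inductive claim, from which both conclusions of the lemma follow, is that whenever $(\v_k^*)^\f\succ\r_1^-\L(\r_1)^\f$, one has $j_k=|\r_1|\hat j_k$ and $\v_k=\Phi_{\r_1}(\hat\v_k)$, so that $\v_k\in X^*(\r_1)$ automatically.

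First I would observe, using Lemma \ref{lem:v_k-decreasing}(ii) iteratively, that the hypothesis $(\v_k^*)^\f\succ\r_1^-\L(\r_1)^\f$ is inherited by every $i\le k$; so the induction hypothesis supplies $\v_1,\dots,\v_{k-1}\in X^*(\r_1)$, and hence $j_k=|\cs|+\sum_{i<k}|\v_i|$ is a multiple of $|\r_1|$. That is, $\v_k$ begins at a boundary of the block-decomposition $\al(\beta)=B_1B_2B_3\cdots$ of $\al(\beta)$ into consecutive building blocks drawn from $\{\r_1,\r_1^-,\L(\r_1),\L(\r_1)^+\}$. The key step is then to apply Lemma \ref{lem:four-blocks} to the Lyndon-periodic sequence $(\v_k^*)^\f$: the hypothesis together with the Lyndon property of $\v_k^*$ gives $\si^n((\v_k^*)^\f)\succ\Phi_{\r_1}(0^\f)$ for every $n\ge 0$, while $\v_k^*\in L^*(\beta)$ gives $\si^n((\v_k^*)^\f)\lle\al(\beta)=\Phi_{\r_1}(\al(\hat\beta_1))\lle\Phi_{\r_1}(1^\f)$; the argument from the proof of Lemma \ref{lem:countable} then yields $(\v_k^*)^\f=\Phi_{\r_1}(\hat\v_k^*)^\f$ for some $\hat\v_k^*\in L^*(\hat\beta_1)$. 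The alternative, that $(\v_k^*)^\f\in\Ga(\r_1)$ forcing $\v_k^*=\r_1$ by Lemma \ref{lem:Farey-sandwich}, must be handled separately by arguing directly from the block structure that $\v_k$ still consists of whole blocks.

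To pass from $\v_k^*\in X^*(\r_1)$ to $\v_k\in X^*(\r_1)$, I would match the minimality condition defining $n_k$ against the analog at level $\hat\beta_1$: for block-aligned shifts $m=\ell|\r_1|$, the comparison of $\si^{j_k+m}(\al(\beta))$ with $\si^{j_k}(\al(\beta))$ reduces, via strict monotonicity of $\Phi_{\r_1}$, to the comparison of $\si^{\hat j_k+\ell}(\hat\al)$ with $\si^{\hat j_k}(\hat\al)$; for non-block-aligned shifts $m=\ell|\r_1|+r$ with $0<r<|\r_1|$, the Lyndon and palindromic properties of the Farey word $\r_1$ from Lemma \ref{lem:Farey-property}, combined with the admissible transitions depicted in Figure \ref{fig:directed-graph}, force $\si^{j_k+m}(\al(\beta))\succ\si^{j_k}(\al(\beta))$, so no such $m$ can realize the minimum. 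This yields $n_k=|\r_1|\hat n_k$ and $\v_k=\Phi_{\r_1}(\hat\v_k)\in X^*(\r_1)$.

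For the second part, once $\v_k=\Phi_{\r_1}(\hat\v_k)$ is established, and noting that the prefix $\al_1\dots\al_{j_k}$ also lies in $X^*(\r_1)$ by the induction, Lemma \ref{lem:substitution-properties}(ii) (to commute $\Phi_{\r_1}$ with the operator $(\cdot)^-$) and Lemma \ref{lem:connectible}(i) (the concatenation is connectible because $\v_k^-$ ends in $0$ while $\al_1\dots\al_{j_k}^-$ begins with $1$) give
\[
\v_k^-(\al_1\dots\al_{j_k}^-)^\f=\Phi_{\r_1}\big(\hat\v_k^-(\hat\al_1\dots\hat\al_{\hat j_k}^-)^\f\big)\succ\Phi_{\r_1}(0^\f)=\r_1^-\L(\r_1)^\f,
\]
the strict inequality coming from strict monotonicity of $\Phi_{\r_1}$ applied to an argument strictly above $0^\f$. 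The main obstacle will be the precise matching of minimality conditions across the renormalization — in particular, ruling out non-block-aligned candidates for $n_k$ — since $\Phi_{\r_1}$ and $\si$ do not commute, so one must carefully control what happens at shifts which land inside individual building blocks rather than at their boundaries.
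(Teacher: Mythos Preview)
Your proposal has a genuine gap: the renormalized base $\hat\beta_1$ that you define by $\al(\beta)=\Phi_{\r_1}(\al(\hat\beta_1))$ need not exist. For $\beta$ in the interior of the basic interval $I^\cs$ with $\cs=\r_1\bullet\dots\bullet\r_n$, all we know is that $\al(\beta)$ begins with $\L(\cs)^+\in X^*(\r_1)$ and satisfies $\al(\beta)\lle\al(\beta_*^\cs)=\Phi_{\r_1}(\al(\beta_*^{\hat\cs}))$; the tail of $\al(\beta)$ can fail to lie in $X(\r_1)$. Indeed, the proof of Theorem \ref{thm:general-basic-intervals} explicitly constructs the first index $n_0$ where $\si^{n_0}(\al(\beta))\prec\r_1^-\L(\r_1)^\f$, and it is only the truncated base $\beta'$ with $\al(\beta')=(\al_1\dots\al_{n_0}^-)^\f$ that actually renormalizes through $\Phi_{\r_1}$. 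Since your entire inductive scheme---the parallel words $\hat\v_k$, the matching of minimality conditions---rests on the global identity $\al(\beta)=\Phi_{\r_1}(\al(\hat\beta_1))$, the argument has no foundation once that identity fails.

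The paper's proof sidesteps this by working purely locally. It never assumes $\al(\beta)\in X(\r_1)$; instead it applies Lemma \ref{lem:four-blocks} directly to the periodic sequence $\v_k^\f$ (when $\v_k\in L^*(\beta)$) or to $(\u^+)^\f$ (when $\v_k\notin L^*(\beta)$, via the structural decomposition $\v_k=\u(\al_1\dots\al_{j_k}^-)^r\al_1\dots\al_{j_k}$ from Lemma \ref{lem:v_k-star}). The needed sandwich $\Phi_{\r_1}(0^\f)\prec\si^n(\v_k^\f)\lle\al(\beta_*^\cs)=\Phi_{\r_1}(\al(\beta_*^{\hat\cs}))$ uses only the global upper bound $\al(\beta)\lle\al(\beta_*^\cs)$, not membership of $\al(\beta)$ in $X(\r_1)$. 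Once $\v_k\in X^*(\r_1)$ is secured this way, the second claim follows immediately since $\al_1\dots\al_{j_k}=\L(\cs)^+\v_1\cdots\v_{k-1}$ is a concatenation of words already known to lie in $X^*(\r_1)$ by induction.
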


\begin{proof}
We prove this by induction on $k$. Take first $k=1$, and assume $(\v_1^*)^\f\succ \r^-\L(\r)^\f$, where we write $\r:=\r_1$. 
{Note that $\v_1=\al_{j_1+1}\dots\al_{j_1+l_1}\lle (\cs^-\L(\cs)^\f)_{1:l_1}$ since $\beta\in(\beta_\ell^\cs,\beta_*^\cs)$, and $\cs^-$ begins with $\r^-$.
Suppose first that $\v_1$ is $\beta$-Lyndon. Then $\v_1^\f\succ \r^-\L(\r)^\f$, so $\v_1^\f$ begins with $\r^-$. Furthermore, $\si^n(\v_1^\f)\prec\al(\beta)\lle \L(\cs)^+\cs^\f\lle \L(\r)^+\r^\f=\Phi_\r(1^\f)$. Hence,} $\v_1^\f\in X(\r)$ by Lemma \ref{lem:four-blocks}. {Since $\v_1$ is Lyndon, it follows that $\v_1\in X^*(\r)$ and moreover, the last block of $\v_1$ is $\r$ or $\L(\r)^+$. Thus,} $\v_1^-\in X^*(\r)$ as well, {and $\v_1^-$ ends in a block $\r^-$ or $\L(\r)$. Observing that
\[
\al_1\dots\al_{j_1}^-=\L(\cs)=\Phi_\r\big(\L(\r_2\bullet\dots\bullet\r_n)\big)\in X^*(\r)
\]
and this word begins with a block $\L(\r)^+$,
it follows that $\v_1^-(\al_1\dots\al_{j_1}^-)^\f\in X^*(\r)$. This implies $\v_1^-(\al_1\dots\al_{j_1}^-)^\f\lge \r^-\L(\r)^\f$.

Assume next that $\v_1$ is not $\beta$-Lyndon. Then $\v_1=\u(\al_1\dots\al_{j_1}^-)^r\al_1\dots\al_{j_1}$ for some word $\u$ and integer $r\geq 0$, and $\v_1^*=\u^+$ by Lemma \ref{lem:v_k-star}. Here $\u^+$ is $\beta$-Lyndon and $(\u^+)^\f\succ \r^-\L(\r)^\f$, so $\u\in X^*(\r)$ and $\u$ ends in a block $\r^-$ or $\L(\r)$ by the same argument as above. As in the previous case, it follows that $\v_1^-(\al_1\dots\al_{j_1}^-)^\f=\u(\al_1\dots\al_{j_1}^-)^\f\in X(\r)$ and $\v_1^-(\al_1\dots\al_{j_1}^-)^\f\lge \r^-\L(\r)^\f$.}

%. Observing that
%\[
%\al_1\dots\al_{j_1}^-=\L(\cs)=\Phi_\r\big(\L(\r_2\bullet\dots\bullet\r_n)\big)\in X^*(\r)
%\]
%{and this word begins with a block $\L(\r)^+$,}
%it follows that $\v_1\in X^*(\r)$ as well, and then also $\v_1^-(\al_1\dots\al_{j_1}^-)^\f\in X^*(\r)$. This implies $\v_1^-(\al_1\dots\al_{j_1}^-)^\f\lge \r^-\L(\r)^\f$.

Next, let $k\geq 2$ and assume the lemma holds for the words $\v_1,\dots,\v_{k-1}$. Suppose $(\v_k^*)^\f\succ \r^-\L(\r)^\f$. The same argument we used for $\v_1$ shows that $\v_k\in X^*(\r)$. Now since $\v_l^\f{\lge}\v_k^\f\succ \r^-\L(\r)^\f$ for all $l<k$, the induction hypothesis implies that $\v_l\in X^*(\r)$ for all $l<k$, and thus, $\al_1\dots\al_{j_k}=\L(\cs)^+\v_1\dots\v_{k-1}\in X^*(\r)$. It follows that $\v_k^-(\al_1\dots\al_{j_k}^-)^\f\in X(\r)$, and hence, $\v_k^-(\al_1\dots\al_{j_k}^-)^\f\lge \r^-\L(\r)^\f$.
\end{proof}

\begin{proof}[Proof of Theorem \ref{thm:general-basic-intervals}]
By Theorem \ref{thm:basic-interval-transitivity}, the theorem holds for the case $n=1$, i.e. $\cs=\r\in{F_e}$; we simply set $\mathcal{K}_\beta'(t_R):=\Kt_\beta(t_R)$ for $t_R\in\mathcal{T}_R(\beta)\backslash \bigcup_{I\in\II}I$.

Now let $\cs=\r_1\bullet\dots\bullet\r_m$, where $m\geq 2$ and {$\r_1\in F_e$ and $\r_i\in F^*$ for $i=2,\dots,m$}. Let $\beta\in(\beta_\ell^{\cs},\beta_*^{\cs})$, and construct the Lyndon words $\v_1,\v_2,\dots$ and collection of intervals $\II=(I_1,I_2,\dots)$ as outlined above. Then statement (i) follows from Proposition \ref{prop:higher-basic-interval-transitivity}.

We define a new base $\beta'\in[\beta_\ell^{\cs},\beta_*^{\cs})$ as follows. Set $\r:=\r_1$. If $\si^n(\al(\beta)){\succ}\r^-\L(\r)^\f$ for all $n\geq j_1$ (and hence for all $n\geq 0$), we set $\beta':=\beta$ and $n_0:=\f$. Otherwise, let
\[
n_0:=\min\{n\geq j_1: \si^n(\al(\beta)){\lle}\r^-\L(\r)^\f\}.
\]
{Assume first that $n_0<\f$.}
Since the words $\v_1,\v_2,\dots$ are Lyndon, there is an integer $k_0\geq 0$ such that $n_0=j_{k_0+1}$. We let $\beta'$ be the base given by
\[
\al(\beta')=(\al_1\dots\al_{j_{k_0+1}}^-)^\f=\big(\L(\cs)^+\v_1\dots\v_{k_0}^-\big)^\f,
\]
where we interpret the last expression as $\al(\beta')=\L(\cs)^\f$ if $k_0=0$. Let $t_*$ be the point given by $b(t_*,\beta)=\r^-\L(\r)^\f$. By Lemma \ref{lem:v_k-four-blocks}, any interval $I_k\in\II$ lies either completely in $(0,t_*)$ or in $[t_*,\tau(\beta)]$. By definition of $n_0$, the intervals $I_1,\dots,I_{k_0}$ lie to the right of the critical point $t_*$, and $I_{k_0+1},I_{k_0+2},\dots$ lie to the left of $t_*$. 

If $n_0=\f$, then each $I_k$ lies to the right of $t_*$, and we set $k_0:=\f$ for convenience.

Due to the special way in which we constructed the intervals $I_1,I_2,\dots$ for the case of periodic $\al(\beta)$ in Section \ref{sec:basic-interiors}, the non-transitivity windows for $\beta'$ are exactly the intervals $I_1'=[\underline{t}_1',\overline{t}_1'),\dots,I_{k_0}'=[\underline{t}_{k_0}',\overline{t}_{k_0}')$ given by
\[
b(\underline{t}_k',\beta')=\v_k^-(\al_1\dots\al_{j_k}^-)^\f, \qquad b(\overline{t}_k',\beta')=(\v_k^*)^\f, \qquad k=1,2,\dots,k_0.
\]

Let $\w$ be a $\beta$-Lyndon word such that $\w^\f\succ \r^-\L(\r)^\f$ and $t_R:=\pi_\beta(\w^\f)\not\in\bigcup_{k=1}^{k_0}I_k$. %and let $t_R$ be the point such that $b(t_R,\beta)=\w^\f$.
We first claim that $\w$ is also $\beta'$-Lyndon. This is obvious if $\beta'=\beta$, so assume $\beta'\neq\beta$ so that $n_0<\f$. Suppose $\si^l(\w^\f)\lge \al(\beta')=\big(\L(\cs)^+\v_1\dots\v_{k_0}^-\big)^\f$ for some $l$. Since $\w$ is $\beta$-Lyndon and the words $\v_1,\dots,\v_{k_0}$ decrease lexicograpically, this can only happen if $\w=\v_{k_0}^-\L(\cs)^+\v_1\dots\v_{k_0-1}=\v_{k_0}^-\al_1\dots\al_{j_{k_0}}$. But then
\[
b(\underline{t}_{k_0},\beta)=\v_{k_0}^-(\al_1\dots\al_{j_{k_0}}^-)^\f\prec \w^\f \prec (\v_{k_0}^*)^\f=b(\overline{t}_{k_0},\beta),
\]
so $t_R=\pi_\beta(\w^\f)\in I_{k_0}$, a contradiction. Hence, $\w$ is $\beta'$-Lyndon, and there is a point $t_R'$ such that $b(t_R',\beta')=\w^\f$.

Furthermore, $\si^{n_0}(\al(\beta)){\lle\r^-\L(\r)^\f}\prec\w^\f$ if $n_0$ is finite, and then
\[
\Kt_\beta(t_R)=\{\z\in{A_\beta}^\N: \w^\f\lle \si^n(\z)\lle (\al_1\dots\al_{j_{{k_0+1}}}^-)^\f\ \forall\,n\geq 0\}.
\]
It follows that $\Kt_\beta(t_R)=\Kt_{\beta'}(t_R')$.

Now define $\cs_i:=\r_1\bullet\dots\bullet\r_i$ for $i=1,2,\dots,m$, so in particular, $\cs_m=\cs$. Since ${b(\tau(\beta),\beta)}=\cs^-\L(\cs)^\f=\cs_m^-\L(\cs_m)^\f$, there is a unique $i\in\{1,2,\dots,m-1\}$ such that
\[
\cs_i^-\L(\cs_i)^\f\prec \w^\f \prec \cs_{i+1}^-\L(\cs_{i+1})^\f.
\]
Set $\bR:=\cs_i$ and $\bR':=\r_{i+1}\bullet\dots\bullet\r_m$. It follows (by Lemma \ref{lem:four-blocks} if $n_0=\f$; by Lemma \ref{lem:v_k-four-blocks} if $n_0<\f$) that $\al(\beta')\in X(\bR)$, and hence $\al(\beta')=\Phi_\bR(\al(\hat{\beta}))$ for some base $\hat{\beta}$. Note that $\cs=\bR\bullet\bR'$. Since $\beta'\in [\beta_\ell^{\cs},\beta_*^{\cs})$, we have $\hat{\beta}\in [\beta_\ell^{\bR'},\beta_*^{\bR'})$. We make the following observations.
\begin{itemize}
\item If $n_0=\f$ and $\II$ is infinite, then 
\[
\al(\beta')=\L(\cs)^+\v_1\v_2\v_3\dots \qquad\mbox{and} \qquad \al(\hat{\beta})=\L(\bR')^+\hat{\v}_1\hat{\v}_2\hat{\v}_3\dots,
\] 
where for each $k$, $\hat{\v}_k$ is a Lyndon word such that $\Phi_\bR(\hat{\v}_k)=\v_k$.

\item If $n_0=\f$ and $\II$ is finite, then for some $k\geq 0$, 
\[
\al(\beta')=\L(\cs)^+\v_1\dots\v_k \y \qquad\mbox{and} \qquad \al(\hat{\beta})=\L(\bR')^+\hat{\v}_1\dots\hat{\v}_k \hat{\y},
\] 
where $\y$ and $\hat{\y}$ are sequences satisfying $\si^j(\y)\succ\y$ for all $j\geq 0$, $\y=\Phi_\bR(\hat{\y})$, and hence also $\si^j(\hat{\y})\succ\hat{\y}$ for all $j\geq 0$.

\item If $n_0<\f$, then 
\[
\al(\beta')=\big(\L(\cs)^+\v_1\dots\v_{k_0}^-\big)^\f \qquad\mbox{and} \qquad \al(\hat{\beta})=\big(\L(\bR')^+\hat{\v}_1\dots\hat{\v}_{k_0}^-\big)^\f, 
\]
where again, $\v_k=\Phi_\bR(\hat{\v}_k)$ for $k=1,2,\dots,k_0$. (If $k_0=0$, we have $\al(\beta')=\L(\cs)^\f$ and $\al(\hat{\beta})=\L(\bR')^\f$.)
\end{itemize}

The words $\hat{\v}_1,\hat{\v}_2,\dots$ determine a (finite or infinite) ordered collection $\widehat{\II}=(\hat{I}_1,\hat{I}_2,\dots)$ of intervals in the same way that $\v_1,\v_2,\dots$ determine $I_1,I_2,\dots$. In fact, if $\hat{I}_k=[\underline{\hat{t}}_k,\overline{\hat{t}}_k)$, then
\begin{equation} \label{eq:endpoint-mapping}
b(\underline{t}_k,\beta)=\Phi_\bR\big(b(\underline{\hat{t}}_k,\hat{\beta})\big), \qquad b(\overline{t}_k,\beta)=\Phi_\bR\big(b(\overline{\hat{t}}_k,\hat{\beta})\big).
\end{equation}

Since $\w^\f\succ \bR^-\L(\bR)^\f$, we have $\w=\Phi_\bR(\hat{\w})$ for some $\hat{\beta}$-Lyndon word $\hat{\w}$, as in the proof of Proposition \ref{prop:transitive-subshifts}, and so there is a point $\hat{t}_R$ such that $b(\hat{t}_R,\hat{\beta})=\hat{\w}^\f$. Therefore, since $t_R'\not\in \bigcup_{k=1}^{k_0}I_k'$ and 
\[
b(t_R',\beta')=\w^\f=\Phi_\bR(\hat{\w}^\f)=\Phi_\bR(b(\hat{t}_R,\hat{\beta})), 
\]
we obtain by \eqref{eq:endpoint-mapping} that $\hat{t}_R\not\in\bigcup_{k=1}^{k_0}\hat{I}_k$. Furthermore, since $b(t_R',\beta')=\w^\f\prec \cs_{i+1}^-\L(\cs_{i+1})^\f$, it follows that
\[
b(\hat{t}_R,\hat{\beta})=\Phi_{\bR}^{-1}\big(b(t_R',\beta')\big)\prec \Phi_{\bR}^{-1}\big(\cs_{i+1}^-\L(\cs_{i+1})^\f\big)=\r_{i+1}^-\L(\r_i)^\f.
\]
{Hence, recalling that $\hat{\beta}\in [\beta_\ell^{\bR'},\beta_*^{\bR'})$ and $\bR'=\r_{i+1}\bullet\dots\bullet\r_m$, we can apply Proposition \ref{prop:higher-basic-interval-transitivity} to $\hat{\beta}$ {and $\hat{t}_R$} and conclude that} $\Kt_{\hat\beta}(\hat{t}_R)$ is transitive. We now define a subshift $\mathcal{K}_\beta'(t_R)$ of $\Kt_\beta(t_R)$ by
\[
\mathcal{K}_\beta'(t_R):=\{\sigma^n(\Phi_{\r}(\hat{\z})): \hat{\z}\in \Kt_{\hat\beta}(\hat{t}_R),\ n\geq 0\}.
\]
As in the proof of Proposition \ref{prop:transitive-subshifts}, $\mathcal{K}_\beta'(t_R)$ is transitive and of full entropy and full Hausdorff dimension in $\Kt_{\beta'}(t_R')=\Kt_\beta(t_R)$, and contains the sequence $b(t_R,\beta)=b(t_R',\beta')$. Moreover, the collection
\[
\{\mathcal{K}_\beta'(t_R): t_R\in\mathcal{T}_R(\beta)\backslash \textstyle{\bigcup_{I\in\II}I}\}
\]
is a strictly descending collection of subshifts.

Finally, Proposition \ref{prop:entropy-constant-in-window} implies that $h(\Kt_\beta(t))$ is constant on each interval in $\II$.
\end{proof}

\section{Gaps between $\beta$-Lyndon intervals} \label{sec:gaps}

We saw in Subection \ref{subsec:dense-intervals} that when $\beta\in \overline{E}$, the $\beta$-Lyndon intervals are dense in $[0,\tau(\beta)]$. However, Example \ref{ex:non-dense-intervals} showed that this need not be the case when $\beta$ lies in the interior of a basic interval. In this section we show that having gaps between $\beta$-Lyndon intervals is in fact typical.

Below we fix a basic interval $[\beta_\ell,\beta_*]=[\beta_\ell^{\cs},\beta_*^{\cs}]$ generated by a Lyndon word $\cs\in\La$.

\begin{definition} \label{def:beta-Lyndon-gap}
We say an open interval $(t_1,t_2)$ is a {\em $\beta$-Lyndon gap} if
\begin{enumerate}[(i)]
\item $(t_1,t_2)$ does not intersect any $\beta$-Lyndon intervals; and
\item any open interval $(u_1,u_2)$ properly containing $(t_1,t_2)$ intersects at least one $\beta$-Lyndon interval.
\end{enumerate}
\end{definition}

Note that this definition is indifferent about whether the endpoints $t_1$ and $t_2$ belong to $\beta$-Lyndon intervals.

We first show that $\beta$-Lyndon gaps always exist when $\beta$ lies in a basic interval.
Most importantly, we show that the entropy $h(\Kt_\beta(t))$ stays constant across each $\beta$-Lyndon gap. {Together, these facts allow us to show that $\EE_\beta\backslash\BB_\beta$ is infinite.} 

\begin{proposition} \label{cor:gaps-exist}
Let $\beta\in(\beta_\ell^{\cs},\beta_*^{\cs})$, where $\cs\in\La$. Then there are infinitely many $\beta$-Lyndon gaps, and the left endpoint of each of these lies in $\EE_\beta$.
\end{proposition}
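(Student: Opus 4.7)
The plan is to produce, for each $n\in\N$, a distinct $\beta$-Lyndon gap $G_n$ with $\inf G_n\in\EE_\beta$. The natural source of these gaps is the collection $\II'$ of non-transitivity windows constructed in Section~\ref{sec:basic-interiors} (and extended in Section~\ref{sec:higher-order-basic}), supplemented by an iterated refinement when $\II'$ happens to be finite.

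I would first verify the general fact that the left endpoint of every $\beta$-Lyndon gap lies in $\EE_\beta$. If $(t_1,t_2)$ is a gap and $t_1=\pi_\beta(\w^\f)$ for some $\w\in L^*(\beta)$, then the Lyndon property yields $\si^n(\w^\f)\lge\w^\f$ for every $n\ge 0$, so $t_1\in\EE_\beta^+=\EE_\beta$; if instead $t_1$ is a left limit of right endpoints of $\beta$-Lyndon intervals, the same conclusion follows by taking limits, in the spirit of Lemma~\ref{lem:E-beta-next-to-beta-Lyndon}. This reduces the problem to producing infinitely many distinct gaps.

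The main claim is that each non-transitivity window $I_k=[\underline t_k,\overline t_k)\in\II'$ contains at least one $\beta$-Lyndon gap. By the restricted form of legal sequences beginning with $\al_1\ldots\al_{j_k}$ established in the proof of Proposition~\ref{prop:non-transitivity-window} (see~\eqref{eq:repeating-pattern}), together with the minimality of $\v_k^*$ as the lex-smallest $\beta$-Lyndon word with $(\v_k^*)^\f\lge\v_k^\f$, the $\beta$-Lyndon words $\w$ satisfying $b(\underline t_k,\beta)\prec\w^\f\prec(\v_k^*)^\f$ form an explicitly constrained family. In particular there is a lex-largest such $\w$, call it $\w_k^{\max}$, and one verifies that no $\beta$-Lyndon interval fits into the strip $(\pi_\beta((\w_k^{\max})^\f),\overline t_k]$, producing a gap whose left endpoint $\pi_\beta((\w_k^{\max})^\f)$ already lies in $\EE_\beta$. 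Since the elements of $\II'$ are pairwise disjoint by Proposition~\ref{prop:non-transitivity-intervals}, distinct windows yield distinct gaps.

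If $\II'$ is infinite the proposition follows at once. If $\II'$ is finite, I would proceed by iteration: either apply the same construction after renormalizing $\al(\beta)=\Phi_{\bR}(\al(\hat\beta))$ for a suitable $\bR\in\La$ with $\hat\beta$ again in the interior of a basic interval, pulling back the resulting $\hat\beta$-Lyndon gaps via the map $\Theta_{\bR,\beta}$ from Lemma~\ref{lem:mapping-Lyndon-intervals}; or, when no further renormalization is available, directly construct Lyndon words of unbounded length that fail to be $\beta$-Lyndon, using Lemma~\ref{lem:no-beta-Lyndon-extension} to ensure no extensions are $\beta$-Lyndon either and hence new gaps open at finer and finer scales. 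The hardest point in the plan is precisely this last step: exhibiting infinitely many gaps in the degenerate case $\II'=\emptyset$, where the direct windows are unavailable and one must instead exploit the recursive combinatorial fine structure of $\al(\beta)$ inside the basic interval to produce and distinguish the gaps.
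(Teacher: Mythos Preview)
Your plan is quite different from the paper's and has a genuine gap in the decisive case. The paper makes no use of the non-transitivity windows here; it simply writes down an explicit one-parameter family of left endpoints. With $\cs=s_1\ldots s_m$ and $\u$ the suffix of $\cs$ satisfying $\L(\cs)=\u\,s_1\ldots s_j$, it sets
\[
b(t_M,\beta)=\cs^-\L(\cs)^M\u^-\L(\cs)^\f\qquad(M\ge N),
\]
where $N$ is chosen via $\si^{|\cs|}(\al(\beta))\prec\cs^-\L(\cs)^N 0^\f$ (available since $\beta<\beta_*^\cs$). A direct check gives $t_M<\tau(\beta)$ and $t_M\in\EE_\beta$; and for $k>(M+1)|\cs|+|\u|$ the word $b_1\ldots b_k^+$ would have to end in $\L(\cs)^+$, which forces a shift of $(b_1\ldots b_k^+)^\f$ to exceed $\al(\beta)$, so no such extension is $\beta$-Lyndon. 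Hence $t_M$ is the left endpoint of a gap, and varying $M$ gives infinitely many, uniformly and with no case split.

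Your approach via $\II'$ fails exactly where you flag it: $\II'$ can be empty (Example~\ref{ex:v_k}(c)) even when $\cs\in\F$, and your renormalization fallback only reduces higher-degree $\cs$ to lower degree, so the base case $\cs\in\F$ with $\II'=\emptyset$ must still be handled by ``directly constructing Lyndon words of unbounded length that fail to be $\beta$-Lyndon''---which is precisely what the paper's explicit family supplies, but your plan offers no candidate. Two minor points: the existence of a lex-largest $\w_k^{\max}$ inside a given window is asserted without justification; and your reduction ``left endpoint of every gap is in $\EE_\beta$ by taking limits'' does not work as written since $b(\cdot,\beta)$ is only right-continuous---the correct route goes via the characterization $\EE_\beta=\{t:K_\beta(t-\ep)\ne K_\beta(t+\ep)\ \forall\ep>0\}$ of Lemma~\ref{lem:E-beta-characterizations} together with the maximality of the gap.
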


\begin{proof}
Write $\cs=s_1\dots s_m$ and $\L(\cs)=c_1\dots c_m$, and let $j$ be the integer such that $\L(\cs)=s_{j+1}\dots s_m s_1\dots s_j$. Set $\u:=s_{j+1}\dots s_m$. Since $\beta<\beta_*$, we have $\sigma^m(\alpha(\beta))\prec \cs^-\L(\cs)^\f$, so there is an integer $N$ such that $\sigma^m(\alpha(\beta))\prec \cs^-\L(\cs)^N 0^\f$. Choose $M\geq N$, and let $t_M$ be the point given by
\[
b(t_M,\beta)=\cs^-\L(\cs)^M \u^- \L(\cs)^\f.
\]
This is a valid greedy $\beta$-expansion, because it does not contain the word $\L(\cs)^+=\alpha_1\dots\alpha_m$, where $(\al_i):=\al(\beta)$. Also, since $\u^-\prec \L(\cs)$, it is clear that $b(t_M,\beta)\prec \cs^-\L(\cs)^\f=b(\tau(\beta),\beta)$, so $t_M<\tau(\beta)$. Furthermore, $b(t_M,\beta)\succ \sigma^m(\alpha(\beta))$ by the choice of $M$ and $N$. Finally, for $j\leq i<m$, the properties of Lyndon words yield
\[
s_{i+1}\dots s_m^- c_1\dots c_i \lge s_1\dots s_{m-i} c_1\dots c_i \succ s_1\dots s_{m-i} s_{m-i+1}\dots s_m^-={\cs^-},
\]
so that
\[
\sigma^{(M+1)m+i-j}(b(t_M,\beta))=s_{i+1}\dots s_m^-\L(\cs)^\f \succ b(t_M,\beta),
\]
and also,
\[
\sigma^{(M+1)m-j}(b(t_M,\beta))=s_1\dots s_j \u^-\L(\cs)^\f=\cs^-\L(\cs)^\f \succ b(t_M,\beta).
\]
It follows that $\sigma^n(b(t_M,\beta))\lge b(t_M,\beta)$ for all $n\geq 0$; in other words, $t_M\in\EE_\beta$. Hence, by Lemma \ref{lem:E-beta-next-to-beta-Lyndon} (and its proof), $t_M$ is the limit of an increasing sequence of endpoints of $\beta$-Lyndon intervals.

On the other hand, if $k>(M+1)m+|\u|$, it is easy to see that $b_1\dots b_k^+$ {is not $\beta$-Lyndon} (where we write $b(t_M,\beta)=b_1b_2b_3\dots$): If it were, then $b_1\dots b_k^+$ would have to end in $\L(\cs)^+$, but $\L(\cs)^+\cs^-\L(\cs)^M \u^-0^\f\succ\al(\beta)$ by the choice of $M$. Thus, $t_M$ is not a decreasing limit of endpoints of $\beta$-Lyndon intervals. Finally, $t_M$ is not itself a right endpoint of a $\beta$-Lyndon interval because $b(t_M,\beta)$ is aperiodic. Hence, $t_M$ is a left endpoint of a $\beta$-Lyndon gap.

Since different choices of $M$ give rise to different sequences $b(t_M,\beta)$, there are infinitely many $\beta$-Lyndon gaps.
\end{proof}

\begin{proposition} \label{prop:beta-Lyndon-gap-entropy}
Let $(t_1,t_2)$ be a $\beta$-Lyndon gap. Then $h(\Kt_\beta(t_1))=h(\Kt_\beta(t_2))$.
\end{proposition}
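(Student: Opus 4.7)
The plan is to reduce the equality $h(\Kt_\beta(t_1)) = h(\Kt_\beta(t_2))$ to the equality $\dim_H K_\beta(t_1) = \dim_H K_\beta(t_2)$ (these two quantities differ only by the factor $\log\beta$, as recalled in Section \ref{sec:introduction}) and then derive the latter directly from Theorem \ref{thm:main}, which is already proven. So the argument is short.

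First I would verify that $\EE_\beta \cap (t_1, t_2) = \emptyset$. By Lemma \ref{lem:E-beta-next-to-beta-Lyndon}, every point $t$ of $\EE_\beta \cap (0,1)$ is either the right endpoint of a $\beta$-Lyndon interval or a left-limit of such right endpoints. If such a $t$ lay in $(t_1, t_2)$, the first case would immediately supply a $\beta$-Lyndon interval meeting $(t_1, t_2)$; in the second case, the approximating right endpoints $t_R^{(n)} \nearrow t$ would eventually lie in $(t_1, t_2)$, producing the same contradiction with Definition \ref{def:beta-Lyndon-gap}(i).

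Second, I would invoke Theorem \ref{thm:main} at both endpoints together with the decomposition $\EE_\beta \cap [t_1, 1] = (\EE_\beta \cap \{t_1\}) \cup (\EE_\beta \cap [t_2, 1])$ and countable stability of Hausdorff dimension to obtain
\[
\dim_H K_\beta(t_1) = \dim_H(\EE_\beta \cap [t_1, 1]) \leq \max\{0, \dim_H(\EE_\beta \cap [t_2, 1])\} = \dim_H K_\beta(t_2),
\]
while monotonicity of $t \mapsto \dim_H K_\beta(t)$ gives the reverse inequality. Converting the resulting equality of dimensions back to entropies completes the proof.

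The only step that requires any real thought is the first one: one must carefully exclude the possibility that a left-accumulation point of right endpoints of $\beta$-Lyndon intervals lies strictly inside the gap. In fact condition (i) of Definition \ref{def:beta-Lyndon-gap} alone is enough to rule this out, and everything else is a formal consequence of Theorem \ref{thm:main} and the entropy–dimension formula.
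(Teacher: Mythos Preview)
Your argument is correct and there is no circularity: Proposition~\ref{prop:beta-Lyndon-gap-entropy} feeds only into Corollaries~\ref{cor:infinite-bifurcation-difference} and~\ref{cor:countably-infinite} and ultimately Theorem~\ref{thm:size-of-E-minus-B}, none of which are used in the proof of Theorem~\ref{thm:main}. So invoking Theorem~\ref{thm:main} here is legitimate.

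That said, your route differs from the paper's, and the paper's is considerably lighter. The paper argues as follows: from Lemmas~\ref{lem:E-beta-next-to-beta-Lyndon} and~\ref{lem:E-beta-characterizations} one gets $(t_1,t_2]\cap\EE_\beta=\emptyset$ (note the paper includes $t_2$, since $t_2$ being a right endpoint or left-limit of right endpoints of $\beta$-Lyndon intervals would again force a $\beta$-Lyndon interval into the gap). By the characterization $\EE_\beta=\EE_\beta^+$, this means $K_\beta(t)$ is constant on $(t_1,t_2]$, so $\mathcal{K}_\beta(t)=\mathcal{K}_\beta(t_2)$ for $t\in(t_1,t_2)$, whence $\Kt_\beta(t)\setminus\Kt_\beta(t_2)$ is countable and $h(\Kt_\beta(t))=h(\Kt_\beta(t_2))$. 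Letting $t\searrow t_1$ and using continuity of entropy finishes the proof. This argument uses only two elementary lemmas from Sections~\ref{sec:E_beta} and~\ref{sec:key} and the continuity of $t\mapsto h(\Kt_\beta(t))$; your argument, by contrast, imports the entire machinery behind Theorem~\ref{thm:main} (transitive subshifts, non-transitivity windows, renormalization, etc.) to establish what is really just a local constancy statement. Your approach is a nice illustration of how Theorem~\ref{thm:main} packages information about $\BB_\beta$, but the paper's direct route is the natural one here and keeps this auxiliary proposition independent of the main theorem.
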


\begin{proof}
By Lemmas \ref{lem:E-beta-next-to-beta-Lyndon} and \ref{lem:E-beta-characterizations}, $(t_1,t_2]\cap\EE_\beta=\emptyset$. Thus, if $t\in(t_1,t_2)$ then $\mathcal{K}_\beta(t)=\mathcal{K}_\beta(t_2)$, and so $\Kt_\beta(t)\backslash \Kt_\beta(t_2)$ is at most countable. Hence, $h(\Kt_\beta(t))=h(\Kt_\beta(t_2))$. Letting $t\searrow t_1$ and using the continuity of $t\mapsto h(\Kt_\beta(t))$, the proposition follows.
\end{proof}

\begin{corollary} \label{cor:infinite-bifurcation-difference}
If $\beta\in(\beta_\ell^{\cs},\beta_*^{\cs})$ for some $\cs\in\La$, then $\EE_\beta\backslash\BB_\beta$ is infinite.
\end{corollary}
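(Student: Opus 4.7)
The plan is to combine Propositions \ref{cor:gaps-exist} and \ref{prop:beta-Lyndon-gap-entropy} directly. The first gives us infinitely many $\beta$-Lyndon gaps $(t_1, t_2)$, and shows that each of their left endpoints $t_1$ belongs to $\EE_\beta$; the second shows that the entropy function is constant across each such gap. What remains is to turn this constancy of entropy into the statement that $t_1 \notin \BB_\beta$.

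More precisely, I would first note that if $(t_1, t_2)$ is a $\beta$-Lyndon gap, then the proof of Proposition \ref{prop:beta-Lyndon-gap-entropy} actually establishes that $h(\Kt_\beta(t)) = h(\Kt_\beta(t_2))$ for every $t \in (t_1, t_2)$, not merely that the two endpoint entropies coincide. Together with the continuity of the map $t \mapsto \dim_H K_\beta(t) = h(\Kt_\beta(t))/\log\beta$, this gives $\dim_H K_\beta(t) = \dim_H K_\beta(t_1)$ for every $t \in [t_1, t_2)$. Consequently the defining condition for $\BB_\beta$, namely $\dim_H K_\beta(t') < \dim_H K_\beta(t_1)$ for every $t' > t_1$, fails at $t' \in (t_1, t_2)$, so $t_1 \notin \BB_\beta$. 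Since $t_1 \in \EE_\beta$ by Proposition \ref{cor:gaps-exist}, we obtain $t_1 \in \EE_\beta \setminus \BB_\beta$.

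Finally, I would invoke the last sentence of Proposition \ref{cor:gaps-exist}, which produces infinitely many $\beta$-Lyndon gaps whose left endpoints $t_M$ (indexed by the parameter $M$ in the construction) have distinct greedy $\beta$-expansions and are hence pairwise distinct. Applying the previous paragraph to each of these gaps yields infinitely many distinct elements of $\EE_\beta \setminus \BB_\beta$, proving the corollary.

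There is no real obstacle here — the work has already been done in the two preceding propositions. The only point that requires a moment's attention is making explicit that the constancy of entropy holds on the half-open interval $[t_1, t_2)$ (and not only between the two endpoints as a pair), which is immediate from the argument $\mathcal{K}_\beta(t) = \mathcal{K}_\beta(t_2)$ for $t \in (t_1, t_2)$ given in the proof of Proposition \ref{prop:beta-Lyndon-gap-entropy} combined with continuity at $t_1$.
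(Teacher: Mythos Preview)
Your proposal is correct and follows essentially the same approach as the paper's proof, which states in one line that the result follows directly from Propositions~\ref{cor:gaps-exist} and~\ref{prop:beta-Lyndon-gap-entropy} since the latter implies the left endpoint of a $\beta$-Lyndon gap does not lie in $\BB_\beta$. Your write-up simply makes explicit the reasoning behind that implication (constancy of entropy on $[t_1,t_2)$ forces $t_1\notin\BB_\beta$) and the distinctness of the infinitely many left endpoints.
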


\begin{proof}
This follows directly from Propositions \ref{cor:gaps-exist} and \ref{prop:beta-Lyndon-gap-entropy}, since the latter implies that the left endpoint of a $\beta$-Lyndon gap does not lie in $\BB_\beta$.
\end{proof}

\begin{corollary} \label{cor:countably-infinite}
Let $\beta\in(\beta_\ell^{\cs},\beta_*^{\cs})$ for some $\cs\in\La$, and suppose $\al(\beta)$ is eventually periodic. If the collection $\II$ of non-transitivity windows is empty, then $\EE_\beta\backslash\BB_\beta$ is countably infinite.
\end{corollary}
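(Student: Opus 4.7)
The plan is to combine Corollary~\ref{cor:infinite-bifurcation-difference}, which already guarantees that $\EE_\beta\backslash\BB_\beta$ is infinite, with a countability argument. The core step is to classify the points of $\EE_\beta\cap[0,\tau(\beta))$ and show that only left endpoints of $\beta$-Lyndon gaps can lie in $\EE_\beta\backslash\BB_\beta$; since pairwise disjoint open intervals in $\R$ form a countable collection, this yields countability on $[0,\tau(\beta))$, and a separate argument handles the tail $[\tau(\beta),1)$.

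For the classification, I would invoke Lemma~\ref{lem:E-beta-next-to-beta-Lyndon}: every $t\in\EE_\beta\cap(0,1)$ is either a right endpoint of a $\beta$-Lyndon interval or a left limit of such right endpoints. Since $\II=\emptyset$, Theorem~\ref{thm:general-basic-intervals} provides, for every $t_R\in\mathcal{T}_R(\beta)$, a transitive subshift $\mathcal{K}_\beta'(t_R)\subseteq\Kt_\beta(t_R)$ of full entropy, with $\{\mathcal{K}_\beta'(t_R):t_R\in\mathcal{T}_R(\beta)\}$ strictly descending. Under the hypothesis that $\al(\beta)$ is eventually periodic, tracing the construction in the proof of Theorem~\ref{thm:general-basic-intervals} shows that the renormalized base $\hat\beta$ satisfies $\al(\hat\beta)=\Phi_\bR^{-1}(\al(\beta'))$, which is again eventually periodic (since $\Phi_\bR$ is a finite block substitution, whence eventual periodicity of the image forces eventual periodicity of the preimage); thus $\Kt_{\hat\beta}(\hat t_R)$ is sofic by Lemma~\ref{lem:sofic-subshift}, and hence so is its factor $\mathcal{K}_\beta'(t_R)$ by \cite[Corollary~3.2.2]{Lind_Marcus_1995}. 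Invoking \cite[Corollary~4.4.9]{Lind_Marcus_1995}, strict containment in a descending chain of transitive sofic subshifts forces strict decrease of entropy, so $h(\Kt_\beta(t_R))>h(\Kt_\beta(u_R))$ whenever $t_R<u_R$ are right endpoints of $\beta$-Lyndon intervals.

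Combining this strict decrease with the monotonicity and continuity of $t\mapsto h(\Kt_\beta(t))$, I would then show that a point $t\in\EE_\beta\cap[0,\tau(\beta))$ lies in $\BB_\beta$ if and only if no $\beta$-Lyndon gap begins at $t$. If no such gap begins at $t$, then right endpoints of $\beta$-Lyndon intervals accumulate at $t$ from above: picking a strictly decreasing sequence $u_R^{(m)}\searrow t$, the strict decrease makes $h(\Kt_\beta(u_R^{(m)}))$ a strictly increasing sequence that converges (by right-continuity) to $h(\Kt_\beta(t))$, so each term is strictly less than the limit; hence for any $t'>t$, choosing $m$ with $u_R^{(m)}<t'$ gives $h(\Kt_\beta(t'))\leq h(\Kt_\beta(u_R^{(m)}))<h(\Kt_\beta(t))$, and $t\in\BB_\beta$. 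Conversely, if a gap $(t,t_2)$ begins at $t$, then $h(\Kt_\beta(t))=h(\Kt_\beta(t_2))$ by Proposition~\ref{prop:beta-Lyndon-gap-entropy}, forcing $t\notin\BB_\beta$. This pins down $(\EE_\beta\backslash\BB_\beta)\cap[0,\tau(\beta))$ as precisely the countable set of left endpoints of $\beta$-Lyndon gaps. For the tail $[\tau(\beta),1)$, an argument analogous to Theorem~\ref{thm:E-minus-B} confines $\EE_\beta$ to finitely many points there (at the periodic expansions $(\r_1\bullet\cdots\bullet\r_k)^\f$, together with $\tau(\beta)$ itself). Combined with Corollary~\ref{cor:infinite-bifurcation-difference}, $\EE_\beta\backslash\BB_\beta$ is countably infinite. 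The main obstacle will be the soficity verification for $\mathcal{K}_\beta'(t_R)$, which requires carefully unpacking the renormalization of Section~\ref{sec:higher-order-basic} to confirm that eventual periodicity descends through the construction to the base $\hat\beta$.
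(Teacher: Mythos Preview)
Your proposal is correct and follows essentially the same strategy as the paper: establish strict entropy decrease across distinct $\beta$-Lyndon intervals via transitivity plus soficity, deduce that each entropy plateau in $[0,\tau(\beta))$ meets $\EE_\beta$ in at most one point, and invoke Corollary~\ref{cor:infinite-bifurcation-difference} for infinitude. In fact you are more careful than the paper in two places: the paper's proof cites Proposition~\ref{prop:transitive-in-between}, which literally assumes $\cs\in\F$, whereas your route through Theorem~\ref{thm:general-basic-intervals} and the renormalized sofic subshifts $\mathcal{K}_\beta'(t_R)$ is what is needed for general $\cs\in\La$; and you explicitly address the tail $[\tau(\beta),1)$, which the paper's short proof glosses over (your appeal to the argument of Theorem~\ref{thm:E-minus-B} is the correct fix).
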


\begin{proof}
By the assumption and Proposition \ref{prop:transitive-in-between}, $\Kt_\beta(t_R)$ is transitive for every $\beta$-Lyndon interval $[t_L,t_R]$. Furthermore, since $\al(\beta)$ is eventually periodic, the subshift $\Sigma_\beta$ is sofic and hence $\Kt_\beta(t_R)$ is sofic as well. Thus, if $[t_L,t_R]$ and $[t_L',t_R']$ are two $\beta$-Lyndon intervals with $t_R<t_R'$, then $h(\Kt_\beta(t_R))>h(\Kt_\beta(t_R'))$ by \cite[Corollary 4.4.9]{Lind_Marcus_1995}. So the only intervals on which $\dim_H K_\beta(t)$ is constant are the $\beta$-Lyndon intervals, which contain no points of $\EE_\beta$ except the right endpoints, and $\beta$-Lyndon gaps, which contain no points of $\EE_\beta$ except possibly the left endpoints. In other words, each interval on which $\dim_H K_\beta(t)$ is constant contains at most one point from $\EE_\beta$. Hence, $\EE_\beta\backslash\BB_\beta$ is countable, and by Corollary \ref{cor:infinite-bifurcation-difference}, it is infinite.
\end{proof}

Example \ref{ex:v_k} (c) presents a $\beta$ satisfying the hypotheses of Corollary \ref{cor:countably-infinite}.

\section{Properties of extended $\beta$-Lyndon intervals} \label{sec:EBLI}

Recall the definition of an extended $\beta$-Lyndon interval (EBLI) from Section \ref{sec:introduction}.

\begin{lemma} \label{lem:EBLI-left-endpoint}
Let $[t_L^*,t_R]$ be an EBLI that is not a $\beta$-Lyndon interval. Then $t_L^*\in \EE_\beta$.
\end{lemma}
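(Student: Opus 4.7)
The plan is to identify the greedy $\beta$-expansion of $t_L^*$ explicitly and then verify the self-shift property characterizing $\EE_\beta$. Let $\w=w_1\dots w_p$ be the $\beta$-Lyndon word that generates the EBLI. Since $[t_L^*,t_R]$ is not a $\beta$-Lyndon interval, we are in the second branch of the definition of $I_\w$: there exists $n\ge 1$ with $\si^n(\al(\beta))\lle \w^\f$, and with $m:=\min\{n\ge 1:\si^n(\al(\beta))\lle \w^\f\}$ one has $t_L^*=\pi_\beta(\mathbf{a})$, where $\mathbf{a}:=\w^-(\al_1\dots\al_m^-)^\f$. Writing $B:=\al_1\dots\al_m^-$ for brevity, the whole proof reduces to (i) checking that $\mathbf{a}$ is a legitimate greedy $\beta$-expansion, so that $b(t_L^*,\beta)=\mathbf{a}$, and (ii) checking that $\si^n(\mathbf{a})\lge \mathbf{a}$ for every $n\ge 0$; since $t\mapsto b(t,\beta)$ is strictly increasing (Lemma \ref{lem:greedy-expansion}), (ii) combined with Lemma \ref{lem:E-beta-characterizations} (which gives $\EE_\beta=\EE_\beta^+$) will yield $t_L^*\in\EE_\beta$.

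For (i), I will apply Parry's criterion (Lemma \ref{lem:greedy-expansion}) and show $\si^n(\mathbf{a})\prec\al(\beta)$ for all $n\ge 0$, splitting on whether the shift falls inside the prefix $\w^-$ or inside the periodic tail $B^\f$. For shifts inside $\w^-$, the $\beta$-Lyndon property $\si^j(\w^\f)\prec\al(\beta)$ plus the minimality of $m$ (which controls how $\al(\beta)$ continues past any initial match with a cyclic shift of $\w$) forces a strict inequality. For shifts inside the tail, I will use the quasi-greedy inequality $\si^k(\al(\beta))\lle \al(\beta)$ together with $B\prec \al_1\dots\al_m$.

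For (ii), the case analysis is parallel. When $1\le n<p$, the Lyndon property $w_{n+1}\dots w_p\succ w_1\dots w_{p-n}$ gives $\si^n(\mathbf{a})\succ\mathbf{a}$ as soon as the difference occurs before position $p-n$; if matching persists to position $p-n$, we must compare $B^\f$ against a suffix of $\w$ followed by $B^\f$, and here the defining inequality $\si^m(\al(\beta))\lle \w^\f$ together with $\w$ being $\beta$-Lyndon (so $\si^{p-n}(\w^\f)\prec \al(\beta)$) gives the required domination after one further peeling. When $n\ge p$, the sequence $\si^n(\mathbf{a})$ is a shift of $B^\f$, which begins with a prefix of $\al(\beta)$, and by the quasi-greedy property of $\al(\beta)$ any such shift $\lge \si^m(\al(\beta))\lle \w^\f$ dominates $\mathbf{a}$ (one checks that $B^\f\lge \w^-B^\f$ reduces to $B\lge \w^-$, which follows from $\w^\f\prec \al(\beta)$ and the definition of $m$).

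The main obstacle is step (ii) in the boundary subcase where the match between $\sigma^n(\mathbf{a})$ and $\mathbf{a}$ extends beyond the prefix $\w^-$ into the periodic tail: there one must combine, in a single chain, the \emph{strict} Lyndon inequality for $\w$, the strict $\beta$-Lyndon inequality $\si^j(\w^\f)\prec\al(\beta)$, the minimality of $m$, and the quasi-greedy relation $\si^m(\al(\beta))\lle \al(\beta)$. Everything else is a bookkeeping case split; no new structural result about EBLIs is needed, which is why this lemma can be placed at the head of Section \ref{sec:EBLI} before the deeper properties of EBLIs are developed.
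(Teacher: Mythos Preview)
Your proposal is correct and follows essentially the same route as the paper's proof: both reduce to verifying $\si^n(\mathbf a)\lge\mathbf a$ for all $n$, split into the cases $n<|\w|$ and $n\ge |\w|$, and use exactly the four ingredients you list (the Lyndon property of $\w$, the $\beta$-Lyndon inequality, the minimality of $m$, and the quasi-greedy property of $\al(\beta)$). The paper organizes the computation slightly differently by first isolating the auxiliary inequality $w_{i+1}\dots w_p^-(\al_1\dots\al_m^-)^\f\lle(\al_1\dots\al_m^-)^\f$ for all $0\le i<p$ (your ``peeling'' step) as a standalone claim before combining it with the Lyndon inequality, and it treats the verification that $\mathbf a$ is a greedy expansion as implicit; your explicit step~(i) is a welcome addition but uses the same estimates.
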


\begin{proof}
We have to check that
\begin{equation} \label{eq:b-shifts-bigger}
\si^n(b(t_L^*,\beta))\lge b(t_L^*,\beta) \qquad \forall\,n\geq 0.
\end{equation}
Let $\w$ be the $\beta$-Lyndon word generating the EBLI, so $b(t_L^*,\beta)=\w^-(\al_1\dots\al_m^-)^\f$ and $b(t_R,\beta)=\w^\f$, where $m:=\min\{n:\si^n(\al(\beta))\lle \w^\f\}$.
Write $\w=w_1\dots w_l$. We first establish that
\begin{equation} \label{eq:alpha-and-w-inequality}
w_{i+1}\dots w_l^-(\al_1\dots\al_m^-)^\f \lle (\al_1\dots\al_m^-)^\f \qquad \forall\,0\leq i<l.
\end{equation}
If $l-i\leq m$, then $w_{i+1}\dots w_l^-\lle \al_1\dots \al_{l-i}^-$ since $\w$ is $\beta$-Lyndon, which yields \eqref{eq:alpha-and-w-inequality} for this case.

If $l-i>m$, then we claim that $w_{i+1}\dots w_{i+m}\lle \al_1\dots\al_m^-$. Suppose to the contrary that $w_{i+1}\dots w_{i+m}\lge \al_1\dots\al_m$. We have $\si^m(\al(\beta))\lle \w^\f\lle \si^{i+m}(\w^\f)$ since $\w$ is Lyndon, and hence it follows that
\[
\al(\beta)=\al_1\dots\al_m\si^m(\al(\beta))\lle w_{i+1}\dots w_{i+m}\si^{i+m}(\w^\f)=\si^i(\w^\f),
\]
contradicting that $\w$ is $\beta$-Lyndon. Hence, $w_{i+1}\dots w_{i+m}\lle \al_1\dots\al_m^-$. We can then cancel the first $m$ digits on both sides of \eqref{eq:alpha-and-w-inequality} and continue inductively until we are left with a word of length $\leq m$ in front of $(\al_1\dots\al_m^-)^\f$ in the left hand side, at which point we know the inequality to be true from the first case above.

For $1\leq j<l$ we have $w_{j+1}\dots w_l^-\lge w_1\dots w_{l-j}$ (since $\w$ is Lyndon), and so \eqref{eq:alpha-and-w-inequality} with $i=l-j$ gives
\[
\si^j(b(t_L^*,\beta))=w_{j+1}\dots w_l^-(\al_1\dots\al_m^-)^\f\lge \w^-(\al_1\dots\al_m^-)^\f=b(t_L^*,\beta).
\]
It remains to check that
\begin{equation} \label{eq:tail-after-w}
\al_{j+1}\dots\al_m^-(\al_1\dots\al_m^-)^\f\lge \w^-(\al_1\dots\al_m^-)^\f \qquad \forall\, 0\leq j<m.
\end{equation}
This is obvious for $j=0$, so assume $1\leq j<m$. If $l\leq m-j$, then $\al_{j+1}\dots\al_{j+l}\lge\w$ since $\si^j(\al(\beta))\succ\w^\f$, hence \eqref{eq:tail-after-w} holds for this case.

So suppose $l>m-j$. Then $\si^j(\al(\beta))\succ\w^\f$ implies $w_1\dots w_{m-j}\lle \al_{j+1}\dots\al_m$. If we had equality here then by the Lyndon property of $\w$ we would get
\[
\si^j(\al(\beta))=w_1\dots w_{m-j}\si^m(\al(\beta))\lle w_1\dots w_{m-j}\w^\f\lle \w^\f,
\]
contradicting the minimality of $m$. Thus, $w_1\dots w_{m-j}\lle \al_{j+1}\dots\al_m^-$. Together with \eqref{eq:alpha-and-w-inequality} (taking $i=m-j$), this gives \eqref{eq:tail-after-w}.
\end{proof}

\begin{lemma} \label{lem:no-overlap-EBLI}
Any two EBLIs are either non-overlapping or else one contains the other.
\end{lemma}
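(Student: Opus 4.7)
The plan is to assume that the two EBLIs $I_{\w_1}$ and $I_{\w_2}$ overlap and, without loss of generality, $\w_1^\f\prec \w_2^\f$, and then to show $I_{\w_1}\subseteq I_{\w_2}$. Denote by $\mathbf e_i\in\{0,1\}^\N$ the greedy $\beta$-expansion of the left endpoint of $I_{\w_i}$; according to the two cases in the definition of an EBLI,
\[
\mathbf e_i=\begin{cases} \w_i0^\f, & \text{non-extended case},\\ \w_i^-(\al_1\dots\al_{m_i}^-)^\f, & \text{extended case},\end{cases}
\]
where $m_i$ is as prescribed. Since the right endpoints of the two EBLIs are $\pi_\beta(\w_i^\f)$ and $\w_1^\f\prec\w_2^\f$, the desired containment reduces to the single lex inequality $\mathbf e_2\lle\mathbf e_1$.

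First I would translate overlap into a lex inequality: because the right endpoint of $I_{\w_1}$ lies in $I_{\w_2}$, one gets $\mathbf e_2\lle\w_1^\f$. Observe that $\mathbf e_2$ agrees with $\w_2$ on positions $1,\dots,|\w_2|-1$ in either case of its definition. Comparing the three sequences $\mathbf e_2\lle\w_1^\f\prec\w_2^\f$ digit-by-digit up to position $|\w_2|$ forces $\w_1^\f$ to agree with $\w_2$ on positions $1,\dots,|\w_2|-1$, with the $|\w_2|$th digit being either $1$ or $0$. If it is $1$, then $\w_1^\f$ begins with $\w_2$, and a standard Lyndon-word argument (two distinct Lyndon words $\w_1,\w_2$ with $\w_2$ a prefix of $\w_1^\f$ and $\w_1^\f\prec\w_2^\f$ must satisfy $|\w_1|\geq|\w_2|$ and $\w_2$ a prefix of $\w_1$; cf.~\cite{Lothaire-2002}) forces $\w_1=\w_2\u$ for some non-empty word $\u$ ending in $1$. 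If the digit at position $|\w_2|$ of $\w_1^\f$ is $0$, then $I_{\w_2}$ is necessarily in the extended case and $\w_1^\f$ agrees with $\mathbf e_2$ on a strictly longer prefix $\w_2^-\al_1\dots\al_j$; pushing the agreement further using the periodicity of $\w_1^\f$ together with the Parry admissibility of $\al(\beta)$ (Lemma \ref{lem:quasi-greedy expansion-alpha-q}) and the minimality of $m_2$ again leads to $\w_1=\w_2\u$ after an appropriate shift, or else produces a contradiction with the choice of $m_2$.

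With $\w_1=\w_2\u$ in hand, verifying $\mathbf e_2\lle\mathbf e_1$ splits into four subcases by whether each $I_{\w_i}$ is extended. In the simplest subcase ($I_{\w_1}$ non-extended), $\mathbf e_1=\w_2\u0^\f$ and one checks $\mathbf e_2\lle\w_20^\f\lle\mathbf e_1$ digit-by-digit, using that $\u$ ends in $1$. The hardest subcase is when both EBLIs are extended: here $\mathbf e_1=\w_2\u^-(\al_1\dots\al_{m_1}^-)^\f$, and one must first establish $m_1\geq m_2+|\u|$ from the minimality of $m_1, m_2$ together with $\w_1^\f\prec\w_2^\f$, and then compare the two periodic tails $(\al_1\dots\al_{m_i}^-)^\f$ to conclude $\mathbf e_2\lle\mathbf e_1$. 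The main obstacle is precisely this final comparison: simultaneously tracking the two distinct periodic tails while invoking the Parry admissibility of $\al(\beta)$ is purely combinatorial, but requires careful bookkeeping, especially to ensure that the extension block of $\mathbf e_2$ is not accidentally longer than that of $\mathbf e_1$.
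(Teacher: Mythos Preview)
Your plan contains a genuine error at the key structural step. In your first branch, where the $|\w_2|$th digit of $\w_1^\f$ equals $1$ so that $\w_1^\f$ begins with $\w_2$, you conclude $\w_1=\w_2\u$. But this situation is actually impossible under $\w_1^\f\prec\w_2^\f$: if $\w_1=\w_2\u$ with $\u$ nonempty, then $\u$ is a proper suffix of the Lyndon word $\w_1$, hence strictly exceeds the prefix of $\w_1$ of the same length; iterating this (peeling off copies of $\w_2$) yields $\si^{|\w_2|}(\w_1^\f)=\u\w_1^\f\succ\w_2^\f$, so $\w_1^\f\succ\w_2^\f$, a contradiction. Thus the only surviving case is the second one, where the $|\w_2|$th digit of $\w_1^\f$ is $0$ and $\w_1^\f$ begins with $\w_2^-$. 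Here the correct prefix relation is $\w_1=\w_2^-\u$ (provided $|\w_1|\ge|\w_2|$), not $\w_1=\w_2\u$; your subsequent four-subcase analysis, your inequality $m_1\ge m_2+|\u|$, and your ``simplest subcase'' verification all rest on the wrong relation and do not go through as written.

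The paper's proof is organised quite differently and avoids both the case split over extended versus non-extended and the direct comparison of two periodic tails that you flag as the main obstacle. It first invokes Lemma~\ref{lem:EBLI-left-endpoint}: the left endpoint of any properly extended EBLI lies in $\EE_\beta$, hence cannot lie in the interior of a $\beta$-Lyndon interval. Combined with the pairwise disjointness of $\beta$-Lyndon intervals, this disposes of every configuration in which at least one EBLI is non-extended. For two extended EBLIs $I_\v,I_\w$ with $\v^\f\prec\w^\f$, the paper argues by contradiction, assuming proper overlap. From $\v^\f\prec\w^\f$ and both Lyndon one gets $\v^\f\prec\w0^\f$, so $\v^\f$ begins with $\w^-$; the non-containment assumption then forces $\v=\w^-u_1\dots u_l$ (ruling out $\v$ a prefix of $\w^-$). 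One also has $m(\v)\ge m(\w)$ directly from $\v^\f\prec\w^\f$. The overlap inequalities now yield $\big(\al_1\dots\al_{m(\v)}^-\big)^\f\prec\si^l\big(\big(\al_1\dots\al_{m(\w)}^-\big)^\f\big)$, which contradicts Lemma~\ref{lem:quasi-greedy expansion-alpha-q} applied to the quasi-greedy sequence $\big(\al_1\dots\al_{m(\w)}^-\big)^\f$ together with $m(\v)\ge m(\w)$. This contradiction is short once the correct relation $\v=\w^-\u$ is in hand, and it sidesteps the delicate tail comparison you anticipate.
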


\begin{proof}
Any two $\beta$-Lyndon intervals are disjoint, and a $\beta$-Lyndon interval $[t_L,t_R]$ and an EBLI $[u_L^*,u_R]$ {with $t_R\neq u_R$} cannot overlap because the left endpoint $u_L^*$ lies in $\EE_\beta$ by Lemma \ref{lem:EBLI-left-endpoint}, and therefore does not lie in $[t_L,t_R]$.

It remains to consider the case of two EBLIs 
\[
I_\v=\big[\pi_\beta\big(\v^-(\al_1\dots\al_{m(\v)}^-)^\f\big),\pi_\beta(\v^\f)\big], \qquad I_\w=\big[\pi_\beta\big(\w^-(\al_1\dots\al_{m(\w)}^-)^\f\big),\pi_\beta(\w^\f)\big].
\]
Suppose they overlap without one containing the other. Without loss of generality,
\begin{equation} \label{eq:overlapping-EBLI}
\v^-\big(\al_1\dots\al_{m(\v)}^-\big)^\f\prec \w^-\big(\al_1\dots\al_{m(\w)}^-\big)^\f\prec \v0^\f\prec \v^\f\prec \w^\f,
\end{equation}
where the second inequality holds because $\w^-(\al_1\dots\al_{m(\w)}^-)^\f$ does not lie in the $\beta$-Lyndon interval $[\v0^\f,\v^\f]$, as observed at the beginning of the proof.
{Recall that $m(\v)=\min\{n\geq 0:\si^n(\al(\beta))\lle \v^\f\}$ and $m(\w)=\min\{n\geq 0:\si^n(\al(\beta))\lle \w^\f\}$. Thus, $\v^\f\prec \w^\f$ implies $m(\v)\geq m(\w)$ and, since $\v$ and $\w$ are Lyndon, $\v^\f\prec \w0^\f$. So $\v^\f$ begins with $\w^-$.} In fact $\v$ extends $\w^-$: It cannot be a prefix of $\w^-$ because $\w^-(\al_1\dots\al_{m(\w)}^-)^\f\prec \v0^\f$. Say $\v=\w^-u_1\dots u_l$. Then from \eqref{eq:overlapping-EBLI},
\begin{equation} \label{eq:u-w-sandwich}
u_1\dots u_l^-\big(\al_1\dots\al_{m(\v)}^-\big)^\f\prec \big(\al_1\dots\al_{m(\w)}^-\big)^\f\prec u_1\dots u_l0^\f.
\end{equation}
Hence the sequence $(\al_1\dots\al_{m(\w)}^-)^\f$ begins with $u_1\dots u_l^-$, and the first inequality in \eqref{eq:u-w-sandwich} implies
\[
\big(\al_1\dots\al_{m(\v)}^-\big)^\f\prec \si^l\big(\big(\al_1\dots\al_{m(\w)}^-\big)^\f\big).
\]
But this is impossible: {Since $m(\v)\geq m(\w)$ and $\al_{l+1}\dots \al_{m(\w)}^-\prec\al_1\dots\al_{m(\w)-l}$ for all $l<m(\w)$ by Lemma \ref{lem:quasi-greedy expansion-alpha-q}, we have}
\[
\si^l\big(\big(\al_1\dots\al_{m(\w)}^-\big)^\f\big)\lle \big(\al_1\dots\al_{m(\w)}^-\big)^\f \lle \big(\al_1\dots\al_{m(\v)}^-\big)^\f.
\]
This contradiction completes the proof.
\end{proof}

\begin{proposition} \label{prop:dense-EBLIs}
The union of all EBLIs is dense in $[0,\tau(\beta)]$.
\end{proposition}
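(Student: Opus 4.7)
The plan is to proceed according to the decomposition \eqref{eq:decomposition}. For $\beta\in\overline{E}\cap(1,2]$, Proposition \ref{prop:dense-intervals} shows the $\beta$-Lyndon intervals are dense in $[0,\tau(\beta)]=[0,1-1/\beta]$; for $\beta\in\overline{E^\cs}$, Proposition \ref{prop:S-dense-intervals} gives the same density in $[0,\tau(\beta)]$; and for $\beta\in E_\f$, Theorem \ref{thm:infinitely-Farey}(ii) does the job. In all of these cases every $\beta$-Lyndon interval is an EBLI of the first type, and the conclusion follows immediately. Thus the only remaining case is $\beta\in(\beta_\ell^\cs,\beta_*^\cs)$ for some $\cs\in\La$, in which the $\beta$-Lyndon intervals may fail to be dense (compare Example \ref{ex:non-dense-intervals}), and where I must show that the $\beta$-Lyndon gaps are absorbed by second-type EBLIs.

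For this remaining case, the plan is to prove that every $\beta$-Lyndon gap $(t_1,t_2)$ is contained in some EBLI. By the definition of a gap together with the monotonicity of $t\mapsto b(t,\beta)$, the point $t_2$ is either equal to $\pi_\beta(\tilde{\w}0^\f)$ for some $\tilde{\w}\in L^*(\beta)$ or a right-limit of such left endpoints; in the latter case I may pass to a cofinal subgap and reduce to the former. So assume $t_2=\pi_\beta(\tilde{\w}0^\f)$. I will then argue that the EBLI $I_{\tilde{\w}}$ must be of the second type and that it contains the entire gap. If instead $I_{\tilde{\w}}$ were of the first type, so $\si^n(\al(\beta))\succ\tilde{\w}^\f$ for all $n\geq 0$, then I would construct $\beta$-Lyndon words approaching $\tilde{\w}0^\f$ from below---for instance words of the form $\w_k=\tilde{\w}^-0^k\tilde{\w}$ with $k$ large---whose greedy expansions $\w_k^\f$ satisfy $\si^j(\w_k^\f)\prec\al(\beta)$ for all $j$ thanks to the shift hypothesis on $\al(\beta)$, and then $\pi_\beta(\w_k^\f)\nearrow t_2$, contradicting the nonemptiness of the gap. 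Granted that $I_{\tilde{\w}}$ is of the second type with $m=\min\{n\geq 1:\si^n(\al(\beta))\lle\tilde{\w}^\f\}$, I must show that no $\beta$-Lyndon word $\w$ satisfies $\pi_\beta(\w^\f)\in\bigl(\pi_\beta(\tilde{\w}^-(\al_1\dots\al_m^-)^\f),t_2\bigr)$: any such $\w$ would force $\w^\f$ to begin with $\tilde{\w}^-$ followed by a word strictly dominating $(\al_1\dots\al_m^-)^\f$, and a case analysis combining the Lyndon condition on $\w$ with the admissibility constraint $\si^n(\w^\f)\prec\al(\beta)$ rules this out.

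The hard part will be executing the last step cleanly, namely showing that the candidate EBLI region $\bigl(\pi_\beta(\tilde{\w}^-(\al_1\dots\al_m^-)^\f),t_2\bigr)$ contains no right endpoint of any $\beta$-Lyndon interval. This is formally analogous to the combinatorial bookkeeping of the auxiliary Lyndon words $\v_k$ and their $\beta$-Lyndon modifications $\v_k^*$ carried out in Lemmas \ref{lem:v_k-beta-Lyndon}, \ref{lem:v_k-star} and in the nesting analysis of Proposition \ref{prop:non-transitivity-intervals}. In fact, the second-type EBLIs essentially coincide with the non-transitivity windows constructed in Section \ref{sec:basic-interiors} (this is the content of the forthcoming Lemma \ref{lem:window-is-EBLI}), so an alternative route is to identify each $\beta$-Lyndon gap directly with a non-transitivity window $I_k$ via its generating word $\v_k^*=\tilde{\w}$, bypassing a standalone argument; the technical heart of either route is the same.
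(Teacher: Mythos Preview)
Your reduction to the case $\beta\in(\beta_\ell^\cs,\beta_*^\cs)$ is correct, and your overall strategy---show each $\beta$-Lyndon gap lies inside an EBLI generated by the next $\beta$-Lyndon word $\tilde{\w}$---is a legitimate reformulation. However, the execution of the key step (forcing $I_{\tilde{\w}}$ to be of the second type) has a concrete flaw. Your proposed words $\w_k=\tilde{\w}^-0^k\tilde{\w}$ fail on two counts. First, they are not Lyndon in general: for $\tilde{\w}=011$ one obtains $\w_1=0100011$, whose smallest cyclic rotation starts at the internal run $000$, not at position~$1$. Second, even granting the Lyndon property, $\w_k^\f\to\tilde{\w}^-0^\f$ as $k\to\infty$, so $\pi_\beta(\w_k^\f)\to\pi_\beta(\tilde{\w}^-0^\f)=t_2-\beta^{-|\tilde{\w}|}<t_2$; the intervals you construct do not approach the gap's right endpoint at all. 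A correct approximating family would have to begin with $\tilde{\w}^-\al_1\dots\al_N$ for large $N$, and verifying those words are $\beta$-Lyndon under the first-type hypothesis is itself a nontrivial combinatorial task.

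The paper avoids this difficulty by taking a different, more direct route. Rather than analysing gaps, it fixes an arbitrary $t\in[0,\tau(\beta))$, examines the Lyndon prefixes of $b(t,\beta)=b_1b_2\dots$, and shows that if some prefix $b_1\dots b_N$ is Lyndon but not $\beta$-Lyndon, then $t$ lies in the EBLI generated by the $\beta$-Lyndon word $\w^*$ obtained from $b_1\dots b_N$ by iteratively stripping trailing blocks $\al_1\dots\al_m$ (a descending chain $N>k_1>k_2>\dots>k_p$ with $\w^*=b_1\dots b_{k_p}^+$). This bypasses any need to identify the gap endpoint $\tilde{\w}$ a priori or to approximate from below. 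Your closing remark that second-type EBLIs ``essentially coincide'' with non-transitivity windows also overstates Lemma~\ref{lem:window-is-EBLI}, which gives only the inclusion $\{$non-transitivity windows$\}\subseteq\{$EBLIs$\}$; the alternative route you sketch would still require showing every $\beta$-Lyndon gap sits inside some~$I_k$, which is not automatic.
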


\begin{proof}
We know from Proposition \ref{prop:dense-intervals} and Theorems \ref{thm:beta-in-relative-exceptional-set} and \ref{thm:infinitely-Farey} that the $\beta$-Lyndon intervals are dense in $[0,\tau(\beta)]$ when $\beta\in \overline{E}\cup \bigcup_{\cs\in\La}\overline{E^\cs}\cup E_\f$. So it remains to show that the EBLIs are dense for $\beta$ in the interior of a basic interval, i.e. $\beta\in(\beta_\ell^\cs,\beta_*^\cs)$ with $\cs\in\La$.

Let $t\in{(0,\tau(\beta))}$ with greedy $\beta$-expansion $b(t,\beta)=b_1b_2\dots$. Let $\mathcal{N}$ denote the set of all indices $n\in\N$ such that {$b_1\dots b_n\in L_e$. Note that $\mathcal{N}\neq\emptyset$: If $b_1>0$, then $b_1\in L_e$; otherwise, since $(b_i)\neq 0^\f$, there is a smallest $n\geq 2$ such that $b_n>0$ and then $b_1\dots b_n\in L_e$.} There are now three possibilities:
\begin{enumerate}[(1)]
\item $\mathcal{N}$ is finite and $b_1\dots b_n$ is $\beta$-Lyndon for each $n\in\mathcal{N}$;
\item $\mathcal{N}$ is infinite and $b_1\dots b_n$ is $\beta$-Lyndon for each $n\in\mathcal{N}$; or
\item $b_1\dots b_n$ is not $\beta$-Lyndon for some $n\in\mathcal{N}$.
\end{enumerate}
In the first case, letting $m:=\max\mathcal{N}$, we have $\si^{m}(b(t,\beta))\lle b(t,\beta)$ and so $t$ lies in the $\beta$-Lyndon interval $[b_1\dots b_{m},(b_1\dots b_{m})^\f]$.

In the second case, $t$ does not lie in any $\beta$-Lyndon interval but is the limit of left endoints of $\beta$-Lyndon intervals $[b_1\dots b_n,(b_1\dots b_n)^\f], n\in\mathcal{N}$.

So it remains to deal with the third case. We show that in this case $t$ lies in some $EBLI$. Let $N$ be the smallest $n\in\mathcal{N}$ for which $b_1,\dots b_n$ is {\em not} $\beta$-Lyndon, and put $\w:=b_1\dots b_{N}$. We first aim to determine $\w^*$. Observe that, since $t<\tau(\beta)=\pi_\beta(\cs^-\L(\cs)^\f)$, there is at least one $\beta$-Lyndon word, namely $\cs$, that is greater than $\w$; hence $\w^*$ is well defined {(see Lemma \ref{lem:existence-of-v-star})}, and we can write $\w^*=b_1\dots b_l^+$ for some $l<N$.

Since $\w$ is not $\beta$-Lyndon, there is an index $k<N$ such that $b_{k+1}\dots b_N b_1\dots b_k\lge\al_1\dots \al_N$. Let $k_1$ be the smallest such $k$. In particular $b_{k_1+1}\dots b_N\lge \al_1\dots\al_{N-k_1}$. But, since $b(t,\beta)$ is a greedy $\beta$-expansion, we also have the reverse inequality. Hence,
\begin{equation} \label{eq:k_1-facts}
b_{k_1+1}\dots b_N=\al_1\dots\al_{N-k_1}, \qquad\mbox{and} \qquad b_1\dots b_{k_1}\lge\al_{N-k_1+1}\dots\al_N.
\end{equation}
Clearly, then, $l\leq k_1$ as otherwise $b_1\dots b_l^+$ would include the forbidden word $\al_1\dots\al_{l-k_1}^+$.

{By the minimality of $k_1$, we must have $b_{k_1}{<M_\beta}$.}
Suppose $b_1\dots b_{k_1}^+$ is $\beta$-Lyndon. Then $\w^*=b_1\dots b_{k_1}^+$, and 
\[
m:=\min\{n\geq 0: \si^n(\al(\beta))\lle (\w^*)^\f\}\leq N-k_1, 
\]
because $\al_{N-k_1+1}\dots\al_N\lle b_1\dots b_{k_1}\prec \w^*$. So in this case, we obtain
\begin{align*}
(\w^*)^-(\al_1\dots\al_m^-)^\f &=b_1\dots b_{k_1}(\al_1\dots\al_m^-)^\f \lle b_1\dots b_{k_1}(\al_1\dots\al_{N-k_1}^-)^\f\\
&=b_1\dots b_{k_1}(b_{k_1+1}\dots b_N^-)^\f\prec b(t,\beta),
\end{align*}
and hence, $t$ lies in the EBLI generated by $\w^*$.

If $b_1\dots b_{k_1}^+$ is not $\beta$-Lyndon, we repeat the process with $k_1$ in place of $N$. Precisely, there is a smallest integer $k_2<k_1$ such that $b_{k_2+1}\dots b_{k_1}^+b_1\dots b_{k_2}\lge \al_1\dots\al_{k_1}$. We claim that in fact
\begin{equation} \label{eq:plus-block}
b_{k_2+1}\dots b_{k_1}^+=\al_1\dots\al_{k_1-k_2},
\end{equation}
and hence $b_1\dots b_{k_2}\lge \al_{k_1-k_2+1}\dots \al_{k_1}$. To see the claim, suppose by way of contradiction that $b_{k_2+1}\dots b_{k_1}\lge \al_1\dots\al_{k_1-k_2}$. Then this must hold with equality since $b_{k_2+1}\dots b_{k_1}$ is part of $b(t,\beta)$. {We also have $b_{k_1+1}\dots b_N=\al_1\dots\al_{N-k_1}$ from \eqref{eq:k_1-facts}, so
\[
b_{k_2+1}\dots b_{k_1}b_{k_1+1}\dots b_N=\al_1\dots\al_{k_1-k_2}\al_1\dots\al_{N-k_1}.
\]
By Lemmas \ref{lem:quasi-greedy expansion-alpha-q} and \ref{lem:greedy-expansion}, this implies $\al_1\dots\al_{N-k_1}=\al_{k_1-k_2+1}\dots\al_{N-k_2}$ and, by the minimality of $k_1$, $b_1\dots b_{k_2}\prec \al_{N-k_2+1}\dots\al_N$. Since furthermore, $b_1\dots b_{k_1}\lge\al_{N-k_1+1}\dots\al_N$ and $k_1>k_2$, it follows that
\[
\al_{N-k_2+1}\dots\al_N\succ \al_{N-k_1+1}\dots\al_{N-(k_1-k_2)},
\]
and hence,
\begin{align*}
\al_1\dots\al_{N-(k_1-k_2)}&=\al_{k_1-k_2+1}\dots\al_{N-k_2}\al_{N-k_1+1}\dots\al_{N-(k_1-k_2)}\\
&\prec \al_{k_1-k_2+1}\dots\al_{N-k_2}\al_{N-k_2+1}\dots\al_N,
\end{align*}
contradicting that $\si^{k_1-k_2}(\al(\beta))\lle \al(\beta)$. This proves \eqref{eq:plus-block}.
}

%{In particular, $b_{k_1}=\al_{k_1-k_2}$. On the other hand, the Lyndon property of $b_1\dots b_N$  implies that $b_1\dots b_N$ must end in $\al_1\dots\al_{k_1-k_2}$.} Thus, $\al_{k_1-k_2}=b_N=1$, whereas $b_{k_1}=0$. This contradiction proves the claim.

{Next, we claim that $k_1-k_2\leq N-k_1$. For, if this were not the case, then \eqref{eq:plus-block} would yield} $b_{k_2+1}\dots b_{k_2+N-k_1}=\al_1\dots\al_{N-k_1}$, so by \eqref{eq:k_1-facts},
\[
b_{k_2+N-k_1+1}\dots b_N \lle \al_{N-k_1+1}\dots\al_{N-k_2}\lle b_1\dots b_{k_1-k_2},
\]
contradicting that $b_1\dots b_N$ is Lyndon. %Thus, $k_1-k_2\leq N-k_1$.

Suppose now that $b_1\dots b_{k_2}^+$ is $\beta$-Lyndon. Since $\al_{k_1-k_2+1}\dots \al_{k_1}\lle b_1\dots b_{k_2}$, we have
\[
\si^{k_1-k_2}(\al(\beta))=\al_{k_1-k_2+1}\al_{k_1-k_2+2}\dots \prec (b_1\dots b_{k_2}^+)^\f,
\]
so $m:=\min\{n\geq 0: \si^n(\al(\beta))\lle (\w^*)^\f\}\leq k_1-k_2$. Hence,
\begin{align*}
(\w^*)^-(\al_1\dots\al_m^-)^\f & \lle b_1\dots b_{k_2}(\al_1\dots\al_{k_1-k_2}^-)^\f
=b_1\dots b_{k_1}(\al_1\dots\al_{k_1-k_2}^-)^\f \\
&\lle b_1\dots b_{k_1}(\al_1\dots\al_{N-k_1}^-)^\f \prec b_1\dots b_{k_1}\al_1\dots\al_{N-k_1}0^\f\\
&=b_1\dots b_N 0^\f\lle b(t,\beta),
\end{align*}
where the first equality uses \eqref{eq:plus-block}, and the second inequality uses that $k_1-k_2\leq N-k_1$. So, $t$ lies in the EBLI generated by $\w^*$.

If $b_1\dots b_{k_2}^+$ is not $\beta$-Lyndon, we repeat the above process with $k_2$ replacing $k_1$ and the Lyndon word $b_1\dots b_{k_1}^+$ replacing the Lyndon word $b_1\dots b_N$. Continuing this way, we obtain a sequence $N>k_1>k_2>\dots$. The process must stop after a finite number of steps, say $p$, and then $\w^*=b_1\dots b_{k_p}^+$ and $t$ lies in the EBLI generated by $\w^*$.
\end{proof}

\begin{lemma} \label{lem:EBLI-constant-entropy}
Assume $\beta\in(\beta_\ell^\cs,\beta_*^\cs)$ for some $\cs\in\La$. Then the entropy function $t\mapsto h(\Kt_\beta(t))$ is constant in each EBLI that lies fully in $[0,\tau(\beta)]$.
\end{lemma}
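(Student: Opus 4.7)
Let $\w\in L^*(\beta)$ be the $\beta$-Lyndon word generating $I_\w=[t_L^*,t_R]$, and set $\mathcal{Y}:=\Kt_\beta(t_L^*)$ and $\mathcal{X}:=\Kt_\beta(t_R)$. For every $t\in I_\w$, monotonicity gives $\mathcal{X}\subseteq\Kt_\beta(t)\subseteq\mathcal{Y}$, so it suffices to prove $h(\mathcal{Y})\leq h(\mathcal{X})$. In the pure $\beta$-Lyndon interval case (when $I_\w=[\pi_\beta(\w 0^\f),\pi_\beta(\w^\f)]$), constancy of entropy on $\beta$-Lyndon intervals follows from known arguments in \cite{Kalle-Kong-Langeveld-Li-18}; so I focus on the genuinely extended case where $b(t_L^*,\beta)=\w^- B^\f$ with $B:=\al_1\dots\al_m^-$ and $m:=\min\{n\geq 1\colon\si^n(\al(\beta))\lle\w^\f\}$.

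The approach is to exploit rigid structural constraints on the ``extra'' set $\mathcal{D}:=\mathcal{Y}\setminus\mathcal{X}$. The crucial observation is that every $\z\in\mathcal{D}$ must contain $\w^-$ as a factor: any shift $\si^n(\z)$ lying in the open interval $[b(t_L^*,\beta),\w^\f)$ has its length-$|\w|$ prefix in the lex-range $[\w^-,\w]$, and since $\w^-$ and $\w$ differ only in the last digit, this prefix is either $\w^-$ or $\w$; iterating on the continuation (any all-$\w$ tail would force $\si^n(\z)=\w^\f\not\prec\w^\f$) forces an occurrence of $\w^-$ somewhere in $\z$. Moreover, by the minimality of $m$, each occurrence of $\w^-$ is followed by a length-$m$ block in the fixed finite lex-range $[B,\al_1\dots\al_m]$, after which the shift of $\z$ again lies in $\mathcal{Y}$, so the analysis iterates.

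Building on this, my plan is to construct a sliding block-code factor $\Psi\colon\mathcal{Y}\to\mathcal{X}$ that acts on each $\w^-$-block (together with its forced length-$m$ continuation) by the substitution $\w^-\mapsto\w$ with a compensating adjustment of the $m$-block, and as the identity elsewhere. Since block-code factors do not increase topological entropy, this yields $h(\mathcal{Y})\leq h(\mathcal{X})$. The pure $\beta$-Lyndon interval case can in fact be recovered as a degenerate sub-case by a similar but simpler substitution argument acting on the ``$\w$ followed by a small continuation'' blocks that play the role of $\w^-$-blocks there.

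\textbf{Main obstacle.} Constructing the block-code factor $\Psi$ is delicate because substituting $\w^-\mapsto\w$ raises the digit at position $|\w|$ of the block from $0$ to $1$, which risks creating a new shift of $\Psi(\z)$ that locally matches $\al(\beta)$ too tightly and violates the upper-bound constraint $\lle\al(\beta)$. Ruling this out requires the fine combinatorial structure of $\w$ relative to $\al(\beta)$ developed in Section~\ref{sec:basic-interiors} — particularly Lemmas~\ref{lem:always-below-alpha} and~\ref{lem:v_k-upper-bound}, which bound how the relevant Lyndon words compare with prefixes of $\al(\beta)$ — together with the minimality of $m$, which guarantees that the length-$m$ block following $\w^-$ can absorb the digit-raising perturbation without creating a $\lle\al(\beta)$ violation. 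An alternative, and perhaps cleaner, execution would be to isolate the ``$\w^-$-anchored'' orbits as an intermediate closed shift-invariant set $\mathcal{Z}\subseteq\mathcal{Y}$ (modelled on $\Ga(\cdot)$ of Lemma~\ref{lem:countable}) and construct $\Psi$ on $\mathcal{Z}$ alone, concluding via $h(\mathcal{Z})\leq h(\mathcal{X})$ together with the observation that orbits in $\mathcal{Y}$ avoiding $\w^-$ entirely already lie in $\mathcal{X}$.
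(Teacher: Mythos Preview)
Your entropy inequality runs the wrong way. A sliding block-code factor $\Psi\colon\mathcal{Y}\to\mathcal{X}$ gives $h(\Psi(\mathcal{Y}))\leq h(\mathcal{Y})$; combined with $\Psi(\mathcal{Y})\subseteq\mathcal{X}$ this yields only $h(\Psi(\mathcal{Y}))\leq h(\mathcal{X})$ and $h(\Psi(\mathcal{Y}))\leq h(\mathcal{Y})$, neither of which implies $h(\mathcal{Y})\leq h(\mathcal{X})$. To obtain $h(\mathcal{Y})\leq h(\mathcal{X})$ you would need $\Psi$ to be an \emph{embedding} (an injective sliding block code), not a factor. But your proposed substitution $\w^-\mapsto\w$ is visibly non-injective: an occurrence of $\w$ in $\Psi(\z)$ could have come either from $\w$ or from $\w^-$ in $\z$, so information is lost. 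The same objection applies to the alternative route you sketch at the end.

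The paper avoids this trap by not attempting to map all of $\Kt_\beta(t_L^*)$ into $\Kt_\beta(t_R)$. Instead, it pins down the structure of any $\z\in\Kt_\beta(t_L^*)\setminus\Kt_\beta(t_R)$: such a sequence must eventually be a concatenation of blocks $\w$, $\w^-$, $\al_1\dots\al_m$, and $\al_1\dots\al_m^-$ in a constrained pattern. The closure of these tails forms a subshift $\mathcal{Z}$ whose entropy is controlled by the two block lengths $|\w|$ and $m$. The key comparison is then with a second subshift $\mathcal{Y}'\subseteq\Kt_\beta(t_R)$ built from free concatenations of $\w$ and $\cs$; since $|\cs|\leq m$ (because $\al(\beta)$ begins with $\L(\cs)^+$ and $\w^\f\prec\cs^\f$), one has $h(\mathcal{Z})\leq h(\mathcal{Y}')\leq h(\Kt_\beta(t_R))$, and hence $h(\Kt_\beta(t_L^*))\leq\max\{h(\Kt_\beta(t_R)),h(\mathcal{Z})\}=h(\Kt_\beta(t_R))$. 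Your structural analysis of where $\w^-$ must appear is correct and is essentially the first step of this argument; what is missing is the comparison subshift inside $\Kt_\beta(t_R)$ that absorbs the entropy of $\mathcal{Z}$.
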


\begin{proof}
The entropy is obviously constant on each $\beta$-Lyndon interval, so consider an EBLI $I_\w=\big[\pi_\beta\big(\w^-(\al_1\dots\al_m^-)^\f\big),\pi_\beta(\w^\f)\big]$ inside $[0,\tau(\beta)]$. Since $\beta$-Lyndon intervals are dense in $[0,\tau(\beta)]$ for $\beta\in \overline{E}\cup \bigcup_{\cs\in\La}\overline{E^\cs}\cup E_\f$ and EBLIs are non-overlapping by Lemma \ref{lem:no-overlap-EBLI}, it must be the case that $\beta\in(\beta_\ell^\cs,\beta_*^\cs)$ for some $\cs\in\La$. Moreover, $\w^\f\lle b(\tau(\beta),\beta)=\cs^-\L(\cs)^\f \prec \cs^\f$. So $\w$ and $\cs$ are two distinct $\beta$-Lyndon words, and $m:=\min\{n:\si^n(\al(\beta))\lle\w^\f\}\geq |\cs|$, because $\al(\beta)$ begins with $\L(\cs)^+$.

Let $t_L^*$ and $t_R$ be the left and right endpoints of $I_\w$. It suffices to show that
\[
h(\Kt_\beta(t_L^*))=h(\Kt_\beta(t_R)).
\]
Suppose $\z\in \Kt_\beta(t_L^*)\backslash \Kt_\beta(t_R)$. Then
\begin{equation} \label{eq:z-boundaries}
b(t_L^*,\beta)=\w^-(\al_1\dots\al_m^-)^\f\lle \si^n(\z) \lle \al(\beta) \qquad\forall\,n\geq 0,
\end{equation}
and there is some integer $n_0$ such that $\si^{n_0}(\z)\prec b(t_R,\beta)=\w^\f$. We claim that $\z$ must end in a sequence of the form
\begin{equation} \label{eq:repeating-pattern-2}
\w^-(\al_1\dots\al_m^-)^{p_1}\,\al_1\dots\al_m \w^{q_1}\w^-(\al_1\dots\al_m^-)^{p_2}\,\al_1\dots\al_m \w^{q_2}\dots,
\end{equation}
where $0\leq p_i,q_i\leq\infty$ for all $i$. Since $\si^{n_0}(\z)\prec \w^\f$, there is some $n_1\geq n_0$ such that $\si^{n_1}(\z)$ begins with $\w^-$. By \eqref{eq:z-boundaries}, it then follows that $\si^{n_1}(\z)$ begins with $\w^-(\al_1\dots\al_m^-)^{p_1}\al_1\dots\al_m$ for some $p_1\leq\infty$. But now recall that $\si^m(\al(\beta))\lle \w^\f$. Thus, if $p_1<\infty$, then again using \eqref{eq:z-boundaries}, the word $\al_1\dots\al_m$ must be followed by $\w^{q_1}\w^-$ for some $q_1\leq\infty$. Repeating this argument, we see that $\z$ ends in a sequence of the form \eqref{eq:repeating-pattern-2}.

Now let $\mathcal{X}$ denote the set of all sequences of the form \eqref{eq:repeating-pattern-2}, together with their shifts. Then $\mathcal{X}$ is a subshift of $\Kt_\beta(t_L^*)$. Also, let $\mathcal{Y}$ denote the set of all infinite concatenations of the words $\cs$ and $\w$ and their shifts, that is,
\[
\mathcal{Y}:=\left\{\si^n\big(\cs^{p_1}\w^{q_1}\cs^{p_2}\w^{q_2}\dots\big): \ n\geq 0,\ 1\leq p_i,q_i\leq\f\ \forall\,i\right\}.
\]
If $\z\in\mathcal{Y}$, then $\si^n(\z)\lge \w^\f$ for all $n\geq 0$ since $\w$ and $\cs$ are both Lyndon and $\w^\f\prec \cs^\f$. Furthermore, $\si^n(\z)\lle \al(\beta)$ for all $n\geq 0$ since $\cs$ and $\w$ are both $\beta$-Lyndon. Hence, $\mathcal{Y}$ is a subshift of $\Kt_\beta(t_R)$. Since neither $\w$ nor $\cs$ is a power of the other (else one of them wouldn't be Lyndon) and $|\cs|\leq m$, a comparison of the definitions of $\mathcal{X}$ and $\mathcal{Y}$ shows that $h(\mathcal{Y})\geq h(\mathcal{X})$. As a result,
\[
h(\Kt_\beta(t_L^*))\leq \max\{h(\Kt_\beta(t_R)),h(\mathcal{X})\}\leq \max\{h(\Kt_\beta(t_R)),h(\mathcal{Y})\}=h(\Kt_\beta(t_R)).
\]
The reverse inequality is obvious. Therefore, the proof is complete.
\end{proof}

\begin{lemma} \label{lem:window-is-EBLI}
The closure of every non-transitivity window (that is, every interval in the collection $\II_{\max}$) is an EBLI.
\end{lemma}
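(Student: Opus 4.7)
Plan: With $\w := \v_k^*$, I will show that the closure $\overline{I_k}$ equals the EBLI $I_\w$. The right endpoints agree immediately because $b(\overline{t}_k, \beta) = (\v_k^*)^\f = \w^\f$. Moreover, since $\si^{j_k}(\al(\beta))$ begins with $\v_k$ and iterating $\si^{j_{k+1}}(\al(\beta)) \lle \si^{j_k}(\al(\beta))$ yields $\si^{j_k}(\al(\beta)) \lle \v_k^\f \lle \w^\f$, we are in the second case of the EBLI definition, with $m := \min\{n \geq 1 : \si^n(\al(\beta)) \lle \w^\f\} \leq j_k$.

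Next I will verify the identity $\v_k^-(\al_1\dots\al_{j_k}^-)^\f = \w^-(\al_1\dots\al_{j_k}^-)^\f$, so that once $m = j_k$ is established the left endpoints of $\overline{I_k}$ and $I_\w$ will also coincide. If $\v_k \in L^*(\beta)$ then $\w = \v_k$ and the identity is immediate. Otherwise, Lemma \ref{lem:v_k-star} decomposes $\v_k = \u(\al_1\dots\al_{j_k}^-)^r\al_1\dots\al_{j_k}$ with $\w = \u^+$; the alternatives ($\v_k = \v_{k-1}$ or $\u = \v_{k-1}^-$) both force $\overline{t}_k = \overline{t}_{k-1}$, hence $I_k \subsetneq I_{k-1}$, contradicting maximality of $I_k \in \II'$. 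Then $\w^- = \u$ and $\v_k^- = \u(\al_1\dots\al_{j_k}^-)^{r+1}$, so both sides of the identity equal $\u(\al_1\dots\al_{j_k}^-)^\f$. It thus remains to prove $m = j_k$, i.e., $\si^n(\al(\beta)) \succ \w^\f$ for all $0 \leq n < j_k$.

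I split this into three ranges. For $n = j_\ell$ with $1 \leq \ell < k$: maximality of $I_k$ combined with Proposition \ref{prop:non-transitivity-intervals}(i) forces $\overline{t}_k \leq \underline{t}_\ell$, so $\w^\f \lle \v_\ell^-(\al_1\dots\al_{j_\ell}^-)^\f \prec \v_\ell 0^\f \lle \si^{j_\ell}(\al(\beta))$. For $j_\ell < n < j_{\ell+1}$ with $\ell \geq 1$: the minimality in the definition of $n_\ell$ gives $\si^n(\al(\beta)) \succ \si^{j_\ell}(\al(\beta)) \succ \w^\f$. This leaves the delicate range $0 \leq n < j_1 = |\cs|$, which I expect to be the main obstacle.

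For this last range, since $\w^\f \prec \cs^\f$ by Lemma \ref{v_k-star-less-than-s}, it suffices to show $\si^n(\al(\beta)) \succ \cs^\f$ for $0 \leq n < m := |\cs|$. Because $\al(\beta)$ begins with $\L(\cs)^+$, this reduces to verifying that the length-$(m-n)$ suffix of $\L(\cs)^+$ strictly exceeds $s_1\dots s_{m-n}$ lexicographically for every such $n$, where $\cs = s_1\dots s_m$. The key observation is that the analogous length-$(m-n)$ suffix of $\L(\cs)$ is the prefix of a nontrivial cyclic shift of $\cs$, which by the Lyndon property strictly exceeds $\cs$: if the first position of disagreement lies within the first $m-n$ coordinates we are done directly, while otherwise the suffix of $\L(\cs)$ equals $s_1\dots s_{m-n}$ verbatim, which (since $\L(\cs)$ ends in $0$) forces $s_{m-n} = 0$, and then flipping this final $0$ to $1$ in passing from $\L(\cs)$ to $\L(\cs)^+$ produces the desired strict inequality. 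Combining the three range analyses yields $m = j_k$, completing the identification $\overline{I_k} = I_\w$.
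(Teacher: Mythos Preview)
Your proof is correct and follows essentially the same approach as the paper's: set $\w := \v_k^*$, use Lemma~\ref{lem:v_k-star} together with maximality of $I_k$ to rewrite the left endpoint of $\overline{I_k}$ as $\w^-(\al_1\dots\al_{j_k}^-)^\f$, and conclude $\overline{I_k} = I_\w$. You are in fact more explicit than the paper in verifying that $j_k$ is the minimal index $m$ appearing in the EBLI definition---in particular, your use of Proposition~\ref{prop:non-transitivity-intervals}(i) to handle the range $j_1 \le n < j_k$ supplies a step the paper leaves implicit.
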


\begin{proof}
Let $I_k=[\underline{t}_k,\overline{t}_k)\in\II_{\max}$ be a non-transitivity window for $\beta\in(\beta_\ell^\cs,\beta_*^\cs)$ with $\cs\in\La$. Set $m:=j_k$. If $\v_k$ is $\beta$-Lyndon, then $\v_k^*=\v_k$, and so we have $b(\underline{t}_k,\beta)=\v_k^-(\al_1\dots\al_m^-)^\f$ and $b(\overline{t}_k,\beta)=\v_k^\f$. By Lemma \ref{v_k-star-less-than-s}, $\v_k^\f\prec \cs^\f$, and since $\al(\beta)$ begins with $\L(\cs)^+$, this implies that $\si^n(\al(\beta))\succ \v_k^\f$ for all $n<m$. Therefore, $\overline{I_k}$ is an EBLI.

Suppose now that $\v_k$ is not $\beta$-Lyndon. Since $I_k$ is a non-transitivity window, $I_k\not\subseteq I_{k-1}$ and so $\v_k\neq \v_{k-1}$. Thus, by Lemma \ref{lem:v_k-star}, there exist a nonnegative integer $r$ and a word $\u$ not ending in $\al_1\dots\al_{j_k}^-$ such that $\v_k=\u(\al_1\dots\al_{j_k}^-)^r\al_1\dots\al_{j_k}$, and
\begin{equation*}
\v_k^*=\begin{cases}
\v_{k-1}^* & \mbox{if $k\geq 2$ and $\u=\v_{k-1}^-$},\\
\u^+ & \mbox{otherwise}.
\end{cases}
\end{equation*}
If $k\geq 2$ and $\u=\v_{k-1}^-$, then, since $j_k>j_{k-1}$, it follows that $\v_k$ begins with $\v_{k-1}^-\al_1\dots\al_{j_{k-1}}$ and so $I_k\subseteq I_{k-1}$ by Proposition \ref{prop:non-transitivity-intervals}. This contradicts our assumption that $I_k$ is a non-transitivity window. Hence, this case cannot happen and $\v_k^*=\u^+$. Now observe that
\[
\v_k^-(\al_1\dots\al_m^-)^\f=\u(\al_1\dots\al_m^-)^{r+1}(\al_1\dots\al_m^-)^\f=(\v_k^*)^-(\al_1\dots\al_m^-)^\f.
\]
Hence, $\overline{I_k}$ is the EBLI generated by $\v_k^*$.
\end{proof}

\begin{lemma} \label{lem:transitive-at-EBLI-right-endpoint}
Let $\mathcal{T}_R'$ denote the set of all right endpoints of {\em maximal} EBLIs in $[0,\tau(\beta)]$. Then for each $t\in\mathcal{T}_R'$, $\Kt_\beta(t)$ has a transitive subshift $\mathcal{K}_\beta'(t)$ of full entropy containing the sequence $b(t,\beta)$. Moreover, these subshifts can be chosen so that $\{\mathcal{K}_\beta'(t): t\in \mathcal{T}_R'\}$ is a strictly descending collection of subshifts, and they are sofic if $\al(\beta)$ is eventually periodic.
\end{lemma}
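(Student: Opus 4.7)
The plan is to partition $\mathcal{T}_R'$ according to the collection $\II$ of non-transitivity windows from Sections \ref{sec:basic-interiors}--\ref{sec:higher-order-basic}, and handle each part by a different construction. For any $t\in\mathcal{T}_R'$, write $b(t,\beta)=\w^\f$ with $\w\in L^*(\beta)$. Using Lemma \ref{lem:window-is-EBLI} together with Lemma \ref{lem:no-overlap-EBLI}, one first shows that $t\in\bigcup_{I\in\II}I$ forces $t=\overline{t}_{k^*}$ for some \emph{closed} window $I_{k^*}\in\II'$: for any maximal window $I_k\in\II'$ the closure $\overline{I_k}$ is an EBLI with right endpoint $\overline{t}_k$, and $t<\overline{t}_k$ would make the EBLI $I_\w$ a proper sub-EBLI of $\overline{I_k}$, contradicting the maximality of $I_\w$. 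Since the closed-window condition $\si^{j_k}(\al(\beta))=\v_k^\f$ propagates to all $l>k$ (yielding $\v_l=\v_k$), at most one such closed window can be maximal in $\II'$, and when it exists it is automatically the rightmost window in $\II'$ when there are none to its right, but need not be so in general.

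In the first case $t\not\in\bigcup_{I\in\II}I$, take $\mathcal{K}_\beta'(t)$ directly from Theorem \ref{thm:general-transitivity}, which delivers transitivity, full entropy, membership of $b(t,\beta)$, and strict descent within this case. When $\al(\beta)$ is eventually periodic, the subshift is sofic: in the non-renormalizable subcases it coincides with the sofic shift $\Kt_\beta(t)$ by Lemma \ref{lem:sofic-subshift}, and in the renormalizable subcases it is obtained via \eqref{eq:the-transitive-subshift} as a factor of $\Kt_{\hat\beta}(\hat t_R)$, which is again sofic because $\al(\hat\beta)=\Phi_\cs^{-1}(\al(\beta))$ inherits eventual periodicity; factors of sofic subshifts are sofic by \cite[Corollary 3.2.2]{Lind_Marcus_1995}.

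In the remaining case $t=t^*:=\overline{t}_{k^*}$, let $\cs$ generate the basic interval containing $\beta$, and set $\w:=\v_{k^*}\in L^*(\beta)$, $m:=j_{k^*}$, so that $\al(\beta)=\al_1\dots\al_m\w^\f$ is eventually periodic. Define $\mathcal{K}_\beta'(t^*):=\mathcal{K}_\beta(t^*)$. The key observation is that no sequence $\z\in\mathcal{K}_\beta(t^*)$ can contain the block $\al_1\dots\al_m$: if $\si^n(\z)$ began with this block, the constraint $\si^n(\z)\prec\al(\beta)$ would force $\si^{n+m}(\z)\prec\w^\f$, contradicting $\si^{n+m}(\z)\lge\w^\f$. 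Iterating this reasoning at every shift yields
\[
\mathcal{K}_\beta'(t^*)=\big\{\z\in\{0,1\}^\N: \w^\f\lle\si^n(\z)\lle(\al_1\dots\al_m^-)^\f\ \forall n\ge 0\big\},
\]
which is a SFT (hence sofic) containing $b(t^*,\beta)=\w^\f$, and of full entropy because $\Kt_\beta(t^*)\setminus\mathcal{K}_\beta'(t^*)$ consists of the countably many sequences that are forced, by the same argument, to terminate in $\w^\f$.

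The main obstacle is proving transitivity of $\mathcal{K}_\beta'(t^*)$. The plan is to mimic the schemes of Propositions \ref{prop:transitive-in-between} and \ref{prop:higher-basic-interval-transitivity}: given $\u\in\cL(\mathcal{K}_\beta'(t^*))$ and $\z\in\mathcal{K}_\beta'(t^*)$, first extend $\u$ to a word ending in $\al_1\dots\al_m^-=\L(\cs)^+\v_1\dots\v_{k^*-1}^-$ (legal since the forbidden block $\al_1\dots\al_m$ is not created), then append a descending ``staircase'' $(\al_1\dots\al_{j_{k^*-1}}^-)^{r_{k^*-1}}\cdots(\al_1\dots\al_{j_1}^-)^{r_1}$ with exponents $r_i$ large enough that every shift stays above $\w^\f$, and finally splice with $\z$ (which may be assumed to begin with $\L(\cs)$ via Lemma \ref{lem:weaker-left-extension}); Lemmas \ref{lem:always-below-alpha} and \ref{lem:header-suffix} rule out spurious occurrences of $\al_1\dots\al_m$ at the splices. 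Strict descent across case A/B then follows: for $t^*<t'$, any $\z\in\mathcal{K}_\beta'(t')\subseteq\Kt_\beta(t')$ satisfies $\si^{n+m}(\z)\lge b(t',\beta)\succ\w^\f$, ruling out $\si^n(\z)$ starting with $\al_1\dots\al_m$ and so placing $\z$ in $\mathcal{K}_\beta'(t^*)$; for $t<t^*$, the bound $\si^n(\z)\lle(\al_1\dots\al_m^-)^\f\prec\al(\beta)$ makes any $\z\in\mathcal{K}_\beta'(t^*)$ admissible in the substitution construction producing $\mathcal{K}_\beta'(t)$; strictness in either direction is witnessed by $b(t^*,\beta)=\w^\f\in\mathcal{K}_\beta'(t^*)$.
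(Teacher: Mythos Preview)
Your approach is essentially the paper's: the right endpoint of a maximal EBLI cannot lie in the half-open part of any non-transitivity window, because by Lemma~\ref{lem:window-is-EBLI} the closure of each window is itself an EBLI, and by Lemma~\ref{lem:no-overlap-EBLI} a strictly smaller right endpoint would force proper containment, contradicting maximality. The paper's proof stops there and simply cites Proposition~\ref{prop:transitive-in-between} and Theorems~\ref{thm:beta-in-relative-exceptional-set}, \ref{thm:infinitely-Farey}, \ref{thm:general-basic-intervals}.

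You go further by explicitly isolating the closed-window edge case $t^*=\overline{t}_{k^*}\in I_{k^*}$, which the paper's two-sentence proof does not address (though the Remark after Proposition~\ref{prop:non-transitivity-window} hints at it). Your choice $\mathcal{K}_\beta'(t^*)=\mathcal{K}_\beta(t^*)$ and the identification with $\{\z:\w^\f\lle\si^n(\z)\lle(\al_1\dots\al_m^-)^\f\}$ are correct, and the staircase transitivity argument correctly mirrors Case~2 of Proposition~\ref{prop:transitive-in-between}; note that the required strict inequalities $\w^\f\prec\v_l^-(\al_1\dots\al_{j_l}^-)^\f$ for $l<k^*$ follow because $I_{k^*}\in\II'$ is maximal and the endpoints $\underline{t}_l$ are never purely periodic with period dividing $n_{k^*}$.

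There is one point to tighten in your strict-descent argument. For $t'>t^*$ your reasoning is fine. For $t<t^*$ you assert that $\z\in\mathcal{K}_\beta'(t^*)$ is ``admissible in the substitution construction producing $\mathcal{K}_\beta'(t)$''. When $\cs\in\F$ this is immediate since $\mathcal{K}_\beta'(t)=\Kt_\beta(t)$, but for higher-order $\cs=\r_1\bullet\cdots\bullet\r_n$ the subshift $\mathcal{K}_\beta'(t)$ is built via \eqref{eq:the-transitive-subshift} as shifts of $\Phi_\bR(\hat\z)$, and you must check that every $\z\in\mathcal{K}_\beta'(t^*)$ is of this form. This does hold: since $b(t,\beta)\succ\r_1^-\L(\r_1)^\f$ forces $\w^\f\succ\r_1^-\L(\r_1)^\f$, Lemma~\ref{lem:v_k-four-blocks} gives $\w\in X^*(\r_1)$ and $(\al_1\dots\al_m^-)^\f\in X(\r_1)$, whence Lemma~\ref{lem:four-blocks} puts each $\z\in\mathcal{K}_\beta'(t^*)$ (after a bounded shift) into $X(\bR)$; but this step deserves to be stated rather than left implicit.
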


\begin{proof}
A maximal EBLI either contains a non-transitivity window or else does not intersect any non-transitivity windows, in view of Lemmas \ref{lem:no-overlap-EBLI} and \ref{lem:window-is-EBLI}. Thus, the result follows from Proposition \ref{prop:transitive-in-between} and Theorems \ref{thm:beta-in-relative-exceptional-set}, \ref{thm:infinitely-Farey} and \ref{thm:general-basic-intervals}.
\end{proof}

\begin{corollary} \label{cor:entropy-different}
Assume $\al(\beta)$ is eventually periodic. Then the entropy function $t\mapsto h(\Kt_\beta(t))$ takes distinct values on distinct maximal EBLIs in $[0,\tau(\beta)]$.
\end{corollary}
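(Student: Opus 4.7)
\bigskip

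\textbf{Proof proposal for Corollary \ref{cor:entropy-different}.}

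The plan is to combine the transitive sofic subshifts supplied by Lemma \ref{lem:transitive-at-EBLI-right-endpoint} with the standard fact that a proper subshift of a transitive sofic subshift has strictly smaller entropy (\cite[Corollary 4.4.9]{Lind_Marcus_1995}), and to note that entropy is constant on each maximal EBLI. Concretely, let $I$ and $I'$ be two distinct maximal EBLIs contained in $[0,\tau(\beta)]$, with right endpoints $t$ and $t'$, respectively, and assume without loss of generality that $t<t'$. Since distinct maximal EBLIs are non-overlapping by Lemma \ref{lem:no-overlap-EBLI}, we have $t\ne t'$, so no ambiguity arises.

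Next, since $\al(\beta)$ is eventually periodic, Lemma \ref{lem:transitive-at-EBLI-right-endpoint} produces transitive sofic subshifts $\mathcal{K}_\beta'(t)$ and $\mathcal{K}_\beta'(t')$ of $\Kt_\beta(t)$ and $\Kt_\beta(t')$ respectively, both having full entropy, containing the sequences $b(t,\beta)$ and $b(t',\beta)$, and forming (over all right endpoints of maximal EBLIs) a strictly descending collection of subshifts. In particular $\mathcal{K}_\beta'(t')\subsetneq \mathcal{K}_\beta'(t)$. Since both are transitive sofic subshifts, by \cite[Corollary 4.4.9]{Lind_Marcus_1995} this proper containment forces a strict drop in entropy, so
\[
h(\Kt_\beta(t'))=h(\mathcal{K}_\beta'(t'))<h(\mathcal{K}_\beta'(t))=h(\Kt_\beta(t)).
\]

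Finally, I would invoke the constancy of $t\mapsto h(\Kt_\beta(t))$ on each maximal EBLI in $[0,\tau(\beta)]$. When $\beta$ lies in the interior of some basic interval, this is exactly Lemma \ref{lem:EBLI-constant-entropy}. Otherwise $\beta\in \overline{E}\cup \bigcup_{\cs\in\La}\overline{E^\cs}\cup E_\f$, and as observed in the proof of Lemma \ref{lem:EBLI-constant-entropy}, the density of $\beta$-Lyndon intervals in $[0,\tau(\beta)]$ together with the non-overlapping property of EBLIs (Lemma \ref{lem:no-overlap-EBLI}) forces every maximal EBLI to coincide with a $\beta$-Lyndon interval; on such an interval the subshift $\Kt_\beta(\cdot)$ is literally constant, hence so is its entropy. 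Combining this with the strict inequality above yields distinct entropy values on $I$ and $I'$.

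There is no real obstacle; the only subtlety is to ensure that the constancy of entropy on maximal EBLIs is covered in all cases of $\beta$, which as noted splits into the interior-of-basic-interval case handled by Lemma \ref{lem:EBLI-constant-entropy} and the remaining cases where maximal EBLIs reduce to $\beta$-Lyndon intervals. Once this is in place, the proof is immediate from the strict descent of the transitive sofic subshifts $\mathcal{K}_\beta'(\cdot)$ and the sofic-entropy dichotomy.
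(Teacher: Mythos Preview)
Your proof is correct and follows essentially the same approach as the paper, which simply says the result is ``immediate from Lemma \ref{lem:transitive-at-EBLI-right-endpoint} and \cite[Corollary 4.4.9]{Lind_Marcus_1995}.'' Your additional paragraph establishing constancy of entropy on each maximal EBLI (splitting into the basic-interval-interior case via Lemma \ref{lem:EBLI-constant-entropy} and the remaining cases where maximal EBLIs reduce to $\beta$-Lyndon intervals) is a helpful elaboration that the paper leaves implicit, but the core argument---strict descent of the transitive sofic subshifts $\mathcal{K}_\beta'(\cdot)$ combined with the entropy dichotomy for sofic shifts---is identical.
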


\begin{proof}
Immediate from Lemma \ref{lem:transitive-at-EBLI-right-endpoint} and \cite[Corollary 4.4.9]{Lind_Marcus_1995}.
\end{proof}

\begin{corollary} \label{cor:maximal-EBLI}
Assume $\al(\beta)$ is eventually periodic. Then an EBLI is maximal if and only if it is not properly contained in the closure of a non-transitivity window.
\end{corollary}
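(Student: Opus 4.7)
I will verify both directions of the biconditional. The forward direction is immediate: if an EBLI $I_\w$ is properly contained in $\overline{I_k}$ for some $I_k\in\II'$, then Lemma \ref{lem:window-is-EBLI} exhibits $\overline{I_k}$ as an EBLI strictly larger than $I_\w$, and so $I_\w$ cannot be maximal.

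The converse is the substantive direction. Suppose $I_\w$ is not maximal; by Lemma \ref{lem:no-overlap-EBLI} there is a unique maximal EBLI $I^*$ with $I_\w\subsetneq I^*$. Write $t_R=\pi_\beta(\w^\f)$ and $t_R^*=\pi_\beta((\w^*)^\f)$ for the right endpoints of $I_\w$ and $I^*$; since two distinct EBLIs with equal right endpoints would share a generating $\beta$-Lyndon word and hence coincide, we must have $t_R<t_R^*$, both lying in $I^*$. First I will argue that $\Kt_\beta(t_R)$ cannot be transitive: both $\Kt_\beta(t_R)$ and $\Kt_\beta(t_R^*)$ are sofic by Lemma \ref{lem:sofic-subshift} and the eventual periodicity hypothesis; the inclusion $\Kt_\beta(t_R^*)\subsetneq\Kt_\beta(t_R)$ is strict (as $\w^\f$ lies in the former but not the latter); and the two subshifts share the same entropy by Lemma \ref{lem:EBLI-constant-entropy}. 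Therefore \cite[Corollary 4.4.9]{Lind_Marcus_1995} rules out transitivity of $\Kt_\beta(t_R)$, since otherwise the strict sofic containment would force a strict entropy decrease.

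Next I invoke the transitivity classification---Theorem \ref{thm:basic-interval-transitivity} for $\cs\in\F$ and Theorem \ref{thm:general-basic-intervals} for general $\cs\in\La$, using induction on the degree of $\cs$ to reduce the ``high range'' non-transitivity condition $b(t_R,\beta)\lge\r_1^-\L(\r_1)^\f$ to a lower-degree case via the renormalization map $\Theta_{\cs_1,\beta}$ of Lemma \ref{lem:mapping-Lyndon-intervals}. Non-transitivity of $\Kt_\beta(t_R)$ then places $t_R$ inside some non-transitivity window $I_k\in\II$; letting $I_{k'}\in\II'$ be the unique maximal non-transitivity window containing $I_k$ (which exists by Proposition \ref{prop:non-transitivity-intervals}), Lemma \ref{lem:window-is-EBLI} identifies $\overline{I_{k'}}$ as an EBLI containing $t_R$. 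Applying Lemma \ref{lem:no-overlap-EBLI} to $I_\w$ and $\overline{I_{k'}}$ forces $I_\w\subseteq\overline{I_{k'}}$, because the reverse inclusion would match right endpoints and collapse the two EBLIs. If this containment is strict, we are done. Otherwise $I_\w=\overline{I_{k'}}\subsetneq I^*$, and then $t_R=\overline{t}_{k'}$ lies outside the half-open window $I_{k'}$, so by the same classification $t_R$ must lie in a different non-transitivity window; its maximal containing window $I_{k'''}\in\II'$ is disjoint from $I_{k'}$ by Proposition \ref{prop:non-transitivity-intervals}, so $\overline{I_{k'''}}$ extends strictly past $t_R$ and hence strictly past $I_\w$, giving $I_\w\subsetneq\overline{I_{k'''}}$ as required.

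The main technical obstacle will be the inductive renormalization step, which requires carefully matching EBLIs of $\beta$ in the high range $b(t_R,\beta)\lge\r_1^-\L(\r_1)^\f$ with EBLIs of the renormalized base $\hat\beta$ under $\Theta_{\cs_1,\beta}$, and verifying that maximal non-transitivity windows correspond bijectively under this renormalization, so that the inductive hypothesis cleanly delivers the containment of $I_\w$ in the closure of some non-transitivity window.
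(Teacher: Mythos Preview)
Your forward direction matches the paper. For the converse, however, the paper takes a much simpler route that you should know about, because your proposed induction on the degree of $\cs$ is where your argument becomes genuinely incomplete.

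The paper proves the converse directly (not by contrapositive): assuming $I_\w$ is not properly contained in the closure of any non-transitivity window, it supposes some EBLI $I_\v=[u_L^*,u_R]$ properly contains $I_\w=[t_L^*,t_R]$ and derives an entropy contradiction. The key point is that the paper does \emph{not} argue about transitivity of $\Kt_\beta(t_R)$ itself. Instead it invokes the transitive sofic subshifts $\mathcal{K}_\beta'(t_R)$ and $\mathcal{K}_\beta'(u_R)$ of full entropy already constructed in Theorem~\ref{thm:general-transitivity} (and its constituent theorems), which exist precisely for $t_R,u_R\notin\bigcup_{I\in\II}I$ and form a strictly descending collection. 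Then \cite[Corollary~4.4.9]{Lind_Marcus_1995} applied to $\mathcal{K}_\beta'(u_R)\subsetneq\mathcal{K}_\beta'(t_R)$ yields $h(\Kt_\beta(t_R))>h(\Kt_\beta(u_R))$, contradicting Lemma~\ref{lem:EBLI-constant-entropy}.

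This sidesteps exactly the difficulty you run into. When $\cs\in\La_k$ with $k\ge 2$, Theorem~\ref{thm:general-basic-intervals}(i) says $\Kt_\beta(t_R)$ is non-transitive either because $t_R$ lies in a non-transitivity window \emph{or} because $b(t_R,\beta)\lge\r_1^-\L(\r_1)^\f$. You recognize this and propose to reduce the second case by induction via $\Theta_{\cs_1,\beta}$, but this is where your argument is only a sketch: you would need to verify that the renormalization $\Phi_{\r_1}$ carries EBLIs of the renormalized base to EBLIs of $\beta$ and matches non-transitivity windows bijectively (note that in the proof of Theorem~\ref{thm:general-basic-intervals} the paper passes through an auxiliary base $\beta'$, not $\hat\beta$ directly, precisely because $\al(\beta)$ need not lie in $X(\r_1)$). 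None of this is established, and it is substantially more work than the one-line appeal to $\mathcal{K}_\beta'$ that the paper uses.

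Two minor points: your parenthetical ``$\w^\f$ lies in the former but not the latter'' has former and latter reversed; and your final fallback argument (the $I_\w=\overline{I_{k'}}$ case) assumes $I_{k'}$ is half-open, but the construction in Section~\ref{sec:basic-interiors} allows closed windows when $\si^{j_k}(\al(\beta))=\v_k^\f$, so that step also needs care.
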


\begin{proof}
The forward direction is immediate from Lemma \ref{lem:window-is-EBLI}. For the converse, assume an EBLI $I_\w=[t_L^*,t_R]$ is not properly contained in the closure of a non-transitivity window. Suppose there is another EBLI, say $I_\v=[u_L^*,u_R]$ that properly contains $I_\w$. Then $\Kt_\beta(t_R)$ (resp. $\Kt_\beta(u_R)$) has a transitive sofic subshift $\mathcal{K}_\beta'(t_R)$ (resp. $\mathcal{K}_\beta'(u_R)$) of full entropy that contains the sequence $b(t_R,\beta)$ (resp. $b(u_R,\beta)$), because neither $t_R$ nor $u_R$ lies in a non-transitivity window. Since $\mathcal{K}_\beta'(u_R)$ is a proper subshift of $\mathcal{K}_\beta'(t_R)$, this implies $h(\Kt_\beta(t_R))>h(\Kt_\beta(u_R))$, contradicting that the entropy is constant on $[u_L^*,u_R]$.
\end{proof}

\begin{proof}[Proof of Theorem \ref{thm:entropy-plateaus}]
Recall the decomposition \eqref{eq:decomposition}.
Assume $\al(\beta)$ is eventually periodic. Then $\beta\not\in E$ by Proposition \ref{prop:Sturmian}. Likewise, $\beta\not\in E^\cs$ for any $\cs\in\La$ because if $\beta\in E^\cs$, then $\al(\beta)=\Phi_\cs(\al(\hat{\beta}))$ for some $\hat{\beta}\in E$, and $\al(\hat{\beta})$ is not eventually periodic, so $\al(\beta)$ isn't either. Furthermore, $\beta\not\in E_\f$ (see the remark following Proposition \ref{prop:E-inf-bifurcation}). Thus, by \eqref{eq:decomposition}, $\beta\in I^\cs=[\beta_\ell^\cs,\beta_*^\cs]$ for some $\cs\in\La$.

If $\beta\in\{\beta_\ell^\cs,\beta_*^\cs\}$, then the $\beta$-Lyndon intervals are dense in $[0,\tau(\beta)]$ by Proposition \ref{prop:dense-intervals} and Theorem \ref{thm:beta-in-relative-exceptional-set}, so the EBLIs in $[0,\tau(\beta)]$ are just the $\beta$-Lyndon intervals and the conclusion follows from Theorem \ref{thm:E-minus-B}.

Finally, if $\beta\in(\beta_\ell^\cs,\beta_*^\cs)$, the theorem follows from Proposition \ref{prop:dense-EBLIs}, Lemma \ref{lem:EBLI-constant-entropy} and Corollary \ref{cor:entropy-different}.
\end{proof}

\begin{proof}[Proof of Corollary \ref{cor:symbolic-plateaus}]
We first show that $\b'$ is well defined. First, suppose $\si^n(\b)\succ\b$ for some $n$, and take $n$ minimal with this property. Then $b_n{<b_1}$, and setting $\b':=(b_1\dots b_n)^\f$ we see that $0^\f\prec\si^n(\b')\lle \b'$ for all $n\geq 0$, and it is easy to see that no sequence between $\b'$ and $\b$ satisfies these inequalities. Furthermore, $\Sigma_{\a,\b}=\Sigma_{\a,\b'}$ for all $\a$. Second, suppose $\si^n(\b)\lle \b$ for all $n\geq 0$, but $\b=b_1\dots b_n0^\f$ with $b_n{>0}$. Setting $\b':=(b_1\dots b_n^-)^\f$ we see that $0^\f\prec\si^n(\b')\lle \b'$ for all $n\geq 0$ by the assumption $\b\succ 10^\f$. Again, no sequence between $\b'$ and $\b$ satisfies these inequalities. Moreover, $\Sigma_{\a,\b'}\subseteq\Sigma_{\a,\b}$ and $\Sigma_{\a,\b}\backslash\Sigma_{\a,\b'}$ is at most countable for each $\a$.
Thus, in both cases, $\b'$ is well defined, $h(\Sigma_{\a,\b})=h(\Sigma_{\a,\b'})$ for all $\a$, and $\b'=\al(\beta)$ for some $\beta{>1}$ by Lemma \ref{lem:quasi-greedy expansion-alpha-q}.

Now assume $\b'$ is eventually periodic. For simplicity we relabel $\b'$ as $\b$ in the rest of the proof.
If $[\pi_\beta(\w0^\f),\pi_\beta(\w^\f)]$ is a plateau of $t\mapsto h(\Kt_\beta(t))$, then $[\w^-\b,\w^\f]$ is a plateau of $\a\mapsto h(\Sigma_{\a,\b})$ since $\Sigma_{\a,\b}=\Sigma_{\w0^\f,\b}$ for all $\a\in (\w^-\b,\w0^\f]$. In the same way, if $[\pi_\beta\big(\w^-(\al_1\dots\al_m^-)^\f\big),\pi_\beta(\w^\f)]$ is a plateau of $t\mapsto h(\Kt_\beta(t))$, then $[\w^-(\al_1\dots\al_m^-)^\f,\w^\f]$ is a plateau of $\a\mapsto h(\Sigma_{\a,\b})$.

It remains to show that no interval of the form $[\u^-\b,\u0^\f]$, with $\u$ an allowed word in $\Sigma_\beta$ { not ending in $0$}, is a plateau of $\a\mapsto h(\Sigma_{\a,\b})$. (Since $\pi_\beta(\u^-\b)=\pi_\beta(\u0^\f)$, the interval $[\u^-\b,\u0^\f]$ collapses to a point when projecting under $\pi_\beta$, so it is at least conceivable that $[\u^-\b,\u0^\f]$ is a plateau.) This is equivalent to showing that the point $t_0:=\pi_\beta(\u0^\f)$ already lies in a plateau of $t\mapsto h(\Kt_\beta(t))$, i.e. in an EBLI.

If $\u$ is not Lyndon, then $t_0$ lies inside some $\beta$-Lyndon interval, hence in an EBLI. If $\u$ is $\beta$-Lyndon, then $t_0$ is the left endpoint of a $\beta$-Lyndon interval, hence is contained in an EBLI. Finally, suppose $\u$ is Lyndon but not $\beta$-Lyndon. Then the proof of Proposition \ref{prop:dense-EBLIs} shows that $t_0$ lies in an EBLI. Hence, in all cases, the interval $[\u^-\b,\u0^\f]$ already lies in one of the plateaus $\tilde{I}_\w$.
\end{proof}

\begin{remark} \label{rem:zero-plateau}
The $0$-plateau of $\a\mapsto h(\Sigma_{\a,\b})$ is $[b(\tau(\beta),\beta),{M_\beta}^\f]$ as long as $b(\tau(\beta),\beta)$ does not end in $0^\f$. The only exceptional case is when $\beta=\beta_r^\cs$ for $\cs\in\La$, in which case $b(\tau(\beta),\beta)=\cs0^\f$ and the $0$-plateau of $\a\mapsto h(\Sigma_{\a,\b})$ is $[\cs^-\b,{M_\beta}^\f]$. (An equivalent result was proved also by Labarca and Moreira \cite{Labarca-Moreira-2006}.)
\end{remark}

\section{Proofs of the other main theorems} \label{sec:other-proofs}

\begin{proof}[Proof of Theorem \ref{thm:general-transitivity}]
Recall the decomposition \eqref{eq:decomposition}.
%\[
%(1,2]=E\cup E_\f\cup\bigcup_{\cs\in\La}E^{\cs}\cup\bigcup_{\cs\in\La}I^{\cs}.
%\]
For $\cs\in{F_e}$, the left endpoint $\beta_\ell^\cs$ of $I^{\cs}$ lies in $E_L$; for $\cs=\s\bullet\r\in\La$ with $\s\in{F_e}$ and {$\r\in\La^*$ (where $\La^*$ was defined in \eqref{eq:Lambda-star}}), $\beta_\ell^\cs$ lies in $\overline{E^\s}$; and for all $\cs\in\La$, the right endpoint $\beta_*^\cs$ of $I^{\cs}$ lies in $\overline{E^\cs}$.
Thus, the theorem follows from Theorems \ref{thm:transitive-in-E_L}, \ref{thm:basic-interval-right-endpoint}, \ref{thm:beta-in-relative-exceptional-set}, \ref{thm:infinitely-Farey} and \ref{thm:general-basic-intervals}.
\end{proof}

%{[For this paper it could be more convenient to define the intervals $I^\cs$ and $J^\cs$ to be open on the left and closed on the right.]}

\begin{proof}[Proof of Theorem \ref{thm:size-of-E-minus-B}]
We first show that $\dim_H(\EE_\beta\backslash\BB_\beta)>0$ for almost all $\beta{>1}$. Fix for the moment a basic interval $I^\cs=[\beta_\ell^\cs,\beta_*^\cs]$ where $\cs\in\La$, and take $\beta\in(\beta_\ell^\cs,\beta_*^\cs)$. Then $\al(\beta)$ begins with $\L(\cs)^+$. Let $k$ be the integer such that $\si^{|\cs|}(\al(\beta))$ begins with {$0^k d$ for some $d>0$}. If there is a string of more than $k$ consecutive zeros in $\al(\beta)$ beyond $\L(\cs)^+{0^k d}$, then it follows from the construction in Section \ref{sec:basic-interiors} that the collection $\II$ of non-transitivity windows is non-empty. It is well known (see \cite[p.~678]{Schmeling-97}) that for Lebesgue-almost all $\beta{>1}$ the sequence $\al(\beta)$ contains arbitrarily long strings of consecutive zeros. Hence for almost all $\beta\in I^\cs$ there is at least one non-transitivity window, and for such $\beta$, $\dim_H(\EE_\beta\backslash\BB_\beta)>0$ by Theorem \ref{thm:bifurcation-in-basic-interval}. Since the basic intervals $I^\cs$, $\cs\in\La$ cover the interval $(1,{\f)}$ up to a set of Lebesgue measure zero (see \cite{Allaart-Kong-2021}), we conclude that $\dim_H(\EE_\beta\backslash\BB_\beta)>0$ for almost all $\beta{>1}$.

(In fact the above argument can easily be extended to show that for almost all $\beta{>1}$, there are infinitely many non-transitivity windows.)

Next, we show that for each $k\in\{0,1,2,\dots\}\cup\{\aleph_0\}$ there are infinitely many $\beta$'s such that $|\EE_\beta\backslash\BB_\beta|=k$. For finite $k$ this is an immediate consequence of Theorem \ref{thm:E-minus-B}, because for each $k\in\N$ the set $\La_k$ is infinite. For $k=\aleph_0$ it follows from Corollary \ref{cor:countably-infinite}, since for each $\cs\in\La$ we can find a base $\beta\in (\beta_\ell^\cs,\beta_*^\cs)$ such that $\II=\emptyset$ and $\al(\beta)$ is eventually periodic. (For instance, let $m:=|\cs|$ and take $\al(\beta)=\L(\cs)^+0^{m+1}(10)^\f$.)

Finally, we show that, if $\EE_\beta\backslash\BB_\beta$ is uncountable, then its Hausdorff dimension is strictly positive. The intersection of $\EE_\beta$ with at least one complementary interval of $\BB_\beta$, say $[t_1,t_2)$, must be uncountable. Since the intersection of $\EE_\beta$ with each $\beta$-Lyndon interval or $\beta$-Lyndon gap contains at most one point, the interval $[t_1,t_2)$ must contain infinitely many $\beta$-Lyndon intervals. Let $[t_L,t_R]$ and $[u_L,u_R]$ be two of them, ordered so that $t_R<u_R$. Let $\v$ and $\w$ be the $\beta$-Lyndon words such that $b(t_R,\beta)=\v^\f$ and $b(u_R,\beta)=\w^\f$. Now consider all sequences of the form
\[
\v^3 \w\v^{k_1}\w\v^{k_2}\w\v^{k_3}\w\dots, \qquad k_i\in\{1,2\}\ \forall\,i.
\]
Since $\v^\f\prec \w^\f$, it is easy to see that each such sequence is the $\beta$-greedy expansion of some point $t\in(t_R,u_R)\cap\EE_\beta$. Hence, as in the proof of Theorem \ref{thm:bifurcation-in-basic-interval}, it follows that
\[
\dim_H(\EE_\beta\backslash\BB_\beta) \geq \dim_H\big(\EE_\beta\cap[t_1,t_2)\big) \geq \dim_H\big(\EE_\beta\cap(t_R,u_R)\big)>0.
\]
This completes the proof.
\end{proof}

\begin{proof}[Proof of Theorem \ref{thm:local-dimension}]
(i) Let $\beta{>1}$ and $t\in\BB_\beta$. Fix $\ep>0$. Then by Theorem \ref{thm:main},
\begin{gather*}
\dim_H\big(\EE_\beta\cap[t,1]\big)=\dim_H K_\beta(t),\\
\dim_H\big(\EE_\beta\cap[t+\ep,1]\big)=\dim_H K_\beta(t+\ep).
\end{gather*}
Since $t\in\BB_\beta$, $\dim_H K_\beta(t+\ep)<\dim_H K_\beta(t)$. Hence, using the stability of Hausdorff dimension, it follows that $\dim_H\big(\EE_\beta\cap[t,t+\ep)\big)=\dim_H\big(\EE_\beta\cap[t,1]\big)$. Thus,
\begin{align*}
\dim_H K_\beta(t)&=\dim_H\big(\EE_\beta\cap[t,1]\big)=\dim_H\big(\EE_\beta\cap[t,t+\ep)\big)\\
&\leq \dim_H\big(\EE_\beta\cap(t-\ep,t+\ep)\big)\leq \dim_H\big(\EE_\beta\cap[t-\ep,1]\big)\\
&=\dim_H K_\beta(t-\ep),
\end{align*}
so letting $\ep\searrow 0$ and using the continuity of $t\mapsto \dim_H K_\beta(t)$ gives (i).

(ii) Assume $\al(\beta)$ is eventually periodic; then $\Kt_\beta(t_R)$ (or $\mathcal{K}_\beta'(t_R)$, as appropriate) is sofic for every $\beta$-Lyndon interval $[t_L,t_R]$. Take $t\in\BB_\beta$. Since $\BB_\beta\subseteq\EE_\beta$, we have
\begin{equation} \label{eq:BB-local-upper}
\dim_H\big(\BB_\beta\cap(t-\ep,t+\ep)\big)\leq \dim_H\big(\EE_\beta\cap(t-\ep,t+\ep)\big).
\end{equation}
On the other hand, it follows from our results on $\EE_\beta\backslash\BB_\beta$ and the argument in the proof of Proposition \ref{prop:E-B-equality} that the only intervals with which $\EE_\beta\backslash\BB_\beta$ can have an uncountable intersection are the non-transitivity windows, if they are present. Since $\al(\beta)$ is eventually periodic, the construction in Section \ref{sec:basic-interiors} implies that there are at most finitely many non-transitivity windows. Furthermore, since $t\in\BB_\beta$, $t$ cannot be a left endpoint of a non-transitivity window by Proposition \ref{prop:entropy-constant-in-window}. Thus, for all sufficiently small $\ep>0$, the interval $(t,t+\ep)$ does not intersect any non-transitivity window. Hence it contains at most countably many points of $\EE_\beta\backslash\BB_\beta$, and so
\begin{equation} \label{eq:BB-local-lower}
\dim_H\big(\BB_\beta\cap[t,t+\ep)\big)\geq \dim_H\big(\EE_\beta\cap[t,t+\ep)\big).
\end{equation}
From \eqref{eq:BB-local-upper}, \eqref{eq:BB-local-lower} and part (i) of the theorem, (ii) follows.
\end{proof}

\begin{remark}
The reason why we cannot extend Theorem \ref{thm:local-dimension} (ii) to all $\beta\in(1,2]$ has to do with a technical limitation. All our upper bounds for the size of $\EE_\beta\backslash\BB_\beta$ depend on \cite[Corollary 4.4.9]{Lind_Marcus_1995}, which states that, if $\mathcal{X}$ is a transitive sofic subshift and $\mathcal{Y}$ is a proper subshift of $\mathcal{X}$, then $h(\mathcal{X})>h(\mathcal{Y})$. Without the assumption of $\mathcal{X}$ being sofic, this conclusion may fail. When $\al(\beta)$ is not eventually periodic, the subshift $\Kt_\beta(t_R)$ is not necessarily sofic, and therefore the argument in the proof of Proposition \ref{prop:E-B-equality} does not work. We suspect that nonetheless, Theorem \ref{thm:local-dimension} (ii) holds for all $\beta\in(1,2]$.
\end{remark}

\section*{Acknowledgments}
{The authors wish to thank Wolfgang Steiner for pointing out a mistake in an earlier draft.}
The first author was partially supported by Simons Foundation grant \#709869.
The second author was supported by NSFC No.~11971079.

%\bibliographystyle{abbrv}
%
%\bibliography{Fractal-Expansions}
\end{document}